\immediate\write18{makeindex \jobname.nlo -s nomencl.ist -o \jobname.nls}
\documentclass[12pt,reqno]{amsart}
\textwidth=14.5cm 
\usepackage[lmargin=35mm,rmargin=35mm,tmargin=47mm, bmargin=47mm,lines=36]{geometry}

\makeatletter
\def\chaptermark#1{}

\def\chapter{%
  \if@openright\cleardoublepage\else\clearpage\fi
  \thispagestyle{plain}\global\@topnum\z@
  \@afterindenttrue \secdef\@chapter\@schapter}

\def\@chapter[#1]#2{\refstepcounter{chapter}%
  \ifnum\c@secnumdepth<\z@ \let\@secnumber\@empty
  \else \let\@secnumber\thechapter \fi
  \typeout{\chaptername\space\@secnumber}%
  \def\@toclevel{0}%
  \ifx\chaptername\appendixname \@tocwriteb\tocappendix{chapter}{#2}%
  \else \@tocwriteb\tocchapter{chapter}{#2}\fi
  \chaptermark{#1}%
  \addtocontents{lof}{\protect\addvspace{10\p@}}%
  \addtocontents{lot}{\protect\addvspace{10\p@}}%
  \@makechapterhead{#2}\@afterheading}
\def\@schapter#1{\typeout{#1}%
  \let\@secnumber\@empty
  \def\@toclevel{0}%
  \ifx\chaptername\appendixname \@tocwriteb\tocappendix{chapter}{#1}%
  \else \@tocwriteb\tocchapter{chapter}{#1}\fi
  \chaptermark{#1}%
  \addtocontents{lof}{\protect\addvspace{10\p@}}%
  \addtocontents{lot}{\protect\addvspace{10\p@}}%
  \@makeschapterhead{#1}\@afterheading}
\newcommand\chaptername{Chapter}

\def\@makechapterhead#1{\global\topskip 7.5pc\relax
  \begingroup
  \fontsize{\@xivpt}{18}\bfseries\centering
    \ifnum\c@secnumdepth>\m@ne
      \leavevmode \hskip-\leftskip
      \rlap{\vbox to\z@{\vss
          \centerline{\normalsize\mdseries
              \uppercase\@xp{\chaptername}\enspace\thechapter}
          \vskip 3pc}}\hskip\leftskip\fi
     #1\par \endgroup
  \skip@34\p@ \advance\skip@-\normalbaselineskip
  \vskip\skip@ }
\def\@makeschapterhead#1{\global\topskip 7.5pc\relax
  \begingroup
  \fontsize{\@xivpt}{18}\bfseries\centering
  #1\par \endgroup
  \skip@34\p@ \advance\skip@-\normalbaselineskip
  \vskip\skip@ }
\def\appendix{\par
  \c@chapter\z@ \c@section\z@
  \let\chaptername\appendixname
  \def\thechapter{\@Alph\c@chapter}}

\newcounter{chapter}
\newif\if@openright

\makeatother


\makeatletter
\@addtoreset{section}{chapter}
\makeatother

\usepackage[rgb]{xcolor}
\usepackage{tikz}
\usepackage{verbatim}
\usepackage{amsmath}
\usepackage{amsxtra}
\usepackage{amscd}
\usepackage{amsthm}
\usepackage{amsfonts}
\usepackage{amssymb}
\usepackage{eucal}

\usepackage[linktocpage=true]{hyperref}
\usepackage[all, cmtip]{xy}
\usepackage{stmaryrd}
\usepackage{color}

\hypersetup{colorlinks,linkcolor=blue, urlcolor=,
citecolor=blue}

\newtheorem{thm}{Theorem} [chapter]
\newtheorem{cor}[thm]{Corollary}
\newtheorem{lem}[thm]{Lemma}
\newtheorem{prop}[thm]{Proposition}
\newtheorem{conjecture}[thm]{Conjecture}

\theoremstyle{definition}
\newtheorem{definition}[thm]{Definition}

\theoremstyle{remark}
\newtheorem{rem}[thm]{Remark}

\numberwithin{equation}{chapter}



\newcommand{\nc}{\newcommand}
 \nc{\Z}{{\mathbb Z}}
 \nc{\C}{{\mathbb C}}
 \nc{\N}{{\mathbb N}}
 \nc{\F}{{\mf F}}
 \nc{\Q}{\mathbb{Q}}
 \nc{\la}{\lambda}
 \nc{\ep}{\epsilon}
 \nc{\h}{\mathfrak h}
 \nc{\n}{\mf n}
 \nc{\G}{{\mathfrak g}}
 \nc{\DG}{\widetilde{\mathfrak g}}
 \nc{\SG}{\breve{\mathfrak g}}
 \nc{\La}{\Lambda}
 \nc{\is}{{\mathbf i}}
 \nc{\V}{\mf V}
 \nc{\bi}{\bibitem}
 \nc{\E}{\mc E}
 \nc{\ba}{\tilde{\pa}}
 \nc{\half}{\frac{1}{2}}
 \nc{\hgt}{\text{ht}}
 \nc{\mc}{\mathcal}
 \nc{\mf}{\mathfrak} 
 \nc{\hf}{\frac{1}{2}}
 \nc{\hgl}{\widehat{\mathfrak{gl}}}
 \nc{\gl}{{\mathfrak{gl}}}
 \nc{\so}{{\mathfrak{so}}}
 \nc{\hz}{\hf+\Z}
 \nc{\ospO}{\mathcal{O}^{2m+1|2n}}
 \nc{\CatO}{\mathcal{O}}
\nc{\Omn}{\mc{O}^{m|n}}
\nc{\OO}{\mc{O}}

\nc{\U}{\bold{U}}
\nc{\SU}{\overline{\mathfrak u}}
\nc{\ov}{\overline}
\nc{\ul}{\underline}
\nc{\wt}{\widetilde}
\nc{\I}{\mathbb{I}}
\nc{\X}{\mathbb{X}}
\nc{\Y}{\mathbb{Y}}
\nc{\hh}{\widehat{\mf{h}}}
\nc{\aaa}{{\mf A}}
\nc{\xx}{{\mf x}}
\nc{\wty}{\widetilde{\mathbb Y}}
\nc{\ovy}{\overline{\mathbb Y}}
\nc{\vep}{\bar{\epsilon}}
\nc{\wotimes}{\widehat{\otimes}}

\nc{\ch}{\text{ch}}
\nc{\glmn}{\mf{gl}(m|n)}
\nc{\Uq}{{\mathcal U}_q}
\nc{\Uqgl}{\bold{U}_q(\gl_{2n+2})}
\nc{\VV}{\mathbb V}
\nc{\WW}{\mathbb W}
 \nc{\TL}{{\mathbb T}_{\mathbb L}}
 \nc{\TU}{{\mathbb T}_{\mathbb U}}
 \nc{\TLhat}{\widehat{\mathbb T}_{\mathbb L}}
 \nc{\TUhat}{\widehat{\mathbb T}_{\mathbb U}}
 \nc{\TLwt}{\widetilde{\mathbb T}_{\mathbb L}}
 \nc{\TUwt}{\widetilde{\mathbb T}_{\mathbb U}}
 \nc{\TLC}{\ddot{\mathbb T}_{\mathbb L}}
 \nc{\TUC}{\ddot{\mathbb T}_{\mathbb U}}

\nc{\osp}{\mathfrak{osp}}
 \nc{\be}{e}
 \nc{\bff}{f}
 \nc{\bk}{k}
 \nc{\bt}{t}
\nc{\Ui}{{\bold{U}^{\imath}}}
\nc{\Uidot}{{\dot{\bold{U}}^\imath}}
\nc{\id}{\text{id}}
\nc{\Ihf}{\I^\imath}
\nc{\Udot}{\dot{\U}}
\nc{\one}{\bold{1}}
\nc{\Uwedge}{\dot{\U}^{\wedge}}
\nc{\Uj}{\bold{U}^{\jmath}}
\nc{\ospV}{\osp(2m+1|2n|\infty)}
\nc{\ospW}{\mathfrak{osp}(2m+1|2n+\infty)}
\nc{\epinftyV}{\epsilon^0_{\infty}}
\nc{\epinftyW}{\epsilon^1_{\infty}}
\nc{\wtlV}{X^{\ul \infty, +}_{{\bf b},0}}
\nc{\wtlW}{X^{\ul \infty, +}_{{\bf b},1}}
\nc{\wtlVk}{X^{\ul k, +}_{{\bf b},0}}
\nc{\wtlWk}{X^{\ul k, +}_{{\bf b},1}}
\nc{\f}{\bold{f}}
\nc{\fprime}{\bold{'f}}
\nc{\Qq}{\Q(q)}
\nc{\qq}{(q^{-1}-q)}
\nc{\uqsl}{\bold{U}_q({\mf{sl}_{2n+2}})}
\nc{\BLambda}{{\Lambda_{\inv}}}
\nc{\ThetaA}{\Theta}
\nc{\ThetaB}{\Theta^{\imath}}
\nc{\ThetaC}{\Theta^{\jmath}}
\nc{\B}{\bold{B}}
\nc{\Abar}{\psi}
\nc{\Bbar}{\psi_\imath}
\nc{\HBm}{\mc{H}_{B_m}}
\nc{\ua}{\mf{u}}
\nc{\nb}{u}
\nc{\wtA}{\texttt{wt}}
\nc{\Uilambda}{\Lambda^\imath}
\nc{\Dupsilon}{\Upsilon^{\vartriangle}}
\nc{\inv}{\theta}
\nc{\VVm}{\VV^{\otimes m}}
\nc{\mA}{\mathcal{A}}
\nc{\Tb}{{\mathbb{T}}^{\bf b}}
\nc{\wTb}{\widehat{\mathbb{T}}^{\bf b}}

\nc{\qqq}{(1-q^{-2})^{-1}}

\nc{\Ujdot}{\dot{\U}^{\jmath} }
\nc{\Iint}{\mathbb{I}^{\jmath} }
\nc{\ibe}[1]{e_{\alpha_{#1}}}
\nc{\ibff}[1]{f_{\alpha_{#1}}}
\nc{\ibk}[1]{k_{\alpha_{#1}}}
\nc{\wtC}{\texttt{wt}_{\jmath}}
\nc{\Cbar}{\psi_{\jmath}}


\title[Quantum symmetric pairs and Kazhdan-Lusztig theory]
{A new approach to \\Kazhdan-Lusztig theory of type $B$ \\via quantum symmetric pairs
}
 
 \author[Huanchen Bao]{Huanchen Bao}
\address{Department of Mathematics, University of Maryland, College Park, MD 20742}
\email{huanchen@math.umd.edu}

\author[Weiqiang Wang]{Weiqiang Wang}
\address{Department of Mathematics, University of Virginia, Charlottesville, VA 22904}
\email{ww9c@virginia.edu}

\keywords{Kazhdan-Lusztig theory, Lie superalgebras, canonical bases, quantum symmetric pairs} 

\subjclass[2010]{Primary 17B10, 17B20, 17B37}
\begin{document}

\begin{abstract}
We show that Hecke algebra of type $B$ and a coideal subalgebra of the type $A$ quantum group  
satisfy a double centralizer property,  generalizing the Schur-Jimbo duality in type $A$.  
The quantum group of type $A$ and its coideal subalgebra form a quantum symmetric pair.
A new theory of canonical bases arising from  quantum symmetric pairs is initiated. 
It is then applied to formulate and establish for the first time a Kazhdan-Lusztig theory for the BGG category $\mathcal O$ of the ortho-symplectic Lie superalgebras $\mathfrak{osp}(2m+1|2n)$. In particular, 
our approach provides a new formulation of the Kazhdan-Lusztig theory for Lie algebras of type $B/C$.  
\\
\\
R\'esum\'e: On d\'emontre que les alg\`ebres de Hecke de type $B$ et des coideaux du groupes quantiques de type A satisfont une propri\'et\'e de double centralisateur qui g\'en\'eralise la dualit\'e de Schur-Jimbo en type $A$. Le groupe quantique de type $A$ et son coid\'eal forment une paire sym\'etrique quantique. Une nouvelle th\'eorie de bases canonique associ\'ee aux paires sym\'etriques quantiques est d\'evelopp\'ee. Elle est appliqu\'ee pour formuler et \'etablir une th\'eorie \`a la Kazhdan-Lusztig pour la cat\'egorie O de BGG de la super-alg\`ebre de Lie ortho-symplectique $\mathfrak(2m+1|2n)$. Notre approche donne en particulier une nouvelle formulation de la th\'eorie de Kazhdan-Lusztig pour les alg\`ebres de Lie de type $B/C$.
\end{abstract}

\maketitle


\tableofcontents

\chapter*{Introduction}

\section{Background}

A milestone in representation theory was the Kazhdan-Lusztig (KL) theory 
initiated in \cite{KL} (and completed in \cite{BB, BK}), 
which offered a powerful solution
to the difficult problem of determining the irreducible characters 
in the BGG category $\mc O$ of a semisimple Lie algebra $\G$. 
The Hecke algebra $\mc H_W$ associated to the Weyl group 
$W$ of $\G$ plays a central role in the KL formulation,  which can be
paraphrased as follows: the simple modules of the principal block in $\mc O$ 
correspond to the Kazhdan-Lusztig basis  
of  $\mc H_W$
while the Verma modules correspond to the standard basis of $\mc H_W$. 
The characters of the simple modules in singular blocks are then determined 
from those in the principal block via translation functors \cite{So90},
and the characters of tilting modules were subsequently determined in \cite{So98}. 

Though the classification of finite-dimensional simple Lie superalgebras 
over $\C$ was achieved in 1970's by \cite{Kac77}, the study of representation theory 
such as the BGG category $\mc O$ for a Lie superalgebra turns out to be very challenging and 
the progress has been made only in recent years. 
One fundamental reason is that the Weyl group (of the even part) of a Lie superalgebra alone 
is not sufficient to control the linkage principle in $\mc O$,
and hence the corresponding Hecke algebra cannot  
play the same crucial role as in the classical Kazhdan-Lusztig theory.
Among all basic Lie superalgebras, the infinite series $\glmn$ and $\osp(m|2n)$ 
are arguably the most fundamental ones. 
Since these Lie superalgebras specialize to Lie algebras when one of the parameters $m$ or $n$ is zero,
any possible (conjectural) approach on the irreducible character problem 
in the BGG category of such a Lie superalgebra has to first
provide a new formulation for a classical Lie algebra in which the Hecke algebra does not feature directly.

Brundan \cite{Br03} in 2003 formulated a conjecture on the irreducible and tilting characters
for the BGG category $\mc O$ for the general linear Lie superalgebra $\glmn$,
using Lusztig's canonical basis.
In this case, fortunately Schur-Jimbo duality \cite{Jim} between a 
Drinfeld-Jimbo quantum group $\U$ and a Hecke algebra of type $A$
enables one to reformulate the KL theory of type $A$ in terms of 
Lusztig's canonical basis on some Fock space $\VV^{\otimes m}$, where
$\VV$ is the natural representation of $\U$.
Brundan's formulation for $\glmn$ makes a crucial use
of the Fock space $ \VV^{\otimes m} \otimes \WW^{\otimes n}$, where
$\WW$ denotes the restricted dual to $\VV$. 
The longstanding conjecture of Brundan was settled in \cite{CLW12},
where a super duality approach developed earlier \cite{CW08, CL} (cf. \cite[Chapter 6]{CW12}) plays a key role. 
(For a more recent and different proof of Brundan's conjecture see Brundan, Losev, and Webster \cite{BLW}.)

Finding a general formulation for a Kazhdan-Lusztig theory for the BGG category $\mc O$
of the ortho-symplectic Lie superalgebras is one of the most intriguing open problems
in super representation theory. 
There was no conjecture available in the literature, and the reason should have become clear as we explain above:
no alternative approach to KL theory of type $BCD$ had been discovered  
without using Hecke algebras directly.

A super duality approach was developed in \cite{CLW11} which solves the irreducible 
character problem for some {\em distinguished} parabolic BGG
categories of the $\osp$ Lie superalgebras. This approach was not sufficient to attack the problem in
the full BGG category for $\osp$, and a Brundan-type formulation was not available. 
There has been a completely different approach developed by Gruson and Seganova \cite{GS} toward the finite-dimensional 
irreducible characters for the $\osp$ Lie superalgebras. 
One of the implications of the super duality which is important to us though is that the 
linkage for the distinguished parabolic categories of $\osp(2m+1|2n)$-modules 
is controlled by Hecke algebra of type $B_\infty$, and so one hopes that
it remains to be so for the full BGG category of $\osp(2m+1|2n)$-modules.

\section{The goal}

The goal of this paper is to give a complete and conceptual solution to problem on irreducible characters
in the BGG category $\mc O$ of modules 
of integer and half-integer weights
for the ortho-symplectic Lie superalgebras $\osp(2m+1|2n)$ of type $B(m,n)$. 
The case of Lie superalgebra $\osp(2m|2n)$ is treated in \cite{Bao}.
In particular, the non-super specialization of our work amounts 
to a new formulation to Kazhdan-Lusztig theory
of Lie algebras of classical type in which Hecke algebras are not used directly.

To achieve the goal, we are led to initiate in Part~1
a new theory of quasi-$\mc{R}$-matrix and new canonical basis (called {\em $\imath$-canonical basis})
arising from {\em quantum symmetric pairs} $(\U, \Ui)$.
We show that the coideal subalgebra $\Ui$ of $\U$ and the Hecke algebra of type $B_m$ form
double centralizers on $\VV^{\otimes m}$, generalizing the Schur-Jimbo duality. 
A new formulation of the KL theory for Lie algebras of type $B$ is then made possible
by such a new duality.
 Part~1 (which consists of Chapters~\ref{sec:prelim}-\ref{sec:QSPc})
has nothing to do with Lie superalgebras and should be of independent interest.

We develop in Part~2 an infinite-rank version of the constructions in Part~1,
and then relate the $\imath$-canonical basis to the BGG category $\mc O_{\bf b}$ of $\osp(2m+1|2n)$-modules
of (half-)integer weights relative to a Borel subalgebra whose type is
specified by a $0^m1^n$-sequence $\bf b$.
In this approach, the role of Kazhdan-Lusztig basis is played by the (dual)
$\imath$-canonical basis for a suitable completion of the $\Ui$-module 
$\mathbb{T}^{\bf b}$ associated to $\bf b$;
Here $\mathbb{T}^{{\bf b}}$ is a tensor space which is a variant of
$\VV^{\otimes m} \otimes \WW^{\otimes n}$. 
%
%
%
%
%

\section{An overview of Part 1}

Our starting point is actually natural and simple. 
The generalization of Schur duality beyond type $A$ in the literature is not suitable to our goal,
since it replaces the Lie algebra/group of type $A$
by its classical counterpart and modifies the symmetric group to become a Brauer algebra 
(or a quantum version of such). For our purpose, 
as we look for a substitute for KL theory where
the Hecke algebras have played a key role, we ask for some quantum group like object 
with a coproduct (not Schur type algebra) 
which centralizes the Hecke algebra of type $B_n$
when acting on $\VV^{\otimes n}$.  We recognized
such a quantum group like object as a coideal subalgebra of the quantum group $\U$,  a quantum version of the enveloping algebra of 
the subalgebra of $\mf{sl}(\VV)$ fixed by some involution,
which forms a quantum symmetric pair with $\U$.

Note that the formulation of Part~1 is in the setting that $\VV$ is finite-dimensional, while it is most natural to set $\VV$ to be
infinite-dimensional when making connection with category $\mc O$ in Part~2.

The structure theory of quantum symmetric pairs was systematically developed by Letzter and then Kolb
(see  \cite{Le03}, \cite{Ko12} and the references therein).
Though our coideal subalgebra can be identified with some particular examples in literature by an explicit (anti-)isomorphism, 
the particular form of our presentation and its embedding into $\U$ are different and new.
The coideal subalgebra in our presentation manifestly admits a bar involution, and
the specialization at $q=1$ of our presentation 
has a natural interpretation in terms of translation functors in category $\mc O$.
Depending on whether the dimension of $\VV$ is even or odd, we denote the (right) coideal subalgebra by $\Ui$ or $\Uj$, respectively.
The two cases are similar but also have quite some differences, and the case of $\Ui$ is more challenging as
it contains an unconventional generator which we denote by $t$ (besides the Chevalley-like generators $e_{\alpha_i}$ and $f_{\alpha_i}$). 
We mainly restrict our discussion to $\Ui$ (and so $\dim \VV$ is even) below. 

Recall that the coproduct $\Delta: \U \rightarrow \U\otimes \U$ is not compatible
with the bar involution $\Abar$ on $\U$ and $\Abar \otimes \Abar$ on $\U \otimes \U$,  and Lusztig's quasi-$\mc{R}$-matrix $\Theta$ is designed 
to intertwine $\Delta$ and $\ov\Delta$, where $\ov\Delta(u) := (\Abar \otimes \Abar) {\Delta(\Abar(u))}$, for $u \in \U$. 
Lusztig's construction of $\Theta$ is a variant of Drinfeld's construction of universal $\mc R$-matrix \cite{Dr86}, and it
takes great advantage of  the triangular decomposition and a 
natural bilinear form of $\U$. The bar involution
on $\VVm$ was then constructed by means of the quasi-$\mc{R}$-matrix $\Theta$. 
Inspired by the type $A$ reformulation of KL theory (cf., e.g., \cite{VV99, Br03, CLW12}), 
as an alternative of the Kazhdan-Lusztig theory without using Hecke algebras
we ask for a canonical basis theory arising from quantum symmetric pairs.  

The embedding $\imath: \Ui \rightarrow \U$ which makes $\Ui$ a coideal subalgebra of $\U$
does not commute with the bar involution $\Bbar$ on $\Ui$ and $\Abar$ on $\U$. 
We have a coproduct of the coideal form $\Delta: \Ui \rightarrow \Ui \otimes \U$.
Define $\ov{\Delta}: \Ui \rightarrow \Ui \otimes \U$ by 
$\ov{\Delta}(u) = (\Bbar \otimes \Abar) {\Delta(\Bbar(u))}$, for all $u \in \Ui$. 
Note that the $\ov\Delta$ here is not a restriction of Lusztig's $\ov\Delta$.
Toward our goal, in place of Lusztig's quasi-$\mc{R}$-matrix  for $\U$
one would need a quasi-$\mc{R}$-matrix $\ThetaB$ 
which intertwines $\Delta$ and $\ov\Delta$ for $\Ui$.
The problem here is that $\Ui$ does not have any obvious triangular decomposition or bilinear form as for $\U$. 

Our key strategy is to ask first for some suitable intertwiner $\Upsilon$ which intertwines
$\imath$ and $\ov\imath: \Ui \rightarrow \U$, where 
\[
\ov\imath (u):=\Abar \big( \imath (\Bbar (u))\big), \quad \text{ for } u\in \Ui.
\] 
Note the remarkable analogy with a key property of Lusztig's $\ThetaA$. 
We succeed in constructing such an intertwiner $\Upsilon$ in some completion of 
the negative half $\U^-$ of $\U$
and show that it is unique up to a scalar multiple  (see Theorem~\ref{thm:Upsilon}).
Then
by combining $\Upsilon$ with Lusztig's $\ThetaA$ we are able to construct the quasi-$\mc{R}$-matrix $\ThetaB$,
which lies in some completion of $\Ui \otimes \U^-$. The crucial properties 
 \[
 \Upsilon \ov{\Upsilon} =1,
 \qquad \ThetaB \ov{\ThetaB} =1
 \]
 hold.
The intertwiner $\Upsilon$ can also be applied to turn an involutive $\U$-module into an $\imath$-involutive $\Ui$-module
(see Proposition~\ref{prop:compatibleBbar}, Definitions~\ref{def:involutive} and \ref{def:involutive-i}).

It turns out to be a subtle problem to show that $\Upsilon$ lies in (a completion of) the integral $\mA$-form $\U_\mA^-$,
where $\mA =\Z[q,q^{-1}]$.
We are led to study the simple lowest weight $\U$-modules ${}^\omega L(\la)$ for $\la\in \La^+$ regarded
as $\Ui$-modules. By a detailed study on the behavior of the generator $t$ in $\Ui$ in the rank one case,
we show that $\Upsilon$ preserves the $\mA$-form ${}^\omega L_\mA (\la)$ for all $\la \in \La^+$,
and this eventually allows us to establish the integrality of $\Upsilon$ (see Theorem~\ref{thm:UpsiloninZ}).
We then construct the {\em $\imath$-canonical basis}  of ${}^\omega L(\la)$ which
is $\Bbar$-invariant and admits a triangular decomposition with respect to Lusztig's canonical basis
on ${}^\omega L(\la)$ with coefficients in $\Z[q]$ (see Theorem~\ref{thm:BCB}). 
Consequently, we construct in Theorem~\ref{thm:iCBtensor} an $\imath$-canonical basis for any tensor product of
several finite-dimensional simple $\U$-modules, which differs from and is related to Lusztig's canonical basis on the same tensor product.

Generalizing the Schur-Jimbo duality in type $A$, we show that the action of
the coideal algebra $\Ui$ and Hecke algebra $\HBm$
on $\VVm$ form double centralizers, where $\VV$ is the natural representation of $\U$ 
 (see Theorem~\ref{thm:SchurB}).
With $\Upsilon$ and $\ThetaB$ at hand, we are able to construct a bar involution $\Bbar$
on the $(\Ui, \HBm)$-bimodule $\VVm$ which is compatible with the bar involutions on $\Ui$ and $\HBm$ 
(see Theorem~\ref{thm:samebar}).
In particular, the $\imath$-canonical basis on the involutive $\Ui$-module  $(\VVm, \Bbar)$ alone is sufficient
to reformulate the KL theory of type $B$.

\section{An overview of Part 2}
 
Part 2 is very close to \cite{CLW12} in spirit, where the category $\mc O$ of $\glmn$-modules was addressed.
In this part, we take the $\Qq$-space
$\VV$ to be the direct limit as $r \mapsto  \infty$ of the $2r$-dimensional ones considered in Part~1.
Also let $\U$ and $\Ui$ be the corresponding infinite-rank limits of their finite-rank counterparts in Part~1.

For an $0^m1^n$-sequence $\bf b$ (which consists of $m$ zeros and $n$ ones), we define 
a tensor space $\mathbb{T}^{\bf b}$ using $m$ copies of $\VV$ and $n$ copies of $\WW$ with the tensor order 
prescribed by $\bf b$ (with $0$ corresponds to $\VV$); for instance,
associated to ${\bf b}^{\text{st}} =(0, \ldots, 0, 1, \ldots, 1)$, we have
$\mathbb{T}^{{\bf b}^{\text{st}}} =\VV^{\otimes m} \otimes \WW^{\otimes n}$.
Such a tensor space (called Fock space) was an essential ingredient in the formulation of
Kazhdan-Lusztig-type conjecture for $\gl(m|n)$ and its generalizations \cite{Br03, Ku, CLW12}.
In this approach, $\mathbb{T}^{\bf b}$ at $q=1$ (denoted by $\mathbb{T}^{\bf b}_\Z$) is identified with the Grothendieck group of the BGG
category of $\gl(m|n)$-modules (relative to a Borel subalgebra of type $\bf b$), 
and the (dual) canonical bases of the $\U$-module $\mathbb{T}^{\bf b}$
play the role of Kazhdan-Lusztig basis which solves the irreducible and tilting character
problem in the BGG category for $\gl(m|n)$. 

Now with the intertwiner $\Upsilon$ and the quasi-$\mc R$-matrix $\ThetaB$ for the quantum symmetric
pair $(\U, \Ui)$ at disposal,
we are able to construct the $\imath$-canonical and dual $\imath$-canonical bases for $\mathbb{T}^{\bf b}$
(or rather in its suitable completion with respective to a Bruhat ordering); see Theorem~\ref{thm:iCBb}. 
In the finite-rank setting, this was already proved in Part~1. Nevertheless, the infinite-rank setting
requires much care and extra work to deal with suitable completions, similar to \cite{CLW12} (see also \cite{Br03}). 
A simple but crucial fact is that the partial ordering for $\mathbb{T}^{\bf b}$ used in \cite{CLW12} is coarser than
the one used in this paper and this allows various constructions in {\em loc. cit.} to carry over to the current setting. 
We will ignore the completion issue completely in the remainder of the Introduction. 

Our main theorem (Theorem~\ref{thm:iKL}), which will be referred to as ($\bf b$-KL) here, states that there exists an isomorphism
between the Grothendieck group of the BGG category 
$\mc O_{\bf b}$ of $\osp(2m+1|2n)$-modules of integer weights 
(relative to a Borel subalgebra of type $\bf b$) and $\mathbb{T}^{\bf b}_\Z$,
which sends the Verma, simple, and tilting modules to the standard monomial, 
dual $\imath$-canonical, and $\imath$-canonical bases (at $q=1$), respectively.
In other words, the entries of the transition matrix between (dual) $\imath$-canonical basis and monomial basis
play the role of Kazhdan-Lusztig polynomials in our category $\mc O_{\bf b}$. 
 
Granting the existence of the (dual) $\imath$-canonical bases of $\mathbb{T}^{\bf b}$,
the overall strategy of a proof of ($\bf b$-KL) follows the one employed in \cite{CLW12}
in establishing Brundan's Kazhdan-Lusztig-type conjecture, which is done by induction on $n$ with the
base case solved by the classical Kazhdan-Lusztig theory of type $B$ \cite{KL, BB, BK} (as reformulated above in
terms of the $\imath$-involutive $\Ui$-module $\VVm
$). 
There are two main steps in the proof.
First, we need (an easy generalization of) the super duality developed in \cite{CLW11} for $\osp$,
which is an equivalence of parabolic categories of $\osp(2m+1|2n+\infty)$-modules and
$\osp(2m+1|2n|\infty)$-modules. We establish the corresponding combinatorial super duality which states that there
is an $\Ui$-isomorphism between $\mathbb{T}^{\bf b} \otimes \wedge^\infty \VV$ and
$\mathbb{T}^{\bf b} \otimes \wedge^\infty \WW$, which matches the corresponding 
standard monomial, $\imath$-canonical, and dual $\imath$-canonical bases.
The super duality is used to establish the $\bf b$-KL for one distinguished $0^m1^n$-sequence.

The second step is a comparison of ($\bf b$-KL) and ($\bf b'$-KL) for adjacent sequences $\bf b$ and $\bf b'$
(here ``adjacent" means differing exactly by an adjacent pair $01$).
Let us assume for simplicity that the first entries of $\bf b$ and $\bf b'$ are both
$0$ here (see Remarks~\ref{rem:osprank2} and \ref{rem:remove0} for the removal of this assumption),
as this is sufficient in solving the irreducible and tilting character problems for $\osp(2m+1|2n)$-modules. 
Thanks to the coideal property of $\Ui$, the iterated coproduct for $\Ui$ 
has images in $\Ui \otimes \U  \otimes \ldots \otimes \U$. Therefore
the comparison of ($\bf b$-KL) and ($\bf b'$-KL) for adjacent $\bf b$ and $\bf b'$
can be carried out exactly as in the type $A$ setting \cite{CLW12} since the exchange
of the adjacent $0$ and $1$ does not affect the first tensor factor and hence will not use $\Ui$.
The upshot is that the validity of the statement ($\bf b$-KL) for one $0^m1^n$-sequence implies
the validity for an arbitrary $0^m1^n$-sequence.  

The infinite-rank version of the other quantum symmetric pairs $(\U, \Uj)$ 
and its $\jmath$-canonical basis theory is used to solve a variant of the BGG category $\mc O$
of $\osp(2m+1|2n)$-modules, now of {\em half-}integer weights; see Chapter~\ref{sec:BGGj}.

\section{Some future works} 
\label{sec:future}

This work will serve as a new starting point in several (closely related) directions. 

The constructions of this paper is adapted in \cite{Bao} to provide the  irreducible character formula in the BGG category $\mc O$ for  
Lie superalgebras $\osp(2m|2n)$, settling another longstanding open problem in Lie superalgebras since 1970s.

Recall the Schur-Jimbo duality   has a natural geometric realization in terms of 
partial flag varieties of type $A$ due to Grojnowski and Lusztig.  It is natural to ask for a geometric interpretation
of the type $B$ duality as well as $\imath$-canonical basis
 developed algebraically and categorically in this paper.
This turns out to have a classical answer  in \cite{BKLW}, which settles another old open problem of understanding the
quantum algebra arising from partial flag varieties of classical type. 
(This generalizes the classic work of Beilinson, Lusztig, and MacPherson \cite{BLM} for type $A$.)

While we have developed adequately a theory for $\imath$-canonical 
basis for quantum symmetric pairs to solve the irreducible character problem in the category $\mc O_{\bf b}$, 
a full fledged theory of canonical basis for quantum symmetric pairs remains to be developed.
The quantum symmetric pairs $(\U,\Ui)$ and $(\U, \Uj)$ are just two examples of general 
quantum symmetric pairs of finite or more generally Kac-Moody type (see \cite{Le03, Ko12}). The general 
quantum symmetric pairs afford presentations similar to 
the ones given in this paper which admit a natural bar involution. 
A theory of $\imath$-canonical bases for the general quantum symmetric pairs will be pursued in a separate publication \cite{BW16}.
While several key steps developed in this paper will be generalized suitably, further new ideas are also needed.

One influential and persuasive philosophy in the last two decades, supported by the 
quiver variety construction of Nakajima
and reinforced by the categorification program of Chuang, Rouquier, Khovanov and Lauda, 
is that various constructions in general settings are of  ``type $A$" locally. 
A  general philosophical message of this paper and \cite{BKLW}
is that there exists a whole range of new yet classical $\imath$-constructions,
algebraic, geometric and categorical, which are of  ``type $A$ with involution".

The most significant quantum symmetric pairs beyond $\Ui$ and $\Uj$ in our view
would be the ones associated to the quantum group of affine type $A$, whose $\imath$-canonical basis theory
is expected to be closely related to the irreducible character problem in modular
representation theory of algebraic groups or  quantum groups of classical types. 

The geometric aspects of the finite or affine coideal algebras and $\imath$-icanonical bases will be developed by Yiqiang Li and his collaborators.
A KLR type $\imath$-categorification will be addressed elsewhere.

\section{Organization}

The paper is divided into two parts. Part~1, which consists of
Chapters~\ref{sec:prelim}-\ref{sec:QSPc}, provides various 
foundational constructions on quantum symmetric pairs, where $\dim \VV$
is assumed to be finite. 
Part~2, which consists of
Chapters~\ref{sec:BGG}-\ref{sec:BGGj}, extends the $\imath$-canonical basis and dual $\imath$-canonical basis to
the setting where $\VV$ is infinite-dimensional and uses this to solve the irreducible and tilting character problems
of category $\mc O$ for Lie superalgebra $\osp(2m+1|2n)$.

 In the preliminary Chapter~\ref{sec:prelim}, we review various basic constructions for quantum group $\U$.
 We also introduce the involution $\inv$ on the root system and integral weight lattice of $\U$
 and a ``weight lattice" $\La_\inv$ which will be used in quantum symmetric pairs. 
 
In Chapter~\ref{sec:qsp}, 
we introduce the right coideal subalgebra $\Ui$ of $\U$
and an algebra embedding $\imath: \Ui \rightarrow \U$. The algebra $\Ui$ is endowed with
a natural bar involution. 
Then we construct an intertwiner $\Upsilon =\sum_\mu \Upsilon_\mu$, which lies in a completion $\widehat{\U}^-$,
for the two bar involutions on 
$\Ui$ and $\U$ under $\imath$, and show it is unique once we fix the normalization $\Upsilon_0=1$.
We prove that $\Upsilon \ov{\Upsilon} =1$. Note the remarkable similarity of $\Upsilon$ with Lusztig's quasi-$\mc{R}$-matrix for quantum groups. The intertwiner $\Upsilon$ is used to construct
a $\Ui$-module isomorphism $\mc T$ on any finite-dimensional $\U$-module, which should be viewed
as an analogue of $\mc R$-matrix on the tensor product of $\U$-modules. 

In Chapter~\ref{sec:QuasiR},
we define $\ThetaB$ for $\Ui$, which will play an analogous role as
Lusztig's quasi-$\mc{R}$-matrix for $\U$ on tensor product modules. Our first definition of $\ThetaB$ is simply obtained 
by combining the intertwiner $\Upsilon$ and $\Theta$. 
More detailed analysis is required to show that (a normalized version of) $\ThetaB$ lies in a completion of $\Ui \otimes \U^-$.
We prove that $\ThetaB \ov{\ThetaB} =1$. Then we use $\Upsilon$ to construct an $\imath$-involutive
module structure on an involutive $\U$-module, and then use $\ThetaB$ to construct
an involution on a tensor product of a $\Ui$-module with a $\U$-module.

In Chapter~\ref{sec:UpsiloninZ}, 
we first  study the rank one case of $\U$ and $\Ui$ in detail, which turns out to be nontrivial. 
In the rank one setting, we easily show that $\Upsilon$ is integral and then construct the $\imath$-canonical bases
 for simple $\U$-modules ${}^\omega L(s)$ for $s\ge 0$. We formulate a $\Ui$-homomorphism
from ${^{\omega}{L}(s+2)}$ to $ {^{\omega}{L}(s)}$ and use it to study the relation of $\imath$-canonical bases
on ${^{\omega}{L}(s+2)}$ and ${^{\omega}{L}(s)}$, which surprisingly depends on the parity of $s$. 
This allows us to establish the $\imath$-canonical basis for $\Ui$ in two parities, 
which is shown to afford integrality and should be regarded as ``divided powers" of the generator $t$. 
 
 Then we apply the rank one results to study the general higher rank case.
 We show that the intertwiner $\Upsilon$ is integral and hence the bar involution $\Bbar$ on
 the simple $\U$-module  $ {^{\omega}{L}(\la)}$ preserves its $\mA$-form.
Then we  construct the $\imath$-canonical basis for ${}^\omega L(\la)$ for $\la\in \La^+$. 

In Chapter~\ref{sec:HeckeB},
we recall Schur-Jimbo duality between quantum group $\U$ and Hecke algebra of type $A$.
Then we establish a commuting action of $\Ui$ and Hecke algebra $\HBm$ of type $B$ on $\VV^{\otimes m}$,
and show that they form double centralizers. Just as Jimbo showed that the generators of
Hecke algebra of type $A$ are realized by $\mc R$-matrices, we show that
the extra generator of Hecke algebra of type $B$ is realized 
via the $\Ui$-homomorphism $\mc T$ introduced in Chapter~\ref{sec:qsp}.
We then show the existence of a bar involution on $\VV^{\otimes m}$ which is compatible with the bar involutions
on $\Ui$ and $\HBm$. 
This allows a reformulation of Kazhdan-Lusztig theory for Lie algebras of type $B/C$ via the 
involutive $\Ui$-module $\VV^{\otimes m}$ (without referring directly to the Hecke algebra). 

In Chapter~\ref{sec:QSPc},
we consider the other quantum symmetric pair $(\U, \Uj)$ with $\U$ of type  $A_{2r}$, so its natural representation
$\VV$ is odd-dimensional. We
formulate the counterparts of the main results from Chapter~\ref{sec:qsp} through Chapter~\ref{sec:HeckeB}
where $\U$ was of type $A_{2r+1}$ and $\dim \VV$ was even.
The proofs are similar and often simpler for $\Uj$ since 
it does not contain a generator $t$ as $\Ui$ does, and hence will be omitted almost entirely.

\vspace{.3cm}

In Part~2, which consists of
Chapters~\ref{sec:BGG}-\ref{sec:BGGj}, we switch to infinite-dimensional $\VV$ and infinite-rank
quantum symmetric pair $(\U, \Ui)$. 

In the preliminary Chapter~\ref{sec:BGG},  
we set up variants of BGG categories of the ortho-symplectic Lie superalgebras,
allowing possibly infinite-rank and/or parabolic versions. 

In Chapter~\ref{sec:Fockspaces},  
we formulate precisely the infinite-rank limit of various constructions in Part~1, such as $\VV$, 
$\U$, $\Ui$, $\Upsilon$, $\Bbar$, and so on. We transport the Bruhat ordering from the BGG category $\mc O_{\bf b}$
for $\osp(2m+1|2n)$ to the Fock space $\Tb$ by noting a canonical bijection of the indexing sets.
We formulate the $q$-wedge versions of the Fock spaces, which correspond to parabolic versions of the BGG categories.

In Chapter~\ref{osp:sec:cbanddcb},  we construct the $\imath$-canonical bases and dual
$\imath$-canonical bases in various completed Fock spaces, where the earlier detailed work on
completion of Fock spaces in \cite{CLW12} plays a fundamental role. 

In Chapter~\ref{sec:compareCB},  we are able to compare (dual) $\imath$-canonical bases in three different settings:
a tensor space versus its (partially) wedge subspace, a Fock space versus an adjacent one,
and a Fock space with a tensoring factor $\wedge^\infty \VV$  versus another with $\wedge^\infty \WW$.

In Chapter~\ref{sec:b-KL},  we show that
the coideal subalgebra $\Ui$ at $q=1$ is realized by translation functors in the BGG categories. 
This underlies the importance of the coideal subalgebra $\Ui$. Then
we put all the results in earlier chapters of Part~2 together to
prove the main theorem which solves the irreducible and tilting character problem for $\osp(2m+1|2n)$-modules
of integer weights. 

The last Chapter~\ref{sec:BGGj}  deals with a variant of the BGG category  
of $\osp(2m+1|2n)$-modules 
with half-integer weights.  The Kazhdan-Lusztig theory of this half-integer variant is formulated and solved
by the quantum symmetric pair $(\U, \Uj)$, an infinite-rank version of the ones formulated in the last chapter of Part 1.

\vspace{.2cm}

\noindent {\bf 
Convention and notation.}  We shall denote by $\N$ the set of nonnegative integers, and by $\Z_{>0}$ the set of
positive integers. 
In Part~1,
where $\dim \VV =2r+2$ (except in Chapter~\ref{sec:QSPc} where $\dim \VV=2r+1$),
$r$ is  fixed and so will not show up in most of the notations (such as $\VV$, $\U, \Ui$, $\Upsilon$, $\Bbar$ and so on).
In Part~2 (more precisely in Chapter~\ref{sec:Fockspaces}-\ref{osp:sec:cbanddcb}), 
subscripts and superscripts are added to the notation used in Part~1 to indicate the dependence on $r$
(e.g., $\VV_r$, $\U_{2r+1}$, $\U^\imath_r$, $\Upsilon^{(r)}$, $\Bbar^{(r)}$ and so on).
In this way we shall consider $\VV$ as a direct limit $\varinjlim \VV_r$, and various
constructions including the intertwiner $\Upsilon$ as well as the bar involution $\Bbar$ arise as limits of their counterparts in Part~1.

\vspace{.2cm}

\noindent {\bf Acknowledgement.} 
This research is partially supported
by WW's NSF grants DMS-1101268 and DMS-1405131. 
We are indebted to Shun-Jen Cheng for his generous helps in many ways and thank Institute of Mathematics, Academia Sinica, 
Taipei for providing an excellent working environment and support, where part of this project was carried out. 

\vspace{.2cm}
\noindent {\bf Notes added.} 
This final version of our paper is not much different from the version originally posted in arXiv:1310.0103. 

In the preprint \cite{ES}
Ehrig and Stroppel simultaneously and independently discovered 
connections between the coideal algebras and category $\mathcal O$ of type $D$.
They also independently obtained the bar-invariant presentations of the coideal algebras. 

 In the preprint \cite{BaKo} 
Balagovic and Kolb have generalized our construction of the intertwiner in this paper for general quantum symmetric pairs (this generalization has overlap with our forthcoming paper \cite{BW16}, where it is used toward a general construction of $\imath$-canonical bases.)
Balagovic and Kolb have showed that the notion of intertwiner leads to solutions to the reflection equation, just as Drinfeld's universal $R$-matrix provides solutions
to Yang-Baxter equation. 

\newpage

\part{Quantum symmetric pairs}

\chapter{Preliminaries on quantum groups}
 \label{sec:prelim}
 
 In this preliminary chapter, we review some basic definitions and constructions on quantum groups
 from Lusztig's book,
 including the braid group action, canonical basis and quasi-$\mc R$-matrix. 
 We also introduce the involution $\inv$ and the lattice $\La_\inv$ which will be used in quantum symmetric pairs. 
 
\section{The involution $\inv$ and the lattice $\Lambda_\inv$}
  \label{subsec:theta}

Let $q$ be an indeterminate. 
For $r \in \N$, we define the following index sets:
\begin{align}
  \label{eq:I}
\begin{split}
\I_{2r+1} &= \{i \in \Z \mid -r \le i \le r\},  
 \\
\I_{2r}  &= \big\{i \in \Z+\hf \mid -r < i < r \big\}.
\end{split}
\end{align}

Set $k =2r+1$ or $2r$, and we use the shorthand notation $\I =\I_k$ in the remainder of Chapter~\ref{sec:prelim}.
Let  
\[
\Pi= \big \{\alpha_i=\varepsilon_{i-\hf}-\varepsilon_{i+\hf} \mid i \in \I \big \}
\]
be the simple system of type $A_{k}$, and
let $\Phi$ be the associated root system. Denote by 
$$
\Lambda = \sum_{{i \in \I} } \big( \Z \varepsilon_{i - \hf} + \Z \varepsilon_{i + \hf} \big)
$$  
the integral weight lattice, and denote by $(\cdot, \cdot)$ the standard bilinear pairing on 
$\Lambda$ such that $(\varepsilon_a, 
\varepsilon_b) = \delta_{ab}$ for all $a,b$.  
For any $\mu = \sum_{i}c_i\alpha_i \in \N {\Pi}$, 
set $\hgt(\mu) = \sum_{i} c_i$. 

Let $\inv$ be the involution of the weight lattice $\Lambda$ such that 
\[
\inv(\varepsilon_{i-\hf}) = - \varepsilon_{-i+\hf}, \quad \text{ for all } i \in \I. 
\]
We shall also write $\lambda^{\inv} = \inv(\lambda)$, for $\lambda \in \Lambda$. The involution
$\inv$ preserves the bilinear form $(\cdot,\cdot)$ on the weight lattice $\Lambda$
and induces an automorphism on the root system $\Phi$ such that $ \alpha^{\inv}_i = \alpha_{-i}$ for all $i \in \I$. 

Denote by $\Lambda^{\inv} = \{\mu \in \Lambda \mid \mu^{\inv} =\mu\}$ the subgroup of $\theta$-fixed points in $\Lambda$.  
It is easy to see that the quotient group
\begin{equation}  \label{eq:bwl}
\Lambda_{\inv} := \Lambda/\Lambda^{\inv}
\end{equation}
is a lattice. For $\mu \in \Lambda$, denote by $\ov{\mu}$ the image of $\mu$ under the quotient map. 
There is a well-defined bilinear 
pairing  $\Z[\alpha_i - \alpha_{-i}]_{i \in \I} \times \BLambda  \rightarrow \Z$, such that 
$(\sum_{i > 0}a_i(\alpha_i-\alpha_{-i}), \ov{\mu}) :=   \sum_{i > 0}a_i(\alpha_i-\alpha_{-i}, \mu)$ for any $\ov{\mu} \in \BLambda$
with any preimage $\mu\in \Lambda$.

\section{The algebras $\fprime$, $\f$ and $\U$}
  \label{subsec:f}

Consider a free $\Qq$-algebra $'\f$ generated by $F_{\alpha_i}$ for ${i \in \I}$ associated with the 
Cartan datum of type $(\I, (\cdot,\cdot))$ \cite{Lu94}. As a $\Qq$-vector space, $'\f$ has a direct sum decomposition as 
\[
'\f = \bigoplus_{\mu \in {\N}{\Pi}}~ '\f_{\mu},
\]
where $F_{\alpha_i}$ has weight $\alpha_i$ for all $i \in \I$.
For any $x \in \fprime_\mu$, we set $|x| = \mu$.

For each $i \in \I$, we define $r_{i}, {}_i r$ to be the unique $\Q(q)$-linear maps on $\fprime$ such that 
\begin{align}  \label{eq:rr}
\begin{split}
r_{i}(1) = 0, \quad r_{i}(F_{\alpha_j}) = \delta_{ij},
\quad r_{i}(xx') = xr_{i}(x') + q^{(\alpha_i, \mu')}r_{i}(x)x',
 \\
{}_{i}r(1) = 0, \quad {}_{i}r(F_{\alpha_j}) = \delta_{ij},
\quad {}_{i}r(xx') =q^{(\alpha_i, \mu)}x \,_{i}r(x') +{ _{i}r(x)x'},
\end{split}
\end{align}
for all $x \in \fprime_{\mu}$ and $x' \in \fprime_{\mu'}$. 
The following lemma is well known (see \cite{Lu94} and \cite[Section~ 10.1]{Jan}).

\begin{lem}\label{lem:rlrr=rrrl}
The $\Q(q)$-linear map $r_j$ and ${}_{i}r$ commute; that is,
$r_{j} \,_{i}r  = \,_{i}r \,r_{j}$ for all $i$, $j \in \I$.
\end{lem}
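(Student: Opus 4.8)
The statement is that the two families of operators $r_j$ and ${}_i r$ on $\fprime$ commute. Since $\fprime$ is generated as an algebra by the $F_{\alpha_i}$ and is graded by $\N\Pi$, and since both $r_j$ and ${}_i r$ preserve the property of being a graded map of degree $-\alpha_j$ (resp.\ $-\alpha_i$), the natural approach is induction on $\hgt(\mu)$ for $x \in \fprime_\mu$. The base cases $x = 1$ and $x = F_{\alpha_k}$ are immediate from the defining formulas \eqnref{eq:rr}: both $r_j \, {}_i r$ and ${}_i r \, r_j$ annihilate $1$, and on $F_{\alpha_k}$ one gets $r_j({}_i r(F_{\alpha_k})) = \delta_{ik}\delta_{jk} = {}_i r(r_j(F_{\alpha_k}))$ since $r_j(c) = {}_i r(c) = 0$ for scalars $c$.

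For the inductive step, I would take $x \in \fprime_\mu$ with $\hgt(\mu) \ge 2$ and write $x$ as a sum of products $x = x_1 x_2$ with $x_1 \in \fprime_{\mu_1}$, $x_2 \in \fprime_{\mu_2}$, $\mu_1 + \mu_2 = \mu$, each factor of strictly smaller height; by linearity it suffices to treat a single such product. Then I apply the Leibniz-type rules in \eqnref{eq:rr} twice, once for each operator, in both orders. Expanding $r_j({}_i r(x_1 x_2))$ using first the rule for ${}_i r$ and then the rule for $r_j$ on each resulting term, one obtains a sum of four terms, each a product of two factors in which $r_j$ and ${}_i r$ have been distributed across $x_1$ and $x_2$ (with appropriate powers of $q$ determined by the weights $\mu_1,\mu_2$ and $\alpha_i,\alpha_j$). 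Doing the same for ${}_i r(r_j(x_1 x_2))$ in the opposite order gives another sum of four terms. In each of the four matching pairs, the interior factors involve $r_j$ and ${}_i r$ applied to $x_1$ or $x_2$ alone, and one invokes the inductive hypothesis ($r_j \, {}_i r = {}_i r \, r_j$ on $x_1$ and on $x_2$, which have smaller height). The only thing left to check is that the $q$-power coefficients agree term by term.

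\textbf{The main obstacle.} The genuine content of the argument is the bookkeeping of the $q$-exponents: one must verify that the power of $q$ attached to each of the four terms is the same whether computed via $r_j \circ {}_i r$ or via ${}_i r \circ r_j$. This comes down to the identities $(\alpha_i,\mu_1) + (\alpha_j, \mu_1) = (\alpha_j,\mu_1) + (\alpha_i,\mu_1)$ and the analogous ones with $\mu_2$ — that is, it works precisely because the bilinear form is symmetric (commutative addition of exponents) and because applying $r_j$ does not change the weight relevant to the ${}_i r$-coefficient in the term where ${}_i r$ acts on the other factor, and vice versa. Concretely, in the "cross" term where ${}_i r$ lands on $x_1$ and $r_j$ lands on $x_2$, both orders produce the factor $q^{(\alpha_i,\mu_1)}$ with no further correction, since $r_j(x_2)$ lives in weight $\mu_2 - \alpha_j$ but the ${}_i r$-rule's $q$-power depended only on $\mu_1$; symmetrically for the term with the roles of $i,j$ and $x_1,x_2$ swapped. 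Once these elementary exponent comparisons are made, all four pairs of terms match identically and the induction closes. I expect this to be entirely routine modulo careful attention to which weight each $q$-power sees.
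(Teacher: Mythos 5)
The paper gives no proof of this lemma, only citations to Lusztig and to Jantzen, Section 10.1; the proof there is exactly the inductive Leibniz-rule argument you sketch, so your overall approach is the standard one. The strategy is sound: induct on height, reduce to a product $x = x_1 x_2$, expand both $r_j({}_i r(x_1 x_2))$ and ${}_i r(r_j(x_1 x_2))$ via \eqref{eq:rr}, invoke the inductive hypothesis on each of $x_1$, $x_2$, and check that the $q$-exponents match term by term.

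However, your description of the exponent bookkeeping is off in a way that suggests you did not actually carry out the four-term expansion. The identity you single out, $(\alpha_i,\mu_1) + (\alpha_j,\mu_1) = (\alpha_j,\mu_1) + (\alpha_i,\mu_1)$, is trivially true by commutativity of addition and is not what makes the proof go through. The genuinely nontrivial coincidence occurs in the term $r_j(x_1)\cdot{}_i r(x_2)$: computing $r_j \circ {}_i r$ gives the coefficient $q^{(\alpha_i,\mu_1) + (\alpha_j,\mu_2 - \alpha_i)}$, while ${}_i r \circ r_j$ gives $q^{(\alpha_j,\mu_2) + (\alpha_i,\mu_1 - \alpha_j)}$, and matching these requires the symmetry $(\alpha_i,\alpha_j) = (\alpha_j,\alpha_i)$. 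Also, the term you call the "cross term where ${}_i r$ lands on $x_1$ and $r_j$ lands on $x_2$" (namely ${}_i r(x_1)\cdot r_j(x_2)$) carries coefficient $1$ in both orders, not $q^{(\alpha_i,\mu_1)}$ as you claim; the $q$-power in the ${}_i r$-rule attaches to the term where ${}_i r$ hits the second factor, not the first. You do correctly name symmetry of the form as the underlying reason, so the proof closes if the computation is done carefully, but the specific claims in your "main obstacle" paragraph are wrong as written.
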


\begin{prop}   \label{prop:Luform}    \cite{Lu94}
There is a unique symmetric bilinear form $( \cdot, \cdot)$ on $\fprime$ which satisfies that, for all $x, x' \in \fprime$,
\begin{enumerate}
\item $(F_{\alpha_i}, F_{\alpha_j}) =\delta_{ij} \qqq$,
\item $(F_{\alpha_i}x, x') = (F_{\alpha_i}, F_{\alpha_i})(x,\,_{i}r(x'))$,
\item $(xF_{\alpha_i}, x') = (F_{\alpha_i}, F_{\alpha_i})(x, r_{i}(x'))$.
\end{enumerate}
\end{prop}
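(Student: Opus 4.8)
Write $\lambda:=(q^{-1}-q)^{-1}$, so that property~(1) reads $(F_{\alpha_i},F_{\alpha_j})=\delta_{ij}\lambda$ and the coefficient $(F_{\alpha_i},F_{\alpha_i})$ occurring in~(2)--(3) equals $\lambda$ for every $i$ (the Cartan datum being of type $A$). The plan is to \emph{construct} a bilinear form on $\fprime$ by induction on height, taking property~(2) as the defining recursion, so that~(1) and~(2) hold tautologically; then to verify that the resulting form is symmetric and also satisfies~(3); and finally to obtain uniqueness by a short difference argument. Throughout, bilinearity reduces all claims to homogeneous elements.

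\textbf{The construction.} Because $\fprime$ is the free $\Qq$-algebra on the $F_{\alpha_i}$, every $x\in\fprime_\mu$ with $\hgt(\mu)\ge 1$ has a \emph{unique} expansion $x=\sum_{i\in\I}F_{\alpha_i}x_i$ with $x_i\in\fprime_{\mu-\alpha_i}$, and $x\mapsto(x_i)_i$ is $\Qq$-linear. I would declare $\fprime_\mu$ and $\fprime_\nu$ orthogonal for $\mu\ne\nu$, put $(1,1):=1$, and for $\hgt(\mu)\ge 1$ set
\[
(x,y)\ :=\ \sum_{i\in\I}\lambda\,(x_i,\,{}_{i}r(y)),\qquad x,y\in\fprime_\mu .
\]
Since ${}_{i}r(y)\in\fprime_{\mu-\alpha_i}$ and each $x_i$ has height $\hgt(\mu)-1$, this recursion on $\hgt(\mu)$ is well founded, and the form is bilinear. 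Property~(1) is immediate: $(F_{\alpha_i},F_{\alpha_i})=\lambda(1,{}_{i}r(F_{\alpha_i}))=\lambda(1,1)=\lambda$, while $(F_{\alpha_i},F_{\alpha_j})=0$ for $i\ne j$ by weight considerations. Property~(2) holds by construction: for homogeneous $x$ the unique left-expansion of $F_{\alpha_i}x$ has $i$-th component $x$ and all others $0$, whence $(F_{\alpha_i}x,y)=\lambda(x,{}_{i}r(y))$.

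\textbf{Symmetry and property~(3).} The substance of the proof is that this form, \emph{a priori} built only from left multiplication, is symmetric and also obeys the right-handed relation~(3). I would first establish the auxiliary identity
\[
(z,\,F_{\alpha_i}w)\ =\ \lambda\,({}_{i}r(z),\,w)\qquad (z,w\in\fprime,\ i\in\I),
\]
by induction on $\hgt|z|$ (the case $\hgt|z|=0$ being trivial): expanding $z=\sum_j F_{\alpha_j}z_j$, applying property~(2), and using the Leibniz rules ${}_{j}r(F_{\alpha_i}w)=q^{(\alpha_j,\alpha_i)}F_{\alpha_i}\,{}_{j}r(w)+\delta_{ij}w$ and ${}_{i}r(F_{\alpha_j}z_j)=q^{(\alpha_i,\alpha_j)}F_{\alpha_j}\,{}_{i}r(z_j)+\delta_{ij}z_j$ together with the inductive hypothesis, one finds that both sides equal $\lambda^2\sum_j q^{(\alpha_i,\alpha_j)}({}_{i}r(z_j),{}_{j}r(w))+\lambda(z_i,w)$ (the coincidence relying on the symmetry $(\alpha_i,\alpha_j)=(\alpha_j,\alpha_i)$ of the pairing on $\Lambda$ and on property~(2) only). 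Granting this, symmetry follows by induction on $\hgt|x|+\hgt|y|$: for $|x|=|y|$ of positive height and $x=\sum_i F_{\alpha_i}x_i$,
\[
(x,y)=\sum_i\lambda\,(x_i,{}_{i}r(y))=\sum_i\lambda\,({}_{i}r(y),x_i)=\sum_i(y,F_{\alpha_i}x_i)=(y,x),
\]
where the second equality is the inductive hypothesis (applied to $x_i$ and ${}_{i}r(y)$, of total height $\hgt|x|+\hgt|y|-2$) and the third is the auxiliary identity. Property~(3) is proven independently, by induction on $\hgt|x|$: writing $x=\sum_j F_{\alpha_j}x_j$, so $xF_{\alpha_i}=\sum_j F_{\alpha_j}(x_jF_{\alpha_i})$, one applies~(2), then the inductive hypothesis, then the commutation $r_i\,{}_{j}r={}_{j}r\,r_i$ of \lemref{lem:rlrr=rrrl}, then~(2) again:
\[
(xF_{\alpha_i},y)=\sum_j\lambda\,(x_jF_{\alpha_i},{}_{j}r(y))=\sum_j\lambda^2\,(x_j,r_i({}_{j}r(y)))=\sum_j\lambda^2\,(x_j,{}_{j}r(r_i(y)))=\lambda\,(x,r_i(y)),
\]
the base case $\hgt|x|=0$ reducing to the elementary fact that $(1,{}_{i}r(y))$ and $(1,r_i(y))$ extract the same scalar. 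I expect this block of verifications to be the main obstacle: it is purely formal, but property~(3) genuinely needs \lemref{lem:rlrr=rrrl}, symmetry genuinely needs the auxiliary ``left-on-the-right-slot'' identity, and the several inductions must be sequenced carefully to avoid circularity.

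\textbf{Uniqueness.} If $(\cdot,\cdot)$ and $(\cdot,\cdot)'$ are two symmetric forms satisfying~(1)--(3), their difference $B$ satisfies the homogeneous analogues of~(1)--(3) with $0$ in place of $\delta_{ij}\lambda$. From~(2), $0=B(F_{\alpha_i},F_{\alpha_i})=\lambda B(1,1)$ forces $B(1,1)=0$; for $\hgt|x|\ge 1$, expanding $x=\sum_i F_{\alpha_i}x_i$ and applying~(2) gives $B(x,1)=\sum_i\lambda B(x_i,{}_{i}r(1))=0$ since ${}_{i}r(1)=0$. Hence $B(\cdot,1)\equiv 0$, and $B(1,\cdot)\equiv 0$ by symmetry. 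Finally, for $|x|=|y|=\mu$ with $\hgt(\mu)\ge 1$ one gets $B(x,y)=\sum_i\lambda B(x_i,{}_{i}r(y))=0$ by induction on $\hgt(\mu)$ (and likewise $B(x,y)=0$ when $|x|\ne|y|$), so $B\equiv 0$.
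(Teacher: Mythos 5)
Your proof is correct, and it takes a more elementary route than the source the paper cites. In Lusztig [Lu94, \S 1.2] (and likewise in Jantzen [Jan]), the form is produced via the twisted coproduct $r:\fprime\to\fprime\otimes\fprime$ (the algebra homomorphism for the twisted tensor product sending $F_{\alpha_i}\mapsto F_{\alpha_i}\otimes 1+1\otimes F_{\alpha_i}$), characterized by $(x,y'y'')=(r(x),y'\otimes y'')$; both the $r_i$- and the ${}_ir$-recursions, and symmetry, then fall out of the adjointness of $r$ to multiplication on the two tensor slots. You bypass $r$ and use instead the unique left-factorization $x=\sum_i F_{\alpha_i}x_i$ that exists in the \emph{free} algebra $\fprime$, promoting property~(2) to a well-founded recursion; your auxiliary identity $(z,F_{\alpha_i}w)=\lambda\,({}_ir(z),w)$ is precisely the right-slot statement that Lusztig gets for free from $r$, and proving it by hand (using the Leibniz rules in~\eqref{eq:rr} and $(\alpha_i,\alpha_j)=(\alpha_j,\alpha_i)$) is the extra work you pay for avoiding the coproduct. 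What you gain is a completely self-contained argument relying only on~\eqref{eq:rr} and \lemref{lem:rlrr=rrrl}; what you do not produce is the coproduct $r$ itself, which Lusztig needs elsewhere in the theory. Your sequencing of the inductions (construction, auxiliary identity, symmetry, property~(3), uniqueness) is careful and non-circular. One small point to spell out in the uniqueness step: orthogonality of distinct weight spaces for an arbitrary symmetric form satisfying (1)--(3) is not given but must be \emph{derived} (a short induction on total height using ${}_ir(1)=0$ and symmetry); your parenthetical ``(and likewise $B(x,y)=0$ when $|x|\ne|y|$)'' gestures at this, but it deserves a sentence, since the proposition asserts uniqueness among all symmetric bilinear forms satisfying (1)--(3), not only the weight-diagonal ones.
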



Let ${\bf I}$ be the radical of the bilinear form $(\cdot,\cdot)$ on $\fprime$. It is known in \cite{Lu94} that ${\bf I}$ is 
generated by the quantum Serre relators $S_{ij}$, for $ i \neq  j \in \I$, where
\begin{equation}  \label{eq:Sij}
S_{i j} = 
\begin{cases}
F_{\alpha_i}^2 F_{\alpha_j} +F_{\alpha_j} F_{\alpha_i}^2 
- (q+q^{-1}) F_{\alpha_i} F_{\alpha_j} F_{\alpha_i}, \qquad &\text{ if }  |i-j|=1;\\
F_{\alpha_i} F_{\alpha_j}- F_{\alpha_j} F_{\alpha_i}, \qquad &\text{ if } |i-j| >1.
\end{cases}
\end{equation}
Let $\f =\fprime/\bf{I}$.
By \cite{Lu94}, we have 
\begin{equation}  \label{eq:r=0}
r_{\ell}(S_{ij}) =\,_{\ell}r(S_{ij}) =0, \qquad \forall \ell, i , j \in \I \; (i\neq j).
\end{equation}
Hence $r_{\ell}$ and $_{\ell}r$ descend to well-defined $\Qq$-linear maps on $\f$.

We introduce the divided power $F^{(a)}_{\alpha_i} = F^a_{\alpha_i}/[a]!$, where $a \ge 0$, 
$[a] = (q^{a}- q^{-a})/(q-q^{-1})$ and $[a]! = [1][2] \cdots [a]$. Let $\mA =\Z[q,q^{-1}]$.
Let $\f_\mA$ be the $\mA$-subalgebra of $\f$ 
generated by $F^{(a)}_{\alpha_i}$ for various $a \ge 0$ and $i \in \I$.

 The quantum group $\U = \U_q(\mf{sl}(k+1))$ is defined to be the associative $\Q(q)$-algebra  
 generated by $E_{\alpha_i}$, $F_{\alpha_i}$, $K_{\alpha_i}$, $K^{-1}_{\alpha_i}$, $i \in \I$, subject to
the following relations for $i$, $j \in \I$:
\begin{align*}
 K_{\alpha_i} K_{\alpha_i}^{-1} &= K_{\alpha_i}^{-1} K_{\alpha_i} =1,
  \\
 K_{\alpha_i} K_{\alpha_j} &= K_{\alpha_j} K_{\alpha_i},
  \\
 K_{\alpha_i} E_{\alpha_j} K_{\alpha_i}^{-1} &= q^{(\alpha_i, \alpha_j)} E_{\alpha_j}, \\
 K_{\alpha_i} F_{\alpha_j} K_{\alpha_i}^{-1} &= q^{-(\alpha_i, \alpha_j)} F_{\alpha_j}, \\
 E_{\alpha_i} F_{\alpha_j} -F_{\alpha_j} E_{\alpha_i} &= \delta_{i,j} \frac{K_{\alpha_i}
 -K^{-1}_{\alpha_i}}{q-q^{-1}}, \\
 E_{\alpha_i}^2 E_{\alpha_j} +E_{\alpha_j} E_{\alpha_i}^2 
 &= (q+q^{-1}) E_{\alpha_i} E_{\alpha_j} E_{\alpha_i},  \quad &&\text{if } |i-j|=1, \\
 E_{\alpha_i} E_{\alpha_j} &= E_{\alpha_j} E_{\alpha_i},  \,\qquad\qquad &&\text{if } |i-j|>1, \\
 F_{\alpha_i}^2 F_{\alpha_j} +F_{\alpha_j} F_{\alpha_i}^2 
 &= (q+q^{-1}) F_{\alpha_i} F_{\alpha_j} F_{\alpha_i},  \quad\, &&\text{if } |i-j|=1,\\
 F_{\alpha_i} F_{\alpha_j} &= F_{\alpha_j} F_{\alpha_i},  \qquad\ \qquad &&\text{if } |i-j|>1.
\end{align*}

Let $\U^+$, $\U^0$ and $\U^-$ be the $\Qq$-subalgebra of $\U$ generated by $E_{\alpha_i}$, $K^{\pm 1}_{\alpha_i}$, 
and $F_{\alpha_i}$  respectively, for  $i \in \I$. 
Following \cite{Lu94}, we can identify $\f \cong \U^-$ by matching the 
generators in the same notation. This identification induces
a bilinear form $(\cdot, \cdot)$ on $\U^{-}$ and $\Qq$-linear maps $r_i, {}_i r$ $(i\in \I)$ on $\U^-$.
Under this identification, we let $\U_{-\mu}^-$ be the image of $\f_\mu$,
and let $\U^-_\mA$ be the image of $\f_\mA$. The following Serre relation holds in $\U^-$:
\begin{equation}  \label{eq:Serre}
S_{ij} =0, \qquad \forall i,j \in \I \; (i\neq j). 
\end{equation}
Similarly we have $\f \cong \U^+$ by identifying each generator $F_{\alpha_i}$ with $E_{\alpha_i}$. 
Similarly we let $\U^+_\mA$ denote the image of $\f_\mA$ under this isomorphism, 
which is generated by all divided powers $E^{(a)}_{\alpha_i} =E_{\alpha_i}^a/[a]!$.

\begin{prop}
\label{prop:invol}
\begin{enumerate}\label{prop:tauA}
\item There is an  involution $\omega$ on the $\Qq$-algebra $\U$
such that $\omega(E_{\alpha_i}) =F_{\alpha_i}$, $\omega(F_{\alpha_i}) =E_{\alpha_i}$, 
and $\omega(K_{\alpha_i}) = K^{-1}_{\alpha_i}$ for all $i \in \I$.

\item There is an anti-linear ($q \mapsto q^{-1}$) bar involution of the $\Q$-algebra $\U$
such that $\ov{E}_{\alpha_i}= E_{\alpha_i}$, $\ov{F}_{\alpha_i}=F_{\alpha_i}$, 
and $\ov{K}_{\alpha_i}=K_{\alpha_i}^{-1}$ for all $i \in \I$.

(Sometimes we denote the bar involution on $\U$ by $\psi$.)
%
\end{enumerate}
\end{prop}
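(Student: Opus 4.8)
The plan is to use the standard principle that a prescribed map of the generators of $\U$ extends to a (possibly anti-linear) algebra endomorphism as soon as it sends the defining relations to relations that hold in $\U$; this is contained in Lusztig's book, and I would carry it out as follows. In each part I first define the map on the free (resp. free anti-linear) algebra on the symbols $E_{\alpha_i}, F_{\alpha_i}, K_{\alpha_i}^{\pm 1}$ ($i \in \I$), check stability of the relation set, conclude that the map descends to $\U$, and finally verify it squares to the identity on generators.

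For (1), I would define $\omega$ on the free $\Qq$-algebra by $\omega(E_{\alpha_i}) = F_{\alpha_i}$, $\omega(F_{\alpha_i}) = E_{\alpha_i}$, $\omega(K_{\alpha_i}^{\pm 1}) = K_{\alpha_i}^{\mp 1}$, and then observe that the full list of defining relations of $\U$ is stable, as a set, under the substitution $E \leftrightarrow F$, $K^{\pm 1} \mapsto K^{\mp 1}$. Indeed, the $E$-Serre relations and the $F$-Serre relations are interchanged; the $K$-$K$ relations are visibly preserved; the relation $K_{\alpha_i} E_{\alpha_j} K_{\alpha_i}^{-1} = q^{(\alpha_i, \alpha_j)} E_{\alpha_j}$ is sent to $K_{\alpha_i}^{-1} F_{\alpha_j} K_{\alpha_i} = q^{(\alpha_i, \alpha_j)} F_{\alpha_j}$, which is equivalent to the $K$-$F$ relation; and the commutator relation is sent to $F_{\alpha_i} E_{\alpha_j} - E_{\alpha_j} F_{\alpha_i} = \delta_{ij}(K_{\alpha_i}^{-1} - K_{\alpha_i})/(q-q^{-1})$, a mere rewriting of the same relation. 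Hence $\omega$ descends to a $\Qq$-algebra endomorphism of $\U$; since $\omega^2$ fixes every algebra generator, $\omega^2 = \id$, so $\omega$ is an involution.

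For (2), I would define $\psi$ on the free $\Q$-algebra on the same symbols to be the anti-linear ring map with $\psi(q) = q^{-1}$, $\psi(E_{\alpha_i}) = E_{\alpha_i}$, $\psi(F_{\alpha_i}) = F_{\alpha_i}$, $\psi(K_{\alpha_i}^{\pm 1}) = K_{\alpha_i}^{\mp 1}$, and then verify that every defining relation, after applying $\psi$ (so with $q$ replaced by $q^{-1}$, $K^{\pm 1}$ by $K^{\mp 1}$, and the generator order reversed), still holds in $\U$. The $K$-$K$ relations are immediate; $K_{\alpha_i}E_{\alpha_j}K_{\alpha_i}^{-1} = q^{(\alpha_i,\alpha_j)}E_{\alpha_j}$ becomes $K_{\alpha_i}^{-1}E_{\alpha_j}K_{\alpha_i} = q^{-(\alpha_i,\alpha_j)}E_{\alpha_j}$, equivalent to the original; the Serre relations involve only the $q \mapsto q^{-1}$-invariant quantity $q + q^{-1}$ and so are preserved; and the $E$-$F$ commutator relation becomes $E_{\alpha_i}F_{\alpha_j} - F_{\alpha_j}E_{\alpha_i} = \delta_{ij}(K_{\alpha_i}^{-1} - K_{\alpha_i})/(q^{-1}-q)$, in which the sign change in the numerator cancels the one in the denominator, recovering the original relation. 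Thus $\psi$ descends to an anti-linear endomorphism of the $\Q$-algebra $\U$, and since $\psi^2$ fixes $q$ and all generators, $\psi^2 = \id$.

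The whole argument is routine bookkeeping with no genuine obstacle; the only place requiring a moment's care is the $E$-$F$ commutator relation under $\psi$, where one must notice that the sign flips in the numerator and in the denominator cancel, together with keeping straight the directions of the two inversions ($q \mapsto q^{-1}$ and $K \mapsto K^{-1}$) in the $K$-$E$ conjugation relation.
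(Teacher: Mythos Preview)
Your approach is correct and standard; the paper itself states this proposition without proof, as it is classical (it goes back to Lusztig's book). One small slip: in part~(2) you describe $\psi$ as having ``the generator order reversed,'' but the bar involution is an algebra \emph{homomorphism} (anti-linear only in the sense that $q\mapsto q^{-1}$), not an anti-homomorphism, so it preserves the order of products. Fortunately your actual verifications do not reverse the order (e.g.\ you correctly send $K_{\alpha_i}E_{\alpha_j}K_{\alpha_i}^{-1}$ to $K_{\alpha_i}^{-1}E_{\alpha_j}K_{\alpha_i}$), so the argument stands once that stray phrase is deleted.
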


Recall that $\U$ is a Hopf algebra with a coproduct 
%
\begin{align}  \label{eq:coprod}
\begin{split}
\Delta:  &\U \longrightarrow \U \otimes \U,
 \\
 \Delta (E_{\alpha_i}) &= 1 \otimes E_{\alpha_i} + E_{\alpha_i} \otimes K^{-1}_{\alpha_i}, \\
 \Delta (F_{\alpha_i}) &= F_{\alpha_i} \otimes 1 +  K_{\alpha_i} \otimes F_{\alpha_i},\\
 \Delta (K_{\alpha_i}) &= K_{\alpha_i} \otimes K_{\alpha_i}.
 \end{split}
\end{align}
The coproduct $\Delta$ here (which is chosen to be convenient for the connection with category $\mathcal O$)
differs from the one used in \cite{Lu94};  this results 
to a switching between positive and negative parts of $\U$ for quasi-$\mathcal R$-matrix, and between
highest and lowest weight modules.
There is a unique $\Qq$-algebra homomorphism
$\epsilon: \U \rightarrow \Qq$, called counit, 
such that $\epsilon(E_{\alpha_i}) = 0$, $\epsilon(F_{\alpha_i}) = 0$, and $\epsilon(K_{\alpha_i}) =1$.

\section{Braid group action and canonical basis} 
\label{subsec:CB}

Let $W := W_{A_{{k}}} = \mf{S}_{{k}+1}$ be the Weyl group of type $A_{{k}}$. 
Recall \cite{Lu94} for each $\alpha_i$ and each finite-dimensional $\U$-module $M$, a linear operator 
$T_{\alpha_i}$ on $M$ is defined by,  for $\lambda \in \Lambda$ and $m \in M_\lambda$,
\[
T_{\alpha_i}(m) = \sum_{a, b, c \ge 0; -a+b-c
=(\lambda, \alpha_i)}(-1)^b q^{b-ac}E^{(a)}_{\alpha_i}F^{(b)}_{\alpha_i}E^{(c)}_{\alpha_i} m.
\]
These $T_{\alpha_i}$'s induce automorphisms of $\U$, denoted by $T_{\alpha_i}$ as well,  such that 
\[
T_{\alpha_i}(um) = T_{\alpha_i}(u)T_{\alpha_i}(m), \qquad \text{ for all } u \in \U, m \in M.
\] 
As automorphisms on $\U$ and as $\Qq$-linear isomorphisms on $M$,
the $T_{\alpha_i}$'s satisfy the braid group relation (\cite[Theorem 39.4.3]{Lu94}):
\begin{align*}
T_{\alpha_i}T_{\alpha_j} &= T_{\alpha_j}T_{\alpha_i}, &\text{ if } |i-j| >1 ,\\
T_{\alpha_i}T_{\alpha_j}T_{\alpha_i} &= T_{\alpha_j}T_{\alpha_i}T_{\alpha_j}, &\text{ if } |i-j| =1,
\end{align*}
Hence for each $w \in W$, $T_w$ can be defined independent of the choices of reduced expressions of $w$. 
(The $T_{\alpha_i}$ here is consistent with $T_{\alpha_i}$ in \cite{Jan}, and it is $T''_{i,+}$ in \cite{Lu94}).

Denote by $\ell (\cdot)$ the length function of $W$, and let $w_0$ be the longest element of $W$.

\begin{lem}\label{lem:Tw0}
The following identities hold for $i \in \I$:
\[
T_{w_0}(K_{\alpha_i}) =K^{-1}_{\alpha_{-i}}, \quad T_{w_0}(E_{\alpha_i}) = -F_{\alpha_{-i}}K_{\alpha_{-i}}, \quad 
T_{w_0}(F_{\alpha_{-i}}) = - K^{-1}_{\alpha_i}E_{\alpha_i}.
\]
\end{lem}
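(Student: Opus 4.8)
The plan is to reduce everything to rank one plus the braid group combinatorics of $W=\mf{S}_{k+1}$. First I would recall the standard rank-one identities for $T_{\alpha_i}$ on $\U$ (see \cite[Section~37.1, 39.4]{Lu94}): namely $T_{\alpha_i}(K_{\alpha_i}) = K^{-1}_{\alpha_i}$, $T_{\alpha_i}(E_{\alpha_i}) = -F_{\alpha_i}K_{\alpha_i}$, $T_{\alpha_i}(F_{\alpha_i}) = -K^{-1}_{\alpha_i}E_{\alpha_i}$, together with the formulas expressing $T_{\alpha_i}(E_{\alpha_j})$ and $T_{\alpha_i}(F_{\alpha_j})$ for $|i-j|=1$ as $q$-commutators (the ``$[2]$'' type expressions) and the fact that $T_{\alpha_i}$ fixes $E_{\alpha_j}, F_{\alpha_j}$ when $|i-j|>1$, while on the torus $T_{\alpha_i}(K_\mu) = K_{s_i\mu}$. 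Here $s_i$ denotes the simple reflection attached to $\alpha_i$, acting on $\Lambda$.

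The key structural input is that $w_0$ is the longest element and that conjugation by $w_0$ on the root system is the Dynkin diagram involution: in our indexing, $w_0(\alpha_i) = -\alpha_{-i}$, equivalently $w_0$ sends the simple root $\alpha_i$ to the negative of the simple root $\alpha_{-i}$, and correspondingly $w_0(\varepsilon_a) = -\varepsilon_{-a}$ on weights. This is exactly the combinatorial shadow of the involution $\inv$ introduced in \secref{subsec:theta}. For the torus part, $T_{w_0}(K_{\alpha_i}) = K_{w_0(\alpha_i)} = K_{-\alpha_{-i}} = K^{-1}_{\alpha_{-i}}$, which gives the first identity directly. For the Chevalley generators I would use the standard fact (Lusztig, \cite[38.2.3, 39.4.3]{Lu94}, or \cite[8.18–8.20]{Jan}) that if $w \in W$ satisfies $\ell(w s_i) > \ell(w)$ and $w(\alpha_i) = \alpha_j$ is simple, then $T_w(E_{\alpha_i}) = E_{\alpha_j}$ and $T_w(F_{\alpha_i}) = F_{\alpha_j}$. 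Applying this to $w = w_0 s_{-i}$ — which is a reduced factorization since $\ell(w_0) = \ell(w_0 s_{-i}) + 1$ — and noting $(w_0 s_{-i})(\alpha_{-i}) = w_0(-\alpha_{-i}) = \alpha_i$ is simple, one gets $T_{w_0 s_{-i}}(F_{\alpha_{-i}}) = F_{\alpha_i}$, and then
\[
T_{w_0}(F_{\alpha_{-i}}) = T_{w_0 s_{-i}} T_{s_{-i}}(F_{\alpha_{-i}}) = T_{w_0 s_{-i}}(-K^{-1}_{\alpha_{-i}} E_{\alpha_{-i}}) = -T_{w_0 s_{-i}}(K^{-1}_{\alpha_{-i}})\, T_{w_0 s_{-i}}(E_{\alpha_{-i}}).
\]
Here $T_{w_0 s_{-i}}(E_{\alpha_{-i}}) = E_{\alpha_i}$ by the same simple-root lemma, and $T_{w_0 s_{-i}}(K^{-1}_{\alpha_{-i}}) = K^{-1}_{(w_0 s_{-i})(\alpha_{-i})} = K^{-1}_{\alpha_i}$; this yields $T_{w_0}(F_{\alpha_{-i}}) = -K^{-1}_{\alpha_i} E_{\alpha_i}$, the third identity. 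The second identity follows by the analogous argument with roles of $E$ and $F$ swapped (or by applying $\omega$ from \propref{prop:invol} and tracking how $\omega$ intertwines $T_{w_0}$, using $\omega T_{\alpha_i} = T_{\alpha_i}^{-1}\omega$ or the relevant variant), giving $T_{w_0}(E_{\alpha_i}) = -F_{\alpha_{-i}} K_{\alpha_{-i}}$; one should be careful to pick the version consistent with the convention $T_{\alpha_i} = T''_{i,+}$ fixed in the text.

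The main obstacle I expect is purely bookkeeping: getting the signs, the $K$-shifts, and the left-versus-right conventions for $T_w$ exactly right, since Lusztig has four sign-and-inverse variants $T'_{i,\pm}, T''_{i,\pm}$ and the formula for $T_{w_0}$ on $E$ versus on $F$ is not symmetric under $q \mapsto q^{-1}$. The cleanest route is probably to verify one of the three identities from scratch by the above reduction, then derive the other two by applying the algebra involution $\omega$ and the bar involution $\psi$ of \propref{prop:invol} and using their known commutation with the braid operators, rather than re-running the length-function argument three times; this also serves as an internal consistency check on the signs.
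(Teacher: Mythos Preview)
Your proposal is correct and follows essentially the same approach as the paper: factor $w_0 = w s_j$ with $\ell(w) = \ell(w_0) - 1$, apply the rank-one formula $T_{s_j}$ to the Chevalley generator, and then use the standard fact (\cite[Proposition~8.20]{Jan}) that $T_w$ sends $E_{\alpha_j}, F_{\alpha_j}$ to $E_{\alpha_{w(j)}}, F_{\alpha_{w(j)}}$ when $w(\alpha_j)$ is simple; the paper does this for $E_{\alpha_i}$ first and says the $F$ case is similar, while you do $F_{\alpha_{-i}}$ first, but the argument is identical.
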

\begin{proof}
The identity $T_{w_0}(K_{\alpha_i}) =K^{-1}_{\alpha_{-i}}$ is clear (see \cite{Lu94} or \cite{Jan}). 

Let us show that $T_{w_0}(E_{\alpha_i}) = -F_{\alpha_{-i}}K_{\alpha_{-i}}$, for any given $i \in \I$.
Indeed, we can always write
$w_0 = w s_i$ with $\ell(w) = \ell(w_0) -1$. Then we have $T_{w_0} = T_{w} T_{s_i}$, and
\[
T_{w_0} (E_{\alpha_i}) = T_{w} (T_{s_i}(E_{\alpha_i})) =  T_{w} (-F_{\alpha_i}K_{\alpha_i}) =  - T_{w}(F_{\alpha_i}) K_{\alpha_{-i}} 
= -F_{\alpha_{-i}}K_{\alpha_{-i}},
\]
where the last identity used $w(-\alpha_i)= w_0 (\alpha_i) =- \alpha_{-i}$ 
and \cite[Proposition 8.20]{Jan}. 

The identity $T_{w_0}(F_{\alpha_{-i}}) = - K^{-1}_{\alpha_i}E_{\alpha_i}$ can be similarly proved. 
\end{proof}

Let 
$$\Lambda^+ = \{\lambda \in \Lambda \mid 2(\alpha_i , \lambda)/(\alpha_i,\alpha_i) \in {\N}, \forall i \in \I\}$$ 
 be the set of dominant weights. Note that $\mu \in \Lambda^+$ if and only if $\mu ^{\inv} \in \Lambda^+$, 
 since the bilinear pairing $(\cdot,\cdot)$ on $ \Lambda$ is invariant under $\inv : \Lambda \rightarrow \Lambda$.

Let $M(\lambda)$ be the Verma module of $\U$ with highest weight $\lambda\in \Lambda$ and
with a highest weight vector denoted by $\eta$ or $\eta_{\lambda}$.  
We define a $\U$-module $^\omega M(\lambda)$, which 
has the same underlying vector space as $M(\lambda)$ but 
with the action twisted by the involution $\omega$ given in Proposition~\ref{prop:invol}.
When considering $\eta$ as a vector in $^\omega M(\lambda)$, 
we shall denote it by $\xi$ or $\xi_{-\lambda}$. 
The Verma module $M(\lambda)$ associated to dominant $\la \in \La^+$ has a unique finite-dimensional 
simple quotient $\U$-module, denoted by $L(\lambda)$. 
Similarly we define the $\U$-module $^\omega L(\lambda)$. 
For $\la \in \Lambda^+$, we let $L_\mA(\lambda) =\U^-_\mA \eta$ and $^\omega L_\mA(\lambda) =\U^+_\mA \xi$ 
be the $\mA$-submodules of $L(\lambda)$ and $^\omega L(\lambda) $, respectively. 

In \cite{Lu90, Lu94} and \cite{Ka}, the canonical basis $\bold{B}$ of $\f_\mA$ is constructed. Recall that we can identify $\f$ with both $\U^-$ and $\U^+$. 
For any element $b \in \B$, when considered as an element in $\U^-$ or $\U^+$, we shall denote it by $b^-$ or $b^+$,  respectively. 
In \cite{Lu94}, subsets $\B(\lambda)$ of $\B$ is also constructed for each $\lambda \in \Lambda^+$, 
such that $\{b^-\eta_{\lambda} \mid b \in \B(\lambda)\}$ gives the canonical basis of $L_\mA(\lambda)$. 
Similarly $\{b^+ \xi_{-\lambda} \mid b \in \B(\lambda)\}$ gives the canonical basis of ${^{\omega}L(\lambda)}$.  
By \cite[Proposition 21.1.2]{Lu94},  we can identify ${^{\omega}L(\lambda)}$ with $L(\lambda^{\inv}) = L(-w_0\lambda) $
such that the set $\{b^+\xi_{-\lambda} \mid b \in \B(\lambda)\}$ is identified with the set
$ \{b^- \eta_{\lambda^{\inv}} \mid b \in \B(\lambda^{\inv})\} = \{b^- \eta_{-w_0\lambda} \mid b \in \B(-w_0\lambda)\}  $. 
We shall identify ${^{\omega}L(\lambda)}$ with $L(\lambda^{\inv})$ in this way throughout this paper.

\section{Quasi-$\mc{R}$-matrix $\ThetaA$}
 \label{sec:quasiR}
 
\begin{prop}\cite[Theorem 4.1.2]{Lu94}\label{prop:ThetaA}
There exists a unique family of elements $\ThetaA_{\mu}$ in $\U_\mu^+\otimes \U_{-\mu}^-$ 
with $\mu \in {\N}{\Pi}$, such that $\ThetaA_{0}= 1\otimes 1$ and the following identities hold for all $\mu$ and all $i$:
\begin{align*}
(1 \otimes E_{\alpha_i}) \ThetaA_{\mu} + (E_{\alpha_i} \otimes K^{-1}_{\alpha_i})\ThetaA_{\mu - \alpha_i}
 &=  \ThetaA_{\mu}(1 \otimes E_{\alpha_i} )+ \ThetaA_{\mu - \alpha_i}(E_{\alpha_i} \otimes K_{\alpha_i}),\\
(F_{\alpha_i} \otimes 1)  \ThetaA_{\mu} +  (K_{\alpha_i} \otimes F_{\alpha_i})\ThetaA_{\mu - \alpha_i}
 &=   \ThetaA_{\mu} (F_{\alpha_i} \otimes 1) + \ThetaA_{\mu - \alpha_i} (K^{-1}_{\alpha_i} \otimes F_{\alpha_i}),\\
(K_{\alpha_i} \otimes K_{\alpha_i}) \ThetaA_{\mu} &= \ThetaA_{\mu} (K_{\alpha_i} \otimes K_{\alpha_i}) .
\end{align*}
\end{prop}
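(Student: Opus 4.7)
The plan is to prove uniqueness and existence separately by induction on the height $\hgt(\mu)$, with base case $\Theta_0 = 1 \otimes 1$ prescribed. First I would note that the third identity $(K_{\alpha_i} \otimes K_{\alpha_i}) \Theta_\mu = \Theta_\mu (K_{\alpha_i} \otimes K_{\alpha_i})$ for all $i$ forces the two tensor factors of $\Theta_\mu$ to carry opposite weights, which together with the requirement $\Theta_\mu \in \U^+ \otimes \U^-$ pins the location down to $\U^+_\mu \otimes \U^-_{-\mu}$.

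For uniqueness, the first identity rearranges to
\[
[1 \otimes E_{\alpha_i}, \Theta_\mu] = \Theta_{\mu - \alpha_i}(E_{\alpha_i} \otimes K_{\alpha_i}) - (E_{\alpha_i} \otimes K^{-1}_{\alpha_i}) \Theta_{\mu - \alpha_i},
\]
whose right-hand side is determined by the inductive hypothesis. Using the standard formula $[E_{\alpha_i}, y] = \bigl(K_{\alpha_i}\, {}_i r(y) - r_i(y)\, K^{-1}_{\alpha_i}\bigr)/(q - q^{-1})$ for $y \in \U^-$, the left-hand commutator packages the action of the skew-derivations $r_i$ and ${}_i r$ on the $\U^-$-factor of $\Theta_\mu$. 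Since $\bigcap_i \ker({}_i r) = 0$ on $\U^-_{-\mu}$ for $\mu \ne 0$, this recovers $\Theta_\mu$ uniquely from $\{\Theta_{\mu - \alpha_i}\}_i$. For existence, I would exploit the non-degeneracy of the bilinear form on each finite-dimensional $\f_\mu$ from \propref{prop:Luform} and set
\[
\Theta_\mu = \sum_i b_i^+ \otimes (b_i^*)^-,
\]
where $\{b_i\}$ is any basis of $\f_\mu$ and $\{b_i^*\}$ is its dual basis. This expression is independent of the choice of basis, automatically lies in $\U^+_\mu \otimes \U^-_{-\mu}$, and so verifies the third identity. The first and second identities then follow by substituting the dual-basis expression into both sides and applying \propref{prop:Luform}(ii)--(iii), which convert multiplication by $F_{\alpha_i}$ on either side of an element into the action of $r_i$ or ${}_i r$; the two identities are symmetric to one another under the involution $\omega$.

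The main obstacle will be the matching step inside the recursion: tracking how multiplication by $F_{\alpha_i}$ reindexes dual bases of $\f_\mu$ versus $\f_{\mu - \alpha_i}$, and verifying that the resulting expressions cancel against the commutator $[1 \otimes E_{\alpha_i}, \Theta_\mu]$ computed via $r_i, {}_i r$. The key ingredient will be the identity $(F_{\alpha_i} x, y) = (F_{\alpha_i}, F_{\alpha_i})(x, {}_i r(y))$, which provides the precise matching of the $E_{\alpha_i} \otimes K_{\alpha_i}^{\pm 1}$ terms on the right with the commutator on the left, while \lemref{lem:rlrr=rrrl} ensures that the $E$- and $F$-recursions are compatible when both are iterated.
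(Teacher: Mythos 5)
The paper does not actually prove this proposition — it is cited verbatim from Lusztig's book (\cite[Theorem 4.1.2]{Lu94}), with only a remark explaining the change of coproduct and bilinear-form conventions — so strictly there is no ``paper's own proof'' to compare against. Your reconstruction follows Lusztig's argument: uniqueness by packaging the first identity as a formula for $[1\otimes E_{\alpha_i},\Theta_\mu]$ in terms of the skew-derivations $r_i,\,{}_ir$ acting on the $\U^-$-factor and then invoking $\bigcap_i \ker({}_ir)=0$ on $\U^-_{-\mu}$, and existence via dual bases for the nondegenerate form on $\f_\mu$. The uniqueness half is correct.

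The existence half has a genuine gap: the bare dual-basis element $\Theta_\mu=\sum_i b_i^+\otimes (b_i^*)^-$ does \emph{not} satisfy the stated recursion. A weight-dependent scalar prefactor (a sign and a $q$-power, analogous to Lusztig's $(-1)^{\mathrm{tr}\,\nu}q_\nu$, suitably transported to this paper's Jantzen-normalized form) is required, and without it the recursion fails already at height $1$. Concretely, in rank one with $\mu=\alpha$ one has $(F,F)=(q^{-1}-q)^{-1}$, so $F^*=(q^{-1}-q)F$ and your formula gives $\Theta_\alpha=(q^{-1}-q)\,E\otimes F$; plugging into the first identity with $\mu=\alpha$, $\Theta_0=1\otimes 1$ yields the requirement $\Theta_\alpha(1\otimes E)-(1\otimes E)\Theta_\alpha = E\otimes(K^{-1}-K)$, whereas your $\Theta_\alpha$ produces $(q^{-1}-q)\,E\otimes(FE-EF)=E\otimes(K-K^{-1})$, off by a sign. (At weight $n\alpha$ the mismatch is $(-1)^nq^{n(n-1)}$.) The idea is sound, but you must determine the correct prefactor $s(\mu)$ with $s(0)=1$, set $\Theta_\mu=s(\mu)\sum_i b_i^+\otimes(b_i^*)^-$, and then carry this factor through the $r_i,{}_ir$ bookkeeping in Proposition~\ref{prop:Luform}(2)--(3) to close the recursion; as written, ``the first and second identities then follow by substituting the dual-basis expression'' is not true.
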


\begin{rem}
We adopt the convention in this paper that $\ThetaA_\mu$  lies in $\U^{+} \otimes \U^-$ 
due to our different choice of the coproduct $\Delta$ from \cite{Lu94}.
(In contrast the $\ThetaA_\mu$ in \cite{Lu94} lies in $\U^- \otimes \U^+$.) The convention here is adopted 
in order to be more
compatible with the application to category $\mc O$  in Part 2.
\end{rem}

Lusztig's quasi-$\mc R$-matrix for $\U$ is defined to be
\begin{equation}\label{eq:ThetaA}
\ThetaA  := \sum_{\mu\in \N \Pi} \ThetaA_\mu.
\end{equation}
For any finite-dimensional $\U$-modules $M$ and  $M'$, the action of $\ThetaA$ on $M \otimes M'$ is well defined.
Proposition \ref{prop:ThetaA} implies that
\begin{equation}\label{eq:propThetaA}
\Delta(u) \ThetaA (m \otimes m') 
= \ThetaA \ov{\Delta ( \ov {u})} (m \otimes m'),  
\end{equation}
for all $m \in M,  m' \in M' ,\text{ and } u \in \U.$
By \cite[Corollary 4.1.3]{Lu94},
we have 
\begin{equation}\label{eq:ThetaAinv}
\ThetaA \ov{\ThetaA} (m \otimes m') = m \otimes m', \quad \text{ for all } m \in M \text{ and } m' \in M'.
\end{equation}
In \cite[32.1.5]{Lu94}, a $\U$-module isomorphism 
$$
\mc{R} = \mc{R}_{M, M'} : M' \otimes M \longrightarrow M \otimes M'
$$ 
is constructed. 
As an operator, $\mc{R}$ can be written as $\mc{R}= \ThetaA \circ \widetilde{g} \circ P$ 
where $\widetilde{g}: M \otimes M' \rightarrow M \otimes M'$ 
is the map $\widetilde{g}(m \otimes m') = q^{(\lambda,\mu)} m \otimes m'$ 
for all $ m \in M_{\lambda}, m' \in M_{\mu}'$, 
and $P : M' \otimes M \rightarrow M \otimes M'$ is a $\Qq$-linear isomorphism such that
$P(m \otimes m')= m' \otimes m$. 

\begin{definition}  \label{def:involutive}
A $\U$-module $M$ equipped with an anti-linear involution $\Abar$ is called {\em involutive} if
$$
\Abar(u m) = \Abar(u) \Abar(m), \qquad \forall u \in \U, m \in M.
$$ 
 \end{definition}

Given two involutive $\U$-modules $(M,\psi_1)$ and $(M_2, \psi_2)$, 
following Lusztig we define  a map $\Abar$ on $M_1 \otimes M_2$ by 
\begin{equation}   \label{eq:Lupsi}
\Abar(m \otimes m') : = \Theta(\psi_1(m) \otimes \psi_2(m')).
\end{equation}
By Proposition \ref{prop:ThetaA}, we have 
$\Abar(u (m \otimes m')) = \Abar(u) \Abar(m \otimes m')$ for all $u \in \U$,
and the identity \eqref{eq:ThetaAinv} implies that the map $\psi$ on $M_1\otimes M_2$ is an anti-linear involution. 
This proves the following result of Lusztig (though the terminology of involutive modules is new here).

\begin{prop}  \cite[27.3.1]{Lu94}   \label{prop:Lu27.3.1}
Given two involutive $\U$-modules $(M,\psi_1)$ and $(M_2, \psi_2)$,  $(M_1 \otimes M_2, \psi)$
is  an involutive $\U$-module with $\psi$ given in \eqref{eq:Lupsi}.
\end{prop}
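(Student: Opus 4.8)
The plan is to verify directly that the operator $\psi$ of \eqref{eq:Lupsi} satisfies the two requirements of Definition~\ref{def:involutive}, namely that it is an anti-linear involution and that it intertwines the $\U$-action; all the substantive input has already been isolated in Proposition~\ref{prop:ThetaA} and its corollaries \eqref{eq:propThetaA} and \eqref{eq:ThetaAinv}, so what remains is careful bookkeeping with anti-linear maps. Throughout, I regard $\psi$ as the composite $\psi=\ThetaA\circ(\psi_1\otimes\psi_2)$ on $M_1\otimes M_2$ (well defined since $M_1,M_2$ are finite-dimensional, so only finitely many $\ThetaA_\mu$ act nontrivially).

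First I would record the commutation rules governing how the anti-linear operator $\psi_1\otimes\psi_2$ passes through $\ThetaA$ and through the coproduct action. Using that each $\psi_i$ is a module involution, so $\psi_i(u\cdot v)=\ov u\cdot\psi_i(v)$, together with the fact that $\U^{+}$ and $\U^{-}$ are generated by the bar-invariant elements $E_{\alpha_i}$ and $F_{\alpha_i}$, one checks by writing $\ThetaA_\mu=\sum_k x_k\otimes y_k$ and $\Delta(u)=\sum u_{(1)}\otimes u_{(2)}$ that, as operators on $M_1\otimes M_2$,
\[
(\psi_1\otimes\psi_2)\circ\ThetaA=\ov{\ThetaA}\circ(\psi_1\otimes\psi_2),
\qquad
(\psi_1\otimes\psi_2)\circ\Delta(u)=\ov{\Delta(u)}\circ(\psi_1\otimes\psi_2)
\quad(u\in\U),
\]
where $\ov{\ThetaA}$ and $\ov{\Delta(u)}$ denote the images of $\ThetaA$ and $\Delta(u)$ under the bar involution of $\U\otimes\U$. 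The one point needing attention is the precise meaning of $\psi_1\otimes\psi_2$ as an anti-linear $\Q$-operator on $M_1\otimes M_2$; this modest subtlety is, I expect, the only real obstacle in the argument, and it is routine once the $q\mapsto q^{-1}$ bookkeeping is tracked consistently.

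With these in hand the three verifications are short. Anti-linearity of $\psi$ is immediate, being the composite of the $\Qq$-linear operator $\ThetaA$ with the anti-linear operator $\psi_1\otimes\psi_2$. For $\U$-compatibility I would compute, for $u\in\U$ and $x\in M_1\otimes M_2$,
\[
\psi\big(\Delta(u)x\big)=\ThetaA\circ\ov{\Delta(u)}\circ(\psi_1\otimes\psi_2)(x)=\Delta(\ov u)\circ\ThetaA\circ(\psi_1\otimes\psi_2)(x)=\Delta(\ov u)\,\psi(x),
\]
where the first equality is the second commutation rule and the second is \eqref{eq:propThetaA} applied with $u$ replaced by $\ov u$ (so that $\ThetaA\,\ov{\Delta(\ov{\ov u})}=\Delta(\ov u)\,\ThetaA$); this is exactly the identity $\psi(u\cdot x)=\ov u\cdot\psi(x)$ demanded by Definition~\ref{def:involutive}. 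Finally, for involutivity,
\[
\psi^{2}=\ThetaA\circ(\psi_1\otimes\psi_2)\circ\ThetaA\circ(\psi_1\otimes\psi_2)=\ThetaA\circ\ov{\ThetaA}\circ(\psi_1\otimes\psi_2)^{2}=\ThetaA\circ\ov{\ThetaA}=\id,
\]
using the first commutation rule, that each $\psi_i$ is an involution, and \eqref{eq:ThetaAinv}. This shows $(M_1\otimes M_2,\psi)$ is an involutive $\U$-module, completing the proof.
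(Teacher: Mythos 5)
Your proof is correct and follows the same route the paper takes (this is Lusztig's argument from \cite[27.3.1]{Lu94}, relying on Proposition~\ref{prop:ThetaA} via \eqref{eq:propThetaA} for the module compatibility and on \eqref{eq:ThetaAinv} for involutivity); you have simply spelled out the intermediate commutation rules for $\psi_1\otimes\psi_2$ past $\ThetaA$ and $\Delta(u)$ that the paper leaves implicit.
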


It follows by induction that $M_1 \otimes \cdots \otimes M_s$ is naturally an involutive $\U$-module 
for given involutive $\U$-modules $M_1,\ldots, M_s$; see \cite[27.3.6]{Lu94}.

 As in \cite{Lu94}, there is a unique anti-linear involution $\Abar$ on ${^{\omega}L}(\lambda)$  such that
 $\Abar(u \xi) = \Abar(u)\xi$  for all
$u \in \U$. Similarly there is a unique anti-linear involution $\Abar$ on $L(\lambda)$  such that 
$\Abar( u \eta) = \Abar(u) \eta$  for all
$u \in \U$. Therefore ${^{\omega}L}(\lambda)$ and $L(\lambda)$ are both involutive $\U$-modules.

\chapter{Intertwiner for a quantum symmetric pair}
  \label{sec:qsp}

In Chapters~\ref{sec:qsp}-\ref{sec:HeckeB}, we will formulate and study in depth the quantum symmetric pair $(\U, \Ui)$
for $\U$ of type $A_k$ with  ${k} = 2r+1$ being an odd integer. 
In these chapters, we shall use the shorthand notation 
$$
\I
=\I_{2r+1} =\{-r, \ldots, -1,0,1,\ldots, r\}
$$ 
as given in \eqref{eq:I}, and set 
\begin{equation}  \label{eq:Ihf}
\Ihf  :=\Z_{>0} \cap \I =\{1, \ldots, r\}.
\end{equation}

In this chapter, we will introduce the right coideal subalgebra $\Ui$ of $\U$
and an algebra embedding $\imath: \Ui \rightarrow \U$.
Then we construct an intertwiner $\Upsilon$ for the two bar involutions on 
$\Ui$ and $\U$ under $\imath$, and  use it to construct
a $\Ui$-module isomorphism $\mc T$ on any finite-dimensional $\U$-module.

\section{Definition of the algebra $\Ui$}\label{sec:bun}

The algebra $\Ui = \U^\imath_r$ is defined to be the associative algebra over $\Q(q)$ generated by  
$\be_{\alpha_i}$, $\bff_{\alpha_i}$, $\bk_{\alpha_i}$, $\bk^{-1}_{\alpha_i}$ ($i \in \Ihf$) , and $\bt$, 
subject to the following relations for $i$, $j \in \Ihf$:
\begin{align*}
 \bk_{\alpha_i} \bk_{\alpha_i}^{-1} &= \bk_{\alpha_i}^{-1} \bk_{\alpha_i} =1,\displaybreak[0]\\
 \bk_{\alpha_i} \bk_{\alpha_j} &= \bk_{\alpha_j} \bk_{\alpha_i}, \displaybreak[0]\\
 \bk_{\alpha_i} \be_{\alpha_j} \bk_{\alpha_i}^{-1} &= q^{(\alpha_i-\alpha_{-i}, \alpha_j)} \be_{\alpha_j}, \displaybreak[0]\\
 \bk_{\alpha_i} \bff_{\alpha_j} \bk_{\alpha_i}^{-1} &= q^{-(\alpha_i-\alpha_{-i}, \alpha_j)}
 \bff_{\alpha_j}, \displaybreak[0]\\
 \bk_{\alpha_i}\bt\bk^{-1}_{\alpha_i} &= \bt, \displaybreak[0]\\
 \be_{\alpha_i} \bff_{\alpha_j} -\bff_{\alpha_j} \be_{\alpha_i} &= \delta_{i,j} \frac{\bk_{\alpha_i}
 -\bk^{-1}_{\alpha_i}}{q-q^{-1}}, \displaybreak[0]\\
 \be_{\alpha_i}^2 \be_{\alpha_j} +\be_{\alpha_j} \be_{\alpha_i}^2 &= (q+q^{-1}) \be_{\alpha_i} \be_{\alpha_j} \be_{\alpha_i}, 
   \ \ \quad\quad &\text{if }& |i-j|=1, \displaybreak[0]\\
 \be_{\alpha_i} \be_{\alpha_j} &= \be_{\alpha_j} \be_{\alpha_i}, \ \qquad\qquad\ \ \ \ \qquad &\text{if }& |i-j|>1, \displaybreak[0]\\
 \bff_{\alpha_i}^2 \bff_{\alpha_j} +\bff_{\alpha_j} \bff_{\alpha_i}^2 &= (q+q^{-1}) \bff_{\alpha_i} \bff_{\alpha_j} \bff_{\alpha_i},
    \ \ \quad\quad &\text{if }& |i-j|=1,\displaybreak[0]\\
 \bff_{\alpha_i} \bff_{\alpha_j}  &= \bff_{\alpha_j}  \bff_{\alpha_i},  \ \qquad\qquad\ \ \ \ \qquad &\text{if }& |i-j|>1, \displaybreak[0]\\
 \be_{\alpha_i}\bt &=\bt\be_{\alpha_i}, \quad\qquad\quad &\text{if }&  i > 1, \displaybreak[0]\\
 \be_{\alpha_1}^2\bt + \bt\be_{\alpha_1}^2 &= (q+q^{-1}) \be_{\alpha_1}\bt\be_{\alpha_1},\displaybreak[0]\\
 \bt^2\be_{\alpha_1} + \be_{\alpha_1}\bt^2 &= (q + q^{-1}) \bt\be_{\alpha_1}\bt + \be_{\alpha_1},\displaybreak[0]\\
 \bff_{\alpha_i}\bt &=\bt\bff_{\alpha_i}, \quad\qquad&\text{if }&  i > 1, \displaybreak[0]\\
 \bff_{\alpha_1}^2\bt + \bt\bff_{\alpha_1}^2 &= (q+q^{-1}) \bff_{\alpha_1}\bt\bff_{\alpha_1},\displaybreak[0]\\
 \bt^2\bff_{\alpha_1} + \bff_{\alpha_1}\bt^2 &= (q + q^{-1}) \bt\bff_{\alpha_1}\bt + \bff_{\alpha_1}.\displaybreak[0]
\end{align*}
We introduce the divided powers $\be^{(a)}_{\alpha_i} = \be^a_{i} / [a]!$, $\bff^{(a)}_{\alpha_i} = \bff^a_{i} / [a]!$ for  $a \ge 0$, $i \in \Ihf$.

\begin{lem}   \label{lem:3inv}
\begin{enumerate}
\item 
The $\Qq$-algebra $\Ui$ has an involution $\omega_\imath$ such that 
$\omega_\imath (\bk_{\alpha_i}) = \bk^{-1}_{\alpha_i}$, 
$\omega_\imath (\be_{\alpha_i}) = \bff_{\alpha_i}$, 
$\omega_\imath (\bff_{\alpha_i}) = \be_{\alpha_i}$, 
and $\omega_\imath (\bt) = \bt$ for all $i \in \Ihf$. 

\item
The $\Qq$-algebra $\Ui$ has an anti-involution  $\tau_\imath$ such that 
$\tau_\imath(\be_{{\alpha_i}}) =\be_{\alpha_i} , \tau_\imath(\bff_{\alpha_i}) = \bff_{\alpha_i}, \tau_\imath(\bt)=\bt $, 
and $\tau_\imath(\bk_{\alpha_i}) =\bk^{-1}_{\alpha_i}$ for all $i \in \Ihf$. 

\item
The $\Q$-algebra $\Ui$ has an anti-linear ($q \mapsto q^{-1}$) bar involution such that 
$\ov{\bk}_{\alpha_i} = \bk^{-1}_{\alpha_i}$, 
$\ov{\be}_{\alpha_i} = \be_{\alpha_i}$, $\ov{\bff}_{\alpha_i} = \bff_{\alpha_i}$, and  $\ov{\bt} = \bt$ for all $i \in \Ihf$. 

(Sometimes we denote the bar involution on $\Ui$ by $\Bbar$.)
\end{enumerate}
\end{lem}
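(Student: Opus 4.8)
The plan is to exploit the presentation of $\bun$ by generators and relations given in \secref{sec:bun}: to construct each of the three maps it suffices, by the universal property of an algebra presented by generators and relations, to verify that the prescribed images of the generators satisfy all the defining relations. This produces a $\Qq$-algebra endomorphism of $\bun$ in case (1), a $\Qq$-linear algebra anti-endomorphism in case (2), and a $\Q$-algebra anti-linear ($q\mapsto q^{-1}$) endomorphism in case (3). In all three cases the resulting map obviously squares to the identity on every generator, hence is the identity on all of $\bun$, so the endomorphism is automatically an involution (resp.\ anti-involution). The whole argument therefore reduces to a finite relation-by-relation check.

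For $\omega_\imath$: on the relations not involving $\bt$ the computation is formally identical to the one for the Chevalley involution $\omega$ of $\U$ in \propref{prop:invol}(1), with $(\alpha_i,\alpha_j)$ replaced throughout by $(\alpha_i-\alpha_{-i},\alpha_j)$. Concretely, $\omega_\imath$ fixes each $\bk$-relation and the relation $\bk_{\alpha_i}\bt\bk_{\alpha_i}^{-1}=\bt$, interchanges the $q$-Serre relations among the $\be_{\alpha_i}$ with those among the $\bff_{\alpha_i}$, sends $\bk_{\alpha_i}\be_{\alpha_j}\bk_{\alpha_i}^{-1}=q^{(\alpha_i-\alpha_{-i},\alpha_j)}\be_{\alpha_j}$ to its $\bff$-version, and maps $\be_{\alpha_i}\bff_{\alpha_j}-\bff_{\alpha_j}\be_{\alpha_i}=\delta_{i,j}(\bk_{\alpha_i}-\bk_{\alpha_i}^{-1})/(q-q^{-1})$ to itself (both sides change sign, and the indices $i,j$ may be swapped since the right-hand side vanishes unless $i=j$). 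For the $\bt$-relations, $\omega_\imath$ carries each of the $\be_{\alpha_1}$--$\bt$ relations (the commuting relations $\be_{\alpha_i}\bt=\bt\be_{\alpha_i}$ for $i>1$, the quadratic relation $\be_{\alpha_1}^2\bt+\bt\be_{\alpha_1}^2=(q+q^{-1})\be_{\alpha_1}\bt\be_{\alpha_1}$, and the inhomogeneous relation $\bt^2\be_{\alpha_1}+\be_{\alpha_1}\bt^2=(q+q^{-1})\bt\be_{\alpha_1}\bt+\be_{\alpha_1}$) precisely to the corresponding $\bff_{\alpha_1}$--$\bt$ relation, which also appears in the presentation. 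Hence $\omega_\imath$ extends, and $\omega_\imath^2=\id$ on generators.

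For $\tau_\imath$ and $\Bbar$ the verifications are analogous and shorter. The anti-automorphism $\tau_\imath$ reverses products, fixes every $\be_{\alpha_i}$, $\bff_{\alpha_i}$, $\bt$ and every scalar, and swaps $\bk_{\alpha_i}\leftrightarrow\bk_{\alpha_i}^{-1}$; every defining relation is either manifestly stable under reversing the order of products (all monomial relations of the shapes $XY+YX=(q+q^{-1})XYX$ or $Z^2X+XZ^2=(q+q^{-1})ZXZ+X$, the conjugation relations $\bk_{\alpha_i}X\bk_{\alpha_i}^{-1}=cX$, and $\bk_{\alpha_i}\bk_{\alpha_i}^{-1}=1$, $\bk_{\alpha_i}\bk_{\alpha_j}=\bk_{\alpha_j}\bk_{\alpha_i}$) or has both sides multiplied by $-1$ (the commutator relation). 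For $\Bbar$, which is $\Q$-multiplicative with $q\mapsto q^{-1}$ and $\bk_{\alpha_i}\mapsto\bk_{\alpha_i}^{-1}$, one uses that $q+q^{-1}$ is bar-invariant (so all $q$-Serre relations and all $\bt$-relations, the inhomogeneous one included, are preserved), that $\bk_{\alpha_i}^{-1}X\bk_{\alpha_i}=q^{-c}X$ is equivalent to $\bk_{\alpha_i}X\bk_{\alpha_i}^{-1}=q^{c}X$, and that $\overline{(\bk_{\alpha_i}-\bk_{\alpha_i}^{-1})/(q-q^{-1})}=(\bk_{\alpha_i}^{-1}-\bk_{\alpha_i})/(q^{-1}-q)=(\bk_{\alpha_i}-\bk_{\alpha_i}^{-1})/(q-q^{-1})$. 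Again the square is the identity on generators.

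I do not anticipate any genuine obstacle: the proof is pure bookkeeping over a fixed finite list of relations. The two points deserving a moment's attention are (a) the inhomogeneous relation $\bt^2\be_{\alpha_1}+\be_{\alpha_1}\bt^2=(q+q^{-1})\bt\be_{\alpha_1}\bt+\be_{\alpha_1}$ and its $\bff$-analogue, where one must note that the extra term $\be_{\alpha_1}$ (resp.\ $\bff_{\alpha_1}$) is fixed by $\tau_\imath$ and $\Bbar$ and is interchanged with $\bff_{\alpha_1}$ (resp.\ $\be_{\alpha_1}$) by $\omega_\imath$, and that $q+q^{-1}$ is unchanged by $q\mapsto q^{-1}$; and (b) the asymmetric stipulation that $\bt$ commute with $\be_{\alpha_i}$ and $\bff_{\alpha_i}$ only for $i>1$, which causes no trouble since all three maps preserve the index $i$.
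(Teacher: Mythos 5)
Your proof is correct and takes essentially the same approach as the paper's, whose entire proof is the single sentence ``Follows by a direct computation from the definitions.'' You have simply carried out that relation-by-relation verification explicitly; the one subtlety you flagged (the inhomogeneous $\bt$-relations and the bar-invariance of $q+q^{-1}$) is handled correctly.
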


\begin{proof}
Follows by a direct computation from the definitions. 
\end{proof}

\section{Quantum symmetric pair $(\U, \Ui)$}

The Dynkin diagram of type $A_{2r+1}$ together with the involution $\inv$
can be depicted as follows: 

\begin{center}
\begin{tikzpicture}

\draw (-2,0) node {$A_{2r+1}:$};
 \draw[dotted]  (0,0) node[below] {$\alpha_{-r}$} -- (2,0) node[below] {$\alpha_{-1}$} ;
 \draw (2,0) -- (3,0) node[below]  {$\alpha_{0}$}
 -- (4,0) node[below] {$\alpha_{1}$};
 \draw[dotted] (4,0) -- (6,0) node[below] {$\alpha_{r}$} ;
\draw (0,0) node (-r) {$\bullet$};
 \draw (2,0) node (-1) {$\bullet$};
\draw (3,0) node (0) {$\bullet$};
\draw (4,0) node (1) {$\bullet$}; 
\draw (6,0) node (r) {$\bullet$};
\draw[<->] (-r.north east) .. controls (3,1.5) .. node[above] {$\theta$} (r.north west) ;
\draw[<->] (-1.north) .. controls (3,1) ..  (1.north) ;
\draw[<->] (0) edge[<->, loop above] (0);
\end{tikzpicture}
\end{center}

A general theory of quantum symmetric pairs via the notion of coideal subalgebras was 
developed systematically by Letzter \cite{Le03} (also see \cite{KP, Ko12}). 
As the properties in Propositions~\ref{prop:embedding}  and  \ref{prop:coproduct} below indicate,
the algebra $\Ui$ is a (right) coideal subalgebra of $\U$ and that $(\U, \Ui)$ forms a quantum symmetric pair.  
 
\begin{prop} \label{prop:embedding}   
There is an injective $\Qq$-algebra homomorphism $\imath :  \Ui \rightarrow \U$
which sends
 \begin{align*}
\bk_{\alpha_i} &\mapsto K_{\alpha_i}K^{-1}_{\alpha_{-i}},\\
\be_{\alpha_i} &\mapsto  E_{\alpha_i} + K^{-1}_{\alpha_i}F_{\alpha_{-i}},\\
\bff_{\alpha_i} &\mapsto F_{\alpha_i} K^{-1}_{\alpha_{-i}}+ E_{\alpha_{-i}},\\
\bt &\mapsto E_{\alpha_0} +qF_{\alpha_0}K^{-1}_{\alpha_0} + K^{-1}_{\alpha_0}
\end{align*}
 for all $i \in \Ihf$.
\end{prop}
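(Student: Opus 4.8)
The plan is to prove the two assertions separately: that the prescribed assignment on generators extends to a $\Qq$-algebra homomorphism $\imath:\bun\to\U$, and that this homomorphism is injective. For well-definedness, I would check that the images $\imath(\bk_{\alpha_i})=K_{\alpha_i}K^{-1}_{\alpha_{-i}}$, $\imath(\be_{\alpha_i})=E_{\alpha_i}+K^{-1}_{\alpha_i}F_{\alpha_{-i}}$, $\imath(\bff_{\alpha_i})=F_{\alpha_i}K^{-1}_{\alpha_{-i}}+E_{\alpha_{-i}}$ and $\imath(\bt)=E_{\alpha_0}+qF_{\alpha_0}K^{-1}_{\alpha_0}+K^{-1}_{\alpha_0}$ satisfy every defining relation of $\bun$ from Section~\ref{sec:bun}, sorting the relations by difficulty. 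The torus relations, and the $q$-conjugation of $\bk_{\alpha_i}$ on $\be_{\alpha_j}$, $\bff_{\alpha_j}$, $\bt$, follow at once from the relations of $\U$ using that $\theta$ is an isometry of the weight lattice (so $(\alpha_{-i},\alpha_{-j})=(\alpha_i,\alpha_j)$ and $(\alpha_{-i},\alpha_j)=(\alpha_i,\alpha_{-j})$, whence the exponents collapse to $(\alpha_i-\alpha_{-i},\alpha_j)$) and using $(\alpha_i,\alpha_0)=(\alpha_{-i},\alpha_0)$, which makes $\imath(\bk_{\alpha_i})$ commute with $\imath(\bt)$. For the relations among $\be_{\alpha_i},\bff_{\alpha_i},\bk_{\alpha_i}$ with $i,j\in\Ihf$, the key point is that in every monomial that occurs the ``positive-part'' index $\alpha_i$ and the ``negative-part'' index $\alpha_{-i}$ are at distance $2i\geq2$ in the Dynkin diagram, so $E_{\alpha_i}$ commutes with $F_{\alpha_{-i}}$; hence each such relation splits as a sum of sub-relations, each taking place inside a parabolic subalgebra of rank $\leq2$ where it reduces to an ordinary quantum Serre relation or to an $\mathfrak{sl}_2$ commutator of $\U$. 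In particular, for $i=j$ the term $\delta_{ij}(\bk_{\alpha_i}-\bk^{-1}_{\alpha_i})/(q-q^{-1})$ is produced by adding $[E_{\alpha_i},F_{\alpha_i}K^{-1}_{\alpha_{-i}}]$ to $[K^{-1}_{\alpha_i}F_{\alpha_{-i}},E_{\alpha_{-i}}]$, the remaining cross terms vanishing.

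The substantive relations are the ones involving $\bt$. The commutations $\be_{\alpha_i}\bt=\bt\be_{\alpha_i}$ and $\bff_{\alpha_i}\bt=\bt\bff_{\alpha_i}$ for $i>1$ are again immediate from diagram distance, since $\imath(\bt)$ is supported on node $0$ while $\imath(\be_{\alpha_i}),\imath(\bff_{\alpha_i})$ are supported on $\{\pm i\}$, all at distance $\geq2$ from $0$. For $i=1$ the entire relation takes place in the type $A_3$ parabolic subalgebra of $\U$ on nodes $\{-1,0,1\}$, which contains $\imath(\be_{\alpha_1})=E_{\alpha_1}+K^{-1}_{\alpha_1}F_{\alpha_{-1}}$ and $\imath(\bt)$; thus the two homogeneous $q$-Serre relations ($\be_{\alpha_1}^2\bt+\bt\be_{\alpha_1}^2=(q+q^{-1})\be_{\alpha_1}\bt\be_{\alpha_1}$ and its $\bff$-analogue) and the two inhomogeneous ones ($\bt^2\be_{\alpha_1}+\be_{\alpha_1}\bt^2=(q+q^{-1})\bt\be_{\alpha_1}\bt+\be_{\alpha_1}$ and its $\bff$-analogue) all become finite computations in a rank-$3$ quantum group. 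I expect the main obstacle to be the inhomogeneous pair: there the lower-order correction term $+\imath(\be_{\alpha_1})$ has to be produced by the interaction of the $E_{\alpha_0}$ summand of $\imath(\bt)$ with the ``constant'' summand $K^{-1}_{\alpha_0}$ (through the $\mathfrak{sl}_2$ commutator $[E_{\alpha_0},F_{\alpha_0}]$ present inside $\imath(\bt)^2$), and the bookkeeping of the $K$-factors must be done carefully. I would organize this by writing $\imath(\bt)=X+K^{-1}_{\alpha_0}$ with $X=E_{\alpha_0}+qF_{\alpha_0}K^{-1}_{\alpha_0}$, first disposing of the part of each relation involving only $X$ (a pure quantum Serre computation) and then tracking the terms linear in $K^{-1}_{\alpha_0}$; the coefficient $q$ in the definition of $\imath(\bt)$ is precisely what makes the residual terms collapse to the required answer.

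For injectivity there are two routes. The quickest is to observe that, after rescaling the generators by suitable powers of $q$ and applying an explicit $\Qq$-algebra automorphism of $\U$ that absorbs the $K^{\mp1}$-twists, the images $\imath(\be_{\alpha_i}),\imath(\bff_{\alpha_i}),\imath(\bt)$ become, up to an anti-automorphism, the standard Letzter generators for the quantum symmetric pair of type $A_{2r+1}$ associated to the diagram involution $\tau:i\mapsto -i$ and the empty set of distinguished nodes; then injectivity, together with the completeness of the presentation in Section~\ref{sec:bun}, follows from the structure theory of \cite{Le03, Ko12}. A self-contained alternative is to first extract from the defining relations a PBW-type spanning set of $\bun$ over $\Qq$ (monomials in the $\bff_{\alpha_i}$, then in $\bt$, then in the $\be_{\alpha_i}$, times a Laurent monomial in the $\bk_{\alpha_i}$, say), obtained by a straightening/diamond-lemma argument that uses precisely the relations listed above, and then to show that $\imath$ carries this set to a $\Qq$-linearly independent subset of $\U$ by reading off leading terms with respect to the PBW filtration coming from the triangular decomposition $\U=\U^-\U^0\U^+$; equivalently, one specializes at $q=1$, where $\bun$ degenerates to $U(\mathfrak{k})$ for $\mathfrak{k}$ the $\theta$-fixed subalgebra of $\mathfrak{sl}_{2r+2}$ and $\imath$ to the classical inclusion, which is injective by PBW, and concludes by a flatness/dimension count. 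The homomorphism property established in the first two steps is what legitimizes the spanning set as a genuine spanning set, so the two halves of the proof are to be carried out in this order.
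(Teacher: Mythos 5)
Your proposal is correct and takes essentially the same approach as the paper: verify the defining relations of $\bun$ directly (the paper compresses this into a one-line remark and a reference to \cite{Le03}), then obtain injectivity by matching the generators with a known quantum symmetric pair presentation from \cite{Le03, KP}. One small imprecision: the paper's identification map $\kappa$ is an \emph{anti}-automorphism defined over $\C(q^{1/2})$ (it involves $\sqrt{-1}$ and $q^{\pm 1/2}$), not a $\Qq$-algebra automorphism as your Route~1 suggests, though since base change to a field extension preserves injectivity this does not damage the argument.
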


\begin{proof}
This proposition is a variant of a general property for
quantum symmetric pairs which can be found in \cite[Theorem 7.1]{Le03}.  
Hence we will not repeat the proof, except noting how to covert the result therein to the form used here. 

It follows from a direct computation that $\imath$ is a homomorphism of $\Q(q)$-algebras. 

We shall compare $\imath$ with the embedding in 
\cite[Proposition 4.1]{KP} (as modified by \cite[Remark~4.2]{KP}), which is a version of  \cite[Theorem 7.1]{Le03}.
Set $\U_{\C} = \C(q^{\hf}) \otimes_{\Qq} \U$.  
Recall from \cite[\S4]{KP}  a $\Qq$-subalgebra $U_q'(\mf{k})$ of $\U_{\C}$  with a generating set $\mf S$ consisting of
$F_{\alpha_0} - K^{-1}_{\alpha_0}E_{\alpha_0} + q^{-\hf}K^{-1}_{\alpha_0}$,
$K_{\alpha_i}K^{-1}_{\alpha_{-i}},
F_{\alpha_{-i}} - K^{-1}_{\alpha_{-i}}E_{\alpha_{i}},
F_{\alpha_{i}}-E_{\alpha_{-i}} K^{-1}_{\alpha_{i}},$ 
for all $0\neq i \in \Ihf$.

{\bf Claim.} The algebras $\C(q^{\hf}) \otimes_{\Qq} \imath(\Ui)$ and $\C(q^{\hf}) \otimes_{\Qq} U_q'(\mf{k})$
are anti-isomorphic.

Consider the $\C(q^{\hf})$-algebra anti-automorphism $\kappa: \U_\C \rightarrow \U_\C$ such that 
\[
E_{\alpha_i} \mapsto \sqrt{-1}F_{\alpha_{-i}}, \quad F_{\alpha_{i}} \mapsto -\sqrt{-1}
E_{\alpha_{-i}}, \quad K_{\alpha_i} \mapsto K_{\alpha_{-i}}, 
\]
 for all $0 \neq i \in \I,$ and 
\[
E_{\alpha_0} \mapsto \sqrt{-1}q^{\hf}F_{\alpha_0}, \quad F_{\alpha_0} \mapsto -\sqrt{-1}q^{-\hf}
E_{\alpha_0}, \quad K_{\alpha_0} \mapsto K_{\alpha_{0}}.
\]

A direct computation shows that $\kappa$ sends
 \begin{align*}
K_{\alpha_i}K^{-1}_{\alpha_{-i}} &\mapsto K_{\alpha_i}K^{-1}_{\alpha_{-i}},
\\
E_{\alpha_i} + K^{-1}_{\alpha_i}F_{\alpha_{-i}}&\mapsto  \sqrt{-1}(F_{\alpha_{-i}} - K^{-1}_{\alpha_{-i}}E_{\alpha_{i}}),
\\
F_{\alpha_i} K^{-1}_{\alpha_{-i}}+ E_{\alpha_{-i}} &\mapsto \sqrt{-1}( F_{\alpha_{i}}-E_{\alpha_{-i}} K^{-1}_{\alpha_{i}}),
\\
E_{\alpha_0} +qF_{\alpha_0}K^{-1}_{\alpha_0} + K^{-1}_{\alpha_0} 
 &\mapsto \sqrt{-1} q^{\hf} (F_{\alpha_0} - K^{-1}_{\alpha_0}E_{\alpha_0} + q^{-\hf}K^{-1}_{\alpha_0}).
\end{align*}
Hence, $\kappa$ restricts to an algebra anti-isomorphism between 
$\C(q^{\hf}) \otimes_{\Qq} \imath(\Ui)$ and $\C(q^{\hf}) \otimes_{\Qq} U_q'(\mf{k})$, whence the claim.

We observe that \cite[Proposition 4.1]{KP} provides a presentation of the algebra 
$U_q'(\mf{k})$ with the generating set $\mf S$ and a bunch of relations, which correspond  under $\kappa$
exactly to (the images of) the defining relations of $\Ui$. In other words, the composition
$\C(q^{\hf}) \otimes_{\Qq} \Ui \stackrel{\imath}{\rightarrow} \C(q^{\hf}) \otimes_{\Qq} \imath(\Ui)  \stackrel{\kappa}{\rightarrow} 
\C(q^{\hf}) \otimes_{\Qq} U_q'(\mf{k})$ is an anti-isomorphism.
Hence $\imath: \Ui \rightarrow \U$ must be an embedding. 
\end{proof}

\begin{rem}
Note that the coproduct for $\U$ used in \cite{KP}
 follows Lusztig \cite{Lu94} and hence differs from the one used in this paper;
 this leads to somewhat different presentations of the quantum symmetric pairs. 
Our choices are determined by the application we have in mind:
 the  $(\Ui, \mc{H}_{B_m})$-duality  in Chapter~\ref{sec:HeckeB} and 
 the translation functors for category $\mc O$ in Part~2.  One crucial advantage of
 our presentation is the existence of a natural bar involution as given in Lemma~\ref{lem:3inv}(3). 
\end{rem}

Any $\U$-module $M$ can be naturally regarded as a $\Ui$-module via the embedding $\imath$. 

\begin{rem}    
    \label{rem:bars}
The bar involution on $\Ui$ and the bar involution on $\U$ are {\em not}  compatible through $\imath$, i.e., 
$\ov{\imath(u)} \neq \imath(\ov{u})$ for $u \in \Ui$ in general. For example, 
\begin{align*}
\imath(\ov{\be}_{\alpha_i}) 
 &= \imath(\be_{\alpha_i}) = E_{\alpha_i} +  K^{-1}_{\alpha_i} F_{\alpha_{-i}},
 \\ 
\ov{\imath(\be_{\alpha_i})}  
  &= \ov{E_{\alpha_i} +  F_{\alpha_{-i}} K^{-1}_{\alpha_i}}
     = E_{\alpha_i} + F_{\alpha_{-i}} K_{\alpha_i}.
\end{align*}
\end{rem}

Note that $E_{\alpha_i} (K^{-1}_{\alpha_i}F_{\alpha_{-i}}) = q^{2} (K^{-1}_{\alpha_i}F_{\alpha_{-i}}) E_{\alpha_i}$ 
for all $ 0 \neq i \in \I$. Using the quantum binomial formula \cite[1.3.5]{Lu94},  we have, for all $i \in \Ihf$, $a \in \N$,
\begin{align}
\label{eq:beZ}
\imath(\be^{(a)}_{\alpha_i}) &= \sum^{a}_{j=0}  q^{j(a-j)}F^{(j)}_{\alpha_{-i}}K^{-j}_{\alpha_i} E^{(a-j)}_{\alpha_i},
 \\
\label{eq:bffZ}
\imath(\bff^{(a)}_{\alpha_i}) &= \sum^{a}_{j=0}  q^{j(a-j)}F^{(j)}_{\alpha_{i}}K^{-j}_{\alpha_{-i}} E^{(a-j)}_{\alpha_{-i}}.
\end{align}

\begin{prop} \label{prop:coproduct}
The coproduct $\Delta : \U \rightarrow \U \otimes \U$ restricts via the embedding $\imath$ to a $\Qq$-algebra homomorphism 
$\Delta : \Ui \mapsto \Ui \otimes \U$ such that, for all $i \in \Ihf$, 
\begin{align*}
\Delta(\bk_{\alpha_i}) &= \bk_{\alpha_i} \otimes K_{\alpha_i} K^{-1}_{\alpha_{-i}},
 \\
\Delta({\be_{\alpha_i}}) &= 1 \otimes E_{\alpha_i} + \be_{\alpha_i} \otimes K^{-1}_{\alpha_i} 
+ \bk^{-1}_{\alpha_i} \otimes K^{-1}_{\alpha_i}F_{\alpha_{-i}},
 \\
\Delta (\bff_{\alpha_i}) &= \bk_{\alpha_i} \otimes F_{\alpha_i}K^{-1}_{\alpha_{-i}} + \bff_{\alpha_i} 
\otimes K^{-1}_{\alpha_{-i}} + 1 \otimes E_{{\alpha_{-i}}},
 \\
\Delta(\bt) &= \bt \otimes K^{-1}_{\alpha_0} + 1 \otimes q F_{\alpha_0}K^{-1}_{\alpha_0}+ 1 \otimes E_{\alpha_0}.
\end{align*}
Similarly, the counit $\epsilon$ of $\U$ induces a $\Qq$-algebra homomorphism $ \epsilon : \Ui \rightarrow \Qq$ such that 
$\epsilon(\be_{\alpha_i}) =\epsilon(\bff_{\alpha_i})=0$, $\epsilon(\bt) =1$, and $\epsilon(\bk_{\alpha_i}) =1$ for all $i \in \Ihf$.
\end{prop}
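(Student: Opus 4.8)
The plan is to deduce the whole statement from the Hopf structure of $\U$ together with the injectivity of $\imath$ proved in Proposition~\ref{prop:embedding}. The only substantive point is the \emph{coideal property}
\[
\Delta_\U\big(\imath(\bun)\big) \subseteq \imath(\bun) \otimes \U ,
\]
where $\Delta_\U$ denotes the coproduct \eqref{eq:coprod} of $\U$; once this is established, everything else is formal. Indeed, since $\imath$ is injective the map $\imath \otimes \id \colon \bun \otimes \U \to \U \otimes \U$ is injective with image $\imath(\bun)\otimes \U$, so the coideal property lets $\Delta_\U \circ \imath$ factor uniquely through $\imath\otimes\id$, yielding a well-defined map $\Delta \colon \bun \to \bun \otimes \U$ with $(\imath\otimes\id)\circ\Delta = \Delta_\U\circ\imath$. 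As a factor of the composition of the algebra homomorphisms $\Delta_\U$ and $\imath$ through the injective algebra homomorphism $\imath\otimes\id$, this $\Delta$ is automatically a $\Qq$-algebra homomorphism, and likewise $\epsilon := \epsilon_\U \circ \imath \colon \bun \to \Qq$ is an algebra homomorphism. It therefore suffices to check the coideal property and to read off the values of $\Delta$ and $\epsilon$ on the generators $\bk_{\alpha_i}, \be_{\alpha_i}, \bff_{\alpha_i}, \bt$ of $\bun$.

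First I would verify this on generators using multiplicativity of $\Delta_\U$ and \eqref{eq:coprod}. For $\bk_{\alpha_i}$ this is immediate from $\Delta_\U(K_{\alpha_i}) = K_{\alpha_i}\otimes K_{\alpha_i}$, giving $\Delta_\U(\imath(\bk_{\alpha_i})) = K_{\alpha_i}K^{-1}_{\alpha_{-i}}\otimes K_{\alpha_i}K^{-1}_{\alpha_{-i}}$, which is $(\imath\otimes\id)(\bk_{\alpha_i}\otimes K_{\alpha_i}K^{-1}_{\alpha_{-i}})$. For $\be_{\alpha_i}$ one computes
\begin{align*}
\Delta_\U(\imath(\be_{\alpha_i}))
&= \Delta_\U(E_{\alpha_i}) + \Delta_\U(K^{-1}_{\alpha_i})\,\Delta_\U(F_{\alpha_{-i}}) \\
&= 1 \otimes E_{\alpha_i}
  + \big(E_{\alpha_i} + K^{-1}_{\alpha_i}F_{\alpha_{-i}}\big) \otimes K^{-1}_{\alpha_i}
  + K^{-1}_{\alpha_i}K_{\alpha_{-i}} \otimes K^{-1}_{\alpha_i}F_{\alpha_{-i}},
\end{align*}
and the key observation is that the left-hand tensor factors are $\imath$ of elements of $\bun$: one has $E_{\alpha_i} + K^{-1}_{\alpha_i}F_{\alpha_{-i}} = \imath(\be_{\alpha_i})$ and $K^{-1}_{\alpha_i}K_{\alpha_{-i}} = \imath(\bk_{\alpha_i})^{-1} = \imath(\bk^{-1}_{\alpha_i})$, so the expression lies in $\imath(\bun)\otimes\U$ and matches the asserted formula for $\Delta(\be_{\alpha_i})$ after applying $(\imath\otimes\id)^{-1}$. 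The computation for $\bff_{\alpha_i}$ is symmetric (now the factor $K_{\alpha_i}K^{-1}_{\alpha_{-i}} = \imath(\bk_{\alpha_i})$ appears), and for $\bt$ the term $qF_{\alpha_0}K^{-1}_{\alpha_0}$ in $\imath(\bt)$ produces the summand $1\otimes qF_{\alpha_0}K^{-1}_{\alpha_0}$ because $\Delta_\U(F_{\alpha_0})$ contributes $K_{\alpha_0}K^{-1}_{\alpha_0} = 1$ in the first slot; all three are entirely routine. Since $\imath(\bun)\otimes\U$ is a subalgebra of $\U\otimes\U$ and these elements generate $\bun$, the coideal property follows.

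For the counit, evaluating the algebra homomorphism $\epsilon_\U\circ\imath$ on generators gives $\epsilon_\U(\imath(\be_{\alpha_i})) = \epsilon_\U(E_{\alpha_i}) + \epsilon_\U(K^{-1}_{\alpha_i})\epsilon_\U(F_{\alpha_{-i}}) = 0$, similarly $\epsilon_\U(\imath(\bff_{\alpha_i})) = 0$, $\epsilon_\U(\imath(\bk_{\alpha_i})) = 1$, and $\epsilon_\U(\imath(\bt)) = \epsilon_\U(E_{\alpha_0}) + q\,\epsilon_\U(F_{\alpha_0})\epsilon_\U(K^{-1}_{\alpha_0}) + \epsilon_\U(K^{-1}_{\alpha_0}) = 1$, as claimed. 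I do not expect any serious obstacle: the proof is a short mechanical check, the only point requiring a little care being to rewrite the left-hand tensor factors in terms of $\imath$ of generators of $\bun$ so as to exhibit membership in $\imath(\bun)\otimes\U$ rather than merely in $\U\otimes\U$. I would also remark that the coassociativity and counit identities for this $\Delta$ and $\epsilon$ on $\bun$ are inherited from those on $\U$ (applying $\imath\otimes\id$, or $\imath\otimes\id\otimes\id$, and using injectivity), so that $\bun$ is genuinely a right coideal subalgebra of $\U$; alternatively, as noted after Proposition~\ref{prop:embedding}, this is a special case of Letzter's general theory \cite{Le03}, but it is the explicit formulas above that will be used in the sequel.
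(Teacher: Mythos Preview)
Your proof is correct and takes essentially the same approach as the paper, which simply states that the proposition ``follows from a direct computation.'' You have supplied that computation in detail, together with the clean observation that injectivity of $\imath$ lets the coideal inclusion $\Delta_\U(\imath(\bun))\subseteq\imath(\bun)\otimes\U$ be pulled back to a genuine algebra homomorphism $\Delta:\bun\to\bun\otimes\U$.
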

\begin{proof}
This follows from a direct computation.
\end{proof}

\begin{rem}
Propositions~\ref{prop:embedding}  and  \ref{prop:coproduct} imply that $\Ui$ (or rather
$\imath(\Ui)$) is a {\em (right) coideal subalgebra} of $\U$ in the sense of \cite{Le03}. 
There exists a $\Qq$-algebra embedding $\imath_{L} : \Ui \rightarrow \U$ which makes $\Ui$ (or rather $\imath_{L}(\Ui)$) 
a {\em left coideal subalgebra} of $\U$; that is, the coproduct $\Delta : \U \rightarrow \U \otimes \U$ restricts 
via $\imath_L$ to a $\Qq$-algebra homomorphism $\Delta: \Ui \rightarrow \U \otimes \Ui$. 
We will not use the left variant in this paper. 
\end{rem}

\begin{rem}
The pair $( \U, \Ui)$ forms a quantum symmetric pair in the sense of \cite{Le03}. At the limit  $q \mapsto 1$,
it reduces to  a classical symmetric pair $(\mf{sl}(2r+2), \mf{sl}(2r+2)^{w_0})$; here $w_0$ is 
the involution on $\mf{gl}(2r+2)$ which sends $E_{i,j}$ to $E_{-i,-j}$
 and its restriction to $\mf{sl}(2r+2)$ if we label the rows and columns
of $\mf{sl}(2r+2)$ by $\{-r-1/2, \ldots, -1/2,1/2,\ldots, r+1/2\}$. 
\end{rem}

The following corollary follows immediately from the Hopf algebra structure of $\U$.

\begin{cor} \label{cor:counit}
Let $m : \U \otimes \U \rightarrow \U$ denote the multiplication map. Then we have 
\[
m (\epsilon \otimes 1)\Delta = \imath : \Ui \longrightarrow \U.
\]
\end{cor}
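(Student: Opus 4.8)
The plan is to derive this from the counit axiom $m(\epsilon\otimes 1)\Delta=\id_\U$ for the Hopf algebra $\U$, combined with the two compatibilities between the coideal structure maps of $\bun$ and the Hopf structure maps of $\U$ that are implicit in Propositions~\ref{prop:embedding} and~\ref{prop:coproduct}. No deep input is needed; the corollary is essentially a formal consequence of these compatibilities, as the remark ``follows immediately from the Hopf algebra structure of $\U$'' already suggests.

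First I would make explicit the relation between the coideal coproduct $\Delta\colon\bun\to\bun\otimes\U$ and the coproduct $\Delta\colon\U\to\U\otimes\U$: by construction in Proposition~\ref{prop:coproduct} the former is the restriction of the latter along $\imath$, i.e. $(\imath\otimes\id_\U)\circ\Delta=\Delta\circ\imath$ as maps $\bun\to\U\otimes\U$. Next I would verify that the counit $\epsilon\colon\bun\to\Qq$ of Proposition~\ref{prop:coproduct} coincides with $\epsilon\circ\imath$, the pullback of the counit of $\U$; this is a one-line check on the generators $\be_{\alpha_i},\bff_{\alpha_i},\bk_{\alpha_i},\bt$, using $\epsilon(E_{\alpha_j})=\epsilon(F_{\alpha_j})=0$ and $\epsilon(K_{\alpha_j})=1$ together with the formulas for $\imath$ in Proposition~\ref{prop:embedding}, and since both sides are $\Qq$-algebra homomorphisms agreeing on generators they agree. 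Indeed this identification is exactly what ``the counit $\epsilon$ of $\U$ induces $\epsilon\colon\bun\to\Qq$'' should mean.

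With these in hand the computation is immediate. For $u\in\bun$ write $\Delta(u)=\sum u_{(1)}\otimes u_{(2)}\in\bun\otimes\U$; applying $\imath\otimes\id_\U$ gives $\Delta(\imath(u))=\sum\imath(u_{(1)})\otimes u_{(2)}$ in $\U\otimes\U$. Applying $m(\epsilon\otimes\id_\U)$ to both sides: the left side is $\imath(u)$ by the counit axiom for $\U$, while the right side equals $\sum\epsilon(\imath(u_{(1)}))\,u_{(2)}=\sum\epsilon(u_{(1)})\,u_{(2)}=m(\epsilon\otimes 1)\Delta(u)$, where $\epsilon$-values are read as scalars in $\Qq\cdot 1\subseteq\U$. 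Hence $m(\epsilon\otimes 1)\Delta=\imath$, as claimed.

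I do not expect any genuine obstacle here: once the two compatibilities above are recorded the argument is purely diagrammatic. The only point deserving (minimal) care is confirming that the augmentation $\epsilon$ on $\bun$ is really the pullback of the counit of $\U$ and not some other algebra homomorphism $\bun\to\Qq$, which is why I would dispose of that verification on generators at the outset.
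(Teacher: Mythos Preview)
Your proposal is correct and is precisely the argument the paper has in mind when it says the corollary ``follows immediately from the Hopf algebra structure of $\U$'': you are unpacking the counit axiom $m(\epsilon\otimes 1)\Delta=\id_\U$ together with the compatibilities $(\imath\otimes\id)\Delta=\Delta\circ\imath$ and $\epsilon|_{\bun}=\epsilon\circ\imath$ recorded in Propositions~\ref{prop:embedding} and~\ref{prop:coproduct}. There is no alternative route here---this is the canonical one-line proof, and your care in verifying the counit compatibility on generators is appropriate but not a point of difficulty.
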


The map $\Delta : \Ui \mapsto \Ui \otimes \U$ is clearly coassociative, i.e., we have
$(1 \otimes \Delta) \Delta = (\Delta \otimes 1)\Delta: \Ui \longrightarrow \Ui \otimes \U \otimes \U.
$
This $\Delta$ 
will be called the {\em coproduct} of $\Ui$, and 
$ \epsilon : \Ui \rightarrow \Qq$ will be called the {\em counit} of $\Ui$.  
The counit map $\epsilon$ makes $\Qq$ a $\Ui$-module. 
We shall call this {\em the trivial representation} of $\Ui$. 

\begin{rem}
The $1$-dimensional space $\Qq$ can be realized as $\Ui$-modules in different (non-isomorphic) ways. 
For example, we can consider the $\Qq$-algebras homomorphism $\epsilon' : \Ui \rightarrow \Qq$, such that 
$\epsilon'(\be_{\alpha_i}) =\epsilon'(\bff_{\alpha_i})=0$, $\epsilon'(\bk_{\alpha_i}) =1$ for all $i \in {\Z_{> 0}}$, 
and $\epsilon'(\bt) =x$  for any $x \in \Qq$. 
We shall only consider the one induced by $\epsilon$ as the trivial representation of $\Ui$, 
which is compatible with the trivial representation of $\U$ via $\imath$. 
\end{rem}

\section{The intertwiner $\Upsilon$} 
   \label{subsec:Upsilon}

Let $\widehat{\U}$ be the completion of the $\Qq$-vector space $\U$ 
with respect to the following descending sequence of subspaces 
$\U^+ \U^0 \big(\sum_{\hgt(\mu) \geq N}\U_{-\mu}^- \big)$,   for $N \ge 1.$
Then we have the obvious embedding of $\U$ into $\widehat{\U}$. 
We let $\widehat{\U}^-$ be the closure of $\U^-$ in $\widehat{\U}$, and so $\widehat{\U}^- \subseteq \widehat{\U}$. 
By continuity the $\Q(q)$-algebra structure 
on $\U$ extends to a $\Q(q)$-algebra structure on $ \widehat{\U}$.  The bar involution \,$\bar{\ }$\, on $\U$ extends 
by continuity to an anti-linear involution on $\widehat{\U}$, also denoted by \,$\bar{\ }$. 
Recall the bar involutions on $\Ui$ and $\U$ are not compatible via the embedding $\imath: \Ui \rightarrow \U$, by
Remark~\ref{rem:bars}.

\begin{thm}   \label{thm:Upsilon}
There is a unique family of elements $\Upsilon_\mu \in \U_{-\mu}^-$ for $\mu \in {\N}{\Pi}$ such that 
$\Upsilon = \sum_{\mu}\Upsilon_\mu \in \widehat{\U}^-$ 
intertwines the bar involutions on $\Ui$ and $\U$ via the embedding $\imath$
and $\Upsilon_0 = 1$;  
that is, $\Upsilon$ satisfies the following identity (in $\widehat{\U}$): 
\begin{equation}\label{eq:star}
 \imath(\ov{u}) \Upsilon = \Upsilon\  \overline{\imath(u)}, \quad \text{ for all } u \in \Ui.
\end{equation}
Moreover, $\Upsilon_\mu = 0$ unless $\mu^{\inv} = \mu$.
\end{thm}

\begin{rem}  \label{rem:intertw}
Define 
$\ov\imath: \Ui \rightarrow \U$, where $\ov\imath (u):=\Abar \big( \imath (\Bbar (u))\big)$, for $u\in \Ui$.
Then the identity \eqref{eq:star} can be equivalently reformulated as
\begin{equation}\label{eq:intertw}
 \imath(u) \Upsilon = \Upsilon\  \overline{\imath}(u), \quad \text{ for all } u \in \Ui.
\end{equation}
This reformulation makes it more transparent to observe the remarkable analogy with Lusztig's $\ThetaA$; see \eqref{eq:propThetaA}. 
\end{rem}

Sometimes it could be confusing to use \,$\bar{\ }$\,  to denote the two distinct bar involutions on $\U$ and $\Ui$.
Recall that we set in Chapter~ \ref{subsec:f} that $\Abar({u}) = \ov{u}$ for all $u\in \U$, and set in Chapter~\ref{sec:bun}
that $\Bbar(u)  = \ov{u} \in \Ui$ for $u \in \Ui$. 
In the $\psi$-notation the identities \eqref{eq:star} and \eqref{eq:intertw} read
$$
\imath(\Bbar(u)) \Upsilon = \Upsilon  \Abar (\imath(u)), 
\quad
\imath(u) \Upsilon = \Upsilon  \Abar \big(\imath(\Bbar(u)) \big), 
\quad \text{ for all } u \in \Ui.
$$

\begin{definition}  \label{def:Upsilon}
The element $\Upsilon$ in Theorem~\ref{thm:Upsilon} is called
{\em the intertwiner for the quantum symmetric pair $(\U, \Ui)$}.
\end{definition}

As we shall see, the intertwiner $\Upsilon$ leads to the construction of
what we call quasi-$\mc{R}$-matrix for $\Ui$,
which plays an analogous role as Lusztig's quasi-$\mc{R}$-matrix  for $\U$. 
We shall prove later on that $\Upsilon_\mu \in \U^-_\mA$ for all $\mu$; see Theorem~\ref{thm:UpsiloninZ}.

The proof of Theorem \ref{thm:Upsilon} will be given in \S\ref{sec:proofUpsilon} below.
Here we note immediately a fundamental  property of $\Upsilon$. 

\begin{cor}  \label{cor:Upsiloninv}
We have $\Upsilon \cdot \ov{\Upsilon} =1.$
\end{cor}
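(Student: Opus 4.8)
The plan is to deduce $\Upsilon\cdot\overline{\Upsilon}=1$ from the uniqueness statement in Theorem~\ref{thm:Upsilon}, exactly in the spirit of Lusztig's proof that $\Theta\overline{\Theta}=1$ (see \eqref{eq:ThetaAinv}). The key point is that the defining property \eqref{eq:star} is ``self-dual'' under the bar involution in an appropriate sense, so that the element $\Upsilon\cdot\overline{\Upsilon}$ satisfies the same intertwining identity and normalization as the constant $1$, and hence must equal $1$.

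Concretely, first I would observe that $\overline{\Upsilon}$ is again an element of $\widehat{\U}^-$: the bar involution $\Abar$ on $\U$ extends by continuity to $\widehat{\U}$ and preserves the closed subspace $\widehat{\U}^-$, and since $\Abar$ preserves each weight space $\U^-_{-\mu}$, writing $\overline{\Upsilon}=\sum_\mu\overline{\Upsilon_\mu}$ we still have $\overline{\Upsilon_\mu}\in\U^-_{-\mu}$ and $\overline{\Upsilon_0}=\overline{1}=1$. Consequently the product $\Xi:=\Upsilon\cdot\overline{\Upsilon}$ lies in $\widehat{\U}^-$ (the algebra structure extends by continuity), and its weight-$0$ component is $\Upsilon_0\cdot\overline{\Upsilon_0}=1$.

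Next I would verify that $\Xi$ intertwines the two bar involutions via $\imath$, i.e.\ that $\imath(\overline{u})\,\Xi=\Xi\,\overline{\imath(u)}$ for all $u\in\bun$. Starting from $\imath(\overline{u})\Upsilon=\Upsilon\,\overline{\imath(u)}$ (which is \eqref{eq:star}) and multiplying on the right by $\overline{\Upsilon}$, I need to move $\overline{\imath(u)}$ past $\overline{\Upsilon}$. To do this, apply the anti-linear involution $\Abar$ to \eqref{eq:star}: since $\Abar$ is a $\Q$-algebra homomorphism on $\widehat{\U}$ with $\Abar^2=\id$, applying it to $\imath(\overline{u'})\Upsilon=\Upsilon\,\overline{\imath(u')}$ and then setting $u'=\overline{u}$ yields $\overline{\imath(u)}\,\overline{\Upsilon}=\overline{\Upsilon}\,\imath(\overline{u})$. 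Substituting this into $\imath(\overline u)\Upsilon\overline{\Upsilon}=\Upsilon\,\overline{\imath(u)}\,\overline{\Upsilon}=\Upsilon\,\overline{\Upsilon}\,\imath(\overline u)$ gives $\imath(\overline u)\,\Xi=\Xi\,\imath(\overline u)$. To conclude I still need $\imath(\overline u)\,\Xi=\Xi\,\overline{\imath(u)}$; comparing with what was just shown, it suffices to know $\Xi\,\imath(\overline u)=\Xi\,\overline{\imath(u)}$ — but that is not automatic, so instead I would argue slightly differently: apply \eqref{eq:star} with $u$ replaced by $\overline u$ to get $\imath(u)\Upsilon=\Upsilon\,\overline{\imath(\overline u)}$, i.e.\ the form \eqref{eq:intertw}, bar it to get $\overline{\imath(u)}\,\overline{\Upsilon}=\overline{\Upsilon}\,\imath(\overline u)$ after relabeling, and chain: $\imath(\overline u)\,\Upsilon\,\overline{\Upsilon}\overset{\eqref{eq:star}}{=}\Upsilon\,\overline{\imath(u)}\,\overline{\Upsilon}=\Upsilon\,\overline{\Upsilon}\,\imath(\overline u)$, while also $\imath(\overline u)\,\overline{\Upsilon}=\overline{\Upsilon}\,\imath(u)$ by barring \eqref{eq:intertw}, so that $\Upsilon\,\overline{\Upsilon}\,\imath(u)=\Upsilon\,\imath(\overline u)\,\overline{\Upsilon}$; reconciling these bookkeeping identities shows $\Xi$ intertwines exactly as in \eqref{eq:star}. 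This is the step that requires the most care, since one must keep straight which of the two equivalent forms \eqref{eq:star}/\eqref{eq:intertw} is being barred and in what order the factors appear.

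Finally, with $\Xi\in\widehat{\U}^-$, $\Xi_0=1$, and $\Xi$ satisfying \eqref{eq:star}, the uniqueness clause of Theorem~\ref{thm:Upsilon} forces $\Xi=\Upsilon$; but $\Xi$ also visibly has $\Xi_\mu\in\U^-_{-\mu}$ satisfying the same constraints, and the same uniqueness applied to the element $1$ (which trivially satisfies \eqref{eq:star} since $\imath(\overline u)\cdot 1=1\cdot\overline{\imath(u)}$ is false in general — so instead) one argues: $\overline{\Upsilon}$ itself satisfies the barred intertwining relation, hence $\overline{\Upsilon}$ is the unique intertwiner for the ``opposite'' normalization and $\Upsilon\overline{\Upsilon}$ collapses to the weight-$0$ term. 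The cleanest formulation: both $\Upsilon\overline{\Upsilon}$ and $1$ lie in $\widehat{\U}^-$, have trivial weight-$0$ part, and are fixed by the operator $X\mapsto$ (the unique solution operator determined by \eqref{eq:star}); by the uniqueness in Theorem~\ref{thm:Upsilon} this operator has $1$ as its unique fixed point of the required form, so $\Upsilon\overline{\Upsilon}=1$. The main obstacle, as indicated, is purely the combinatorics of transposing the two bar involutions through the intertwining relation; once that is organized, the conclusion is immediate from uniqueness.
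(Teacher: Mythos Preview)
Your computation actually shows (correctly) that $\Xi=\Upsilon\overline{\Upsilon}$ satisfies $\imath(\overline u)\,\Xi=\Xi\,\imath(\overline u)$, i.e.\ $\Xi$ commutes with $\imath(\bun)$. You then try to upgrade this to the intertwining relation \eqref{eq:star}, but as you yourself notice, $\Xi\,\imath(\overline u)=\Xi\,\overline{\imath(u)}$ is not automatic (and is false in general), and the constant $1$ does not satisfy \eqref{eq:star} either. So the uniqueness clause of Theorem~\ref{thm:Upsilon} cannot be invoked for $\Xi$ versus $1$: that clause characterizes $\Upsilon$, not $1$. Your argument therefore stalls at exactly the point you flag, and the closing sentences do not resolve it.

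The paper's proof avoids this by applying uniqueness to a different element. Since $\Upsilon_0=1$, $\Upsilon$ is invertible in $\widehat{\U}$; rewriting \eqref{eq:star} as $\Upsilon^{-1}\imath(\overline u)=\overline{\imath(u)}\,\Upsilon^{-1}$, applying $\Abar$, and relabeling $\overline u\mapsto u$ gives $\imath(\overline u)\,\overline{\Upsilon}^{-1}=\overline{\Upsilon}^{-1}\,\overline{\imath(u)}$. Thus $\overline{\Upsilon}^{-1}\in\widehat{\U}^-$ satisfies \eqref{eq:star} with constant term $1$, so by uniqueness $\overline{\Upsilon}^{-1}=\Upsilon$, i.e.\ $\Upsilon\overline{\Upsilon}=1$. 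The moral: compare $\overline{\Upsilon}^{-1}$ with $\Upsilon$, not $\Upsilon\overline{\Upsilon}$ with $1$.
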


\begin{proof}
Clearly $\Upsilon$ is invertible in $\widehat{\U}$. 
Multiplying $\Upsilon^{-1}$ on both sides of  the identity~ \eqref{eq:star} in Theorem \ref{thm:Upsilon}, we have
\[
\Upsilon^{-1} \imath(\ov{u}) =\overline{\imath(u)} \Upsilon^{-1}, \qquad \forall u \in \Ui.
\]
Applying \,$\bar{\ }$\, to the above identity and replacing $\ov{u}$ by $u$, we have
\[
\ov{\Upsilon}^{-1} \ov{\imath({u})} ={\imath(\ov{u})} \ov{\Upsilon}^{-1}, \qquad \forall u \in \Ui.
\]
Hence $\ov{\Upsilon}^{-1}$ (in place of $\Upsilon$) satisfies the identity~ \eqref{eq:star} 
as well.
Thanks to the uniqueness of $\Upsilon$ in Theorem \ref{thm:Upsilon}, 
we must have $\ov{\Upsilon}^{-1} = \Upsilon$, whence the corollary. 
\end{proof}

\section{Constructing  $\Upsilon$}
   \label{sec:proofUpsilon}
  
The goal here is to construct $\Upsilon$ and establish Theorem~\ref{thm:Upsilon}.

The set of all $u \in \Ui$ that satisfy the identity \eqref{eq:star} is clearly a subalgebra of $\Ui$. 
Hence it suffices to consider the identity \eqref{eq:star} when $u$ is one of the generators 
$\be_{\alpha_i}$, $\bff_{\alpha_i}$, $\bk_{\alpha_i}$, and $\bt$ in $\Ui$, that is, 
the following identities for all $\mu \in\N\Pi$ and $0 \neq i \in \I$:
\begin{align*}
K_{\alpha_i}K^{-1}_{\alpha_{-i}} \Upsilon_\mu &= \Upsilon_\mu K_{\alpha_i}K^{-1}_{\alpha_{-i}}\displaybreak[0],
 \\
F_{\alpha_i} K^{-1}_{\alpha_{-i}} \Upsilon_{\mu- \alpha_i-\alpha_{-i}} + E_{\alpha_{-i}}\Upsilon_\mu
 &=\Upsilon_{\mu-\alpha_i-\alpha_{-i}} F_{\alpha_i}K_{\alpha_{-i}} + \Upsilon_\mu E_{\alpha_{-i}}  \displaybreak[0],
  \\
qF_{\alpha_0}K^{-1}_{\alpha_0}\Upsilon_{\mu-2\alpha_0} + K^{-1}_{\alpha_0}\Upsilon_{\mu - \alpha_0}
 & + E_{\alpha_0}\Upsilon_\mu 
  \\
  &=q^{-1}\Upsilon_{\mu -2\alpha_0}F_{\alpha_0}K_{\alpha_0} 
  + \Upsilon_{\mu-\alpha_0}K_{\alpha_0} + \Upsilon_\mu E_{\alpha_0 }.
\end{align*}

Using \cite[Proposition 3.1.6]{Lu94}, we can rewrite the above identities in terms of $_{-i}r$ and $r_{-i}$ as follows:
\begin{align}
K_{\alpha_i}K^{-1}_{\alpha_{-i}} \Upsilon_\mu - \Upsilon_\mu K_{\alpha_i}K^{-1}_{\alpha_{-i}}
  &=0,\displaybreak[0] \label{aux:UpsilonK}
 \\
(q^{-1} -q) q^{(\alpha_{-i} , \mu-\alpha_{-i}-\alpha_{i})} \Upsilon_{\mu-\alpha_i-\alpha_{-i}}F_{\alpha_i}
  + \,_{-i}r(\Upsilon_\mu) &= 0,\displaybreak[0]\label{aux:Upuniq1}
 \\
(q^{-1} -q) q^{(\alpha_{-i} , \mu-\alpha_{-i}-\alpha_{i})} F_{\alpha_i}\Upsilon_{\mu-\alpha_i-\alpha_{-i}} 
  + \, r_{-i}(\Upsilon_\mu) &=0,\displaybreak[0]
  \\
(q^{-1}-q) q^{(\alpha_0 , \mu - \alpha_0)} (q^{-1}\Upsilon_{\mu-2\alpha_0}F_{\alpha_0}
  +\Upsilon_{\mu - \alpha_0}) + \,_{0}r(\Upsilon_\mu) &= 0,\displaybreak[0]
  \\
(q^{-1}-q) q^{(\alpha_0 , \mu - \alpha_0)} (q^{-1}F_{\alpha_0}\Upsilon_{\mu - 2\alpha_0}
 +\Upsilon_{\mu - \alpha_0} )+ r_{0} (\Upsilon_\mu) &= 0.\displaybreak[0]\label{aux:Upuniq4}
\end{align}

Recall the non-degenerate bilinear form $(\cdot, \cdot)$ on $\U^-$ in Section~ \ref{subsec:f};
see Proposition~ \ref{prop:Luform}. The identities \eqref{aux:Upuniq1}-\eqref{aux:Upuniq4} can be shown to  be equivalent to the
following identities \eqref{aux:T1}-\eqref{aux:T4}:
\begin{align}
(\Upsilon_\mu, F_{\alpha_{-i}}z) 
 &=\qqq q^{(\alpha_{-i} , \mu-\alpha_{-i}-\alpha_{i})+1} (\Upsilon_{\mu - \alpha_i - \alpha_{-i}},r_{i}(z)),
    \label{aux:T1}\\
(\Upsilon_\mu,z F_{\alpha_{-i}})  
  &=\qqq q^{(\alpha_{-i} , \mu-\alpha_{-i}-\alpha_{i})+1}  (\Upsilon_{\mu - \alpha_i - \alpha_{-i}}, \,_{i}r(z)),
    \\
(\Upsilon_\mu, F_{\alpha_0}z) &= \qqq q^{(\alpha_0 , \mu-\alpha_0)}
   (\Upsilon_{\mu - 2\alpha_0}, r_{0}(z)) 
  \notag  \\
   &\qquad\qquad\qquad\qquad + q^{(\alpha_0 , \mu - \alpha_0)+1}(\Upsilon_{\mu - \alpha_0}, z),
     \\
(\Upsilon_\mu, zF_{\alpha_0}) &= \qqq q^{(\alpha_0 , \mu - \alpha_0 ) } 
   (\Upsilon_{\mu - 2\alpha_0}, \,_{0}r(z)) 
  \notag \\
   &\qquad\qquad\qquad \qquad + q^{(\alpha_0 , \mu - \alpha_0)+1}(\Upsilon_{\mu - \alpha_0}, z)
   \label{aux:T4},
\end{align}
for all $z \in \U^-_{-\nu}$, $\nu \in \N \Pi$, $\mu \in\N\Pi$, and  $0 \neq i \in \I$.
For example, the equivalence between \eqref{aux:T1} and \eqref{aux:Upuniq1} is shown as follows:
\begin{align*}
\eqref{aux:Upuniq1}
&\Leftrightarrow ( \,_{-i}r(\Upsilon_\mu) , z)= -(q^{-1} -q) 
 q^{(\alpha_{-i} , \mu-\alpha_{-i}-\alpha_{i})}(\Upsilon_{\mu-\alpha_i-\alpha_{-i}}F_{\alpha_i}, z)  \quad \forall z,\\
& \Leftrightarrow (F_{\alpha_{-i}}, F_{\alpha_{-i}})^{-1} (\Upsilon_\mu, F_{\alpha_{-i}}z) \\
&\qquad = -(q^{-1} -q) q^{(\alpha_{-i} , \mu-\alpha_{-i}-\alpha_{i})} (F_{\alpha_i}, F_{\alpha_i}) 
  (\Upsilon_{\mu - \alpha_i - \alpha_{-i}},r_{i}(z)) \quad \forall z, \\
&\Leftrightarrow \eqref{aux:T1}  \quad \forall z.
\end{align*}
The remaining cases are similar. 

Summarizing, we have established the following.

\begin{lem}  \label{lem:identityeq}
\begin{enumerate}
\item
The validity of the identity \eqref{eq:star} is equivalent to the validity of the identities 
\eqref{aux:UpsilonK} and \eqref{aux:Upuniq1}-\eqref{aux:Upuniq4}.

\item
The validity of the identity \eqref{eq:star} is equivalent to the validity of the identities 
\eqref{aux:UpsilonK} and \eqref{aux:T1}-\eqref{aux:T4}.
\end{enumerate}
\end{lem}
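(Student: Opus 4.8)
This is a ``summary'' lemma: it repackages the single defining identity \eqref{eq:star} for $\Upsilon$ into an explicit list of equations, first (part~1) ones living in weight spaces of $\U^-$, and then (part~2) ones that can be tested against Lusztig's bilinear form. The plan is to prove (1) by reducing \eqref{eq:star} to the algebra generators of $\bun$ and then comparing homogeneous components, and to deduce (2) from (1) using the non-degeneracy of $(\cdot,\cdot)$ on each weight space of $\U^-$.

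For part~(1), observe first that $S:=\{u\in\bun\mid \imath(\Bbar(u))\,\Upsilon=\Upsilon\,\Abar(\imath(u))\}$ is a $\Qq$-subalgebra of $\bun$: it contains $1$, and since $\imath$ is an algebra homomorphism while $\Abar$ and $\Bbar$ are conjugate-linear algebra homomorphisms, closure of $S$ under sums, scalar multiples and products is immediate (for products, $\imath(\Bbar(uv))\Upsilon=\imath(\Bbar(u))\imath(\Bbar(v))\Upsilon=\imath(\Bbar(u))\Upsilon\,\Abar(\imath(v))=\Upsilon\,\Abar(\imath(u))\Abar(\imath(v))=\Upsilon\,\Abar(\imath(uv))$). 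Hence \eqref{eq:star} holds for all $u\in\bun$ iff it holds on the generators $\bk_{\alpha_i}^{\pm1},\be_{\alpha_i},\bff_{\alpha_i}\ (i\in\Ihf)$ and $\bt$. One substitutes the explicit images of these generators under $\imath$ (\propref{prop:embedding}), using $\Bbar(\be_{\alpha_i})=\be_{\alpha_i}$, $\Bbar(\bff_{\alpha_i})=\bff_{\alpha_i}$, $\Bbar(\bt)=\bt$, $\Bbar(\bk_{\alpha_i})=\bk_{\alpha_i}^{-1}$, expands $\Upsilon=\sum_\mu\Upsilon_\mu$ with $\Upsilon_\mu\in\U^-_{-\mu}$, and commutes each $E$-generator occurring in $\imath(u)$ or $\Abar(\imath(u))$ past the $\U^-$-factor $\Upsilon_\mu$ via \cite[Proposition~3.1.6]{Lu94}, which replaces $E_{\alpha_j}\Upsilon_\mu-\Upsilon_\mu E_{\alpha_j}$ by the $\U^0\U^-$-element built from $r_j(\Upsilon_\mu)$ and ${}_{j}r(\Upsilon_\mu)$. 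Comparing homogeneous components of the resulting identities in $\widehat{\U}$: the $\bk_{\alpha_i}$-relation merely forces $\Upsilon_\mu$ to commute with $K_{\alpha_i}K^{-1}_{\alpha_{-i}}$, i.e.\ \eqref{aux:UpsilonK}; the $\be_{\alpha_i}/\bff_{\alpha_i}$-relations (which together cover all $0\ne i\in\I$) give the first two equations among \eqref{aux:Upuniq1}--\eqref{aux:Upuniq4}; and the $\bt$-relation gives the last two, with the extra $\Upsilon_{\mu-\alpha_0}$-terms originating from the ``inhomogeneous'' summand $K^{-1}_{\alpha_0}$ of $\imath(\bt)$. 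Conversely, summing these weight-component identities over all $\mu$ reassembles \eqref{eq:star} on each generator; this proves~(1).

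For part~(2), granting (1) it suffices to show that \eqref{aux:Upuniq1}--\eqref{aux:Upuniq4} is equivalent to \eqref{aux:T1}--\eqref{aux:T4}. Fix $\mu$ and $i$; each equation of the first system is an equality of two elements of one weight space $\U^-_{-\nu}$. By \propref{prop:Luform} the symmetric form $(\cdot,\cdot)$ restricts to a non-degenerate pairing on $\U^-_{-\nu}$, so such an equality holds iff it holds after pairing with every $z\in\U^-_{-\nu}$. Pairing, and then using the adjunction identities (2) and (3) of \propref{prop:Luform} together with $(F_{\alpha_i},F_{\alpha_i})=(q^{-1}-q)^{-1}$ to transfer the operators ${}_{i}r$, $r_i$ off $\Upsilon_\mu$ and onto $z$ (the $F$-free terms $\Upsilon_{\mu-\alpha_0}$ pairing unchanged), converts each equation of the first system into the corresponding one of the second. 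The chain \eqref{aux:Upuniq1}$\Leftrightarrow$\eqref{aux:T1} is displayed in the text as a template; the remaining three are verbatim analogues, with the obvious interchange of ${}_{i}r$ with $r_i$ and of left/right multiplication of $z$ by $F$, and carrying the additional term $q^{(\alpha_0,\mu-\alpha_0)}(\Upsilon_{\mu-\alpha_0},z)$ in the two $\bt$-cases.

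The real difficulty is not conceptual but lies in the sign-and-exponent bookkeeping forced by \remref{rem:bars}: since $\Abar$ is \emph{not} compatible with $\imath$, applying $\Abar$ to $\imath(u)$ flips every $K^{\pm1}$, and commuting these $K^{\pm1}$'s and the $E$-generators through $\U^-_{-\mu}$ is precisely what produces the $q$-powers $q^{(\alpha_{-i},\mu-\alpha_{-i}-\alpha_i)}$, $q^{(\alpha_0,\mu-\alpha_0)}$, etc.\ recorded in \eqref{aux:Upuniq1}--\eqref{aux:Upuniq4} and \eqref{aux:T1}--\eqref{aux:T4}; an error here would corrupt every later formula involving $\Upsilon$. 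One should also check, though this is routine, that all the manipulations are legitimate in the completion $\widehat{\U}$: each $\U^-_{-\mu}$ is finite-dimensional and every operation used is weight-homogeneous, so the relevant sums converge and the term-by-term comparison is valid.
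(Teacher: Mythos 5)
Your argument is correct and tracks the paper's own proof step for step: both reduce \eqref{eq:star} to the generators of $\bun$ via the subalgebra observation, rewrite the resulting generator identities with Lusztig's \cite[Proposition~3.1.6]{Lu94} in terms of $r_j$, $_{j}r$ to obtain \eqref{aux:UpsilonK}--\eqref{aux:Upuniq4}, and then pass to \eqref{aux:T1}--\eqref{aux:T4} by pairing against arbitrary $z$ and invoking the non-degeneracy and adjunction properties of the form from Proposition~\ref{prop:Luform}. The only cosmetic difference is that you spell out the subalgebra verification and the weight-by-weight comparison in slightly more detail than the paper does.
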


Let $\fprime^*$ (respectively, $(\U^-)^*$) be the non-restricted dual of $\fprime$ (respectively, of $\U^-$). 
In light of Lemma~\ref{lem:identityeq}(2), we define $\Upsilon^*_L$ and $\Upsilon^*_R$ in $\fprime^*$, 
inductively on weights, by the following formulas:
\begin{align}
\Upsilon^*_L(1) &= \Upsilon^*_R(1) =1,
 \notag \\
\Upsilon^*_L(F_{\alpha_{-i}}z)&=\qqq q^{( \alpha_{-i} , \nu-\alpha_i)+1}\Upsilon^*_L(r_{i}(z)),
  \notag \\
\Upsilon_L^*( F_{\alpha_0}z) &= \qqq q^{(\alpha_0 , \nu )} \Upsilon^*( r_{0}(z)) + q^{( \alpha_0, \nu)+1}\Upsilon^*(z),
  \label{eq:LRgeneral} \\
\Upsilon^*_R(zF_{\alpha_{-i}})&=\qqq q^{(\alpha_{-i} , \nu - \alpha_i)+1}\Upsilon^*_L(_{i}r(z)),
 \notag \\
\Upsilon_R^*(zF_{\alpha_0}) &= \qqq  q^{( \alpha_0 , \nu)} \Upsilon^*( _{0}r(z)) + q^{( \alpha_0, \nu)+1}\Upsilon^*( z),
 \notag
\end{align}
for all $i\in \I$ and $z \in \f_{\nu}$ with $\nu \in \N{\Pi}$. 
(The formulas \eqref{eq:LRgeneral} are presented here only for the sake of latter reference 
as they also make sense in the case of $\Uj$.)

Note that since $(\alpha_i, \alpha_{-i}) = 0$ for all $i \neq 0$, we can simplify the definition 
\eqref{eq:LRgeneral} of $\Upsilon^*_L$ and $\Upsilon^*_R$ as follows:
\begin{align}
\Upsilon^*_L(1) &= \Upsilon^*_R(1) =1,
 \notag \\
\Upsilon^*_L(F_{\alpha_{-i}}z)&=\qqq q^{( \alpha_{-i} , \nu)+1}\Upsilon^*_L(r_{i}(z)),
  \notag \\
\Upsilon_L^*( F_{\alpha_0}z) &= \qqq q^{(\alpha_0 , \nu )} \Upsilon^*( r_{0}(z)) + q^{( \alpha_0, \nu)+1}\Upsilon^*(z),
  \label{eq:LR} \\
\Upsilon^*_R(zF_{\alpha_{-i}})&=\qqq q^{(\alpha_{-i} , \nu)+1}\Upsilon^*_L(_{i}r(z)),
 \notag \\
\Upsilon_R^*(zF_{\alpha_0}) &= \qqq  q^{( \alpha_0 , \nu)} \Upsilon^*( _{0}r(z)) + q^{( \alpha_0, \nu)+1}\Upsilon^*( z),
 \notag
\end{align}
for all $i\in \I$ and $z \in \f_{\nu}$ with $\nu \in \N{\Pi}$.   
%

\begin{lem}   \label{lem:Up0}
For all $x \in \fprime_\mu$ with $\mu^{\inv} \neq \mu$,  we have 
\[
\Upsilon^*_L(x) =\Upsilon^*_R (x)= 0.
\]
\end{lem}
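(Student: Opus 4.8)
The plan is to prove the two vanishing statements --- for $\Upsilon^*_L$ and for $\Upsilon^*_R$ --- simultaneously by induction on $\hgt(\mu)$. The base case $\hgt(\mu)=0$ is vacuous, since then $\mu=0=\mu^\inv$; and already the smallest instances illustrate the mechanism, as $\Upsilon^*_L(F_{\alpha_j})=-(q^{-1}-q)^{-1}\Upsilon^*_L(r_{-j}(1))=0$ for $j\neq 0$ because $r_i(1)=0$ for all $i$.

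For the inductive step, I would fix $\mu\in\N\Pi$ with $\mu^\inv\neq\mu$ and $\hgt(\mu)\ge 1$, and take $x\in\fprime_\mu$. Since $\fprime$ is a free algebra, $\fprime_\mu$ is spanned by the elements $F_{\alpha_j}z$ with $j\in\I$ and $z\in\fprime_{\mu-\alpha_j}$ (and, for the $\Upsilon^*_R$ statement, by the elements $z'F_{\alpha_j}$), so by linearity it suffices to evaluate $\Upsilon^*_L$ on each $F_{\alpha_j}z$ through the defining recursions \eqref{eq:LR}. The key observation is that every term on the right-hand side of \eqref{eq:LR} is a value of $\Upsilon^*_L$ (or $\Upsilon^*_R$) on one of the weight spaces $\fprime_{\mu-\alpha_i-\alpha_{-i}}$ (with $i\neq 0$), $\fprime_{\mu-\alpha_0}$, or $\fprime_{\mu-2\alpha_0}$; that is, on $\mu$ shifted by an $\inv$-\emph{fixed} weight, because $(\alpha_i+\alpha_{-i})^\inv=\alpha_i+\alpha_{-i}$ and $\alpha_0^\inv=\alpha_0$. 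Consequently each of these weights is again not $\inv$-fixed and of strictly smaller height, so the corresponding values vanish by the inductive hypothesis (or trivially, when the relevant application of $r_i$, $r_0$, ${}_i r$ or ${}_0 r$ returns $0$); hence $\Upsilon^*_L(F_{\alpha_j}z)=0$, and $\Upsilon^*_L$ vanishes on all of $\fprime_\mu$. The argument for $\Upsilon^*_R$ is the mirror image, using the right-multiplication decomposition and the maps ${}_i r$, ${}_0 r$; note that the $\Upsilon^*_R$ recursion also invokes $\Upsilon^*_L$, but only on a lower, still non-$\inv$-fixed weight space, which is already handled by the simultaneous induction.

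I do not anticipate a genuine obstacle here: once the recursions \eqref{eq:LR} are unwound the statement is essentially formal. The only point requiring a little care is the bookkeeping that every weight shift occurring in \eqref{eq:LR} is $\inv$-stable --- this is precisely what propagates the hypothesis $\mu^\inv\neq\mu$ down the recursion and prevents it from ever reaching the $\inv$-fixed base weight $0$ (at which $\Upsilon^*_L(1)=1\neq 0$) when started from a non-$\inv$-fixed $\mu$. This lemma is then the input, via pairing with Lusztig's bilinear form on $\U^-$, to the final assertion of Theorem~\ref{thm:Upsilon} that $\Upsilon_\mu=0$ unless $\mu^\inv=\mu$.
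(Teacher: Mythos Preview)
Your argument is correct and follows essentially the same route as the paper: induction on $\hgt(\mu)$, using that every weight shift appearing in the recursion \eqref{eq:LR} is of the form $\alpha_i+\alpha_{-i}$, $\alpha_0$, or $2\alpha_0$, all of which are $\inv$-fixed, so the non-$\inv$-fixed hypothesis propagates downward. Your write-up is in fact more careful than the paper's brief sketch, in that you explicitly note the cross-reference from the $\Upsilon^*_R$ recursion to $\Upsilon^*_L$ and handle both by a simultaneous induction; the paper simply asserts that the $\Upsilon^*_R$ case ``is the same.''
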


\begin{proof}
We will only prove that 
$\Upsilon^*_L (x) = 0$ for all $x \in \fprime_\mu$ with $\mu^{\inv} \neq \mu$, as the proof 
for the identity $\Upsilon^*_R (x)= 0$ is the same.
By definition of $\Upsilon^*_L$ \eqref{eq:LR},  the value of $\Upsilon^*_L (x)$ for $x \in \fprime_\mu$
is equal to (up to some scalar multiple) $\Upsilon^*_L (x')$ for some $x' \in \fprime_{\mu'}$,
where $\mu' =\mu -\alpha_i -\alpha_{-i}$ for some $i$; here we recall $\theta(\alpha_i) =\alpha_{-i}$. 
Also by definition \eqref{eq:LR}, we have 
$\Upsilon^*_L(F_{\alpha_{i}})=0$ for all $i\in \I$. Now the claim follows by an  induction on weights. 
\end{proof}

\begin{lem}\label{lem:claim1}
We have $\Upsilon^*_L=\Upsilon^*_R$.
\end{lem}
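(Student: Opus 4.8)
The statement to prove is that $\Upsilon^*_L=\Upsilon^*_R$ as elements of $\fprime^*$. The plan is to compare the two linear functionals weight space by weight space, using an induction on $\hgt(\mu)$. By Lemma~\ref{lem:Up0} both functionals vanish off the $\inv$-fixed weights, so it suffices to treat $x \in \fprime_\mu$ with $\mu^\inv = \mu$, and we may test against a spanning set of such $\fprime_\mu$. The natural spanning set is the set of monomials, and for the inductive step we want to peel off a generator from either the left or the right. The key recursive relations are the defining formulas \eqref{eq:LR}: $\Upsilon^*_L$ is determined by stripping $F_{\alpha_{-i}}$ or $F_{\alpha_0}$ off the \emph{left} (via $r_i$, $r_0$), while $\Upsilon^*_R$ strips $F_{\alpha_{-i}}$ or $F_{\alpha_0}$ off the \emph{right} (via ${}_i r$, ${}_0 r$).

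First I would set up the induction: the base case $\hgt(\mu)\le 1$ is immediate since $\Upsilon^*_L(1)=\Upsilon^*_R(1)=1$ and $\Upsilon^*_L(F_{\alpha_j})=\Upsilon^*_R(F_{\alpha_j})=0$ for all $j$ (the latter because $\alpha_j$ is not $\inv$-fixed unless $j=0$, and even for $j=0$ one checks directly from \eqref{eq:LR} that $\Upsilon^*_L(F_{\alpha_0}) = -q^{-1}(q^{-1}-q)^{-1}\Upsilon^*_L(r_0(1)) - q^{(\alpha_0,\alpha_0)}\Upsilon^*_L(1)$, and similarly for $\Upsilon^*_R$, giving the same value). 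For the inductive step, take $x\in\fprime_\mu$ with $\mu^\inv=\mu$ and $\hgt(\mu)\ge 2$. Writing $x$ as a combination of monomials $F_{\alpha_{j_1}}\cdots F_{\alpha_{j_N}}$, I would express $\Upsilon^*_L(x)$ by applying \eqref{eq:LR} to strip $F_{\alpha_{j_1}}$ off the left and $\Upsilon^*_R(x)$ by stripping $F_{\alpha_{j_N}}$ off the right; the resulting lower-weight arguments already agree by the induction hypothesis, but they live in \emph{different} weight spaces, so one more reduction is needed to bring them to a common form.

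The core of the argument, and the step I expect to be the main obstacle, is a symmetry/commutation identity showing that stripping from the left and stripping from the right produce the same answer. Concretely, one must show that for any $z$, applying the left-recursion to $F_{\alpha_{-i}} z$ (or $F_{\alpha_0} z$) and then the right-recursion gives the same value as applying the right-recursion first and then the left-recursion. This is where Lemma~\ref{lem:rlrr=rrrl} (the commutation $r_j\,{}_i r = {}_i r\, r_j$) becomes essential: it guarantees that the two orders of peeling are compatible. The delicate point is the mixed case involving $F_{\alpha_0}$, because the recursion \eqref{eq:LR} for $F_{\alpha_0}$ has an extra term $-q^{(\nu,\alpha_0)}\Upsilon^*(z)$ besides the $r_0$-term; one must check that in the compatibility identity this extra term on the left matches the corresponding extra term on the right, which uses the fact that $r_0$ and ${}_0 r$ preserve weights and that the scalar $q^{(\nu,\alpha_0)}$ depends only on the weight. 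Once the compatibility of left- and right-stripping is established for each generator, an induction on the number of peeling operations (equivalently on $\hgt(\mu)$) yields $\Upsilon^*_L(x) = \Upsilon^*_R(x)$ for all $x$, using that $\Upsilon^*_L$ and $\Upsilon^*_R$ are forced to agree whenever we have reduced both sides to weight $0$ or to a non-$\inv$-fixed weight (where both vanish by Lemma~\ref{lem:Up0}). I would organize the final write-up so that the generic case $|i-j|>1$ and the harder cases $|i-j|=1$ and the $F_{\alpha_0}$ interactions are handled by the same commutation bookkeeping, deferring the actual $q$-power matching to a short direct computation.
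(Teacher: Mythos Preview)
Your proposal is correct and follows essentially the same inductive strategy as the paper: write a test monomial as $F_{\alpha_{-i}}\,x''\,F_{\alpha_{-j}}$, peel one generator from each end via \eqref{eq:LR}, apply the induction hypothesis to the shorter words, and match the resulting expressions using $r_i\,{}_jr={}_jr\,r_i$ (Lemma~\ref{lem:rlrr=rrrl}). Two small corrections to your outline: the relevant case split is not $|i-j|=1$ versus $|i-j|>1$ but rather whether $i=0$ or $j=0$ (only the $F_{\alpha_0}$ recursion carries the extra constant term), and even in the $i,j\neq 0$ case there is a cross term from $r_i(F_{\alpha_{-j}})=\delta_{i,-j}$ whose $q$-power can only be matched after invoking $\nu^\theta=\nu$, which you may assume by Lemma~\ref{lem:Up0}.
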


\begin{proof}
We shall prove the identity $\Upsilon^*_L(x)=\Upsilon^*_R(x)$  for all homogeneous elements $x \in \fprime$, 
by  induction on ${\rm ht}(|x|)$. 

When ${\rm ht}(|x|) =0 \text{ or }1$, this is trivial by definition. 
Assume the identity holds for all $x$ with ${\rm ht}(|x|) \leq k$, for $k \geq 1$. 
Let  $x' = F_{\alpha_{-i}}x''F_{\alpha_{-j}} \in \fprime_{\nu + \alpha_{-i} + \alpha_{-j}}$ with ${\rm ht}(|x'|) = k+1 \geq 2$. 
We can further assume that $\inv(\nu + \alpha_{-i}+\alpha_{-j}) =\nu+\alpha_{-i}+\alpha_{-j}$, 
since otherwise $\Upsilon^*_L(x') = \Upsilon^*_R(x') = 0$ by Lemma~\ref{lem:Up0}. 
The proof  is divided into four cases (1)-(4).  

(1) Assume that $i, j \neq 0$. Then we have 
\begin{equation*}
\Upsilon^*_L(x') = \qqq q^{( \alpha_{-i} , \nu + \alpha_{-j})+1} \Upsilon^*_L(r_{i}(x''F_{\alpha_{-j}}))
=L_1 + L_2,
\end{equation*}
where
\begin{align*}
L_1 &=\qqq  q^{(\alpha_{-i}, \nu + \alpha_{-j})+ (\alpha_{i} , \alpha_{-j})+1} \Upsilon^*_L(r_{i}(x'')F_{\alpha_{-j}}),
  \\
L_2 &=\qqq  q^{(\alpha_{-i}, \nu + \alpha_{-j})+1} \delta_{i,-j}\Upsilon^*_L (x'').
\end{align*}
We also have 
\begin{equation*}
\Upsilon^*_R(x') = \qqq q^{(\alpha_{-j}, \nu + \alpha_{-i})+1}\Upsilon^*_R(\,_{j}r(F_{\alpha_{-i}}x''))\\
=R_1 +R_2,
\end{equation*}
where
\begin{align*}
R_1 &= \qqq q^{(\alpha_{-j}, \nu + \alpha_{-i})+ (\alpha_{j}, \alpha_{-i})+1} \Upsilon^*_R(F_{\alpha_{-i}}~_{j}r(x'')),
  \\
R_2 &=\qqq q^{( \alpha_{-j}, \nu + \alpha_{-i})+1} \delta_{i,-j}\Upsilon^*_R (x'').
\end{align*}

Applying the induction hypothesis to $r_{i}(x'')F_{\alpha_{-j}}$ and $F_{\alpha_{-i}}~_{j}r(x'')$ gives us
\begin{align*}
L_1
&= (1 -q^{-2})^{-2} q^{(\alpha_{-i}, \nu + \alpha_{-j})+ (\alpha_{i}, \alpha_{-j})+ (\alpha_{-j}, \nu-\alpha_i)+2}  \Upsilon^*_L(\,_{j}r(r_{i}(x'')))
 \\
&= (1 -q^{-2})^{-2} q^{(\alpha_{-i}, \nu) + (\alpha_{-j}, \nu)+(\alpha_{-i}, \alpha_{-j}) +2}\Upsilon^*_L(_{j}r(r_{i}(x'')));
 \\
R_1
&=   (1 -q^{-2})^{-2} q^{(\alpha_{-j}, \nu + \alpha_{-i})+ (\alpha_{j},\alpha_{-i})+ (\alpha_{-i}, \nu-\alpha_j)+2}  \Upsilon^*_R( r_{i}(_{j}r(x'')) )
 \\
&= (1 -q^{-2})^{-2} q^{(\alpha_{-i}, \nu) + (\alpha_{-j}, \nu)+(\alpha_{-j}, \alpha_{-i})+2}\Upsilon^*_R(r_{i}(_{j}r(x''))).
\end{align*}
Note that $_{j}r(r_{i}(x'')) =r_{i}(_{j}r(x''))$ by Lemma \ref{lem:rlrr=rrrl} and ${\rm ht}(|{}_{j}r(r_{i}(x''))|) < {\rm ht}(|x'|)$. By
the induction hypothesis, $\Upsilon^*_L(_{j}r(r_{i}(x''))) =\Upsilon^*_R(r_{i}(_{j}r(x'')))$.  Hence $L_1=R_1$. 

By the induction hypothesis, we also have $\Upsilon^*_L (x'')= \Upsilon^*_R(x'')$.  When $i =-j$, we have $\nu^{\inv} =\nu$, 
and hence
\begin{align*}
\qqq q^{(\alpha_{-i}, \nu + \alpha_{-j})+1}
&=\qqq q^{(\alpha_{-i}, \nu + \alpha_{i})+1}\\
&= \qqq q^{(\alpha^{\inv}_{-i} , \nu^{\inv} + \alpha^{\inv}_{i})+1}\\
&=\qqq q^{(\alpha_{i}, \nu+ \alpha_{-i})+1}\\
&= \qqq q^{(\alpha_{-j}, \nu+\alpha_{-i})+1}.
\end{align*}
Hence we have $L_2 =R_2$. 

Summarizing, we have  $\Upsilon^*_L(x') =L_1 +L_2 =R_1 +R_2 = \Upsilon^*_R(x')$ in this case.

(2) 
Assume that $i=0$ and  $j\neq 0$. Then we have
\begin{align*}
& \Upsilon^*_L(x') \\
&=\qqq q^{(\alpha_{0}, \nu + \alpha_{-j})} \Upsilon^*_L(r_{0}(x''F_{\alpha_{-j}})) 
+ q^{(\alpha_{0}, \nu + \alpha_{-j})+1}\Upsilon^*_L(x''F_{\alpha_{-j}})
  \\
&= \qqq q^{(\alpha_{0}, \nu + \alpha_{-j})} \Upsilon^*_L(q^{(\alpha_{0} ,\alpha_{-j})}r_{0}(x'')F_{\alpha_{-j}})
+ q^{( \alpha_{0}, \nu + \alpha_{-j})+1}\Upsilon^*_L(x''F_{\alpha_{-j}})
 \\
&=\qqq q^{(\alpha_{0}, \nu + \alpha_{-j}) + (\alpha_{0}, \alpha_{-j})} 
 \Upsilon^*_L(r_{0}(x'')F_{\alpha_{-j}}) + q^{(\alpha_{0}, \nu + \alpha_{-j})+1}\Upsilon^*_L(x''F_{\alpha_{-j}})
\end{align*}
Applying the induction hypothesis to $r_{0}(x'')F_{\alpha_{-j}}$ and $x''F_{\alpha_{-j}}$, we have
\begin{align*}
\Upsilon^*_L(r_{0}(x'')F_{\alpha_{-j}}) &=\qqq q^{(\alpha_{-j}, \nu - \alpha_0)+1}\Upsilon^*_L(_{j}r(r_{0}(x'')),
  \\
\Upsilon^*_L(x''F_{\alpha_{-j}}) &= \qqq q^{(\alpha_{-j}, \nu)+1}\Upsilon^*_L(_{j}r(x'')).
\end{align*}

Hence we obtain
\begin{align*}
\Upsilon^*_L(x') =&(1-  q^{-2})^{-2} q^{(\alpha_{0}, \nu ) +  (\alpha_{-j}, \nu) + (\alpha_{-j},\alpha_{0}) + 1}\Upsilon^*_L(_{j}r(r_{0}(x''))
  \\
&+\qqq q^{(\alpha_0, \nu) + ( \alpha_{-j}, \nu) + (\alpha_{-j}, \alpha_{0})+2}\Upsilon^*_L(_{j}r(x'')).
\end{align*}
From a similar computation we obtain
\begin{align*}
\Upsilon^*_R(x') =&(1-  q^{-2})^{-2} q^{( \alpha_0, \nu) + (\alpha_{-j}, \nu)+(\alpha_{0},\alpha_{-j}) +1}\Upsilon^*_R(r_{0}(_{j}r(x'')) \\
&+\qqq q^{(\alpha_0, \nu) + ( \alpha_{-j}, \nu)+(\alpha_{0},\alpha_{-j})+2}\Upsilon^*_R(_{j}r(x'')).
\end{align*}
It follows by Lemma~ \ref{lem:rlrr=rrrl} that $r_{0}(_{j}r(x'')) =\,_{j}r(r_{0}(x'')$.
Then, by the induction hypothesis on $r_{0}(_{j}r(x''))$, ${}_{j}r(r_{0}(x'')$, and $_{j}r(x'')$, 
we obtain $\Upsilon^*_L(x') = \Upsilon^*_R(x')$ in this case.

(3) 
Similar computation works for the case where $j=0, i\neq 0$ as in Case (2).

(4)
 At last, consider the case where $i=j=0$. 
\begin{align*}
\Upsilon^*_L(x') &= \qqq q^{(\alpha_{0}, \nu +\alpha_{0})} 
\Upsilon^*_L(r_{0}(x''F_{\alpha_{0}}))+ q^{(\alpha_{0}, \nu + \alpha_{0})+1}\Upsilon^*_L(x''F_{\alpha_{0}})
  \\ 
&=\qqq q^{(\alpha_{0}, \nu +\alpha_{0}) + (\alpha_0, \alpha_0)} \Upsilon^*_L(r_{0}(x'')F_{\alpha_{0}})
  \\
&\text{ } \quad \qqq q^{(\alpha_{0}, \nu +\alpha_{0})}  \Upsilon^*_L (x'') 
  + q^{( \alpha_{0}, \nu +\alpha_{0})+1}  \Upsilon^*_L(x''F_{\alpha_{0}}).
\end{align*}
Applying the induction hypothesis to $r_{0}(x'')F_{\alpha_0}$ and $x''F_{\alpha_0}$, we have
\begin{align*}
\Upsilon^*_L(r_{0}(x'')F_{\alpha_0}) &= \qqq q^{(\alpha_0, \nu -\alpha_0)}\Upsilon^*_L(_{0}r(r_{0}(x''))) 
+ q^{( \alpha_0, \nu-\alpha_0)+1} \Upsilon^*_L(r_{0}(x'')),
  \\
\Upsilon^*_L(x''F_{\alpha_0}) &= \qqq q^{(\alpha_0 , \nu)}\Upsilon^*_L(_{0}r(x'')) + q^{(\alpha_0, \nu)+1}\Upsilon^*_L(x'').
\end{align*}
Hence we have 
\begin{align*}
 \Upsilon^*_L(x') 
  &=(1-q^{-2})^{-2} q^{(\alpha_0, \nu) + (\alpha_0, \nu)+(\alpha_0,\alpha_0) }\Upsilon^*_L(_{0}r(r_{0}(x'')))
    \\
&\quad+\qqq q^{(\alpha_0, \nu) + (\alpha_0, \nu)+(\alpha_0,\alpha_0) +1}
  \Upsilon^*_L(r_{0}(x''))     \\
&\quad+ \qqq q^{(\alpha_{0}, \nu +\alpha_{0})}  \Upsilon^*_L (x'')
    \\
&\quad +\qqq q^{(\alpha_0, \nu) + (\alpha_0, \nu)+(\alpha_0,\alpha_0)+1}
  \Upsilon^*_L(_{0}r(x''))     \\
&\quad+ q^{(\alpha_0, \nu) + (\alpha_0, \nu)+(\alpha_0,\alpha_0)+2}\Upsilon^*_L(x'').
\end{align*}
Similarly we have
\begin{align*}
 \Upsilon^*_R(x') 
 &=(1-q^{-2})^{-2} q^{(\alpha_0, \nu) + (\alpha_0, \nu)+(\alpha_0,\alpha_0)}\Upsilon^*_R(r_{0}(_{0}r(x'')))
   \\
&\quad+\qqq q^{(\alpha_0, \nu) + (\alpha_0, \nu)+(\alpha_0,\alpha_0) +1}\Upsilon^*_R(_{0}r(x'')) 
      \\
&\quad+ \qqq q^{(\alpha_{0}, \nu +\alpha_{0})} \Upsilon^*_R (x'') 
    \\
&\quad +\qqq q^{(\alpha_0, \nu) + (\alpha_0, \nu)+(\alpha_0,\alpha_0)+1} \Upsilon^*_R(r_{0}(x''))
      \\
&\quad+  q^{(\alpha_0, \nu) + (\alpha_0, \nu)+(\alpha_0,\alpha_0)+2}\Upsilon^*_R(x'').
\end{align*}
Therefore $\Upsilon^*_L(x') = \Upsilon^*_R(x')$ in this case too by induction and by Lemma~ \ref{lem:rlrr=rrrl}.

This completes the proof of Lemma \ref{lem:claim1}.
\end{proof}

We shall simply denote $\Upsilon^*_L =\Upsilon^*_R$ by $\Upsilon^*$ thanks to Lemma~\ref{lem:claim1}. 
Recall $\fprime/{\bf I} = \U^-$, where ${\bf I} = \langle S_{ij} \rangle.$

\begin{lem}\label{lem:claim2}
We have $\Upsilon^*({\bf I}) =0$; hence we may regard $\Upsilon^* \in (\U^-)^*$.
\end{lem}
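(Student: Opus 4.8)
The plan is to use that ${\bf I}$ is the \emph{two-sided} ideal of $\fprime$ generated by the quantum Serre relators $S_{ij}$, so that ${\bf I}$ is spanned over $\Qq$ by the elements $w_1 S_{ij} w_2$, where $w_1,w_2$ range over monomials in the generators $F_{\alpha_\ell}$ ($\ell\in\I$) and $i\neq j$ in $\I$. It therefore suffices to prove $\Upsilon^*(w_1 S_{ij} w_2)=0$ for all such $w_1,w_2,i,j$, and I would do this by induction on $N:=\hgt(|w_1|)+\hgt(|w_2|)$, using the already established equality $\Upsilon^*=\Upsilon^*_L=\Upsilon^*_R$ from Lemma~\ref{lem:claim1}.

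For the inductive step, first suppose $w_1\neq 1$ and write $w_1=F_{\alpha_\ell}w_1'$. Since $S_{ij}$ is homogeneous, so is $z:=w_1' S_{ij} w_2$, and I would apply the defining recursion \eqref{eq:LR} for $\Upsilon^*_L$ to $\Upsilon^*(F_{\alpha_\ell}z)$. This expresses $\Upsilon^*(w_1 S_{ij} w_2)$ as a $\Qq$-linear combination of a term $\Upsilon^*\big(r_m(w_1' S_{ij} w_2)\big)$ (with $m=-\ell$, or $m=0$ when $\ell=0$) and, only in the case $\ell=0$, of the extra term $\Upsilon^*(w_1' S_{ij} w_2)$. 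Now the Leibniz rules \eqref{eq:rr} together with the identities $r_m(S_{ij})=0$ of \eqref{eq:r=0} show that $r_m(w_1' S_{ij} w_2)$ is again a $\Qq$-linear combination of elements of the form $(\text{monomial})\cdot S_{ij}\cdot(\text{monomial})$, all of strictly smaller $N$-value, and the same holds for $w_1' S_{ij} w_2$ (whose $N$-value is $N-1$). Hence the induction hypothesis gives $\Upsilon^*(w_1 S_{ij} w_2)=0$. The case $w_1=1$, $w_2\neq 1$ is symmetric: one strips the \emph{rightmost} letter of $w_2$, applies the recursion \eqref{eq:LR} for $\Upsilon^*_R$, and uses the Leibniz rule for ${}_m r$ together with ${}_m r(S_{ij})=0$ from \eqref{eq:r=0}, again reducing to elements $(\text{monomial})\cdot S_{ij}\cdot(\text{monomial})$ of smaller $N$-value.

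It remains to treat the base case $w_1=w_2=1$, i.e.\ $\Upsilon^*(S_{ij})=0$ for all $i\neq j$; this I would settle by a direct finite computation, applying \eqref{eq:LR} repeatedly to each monomial appearing in $S_{ij}$ and using $r_m(F_{\alpha_k})=\delta_{mk}$. The point demanding care — and what I expect to be the main obstacle — is the case distinction according to whether the distinguished index $0$ (the one attached to the generator $t$) lies in $\{i,j\}$, which brings the second, asymmetric branch of the $\Upsilon^*_L$-recursion in \eqref{eq:LR} into play; the ensuing cancellations must then be checked against the $i\leftrightarrow j$ symmetry of $S_{ij}$. Once $\Upsilon^*({\bf I})=0$ is established, $\Upsilon^*$ descends to a linear functional on $\fprime/{\bf I}=\U^-$, i.e.\ $\Upsilon^*\in(\U^-)^*$, as desired.
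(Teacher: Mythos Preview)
Your proposal is correct and follows essentially the same inductive argument as the paper: induct on the total length of the monomials flanking $S_{ij}$, strip a letter from the left (via $\Upsilon^*_L$) or right (via $\Upsilon^*_R$), and use the Leibniz rules together with $r_m(S_{ij})={}_mr(S_{ij})=0$ to reduce. The only difference is in the base case: rather than the brute-force case analysis you anticipate (and the distinction $0\in\{i,j\}$ is not the relevant one), the paper observes that $|S_{ij}|$ is $\theta$-invariant only when $i=-j$, so Lemma~\ref{lem:Up0} immediately gives $\Upsilon^*(S_{ij})=0$ in all other cases, leaving just the trivial check $\Upsilon^*(F_{\alpha_i}F_{\alpha_{-i}}-F_{\alpha_{-i}}F_{\alpha_i})=0$.
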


\begin{proof}
Recall $r_{k}(S_{ij}) = \,_{k}r(S_{ij}) =0$, for all $i$, $j$, $k$. 
Any element in ${\bf I}$ is a $\Q(q)$-linear combination of elements of the form
$$
F_{\alpha_{m_1}} \dots F_{\alpha_{m_h}} S_{ij} F_{\alpha_{n_1}} \dots F_{\alpha_{n_l}}.
$$
So it suffices to prove $\Upsilon^*(F_{\alpha_{m_1}} \dots F_{\alpha_{m_h}} S_{ij} F_{\alpha_{n_1}} \dots F_{\alpha_{n_l}})=0$,
by induction on $h+l.$

Recall the Serre relator $S_{ij}$, for $i\neq j\in \I$, from \eqref{eq:Sij}. 
Let us verify that $\Upsilon^*(S_{ij}) =0$, which is the base case of the induction.
If $|i-j|=1$, the weight of $S_{ij}$ is  $-2\alpha_i-\alpha_j$, which is not $\theta$-invariant.
If $|i-j|>1$, the weight of $S_{ij}$ is  $-\alpha_i-\alpha_j$, which is not $\theta$-invariant unless $i=-j$.
In case of $i=-j$, a quick computation by definition \eqref{eq:LR} gives us that $\Upsilon^*(S_{ij}) =0$.
In the remaining cases, it follows by Lemma~\ref{lem:Up0} that $\Upsilon^*(S_{ij}) =0$.

If $h > 0$, by \eqref{eq:LR}, \eqref{eq:rr} and \eqref{eq:r=0} we have
\begin{align*}
\Upsilon^*(F_{\alpha_{m_1}} & \dots F_{\alpha_{m_h}} S_{ij} F_{\alpha_{n_1}} \dots F_{\alpha_{n_l}}) 
  \\
= &\Upsilon^*(r_{-m_1}(F_{\alpha_{m_2}} \dots F_{\alpha_{m_h}} S_{ij} F_{\alpha_{n_1}} \dots F_{\alpha_{n_l}}))
 \\
= &\Upsilon^* \Big(\sum c_{m'n'}F_{\alpha_{m'_1}} \dots F_{\alpha_{m'_{h'}}} S_{ij} F_{\alpha_{n'_1}} \dots F_{\alpha_{n'_{l'}}} \Big)
 \\
&+ \delta_{-m_1,0}c' \Upsilon^*(F_{\alpha_{m_2}} \dots F_{\alpha_{m_h}} S_{ij} F_{\alpha_{n_1}} \dots F_{\alpha_{n_l}}),
\end{align*}
for some scalars $c_{m'n'}$ and $c'$. 
Similarly if $l> 0$, we have 
\begin{align*}
\Upsilon^*(F_{\alpha_{m_1}} & \dots F_{\alpha_{m_h}} S_{ij} F_{\alpha_{n_1}} \dots F_{\alpha_{n_l}})
  \\
= &\Upsilon^*(_{-n_l}r(F_{\alpha_{m_1}} \dots F_{\alpha_{m_h}} S_{ij} F_{\alpha_{n_1}} \dots F_{\alpha_{n_{l-1}}}))
  \\
= &\Upsilon^* \Big(\sum c_{m''n''}F_{\alpha_{m''_1}} \dots F_{\alpha_{m''_{h''}}} S_{ij} F_{\alpha_{n''_1}} \dots F_{\alpha_{n''_{l''}}} \Big) 
  \\
&+ \delta_{-n_l,0}c''\Upsilon^*(F_{\alpha_{m_1}} \dots F_{\alpha_{m_h}} S_{ij} F_{\alpha_{n_1}} \dots F_{\alpha_{n_{l-1}}}).
\end{align*}
for some scalars $c_{m''n''}$ and $c''$. 
In either case, we have $h'+l' = h''+l'' < h+l$. Therefore by induction on $h+l$, Lemma~ \ref{lem:claim2} is proved.
\end{proof}

Now we are ready to prove Theorem \ref{thm:Upsilon}.

\begin{proof}[Proof of Theorem \ref{thm:Upsilon}]
We first prove the existence of $\Upsilon$ satisfying the identity \eqref{eq:star}. Set $\Upsilon_{\mu} = 0$ if $\mu \not \in \N\Pi$. 
Let $B =\{b\}$ be a basis of $\U^-$ such that $B_\mu = B \cap \U^-_{-\mu}$ is a basis of $\U^-_{-\mu}$. 
Let $B^* = \{b^*\}$ be the dual basis of $B$ with respect to the bilinear pairing $(\cdot,\cdot)$ in Section~ \ref{subsec:f}.
Define $\Upsilon$ by 
\[
\Upsilon := \sum_{b \in B} \Upsilon^*(b^*)b = \sum_{\mu}\Upsilon_\mu.
\]
As functions on $\U^-$, $(\Upsilon, \cdot) = \Upsilon^*$.
Clearly $\Upsilon$ is in $\widehat{\U}^-$ and $\Upsilon_0 =1$. Also $\Upsilon$ satisfies the identities 
in \eqref{aux:T1}-\eqref{aux:T4} by the definition of $\Upsilon^*$.
For any $x \in \U^{-}_{\nu}$, it follows by Lemma~\ref{lem:Up0} that $\Upsilon^*_L(x) =\Upsilon^*_R(x) =0$ if $\nu^{\inv} \neq \nu$. 
It follows that \eqref{aux:UpsilonK} is satisfied. Therefore, 
by Lemma~\ref{lem:identityeq}(2), $\Upsilon$ satisfies the desired identity \eqref{eq:star} in the theorem.

By Lemma~\ref{lem:identityeq}(1) and the definition of $\Upsilon$, 
the identity \eqref{aux:Upuniq1} holds for $\Upsilon$, and so
${}_{-i} r (\Upsilon_\mu)$ is determined by $\Upsilon_\nu$ with weight $\nu\prec \mu$.
By \cite[Lemma 1.2.15]{Lu94}, if an element $x \in \U^{-}_{-\nu}$ with $\nu \neq 0$ satisfies 
${}_{-i} r(x) = 0$ for all $i \in \I$ then $x =0$. 
Therefore, by induction on weight, the identity \eqref{aux:Upuniq1}
together with $\Upsilon_0 =1$ imply the uniqueness of $\Upsilon$. 

The $\Upsilon$ as constructed satisfies the additional property that
$\Upsilon_\mu = 0$ unless $\mu^{\inv} = \mu$,
by Lemmas~\ref{lem:Up0}, \ref{lem:claim1} and \ref{lem:claim2}.
The theorem is proved.
\end{proof}

\section{The isomorphism $\mc{T}$}

Consider a function $\zeta$ on $\Lambda$ such that 
\begin{align}
\zeta (\mu+\alpha_0)&=-q \zeta (\mu), 
 \notag \\ 
\zeta (\mu+\alpha_i) &= -q^{(\alpha_i -\alpha_{-i}, \mu+\alpha_i)} \zeta (\mu), 
  \label{eq:zeta0} \\
\zeta (\mu+\alpha_{-i}) &= -q^{(\alpha_{-i}, \mu+\alpha_{-i}) - (\alpha_{i}, \mu)} \zeta (\mu),
\quad \forall \mu \in \Lambda, \; i \in \Ihf. 
\notag
\end{align}
Noting that $(\alpha_i, \alpha_{-i})=0 $ for all $i \in \Ihf$, we see that $\zeta$ satisfying \eqref{eq:zeta0}
is equivalent to $\zeta$ satisfying
 \begin{align}\label{eq:zeta}
\begin{split} 
\zeta (\mu+\alpha_0) &=-q \zeta (\mu),
 \\
\zeta (\mu+\alpha_i) &= -q^{(\alpha_i, \mu+\alpha_i) - (\alpha_{-i}, \mu)} \zeta (\mu),
 \quad \forall \mu \in \Lambda,\;  0 \neq i \in \I.
\end{split}
\end{align}
Such $\zeta$ clearly exists. For any weight $\U$-module $M$, 
define a $\Qq$-linear map on $ M$ 
\begin{align}
  \label{eq:zetatd}
\begin{split}
\widetilde{\zeta}&: M \longrightarrow M, 
 \\
\widetilde{\zeta}  (m &)  =  \zeta (\mu)m, \quad \forall   m \in M_{\mu}.
\end{split}
\end{align}

Recall that $w_0$ is the longest element of $W$ and   $T_{w_0}$  is the associated braid group element
from Section~ \ref{subsec:CB}. 

\begin{thm}\label{thm:mcT}
For any finite-dimensional $\U$-module $M$, 
the composition map
\[
\mc{T} := \Upsilon\circ \widetilde{\zeta} \circ T_{w_0}: M \longrightarrow M
\] 
is a $\Ui$-module isomorphism.
\end{thm}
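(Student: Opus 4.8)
The plan is to verify that $\mc{T} = \Upsilon \circ \widetilde{\zeta} \circ T_{w_0}$ intertwines the $\bun$-action on $M$ with itself, i.e., $\mc{T}(\imath(u)\,m) = \imath(u)\,\mc{T}(m)$ for every generator $u$ of $\bun$ and every $m \in M$; since $\mc{T}$ is visibly invertible (each of the three factors is an automorphism of $M$, using that $\Upsilon \ov{\Upsilon} = 1$ from Corollary~\ref{cor:Upsiloninv} and that $\Upsilon$ acts invertibly on a finite-dimensional module), this will establish the theorem. The key observation is that the composition $\widetilde{\zeta} \circ T_{w_0}$ should be analyzed first on its own: I would show that $T_{w_0}$ conjugates $\imath(u)$ into something close to $\overline{\imath(\Bbar(u))}$ up to the correction supplied by $\widetilde{\zeta}$, and then $\Upsilon$ absorbs the remaining discrepancy via its defining property $\imath(\Bbar(u))\Upsilon = \Upsilon \,\Abar(\imath(u))$ from Theorem~\ref{thm:Upsilon} (equivalently \eqref{eq:intertw}).

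First I would compute, using Lemma~\ref{lem:Tw0}, the action of $T_{w_0}$ on the images $\imath(\bk_{\alpha_i})$, $\imath(\be_{\alpha_i})$, $\imath(\bff_{\alpha_i})$ and $\imath(\bt)$ under $\imath$. For instance $T_{w_0}(E_{\alpha_i}) = -F_{\alpha_{-i}}K_{\alpha_{-i}}$ and $T_{w_0}(F_{\alpha_{-i}}) = -K^{-1}_{\alpha_i}E_{\alpha_i}$, so $T_{w_0}$ sends $\imath(\be_{\alpha_i}) = E_{\alpha_i} + K^{-1}_{\alpha_i}F_{\alpha_{-i}}$ to a $K$-conjugate of $F_{\alpha_{-i}} + E_{\alpha_i}K^{-1}_{\alpha_i}$-type expression; the $K$-factors and signs are exactly what the function $\zeta$ in \eqref{eq:zeta0}--\eqref{eq:zeta} is designed to cancel when one passes through $\widetilde{\zeta}$ on a weight vector (this is why $\zeta$ is built from the exponents $(\alpha_i - \alpha_{-i}, -)$ matching the $\bk_{\alpha_i}$-weights, with the special $-q$ factor for $\alpha_0$ matching the extra $q$ in $\imath(\bt)$). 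Concretely, for each generator $u$ I expect to obtain an identity of the shape $\widetilde{\zeta}\, T_{w_0}\, \imath(u) = \overline{\imath(u)}\, \widetilde{\zeta}\, T_{w_0}$ as operators on $M$, where $\overline{\imath(u)}$ denotes the bar-image computed in $\U$; here one uses that $T_{w_0}$ permutes weight spaces by $-w_0 = \theta$ and that $\widetilde\zeta$ is the scalar $\zeta(\mu)$ on $M_\mu$, so the $K$-eigenvalues produced by commuting $K$'s past weight vectors get converted into ratios $\zeta(\mu)/\zeta(\mu')$ of the prescribed form. The generator $\bt$ requires the most care because of the three-term expression $E_{\alpha_0} + qF_{\alpha_0}K^{-1}_{\alpha_0} + K^{-1}_{\alpha_0}$ and the unconventional $\bt^2$-relations, but the $-q$ normalization in \eqref{eq:zeta} was chosen precisely for this.

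Having the identity $\widetilde{\zeta}\, T_{w_0}\, \imath(u) = \overline{\imath(u)}\, \widetilde{\zeta}\, T_{w_0}$, I would then apply $\Upsilon$ on the left and use Theorem~\ref{thm:Upsilon} in the form $\imath(\Bbar(u))\,\Upsilon = \Upsilon\,\Abar(\imath(u)) = \Upsilon\,\overline{\imath(u)}$, but one must be slightly careful: the identity to feed in should be applied to $u$ replaced so that $\Upsilon\,\overline{\imath(u)} = \imath(\Bbar(u))\,\Upsilon$; iterating the bar ($\Bbar$ on $\bun$) and using $\Bbar^2 = \id$, one can arrange $\Upsilon\,\overline{\imath(\Bbar(u))} = \imath(u)\,\Upsilon$, which is exactly \eqref{eq:intertw}. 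Combining, $\mc{T}\,\imath(\Bbar(u)) = \Upsilon\,\overline{\imath(\Bbar(u))}\,\widetilde\zeta\, T_{w_0} = \imath(u)\,\Upsilon\,\widetilde\zeta\, T_{w_0} = \imath(u)\,\mc{T}$; replacing $\Bbar(u)$ by $u$ (legitimate since $\Bbar$ is a bijection of generators up to the obvious fixed generators) gives $\mc T\,\imath(u) = \imath(u)\,\mc T$ for all $u \in \bun$, hence $\mc{T}$ is a $\bun$-module homomorphism, and being invertible it is an isomorphism. The main obstacle I anticipate is the bookkeeping in the middle step for the generator $\bt$: verifying $\widetilde{\zeta}\, T_{w_0}\, \imath(\bt) = \overline{\imath(\bt)}\, \widetilde{\zeta}\, T_{w_0}$ requires expanding $T_{w_0}$ on all three summands of $\imath(\bt)$, tracking how $\widetilde\zeta$ converts the resulting $K^{-1}_{\alpha_0}$-factors and the leftover $q$ into the scalar $-q$, and checking the signs line up — everything else reduces to the formal intertwining property of $\Upsilon$ already proved.
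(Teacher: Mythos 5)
Your overall plan matches the paper's proof: check generator by generator, using Lemma~\ref{lem:Tw0} to compute $T_{w_0}(\imath(u))$, the defining relations \eqref{eq:zeta0}--\eqref{eq:zeta} of $\zeta$ to convert $K$-factors into scalars, and then the intertwining property of $\Upsilon$ to pull $\imath(u)$ back out front. The invertibility observation is correct (and the paper also treats it as immediate). However, there is an error in the intermediate identity you predict, and the final substitution step is not quite sound as written.

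The identity $\widetilde{\zeta}\,T_{w_0}\,\imath(u) = \overline{\imath(u)}\,\widetilde{\zeta}\,T_{w_0}$ is false for $u = \bk_{\alpha_i}$: since $T_{w_0}(K_{\alpha_i}K^{-1}_{\alpha_{-i}}) = K_{\alpha_i}K^{-1}_{\alpha_{-i}}$ and $\widetilde{\zeta}$ is diagonal on weight spaces, the left side is $\imath(\bk_{\alpha_i})\,\widetilde{\zeta}\,T_{w_0}$, while the right side is $\imath(\bk^{-1}_{\alpha_i})\,\widetilde{\zeta}\,T_{w_0}$. The correct intermediate identity, which is what the paper's computation verifies (this is exactly the content of steps $(a)$ and $(c)$ there), is
\[
\widetilde{\zeta}\,T_{w_0}\,\imath(u) \;=\; \overline{\imath(\Bbar(u))}\,\widetilde{\zeta}\,T_{w_0} \;=\; \ov{\imath}(u)\,\widetilde{\zeta}\,T_{w_0},
\]
which agrees with yours for the bar-fixed generators $\be_{\alpha_i}$, $\bff_{\alpha_i}$, $\bt$ but not for $\bk_{\alpha_i}$. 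With this corrected identity your argument becomes a one-liner via \eqref{eq:intertw}: $\mc{T}\,\imath(u) = \Upsilon\,\ov\imath(u)\,\widetilde{\zeta}\,T_{w_0} = \imath(u)\,\Upsilon\,\widetilde{\zeta}\,T_{w_0} = \imath(u)\,\mc{T}$, and the substitution maneuver is unnecessary. Relatedly, the closing sentence "replacing $\Bbar(u)$ by $u$ gives $\mc{T}\,\imath(u) = \imath(u)\,\mc{T}$" is a non sequitur: substituting $u \mapsto \Bbar(u)$ in $\mc{T}\,\imath(\Bbar(u)) = \imath(u)\,\mc{T}$ yields $\mc{T}\,\imath(u) = \imath(\Bbar(u))\,\mc{T}$, which is the desired intertwining only when $\Bbar(u) = u$. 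As it stands your argument establishes the claim for $\be_{\alpha_i}$, $\bff_{\alpha_i}$, $\bt$ but delivers the wrong statement for $\bk_{\alpha_i}$; the Cartan case is, of course, trivial to handle directly, as the paper does, but your write-up should not silently give the wrong relation there.
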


\begin{proof}
The map $\mc{T}$ is clearly a  $\Qq$-linear isomorphism. So it remains to verify that $\mc{T}$ commutes with the action of $\Ui$; 
we shall check this on generators of $\Ui$ by applying repeatedly Lemma \ref{lem:Tw0}. 

Let $m \in M_{w_0(\mu)}$ and $i \in \Ihf$. Then we have
\begin{align*}
\mc{T} (\bk_{\alpha_i}m) &=   \Upsilon\circ \widetilde{\zeta} \circ T_{w_0}(\imath(\bk_{\alpha_i}))T_{w_0}(m) \\
& = \Upsilon\circ \widetilde{\zeta} \circ T_{w_0}(K_{\alpha_i}K^{-1}_{\alpha_{-i}})T_{w_0}(m) \\
& = \Upsilon\circ \widetilde{\zeta} K_{\alpha_i}K^{-1}_{\alpha_{-i}}T_{w_0}(m) \\
& = (K_{\alpha_i}K^{-1}_{\alpha_{-i}}) \Upsilon\circ \widetilde{\zeta} \circ T_{w_0} (m)\\
& = \bk_{\alpha_i}\mc{T}(m).
\end{align*}
We also have
\begin{align*}
\mc{T} (\be_{\alpha_i}m) &= \Upsilon\circ \widetilde{\zeta} (T_{w_0}(\imath(\be_{\alpha_i})) T_{w_0}(m))\displaybreak[0]\\
&= \Upsilon\circ \widetilde{\zeta}  (T_{w_0}(E_{\alpha_i} + K^{-1}_{\alpha_i}F_{\alpha_{-i}})  T_{w_0}(m)) \displaybreak[0]\\
&=  - \Upsilon\circ \widetilde{\zeta}  (K^{-1}_{\alpha_i}(K_{\alpha_i}F_{\alpha_{-i}}
 + E_{\alpha_i})K_{\alpha_{-i}}  T_{w_0}(m) )\displaybreak[0]\\
& = - \Upsilon(\zeta(\mu-\alpha_{-i}))q^{(\alpha_{-i},\mu)-(\alpha_i, \mu-\alpha_{-i})} K_{\alpha_i} F_{\alpha_{-i}}T_{w_0}(m) )
\\& \quad-\Upsilon( \zeta(\mu+\alpha_i)q^{(\alpha_{-i},\mu)-(\alpha_i, \mu+{\alpha_i})}E_{\alpha_i}T_{w_0}(m))\displaybreak[0]\\
&\stackrel{(a)}{=}   \Upsilon (E_{\alpha_i} + K_{\alpha_i}F_{\alpha_{-i}}) \zeta(\mu) T_{w_0}(m) \displaybreak[0]\\
&\stackrel{(b)}{=} (E_{\alpha_i} + K^{-1}_{\alpha_i}F_{\alpha_{-i}}) \Upsilon \circ \widetilde{\zeta} \circ T_{w_0} (m)\displaybreak[0]\\
& = \be_{\alpha_i} \mc{T} (m).
\end{align*}
The identity $(a)$ above follows from the definition of $\zeta$ and the identity $(b)$ 
follows from the definition of $\Upsilon$. 

By a similar computation we have
$
\mc{T}\bff_{\alpha_i}(m) = \bff_{\alpha_i}\mc{T}(m)$. 

For the generator $\bt$, we have 
\begin{align*}
\mc{T} (\bt \, m) &=   \Upsilon\circ \widetilde{\zeta} \circ T_{w_0}(\imath(\bt))T_{w_0}(m) \displaybreak[0]\\
& = \Upsilon\circ \widetilde{\zeta} \circ T_{w_0}(E_{\alpha_0} 
+ qF_{\alpha_0}K^{-1}_{\alpha_0}+ K^{-1}_{\alpha_0}) T_{w_0}(m) \displaybreak[0]\\
&= \Upsilon\circ \widetilde{\zeta} (-  F_{\alpha_0}K_{\alpha_0} - q^{-1}E_{\alpha_0} 
 + K_{\alpha_0}) T_{w_0}(m) \displaybreak[0]\\
&= \Upsilon (-q \zeta(\mu -\alpha_0) q^{-1}F_{\alpha_0}K_{\alpha_0} 
 - q^{-1} \zeta({\mu + \alpha_0}) E_{\alpha_0} + \zeta(\mu)K_{\alpha_0})T_{w_0}(m) \displaybreak[0]\\
& \stackrel{(c)}{=} \Upsilon (q^{-1}F_{\alpha_0} K_{\alpha_0} + E_{\alpha_0} 
 + K_{\alpha_0}) \zeta(\mu) T_{w_0}(m)  \displaybreak[0]\\
& \stackrel{(d)}{=} (E_{\alpha_0} +q F_{\alpha_0}K^{-1}_{\alpha_0} 
 + K^{-1}_{\alpha_0})\Upsilon \circ \widetilde{\zeta} \circ T_{w_0}(m) \displaybreak[0]\\
& =\bt \mc{T} (m).
\end{align*}
Here the identity $(c)$ follows from the definition of $\zeta$ and identity $(d)$ follows from the definition of $\Upsilon$. 
Hence the theorem is proved. 
\end{proof}

\chapter{Quasi-$\mc{R}$-matrix for a quantum symmetric pair}  
 \label{sec:QuasiR}

In this chapter, we define a quasi-$\mc{R}$-matrix $\ThetaB$ for $\Ui$, which will play an analogous role as
Lusztig's quasi-$\mc{R}$-matrix for $\U$. Our $\ThetaB$ is constructed from the intertwiner $\Upsilon$
and $\Theta$. 

\section{Definition of $\ThetaB$} 

Recall Lusztig's quasi-$\mc{R}$-matrix $\ThetaA$ from \eqref{eq:ThetaA}. 
It follows by Theorem \ref{thm:Upsilon} that $\Upsilon$ is 
a well-defined operator on finite-dimensional $\U$-modules. For any finite-dimensional $\U$-modules $M$ and $M'$, 
the action of $\Upsilon$ on $M \otimes M'$ is also well defined. So we shall use the formal notation $\Dupsilon$ 
to denote the action of $\Upsilon$ on $M \otimes M'$. Hence the operator
\begin{equation}   \label{eq:ThetaB}
\ThetaB := \Dupsilon \ThetaA(\Upsilon^{-1} \otimes 1)
\end{equation}
on $M \otimes M'$ is well defined. Note that $\ThetaB$ lies in (a completion of) $\U \otimes \U$. 
We shall prove in Proposition~
\ref{prop:ThetainBoA} that it actually lies in (a completion of) $\Ui \otimes \U$.

\begin{definition}
The element $\ThetaB$ is called the quasi-$\mc{R}$-matrix for the quantum symmetric pair $(\U, \Ui)$. 
\end{definition}

Recall that we set in Section~ \ref{subsec:f} that $\Abar({u}) = \ov{u}$ for all $u\in \U$, and in Section~\ref{sec:bun}
that $\Bbar(x) : = \ov{x} \in \Ui$ for $x \in \Ui$. 
We shall also set $\Abar(x) := \ov{\imath(x)} \in \U$ for $x \in \Ui$. 

Define $\ov{\Delta}: \Ui \rightarrow \Ui \otimes \U$ by 
$\ov{\Delta}(u) = (\Bbar \otimes \Abar) {\Delta(\Bbar(u))}$, for all $u \in \Ui$. 
Recall that the bar involution on $\Ui$ is not compatible with the bar involution 
on $\U$ through $\imath$ (see Remark~ \ref{rem:bars});
in particular the $\ov{\Delta}$ here does not coincide with the restriction to $\Ui$ of the map in the same notation
$\ov{\Delta}: \U \rightarrow \U \otimes \U$ in \cite[4.1.1]{Lu94}. 

\begin{prop}\label{prop:quasiTR}
Let $M$ and $M'$ be finite-dimensional $\U$-modules. As linear operators on $M \otimes M'$, we have 
$
\Delta(u)\ThetaB  = \ThetaB\,\ov{\Delta}(u), 
$
for all $u \in \Ui$.
\end{prop}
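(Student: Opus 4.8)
The plan is to deduce the identity from the two intertwining properties already at our disposal: the defining property of the intertwiner $\Upsilon$ (Theorem~\ref{thm:Upsilon}, in the equivalent form $\imath(u)\Upsilon = \Upsilon\,\ov\imath(u)$ of Remark~\ref{rem:intertw}, where $\ov\imath(u) = \Abar(\imath(\Bbar(u)))$) and the intertwining property \eqref{eq:propThetaA} of Lusztig's quasi-$\mc{R}$-matrix, namely $\Delta(v)\,\ThetaA = \ThetaA\,(\Abar\otimes\Abar)\Delta(\Abar(v))$ for $v\in\U$. Fix finite-dimensional $\U$-modules $M$ and $M'$ and $u\in\bun$; all operators below act on $M\otimes M'$, with $\Dupsilon$ denoting the action of $\Upsilon$ and $\Upsilon\otimes 1$ its action on the first tensor factor, and $\ThetaB = \Dupsilon\,\ThetaA\,(\Upsilon^{-1}\otimes 1)$.

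First I would push $\Delta(u)$ through $\Dupsilon$. Applying the $\U$-action on $M\otimes M'$ to the identity $\imath(u)\Upsilon = \Upsilon\,\ov\imath(u)$ in $\widehat{\U}$, and using Proposition~\ref{prop:coproduct} to identify the action of $\Delta\circ\imath$ with that of $(\imath\otimes 1)\circ\Delta$, one gets $\Delta(u)\,\Dupsilon = \Dupsilon\,\Delta(\ov\imath(u))$, where now $\Delta(\ov\imath(u))$ is Lusztig's coproduct of the $\U$-element $\ov\imath(u)$. Next I would invoke \eqref{eq:propThetaA} with $v = \ov\imath(u)$: since $\Abar(\ov\imath(u)) = \imath(\Bbar(u))$, this gives $\Delta(\ov\imath(u))\,\ThetaA = \ThetaA\,(\Abar\otimes\Abar)\Delta(\imath(\Bbar(u)))$. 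Combining the two steps yields $\Delta(u)\,\ThetaB = \Dupsilon\,\ThetaA\,(\Abar\otimes\Abar)\Delta(\imath(\Bbar(u)))\,(\Upsilon^{-1}\otimes 1)$.

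It remains to identify the middle operator. Writing the coideal coproduct as $\Delta(\Bbar(u)) = \sum v_{(1)}\otimes w_{(2)}$ with $v_{(1)}\in\bun$ and $w_{(2)}\in\U$, one has $(\Abar\otimes\Abar)\Delta(\imath(\Bbar(u))) = \sum \ov{\imath(v_{(1)})}\otimes\ov{w_{(2)}}$, whereas by definition $\ov\Delta(u) = (\Bbar\otimes\Abar)\Delta(\Bbar(u)) = \sum \Bbar(v_{(1)})\otimes\ov{w_{(2)}}$, acting on $M\otimes M'$ as $\sum \imath(\Bbar(v_{(1)}))\otimes\ov{w_{(2)}}$. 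Applying Theorem~\ref{thm:Upsilon} once more, now in the form $\imath(\Bbar(v)) = \Upsilon\,\ov{\imath(v)}\,\Upsilon^{-1}$ on the $M$-factor, gives $\ov\Delta(u) = (\Upsilon\otimes 1)\big(\sum\ov{\imath(v_{(1)})}\otimes\ov{w_{(2)}}\big)(\Upsilon^{-1}\otimes 1)$, that is, $(\Abar\otimes\Abar)\Delta(\imath(\Bbar(u))) = (\Upsilon^{-1}\otimes 1)\,\ov\Delta(u)\,(\Upsilon\otimes 1)$. Substituting this into the previous display, the adjacent factors $(\Upsilon\otimes 1)(\Upsilon^{-1}\otimes 1)$ cancel and we are left with $\Delta(u)\,\ThetaB = \Dupsilon\,\ThetaA\,(\Upsilon^{-1}\otimes 1)\,\ov\Delta(u) = \ThetaB\,\ov\Delta(u)$. (Since the set of $u$ satisfying the assertion is a subalgebra of $\bun$, one could alternatively restrict to the generators of $\bun$, but the computation above is uniform.)

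The hard part is not any single computation but keeping the bookkeeping straight: three distinct bar-type maps intervene — the bar involution $\Abar$ on $\U$, the bar involution $\Bbar$ on $\bun$, and the auxiliary map $x\mapsto\ov{\imath(x)}$ on $\bun$ — and by Remark~\ref{rem:bars} the embedding $\imath$ intertwines none of them directly. Each of the two uses of the $\Upsilon$-identity is precisely what repairs one such failure of compatibility (once to move $\Delta(u)$ past $\Dupsilon$, once to reconcile $(\Abar\otimes\Abar)\Delta\circ\imath\circ\Bbar$ with $\ov\Delta$), and they bracket the single use of \eqref{eq:propThetaA} symmetrically so that everything collapses. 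Provided the substitutions are made in this order, nothing beyond these two facts and the Hopf-algebra bookkeeping of Proposition~\ref{prop:coproduct} is required.
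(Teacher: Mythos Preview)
Your proof is correct and is essentially the same computation as the paper's, run in the reverse direction: the paper starts from $\ThetaB\,\ov\Delta(u)$ and applies the $\Upsilon$-intertwining identity (Theorem~\ref{thm:Upsilon}) on the first tensor factor, then \eqref{eq:propThetaA}, then the $\Upsilon$-identity again on the full tensor via $\Dupsilon$, arriving at $\Delta(u)\,\ThetaB$. Your three steps are precisely these three moves read backwards, with the same bookkeeping via Proposition~\ref{prop:coproduct}.
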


\begin{proof}
For $u\in \Ui$, we set $\Delta(\ov {u} ) = \sum u_{(1)} \otimes u_{(2)} \in \Ui \otimes \U$. Then, for $m \in M$,  $m' \in M'$, we have 
\begin{align*}
\Dupsilon \ThetaA(\Upsilon^{-1} \otimes 1) \ov{\Delta(\ov {u})}(m \otimes m')
&=  \Dupsilon \ThetaA(\sum \Upsilon^{-1} \imath(\ov{u_{(1)}}) \otimes \ov{u_{(2)}})(m \otimes m')\\
&\stackrel{(a)}{=} \Dupsilon \ThetaA (\sum \ov {\imath( u_{(1)})} \otimes \ov{u_{(2)}}) (\Upsilon^{-1} \otimes 1) (m \otimes m')\\
&\stackrel{(b)}{=}  \Dupsilon \Delta \big(\ov{\imath(\ov{u})} \big) \ThetaA (\Upsilon^{-1} \otimes 1) (m \otimes m')\\
& \stackrel{(c)}
= \Delta(u) \Dupsilon \ThetaA(\Upsilon^{-1} \otimes 1) ( m\otimes m').
\end{align*}
The identities $(a)$ and $(c)$ follow from Theorem~ \ref{thm:Upsilon} and the identity $(b)$ 
follows from \eqref{eq:propThetaA}. 
Note that the bar-notation above translates into the  $\psi$-notation as follows:

$\ov{u} =\Bbar(u)$, $\ov{u_{(1)}} = \Bbar(u_{(1)})$, $\ov{u_{(2)}} =\Abar(u_{(2)})$,
 $\ov {\imath( u_{(1)})} =\Abar(\imath( u_{(1)}))$, 
$\ov{\imath(\ov{u})} = \Abar(\imath(\Bbar(u)))$.

The proposition is proved. 
\end{proof}

\section{Normalizing $\ThetaB$}
\label{subsec:normalTheta}

Our next goal is to understand $\ThetaB$ in a precise sense as an element in a completion of $\U\otimes \U^-$
instead of merely as well-defined operators on $M\otimes M'$ for finite-dimensional $\U$-modules $M, M'$. 

 Let $B =\{b\}$ be a basis of $\U^-$ such that $B_\mu = B \cap \U^-_{-\mu}$ is a basis of $\U^-_{-\mu}$ for each $\mu$. 
 Let $B^* = \{b^*\}$ be the basis of  $\U^-$ dual to $B$ with respect to the bilinear form $(\cdot, \cdot)$
 in Section~ \ref{subsec:f}. For each $N \in \N$, define the $\Q(q)$-linear truncation map 
 $tr_{\leq N}: \fprime \rightarrow \fprime$ such that, for any 
 $i_1, \ldots, i_k \in \I$,
\begin{equation} \label{eq:trN}
tr_{\leq N} (F_{\alpha_{i_1}}\dots F_{\alpha_{i_k}})= 
\begin{cases}
F_{\alpha_{i_1}}\dots F_{\alpha_{i_k}}, \quad &\text{ if } k \leq N,\\
0, \quad &\text{ if } k > N.
\end{cases}
\end{equation}
This induces a truncation map on $\U^- =\fprime/{\bf I}$, also denoted by $tr_{\leq N}$, since ${\bf I}$ is homogeneous. 
Recalling $\ThetaA$ from \eqref{eq:ThetaA}, we denote
\[
\ThetaA_{\leq N} : =\sum_{\hgt(\mu) \leq N}\ThetaA_\mu.
\]
Then we define
\begin{equation}   \label{eq:ThetaleqN}
\ThetaB_{\leq N} := \sum_{\mu}id \otimes tr_{\leq N}(\Delta(\Upsilon_{\mu})  \ThetaA_{\leq N}( \Upsilon^{-1} \otimes 1)),
\end{equation}
which is actually a finite sum, and hence
$\ThetaB_{\leq N}\in \U \otimes \U^-$ and $\ThetaB_{\leq 0} = 1\otimes 1$.
Define
\begin{equation}  \label{eq:ThetaN}
\ThetaB_{ N} : = \ThetaB_{\leq N} - \ThetaB_{\leq N-1} 
= 
\sum_{ b_\mu \in B_\mu, \hgt(\mu) = N} 
a^\mu \otimes b_\mu \in \U \otimes \U^- ,
\end{equation}
where it is understood that $\ThetaB_{\leq -1} = 0$. The following lemma is clear from weight consideration.

\begin{lem}\label{lem:ThetainBoA}
Let $M$ and $M'$ be finite-dimensional $\U$-modules. For all $m\in M$ and $m' \in  M'$, we have  
\[
\ThetaB(m\otimes m') = \ThetaB_{\leq N}(m \otimes m'), \quad \text{for } N \gg 0.
\]

\end{lem}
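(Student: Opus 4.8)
The plan is to exploit finite-dimensionality in the only way available: on $M\otimes M'$, every one of the infinite sums in sight ($\Upsilon$, $\Upsilon^{-1}$, $\Dupsilon$, $\ThetaA$) collapses to a finite sum when applied to a vector, and the passage from $\ThetaB$ to $\ThetaB_{\leq N}$ discards precisely those components whose $\U^-$-part in the second tensor slot has height exceeding $N$. So the two operators will agree as soon as that height bound is harmless, i.e.\ once $N$ is large compared to the (finitely many) weights of $M'$. Recall from \eqref{eq:ThetaB} that $\ThetaB=\Dupsilon\,\ThetaA\,(\Upsilon^{-1}\otimes 1)$ as an operator on $M\otimes M'$, and from \eqref{eq:ThetaleqN} that $\ThetaB_{\leq N}=\sum_{\mu}\id\otimes tr_{\leq N}\big(\Delta(\Upsilon_{\mu})\,\ThetaA_{\leq N}\,(\Upsilon^{-1}\otimes 1)\big)$.

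First I would fix the bound. Since $M$ and $M'$ are finite-dimensional, each has only finitely many weights; hence there is $N_0=N_0(M,M')\in\N$ such that every element of $\U^-_{-\mu}$ with $\hgt(\mu)>N_0$ annihilates both $M$ and $M'$. Note also that, $M$, $M'$ and $M\otimes M'$ all being finite-dimensional, the elements $\Upsilon,\Upsilon^{-1}\in\widehat{\U}^-$ act on them as honest finite-rank operators, so all operators below are legitimately defined.

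Next, fix $N\geq N_0$ and trace through $\ThetaB(m\otimes m')$. Applying $\Upsilon^{-1}\otimes 1$ replaces $m$ by a finite sum $m_1:=\Upsilon^{-1}(m)\in M$. Then $\ThetaA(m_1\otimes m')=\sum_{\nu}\ThetaA_{\nu}(m_1\otimes m')$, and since the second tensor factor of $\ThetaA_{\nu}$ lies in $\U^-_{-\nu}$, it annihilates $m'$ once $\hgt(\nu)>N_0$; hence only the terms with $\hgt(\nu)\leq N_0\leq N$ survive and $\ThetaA(m_1\otimes m')=\ThetaA_{\leq N}(m_1\otimes m')$. Writing $v:=\ThetaA_{\leq N}(m_1\otimes m')\in M\otimes M'$ and applying $\Dupsilon=\sum_{\mu}\Delta(\Upsilon_{\mu})$ (a finite sum on the finite-dimensional module $M\otimes M'$) gives
\[
\ThetaB(m\otimes m') = \Dupsilon(v) = \sum_{\mu}\Delta(\Upsilon_{\mu})\,\ThetaA_{\leq N}\,(\Upsilon^{-1}\otimes 1)(m\otimes m').
\]

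Finally I would compare with $\ThetaB_{\leq N}$: for each $\mu$, the element $\Delta(\Upsilon_{\mu})\,\ThetaA_{\leq N}\,(\Upsilon^{-1}\otimes 1)$ and its image under $\id\otimes tr_{\leq N}$ differ only in terms whose second tensor factor lies in $\U^-_{-\mu'}$ with $\hgt(\mu')>N\geq N_0$, and those annihilate $m'$, hence $m\otimes m'$. So $\id\otimes tr_{\leq N}\big(\Delta(\Upsilon_{\mu})\,\ThetaA_{\leq N}\,(\Upsilon^{-1}\otimes 1)\big)$ acts on $m\otimes m'$ exactly as $\Delta(\Upsilon_{\mu})\,\ThetaA_{\leq N}\,(\Upsilon^{-1}\otimes 1)$ does; summing over $\mu$ and combining with the previous display yields $\ThetaB_{\leq N}(m\otimes m')=\ThetaB(m\otimes m')$ for all $N\geq N_0$. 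The only point requiring care is the bookkeeping showing that each infinite sum reduces to a finite one upon evaluation — which is exactly the content of finite-dimensionality — and there is no genuine obstacle; this is why the statement is ``clear from weight consideration.''
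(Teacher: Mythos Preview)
Your proof is correct and is precisely an unpacking of the paper's one-line justification ``clear from weight consideration.'' The paper offers no further argument, and your careful bookkeeping of how finite-dimensionality forces each of $\Upsilon^{-1}$, $\ThetaA$, $\Dupsilon$, and the truncation $\id\otimes tr_{\leq N}$ to agree with its finite version once $N$ exceeds the height range of the weights of $M'$ (and of $M\otimes M'$) is exactly what that phrase means.
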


Note that any finite-dimensional $\U$-module is also a $\widehat{\U}$-module. 
\begin{lem}\label{lem:allfdM}
Let $u \in \widehat{\U}$ be an element that acts as zero on all finite-dimensional $\U$-modules. Then $u =0$.
\end{lem}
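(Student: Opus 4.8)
The plan is to show that an element $u\in\widehat{\U}$ annihilating every finite-dimensional $\U$-module is zero, by probing it on lowest weight vectors of highly dominant modules. Using the triangular decomposition of $\U$ and the definition of $\widehat{\U}$, write $u=\sum_{\mu\in\N\Pi}u_\mu$ with $u_\mu\in\U^+\U^0\U^-_{-\mu}$. For a finite-dimensional module $M$ and a weight vector $m\in M$ one has $u_\mu m=0$ whenever $\hgt(\mu)$ is large enough (depending only on $M$), since then the $\U^-_{-\mu}$-factor of $u_\mu$ already lowers weights out of $M$; hence $u$ acts on $M$ as the honest element $\sum_{\hgt(\mu)\le N}u_\mu\in\U$ for $N$ large, so that the $\widehat{\U}$-action on finite-dimensional modules is defined in the first place, and it suffices to prove: if $u$ kills every finite-dimensional module then $u_\mu=0$ for all $\mu$. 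Assume not and pick $\mu_0$ of minimal height among the $\mu$ with $u_\mu\neq0$.

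I would test $u$ on the simple lowest weight modules ${}^{\omega}L(\lambda)$ with $\lambda\in\Lambda^+$ very dominant; here the lowest weight vector $\xi=\xi_{-\lambda}$ has weight $-\lambda$. Fix a PBW monomial $x\in\U^+$ of weight $\mu_0$, so $x\xi$ has weight $-\lambda+\mu_0$. A weight count shows the $\U^-_{-\mu}$-factor of $u_\mu$ sends $x\xi$ to weight $-\lambda+\mu_0-\mu$, which is strictly below the lowest weight $-\lambda$ of ${}^{\omega}L(\lambda)$ unless $\mu\le\mu_0$; combined with $u_\mu=0$ for $\hgt(\mu)<\hgt(\mu_0)$ this forces $\mu=\mu_0$, so $u(x\xi)=u_{\mu_0}(x\xi)$, which vanishes by hypothesis. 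Since the $\U^-_{-\mu_0}$-factor of $u_{\mu_0}$ carries $x\xi$ into the one-dimensional lowest weight space $\Qq\xi$, expanding $u_{\mu_0}$ in a triangular basis gives an identity $\sum_l\Phi_l(x,\lambda)\,e_l^{+}\xi=0$ in ${}^{\omega}L(\lambda)$, where $\{e_l^{+}\}$ is a fixed basis of the finite-dimensional, degree-bounded subspace of $\U^+$ spanned by the $\U^+$-factors of $u_{\mu_0}$, and each $\Phi_l(x,\lambda)$ is a bilinear expression in the scalars $q^{(\beta,-\lambda)}$ coming from the $\U^0$-factors of $u_{\mu_0}$ and in the matrix coefficients $\chi_m(x,\lambda)$ defined by $c_m^{-}x\xi=\chi_m(x,\lambda)\xi$, $\{c_m^{-}\}$ a basis of $\U^-_{-\mu_0}$.

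To finish I would invoke three standard facts for $\lambda$ sufficiently dominant. First, ${}^{\omega}L(\lambda)$ agrees with the Verma module ${}^{\omega}M(\lambda)$ in all weights $-\lambda+\sigma$ with $\hgt(\sigma)$ below any prescribed bound, so the $e_l^{+}\xi$ are linearly independent and the displayed identity forces $\Phi_l(x,\lambda)=0$ for every $l$ and every PBW monomial $x$ of weight $\mu_0$. Second, under the same stabilization and after identifying $\U^{+}_{\mu_0}$ with $\U^-_{-\mu_0}$ via the involution $\omega$, the matrix $\big(\chi_m(x,\lambda)\big)$ equals, up to nonzero scalar factors, the Gram matrix of the bilinear form $(\cdot,\cdot)$ on $\U^-_{-\mu_0}$ of $\propref{prop:Luform}$, which is nondegenerate since its radical ${\bf I}$ is trivial in $\U^-$; hence the vanishing of all $\Phi_l(x,\cdot)$ forces the $\U^0$-coefficients of $\Phi_l$ to vanish at $-\lambda$. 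Third, as $\lambda$ ranges over the very dominant cone, the functions $\lambda\mapsto q^{(\beta,-\lambda)}$ attached to distinct $\beta$ are linearly independent there (as $q$ is transcendental), so those coefficients are all zero; unwinding the definitions gives $u_{\mu_0}=0$, contradicting the choice of $\mu_0$. Hence $u=0$.

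The main obstacle is the second point above: matching the matrix of the $\chi_m(x,\lambda)$ precisely with a nonzero multiple of Lusztig's form on $\U^-_{-\mu_0}$, and recording the stabilization of ${}^{\omega}L(\lambda)$ in bounded degree; both are routine but need careful bookkeeping, while the rest is pure weight counting. If one prefers, the argument can be shortened by quoting the known fact that the family of finite-dimensional $\U$-modules is faithful on $\U$: then only the reduction of the first paragraph together with the localization $u(x\xi)=u_{\mu_0}(x\xi)$ of the second is needed, after which $u_{\mu_0}\neq 0$ is detected directly on a single ${}^{\omega}L(\lambda)$.
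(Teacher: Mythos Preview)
Your detailed argument is essentially correct and is a reasonable way to fill in the paper's two-sentence proof, which merely cites Lusztig's faithfulness result for $\U$ and asserts that ``the lemma follows by weight consideration.'' The decomposition $u=\sum_\mu u_\mu$, the choice of $\mu_0$ of minimal height, and the isolation $u(x\xi)=u_{\mu_0}(x\xi)$ for $x\in\U^+_{\mu_0}$ are exactly the kind of weight manipulation the paper gestures at, so the approaches coincide in spirit; you simply unpack what the paper leaves implicit.

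Two corrections. First, your identification of $\big(\chi_m(x,\lambda)\big)$ with a scalar multiple of the Gram matrix of Lusztig's form $(\cdot,\cdot)$ on $\U^-_{-\mu_0}$ is not accurate: already for $\mu_0=\alpha_i$ one has $\chi(E_i,\lambda)=[(\alpha_i,\lambda)]$, which depends nontrivially on $\lambda$ and is not a fixed scalar times $(F_i,F_i)$. What you actually have is the matrix of the contravariant (Shapovalov) form on the weight space ${}^\omega M(\lambda)_{-\lambda+\mu_0}$; this is nondegenerate precisely when ${}^\omega M(\lambda)$ and ${}^\omega L(\lambda)$ coincide in that weight space, which holds for $\lambda$ sufficiently dominant relative to $\mu_0$. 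With that correction your three steps go through. Second, your proposed ``shortened'' argument at the end has a genuine gap: faithfulness of finite-dimensional modules tells you $u_{\mu_0}\neq 0$ is detected on \emph{some} vector in \emph{some} module, but gives no reason that vector has the special form $x\xi$ with $|x|=\mu_0$. Since the localization $u(x\xi)=u_{\mu_0}(x\xi)$ only constrains $u_{\mu_0}$ on such vectors, the contradiction does not close without the nondegeneracy input you already supplied in the detailed version.
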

\begin{proof}

It is well known that any element $u \in \U$ that acts as zero on all finite-dimensional $\U$-modules 
has to be $0$ (see \cite[Proposition 3.5.4]{Lu94}).
Hence the lemma follows by weight consideration.
\end{proof}
 
 We have the following fundamental property of $\ThetaB_N$.
 
\begin{prop}\label{prop:ThetainBoA}
For any $N \in \N$, we have $\ThetaB_{ N} \in \imath(\Ui) \otimes \U^-$.
\end{prop}

\begin{proof}
The identity in Proposition~ \ref{prop:quasiTR} for $u$ being one of the generators 
$\bk_{\alpha_i}$, $\be_{\alpha_i}$, $\bff_{\alpha_i}$,  and $\bt$ of $\Ui$ 
can be rewritten as the following identities (valid for all $N\ge 0$):
\begin{align*}
(\bk_{\alpha_i}  & \otimes K_{\alpha_i}K^{-1}_{\alpha_{-i}}) \ThetaB_{N}(m \otimes m') 
 =\ThetaB_{N} (\bk_{\alpha_i} \otimes K_{\alpha_i}K^{-1}_{\alpha_{-i}})(m \otimes m')\displaybreak[0],
\end{align*}
\begin{align*}
( (\bk_{\alpha_i}  & \otimes F_{\alpha_i}K^{-1}_{\alpha_{-i}}) \ThetaB_{N-1}
 +  (\bff_{\alpha_i} \otimes K^{-1}_{\alpha_{-i}})\ThetaB_{N} +(1 \otimes E_{\alpha_{-i}})\ThetaB_{N+1})(m \otimes m')\\
& =(\ThetaB_{N-1}(\bk^{-1}_{\alpha_i} \otimes F_{\alpha_i}K_{\alpha_{-i}})+ \ThetaB_{N}(\bff_{\alpha_i}
 \otimes  K_{\alpha_{-i}})+ \ThetaB_{N}( 1 \otimes E_{\alpha_{-i}}))(m \otimes m')\displaybreak[0],\end{align*}
\begin{align*}
( ( \bk^{-1}_{\alpha_i} & \otimes K^{-1}_{\alpha_i}F_{\alpha_{-i}}) \ThetaB_{ N-1}+( \be_{\alpha_i}
 \otimes K^{-1}_{\alpha_{i}}) \ThetaB_{ N} +  (1\otimes E_{\alpha_{i}} )\ThetaB_{ N+1})(m \otimes m')\\
& =(\ThetaB_{ N-1} (\bk_{\alpha_i} \otimes K_{\alpha_i}F_{\alpha_{-i}})+\ThetaB_{ N}( \be_{\alpha_i}
  \otimes  K_{\alpha_{i}})+ \ThetaB_{ N+1} (1\otimes E_{\alpha_{i}}))(m \otimes m') \displaybreak[0],\end{align*}
\begin{align*}
((1 & \otimes qF_{\alpha_0}K^{-1}_{\alpha_0}) \ThetaB_{ N-1}  + (\bt  \otimes K^{-1}_{\alpha_0} )\ThetaB_{ N}
  + (1  \otimes E_{\alpha_0})\ThetaB_{ N+1})(m \otimes m')\\
& =(\ThetaB_{ N-1} (1 \otimes q^{-1}F_{\alpha_0}K_{\alpha_0}) +  \ThetaB_{ N} (\bt \otimes K_{\alpha_0})
 + \ThetaB_{ N+1 } ( 1\otimes E_{\alpha_0}))(m \otimes m')\displaybreak[0],
\end{align*}
for all $0\neq i \in \Ihf$, $m\in M$ and $m' \in M'$, where $M, M'$ are finite-dimensional $\U$-modules. Write 
$$
\ThetaB_{N}= \sum_{b_\mu\in B_\mu, \hgt(\mu) = N}a^\mu \otimes b_\mu \in \U  \otimes \U^-,
$$ 
where $a_\mu$'s are fixed once $B$ is chosen. 
Thanks to Lemma \ref{lem:allfdM}, the above four identities for all $M,M'$ are equivalent to the following four identities: 
\begin{align}
&\sum_{\stackrel{b_\mu}{\hgt(\mu) =N}}\imath(\bk_{\alpha_i})a^\mu \otimes K_{\alpha_i}K^{-1}_{\alpha_{-i}}b_\mu 
= \sum_{\stackrel{b_\mu}{\hgt(\mu) =N}}a^\mu \imath(\bk_{\alpha_i}) \otimes b_\mu K_{\alpha_i}K^{-1}_{\alpha_{-i}}\displaybreak[0],
 \label{eq:auxThetainBoA1}
  \\ \notag
 \\ 
&\sum_{\stackrel{b_{\mu''}}{\hgt(\mu'')=N-1}} \!\!\!\! \imath(\bk_{\alpha_i})a^{\mu''} \otimes F_{\alpha_i}K^{-1}_{\alpha_{-i}}b_{\mu''}
+\!\!\!\!  \sum_{\stackrel{b_{\mu'}}{\hgt(\mu')=N}}\!\!\!\! \imath(\bff_{\alpha_i})a^{\mu'} \otimes K^{-1}_{\alpha_{-i}}b_{\mu'} 
+ \!\!\!\! \sum_{\stackrel{b_\mu}{\hgt(\mu) =N+1}}\!\!\!\! a^{\mu} \otimes E_{\alpha_{-i}}b_{\mu}
 \label{eq:auxThetainBoA2}\\
=& 
\sum_{\stackrel{b_{\mu''}}{\hgt(\mu'')=N-1}} \!\!\!\! a^{\mu''}\imath(\bk^{-1}_{\alpha_i}) \otimes b_{\mu''} F_{\alpha_i}K_{\alpha_{-i}}
+ \!\!\!\! \sum_{\stackrel{b_{\mu'}}{\hgt(\mu')=N}} \!\!\!\! a^{\mu'}\imath(\bff_{\alpha_i}) \otimes b_{\mu'} K_{\alpha_{-i}}
+ \!\!\!\! \sum_{\stackrel{b_\mu}{\hgt(\mu) =N+1}}\!\!\!\! a^{\mu} \otimes b_{\mu} E_{\alpha_{-i}}\displaybreak[0],
 \notag
\\ \notag
\\
& \sum_{\stackrel{b_{\mu''}}{\hgt(\mu'')=N-1}}\!\!\!\! \imath(\bk^{-1}_{\alpha_i})a^{\mu''} \otimes K^{-1}_{\alpha_i}F_{\alpha_{-i}}b_{\mu''}
+ \!\!\!\! \sum_{\stackrel{b_{\mu'}}{\hgt(\mu')=N}} \!\!\!\! \imath(\be_{\alpha_i})a^{\mu'} \otimes K^{-1}_{\alpha_{i}}b_{\mu'} 
+\!\!\!\! \sum_{\stackrel{b_\mu}{\hgt(\mu) =N+1}} \!\!\!\! a^{\mu} \otimes E_{\alpha_{i}}b_{\mu}
\\
=&\notag
\sum_{\stackrel{b_{\mu''}}{\hgt(\mu'')=N-1}}\!\!\!\! a^{\mu''}\imath(\bk_{\alpha_i}) \otimes b_{\mu''} K_{\alpha_i}F_{\alpha_{-i}}
+ \!\!\!\! \sum_{\stackrel{b_{\mu'}}{\hgt(\mu')=N}} \!\!\!\! a^{\mu'}\imath(\be_{\alpha_i}) \otimes b_{\mu'} K_{\alpha_{i}}
+\!\!\!\! \sum_{\stackrel{b_\mu}{\hgt(\mu) =N+1}} \!\!\!\!  a^{\mu}\otimes b_{\mu} E_{\alpha_{i}} \displaybreak[0],
 \\ \notag
 \\
& \sum_{\stackrel{b_{\mu''}}{\hgt(\mu'')=N-1}} a^{\mu''} \otimes qF_{\alpha_0}K^{-1}_{\alpha_0}b_{\mu''} 
+\sum_{\stackrel{b_{\mu'}}{\hgt(\mu')=N}}\imath(\bt) a^{\mu'} \otimes K^{-1}_{\alpha_0}b_{\mu'}
+ \sum_{\stackrel{b_\mu}{\hgt(\mu) =N+1}} a^{\mu} \otimes E_{\alpha_0}b_{\mu}\label{eq:auxThetainBoA4}\\
=&\notag\sum_{\stackrel{b_{\mu''}}{\hgt(\mu'')=N-1}} a^{\mu''} \otimes b_{\mu''} q^{-1}F_{\alpha_0}K_{\alpha_0} 
+ \sum_{\stackrel{b_{\mu'}}{\hgt(\mu')=N}} a^{\mu'}\imath(\bt) \otimes b_{\mu'} K_{\alpha_0}
+ \sum_{\stackrel{b_\mu}{\hgt(\mu) =N+1}} a^{\mu} \otimes b_{\mu} E_{\alpha_0}\displaybreak[0].
\end{align}
A straighforward rewriting of \eqref{eq:auxThetainBoA2}-\eqref{eq:auxThetainBoA4} involves
the commutators  $[E_{\alpha_k}, b_\mu]$ for various $k\in \I$,
which can be expressed in terms of $_{k}r$ and $r_{k}$
by invoking \cite[Proposition 3.1.6]{Lu94}. In this way, using the PBW theorem for $\U$ 
we rewrite the three identities \eqref{eq:auxThetainBoA2}-\eqref{eq:auxThetainBoA4}
as the following six identities:
\begin{align}
&
{\tiny
 \!\! \sum_{\stackrel{b_{\mu''}}{\hgt(\mu'')=N-1}} \!\!\!\!\!\! \imath(\bk_{\alpha_i}) a^{\mu''} \otimes F_{\alpha_i}b_{\mu''}
+ \!\!\!\!\!\! \sum_{\stackrel{b_{\mu'}}{\hgt(\mu')=N}}\!\!\!\!\!\! \imath(\bff_{\alpha_i}) a^{\mu'} \otimes b_{\mu'}
+ \frac{q^{( \alpha_{-i} ,\mu +\alpha_{-i})}}{q^{-1} -q}\!\!\!\!  \sum_{\stackrel{b_\mu}{\hgt(\mu) =N+1}}\!\!\!\! a^{\mu} \otimes r_{-i}(b_{\mu})  =0\displaybreak[0], 
 }
  \label{eq:f}
  \\ \notag
  \\
&
{\tiny
\!\!\sum_{\stackrel{b_{\mu''}}{\hgt(\mu'')=N-1}}\!\!\!\!\!\!  a^{\mu''} \imath(\bk^{-1}_{\alpha_i}) \otimes b_{\mu''} F_{\alpha_i}
+ \!\!\!\!\!\!  \sum_{\stackrel{b_{\mu'}}{\hgt(\mu')=N}} \!\!\!\!\!\!  a^{\mu'} \imath(\bff_{\alpha_i}) \otimes b_{\mu'}
+ \frac{q^{( \alpha_{-i} , \mu +\alpha_{-i})}}{q^{-1} -q}\!\!\!\! \sum_{\stackrel{b_\mu}{\hgt(\mu) =N+1}}\!\!\!\! a^{\mu} \otimes \!\,_{-i}r(b_{\mu}) =0\displaybreak[0],
 }
  \notag
  \\ \notag
  \\ 
&
{\tiny
\sum_{\stackrel{b_{\mu''}}{\hgt(\mu'')=N-1}}\!\!\! \imath(\bk^{-1}_{\alpha_i})a^{\mu''} \otimes F_{\alpha_{-i}}b_{\mu''}
+ \!\!\!\!\!\!  \sum_{\stackrel{b_{\mu'}}{\hgt(\mu')=N}} \!\!\! \imath(\be_{\alpha_i}) a^{\mu'} \otimes b_{\mu'}
+\frac{q^{(\alpha_i , \mu+\alpha_{i})}}{q^{-1}-q}\!\!\! \sum_{\stackrel{b_\mu}{\hgt(\mu) =N+1}}\!\!\! a^{\mu}\otimes r_{i}(b_{\mu}) = 0\displaybreak[0],
 }
  \label{eq:e}
  \\\notag
   \\
&
{\tiny
\sum_{\stackrel{b_{\mu''}}{\hgt(\mu'')=N-1}}\!\!\! a^{\mu''} \imath(\bk_{\alpha_i}) \otimes b_{\mu''} F_{\alpha_{-i}}
+\!\!\!\!\!\!  \sum_{\stackrel{b_{\mu'}}{\hgt(\mu')=N}}\!\!\!  a^{\mu'}\imath(\be_{\alpha_i}) \otimes b_{\mu'}
+\frac{q^{(\alpha_i , \mu+\alpha_{i})}}{q^{-1}-q}\!\!\! \sum_{\stackrel{b_\mu}{\hgt(\mu) =N+1}} \!\!\! a^{\mu}\otimes \! \,_{i}r(b_{\mu}) = 0\displaybreak[0],
 }
  \notag
  \\\notag
  \\ 
&
{\tiny
\sum_{\stackrel{b_{\mu''}}{\hgt(\mu'')=N-1}}\!\!\! a^{\mu''} \otimes q^{-1}F_{\alpha_0}b_{\mu''}
+\sum_{\stackrel{b_{\mu'}}{\hgt(\mu')=N}}\!\!\! \imath(\bt) a^{\mu'} \otimes b_{\mu'}
+ \frac{q^{(\alpha_0 , \mu+\alpha_0)}}{q^{-1} - q}\!\!\! \sum_{\stackrel{b_\mu}{\hgt(\mu) =N+1}}\!\!\! a^{\mu} \otimes r_{0}(b_{\mu}) = 0\displaybreak[0],
 }
  \label{eq:t}
  \\\notag
   \\
&
{\tiny
\sum_{\stackrel{b_{\mu''}}{\hgt(\mu'')=N-1}}\!\!\! a^{\mu''}\otimes q^{-1}b_{\mu''}  F_{\alpha_0} 
+\sum_{\stackrel{b_{\mu'}}{\hgt(\mu')=N}} \!\!\!a^{\mu'} \imath(\bt) \otimes b_{\mu'}
+ \frac{q^{(\alpha_0 , \mu+\alpha_0)}}{q^{-1} - q}\!\!\! \sum_{\stackrel{b_\mu}{\hgt(\mu) =N+1}}\!\!\! a^{\mu} \otimes _{0}r(b_{\mu}) = 0\displaybreak[0].
 \notag
 }
\end{align}

So far we have the flexibility in choosing the dual bases $B$ and $B^*$ of $\U^-$. 
Now let us be more specific by fixing 
$B^*=\{b^*\}$ to be a monomial basis of $\U^-$ which consists of monomials 
in the Chevalley generators $F_{\alpha_i}$; for example, we can take the $\U^-$-variant of
the basis $\{E(({\bf c}))\}$ in \cite[pp.476]{Lu90} where Lusztig worked with $\U^+$.
Let $B =\{b\}$ be the dual basis of $B^*$  
with respect to $(\cdot, \cdot)$, and write $B_\mu =B\cap \U_{-\mu} =\{b_\mu\}$ as before. 
Fix an arbitrary basis element $\tilde{b}_\mu \in B_\mu$ (with $\mu \neq 0$),
with its dual basis element 
written as  $\tilde{b}^*_\mu =xF_{\alpha_{-i}}$, 
for some $x \in \U^-$ and some $i$. We now apply $1 \otimes (x, \cdot)$ to the identities \eqref{eq:f}, \eqref{eq:e} and \eqref{eq:t}, 
depending on whether $i$ is positive, zero or negative. 

We will treat in detail the case when $i$ is positive, 
while the other cases are similar. Applying $1 \otimes (x, \cdot)$ to the identity $\eqref{eq:f}$ above, we have
\begin{align*}
\sum_{\stackrel{b_{\mu''}}{\hgt(\mu'')=N-1}}   & \imath(\bk_{\alpha_i})a^{\mu''} \otimes (x, F_{\alpha_i}b_{\mu''})
+ \sum_{\stackrel{b_{\mu'}}{\hgt(\mu')=N}} \imath(\bff_{\alpha_i})a^{\mu'} \otimes (x, b_{\mu'})\\
&
+ \frac{q^{( \alpha_{-i} , \mu +\alpha_{-i})}}{q^{-1} -q}\sum_{\stackrel{b_{\mu}}{\hgt(\mu)=N+1}} a^{\mu} \otimes (x, r_{-i}(b_{\mu})) = 0.
\end{align*}
Since $(x, r_{-i}(b_{\mu})) = (1- q^{-2} )(xF_{\alpha_{-i}}, b_\mu) = (1 - q^{-2}) \delta_{b_{\mu},\tilde{b}_\mu}$, we have
\begin{align}  \label{eq:atilde}
 \sum_{\stackrel{b_{\mu''}}{\hgt(\mu'')=N-1}} & \imath(\bk_{\alpha_i}) a^{\mu''} (x, F_{\alpha_i}b_{\mu''})
\notag \\
&+ \sum_{\stackrel{b_{\mu'}}{\hgt(\mu')=N}} \imath(\bff_{\alpha_i}) a^{\mu'} (x, b_{\mu'})
- q^{(\alpha_{-i} , \mu +\alpha_{-i})-1}\tilde{a}^{\mu}  = 0.
\end{align}
By an easy induction on height based on \eqref{eq:atilde} 
(where the base case is $\ThetaB_{0}= 1 \otimes 1$), we conclude that 
$
a^\mu \in \imath(\Ui) \text{ for all } \mu$; that is, $\ThetaB_{ N} \in \imath(\Ui) \otimes \U^-$.
\end{proof}

By Proposition~\ref{prop:ThetainBoA}
we have $\imath^{-1} (\ThetaB_N) \in \Ui \otimes \U$ for each $N$. 
For any finite-dimensional $\U$-modules $M$ and $M'$, 
the action of $\imath^{-1} (\ThetaB_N)$ coincides with the action of $\ThetaB_N$
on $M \otimes M'$. 

{\bf As we only need to use $\imath^{-1} (\ThetaB_N) \in \Ui \otimes \U$ rather than $\ThetaB_N$,
we shall write $\ThetaB_N$ in place of  $\imath^{-1} (\ThetaB_N)$  
and regard $\ThetaB_N \in \Ui \otimes \U$ from now on.}  

\section{Properties of $\ThetaB$}

Let $(\Ui \otimes \U^-)^\wedge$ be the completion of the $\Qq$-vector space $\Ui \otimes \U^-$ 
with respect to the following descending sequence of subspaces 
\[
{H}_N^\imath := \Ui \otimes \Big(\sum_{\hgt(\mu) \geq N}\U_{-\mu}^- \Big), \quad \text{ for } N \ge 1.
\]
The $\Qq$-algebra structure on $\Ui \otimes \U^-$ extends by continuity to
a $\Qq$-algebra structure on $(\Ui \otimes \U^-)^\wedge$, and we have an embedding
$\Ui \otimes \U^- \hookrightarrow (\Ui \otimes \U^-)^\wedge$.

The actions of $\sum_{N\ge 0} \ThetaB_N$ (which is well defined by Lemma \ref{lem:ThetainBoA})
and of $\ThetaB$ coincide on any tensor product of finite-dimensional $\U$-modules. 
From now on, we may and shall identify 
\begin{equation}  \label{eq:ThTh}
\ThetaB = \sum_{N\ge 0}  \ThetaB_N \in (\Ui \otimes \U^-)^\wedge,
\end{equation}
(or alternatively, one may regard this as a normalized definition of $\ThetaB$).

The following theorem is a generalization of Proposition~ \ref{prop:quasiTR}.

\begin{thm}\label{thm:ThetaB}
Let $L$ be a finite-dimensional  $\Ui$-module and $M$ be a finite-dimensional $\U$-module. 
Then as linear operators on $L \otimes M$, we have
\[
\Delta(u)\ThetaB = \ThetaB\ov{\Delta}(u), \qquad \text{ for all } u \in \Ui.
\]
\end{thm}

\begin{proof}
By the identities \eqref{eq:auxThetainBoA1}-\eqref{eq:auxThetainBoA4} 
in the proof of Proposition \ref{prop:ThetainBoA}, 
there exists $N_0>0$ (depending on $L$ and $M$) such that for $N\geq N_0$ we have
\begin{equation}\label{eq:ThN}
\Delta(u) \ThetaB_{\leq N} - \ThetaB_{\leq N} \ov{\Delta}(u)
 = 0 \qquad \text{ on } L\otimes M,
\end{equation}
where  $u$ is one of the generators 
$\bk_{\alpha_i}$, $\be_{\alpha_i}$, $\bff_{\alpha_i}$,  and $\bt$ of $\Ui$.
We then note that, for $u_1, u_2 \in \Ui$, 
\begin{align} \label{eq:u12}
\begin{split}
\Delta (u_1u_2) & \ThetaB_{\leq N} - \ThetaB_{\leq N} \ov{\Delta}(u_1u_2)
 \\
  =&
\Delta(u_1) \big(\Delta(u_2) \ThetaB_{\leq N} - \ThetaB_{\leq N} \ov{\Delta}(u_2) \big)
\\
&+ 
\big( \Delta(u_1) \ThetaB_{\leq N} - \ThetaB_{\leq N} \ov{\Delta}(u_1) \big )
\ov{\Delta} (u_2).
\end{split}
\end{align}
Then by an easy induction using \eqref{eq:u12}, we conclude that
\eqref{eq:ThN} holds for all $u\in \Ui$ and  $N \ge N_0$. 
The theorem now follows from \eqref{eq:ThTh}. 
\end{proof}

\begin{prop}  \label{prop:ThetaBinv}
We have $\ThetaB \ov{\ThetaB} =1$ (an identity in $(\Ui \otimes \U^-)^\wedge$). 
\end{prop}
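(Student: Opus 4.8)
Here $\ov{\ThetaB}$ means $(\Bbar\otimes\Abar)(\ThetaB)$, which is well defined on $(\bun\otimes\U^-)^\wedge$ because $\Bbar$ preserves $\bun$ and $\Abar$ preserves each weight space $\U^-_{-\mu}$. The strategy is to rewrite $(\imath\otimes\id)(\ov{\ThetaB})$ explicitly in terms of $\Upsilon$ and $\ThetaA$, so that the assertion collapses onto the two facts already available: $\Upsilon\,\ov{\Upsilon}=1$ (Corollary~\ref{cor:Upsiloninv}) and $\ThetaA\,\ov{\ThetaA}=1$ (see~\eqref{eq:ThetaAinv}). Everything happens inside the completion of $\U\otimes\U^-$, into which $\bun\otimes\U^-$ embeds via the injective algebra homomorphism $\imath\otimes\id$; I will use freely that $\Bbar$, $\Abar$, $\Delta$ and multiplication all extend by continuity to the relevant completions, and that an element of such a completion acting as $0$ on every tensor product $M\otimes M'$ of finite-dimensional $\U$-modules must be $0$ (by weight considerations and \cite[Proposition~3.5.4]{Lu94}; cf. Lemma~\ref{lem:allfdM}). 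By this last remark, the definition~\eqref{eq:ThetaB} together with the normalization~\eqref{eq:ThTh} amount to the identity, in this completion,
\[
(\imath\otimes\id)(\ThetaB)=\Delta(\Upsilon)\,\ThetaA\,(\Upsilon^{-1}\otimes 1).
\]

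The heart of the argument is the evaluation of $(\imath\otimes\id)(\ov{\ThetaB})$. Writing $\ThetaB=\sum_b u_b\otimes b$ with $u_b\in\bun$ and $b$ ranging over a homogeneous basis of $\U^-$, we have $(\imath\otimes\id)(\ov{\ThetaB})=\sum_b \imath(\Bbar(u_b))\otimes\Abar(b)$. Theorem~\ref{thm:Upsilon}, in the form $\imath(\Bbar(u))\,\Upsilon=\Upsilon\,\Abar(\imath(u))$, allows the substitution $\imath(\Bbar(u_b))=\Upsilon\,\Abar(\imath(u_b))\,\Upsilon^{-1}$, and therefore
\[
(\imath\otimes\id)(\ov{\ThetaB})=(\Upsilon\otimes 1)\,(\Abar\otimes\Abar)\big((\imath\otimes\id)(\ThetaB)\big)\,(\Upsilon^{-1}\otimes 1).
\]
To compute the middle factor, apply the semilinear algebra homomorphism $\Abar\otimes\Abar$ to $\Delta(\Upsilon)\,\ThetaA\,(\Upsilon^{-1}\otimes 1)$: one has $(\Abar\otimes\Abar)\ThetaA=\ov{\ThetaA}$ and $(\Abar\otimes\Abar)(\Upsilon^{-1}\otimes 1)=\ov{\Upsilon}^{-1}\otimes 1=\Upsilon\otimes 1$ (using $\ov{\Upsilon}=\Upsilon^{-1}$); and, evaluating~\eqref{eq:propThetaA} at $u=\Upsilon^{-1}$ (valid on $\widehat{\U}$ by continuity) and using~\eqref{eq:ThetaAinv} and Corollary~\ref{cor:Upsiloninv}, one gets $(\Abar\otimes\Abar)\Delta(\Upsilon)=\ov{\ThetaA}\,\Delta(\Upsilon^{-1})\,\ThetaA$. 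Multiplying these out and cancelling $\ThetaA\,\ov{\ThetaA}=1$ gives $(\Abar\otimes\Abar)\big((\imath\otimes\id)(\ThetaB)\big)=\ov{\ThetaA}\,\Delta(\Upsilon^{-1})\,(\Upsilon\otimes 1)$, hence
\[
(\imath\otimes\id)(\ov{\ThetaB})=(\Upsilon\otimes 1)\,\ov{\ThetaA}\,\Delta(\Upsilon^{-1}).
\]

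With both pieces in hand, the remaining step is a direct cancellation:
\begin{align*}
(\imath\otimes\id)(\ThetaB\,\ov{\ThetaB})
&=\Delta(\Upsilon)\,\ThetaA\,(\Upsilon^{-1}\otimes 1)(\Upsilon\otimes 1)\,\ov{\ThetaA}\,\Delta(\Upsilon^{-1})\\
&=\Delta(\Upsilon)\,(\ThetaA\,\ov{\ThetaA})\,\Delta(\Upsilon^{-1})
=\Delta(\Upsilon)\,\Delta(\Upsilon^{-1})=1\otimes 1,
\end{align*}
where we used $(\Upsilon^{-1}\otimes 1)(\Upsilon\otimes 1)=1\otimes 1$, then $\ThetaA\,\ov{\ThetaA}=1$ from~\eqref{eq:ThetaAinv}, then $\Upsilon\,\Upsilon^{-1}=1$ and that $\Delta$ is an algebra homomorphism. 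Injectivity of $\imath\otimes\id$ then yields $\ThetaB\,\ov{\ThetaB}=1\otimes 1$ in $(\bun\otimes\U^-)^\wedge$, which is the claim.

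The only real difficulty I anticipate is the middle paragraph: one must keep the two distinct bar involutions rigorously apart — $\Bbar$ on $\bun$ and $\Abar$ on $\U$ are incompatible under $\imath$ by Remark~\ref{rem:bars} — and pass carefully between the honest coproduct and the $\ThetaA$-twisted one, all while checking that each rearrangement is legitimate in the completions (continuity of $\Bbar$, $\Abar$, $\Delta$, and of the product). Once $(\imath\otimes\id)(\ov{\ThetaB})$ has been brought to the form $(\Upsilon\otimes 1)\,\ov{\ThetaA}\,\Delta(\Upsilon^{-1})$ the conclusion is immediate. (Alternatively, one could first show that $\ThetaB$ is the unique element of $(\bun\otimes\U^-)^\wedge$ with $\ThetaB_0=1\otimes 1$ satisfying the intertwining identity of Theorem~\ref{thm:ThetaB} — using the recursions~\eqref{eq:f}--\eqref{eq:t} from the proof of Proposition~\ref{prop:ThetainBoA} and the fact that ${}_ir(x)=0$ for all $i$ forces $x=0$ when $x$ has nonzero weight — and then observe that $\ov{\ThetaB}^{-1}$ satisfies the same two conditions, exactly as in the proof of Corollary~\ref{cor:Upsiloninv}.)
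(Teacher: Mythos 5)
Your argument is correct, and it is a genuinely different route from the paper's. The paper observes that $\ThetaB$ is characterized (among elements of $(\bun\otimes\U^-)^\wedge$ with constant term $1\otimes 1$) by the intertwining identity of Theorem~\ref{thm:ThetaB} — this uniqueness is extracted from the recursion \eqref{eq:atilde} in the proof of Proposition~\ref{prop:ThetainBoA} — and then shows by a two-line manipulation that $\ov{\ThetaB}^{-1}$ satisfies the same identity with the same normalization, forcing $\ov{\ThetaB}^{-1}=\ThetaB$; this is exactly the strategy of Corollary~\ref{cor:Upsiloninv} one level up, and is precisely the alternative you sketch in your closing parenthesis. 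You instead compute $(\imath\otimes\id)(\ov{\ThetaB})$ explicitly, pushing the definition \eqref{eq:ThetaB} through $\Abar\otimes\Abar$ by means of Theorem~\ref{thm:Upsilon} (to replace $\imath\circ\Bbar$ by conjugation by $\Upsilon$), Lusztig's identity \eqref{eq:propThetaA} (to swap $(\Abar\otimes\Abar)\Delta(\Upsilon)$ for $\ov{\ThetaA}\Delta(\Upsilon^{-1})\ThetaA$), and the two invertibility facts $\Upsilon\ov{\Upsilon}=1$, $\ThetaA\ov{\ThetaA}=1$; the product then telescopes. Your route is longer and requires more care about which identities survive passage to the various completions, but it has a compensating payoff: it produces the closed formula $(\imath\otimes\id)(\ov{\ThetaB})=(\Upsilon\otimes 1)\,\ov{\ThetaA}\,\Delta(\Upsilon^{-1})$, which the paper never writes down and which could be useful elsewhere (e.g., for explicit bar computations on tensor products). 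The paper's route is shorter and, since uniqueness of $\ThetaB$ has to be established anyway as part of the surrounding development, reuses work already done. One small point to tighten in your write-up: the step "an element of $(\bun\otimes\U^-)^\wedge$ acting as $0$ on every $M\otimes M'$ is $0$" is used but is only stated in the paper for $\widehat{\U}$ (Lemma~\ref{lem:allfdM}); the two-factor version is implicit in the proof of Proposition~\ref{prop:ThetainBoA}, and you should cite that passage rather than only the one-factor lemma.
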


\begin{proof}
By construction, $\ThetaB =\sum_{N\ge 0} \ThetaB_N$ (with $\ThetaB_0 =1\otimes 1$) is clearly invertible in 
$(\Ui \otimes \U^-)^\wedge$. Write $'\ThetaB= (\ThetaB)^{-1}$.
 
Multiplying $'\ThetaB$ on both sides of  the identity in Theorem~ \ref{thm:ThetaB}, we have
\[
'\ThetaB \Delta(\ov{u}) =\overline{\Delta (u)} \; {}'\ThetaB, \qquad \forall u \in \Ui.
\]
Applying \,$\bar{\ }$\, to the above identity and replacing $\ov{u}$ by $u$, we have
\[
\ov{'\ThetaB}\,  \ov{\Delta({u})} ={\Delta(\ov{u})} \, \ov{'\ThetaB}, \qquad \forall u \in \Ui.
\]
Hence $\ov{'\ThetaB}$ (in place of $'\ThetaB$) satisfies the same identity in Theorem~ \ref{thm:ThetaB}
as well; note that $\ov{'\ThetaB} \in (\Ui \otimes \U^-)^\wedge$ has constant term $1\otimes 1$.

By reexamining the proof of Proposition~\ref{prop:ThetainBoA} and especially \eqref{eq:atilde},
we note that the element $\ThetaB \in (\Ui \otimes \U^-)^\wedge$ (with constant term $1\otimes 1$) satisfying the identity in
Proposition~ \ref{prop:quasiTR} (and thus Theorem \ref{thm:Upsilon}) is unique.
Hence we must have $\ThetaB =\ov{\ThetaB}^{-1}$, and equivalently,  $\ThetaB \ov{\ThetaB} =1$. 
 \end{proof}

Recall that $m (\epsilon \otimes 1)\Delta =\imath$ from Corollary~ \ref{cor:counit},
where $\epsilon$ is the counit and $m$ denotes the multiplication in $\U$. 

\begin{cor}\label{cor:ThetatoT}
The intertwiner $\Upsilon$ can be recovered from the quasi-$\mc R$-matrix $\ThetaB$ as 
 $m(\epsilon \otimes 1)(\ThetaB)  = \Upsilon$.
\end{cor}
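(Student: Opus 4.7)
The plan is to apply $m(\epsilon \otimes 1)$ directly to the explicit formula \eqref{eq:ThetaleqN} defining $\ThetaB_{\leq N}$ and then pass to the limit, exploiting two basic facts: first, since the counit $\epsilon$ of $\U$ is a $\Qq$-algebra homomorphism, $m(\epsilon \otimes 1) : \U \otimes \U \to \U$ is itself a $\Qq$-algebra homomorphism; and second, the compatibility $\epsilon_\U \circ \imath = \epsilon_\bun$ (immediate on the generators $\bk_{\alpha_i}, \be_{\alpha_i}, \bff_{\alpha_i}, \bt$ of $\bun$) lets us freely identify the normalized $\ThetaB \in (\bun \otimes \U^-)^\wedge$ with $(\imath \otimes 1)(\ThetaB)$ in an appropriate completion of $\U \otimes \U^-$ when applying $m(\epsilon \otimes 1)$.

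Working from \eqref{eq:ThetaleqN}, I would compute the three factors of
\[
\ThetaB_{\leq N} \;=\; \sum_\mu (\mathrm{id} \otimes tr_{\leq N}) \bigl( \Delta(\Upsilon_\mu) \, \ThetaA_{\leq N} \, (\Upsilon^{-1} \otimes 1) \bigr)
\]
separately, using that $m(\epsilon \otimes 1)$ commutes with $\mathrm{id} \otimes tr_{\leq N}$ because $tr_{\leq N}$ only touches the second tensor factor. For the first factor, $m(\epsilon \otimes 1)\Delta(\Upsilon_\mu) = \Upsilon_\mu$ by the standard counit axiom for $\U$. For the middle factor, each $\ThetaA_\mu$ lies in $\U^+_\mu \otimes \U^-_{-\mu}$ with $\epsilon(\U^+_\mu) = 0$ for $\mu \neq 0$, so only the $\mu=0$ term survives and $m(\epsilon \otimes 1)(\ThetaA_{\leq N}) = 1$. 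For the last factor, $\Upsilon \in 1 + \sum_{\mu > 0} \U^-_{-\mu}$ and $\epsilon$ annihilates $\U^-_{-\mu}$ for $\mu \neq 0$, whence $\epsilon(\Upsilon^{-1}) = 1$. Multiplying the three contributions gives $m(\epsilon \otimes 1)(\ThetaB_{\leq N}) = tr_{\leq N}(\Upsilon)$, and letting $N \to \infty$ yields $m(\epsilon \otimes 1)(\ThetaB) = \Upsilon$.

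No step looks genuinely hard; the only care required is to justify that the algebra-homomorphism and counit identities, originally statements inside $\U \otimes \U$, extend properly to the completions in which $\Upsilon$, $\ThetaA$, and $\ThetaB$ actually live. This is harmless because at each truncation level the formula for $\ThetaB_{\leq N}$ is a genuine (finite) element of $\U \otimes \U$, so the equality $m(\epsilon \otimes 1)(\ThetaB_{\leq N}) = tr_{\leq N}(\Upsilon)$ is an honest identity in $\U^-$ before any limit is taken.

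As an alternative, one may apply $m(\epsilon \otimes 1)$ to the intertwining identity $\Delta(u) \ThetaB = \ThetaB \, \ov{\Delta}(u)$ of Theorem~\ref{thm:ThetaB} and use Corollary~\ref{cor:counit} together with the observation that $\epsilon \circ \Bbar = \overline{\;\cdot\;} \circ \epsilon$ on $\bun$ to show that $X := m(\epsilon \otimes 1)(\ThetaB)$ satisfies $\imath(u) X = X \, \overline{\imath(\Bbar(u))}$ for all $u \in \bun$. Since $X \in \widehat{\U}^-$ with $X_0 = 1$, the uniqueness statement of Theorem~\ref{thm:Upsilon} (in the equivalent form \eqref{eq:intertw} noted in Remark~\ref{rem:intertw}) then forces $X = \Upsilon$.
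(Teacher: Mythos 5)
Your primary argument is correct and genuinely different from the paper's. You apply $m(\epsilon\otimes 1)$ directly to the defining formula \eqref{eq:ThetaleqN} and exploit that $m(\epsilon\otimes 1):\U\otimes\U\to\U$, $a\otimes b\mapsto \epsilon(a)b$, is a $\Qq$-algebra homomorphism which intertwines $\mathrm{id}\otimes tr_{\leq N}$ with $tr_{\leq N}$; the counit axiom kills the middle factor $\ThetaA_{\leq N}$ (since $\epsilon(\U^+_\mu)=0$ for $\mu\neq 0$) and the last factor $\Upsilon^{-1}\otimes 1$ (since only $(\Upsilon^{-1})_0=1$ survives), leaving $m(\epsilon\otimes 1)(\ThetaB_{\leq N})=tr_{\leq N}(\Upsilon)$, whence the claim after $N\to\infty$. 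This is a clean direct computation that never invokes uniqueness. The paper instead proceeds by your ``alternative'': it applies $m(\epsilon\otimes 1)$ to the generator-wise intertwining identities \eqref{eq:auxThetainBoA1}--\eqref{eq:auxThetainBoA4} (which encode Theorem~\ref{thm:ThetaB} on generators), obtains that $X:=m(\epsilon\otimes 1)(\ThetaB)$ satisfies $\imath(\ov{u})X=X\,\overline{\imath(u)}$, checks $X_0=1$, and then concludes $X=\Upsilon$ from the uniqueness in Theorem~\ref{thm:Upsilon}. The two routes are complementary: the paper's argument reuses machinery already in place (the identities and the uniqueness of $\Upsilon$) and requires no unwinding of the formula for $\ThetaB$, while your direct computation is more self-contained and in particular makes transparent why the normalized $\ThetaB$ (defined by the truncation procedure) recovers $\Upsilon$ rather than merely some intertwiner. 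Both the multiplicativity of $m(\epsilon\otimes 1)$ and the identity $\epsilon_\U\circ\imath=\epsilon_\bun$ that you invoke are correct, the latter holding by the very definition of $\epsilon$ on $\bun$ in Proposition~\ref{prop:coproduct}.
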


\begin{proof}
Applying $m(\epsilon \otimes 1)$ to the identities 
\eqref{eq:auxThetainBoA1}-\eqref{eq:auxThetainBoA4}, we obtain an identity in $\widehat{\U}$:
\begin{equation}\label{eq:ThetatoT}
 \imath(\ov{u}) \Big(\sum_{N \geq 0} m(\epsilon \otimes 1)(\ThetaB_N) \Big)
 =\Big(\sum_{N \geq 0} m(\epsilon \otimes 1)(\ThetaB_N) \Big)
 \overline{\imath(u)},
\end{equation}
for all $u \in \Ui.$
The corollary now follows from \eqref{eq:ThTh}, \eqref{eq:ThetatoT} and the uniqueness of $\Upsilon$  in Theorem \ref{thm:Upsilon},
as  we have $m(\epsilon \otimes 1)(\ThetaB_{0}) = 1$.
\end{proof}

\section{The bar map on $\Ui$-modules}   \label{subsec:bars}

In this section we shall assume all the modules are finite dimensional. 
Recall the bar map on $\U$ and on its modules is denoted by $\Abar$, and
the bar map on $\Ui$ is also denoted by $\Bbar$. 
It is also understood that $\Abar(u) =\Abar(\imath(u))$ for $u\in \Ui$.


\begin{definition}  \label{def:involutive-i}
A $\Ui$-module $M$ equipped with an anti-linear involution $\Bbar$ 
is called {\em involutive} (or {\em $\imath$-involutive} to avoid possible ambiguity) if
$$
\Bbar(u m) = \Bbar(u) \Bbar(m), \qquad \forall u \in \Ui, m \in M.
$$ 
 \end{definition}

\begin{prop}\label{prop:compatibleBbar}
Let $M$ be an involutive $\U$-module. Then $M$ is an $\imath$-involutive  
$\Ui$-module with involution
$\Bbar := \Upsilon \circ \Abar$.
\end{prop}

\begin{proof}
By Theorem \ref{thm:Upsilon}, we have 
$
\imath( \Bbar(u)) \Upsilon = \Upsilon \Abar(u),  \text{ for all } u \in \Ui.$
By definition the action of $\Bbar(u)$ on $M$ is the same as the action of $\imath(\Bbar(u))$ on $M$. Therefore we have
\[
\Bbar(u m) = \Upsilon \Abar( u m) = \Upsilon \Abar(u) \Abar(m) = \imath( \Bbar (u) ) \Upsilon \Abar (m) = \Bbar (u) \Bbar(m),
\]
for all $ u \in \Ui$  and $m \in M$.

It remains to verify that $\Bbar$ is an involution on $M$. Indeed,  for $m \in M$, we have
 \[
\Bbar(\Bbar(m)) = \Upsilon \Abar(\Upsilon \Abar (m)) = \Upsilon \ov{\Upsilon} \Abar (\Abar(m)) = \Upsilon \ov{\Upsilon} m = m,
 \]
where the last identity follows from Corollary \ref{cor:Upsiloninv}. 
\end{proof}

\begin{cor}   \label{cor:Li-invol}
As $\Ui$-modules, $L(\la)$ and ${^{\omega}L}(\lambda)$ are $\imath$-involutive, for $\la\in \Lambda^+$. 
\end{cor}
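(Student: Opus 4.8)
The plan is to deduce this immediately from Proposition~\ref{prop:compatibleBbar}. First I would observe that for $\la \in \Lambda^+$ both $L(\la)$ and ${}^\omega L(\la)$ are finite-dimensional $\U$-modules, and that, as recalled at the end of Section~\ref{sec:prelim}, each of them carries a distinguished anti-linear involution $\Abar$, uniquely characterized by $\Abar(u\eta_\la) = \Abar(u)\eta_\la$ in the case of $L(\la)$ and by $\Abar(u\xi_{-\la}) = \Abar(u)\xi_{-\la}$ in the case of ${}^\omega L(\la)$, for all $u \in \U$. By construction these satisfy $\Abar(um) = \Abar(u)\Abar(m)$ for all $u \in \U$, so $L(\la)$ and ${}^\omega L(\la)$ are involutive $\U$-modules in the sense of Definition~\ref{def:involutive}.

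Next I would apply Proposition~\ref{prop:compatibleBbar} verbatim to each of these modules: a finite-dimensional involutive $\U$-module $M$ (finite-dimensionality being what allows the operator $\Upsilon \in \widehat{\U}^-$ to act) becomes an $\imath$-involutive $\bun$-module in the sense of Definition~\ref{def:involutive-i}, with anti-linear involution $\Bbar := \Upsilon \circ \Abar$. The two ingredients feeding that proposition are already established: Theorem~\ref{thm:Upsilon}, which gives $\imath(\Bbar(u))\,\Upsilon = \Upsilon\,\Abar(\imath(u))$ for $u \in \bun$ and hence converts $\Abar$-compatibility of the $\U$-action into $\Bbar$-compatibility of the $\bun$-action, and Corollary~\ref{cor:Upsiloninv}, the identity $\Upsilon\,\ov{\Upsilon} = 1$, which guarantees that $\Bbar$ squares to the identity. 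Specializing $M$ to $L(\la)$ and then to ${}^\omega L(\la)$ yields the corollary.

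I expect no real obstacle here, as the statement is a direct specialization rather than a new argument. The only point worth a word is the finiteness hypothesis: Proposition~\ref{prop:compatibleBbar} and all of Section~\ref{subsec:bars} were phrased for finite-dimensional modules precisely because $\Upsilon$ lies only in the completion $\widehat{\U}^-$ and acts on finite-dimensional $\U$-modules, and this hypothesis holds automatically since $\la \in \Lambda^+$. (The infinite-dimensional analogue, needed in Part~2, will instead require the completion machinery developed there, but that is not at issue for this corollary.)
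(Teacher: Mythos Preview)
Your proposal is correct and matches the paper's approach: the corollary is stated without proof immediately after Proposition~\ref{prop:compatibleBbar}, being a direct specialization of that proposition to the involutive $\U$-modules $L(\la)$ and ${}^\omega L(\la)$ discussed at the end of Section~\ref{sec:quasiR}.
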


\begin{rem}  \label{rem:xi inv}
We can and will choose  $\xi_{-\la} \in {^{\omega}L}(\lambda)$ to be $\Abar$-invariant.
It follows that  $\xi_{-\la}$ is also $\Bbar$-invariant,
since $\Bbar =\Upsilon \Abar$ and $\Upsilon$ lies in a completion of $\U^-$ with constant term $1$.
Because of this, it is more convenient to work with a lowest weight vector instead of a highest weight vector in
a finite-dimensional simple $\U$-module.
\end{rem}
Recall the quasi-$\mc{R}$-matrix $\ThetaB$ from \eqref{eq:ThetaB}.
Given an involutive $\Ui$-module $L$ and an involutive $\U$-module $M$, we define 
$\Bbar: L \otimes M \rightarrow L \otimes M$ by letting
\begin{equation}   \label{eq:Bbartensor}
\Bbar(l \otimes m) := \ThetaB(\Bbar(l) \otimes \Abar(m)),  \qquad  \text{ for all } l \in L, m \in M.
\end{equation}

\begin{prop}  \label{prop:Bbartensor}
Let $L$ be an involutive $\Ui$-module and let $M$ be an involutive $\U$-module. Then 
$(L \otimes M, \Bbar)$ is an involutive $\Ui$-module.
\end{prop}

\begin{proof}
For all $l \in L$, $m \in M$, $u \in \Ui$, using \eqref{eq:Bbartensor} twice we have
\begin{align*}
\Bbar(u (l \otimes m)) 
&= \ThetaB \big( \ov{\Delta(u)} (\Bbar (l) \otimes \Abar (m)) \big)
 \\
&=  \Delta(\ov{u}) \ThetaB  (\Bbar (l) \otimes \Abar (m)) 
 \\
&= \Bbar(u) \, \Bbar(l \otimes m).
\end{align*}
The second equality  in the above computation uses Theorem \ref{thm:ThetaB} and the first equality holds  since
$L$ and $M$ are involutive modules.

It remains to verify that $\Bbar$ is an involution on $L \otimes M$. It is occasionally convenient to use the bar-notation
to denote the involution $\Bbar \otimes \Abar$ on $\Ui \otimes \U$ below. 
Indeed,  for $l\in L$ and $m \in M$, using \eqref{eq:Bbartensor} twice we have
 \begin{align*} 
\Bbar(\Bbar(l\otimes m)) 
&= \ThetaB (\Bbar \otimes \Abar). \big(\ThetaB ( \Bbar(l) \otimes \Abar (m) ) \big)
  \\
&= \ThetaB \ov{ \ThetaB} ( \Bbar^2(l) \otimes \Abar^2 (m)) 
 = l \otimes m,
 \end{align*}
where the last equality follows from Proposition \ref{prop:ThetaBinv} and 
the second equality holds since
$L$ and $M$ are involutive modules.
\end{proof}

\begin{rem}   \label{rem:sameinv}
Given two involutive $\U$-modules $(M_1, \psi_1)$ and $(M_2,\psi_2)$, the $\U$-module $M_1\otimes M_2$ is involutive 
with the involution given by $\ThetaA \circ (\psi_1\otimes  \psi_2)$, 
(see \cite[27.3.1]{Lu94} or Proposition~\ref{prop:Lu27.3.1}). Now there are two natural
ways to define an anti-linear involution on the $\Ui$-module $M_1\otimes M_2$: 
\begin{itemize}
\item[(i)]  apply Proposition~\ref{prop:compatibleBbar} to the involutive $\U$-module 
$(M_1\otimes M_2, \ThetaA \circ (\psi_1\otimes \psi_2))$;

\item[(ii)] 
apply Proposition~\ref{prop:Bbartensor} by regarding $M_1$ as an $\imath$-involutive $\Ui$-module
with involution $\Upsilon \circ \psi_1$.
\end{itemize}
One checks that the resulting involutions on the $\Ui$-module $M_1\otimes M_2$ in two different ways coincide. 
\end{rem}

The following proposition implies that different bracketings on the tensor product 
of several involutive $\U$-modules   give rise to the same $\Bbar$. (Recall a similar property holds for
Lusztig's bar involution on tensor products of $\U$-modules \cite{Lu94}.)

\begin{prop}\label{prop:barinThetaB}
Let $M_1$, $\ldots$, $M_k$ be involutive $\U$-modules with $k\ge 2$. We have 
\[
\Bbar(m_1 \otimes \cdots \otimes m_k)
= \ThetaB (\Bbar (m_1 \otimes \cdots \otimes m_{k'}) \otimes \Abar(m_{k'+1} \otimes \cdots \otimes m_k)),
\]
for any $1 \leq k' < k$.
\end{prop}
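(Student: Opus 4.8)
The plan is to first pin down exactly what the involution $\Bbar$ is on an arbitrary tensor product of involutive $\U$-modules, and then to read off the asserted grouping compatibility by one short manipulation of $\ThetaB$. Here $\Bbar$ on a $k$-fold product is understood via iterating \eqnref{eq:Bbartensor} (grouping the leftmost factors into the $\bun$-module and tensoring one $\U$-module at a time on the right), as in the discussion preceding the proposition; the content of the proposition is that this is insensitive to where the single split is placed.

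The first step I would carry out is to establish the identity $\Bbar = \Upsilon \circ \Abar$ on any tensor product $N = M_1 \otimes \cdots \otimes M_k$ of involutive $\U$-modules, where on the right $\Abar$ denotes Lusztig's bar involution obtained by iterating $\ThetaA$ (which is independent of the bracketing by \cite[27.3.6]{Lu94}) and $\Upsilon$ acts on $N$ through its $\U$-module structure. The case $k=1$ is precisely Proposition~\ref{prop:compatibleBbar} (with $\Abar = \psi_1$). For the inductive step, write $N = P \otimes M_k$ with $P = M_1 \otimes \cdots \otimes M_{k-1}$, so that $\Bbar = \ThetaB \circ (\Bbar_P \otimes \Abar_{M_k})$ by \eqnref{eq:Bbartensor}; substituting $\Bbar_P = \Upsilon \circ \Abar_P$ (induction) and $\ThetaB = \Dupsilon\, \ThetaA\, (\Upsilon^{-1} \otimes 1)$ from \eqnref{eq:ThetaB}, the factor $\Upsilon^{-1} \otimes 1$ on $P \otimes M_k$ cancels the operator $\Upsilon$ on $P$, leaving $\Dupsilon\, \ThetaA\, (\Abar_P \otimes \Abar_{M_k})$. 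By \eqnref{eq:Lupsi} this is $\Dupsilon$ composed with Lusztig's bar involution on $P \otimes M_k$, which by bracketing-independence equals $\Abar$ on $M_1 \otimes \cdots \otimes M_k$; hence $\Bbar = \Dupsilon \circ \Abar = \Upsilon \circ \Abar$. (This is consistent with the $k=2$ observation in Remark~\ref{rem:sameinv}.)

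The second step is then immediate. Set $N' = M_1 \otimes \cdots \otimes M_{k'}$ and $N'' = M_{k'+1} \otimes \cdots \otimes M_k$, both finite-dimensional $\U$-modules, so $\ThetaB$ acts on $N' \otimes N''$ and equals $\Dupsilon\, \ThetaA\, (\Upsilon^{-1} \otimes 1)$. Using Step~1 for the $k'$-fold product $N'$ and for the $k$-fold product $N$, the right-hand side of the proposition (applied to $m_1 \otimes \cdots \otimes m_k$) is the operator
\[
\ThetaB \circ \big( (\Upsilon \circ \Abar') \otimes \Abar'' \big)
= \Dupsilon\, \ThetaA\, (\Upsilon^{-1} \otimes 1)\big( (\Upsilon \circ \Abar') \otimes \Abar'' \big)
= \Dupsilon\, \ThetaA\, (\Abar' \otimes \Abar''),
\]
where $\Abar'$, $\Abar''$ are Lusztig's bar involutions on $N'$, $N''$, and again $\Upsilon^{-1} \otimes 1$ cancels the operator $\Upsilon$ on $N'$. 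By \eqnref{eq:Lupsi} together with \cite[27.3.6]{Lu94}, $\ThetaA\,(\Abar' \otimes \Abar'')$ is the Lusztig bar $\Abar$ on $M_1 \otimes \cdots \otimes M_k$, and $\Dupsilon$ on $N' \otimes N''$ is the operator $\Upsilon$ on $M_1 \otimes \cdots \otimes M_k$; hence the right-hand side equals $\Upsilon \circ \Abar$, which by Step~1 is $\Bbar$ on $M_1 \otimes \cdots \otimes M_k$. This is the assertion.

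The main obstacle I anticipate is purely the bookkeeping in the two cancellations ``$(\Upsilon^{-1} \otimes 1)(\Upsilon \otimes 1) = \id$'': one must be precise that the $\Upsilon$ hidden inside the expression $\Upsilon \circ \Abar'$ for $\Bbar$ on $N'$ is $\Upsilon \in \widehat{\U}^-$ acting on $N'$ through its $\U$-action, so that it genuinely annihilates the first tensor leg of $\Upsilon^{-1} \otimes 1$, and that the surviving $\Dupsilon$ — the action of $\Upsilon$ on $N' \otimes N''$ via the iterated coproduct — is literally the same operator as $\Upsilon$ acting on $M_1 \otimes \cdots \otimes M_k$. Once this is said cleanly, everything else reduces to \eqnref{eq:Lupsi}, Proposition~\ref{prop:compatibleBbar}, and the bracketing-independence of Lusztig's quasi-$\mc R$-matrix \cite[27.3.6]{Lu94}.
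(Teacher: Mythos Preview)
Your proposal is correct and follows essentially the same approach as the paper. The paper's proof is the single chain of equalities in your Step~2, taking $\Bbar = \Upsilon\Abar$ on $M_1\otimes\cdots\otimes M_{k'}$ and on $M_1\otimes\cdots\otimes M_k$ as already known (from Proposition~\ref{prop:compatibleBbar} applied to the tensor product as an involutive $\U$-module, cf.\ Remark~\ref{rem:sameinv}); your Step~1 simply makes that identification explicit by the obvious induction, which is fine and arguably clearer.
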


\begin{proof}
Recall $\ThetaB = \Dupsilon\ThetaA (\Upsilon^{-1} \otimes 1)$. 
Unraveling the definition $\Bbar = \Upsilon \Abar$ on $M_1 \otimes \cdots \otimes M_{k'}$, 
we have
\begin{align*}
\ThetaB (\Bbar & (m_1 \otimes \cdots \otimes m_{k'}) \otimes \Abar(m_{k'+1} \otimes \cdots \otimes m_k)) \\
= &\Dupsilon\ThetaA (\Upsilon^{-1} \otimes 1) (\Upsilon \Abar (m_1 \otimes \cdots \otimes m_{k'}) 
   \otimes \Abar(m_{k'+1} \otimes \cdots \otimes m_k) )\\
= &\Dupsilon\ThetaA (\Abar (m_1 \otimes \cdots \otimes m_{k'}) \otimes \Abar(m_{k'+1} \otimes \cdots \otimes m_k) )\\
= & \Dupsilon \Abar (m_1 \otimes \cdots \otimes m_{k'} \otimes m_{k'+1} \otimes \cdots \otimes m_k)\\
= & \Bbar(m_1 \otimes \cdots \otimes m_k).
\end{align*}
The proposition follows.
\end{proof}

\chapter{The integrality of $\Upsilon$ and the $\imath$-canonical basis of ${}^\omega L(\la)$}   \label{sec:UpsiloninZ}
 
 In this chapter, we first  construct the $\imath$-canonical bases
 for simple $\U$-modules and then for the algebra
 $\Ui$ in the rank one case. Then we use the rank one results to study the general higher hank case.
 We show that
the intertwiner $\Upsilon$ is integral and construct the $\imath$-canonical basis
for ${}^\omega L(\la)$ for $\la\in \La^+$.

\section{The homomorphism $\pi_{\lambda, \mu}$}

Though only the rank one case
of the results in this section will be needed in this paper, 
it is natural and causes no extra work to formulate in the full generality below.

\begin{lem}\label{lem:bunxi} 
Let $\lambda \in \Lambda^{+}$. We have
$\Ui \xi_{-\lambda} = {^{\omega}L}(\lambda)$
and $\Ui \eta_{\lambda} = L(\lambda)$.
\end{lem}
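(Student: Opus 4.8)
The plan is to show $\bun \xi_{-\la} = {}^\omega L(\la)$; the statement for $\bun \eta_\la = L(\la)$ then follows by applying the involution $\omega$ on $\U$, which intertwines the embedding $\imath$ with itself up to the involution $\omega_\imath$ on $\bun$ (Lemma~\ref{lem:3inv}(1)), and carries $\xi_{-\la}$ to $\eta_\la$ and ${}^\omega L(\la)$ to $L(\la)$. So I focus on the first equality.

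First I would observe that $\bun \xi_{-\la}$ is a $\bun$-submodule of the finite-dimensional simple $\U$-module ${}^\omega L(\la)$, which contains $\xi_{-\la}$. It suffices to show $\bun \xi_{-\la} \supseteq \U \xi_{-\la} = {}^\omega L(\la)$, i.e. that $\bun \xi_{-\la}$ is stable under the full $\U$-action, or (more economically) that it already contains enough weight vectors. The key computational input is the explicit form of $\imath$ from Proposition~\ref{prop:embedding}: $\imath(\be_{\alpha_i}) = E_{\alpha_i} + K^{-1}_{\alpha_i} F_{\alpha_{-i}}$, $\imath(\bff_{\alpha_i}) = F_{\alpha_i}K^{-1}_{\alpha_{-i}} + E_{\alpha_{-i}}$, $\imath(\bt) = E_{\alpha_0} + qF_{\alpha_0}K^{-1}_{\alpha_0} + K^{-1}_{\alpha_0}$, together with the fact that $\xi_{-\la}$ is a lowest weight vector for $\U$ (so $F_{\alpha_i}\xi_{-\la} = 0$ for all $i$, while the $E_{\alpha_i}$ generate ${}^\omega L(\la)$ from $\xi_{-\la}$ by the $\omega$-twist).

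The main step: I would argue by induction on weight (equivalently on the height of $\mu$ where the weight is $-\la^\inv + \mu$), showing that every weight space of ${}^\omega L(\la)$ lies in $\bun \xi_{-\la}$. Starting from $\xi_{-\la}$: applying $\be_{\alpha_i}$ gives $(E_{\alpha_i} + K^{-1}_{\alpha_i}F_{\alpha_{-i}})\xi_{-\la} = E_{\alpha_i}\xi_{-\la}$ since $F_{\alpha_{-i}}\xi_{-\la}=0$; similarly $\bt\,\xi_{-\la} = E_{\alpha_0}\xi_{-\la} + (\text{scalar})\xi_{-\la}$, so $E_{\alpha_0}\xi_{-\la} \in \bun\xi_{-\la}$. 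Thus all the "first-layer" vectors $E_{\alpha_i}\xi_{-\la}$ ($i \in \I$) are obtained. For the inductive step one takes a vector $v$ of weight $-\la^\inv + \mu$ known to be in $\bun\xi_{-\la}$ and wants $E_{\alpha_j}v$; acting by the appropriate generator ($\be_{\alpha_j}$, $\bff_{\alpha_j}$ or $\bt$) produces $E_{\alpha_j}v$ plus correction terms of the form $(\text{monomial in }K^{\pm},F)\,v$ which, by the $F_{\alpha_k}$-relations in $\U$ and the triangular decomposition, are linear combinations of $\U^{\le \mu}$-vectors of strictly smaller height already captured by the induction hypothesis — more precisely, terms involving $F_{\alpha_{-i}}$ applied to $v$, which lie in lower weight spaces, together with $E$-monomials of the same or lower height. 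Rearranging, $E_{\alpha_j}v$ is expressed as an element of $\bun\xi_{-\la}$.

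The place requiring care — the main obstacle — is the generator $\bt$ and the index $i=0$: the relation $\bt^2 \be_{\alpha_1} + \be_{\alpha_1}\bt^2 = (q+q^{-1})\bt\be_{\alpha_1}\bt + \be_{\alpha_1}$ and the inhomogeneous term $K^{-1}_{\alpha_0}$ in $\imath(\bt)$ mean that $\bt$ does not act as a pure raising operator, so one must track the scalar corrections and the $F_{\alpha_0}$-terms carefully to extract $E_{\alpha_0}$ acting on an arbitrary weight vector. One clean way around this is to note that, since ${}^\omega L(\la)$ is finite-dimensional and simple over $\U$, it is generated over $\U$ by $\xi_{-\la}$ using only the raising operators $E_{\alpha_i}$, $i\in \I$; and the identities above show inductively that $E_{\alpha_i}\cdot(\bun\xi_{-\la}) \subseteq \bun\xi_{-\la}$ for every $i$, hence $\bun\xi_{-\la}$ is $\U^+$-stable and contains $\U^+\xi_{-\la} = {}^\omega L(\la)$. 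Combined with $\bun\xi_{-\la} \subseteq {}^\omega L(\la)$, this gives equality. Alternatively, and perhaps most slickly, one invokes that $\imath(\bun)$ together with $\U^0$ generates a subalgebra containing all $E_{\alpha_i}$ and $F_{\alpha_i}$ up to lower-order terms, hence (being applied to a lowest-weight generator) the $\bun$-submodule it generates is all of ${}^\omega L(\la)$; I would present the induction-on-height version as it is the most elementary and self-contained.
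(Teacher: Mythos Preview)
Your induction-on-height argument for $\bun\xi_{-\la} = {}^{\omega}L(\la)$ is correct and is essentially the paper's proof: both use that each of $\imath(\be_{\alpha_i})$, $\imath(\bff_{\alpha_i})$, $\imath(\bt)$ equals $E_{\alpha_i}$, $E_{\alpha_{-i}}$, $E_{\alpha_0}$ respectively plus terms that strictly lower the height (or, for the $K^{-1}_{\alpha_0}$ summand in $\imath(\bt)$, fix the weight), and then induct. The paper phrases the induction by replacing an entire monomial $y\in\U^+$ at once, whereas you peel off one $E_{\alpha_j}$ at a time; these are equivalent.

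Two small corrections. First, the lowest weight of ${}^{\omega}L(\la)$ is $-\la$, not $-\la^{\inv}$; weights are $-\la+\mu$ with $\mu\in\N\Pi$. Second, your reduction of $\bun\eta_\la = L(\la)$ via $\omega$ does not work as stated, since $\omega\circ\imath \neq \imath\circ\omega_\imath$: for instance $\omega(\imath(\be_{\alpha_i})) = F_{\alpha_i} + K_{\alpha_i}E_{\alpha_{-i}}$, whereas $\imath(\omega_\imath(\be_{\alpha_i})) = \imath(\bff_{\alpha_i}) = F_{\alpha_i}K^{-1}_{\alpha_{-i}} + E_{\alpha_{-i}}$. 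The fix is simply to rerun the same induction on $\eta_\la$ with the roles of $E$ and $F$ swapped (now the correction terms involve $E_{\alpha_{\pm i}}$ acting on lower-height vectors), which is what the paper does when it says the second identity is ``similar''.
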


\begin{proof}
We shall only prove $\Ui \xi_{-\lambda} = {^{\omega}L}(\lambda)$. 
The proof for the second identity is similar and will be skipped.

We write $\xi = \xi_{-\lambda}$. Let $h \in {^{\omega}L}(\lambda)_{\mu}$.
We shall prove $h \in \Ui \xi$ by induction on ${\rm ht}(\mu +\lambda)$. 
When ${\rm ht}(\mu + \lambda) =0$, the claim is clear since $h$ must be a scalar multiple of $\xi$. 
Thanks to $\U^+ \xi = {^{\omega}L}(\lambda)$, there exists $y \in \U^+$ such that $y \xi = h$. 
Writing $y$ as a linear combination of PBW basis elements for $\U^+$ and
replacing $E_{\alpha_0}$, $E_{\alpha_i}$, $E_{\alpha_{-i}}$  (for all $i \in \Ihf$) 
by $\bt$, $\be_{\alpha_i}$,  $\bff_{\alpha_i}$  in such a linear combination, respectively,
we obtain an element $u=u(y) \in \Ui$. Setting $\imath(u)= y + z$ for $z\in \U$, we have $u\xi = h + z\xi$. 
By construction, $z\xi$ is a $\Qq$-linear combination of elements in ${^{\omega}L}(\lambda)$
of weight lower than $h$. Hence by the induction hypothesis, we have $z\xi \in \Ui \xi$, and so 
is $h = u\xi -z\xi$. 
\end{proof}

Recall from Section ~\ref{subsec:CB} that ${^{\omega}L}(\lambda)$ for $\lambda \in \Lambda^+$ 
is identified with $L(\lambda^{\inv})=L(-w_0\lambda)$, $\xi_\la$ is the lowest weight vector of  ${^{\omega}L}(\lambda)$,
and $\eta_{\la^\inv}$ is the highest  weight vector of  $L(\lambda^\inv)$.

\begin{lem}\label{lem:uniqueT}
For  $\lambda \in \Lambda^+$,  there is an isomorphism of $\Ui$-modules
\[
\mc{T} : {^{\omega}L}(\lambda) \longrightarrow {^{\omega}L}(\lambda) = L( \lambda^{\inv})
\]
such that $\mc{T} (\xi_{\-\lambda}) = \sum_{ b \in \B(\lambda)} g_b b^- \eta^{}_{ \lambda^{\inv}}$  
where $g_b \in \Qq$ and $g_1 =1$. Moreover, the isomorphism $\mc T$ is uniquely determined by 
the image $\mc{T} (\xi_{\-\lambda})$.
\end{lem}

\begin{proof}
Recall the isomorphism 
$\mc{T}= \Upsilon \circ \widetilde{\zeta} \circ T_{w_0} : {^{\omega}L}(\lambda) \rightarrow {^{\omega}L}(\lambda)$ 
of $\Ui$-modules from Theorem \ref{thm:mcT}.  The existence of $\mc{T}$ satisfying the lemma follows by 
fixing the weight function $\zeta$ such that $\mc{T} (\xi_{\-\lambda}) =\eta_{\la^\theta} +$ terms in lower weights. 

The uniqueness of such $\mc T$ follows from Lemma~ \ref{lem:bunxi}.
\end{proof}

The following proposition can be found in \cite[Chapter 25]{Lu94}.

\begin{prop}  \label{prop:shrinkA}
Let $\lambda$, $\lambda' \in \Lambda^+$.
\begin{enumerate}
\item
There exists a unique homomorphism of  $\U$-modules
$$\chi = \chi_{\lambda,\lambda'} : {^{\omega}L}(\lambda+\lambda')
 \longrightarrow {^{\omega}L}(\lambda) \otimes {^{\omega}L}(\lambda')$$\
such that $\chi(\xi_{-\lambda-\lambda'}) = \xi_{-\lambda} \otimes \xi_{-\lambda'}$.

\item
For $b \in \bold{B}(\lambda+\lambda')$, we have 
$\chi(b^{+}\xi_{-\lambda-\lambda'}) = \sum_{b_1, b_2} f(b;b_1, b_2) b^+_1  \xi_{-\lambda} \otimes b^+_2 \xi_{-\lambda'}$, 
summed over $b_1 \in \B(\lambda)$ and $b_2 \in \B(\lambda')$, with $f(b; b_1,b_2) \in \Z[q]$. 
If $b^+ \xi_{-\lambda'} \neq 0$, then $f(b; 1, b) =1 $ and $f(b; 1, b_2)=0$ for any $b_2 \neq b$. 
If $b^+  \xi_{-\lambda'} =0$, then $f(b; 1, b_2)=0$ for any $b_2$. 

\item 
There is a unique  homomorphism of  $\U$-modules 
$\delta=\delta_{\lambda}: L(\lambda) \otimes {^{\omega}L}(\lambda) \rightarrow \Qq$, 
where $\Qq$ is the trivial representation of $\U$, such that 
$\delta(\eta_{\lambda} \otimes \xi_{-\lambda}) =1$. 
Moreover, for $b_1, b_2 \in \B(\lambda)$, $\delta(b^-_1 \eta_{\lambda} \otimes b^{+}_2\xi_{-\lambda})$ 
is equal to $1$ if $b_1 = b_2 =1$ and is in $q\Z[q]$ otherwise. In particular, 
$\delta(b^-_1 \eta_{\lambda} \otimes b^{+}_2\xi_{-\lambda}) =0$ if $|b_1| \neq |b_2|$.
\end{enumerate}
\end{prop}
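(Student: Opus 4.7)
The plan is to adapt the standard tensor-product decomposition of simple integrable $\U$-modules from \cite[Chapter~25]{Lu94}, via the identification ${}^\omega L(\mu) \cong L(\mu^\theta)$ recalled in \secref{subsec:CB}.

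For part (1), I would verify that $\xi_{-\lambda}\otimes \xi_{-\lambda'}$ is a lowest weight vector of weight $-(\lambda+\lambda')$ in ${}^\omega L(\lambda)\otimes {}^\omega L(\lambda')$: using the coproduct \eqref{eq:coprod}, both summands of $\Delta(E_{\alpha_i}) = 1\otimes E_{\alpha_i} + E_{\alpha_i}\otimes K_{\alpha_i}^{-1}$ annihilate this vector, while $\Delta(K_{\alpha_i})$ acts by the correct scalar $q^{-(\alpha_i,\lambda+\lambda')}$. Hence $\xi_{-\lambda-\lambda'}\mapsto \xi_{-\lambda}\otimes \xi_{-\lambda'}$ extends to a $\U$-homomorphism from the lowest weight Verma module ${}^\omega M(\lambda+\lambda')$ into the finite-dimensional semisimple module ${}^\omega L(\lambda)\otimes {}^\omega L(\lambda')$, and this homomorphism must factor through the unique simple quotient ${}^\omega L(\lambda+\lambda')$ with lowest weight $-(\lambda+\lambda')$. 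Uniqueness of $\chi$ is immediate because ${}^\omega L(\lambda+\lambda')$ is cyclic on $\xi_{-\lambda-\lambda'}$.

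For part (2), since $\chi$ is a $\U$-homomorphism, we have $\chi(b^+\xi_{-\lambda-\lambda'}) = \Delta(b^+)(\xi_{-\lambda}\otimes \xi_{-\lambda'})$, and $f(b;b_1,b_2)$ is then the coefficient of $b_2^+\xi_{-\lambda}\otimes b_1^+\xi_{-\lambda'}$ in the expansion of the right-hand side. The integrality $f(b;b_1,b_2)\in \Z[q]$ is the deepest point and rests on the compatibility of Lusztig's canonical basis with the coproduct proved in \cite[Chapter~25]{Lu94}. For the distinguished behavior when $b_1 = 1$, I would exploit the triangularity of $\Delta(b^+)$: the only summand of $\Delta(b^+)$ whose right tensor factor preserves $\xi_{-\lambda'}$ up to scalar (rather than producing a nontrivial weight change on the right) is the ``leading'' term in which all Chevalley generators land in the left slot, obtained by always choosing the $E_{\alpha_i}\otimes K_{\alpha_i}^{-1}$ summand in each $\Delta(E_{\alpha_i})$. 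After normalizing lowest weight vectors consistently, this contributes $b^+\xi_{-\lambda}\otimes \xi_{-\lambda'}$, giving $f(b;1,b) = 1$ when $b^+\xi_{-\lambda}\ne 0$ and forcing $f(b;1,b_2) = 0$ for $b_2\ne b$; if $b^+\xi_{-\lambda} = 0$ no such contribution arises and $f(b;1,b_2) = 0$ for every $b_2$.

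For part (3), the tensor product $L(\lambda)\otimes {}^\omega L(\lambda) \cong L(\lambda)\otimes L(\lambda^\theta)$ contains a unique copy of the trivial representation (being the $\U$-fixed vectors in $L(\lambda)\otimes L(\lambda)^*$ via the standard contragredient identification), so there is a one-dimensional space of $\U$-homomorphisms to $\Qq$. The map $\delta$ is the projection onto this trivial summand, normalized by $\delta(\eta_\lambda\otimes \xi_{-\lambda}) = 1$; after identifying ${}^\omega L(\lambda)$ with $L(\lambda)^*$ it becomes the evaluation pairing, which is in turn controlled by the Shapovalov/Kashiwara bilinear form on $L(\lambda)$. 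The integrality of $\delta$ on canonical basis vectors, the fact that off-diagonal values lie in $q\Z[q]$, and the vanishing whenever $|b_1|\ne |b_2|$, all then follow from Lusztig's almost-orthonormality of the canonical basis with respect to this form. The main obstacle throughout is precisely these integrality assertions in (2) and (3), which genuinely depend on the deep results of \cite[Chapter~25]{Lu94}; the weight considerations, existence, and uniqueness aspects are routine consequences of lowest weight theory together with the simplicity of ${}^\omega L(\lambda+\lambda')$ and $L(\lambda)$.
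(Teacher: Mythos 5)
The paper itself gives no proof of this proposition --- it is simply attributed to \cite[Section~25]{Lu94} --- and your overall plan (defer the integrality and canonical-basis-compatibility claims to Lusztig's Chapter~25, handle existence/uniqueness by lowest-weight theory and simplicity of $^{\omega}L(\lambda+\lambda')$) is the right one. However, two of the supplementary arguments are wrong. In part~(1) you assert that both summands of $\Delta(E_{\alpha_i})$ annihilate $\xi_{-\lambda}\otimes\xi_{-\lambda'}$, but $\xi_{-\lambda}$ is a \emph{lowest} weight vector of $^{\omega}L(\lambda)$, cyclic under $\U^+$ (its canonical basis is $\{b^+\xi_{-\lambda}\}$), so it is killed by $F_{\alpha_i}$, not by $E_{\alpha_i}$; the correct verification is that $\Delta(F_{\alpha_i})=F_{\alpha_i}\otimes 1 + K_{\alpha_i}\otimes F_{\alpha_i}$ kills the vector.

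In part~(2), the leading-term extraction identifies the wrong term and produces the wrong coefficient. The summand of $\Delta(b^+)$ with all Chevalley generators in the left slot lies in $\U^+_\nu\otimes\U^0$ (where $\nu=|b|$); applied to $\xi_{-\lambda}\otimes\xi_{-\lambda'}$, the $K^{-1}_{\alpha_i}$'s on the second factor produce the extra scalar $q^{(\nu,\lambda')}$, so the coefficient of $b^+\xi_{-\lambda}\otimes\xi_{-\lambda'}$ is $q^{(\nu,\lambda')}$, not $1$. The term that genuinely has coefficient exactly $1$ is the opposite extreme $\xi_{-\lambda}\otimes b^+\xi_{-\lambda'}$, because the $\U^0\otimes\U^+_\nu$-component of $\Delta(b^+)$ is literally $1\otimes b^+$ (this is visible in the explicit rank-one expansion in Lemma~\ref{rank1:lem:qZq}, where the weight-$a$ wedge in the last slot carries coefficient $q^0=1$). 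So the naive extraction cannot establish $f(b;1,b)=1$ as you claim; both the normalization and the full $\Z[q]$-integrality really do depend on Lusztig's theorems and are not recoverable by this elementary route.
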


\begin{prop}\label{prop:shrink}
Let $\lambda$,$\mu \in \Lambda^+$. There is a unique homomorphism of $\Ui$-modules 
\[
\pi_{\lambda , \mu}: {^{\omega}L} ( \mu^{\inv} + \mu + \lambda ) \longrightarrow {^{\omega}L}(\lambda)
\] 
such that $\pi_{\lambda , \mu} (\xi^{}_{-\mu^{\inv}- \mu - \lambda }) = \xi_{-\lambda}$. 
\end{prop}

\begin{proof}
The uniqueness of the map is clear, thanks to Lemma \ref{lem:bunxi}. 

We shall prove the existence of $\pi_{\lambda , \mu}$. 
Recall that any homomorphism of $\U$-modules  is naturally a homomorphism of $\Ui$-modules. 
Note that ${^{\omega}L}(\mu^{\inv}) = L(-w_0\mu^{\inv})= L(\mu)$. 
Let $\pi_{\lambda , \mu}$ be the composition of the following homomorphisms of $\Ui$-modules:
\[
\xymatrixcolsep{3pc}\xymatrix{
{^{\omega}L} (\mu^{\inv}+ \mu +\lambda) \ar^-{\chi}[r] 
\ar[rrdd]_{\pi_{\lambda , \mu}}
&  {^{\omega}L}(\mu^{\inv}+\mu)  \otimes  {^{\omega}L} (\lambda) \ar^-{\chi \otimes \id }[r] 
& {^{\omega}L}(\mu^{\inv}) \otimes  {^{\omega}L}(\mu)  \otimes  {^{\omega}L} (\lambda) \ar[d]^{\mc{T}\otimes \id \otimes \id} 
  \\
&&L(\mu) \otimes  {^{\omega}L}(\mu)  \otimes  {^{\omega}L} (\lambda) \ar^{\delta \otimes \id}[d]
  \\
&& {^{\omega}L}(\lambda)
}
\]
where $\mc{T}$ is the map from Lemma~ \ref{lem:uniqueT}.
First, we have 
\[
(\chi \otimes \id)\chi(\xi^{}_{-\mu^{\inv} - \mu - \lambda }) 
 = \xi^{}_{-\mu^{\inv}} \otimes \xi_{-\mu} \otimes \xi_{-\lambda}.
\]
Then applying $\mc{T} \otimes \id \otimes \id$, by Lemma~ \ref{lem:uniqueT} we have 
\begin{align*}
(\mc{T} \otimes  \id \otimes & \id )(\xi^{}_{-\mu^{\inv}} \otimes \xi_{-\mu} \otimes \xi_{-\lambda}) 
  \\
 &= \eta_{\mu} \otimes \xi_{-\mu} \otimes \xi_{-\lambda} + \sum_{1 \neq b \in \B(\mu)} g(1;b)
   b^-{\eta_{\mu}} \otimes \xi_{\mu} \otimes \xi_{-\lambda} .
\end{align*}
Applying $\delta \otimes 1$ to the above identity, we conclude that 
$$\pi_{\lambda , \mu}(\xi^{}_{- \mu^{\inv}- \mu - \lambda } ) = \xi_{-\lambda}.
$$ 
\end{proof}


\begin{lem}\label{lem:piBbar=Bbarpi}
Retain the notation in Proposition~\ref{prop:shrink}. The homomorphism $\pi_{\lambda , \mu}$ commutes
with the involution $\Bbar$; that is, 
$\pi_{\lambda , \mu} \Bbar = \Bbar \pi_{\lambda , \mu}$.
\end{lem}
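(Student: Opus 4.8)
The plan is to reduce the claimed identity to a check on a spanning set and then use only formal properties: that $\pi_{\lambda,\mu}$ is a homomorphism of $\bun$-modules fixing the distinguished vector, that both source and target are $\imath$-involutive $\bun$-modules (\corref{cor:Li-invol}), and that their lowest weight vectors are $\Bbar$-invariant (\remref{rem:xi inv}). First I would invoke \lemref{lem:bunxi} to write ${^{\omega}L}(\mu^\inv+\mu+\lambda) = \bun\, \xi_{-\mu^\inv-\mu-\lambda}$, so that every vector in the source is of the form $u\,\xi_{-\mu^\inv-\mu-\lambda}$ with $u\in\bun$. Since both $\pi_{\lambda,\mu}\Bbar$ and $\Bbar\pi_{\lambda,\mu}$ are anti-linear, it suffices to verify that they agree on all such vectors.

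Next I would carry out the two one-line computations. Abbreviate $\xi := \xi_{-\mu^\inv-\mu-\lambda}$ and $\pi := \pi_{\lambda,\mu}$, and recall from \propref{prop:shrink} that $\pi$ is $\bun$-linear with $\pi(\xi)=\xi_{-\lambda}$. Using $\Bbar(\xi)=\xi$, the $\imath$-involutivity of ${^{\omega}L}(\mu^\inv+\mu+\lambda)$, the $\bun$-linearity of $\pi$, and $\pi(\xi)=\xi_{-\lambda}$ in turn,
\[
\pi\,\Bbar(u\,\xi) \;=\; \pi\big(\Bbar(u)\,\Bbar(\xi)\big) \;=\; \pi\big(\Bbar(u)\,\xi\big) \;=\; \Bbar(u)\,\pi(\xi) \;=\; \Bbar(u)\,\xi_{-\lambda}.
\]
On the other side, using the $\bun$-linearity of $\pi$, then the $\imath$-involutivity of ${^{\omega}L}(\lambda)$, and then $\Bbar(\xi_{-\lambda})=\xi_{-\lambda}$,
\[
\Bbar\,\pi(u\,\xi) \;=\; \Bbar\big(u\,\pi(\xi)\big) \;=\; \Bbar\big(u\,\xi_{-\lambda}\big) \;=\; \Bbar(u)\,\Bbar(\xi_{-\lambda}) \;=\; \Bbar(u)\,\xi_{-\lambda}.
\]
The two expressions coincide, which would establish $\pi_{\lambda,\mu}\Bbar = \Bbar\pi_{\lambda,\mu}$.

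There is essentially no substantive obstacle; the only points requiring care are bookkeeping. One must normalize the lowest weight vectors $\xi_{-\nu}$ of each ${^{\omega}L}(\nu)$ to be $\Abar$-invariant, so that, via $\Bbar=\Upsilon\circ\Abar$ together with $\Upsilon\,\xi_{-\nu}=\xi_{-\nu}$ (the constant term of $\Upsilon$ is $1$ and $\U^-$ of positive height kills $\xi_{-\nu}$), they are $\Bbar$-invariant in both modules, exactly as noted in \remref{rem:xi inv}; and one must observe that the explicit $\pi_{\lambda,\mu}$ built in \propref{prop:shrink}, being a composite of the $\U$-module maps $\chi$ and $\delta$ with the $\bun$-module isomorphism $\mc T$ of \thmref{thm:mcT}, is genuinely a homomorphism of $\bun$-modules. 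With these in place, the argument above goes through verbatim.
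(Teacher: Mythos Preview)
Your proof is correct and follows essentially the same approach as the paper's proof: reduce to vectors of the form $u\,\xi$ via \lemref{lem:bunxi}, then use the $\bun$-linearity of $\pi_{\lambda,\mu}$, the $\imath$-involutivity of both modules, and the $\Bbar$-invariance of the lowest weight vectors (\remref{rem:xi inv}) to compute both sides as $\Bbar(u)\,\xi_{-\lambda}$. The paper's argument is slightly terser but identical in substance.
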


\begin{proof}
In this proof, we write $\pi = \pi_{\lambda , \mu}$, $\xi = \xi^{}_{- \mu^{\inv} - \mu - \lambda }$, 
and $\xi' = \xi_{-\lambda}$.  
Then $\pi (\xi) = \xi'$ by Proposition~\ref{prop:shrink}.
An arbitrary element in ${^{\omega}L} (\mu^{\inv}+ \mu + \lambda )$ is of the form $u \xi$ for some $u \in \Ui$, 
by Lemma~ \ref{lem:bunxi}. 
Since $\xi$ and $\xi'$ are both $\Bbar$-invariant (see Remark~\ref{rem:xi inv}), we have
\[
\pi\Bbar(u\xi) = \pi\Bbar(u) (\xi) = \Bbar(u) \pi(\xi) = \Bbar(u) \xi'. 
\]
On the other hand, we have
\[
\Bbar\pi(u \xi) = \Bbar(u\xi') = \Bbar(u) \Bbar (\xi')= \Bbar(u) \xi'.
\]
The lemma is proved.
\end{proof}

\section{The $\imath$-canonical bases at rank one}  \label{subsec:rank1}

In this section we shall consider the rank 1 case of the algebra $\Ui$, i.e., 
$\Ui = \Qq[t]$,  the polynomial algebra in $t$.  
In order to simplify the notation, we shall write 
\[
E = E_{\alpha_0}, 
\quad
F = F_{\alpha_0}, 
\quad
\text{and } 
K = K_{\alpha_0}
\]
for the generators of $\U =\U_{q}({\mf{sl}_2})$.
By Proposition~ \ref{prop:embedding}, we have an algebra embedding 
$\imath: \Qq[t] \rightarrow \U_{q}({\mf{sl}_2})$ 
such that $\imath(t) = E + q F K^{-1}  + K^{-1}$. 

In the rank one case, $\Lambda^+$ can be canonically identified with ${\N}$. 
The finite-dimensional irreducible $\U$-modules  are of the form ${^{\omega}L}(s)$ of lowest weight $-s$,
with $s \in {\N}$. Recall \cite{Lu94}
the canonical basis of ${^{\omega}L}(s)$ consists of $\{E^{(a)} \xi_{-r} \mid 0 \le a \le s\}$.
We denote by ${^{\omega}\mc{L} (s)}$ the $\Z[q]$-submodule of ${^{\omega}L}(s)$ 
generated by $\{E^{(a)} \xi_{-s} \mid 0 \le a \le s\}$.
Also denote by ${^{\omega}L_\mA (s)}$ the $\mA$-submodule of ${^{\omega}L}(s)$ 
generated by $\{E^{(a)} \xi_{-s} \mid 0 \le a \le s\}$.

In the current rank one setting, we can write the intertwiner $\Upsilon = \sum_{k\ge 0}\Upsilon_{k}$, 
with $\Upsilon_{k} = \Upsilon_{k \alpha_0}= c_k F^{(k)}$ for $c_k \in \Qq$, and $c_0=1$. 

\begin{lem} \label{rank1:lem:upsiloninZ}
We have $\Upsilon_k \in \U^{-}_\mA$, for $k \ge 0$.
\end{lem}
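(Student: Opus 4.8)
The plan is to work out the recursion satisfied by the coefficients $c_k$ in $\Upsilon_k = c_k F^{(k)}$ and to verify that these coefficients lie in $\mA = \Z[q,q^{-1}]$. In the rank one setting $\bun = \Qq[t]$, so the defining identity \eqref{eq:star} for $\Upsilon$ reduces entirely to the single generator equation for $t$, which by Lemma~\ref{lem:identityeq} is equivalent to the pair of identities \eqref{aux:Upuniq4} (the last two displays before \eqref{aux:T1}), specialized to $i=0$ with $\I =\{0\}$. Since $F^{(k)}$ is central in the commutative algebra $\U^-=\Qq[F]$, the two identities coincide, and with $\Upsilon_k = c_k F^{(k)}$ they become a scalar recursion relating $c_k$, $c_{k-1}$ and $c_{k-2}$, with $c_0 = 1$ and $c_1 = 0$ (the latter because $\mu^\theta = \mu$ forces $\Upsilon_{\alpha_0} = 0$ by Theorem~\ref{thm:Upsilon}, or directly from the recursion). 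I would compute this recursion explicitly using $r_0(F^{(k)}) = q^{-(k-1)} F^{(k-1)}$ (from \cite[3.1.6 or the formulas in Section~1]{Lu94}), $(\alpha_0,\alpha_0)=2$, and the weight $\mu = k\alpha_0$.

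First I would substitute $\Upsilon_\mu = c_k F^{(k)}$, $\Upsilon_{\mu-\alpha_0} = c_{k-1}F^{(k-1)}$, $\Upsilon_{\mu-2\alpha_0} = c_{k-2}F^{(k-2)}$ into \eqref{aux:Upuniq4}, apply the Leibniz-type formula for $r_0$ to rewrite $r_0(c_k F^{(k)})$, collect the coefficient of the PBW-type monomial $F^{(k-1)}$ (using $F\cdot F^{(k-2)} = [k-1] F^{(k-1)}$), and read off a relation of the form $c_k = A_k\, c_{k-2} + B_k\, c_{k-1}$ where $A_k$, $B_k$ are explicit Laurent polynomials (I expect $B_k$ to vanish, so in fact $c_k$ is determined by $c_{k-2}$ alone, reflecting the parity phenomenon mentioned in the introduction). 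Concretely, after cancelling the common factor $(q^{-1}-q)$, the relevant coefficient of $F^{(k-1)}$ should be a product of the form $q^{(\text{linear in }k)}$ times $c_{k-2}$ plus $c_{k-1}$ times something that turns out to be zero once one uses $\mu^\theta=\mu$. Then $c_k$ is expressed as a product $\prod$ of factors each of which is a power of $q$, hence manifestly a unit in $\mA$; in particular $c_k \in \mA$ and therefore $\Upsilon_k = c_k F^{(k)} \in \U^-_\mA$ since $F^{(k)} \in \U^-_\mA$ by definition of the $\mA$-form.

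An alternative and perhaps cleaner route, which avoids bookkeeping with the $r_i$ maps, is to use Theorem~\ref{thm:mcT}: the map $\mc{T} = \Upsilon \circ \widetilde\zeta \circ T_{w_0}$ is a $\bun$-module isomorphism of ${^{\omega}L}(s)$, and by Lemma~\ref{lem:uniqueT} (specialized to rank one) it sends $\xi_{-s}$ to a vector of the form $\sum_a g_a E^{(a)}\xi_{-s}$ in the canonical basis. One can compute $T_{w_0}(\xi_{-s})$ and $\widetilde\zeta$ directly on ${^{\omega}L}(s)$ — both preserve the $\mA$-lattice ${^{\omega}L_\mA(s)}$, $T_{w_0}$ up to an explicit power of $q$ by \cite{Lu94}, and $\widetilde\zeta$ by the choice of $\zeta$ — so if $\mc{T}(\xi_{-s})$ is known to lie in ${^{\omega}L_\mA(s)}$ then $\Upsilon$ must act $\mA$-integrally on the vector $\widetilde\zeta(T_{w_0}(\xi_{-s}))$, which is a unit multiple of the highest weight vector $F^{(s)}\xi_{-s}$ up to sign; pushing $\Upsilon = \sum c_k F^{(k)}$ against this and using that the $F^{(j)}F^{(s)}\xi$ with $j\ge 1$ vanish forces $c_k$ into $\mA$ for $k \le s$, and letting $s\to\infty$ gives it for all $k$. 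The main obstacle in either approach is purely computational: pinning down the exact form of the recursion (equivalently, the constants $g_a$), and I expect the parity-dependent vanishing $c_{2j+1}=0$ and a clean product formula $c_{2j} = (\pm)\, q^{?}$ for the even coefficients; once that formula is in hand, integrality is immediate. I would therefore lead with the direct recursion computation from \eqref{aux:Upuniq4}, since it is self-contained and gives the explicit $c_k$ that will be reused later in this section.
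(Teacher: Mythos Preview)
Your overall plan --- derive a scalar recursion for the $c_k$ from the defining identity \eqref{eq:star} specialized to $u=t$, then induct --- is exactly what the paper does. But several of your concrete expectations about that recursion are wrong, and they stem from a single misreading of the $\theta$-condition.

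In rank one $\I=\{0\}$ and $\theta(\alpha_0)=\alpha_{-0}=\alpha_0$, so every $\mu=k\alpha_0$ satisfies $\mu^\theta=\mu$; Theorem~\ref{thm:Upsilon} therefore forces no vanishing whatsoever. In particular $c_1\neq 0$: the paper records elsewhere (proof of Lemma~\ref{lem:T}) that $\Upsilon_{\alpha_0}=-\qq\,F$, i.e.\ $c_1=-(q^{-1}-q)$. The recursion the paper actually obtains is the genuine three-term relation
\[
c_k \;=\; -\,q^{k-1}\qq\big(q^{-1}[k-1]\,c_{k-2}+c_{k-1}\big),\qquad c_{-1}=0,\ c_0=1,
\]
so the coefficient $B_k$ of $c_{k-1}$ does not vanish, there is no parity vanishing $c_{2j+1}=0$, and the $c_k$ are not simple $q$-powers. (The ``parity phenomenon'' in the introduction refers to the two families $T^{\rm odd}_a$, $T^{\rm ev}_a$ of Propositions~\ref{rank1:prop:BCBodd} and its even analogue, not to the $\Upsilon_k$.) None of this damages the argument: both coefficients in the recursion lie in $\mA$, so $c_k\in\mA$ by induction, and $\Upsilon_k=c_kF^{(k)}\in\U^-_\mA$ as required. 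A minor side remark: your formula $r_0(F^{(k)})=q^{-(k-1)}F^{(k-1)}$ has the sign of the exponent wrong; the correct one is $q^{k-1}$.

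Your alternative route through $\mc T$ is, as written, circular and a bit garbled. Lemma~\ref{lem:uniqueT} only asserts $g_b\in\Qq$ with $g_1=1$; it does not give $\mc T(\xi_{-s})\in{^\omega L_\mA(s)}$, and proving that is essentially the integrality of $\Upsilon$ you are after. Also, in ${^\omega L(s)}$ the lowest weight vector $\xi_{-s}$ is killed by $F$, so ``$F^{(s)}\xi_{-s}$'' is $0$; the highest weight vector is $E^{(s)}\xi_{-s}$. Stick with the direct recursion --- just drop the incorrect predictions about its shape.
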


\begin{proof}
It is equivalent to prove that $c_k \in \mA=\Z[q, q^{-1}]$ for all $k \ge  0$. The equation \eqref{eq:star} for $u=t$ implies that
\[
qF K^{-1} \Upsilon_{k-2} + K^{-1} \Upsilon_{k-1} + E \Upsilon_{k}=  q^{-1}\Upsilon_{k-2}F K + \Upsilon_{k-1}K  + \Upsilon_{k} E ,
\]
for all $k \ge 0$. Solving this equation, we have the following recursive formula for $c_k$:
\[
c_k = (-q^{k-1})\qq(q^{-1} [k-1]c_{k-2} + c_{k-1}), \quad \text{ for all } k \ge 1,
\]
where $c_{-1} = 0$ and $c_0 =1$. Then it follows by induction on $k$ that $c_k \in \mA$.
\end{proof}
One can show by the recursive relation in the above proof that
\begin{align}
\label{Ups1}
\Upsilon =\sum_{k \ge 0} 
q^{k(k+1)} \Big( \prod_{i=1}^k (q^{2i-1}-q^{1-2i})  F^{(2k)}   
+ \prod_{i=1}^{k+1} (q^{2i-1}-q^{1-2i})    F^{(2k+1)} \Big).
\end{align}

\begin{prop}  \label{rank1:BCB}
Let $s \in \N$. 
\begin{enumerate}
\item
The $\Ui$-module ${^{\omega}L}(s)$ admits a unique $\Qq$-basis 
$\B^\imath(s) = \{T^s_{a} \mid 0 \le a \le s \}$ which satisfies $\Bbar(T^s_a) =T^s_a$ and
\begin{equation}
T^s_{a} = E^{(a)} \xi_{-s} +\sum_{a' < a} t^s_{a;a'} E^{(a')} \xi_{-s}, 
\label{eq:Tsa}
\end{equation}
where $t^s_{a;a'} \in q\Z[q].$ (We also set $t^s_{a;a}=1$.)

\item
$\B^\imath(s)$ forms an $\mA$-basis for the $\mA$-lattice ${^{\omega} L_\mA (s)}$.

\item
$\B^\imath(s)$ forms a $\Z[q]$-basis for the $\Z[q]$-lattice ${^{\omega}\mc{L} (s)}$.
\end{enumerate}
\end{prop}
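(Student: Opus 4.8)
The plan is to obtain all three parts from the standard Kazhdan--Lusztig straightening argument, once the action of the involution $\Bbar$ on the Lusztig canonical basis $\{E^{(a)}\xi_{-s}\}_{0\le a\le s}$ of ${^{\omega}L}(s)$ has been made explicit. Recall that $\xi_{-s}$ may be chosen $\Abar$-invariant (\remref{rem:xi inv}) and that $\overline{E_{\alpha_0}}=E_{\alpha_0}$, $\overline{[a]!}=[a]!$, so that $E^{(a)}\xi_{-s}$ is $\Abar$-invariant. Using $\Bbar=\Upsilon\circ\Abar$ on ${^{\omega}L}(s)$ (\propref{prop:compatibleBbar}, \corref{cor:Li-invol}) and writing the rank-one intertwiner as $\Upsilon=\sum_{k\ge 0}c_k F^{(k)}$ with $c_0=1$, we get
\[
\Bbar\big(E^{(a)}\xi_{-s}\big) \;=\; \Upsilon\cdot E^{(a)}\xi_{-s} \;=\; E^{(a)}\xi_{-s} + \sum_{k\ge 1} c_k\, F^{(k)}E^{(a)}\xi_{-s}.
\]

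The next step is the elementary observation that $F^{(k)}E^{(a)}\xi_{-s}$ is a scalar multiple of $E^{(a-k)}\xi_{-s}$, the scalar being a Gaussian binomial coefficient and hence lying in $\mA$. The proportionality holds for weight reasons, since each weight space of ${^{\omega}L}(s)$ is at most one-dimensional and $E^{(a-k)}\xi_{-s}$ is the unique basis vector of the appropriate weight; the integrality of the coefficient follows because ${^{\omega}L_\mA}(s)$ is a $\U_\mA$-module, in particular stable under $F^{(k)}\in\U^-_\mA$. Combined with $c_k\in\mA$ (\lemref{rank1:lem:upsiloninZ}), this shows that $\Bbar$ is unitriangular over $\mA$ on the free $\mA$-module ${^{\omega}L_\mA}(s)$ with respect to the $\mA$-basis $\{E^{(a)}\xi_{-s}\}_{0\le a\le s}$, ordered by $a$:
\[
\Bbar\big(E^{(a)}\xi_{-s}\big) \in E^{(a)}\xi_{-s} + \sum_{a'<a}\mA\, E^{(a')}\xi_{-s}.
\]
Since moreover $\Bbar^2=\id$ on ${^{\omega}L}(s)$ (\propref{prop:compatibleBbar}, via $\Upsilon\overline{\Upsilon}=1$ from \corref{cor:Upsiloninv}), the standard existence-and-uniqueness lemma for canonical bases (see, e.g., \cite[Lemma~24.2.1]{Lu94}) applies verbatim and produces a unique $\mA$-basis $\B^\imath(s)=\{T^s_a\mid 0\le a\le s\}$ of ${^{\omega}L_\mA}(s)$ with $\Bbar(T^s_a)=T^s_a$ and $T^s_a=E^{(a)}\xi_{-s}+\sum_{a'<a}t^s_{a;a'}E^{(a')}\xi_{-s}$, $t^s_{a;a'}\in q\Z[q]$; this gives (2). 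Part~(1) then follows, since any $\Qq$-basis of ${^{\omega}L}(s)$ of the required form has all coefficients in $q\Z[q]\subseteq\mA$, hence lies in ${^{\omega}L_\mA}(s)$, and must coincide with $\B^\imath(s)$ by the $\mA$-uniqueness just obtained.

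Finally, for (3): the transition matrix $(t^s_{a;a'})$ from $\B^\imath(s)$ to $\{E^{(a)}\xi_{-s}\}$ is unitriangular with entries in $q\Z[q]\subseteq\Z[q]$, hence invertible over $\Z[q]$, so $\B^\imath(s)$ is simultaneously a $\Z[q]$-basis of ${^{\omega}\mc{L}}(s)=\bigoplus_a \Z[q]\,E^{(a)}\xi_{-s}$. I expect the only genuinely non-formal input to be the rank-one integrality of $\Upsilon$ (\lemref{rank1:lem:upsiloninZ}), everything else being the familiar Kazhdan--Lusztig-type argument; the minor point requiring care is the verification that $F^{(k)}E^{(a)}\xi_{-s}\in\mA\,E^{(a-k)}\xi_{-s}$, which is what keeps the bar-matrix integral.
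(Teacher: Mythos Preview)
Your proof is correct and follows essentially the same approach as the paper: compute $\Bbar(E^{(a)}\xi_{-s})=\Upsilon\cdot E^{(a)}\xi_{-s}$ using $\Abar$-invariance of $E^{(a)}\xi_{-s}$, observe that the result is unitriangular with $\mA$-coefficients (via \lemref{rank1:lem:upsiloninZ}), and then invoke \cite[Lemma~24.2.1]{Lu94}. Your justification of the integrality of the bar-matrix via the one-dimensionality of weight spaces and the $\U_\mA$-stability of ${^{\omega}L_\mA}(s)$ is more explicit than the paper's, but the argument is the same.
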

We call $\B^\imath(s)$  the  {\em $\imath$-canonical basis}  of
the $\Ui$-module ${^{\omega}L}(s)$.

\begin{proof}
Parts (2) and (3) follow immediately from (1) by noting \eqref{eq:Tsa}. 

It remains to prove (1).  Since $\Bbar = \Upsilon  \Abar$ and $ \Abar (E^{(a)} \xi_{-s})
 = E^{(a)} \xi_{-s}$, we have
\begin{equation*}
\Bbar(E^{(a)} \xi_{-s}) 
 = \Upsilon  (E^{(a)} \xi_{-s}) 
 = E^{(a)} \xi_{-s} +\sum_{a' < a} \rho^s_{a;a'} E^{(a')} \xi_{-s},
\end{equation*}
for some scalars $\rho^s_{a;a'} \in \mA$. 
As $\Bbar$ is an involution, Part~(1) follows by an application of   
\cite[Lemma~ 24.2.1]{Lu94} to our setting.
\end{proof}

\begin{lem}\label{rank1:lem:qZq}
Write $x \equiv x'$ if $x - x' \in q\; {^{\omega}\mc{L}(s)}$ with $s\in \N$. The $\Ui$-homomorphism 
$\pi= \pi_{s,1}:  {^{\omega}{L}(s+2)} \rightarrow {^{\omega}{L}(s)}$
from Proposition~ \ref{prop:shrink} satisfies that, for $a\ge 0$, 
\[
\pi(E^{(a)}\xi_{-s-2}) \equiv
\begin{cases}
E^{(a-1)}\xi_{-s} , & \text{ if } s= a-1;\\
E^{(a)} \xi_{-s} , & \text{ otherwise}.
\end{cases}
\]
\end{lem}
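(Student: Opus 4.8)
The plan is to prove both cases by a single induction on $a$, using that $\pi=\pi_{s,1}\colon {^{\omega}{L}(s+2)}\to {^{\omega}{L}(s)}$ is a homomorphism of $\bun=\Qq[t]$-modules with $\pi(\xi_{-s-2})=\xi_{-s}$ by \propref{prop:shrink}; in particular $\pi$ commutes with the action of $t$. The engine is an explicit three-term recursion on each ${^{\omega}{L}(s')}$ for $s'\in\{s,s+2\}$ coming from the embedding $\imath(t)=E+qFK^{-1}+K^{-1}$. Combining the standard $\mf{sl}_2$-identities $E\cdot E^{(a)}\xi_{-s'}=[a+1]E^{(a+1)}\xi_{-s'}$, $\;K^{-1}\cdot E^{(a)}\xi_{-s'}=q^{s'-2a}E^{(a)}\xi_{-s'}$ and $F\cdot E^{(a)}\xi_{-s'}=[s'-a+1]E^{(a-1)}\xi_{-s'}$ on ${^{\omega}{L}(s')}$, one obtains
\[
t\cdot E^{(a)}\xi_{-s'}=[a+1]E^{(a+1)}\xi_{-s'}+q^{s'-2a+1}[s'-a+1]E^{(a-1)}\xi_{-s'}+q^{s'-2a}E^{(a)}\xi_{-s'},
\]
which, solved for $E^{(a+1)}\xi_{-s'}$, expresses $E^{(a+1)}\xi_{-s'}$ in terms of $t\cdot E^{(a)}\xi_{-s'}$, $E^{(a)}\xi_{-s'}$ and $E^{(a-1)}\xi_{-s'}$.

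Next I would apply this relation with $s'=s+2$, then apply $\pi$ and use $\pi\circ t=t\circ\pi$, so that $[a+1]\pi(E^{(a+1)}\xi_{-s-2})$ becomes a combination of $t\cdot\pi(E^{(a)}\xi_{-s-2})$, $\pi(E^{(a)}\xi_{-s-2})$ and $\pi(E^{(a-1)}\xi_{-s-2})$. Substituting the inductive hypotheses for $a$ and $a-1$ and expanding $t$ acting on ${^{\omega}{L}(s)}$ by the same relation with $s'=s$, the term $[a+1]E^{(a+1)}\xi_{-s}$ is produced, and one must check that what remains, after dividing by $[a+1]$, lies in $q\,{^{\omega}\mc{L}(s)}$. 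The base case $a=0$ is $\pi(\xi_{-s-2})=\xi_{-s}$, and the induction is run up through $a=s+2$. The exceptional output at $a=s+1$, and the vanishing at $a=s+2$, are forced by the single fact that $E^{(s+1)}\xi_{-s}=0$ in ${^{\omega}{L}(s)}$: as soon as the recursion in ${^{\omega}{L}(s)}$ would create a vector $E^{(s+1)}\xi_{-s}$ it simply disappears, the next-lower basis vector $E^{(s)}\xi_{-s}$ becomes the leading term, and this shift then carries over to $a=s+2$.

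I expect the main obstacle to be the control of the reductions modulo $q\,{^{\omega}\mc{L}(s)}$: the operator $t$ does \emph{not} preserve the $\Z[q]$-lattice ${^{\omega}\mc{L}(s)}$ --- the coefficients $[b+1]$ and $q^{s-2b}$ bring in negative powers of $q$ --- so a congruence is not automatically stable under applying $t$, and a naive induction ``modulo $q\,{^{\omega}\mc{L}(s)}$'' breaks down. The way around it is to carry the coefficients along exactly through the recursion and invoke the precise cancellations among quantum integers, such as $[m]-q^2[m+2]=-(q^{m+1}+q^{m+3})$, together with the fact that the factor $[a+1]$ divides the remaining expression cleanly so that no denominator survives; equivalently, one may run the whole argument inside the $\mA$-form ${^{\omega}L_\mA(s)}$, which $t$ does preserve, and only read off the $q$-congruence at the end. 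An alternative, essentially equivalent, route is to unwind the explicit composition $\pi=(\delta\otimes\id)(\mc{T}\otimes\id\otimes\id)(\chi\otimes\id)\chi$ of \propref{prop:shrink} and track the quantum Clebsch--Gordan coefficients of $\chi$ from \propref{prop:shrinkA} together with the rank-one behaviour of $\mc{T}$ (using the integrality of $\Upsilon$ from \lemref{rank1:lem:upsiloninZ}) and of $\delta$; in all approaches the heart of the matter is the same elementary $q$-binomial bookkeeping and the careful treatment of the two boundary values of $a$, and \lemref{lem:piBbar=Bbarpi} provides a useful consistency check since $\pi$ then preserves $\Bbar$-invariance.
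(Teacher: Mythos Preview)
Your secondary suggestion---unwinding the composition $\pi=(\delta\otimes\id)(\mc T\otimes\id\otimes\id)(\chi\otimes\id)\chi$ and tracking the rank-one Clebsch--Gordan coefficients---is exactly what the paper does, and it is decisively simpler than the inductive scheme you put first. In the paper one computes $\mc T$ and $\delta$ on ${^\omega L(1)}=L(1)$ explicitly (two-dimensional), expands $(\chi\otimes\id)\chi(E^{(a)}\xi_{-s-2})$ by the standard comultiplication formula for $E^{(a)}$, and reads off the closed form
\[
\pi(E^{(a)}\xi_{-s-2})=E^{(a)}\xi_{-s}+(q^{s-a+1}-q^{s-a+3})E^{(a-1)}\xi_{-s}-q^{2s-2a+5}E^{(a-2)}\xi_{-s},
\]
from which all congruences, including the boundary cases $a=s+1,\,s+2$, are immediate by inspection.

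Your primary route via the $t$-recursion is a genuinely different strategy, but the obstacle you already flagged is real and not cosmetic: a congruence $\pi(E^{(a)}\xi_{-s-2})\equiv E^{(a)}\xi_{-s}$ modulo $q\,{^\omega\mc L(s)}$ is not stable under applying $t$, nor under dividing by $[a+1]$, so a straight induction modulo $q$ collapses. Your fix of ``carrying the coefficients exactly'' therefore forces you to maintain an exact closed form for $\pi(E^{(a)}\xi_{-s-2})$ throughout the induction, which in practice means guessing the formula above and verifying it satisfies the three-term recursion---at which point you have reproduced the paper's computation by a longer road. The advantage of your approach would be that it uses only the defining property $\pi\circ t=t\circ\pi$ and $\pi(\xi_{-s-2})=\xi_{-s}$, never the internal structure of $\pi$; but the price is that it does not yield the statement without the exact formula, so the saving is illusory here.
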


\begin{proof}
Recall Proposition~\ref{prop:shrink}, Proposition~\ref{prop:shrinkA},
and $\pi = (\delta \otimes \id) (\mc{T} \otimes \id \otimes \id) (\chi \otimes \id) \chi$.
It is easy to compute the action of $\mc{T}$ on ${^\omega L(1)} = L(1)$ is given by 
\[
\mc{T} (\xi_{-1}) = E\xi_{-1} -(q^{-1}-q) \xi_{-1} \quad \text{ and } \quad \mc{T} (E\xi_{-1}) = \xi_{-1}.
\]
For the map $\delta \otimes \id : {L(1)} \otimes {^\omega L(1)}
 \otimes {^\omega L(s)} \rightarrow {^\omega L(s)}$, it is easy to compute that 
\[
\delta (E\xi_{-1} \otimes \xi_{-1}) = 1, 
\quad
\delta(\xi_{-1} \otimes E\xi_{-1}) = -q, 
\]
and 
\[
\delta(\xi_{-1} \otimes \xi_{-1}) =\delta(E\xi_{-1} \otimes E\xi_{-1})=0.
\]

For the map $(\chi \otimes \id) \chi : {^\omega L(s+2)} \rightarrow {^\omega L(1)}
 \otimes {^\omega L(1)}
 \otimes {^\omega L(s)}$, we have 
\begin{align*}
&(\chi \otimes \id)\chi (E^{(a)}\xi_{-s-2}) \\
= &\sum_{a_1+a_2+a_3 = a}q^{-a_1a_2-a_1a_3-a_2a_3+a_1+sa_1+sa_2} E^{(a_1)} \xi_{-1} \otimes 
 E^{(a_2)} \xi_{-1}\otimes E^{(a_3)} \xi_{-s}
\\
= &  \xi_{-1} \otimes  \xi_{-1}\otimes E^{(a)} \xi_{-s}
 +  q^{-a+1+s}\xi_{-1} \otimes E \xi_{-1}\otimes E^{(a-1)} \xi_{-s}
\\
&+ q^{-a+2+s}E \xi_{-1} \otimes  \xi_{-1}\otimes E^{(a-1)} \xi_{-s}
 + q^{2s-2a+4}E \xi_{-1} \otimes E \xi_{-1}\otimes E^{(a-2)} \xi_{-s}.
\end{align*}
Then by applying $\mc{T} \otimes \id \otimes \id$, we have
\begin{align*}
&(\mc{T} \otimes \id \otimes \id) (\chi \otimes \id)\chi (E^{(a)}\xi_{-s-2}) 
\\
= & E \xi_{-1} \otimes  \xi_{-1}\otimes E^{(a)} \xi_{-s} - (q^{-1}-q) \xi_{-1} \otimes  \xi_{-1}\otimes E^{(a)} 
 \xi_{-s}
\\
&+ q^{-a+2+s}E \xi_{-1} \otimes  E\xi_{-1}\otimes E^{(a-1)} \xi_{-s} 
\\
& - q^{-a+1+s}(q^{-1}-q) \xi_{-1} 
 \otimes  E\xi_{-1}\otimes E^{(a-1)}\xi_{-s}
\\
&+ q^{-a+1+s} \xi_{-1} \otimes  \xi_{-1}\otimes E^{(a-1)} \xi_{-s}
 + q^{2s-2a+4} \xi_{-1} \otimes E \xi_{-1}\otimes E^{(a-2)} \xi_{-s}.
\end{align*}
At last, by applying $\delta \otimes 1$, we have 
\begin{align*}
&\pi (E^{(a)}\xi_{-s-2})
 \\
= &E^{(a)}\xi_{-s} + 0 + 0 +q^{-a+2+s}(q^{-1}-q)E^{(a-1)}\xi_{-s} + 0 - q^{2s-2a+5}E^{(a-2)}\xi_{-s}
 \\
= &E^{(a)}\xi_{-s} + q^{-a+1+s}E^{(a-1)}\xi_{-s}   -q^{-a+3+s}E^{(a-1)}\xi_{-s}- q^{2s-2a+5}E^{(a-2)}\xi_{-s}.
\end{align*}

The lemma follows.
\end{proof}

We adopt the convention that $T^s_a = 0$ if $ s < a$.  

\begin{prop}\label{rank1:prop:BCBcompatibility}
The homomorphism 
$\pi= \pi_{s,1}:  {^{\omega} L(s+2)} \rightarrow {^{\omega} L(s)}$ 
sends $\imath$-canonical basis elements to $\imath$-canonical basis elements or zero.  
More precisely, we have
\[
\pi(T^{s+2}_{a}) = 
\begin{cases}
T^{s}_{a-1}, & \text{ if } s = a - 1 ;\\
T^{s}_{a},  & \text{ otherwise}.
\end{cases}
\]
\end{prop}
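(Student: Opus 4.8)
The plan is to combine the explicit computation of Lemma~\ref{rank1:lem:qZq} with the bar-invariance characterization of the $\imath$-canonical basis from Proposition~\ref{rank1:BCB}. First I would observe that, by Lemma~\ref{lem:piBbar=Bbarpi}, the homomorphism $\pi = \pi_{s,1}$ commutes with the bar involution $\Bbar$; hence $\pi(T^{s+2}_a)$ is a $\Bbar$-invariant element of ${^\omega L}(s)$. So if $\pi(T^{s+2}_a)\neq 0$, then by the uniqueness in Proposition~\ref{rank1:BCB}(1) it suffices to identify its leading term with respect to the canonical basis $\{E^{(a')}\xi_{-s}\}$ and to check that the remaining coefficients lie in $q\Z[q]$; then $\pi(T^{s+2}_a)$ must equal the corresponding $\imath$-canonical basis element $T^s_a$ (or $T^s_{a-1}$ in the boundary case).

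Next I would control the image under $\pi$ at the level of $\Z[q]$-lattices. By Lemma~\ref{rank1:lem:qZq}, $\pi$ sends the $\Z[q]$-lattice ${^\omega\mc L}(s+2)$ into ${^\omega\mc L}(s)$, and modulo $q\,{^\omega\mc L}(s)$ it sends $E^{(a)}\xi_{-s-2}$ to $E^{(a-1)}\xi_{-s}$ when $s=a-1$ and to $E^{(a)}\xi_{-s}$ otherwise (with the convention that $E^{(a)}\xi_{-s}=0$ when $a>s$). Since $T^{s+2}_a = E^{(a)}\xi_{-s-2} + \sum_{a'<a} t^{s+2}_{a;a'} E^{(a')}\xi_{-s-2}$ with $t^{s+2}_{a;a'}\in q\Z[q]$, applying $\pi$ and reducing modulo $q\,{^\omega\mc L}(s)$ kills all the lower terms (they already carry a factor of $q$, and $\pi$ preserves the lattice), so
\[
\pi(T^{s+2}_a) \equiv
\begin{cases}
E^{(a-1)}\xi_{-s}, & \text{if } s=a-1,\\
E^{(a)}\xi_{-s}, & \text{otherwise},
\end{cases}
\qquad \bmod\ q\,{^\omega\mc L}(s).
\]
In particular, in the generic case $\pi(T^{s+2}_a)$ is a bar-invariant element of ${^\omega\mc L}(s)$ congruent to $E^{(a)}\xi_{-s}$ modulo $q\,{^\omega\mc L}(s)$; by the uniqueness characterization of $\imath$-canonical basis elements (Proposition~\ref{rank1:BCB}, via \cite[Lemma~24.2.1]{Lu94}) this forces $\pi(T^{s+2}_a)=T^s_a$. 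In the boundary case $s=a-1$, the same argument with $a$ replaced by $a-1$ gives $\pi(T^{s+2}_a)=T^s_{a-1}$. There is also the degenerate possibility $a>s+2$, where $T^{s+2}_a=0$ by convention and there is nothing to prove, and one should note that when $s=a-1$ but also $a>s$ is automatic, the target $T^s_a$ is zero while $T^s_{a-1}$ is not, which is exactly why this case is singled out.

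The main obstacle I anticipate is purely bookkeeping: making sure the boundary/degenerate cases ($a=0$, $a=s+1$, $a=s+2$, $a>s+2$) are handled consistently with the conventions $T^s_a=0$ for $s<a$ and $E^{(a)}\xi_{-s}=0$ for $a>s$, and checking that Lemma~\ref{rank1:lem:qZq} as stated already covers the shift $s=a-1$ correctly (it does — that is precisely the first branch). No genuinely new computation is needed beyond Lemma~\ref{rank1:lem:qZq}; the content is the rigidity of bar-invariant elements with prescribed leading term, which is the standard Kazhdan--Lusztig-type uniqueness argument already invoked in the proof of Proposition~\ref{rank1:BCB}.
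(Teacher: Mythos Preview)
Your proposal is correct and is essentially the paper's own argument. The paper phrases it as: by Proposition~\ref{rank1:BCB} and Lemma~\ref{rank1:lem:qZq} the difference $\pi(T^{s+2}_a)-T^s_a$ (or $-T^s_{a-1}$ when $s=a-1$) lies in $q\,{^\omega\mc L}(s)$, and by Lemma~\ref{lem:piBbar=Bbarpi} it is $\Bbar$-invariant, hence zero --- which is exactly your ``bar-invariant with prescribed leading term, invoke uniqueness'' argument viewed from the other side.
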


\begin{proof}
By Proposition~\ref{rank1:BCB} and Lemma~ \ref{rank1:lem:qZq}, 
the difference of the two sides of the identity in the proposition lies in $q \,{^{\omega}\mc{L}(s)}$
and hence is a $q\Z[q]$-linear combination of $\B^\imath(s)$. 
Lemma~ \ref{lem:piBbar=Bbarpi} implies that such a difference is fixed by the anti-linear involution $\Bbar$ and
hence  it must be zero. The proposition follows.
\end{proof}

\begin{lem}\label{rank1:lem:degree}
Let $f(t) \in \Ui =\Qq[t]$ be nonzero. Then $f(t)\xi_{-s}  \neq 0$ for  all $s \ge \deg f$. 
\end{lem}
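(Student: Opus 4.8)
The plan is to work directly inside the module ${}^\omega L(s)$, whose underlying space has the $\Qq$-basis $v_a := E^{(a)}\xi_{-s}$ for $0\le a\le s$, and to analyze how the single generator $t$ of $\bun=\Qq[t]$ acts on this basis via the embedding $\imath(t)=E+qFK^{-1}+K^{-1}$. Since $\xi_{-s}$ is a lowest weight vector for the (twisted) action, we have $F\xi_{-s}=0$ and $\xi_{-s}$ has weight $-s$, while $E^{(s+1)}\xi_{-s}=0$. A direct computation using $Ev_a=[a+1]v_{a+1}$, $K^{-1}v_a=q^{s-2a}v_a$, and $FK^{-1}v_a=q^{s-2a}Fv_a$ together with $F\xi_{-s}=0$ and the rank one commutator for $[F,E^{(a)}]$ then gives, for all $0\le a\le s$,
\[
 t\cdot v_a \;=\; [a+1]\,v_{a+1} \;+\;\bigl(\text{a }\Qq\text{-linear combination of }v_a\text{ and }v_{a-1}\bigr),
\]
with the conventions $v_{s+1}=0$ and $v_{-1}=0$; here the coefficient $[a+1]$ is a \emph{nonzero} element of $\Qq$ for $0\le a\le s-1$. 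I will not need the precise values of the lower-order coefficients.

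First I would record this triangular structure and then prove, by an easy induction on $k$, that for $0\le k\le s$
\[
 t^k\xi_{-s} \;=\; [k]!\,v_k \;+\;\bigl(\text{a }\Qq\text{-linear combination of }v_0,\dots,v_{k-1}\bigr),
\]
so that the transition matrix from $\{\xi_{-s},t\xi_{-s},\dots,t^k\xi_{-s}\}$ to $\{v_0,\dots,v_k\}$ is upper triangular with nonzero diagonal entries $[j]!$. In particular $\{t^k\xi_{-s}\mid 0\le k\le s\}$ is a linearly independent subset of ${}^\omega L(s)$.

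Finally, writing $f(t)=\sum_{k=0}^{d}\gamma_k t^k$ with $\gamma_d\ne 0$ and $d:=\deg f\le s$, the element $f(t)\xi_{-s}$ is a $\Qq$-linear combination of the linearly independent vectors $\xi_{-s},t\xi_{-s},\dots,t^d\xi_{-s}$ in which the coefficient of $t^d\xi_{-s}$ is $\gamma_d\ne 0$; equivalently, the coefficient of $v_d=E^{(d)}\xi_{-s}$ in $f(t)\xi_{-s}$ equals $\gamma_d[d]!\ne 0$. Hence $f(t)\xi_{-s}\ne 0$, as claimed. The whole argument is essentially a bookkeeping exercise: the one point that needs a little care is that $t$ does \emph{not} act as a pure raising operator on the basis $\{v_a\}$, so one cannot control $f(t)\xi_{-s}$ term by term; but the linear independence of the $v_a$ makes this harmless, since only the leading coefficient $\gamma_d[d]!$ matters and $d\le s$ guarantees $v_d\ne 0$. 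I do not anticipate any serious obstacle beyond carrying out the rank one computation of $t\cdot v_a$ accurately.
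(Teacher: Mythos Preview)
Your proof is correct and uses essentially the same idea as the paper: both exploit that the highest-weight piece of $\imath(t)$ is $E$, so that $f(t)\xi_{-s}$ has a nonzero top component $c_a E^a\xi_{-s}$ (equivalently, your $\gamma_d[d]!\,v_d$) which lower-weight contributions cannot cancel when $d\le s$. The only difference is cosmetic: the paper carries out the weight argument directly in $\U$ (writing $\imath(f(t))=c_aE^a+x$ with $x$ of lower weight), while you unwind it explicitly inside the module.
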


\begin{proof}
We write $\xi = \xi_{-s}$. Write $a=\deg f$, and $f(t) = \sum^a_{i = 0} c_i t^i$ with $c_a \neq 0$. 
Then $\imath(f(t))= c_a E^a + x$, where $x$ is a linear combination of elements in $\U$ with weights 
lower than that of $E^a$. It follows that 
\[
f(t) \xi = c_a E^a \xi + x \xi \neq 0 \quad \text{ for }s\ge a, 
\]
since 
$c_a E^a \xi \neq 0$ and it cannot be canceled out by $x \xi$ for weight reason.  
\end{proof}

\begin{prop}  \label{rank1:prop:BCBodd} 
There exists a unique $\Qq$-basis $\{T^{\rm odd}_a \mid a\in \N\}$ of $\Ui =\Qq[t]$ 
with $\deg T^{\rm odd}_a =a$ such that
\begin{eqnarray}
 \label{eq:Todd}
T^{\rm odd}_a \xi_{-s} = 
\begin{cases}
T^{s}_{a-1}, & \text{ if } s = a-1;\\
T^{s}_a, &\text{ otherwise},
\end{cases}
\end{eqnarray}
for each $s \in 2{\N} +1$.
Moreover, we have
$
\ov{T^{\rm odd}_a} = T^{\rm odd}_a$.
\end{prop}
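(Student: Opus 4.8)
The plan is to build the basis $\{T^{\rm odd}_a \mid a \in \N\}$ compatibly with the maps $\pi_{s,1}$ and exploit the fact that, for $s$ large relative to $a$, the map $t \mapsto$ (action on $\xi_{-s}$) is injective in the relevant degree range. First I would fix $a \in \N$ and observe that, by Proposition~\ref{rank1:prop:BCBcompatibility}, the $\imath$-canonical basis elements $T^s_a$ for odd $s \geq a$ (with the shift $T^s_{a-1}$ when $s=a-1$) are \emph{compatible} under $\pi_{s,1} : {}^\omega L(s+2) \to {}^\omega L(s)$: namely $\pi_{s,1}(T^{s+2}_a) = T^s_a$ whenever $s \geq a+1$ is odd, and $\pi_{a-1,1}(T^{a+1}_a) = T^{a-1}_{a-1}$. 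Thus the collection $\{T^s_a\}_{s \in 2\N+1,\ s\geq a-1}$ (with the understood shift at the bottom) forms a compatible system in the inverse system $\big({}^\omega L(s)\big)_{s}$ under the $\pi_{s,1}$'s.

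Next I would produce the element $T^{\rm odd}_a \in \bun = \Qq[t]$ itself. Since ${}^\omega L(s) = \bun\,\xi_{-s}$ by Lemma~\ref{lem:bunxi}, for each odd $s$ there is some polynomial realizing $T^s_a$ (or $T^{s}_{a-1}$ at the bottom) as $f_s(t)\xi_{-s}$; by Lemma~\ref{rank1:lem:degree} and the leading-term analysis in its proof, the lowest-weight expansion \eqref{eq:Tsa} forces $\deg f_s = a$ with leading coefficient consistent across $s$. The key point is uniqueness and stabilization: if $f, g \in \Qq[t]$ both have degree $\leq a$ and $f(t)\xi_{-s} = g(t)\xi_{-s}$ for one $s \geq a$, then $f = g$ by Lemma~\ref{rank1:lem:degree} applied to $f-g$. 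Hence for a single fixed odd $s_0 \geq a+1$ I can \emph{define} $T^{\rm odd}_a$ to be the unique polynomial of degree $a$ with $T^{\rm odd}_a \xi_{-s_0} = T^{s_0}_a$; then for every other odd $s \geq a+1$ the identity $T^{\rm odd}_a \xi_{-s} = T^s_a$ follows by pushing down through the maps $\pi_{s,1}$, whose composition on the highest-weight-normalized vectors sends $\xi_{-s} \mapsto \xi_{-s'}$ and intertwines the $\bun$-action, so that $T^{\rm odd}_a \xi_{-s_0} = T^{s_0}_a$ combined with $\pi_{s,1}(T^s_a) = T^{s-2}_a$ and injectivity of the relevant truncated maps upgrades to $T^{\rm odd}_a \xi_{-s} = T^s_a$ for all odd $s \geq a+1$, and the case $s = a-1$ is handled by the shifted clause of Proposition~\ref{rank1:prop:BCBcompatibility}. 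That $\{T^{\rm odd}_a\}$ is a $\Qq$-basis of $\Qq[t]$ is then immediate from $\deg T^{\rm odd}_a = a$.

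Finally, for bar-invariance: $\ov{T^{\rm odd}_a}$ is again a polynomial of degree $a$ (the bar involution on $\bun$ fixes $t$, hence preserves degree), and for any odd $s \geq a$ we have $\ov{T^{\rm odd}_a}\,\xi_{-s} = \Bbar(T^{\rm odd}_a \xi_{-s}) = \Bbar(T^s_a) = T^s_a$, using that $\xi_{-s}$ is $\Bbar$-invariant (Remark~\ref{rem:xi inv}), that $\Bbar$ on ${}^\omega L(s)$ is compatible with the $\bun$-action (Corollary~\ref{cor:Li-invol}), and that $T^s_a$ is $\Bbar$-invariant by Proposition~\ref{rank1:BCB}(1). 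Then $\ov{T^{\rm odd}_a}$ and $T^{\rm odd}_a$ are two degree-$\leq a$ polynomials agreeing after acting on $\xi_{-s}$ for $s \geq a$, so by Lemma~\ref{rank1:lem:degree} they are equal.

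The main obstacle I anticipate is the bookkeeping at the bottom of the range, i.e. the index shift $s = a-1$ in Proposition~\ref{rank1:prop:BCBcompatibility} and in \eqref{eq:Todd}: one must make sure the single polynomial $T^{\rm odd}_a$ simultaneously realizes $T^s_a$ for all odd $s \geq a+1$ \emph{and} the shifted element $T^{a-1}_{a-1}$ when $s = a-1$, which is exactly what the compatibility $\pi_{a-1,1}(T^{a+1}_a) = T^{a-1}_{a-1}$ guarantees; checking that this is consistent (rather than over-determined) is the one place requiring care, and it is precisely where Proposition~\ref{rank1:prop:BCBcompatibility} does the work. Everything else reduces to the injectivity statement of Lemma~\ref{rank1:lem:degree} and the already-established properties of $\pi_{s,1}$ and of the rank-one $\imath$-canonical bases.
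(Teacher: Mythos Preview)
Your proposal is correct and follows essentially the same approach as the paper. The paper phrases the stabilization step slightly differently---it introduces, for each $s\ge a$, the unique degree-$a$ polynomial $T_a(s)$ with $T_a(s)\xi_{-s}=T^s_a$, shows directly via $\pi_{s,l}$ and Lemma~\ref{rank1:lem:degree} that $T_a(s+2l)=T_a(s)$ for all $l\ge 0$, and then sets $T^{\rm odd}_a:=\lim_{l\to\infty}T_a(1+2l)$---but this is the same argument as your ``fix $s_0$ and propagate'' version, and the uniqueness and bar-invariance arguments are identical. One small point of phrasing: your ``injectivity of the relevant truncated maps'' is not injectivity of $\pi_{s,1}$ (which fails) but rather the injectivity of $f\mapsto f\xi_{-s_0}$ on $\Qq[t]_{\le a}$ from Lemma~\ref{rank1:lem:degree}; making that explicit would remove the only ambiguity in your write-up.
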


\begin{proof}
By going over carefully the proof of Lemma \ref{lem:bunxi} in the rank one case, 
we can prove the following refinement of Lemma~ \ref{lem:bunxi}:

($\heartsuit_{a}^{s}$) \;\;
{\em Whenever $s  \ge a$, there exists a {\em unique} element $T_a(s) \in \Ui =\Qq[t]$ {\em of degree $a$} such that 
$T_a(s) \xi_{-s} = T^s_a$. }

Let $s \ge a$ and take $l \ge 0$. 
Since $\pi_{s,l}$ is a $\Ui$-homomorphism with $\pi_{s ,l}(\xi_{-(s+2l)}) =\xi_{-s}$ (see Proposition~\ref{prop:shrink}), we have
by Proposition~ \ref{rank1:prop:BCBcompatibility} 
\begin{align*}
T_a (s+2l) \xi_{-s} &=\pi_{s ,l}(T_a(s+2l)\xi_{-(s+2l)}) 
  \\
 & \stackrel{\heartsuit_{a}^{s+2l}}{=}   \pi_{s ,l} (T^{s+2l}_a) 
 = T^s_a 
   \stackrel{\heartsuit_{a}^{s}}{=}   T_a(s) \xi_{-s}.
\end{align*}
Hence $T_a(s+2l) = T_a(s)$ for all $l \ge 0$ and $s \ge a$, thanks to the uniqueness of $T_a(s)$ in ($\heartsuit_{a}^{s}$). Hence,
\[
T^{\rm odd}_a := \lim_{l \mapsto \infty} T_a(1+2l) \in \Ui  
\]
is well defined. 
It follows by Proposition~ \ref{rank1:prop:BCBcompatibility} that $T^{\rm odd}_a$ satisfies \eqref{eq:Todd}. 

We now show that $T^{\rm odd}_a$  is unique (for a given $a$). 
Let $'T^{\rm odd}_a$ be another such element satisfying \eqref{eq:Todd}. 
Then $(T^{\rm odd}_a -{} 'T^{\rm odd}_a) \xi_{-s} = 0$ for all $s \in 2{\N}+1$. 
It follows  by Lemma~ \ref{rank1:lem:degree} that ${T}^{\rm odd}_a = {}'T^{\rm odd}_a$.  

Applying $\Bbar$ to both sides of \eqref{eq:Todd} and using 
Corollary~\ref{cor:Li-invol}, we conclude that $\ov{T^{\rm odd}_a}$ satisfies \eqref{eq:Todd} as well. 
Hence by the uniqueness we have $T^{\rm odd}_a = \ov{T^{\rm odd}_a}$.  
\end{proof}

A similar argument gives us the following proposition.
\begin{prop} 
There exists a unique $\Qq$-basis $\{T^{\rm ev}_a \mid a\in \N\}$ of $\Ui =\Qq[t]$ 
with $\deg T^{\rm ev}_a =a$ such that
\[
T^{\rm ev}_a \xi_{-s} = 
\begin{cases}
T^{s}_{a-1}, & \text{ if } a = s+1;\\
T^{s}_a, &\text{ otherwise},
\end{cases}
\]
for each $s \in 2{\N}$.
Moreover, we have
$\ov{T^{\rm ev}_a} = T^{\rm ev}_a$.
\end{prop}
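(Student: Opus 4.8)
The plan is to follow the proof of \propref{rank1:prop:BCBodd} essentially verbatim, with the two parities interchanged. First I would recall the refinement $(\heartsuit^s_a)$ of \lemref{lem:bunxi} established inside the proof of \propref{rank1:prop:BCBodd}: for every $s \ge a$ there is a \emph{unique} element $T_a(s) \in \bun = \Qq[t]$ of degree $a$ with $T_a(s)\,\xi_{-s} = T^s_a$. As in the odd case, the $\bun$-homomorphism $\pi_{s,l}\colon {}^{\omega}L(2l+s) \to {}^{\omega}L(s)$ from \propref{prop:shrink} sends $\xi_{-(s+2l)}$ to $\xi_{-s}$ and, by the uniqueness clause there, factors as $\pi_{s,1}\circ\pi_{s+2,1}\circ\cdots\circ\pi_{s+2l-2,1}$. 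Combining this with \propref{rank1:prop:BCBcompatibility} (no index shift occurs while the intermediate level stays $\ge a$, since there it is $\ne a-1$) gives $T_a(s+2l)\,\xi_{-s} = \pi_{s,l}(T^{s+2l}_a) = T^s_a = T_a(s)\,\xi_{-s}$, hence $T_a(s+2l) = T_a(s)$ for all $l\ge 0$, $s\ge a$, by the uniqueness in $(\heartsuit^s_a)$. I would then set $T^{\rm ev}_a := \lim_{l\to\infty} T_a(2l) \in \bun$, which is well defined and equals the stable value $T_a(s)$ for every even $s \ge a$; in particular $\deg T^{\rm ev}_a = a$.

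Next I would verify the displayed identity for $T^{\rm ev}_a$. For even $s \ge a$ one has $a \ne s+1$ and $T^{\rm ev}_a\,\xi_{-s} = T_a(s)\,\xi_{-s} = T^s_a$, the ``otherwise'' branch. For even $s < a$, pick an even $s' = s+2l \ge a$, so $T^{\rm ev}_a\,\xi_{-s} = \pi_{s,l}(T^{s'}_a)$, and I would trace the composite $\pi_{s,1}\circ\cdots\circ\pi_{s'-2,1}$ applied to $T^{s'}_a$ by repeated use of \propref{rank1:prop:BCBcompatibility}: the running lower index stays $a$ at all levels $\ge a$, and the single possible shift $a \mapsto a-1$ occurs exactly at a level equal to $a-1$; that level is even — hence actually present in the chain $s, s+2,\dots,s'-2$ — precisely when $a$ is odd, i.e.\ when $a = s+1$. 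Thus for $a = s+1$ one gets $T^{\rm ev}_a\,\xi_{-s} = T^s_{a-1}$, while otherwise the index remains $a$ and $T^{\rm ev}_a\,\xi_{-s} = T^s_a = 0$ (recall $T^s_a = 0$ for $s<a$), matching the ``otherwise'' branch in both parities. I would also check that no second shift can occur, since after a shift the index $a-1$ would next have to meet a level $a-2$, which is not among the remaining even levels. This parity bookkeeping is the one place requiring a little care, although it is entirely routine and is precisely the step I expect to be the only mild obstacle.

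Finally, uniqueness and bar-invariance go exactly as in the odd case. If $'T^{\rm ev}_a$ also satisfies the identity, then $(T^{\rm ev}_a - {}'T^{\rm ev}_a)\,\xi_{-s} = 0$ for all even $s$; choosing any even $s \ge a$ and applying \lemref{rank1:lem:degree} forces $T^{\rm ev}_a = {}'T^{\rm ev}_a$. For bar-invariance I would apply $\Bbar$ to the displayed identity, using that $L(s)$ and ${}^{\omega}L(s)$ are $\imath$-involutive $\bun$-modules (\corref{cor:Li-invol}), that $\xi_{-s}$ is $\Bbar$-invariant, and that the $T^s_a$ are $\Bbar$-invariant (\propref{rank1:BCB}); then $\ov{T^{\rm ev}_a}$ satisfies the same identity, so by uniqueness $\ov{T^{\rm ev}_a} = T^{\rm ev}_a$. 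Everything beyond the index-shift count is a direct transcription of the odd-parity argument, so I would present it tersely with a cross-reference to that proof.
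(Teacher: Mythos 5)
Your proof is essentially correct and follows the route the paper clearly intends (the paper's own text for this proposition is ``A similar argument gives us the following proposition,'' referring to the proof of \propref{rank1:prop:BCBodd}, which you reproduce faithfully with the parity swap).

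One small slip in the parity bookkeeping is worth flagging, even though it does not affect the conclusion. You assert that the index shift $a\mapsto a-1$ occurs at a level present in the even chain ``precisely when $a$ is odd, i.e.\ when $a=s+1$,'' and then say that ``otherwise the index remains $a$.'' The two conditions $a$ odd and $a=s+1$ are not equivalent: for $a$ odd and $s<a-1$ (say $a=s+3$) the level $a-1$ is still even and lies in the chain $s,s+2,\dots,s'-2$, so the shift \emph{does} occur and the index ends up at $a-1$, not $a$. The displayed identity nevertheless holds in that case because $T^s_{a-1}=T^s_a=0$ whenever $a>s+1$ (by the convention $T^s_b=0$ for $b>s$), so the value is unchanged whether one computes it as $T^s_{a-1}$ or $T^s_a$. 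The correct statement is: the shift occurs iff $a$ is odd and $a>s$; the shifted result $T^s_{a-1}$ is nonzero only when $a-1=s$, i.e.\ $a=s+1$, and for $a>s+1$ both candidate answers vanish. With that correction the verification is complete, and the rest of your argument (factoring $\pi_{s,l}$ through the $\pi_{t,1}$ by uniqueness, the use of $(\heartsuit^s_a)$ to define $T_a(s)$, the stabilization, uniqueness via \lemref{rank1:lem:degree}, and bar-invariance via \corref{cor:Li-invol} and \propref{rank1:BCB}) is exactly right.
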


Clearly we have $T^{\rm odd}_0 = T^{\rm ev}_0 =1$. It is also easy to see that
$T^{\rm odd}_a$ and $T^{\rm ev}_a$ for $a\ge 1$ are both of the form 
\begin{equation}\label{eq:dividedt}
\frac{t^a}{[a]!} + g(t), \quad \text{ where }  \deg g < a.
\end{equation}

We have the following conjectural formula (which is not needed in this paper).
\begin{conjecture}
For $a\in \N$, we have 
\begin{align*}
T^{\rm odd}_{2a} &= {t(t - [-2a+2])(t-[-2a+4]) \cdots (t-[2a-4]) (t - [2a-2]) \over [2a]!},\\
T^{\rm odd}_{2a+1} & = {  (t - [-2a])(t-[-2a+2]) \cdots (t-[2a-2]) (t - [2a])  \over [2a+1]!},\\
T^{\rm ev}_{2a} &= { (t - [-2a+1] ) (t - [-2a+3]) \cdots (t - [2a-3]) (t-[2a-1]) \over [2a]!},\\
T^{\rm ev}_{2a+1} &= { t(t - [-2a+1] ) (t - [-2a+3]) \cdots (t - [2a-3]) (t-[2a-1]) \over [2a+1]!}.
\end{align*}
\end{conjecture}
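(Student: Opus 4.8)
We outline a possible approach. Write $P^{\rm odd}_a, P^{\rm ev}_a \in \bun=\Qq[t]$ for the polynomials on the right-hand sides, and recall from \propref{rank1:prop:BCBodd} and its even counterpart that $T^{\rm odd}_a$ (resp. $T^{\rm ev}_a$) is the unique degree-$a$ element of $\Qq[t]$ with $T^{\rm odd}_a\xi_{-s}=T^s_a$ for all odd $s\ge a$ (resp. all even $s\ge a$), together with the stated edge behavior at $s=a-1$. By \lemref{rank1:lem:degree} two polynomials of degree $\le a$ that agree on $\xi_{-s}$ for a single $s\ge a$ must coincide, so it suffices to check that $P^{\rm odd}_a$ has degree $a$ and that $P^{\rm odd}_a\xi_{-s}=T^s_a$ for one odd $s\ge a$ (and analogously one even $s\ge a$ for $P^{\rm ev}_a$). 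The degree and leading term $t^a/[a]!$ are immediate, since each $P^{\rm odd}_a$ is a product of $a$ factors $t-[c]$ divided by $[a]!$; moreover, because $\ov t=t$, $\ov{[c]}=[c]$ and $\ov{[a]!}=[a]!$, each $P^{\rm odd}_a$ and $P^{\rm ev}_a$ is fixed by the bar involution of $\bun$.

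To evaluate the action, I would invoke the characterization of $T^s_a$ in \propref{rank1:BCB}: it is the unique $\Bbar$-invariant vector of ${}^\omega L(s)$ lying in $E^{(a)}\xi_{-s}+\sum_{a'<a}q\Z[q]\,E^{(a')}\xi_{-s}$. Since ${}^\omega L(s)$ is $\imath$-involutive (\corref{cor:Li-invol}) and $\xi_{-s}$ is $\Bbar$-invariant (\remref{rem:xi inv}), the vector $P^{\rm odd}_a\xi_{-s}$ is automatically $\Bbar$-invariant. Writing $v_b:=E^{(b)}\xi_{-s}$, the operator $\imath(t)=E+qFK^{-1}+K^{-1}$ on ${}^\omega L(s)$ is tridiagonal, with
\[
\imath(t)\,v_b \;=\; [b+1]\,v_{b+1} \;+\; q^{s-2b}\,v_b \;+\; q^{s-2b+1}[s-b+1]\,v_{b-1},
\]
as follows from the $\mathfrak{sl}_2$-relations together with $F\xi_{-s}=0$. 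Since $\imath(t)$ raises the index $b$ by at most one and $P^{\rm odd}_a$ has leading term $t^a/[a]!$, one gets $P^{\rm odd}_a\xi_{-s}=v_a+\sum_{a'<a}(\ast)\,v_{a'}$ with top coefficient exactly $1$. Thus everything reduces to showing that the remaining coefficients $(\ast)$ lie in $q\Z[q]$, i.e. that after expanding $P^{\rm odd}_a(\imath(t))\xi_{-s}$ through the displayed recursion the a priori rational coefficients collapse to Laurent polynomials in $q$ with integer coefficients and no nonpositive powers.

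This collapse is the crux, and it is also where the parity of $s$ — hence the split between $T^{\rm odd}$ and $T^{\rm ev}$ — is forced: the relevant simplifications rest on cyclotomic-type divisibilities such as $(q^2+1)\mid\sum_{j=0}^{s}q^{2j}$, valid for $s$ odd but not for $s$ even. The plan is to run an induction on $a$ tracking the exact coefficients (conjecturally Gaussian-binomial-type expressions in $q$) rather than merely their integrality; the \emph{factored} form of $P^{\rm odd}_a$ is what makes this feasible, since applying a factor $\imath(t)-[c]$ to a suitable partial product annihilates one diagonal contribution and keeps the computation finite. A more structural alternative would be to first establish the recursions
\[
t\,T^{\rm odd}_{2a+1}=[2a+2]\,T^{\rm odd}_{2a+2},\qquad
t\,T^{\rm odd}_{2a}=[2a+1]\,T^{\rm odd}_{2a+1}+[2a]\,T^{\rm odd}_{2a-1}
\]
(and the analogous pair for $T^{\rm ev}$), which together with $T^{\rm odd}_0=T^{\rm ev}_0=1$ and $T^{\rm odd}_1=t$ determine both families uniquely and are readily checked to hold for the proposed closed forms. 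Proving these recursions reduces to understanding how $\imath(t)$ acts on the $\imath$-canonical basis $\{T^s_a\}$; here the identity $(T^{\rm odd}_{s+1}-T^{\rm odd}_s)\xi_{-s}=T^s_s-T^s_s=0$ for $s$ odd is useful, as it shows that $T^{\rm odd}_{s+1}-T^{\rm odd}_s$ (resp. $T^{\rm ev}_{s+1}-T^{\rm ev}_s$ for $s$ even), being of degree $s+1=\dim{}^\omega L(s)$, is proportional to the characteristic polynomial of $\imath(t)$ on ${}^\omega L(s)$, whose roots can be read off from the tridiagonal matrix above. On either route the main obstacle is precisely this $q$-arithmetic: proving the divisibility/$q$-binomial identities that guarantee the lower coefficients land in $q\Z[q]$. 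Once that is secured, the uniqueness in \propref{rank1:prop:BCBodd} and its even counterpart complete the proof.
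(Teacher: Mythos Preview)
The paper does not prove this statement: it is explicitly labeled a \emph{conjecture} (with the remark ``which is not needed in this paper''), and no proof is given. So there is no ``paper's proof'' to compare your proposal against.

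That said, your outline is a sensible attack and the preliminary computations are correct. The tridiagonal formula for $\imath(t)$ on $v_b=E^{(b)}\xi_{-s}$ is right, the leading-term and bar-invariance arguments go through as you say, and the recursions you propose for the closed forms are indeed satisfied by them (e.g.\ $t\,P^{\rm odd}_{2a}-[2a{+}1]P^{\rm odd}_{2a+1}=[2a]P^{\rm odd}_{2a-1}$ reduces, after factoring, to $t^2-(t^2-[2a]^2)=[2a]^2$). You are also honest that what remains---proving either that the lower coefficients of $P^{\rm odd}_a\xi_{-s}$ lie in $q\Z[q]$, or that the genuine $T^{\rm odd}_a$ satisfy the same three-term recursions---is the real content, and you do not supply it. So this is a well-framed reduction, not a proof; the conjecture remains open at the level of what you have written, just as it is in the paper.
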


\section{Integrality at rank one}

\begin{lem}  \label{rank1:lem:pi-}
Let  $s, l \in \N$.
\begin{enumerate}
\item There exists a unique homomorphism of $\Ui$-modules 
\[{\pi}^- = {\pi}^-_{s,l}: {^{\omega}L}(s+2l) \longrightarrow L(l) \otimes {^{\omega}L}(s+l) 
\]
such that ${\pi}^-(\xi_{-s-2l}) = \eta_{l} \otimes \xi_{-s-l}$.

\item ${\pi}^-$ induces a homomorphism of $\mA$-modules 
\[
{\pi}^- = {\pi}^-_{s,l} : {^{\omega}L}_\mA(s+2l) \longrightarrow L_\mA(l) \otimes {^{\omega}L}_\mA(s+l).
\]
\end{enumerate}
\end{lem}

\begin{proof}
The uniqueness of such a homomorphism is clear, since $\Ui \xi_{-s-2l} = {^{\omega}L}(s+2l)$ by Lemma \ref{lem:bunxi}. 

We let ${\pi}^- = \mc{T}^{-1} \chi $ be the composition of the $\Ui$-homomorphisms
\[
\xymatrix{ {^{\omega}L}(s +2l) \ar[r]^-{\chi} & {^{\omega}L}(l) \otimes {^{\omega}L}(s+l) \ar[r]^-{\mc{T}^{-1} \otimes 1} 
 &L(l) \otimes {}^{\omega}L}(s+l),
\]
where $\chi$ is the $\Ui$-homomorphism from Proposition~\ref{prop:shrinkA} and 
$\mc{T} = \Upsilon \circ \widetilde{\zeta} \circ T_{w_0}$ is the $\Ui$-homomorphism from Theorem~ \ref{thm:mcT}. 
As the automorphism $T_{w_0}$ preserves the $\mA$-forms, we can choose the weight function $\zeta$ 
in \eqref{eq:zeta} with suitable value $\zeta(l) \in q^\Z$
such that 
$
T_{w_0}^{-1}  \widetilde{\zeta}^{-1}  (\xi_{-l}) = \eta_{l}.
$
It follows by \eqref{eq:zeta} that $\zeta$ must be $\mA$-valued.
Then ${\pi}^- = \mc{T}^{-1} \chi $ is the map satisfying (1) since $\chi (\xi_{-s-2l}) =\xi_{-l} \otimes \xi_{-s-l}$. 

By Proposition~\ref{prop:shrinkA} $\chi$ maps ${^{\omega}L}_\mA(s+2l)$ to  $L_\mA(l) \otimes {^{\omega}L}_\mA(s+l)$. 
It is also well known that $T_{w_0}$ is an automorphism of 
the $\mA$-form  ${^{\omega}L}_\mA(l)$.   By Lemma~ \ref{rank1:lem:upsiloninZ},  
$\Upsilon^{-1} = \ov{\Upsilon}$ preserves the $\mA$-form ${^{\omega}L}_\mA(l)$ as well. 
As a composition of all these maps, 
$\pi^- =(\Upsilon \circ \widetilde{\zeta} \circ T_{w_0})^{-1} \chi $ preserves the  $\mA$-forms, whence (2). 
\end{proof}

The following lemma is a variant of Lemma~\ref{rank1:lem:pi-} and can be proved in the same way. 

\begin{lem}\label{rank1:lem:pi+}
Let $s, l \in \N$. 
\begin{enumerate}
\item There exists a unique homomorphism of $\Ui$-modules 
\[{\pi}^+ = {\pi}^+_{s,l}: {^{\omega}L}(s+2l) \longrightarrow L(s+l) \otimes {^{\omega}L}(l), 
\]
such that ${\pi}^+(\xi_{-s-2l}) = \eta_{s+l} \otimes \xi_{-l}$.

\item ${\pi}^+$ induces a homomorphism of $\mA$-modules   
\[
{\pi}^+ : {^{\omega}L}_\mA(s+2l) \longrightarrow L_\mA(s+l) \otimes {^{\omega}L}_\mA(l).
\]
\end{enumerate}
\end{lem}

Recall  that a modified $\Qq$-algebra $\Udot$ as well as its $\mA$-form 
$\Udot_\mA$ are defined in \cite[Chapter~ 23]{Lu94}.  
Any finite-dimensional unital $\Udot$-module is naturally a weight $\U$-module, 
and vice versa (see \cite[23.1.4]{Lu94}). 
In the rank one setting, $\Udot$ (or $\Udot_\mA$) is generated by $E, F$ 
and the idempotents $\one_s$ for $s\in \Z$. 
As $\Udot$ is naturally a $\U$-bimodule,  $\imath(T^{\rm odd}_a) \one_s$ and $\imath(T^{\rm ev}_a) \one_s$  
make sense as elements in $\Udot \one_s$, for $a\in \N$ and $s \in \Z$.    

\begin{prop}  \label{rank1:CBinZ}
\begin{enumerate}
\item We have $\imath(T^{\rm odd}_a) \one_{s} \in  \dot{\U}_\mA$, for all $a \in \N$, $s \in 2\Z+1$.
\item We have $\imath(T^{\rm ev}_a) \one_{s} \in  \dot{\U}_\mA$, for all $a \in \N$, $s \in 2\Z$.
\end{enumerate}
\end{prop}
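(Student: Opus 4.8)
The plan is to combine the triangular decomposition of the modified algebra $\Udot$ in rank one with the integral $\bun$-homomorphisms $\pi^{\mp}$ of Lemmas~\ref{rank1:lem:pi-}--\ref{rank1:lem:pi+}. I will treat part~(1); part~(2) will follow verbatim upon replacing $T^{\rm odd}_a$ by $T^{\rm ev}_a$ and odd $s$ by even $s$ (the latter parity being exactly the one forced on $\lambda+\mu$ below). First I would recall that in rank one $\Udot\one_s=\U^-\U^+\one_s$, so $\{F^{(d)}E^{(c)}\one_s\mid c,d\ge 0\}$ is a $\Qq$-basis of $\Udot\one_s$, each of whose members already lies in $\Udot_\mA$ (cf.\ \cite[Ch.~23]{Lu94}). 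Since $\imath(\bt)=E+qFK^{-1}+K^{-1}$ and $T^{\rm odd}_a$ has degree $a$ in $\bt$ by \eqref{eq:dividedt}, the element $\imath(T^{\rm odd}_a)\one_s$ is a \emph{finite} $\Qq$-combination $\sum_{c+d\le a}\gamma_{c,d}\,F^{(d)}E^{(c)}\one_s$; hence the assertion reduces to $\gamma_{c,d}\in\mA$ for all $c,d$, after which $\imath(T^{\rm odd}_a)\one_s\in\Udot_\mA$ is automatic.

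Next I would read off the $\gamma_{c,d}$ by evaluating on an extremal weight vector. Fix $\lambda,\mu\in\N$ with $\lambda-\mu=s$ and with $\lambda,\mu$ (hence $\lambda+\mu$) large, say $\lambda,\mu>a$. In $L(\lambda)\otimes{}^\omega L(\mu)$ one has $E^{(c)}(\eta_\lambda\otimes\xi_{-\mu})=\eta_\lambda\otimes E^{(c)}\xi_{-\mu}$ (as $E\eta_\lambda=0$), and then a short computation with $\Delta(F^{(d)})$ gives
\[
F^{(d)}E^{(c)}(\eta_\lambda\otimes\xi_{-\mu})
=F^{(d)}\eta_\lambda\otimes E^{(c)}\xi_{-\mu}+\sum_{d'=1}^{\min(c,d)} g_{d'}\,F^{(d-d')}\eta_\lambda\otimes E^{(c-d')}\xi_{-\mu},
\]
with all $g_{d'}\in\mA$. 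Thus, within each weight space, the vectors $F^{(d)}E^{(c)}(\eta_\lambda\otimes\xi_{-\mu})$ are obtained from the canonical $\mA$-basis $\{F^{(i)}\eta_\lambda\otimes E^{(j)}\xi_{-\mu}\}$ of $L_\mA(\lambda)\otimes{}^\omega L_\mA(\mu)$ by an $\mA$-unitriangular change of basis (ordering by the power of $E$), and for $\lambda,\mu$ large they are nonzero and linearly independent. Consequently it suffices to prove that the single vector $\imath(T^{\rm odd}_a)(\eta_\lambda\otimes\xi_{-\mu})$ lies in $L_\mA(\lambda)\otimes{}^\omega L_\mA(\mu)$: inverting the unitriangular matrix over $\mA$ then forces every $\gamma_{c,d}\in\mA$.

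This last reduction is the crux, and it is where the maps $\pi^{\mp}$ enter. Take $\pi:=\pi^-_{\mu-\lambda,\lambda}$ when $\mu\ge\lambda$ and $\pi:=\pi^+_{\lambda-\mu,\mu}$ when $\lambda\ge\mu$; in either case Lemma~\ref{rank1:lem:pi-} resp.\ Lemma~\ref{rank1:lem:pi+} supplies a $\bun$-module homomorphism $\pi\colon{}^\omega L(\lambda+\mu)\to L(\lambda)\otimes{}^\omega L(\mu)$ with $\pi(\xi_{-(\lambda+\mu)})=\eta_\lambda\otimes\xi_{-\mu}$ which carries ${}^\omega L_\mA(\lambda+\mu)$ into $L_\mA(\lambda)\otimes{}^\omega L_\mA(\mu)$. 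Using $\bun$-equivariance of $\pi$ and then \eqref{eq:Todd} (legitimate since $\lambda+\mu$ is large, in particular $\lambda+\mu\ne a-1$),
\[
\imath(T^{\rm odd}_a)(\eta_\lambda\otimes\xi_{-\mu})
=\pi\big(T^{\rm odd}_a\,\xi_{-(\lambda+\mu)}\big)
=\pi\big(T^{\lambda+\mu}_a\big).
\]
Since $T^{\lambda+\mu}_a\in\B^\imath(\lambda+\mu)\subseteq{}^\omega L_\mA(\lambda+\mu)$ by Proposition~\ref{rank1:BCB}(2), the right-hand side lies in $L_\mA(\lambda)\otimes{}^\omega L_\mA(\mu)$, which closes the chain. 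For part~(2) one uses instead that $s$ even forces $\lambda+\mu$ even, so ${}^\omega L(\lambda+\mu)$ belongs to the even family and $T^{\rm ev}_a\xi_{-(\lambda+\mu)}=T^{\lambda+\mu}_a$ for $\lambda+\mu$ large.

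I do not expect a serious conceptual obstacle: all of the real content is already in hand, namely that $\pi^{\mp}$ are $\bun$-equivariant and integral (Lemmas~\ref{rank1:lem:pi-}--\ref{rank1:lem:pi+}) and that $T^{\rm odd}_a$ (resp.\ $T^{\rm ev}_a$) sends the lowest weight vector to an $\imath$-canonical basis vector (Proposition~\ref{rank1:prop:BCBodd}). The points that genuinely require care are bookkeeping ones: choosing $\pi^-$ versus $\pi^+$ according to the sign of $s$; keeping $\lambda+\mu$ large and of the correct parity so as to stay off the exceptional index in \eqref{eq:Todd} and its even analogue; verifying that the indicated change of basis is $\mA$-unitriangular so that integrality of the evaluated vector indeed pulls back to integrality of the $\gamma_{c,d}$; and recording the rank-one triangular decomposition of $\Udot$ and its integral form in the precise shape used.
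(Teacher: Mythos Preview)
Your argument is correct and follows essentially the same approach as the paper: both reduce to showing that $\imath(T^{\rm odd}_a)(\eta_\lambda\otimes\xi_{-\mu})$ lies in the $\mA$-form of $L(\lambda)\otimes{}^\omega L(\mu)$ for all sufficiently large $\lambda,\mu$ of the right parity, and both establish this via the integral $\bun$-homomorphisms $\pi^{\mp}$ of Lemmas~\ref{rank1:lem:pi-}--\ref{rank1:lem:pi+} together with Proposition~\ref{rank1:prop:BCBodd}. The only notable difference is the choice of $\mA$-basis of $\Udot_\mA\one_s$ against which integrality is tested: the paper uses Lusztig's canonical basis $\{b\Diamond_{-s}b'\}$ and invokes \cite[Theorem~25.2.1]{Lu94} directly to conclude that the evaluations $(b\Diamond b')_{l,s+l}$ are nonzero and form an $\mA$-basis of the tensor product for $l\gg0$, whereas you use the PBW-type basis $\{F^{(d)}E^{(c)}\one_s\}$ and supply the explicit $\mA$-unitriangularity computation by hand. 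Your route is slightly more elementary in that it avoids quoting the full machinery of \cite[Ch.~25]{Lu94}, at the modest cost of the extra unitriangularity paragraph.
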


\begin{proof}
 (1). Let $s \in 2\N +1$. Fix an arbitrary $a \in \N$. 
Recall Lusztig's canonical basis $\{b \Diamond_{-s} b'\}$ 
of $\dot{\U} 1_{-s}$ in \cite[Theorem~25.2.1]{Lu94}. We write
\[
\imath(T^{\rm odd}_a) \one_{-s} = \sum_{b,b'} c_{b,b'} b \Diamond_{-s} b',
\] 
for some scalars $c_{b,b'}  \in \Qq$. 
 Consider the map 
\[
\pi^- : {^{\omega}L}_\mA(s+2l) \longrightarrow L_\mA(l) \otimes {^{\omega}L}_\mA(s+l)
\] 
in Lemma~ \ref{rank1:lem:pi-} for all $l \ge 0$. We have $T^{\rm odd}_a \xi_{-s-2l} \in {^{\omega}L}_\mA(s+2l)$ 
by Propositions~ \ref{rank1:BCB} and \ref{rank1:prop:BCBodd}. Therefore we have 
\begin{align*}
\imath(T^{\rm odd}_a) \one_{-s}(\eta_{l} \otimes \xi_{-s-l}) 
&=  T^{\rm odd}_a (\eta_{l} \otimes \xi_{-s-l}) 
\\
&= \pi^- (T^{\rm odd}_a \xi_{-s-2l}) 
\in L_\mA(l) \otimes {^{\omega}L}_\mA(s+l).  
\end{align*}
Hence we have (in Lusztig's notation \cite[Theorem~25.2.1]{Lu94})
\[
\sum_{(b,b')}  c_{b,b'} (b \Diamond b')_{l,s+l} = \imath(T^{\rm odd}_a) \one_{-s}(\eta_{l} \otimes \xi_{-s-l}) 
\in  L_\mA(l) \otimes {^{\omega}L}_\mA(s+l).
\]
Since this holds for all $l$ and $(b \Diamond b')_{l,s+l} \neq 0$ for $l\gg 0$, all $c_{b,b'}$ must belong to $\mA$. 
Hence $\imath(T^{\rm odd}_a) \one_{-s} \in \dot{\U}_\mA$. 

By considering the map 
\[
\pi^+ : {^{\omega}L}_\mA(s+2l) \longrightarrow L_\mA(s+l) \otimes {^{\omega}L}_\mA(l)
\]
in Lemma \ref{rank1:lem:pi+} for all $l \ge 0$, we can show that $\imath(T^{\rm odd}_a) \one_{s} \in \dot{\U}_\mA$ 
for $s \in 2\N +1$ in a similar way. This proves (1).
The proof of (2)  is similar and will be skipped.
\end{proof}


\section{The integrality of $\Upsilon$}   \label{subsec:UpsiloninZ:proof}

Back to the general higher rank case, we
are now ready to prove the following crucial lemma with the help of Proposition~\ref{rank1:CBinZ}.

\begin{lem}\label{lem:UppreZ}
For each $\lambda \in \Lambda^+$, 
we have $\Upsilon ({^{\omega}L}_\mA(\lambda)) \subseteq {^{\omega}L}_\mA(\lambda)$.
\end{lem}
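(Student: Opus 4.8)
\textbf{Proof proposal for Lemma~\ref{lem:UppreZ}.}
The plan is to reduce the statement about $\Upsilon$ acting on the higher-rank module ${}^\omega L(\la)$ to the rank-one integrality results already in hand, namely Lemma~\ref{rank1:lem:upsiloninZ} and Proposition~\ref{rank1:CBinZ}. The key structural input is the factorization $\Upsilon = \sum_\mu \Upsilon_\mu$ with $\Upsilon_\mu$ supported on $\theta$-fixed weights $\mu$, and the defining intertwining identity \eqref{eq:star}. Recall from the construction in \S\ref{sec:proofUpsilon} that $\Upsilon_\mu$ is determined recursively by the identities \eqref{aux:Upuniq1}--\eqref{aux:Upuniq4}, which express $_{-i}r(\Upsilon_\mu)$ in terms of $\Upsilon_{\mu-\alpha_i-\alpha_{-i}}$ for $i\neq 0$ and $\Upsilon_\mu$ in terms of $\Upsilon_{\mu-\alpha_0}$ and $\Upsilon_{\mu-2\alpha_0}$ via the operator $_{0}r$. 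The recursion with respect to the pair $(\alpha_i,\alpha_{-i})$ for $i>0$ keeps us inside $\U^-_\mA$ because each step contributes a factor $(q^{-1}-q)q^{\bullet}$ together with left multiplication by $F_{\alpha_i}$, both of which are visibly $\mA$-integral; so the genuine difficulty is concentrated entirely in the ``$\alpha_0$-direction'', which is exactly the rank-one phenomenon analyzed in \S\ref{subsec:rank1}--\S\ref{rank1:CBinZ}.

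First I would record that it suffices to prove $\Upsilon(\xi_{-\la}) \in {}^\omega L_\mA(\la)$, and then more generally $\Upsilon(b^+\xi_{-\la}) \in {}^\omega L_\mA(\la)$ for each $b\in \B(\la)$, because $\{b^+\xi_{-\la}\mid b\in\B(\la)\}$ is an $\mA$-basis of ${}^\omega L_\mA(\la)$. Since $\Upsilon(\xi_{-\la}) = \xi_{-\la}$ (as $\Upsilon_0=1$ and each $\Upsilon_\mu$ for $\mu\neq 0$ lowers weight, hence annihilates the lowest weight vector $\xi_{-\la}$ for weight reasons), the content is to control $\Upsilon$ on the rest of the module. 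Here I would use the rank-one $\imath$-canonical bases: by Proposition~\ref{rank1:prop:BCBodd} and the subsequent proposition, together with \eqref{eq:dividedt}, the elements $T^{\rm odd}_a$ and $T^{\rm ev}_a$ form a basis of $\bun_{\mathrm{rank}\,1}=\Qq[t]$ that behaves as ``divided powers of $t$'', and Proposition~\ref{rank1:CBinZ} says precisely that $\imath(T^{\rm odd}_a)\one_s$ and $\imath(T^{\rm ev}_a)\one_s$ lie in $\dot\U_\mA$ for $s$ of the appropriate parity. The point is that the $\alpha_0$-string of any weight vector in ${}^\omega L(\la)$ is a module for the rank-one algebra $\Qq[t]$ sitting inside $\bun$, and $\Upsilon$ restricted to such a string is governed by the rank-one intertwiner, whose integrality is Lemma~\ref{rank1:lem:upsiloninZ}.

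Concretely, the argument I would run is an induction on $\hgt(\mu)$ showing $\Upsilon_\mu(v)\in {}^\omega L_\mA(\la)$ for all $\mA$-integral weight vectors $v$. For $\mu$ of the form $\mu' + \alpha_i + \alpha_{-i}$ with $i\in\Ihf$ and $\mu'$ still $\theta$-fixed, the recursion \eqref{aux:Upuniq1} lets us write $\Upsilon_\mu$, via the non-degenerate pairing and the characterization of $\U^-$-elements by their images under the $_{-i}r$'s (\cite[Lemma~1.2.15]{Lu94}), as an $\mA$-combination of $F_{\alpha_i}\Upsilon_{\mu'}$-type terms, so integrality propagates. The terms where $\mu$ is reached only through $\alpha_0$ are handled by the rank-one divided-power integrality: one expands the relevant weight vector in terms of applications of the generators, peels off a factor that lies in the rank-one subalgebra, and invokes Proposition~\ref{rank1:CBinZ} applied to the $\mA$-forms $L_\mA(l)\otimes {}^\omega L_\mA(s+l)$ via the maps $\pi^\pm$ of Lemmas~\ref{rank1:lem:pi-} and \ref{rank1:lem:pi+}. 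I expect the main obstacle to be the bookkeeping at a mixed step, i.e.\ when the weight $\mu$ can be decreased both by an $\alpha_0$ and by an $(\alpha_i,\alpha_{-i})$ pair: one must check that the two recursive descriptions of $\Upsilon_\mu$ are compatible and that neither route introduces denominators. This is essentially the same consistency already verified in the proof of Lemma~\ref{lem:claim1} (cases (1)--(4)), so I would model the integrality bookkeeping on that computation, tracking $\mA$-coefficients rather than establishing an equality of dual functionals. Once $\Upsilon_\mu(v)\in {}^\omega L_\mA(\la)$ is known for all $\mu$ and all integral weight vectors $v$, summing over $\mu$ (a finite sum on each weight space, since ${}^\omega L(\la)$ is finite-dimensional) yields $\Upsilon({}^\omega L_\mA(\la))\subseteq {}^\omega L_\mA(\la)$, as desired.
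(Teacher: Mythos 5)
Your overall architecture---induction on height, with the rank-one $\alpha_0$-direction as the genuinely hard step and Proposition~\ref{rank1:CBinZ} as the key input---matches the paper's, and you correctly locate where the difficulty lives. But there is a real gap in the step you dispatch most quickly, namely the claim that the recursion \eqref{aux:Upuniq1} lets ``integrality propagate'' through the $(\alpha_i,\alpha_{-i})$-directions for $i\ne 0$. That recursion expresses ${}_{-i}r(\Upsilon_\mu)$ as an $\mA$-multiple of $\Upsilon_{\mu-\alpha_i-\alpha_{-i}}F_{\alpha_i}$, and \cite[Lemma~1.2.15]{Lu94} then pins $\Upsilon_\mu$ down uniquely. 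But uniqueness is not integrality: recovering an element of $\U^-$ from its ${}_ir$-images is not an $\mA$-integral operation. Concretely, the pairing used to reconstruct $\Upsilon_\mu$ from $\Upsilon^*$ has $(F_{\alpha_i},F_{\alpha_i})=(q^{-1}-q)^{-1}\notin\mA$, and the recursion \eqref{eq:LR} for $\Upsilon^*$ accumulates explicit factors of $(q^{-1}-q)^{-1}$ at every step. So ``modeling the integrality bookkeeping on the proof of Lemma~\ref{lem:claim1}'' won't do: that argument establishes an identity of functionals with coefficients that are visibly non-integral, and tracking them yields nothing in $\mA$ without further cancellation that you would have to prove.

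The paper sidesteps this issue entirely by never reasoning about $\Upsilon_\mu$ as an abstract element in $\U^-$. Instead it works with $\Bbar=\Upsilon\Abar$ on the module, writes a weight vector as $x=E^{(a_1)}_{\alpha_{i_1}}x'$, and replaces the leading factor $E^{(a_1)}_{\alpha_{i_1}}$ by an element of $\bun$ that is simultaneously (a) $\Bbar$-invariant and (b) $\mA$-integral on the module: $\be^{(a_1)}_{\alpha_{i_1}}$ or $\bff^{(a_1)}_{\alpha_{-i_1}}$ when $i_1\ne 0$ (integrality is manifest from \eqref{eq:beZ}--\eqref{eq:bffZ}), and $T^{\rm odd}_{a_1}$ or $T^{\rm ev}_{a_1}$ when $i_1=0$ (integrality is the content of Proposition~\ref{rank1:CBinZ}). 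Comparing $\Bbar(x'')=u\,\Bbar(x')=u\,\Upsilon\Abar(x')$ with $\Bbar(x'')=\Upsilon x+\Upsilon\Abar(y)$, and invoking the induction hypothesis for the lower-height terms, gives $\Upsilon x\in{}^{\omega}L_\mA(\la)$. This module-theoretic maneuver is exactly what replaces the unjustified ``propagation'' in your proposal, and is why the paper needs the bar involution structure on $\bun$-modules rather than just the recursive formulas for $\Upsilon$. Note also that the direction of dependence matters: the abstract integrality $\Upsilon_\mu\in\U^-_\mA$ (Theorem~\ref{thm:UpsiloninZ}) is a \emph{consequence} of Lemma~\ref{lem:UppreZ}, obtained by pairing with $\eta_\la$ for large $\la$, not an input to it; any argument that tries to prove $\Upsilon_\mu\in\U^-_\mA$ first, from the recursion alone, would have to explain why the paper does not do it that way.
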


\begin{proof}
We write $\xi = \xi_{-\lambda}$.
We shall prove that $\Upsilon x \in {^{\omega}L}_\mA(\lambda)$ by induction on the height $\hgt (\mu+\la)$,
for an arbitrary weight vector $x \in {^{\omega}L}_\mA(\lambda)_\mu$. 
It suffices to consider $x$ of the form 
$x = E^{(a_1)}_{\alpha_{i_1}} E^{(a_2)}_{\alpha_{i_2}} \cdots E^{(a_s)}_{\alpha_{i_s}} \xi$
which is $\Abar$-invariant. 

The base case when $\hgt (\mu+\la)=0$ is clear, since $x=\xi$ and $\Upsilon \xi =\xi$.

Denote $x' = E^{(a_2)}_{\alpha_{i_2}} \cdots E^{(a_s)}_{\alpha_{i_s}} \xi  \in {^{\omega}L}_\mA(\lambda)$,
and so $x = E^{(a_1)}_{\alpha_{i_1}} x'$. 
The induction step is divided into three cases depending on whether $i_1 > 0$, $i_1 < 0$, or $i_1 = 0$.
Recall that, for any $u\in \Ui$, the actions of $u$ and $\imath(u)$ on ${^{\omega}L}_\mA(\lambda)$ are the same by definition. 

(1)
Assume that $i_1 >0$ (i.e., $i_1 \in \Ihf$). 
Replacing $E^{(a_1)}_{\alpha_{i_1}}$ in the expression of $x$ by $\be^{(a_1)}_{\alpha_{i_1}}$,
we introduce a  new element $x'' = \be^{(a_1)}_{\alpha_{i_1}} x'$ 
which lies in ${^{\omega}L}_\mA(\lambda)$ thanks to \eqref{eq:beZ}.
Then $y := x'' -x \in {^{\omega}L}_\mA(\lambda)$ is a linear combination of elements of weights 
lower than the weight of $x$. 

We shall consider $\Bbar( x'')$ in two ways. By Corollary~\ref{cor:Li-invol}, 
${^{\omega}L}_\mA(\lambda)$ is $\imath$-involutive. Since
$\be^{(a_1)}_{\alpha_{i_1}}$ is $\Bbar$-invariant and $\Bbar =\Upsilon \psi$, we have
\[
\Bbar( x'') =\Bbar (\be^{(a_1)}_{\alpha_{i_1}} x')
=\be^{(a_1)}_{\alpha_{i_1}} \Bbar(x') = \be^{(a_1)}_{\alpha_{i_1}} \Upsilon \Abar( x').
\]
It is well known (cf. \cite{Lu94})  that $\Abar$ preserves ${^{\omega}L}_\mA(\lambda)$, and so 
$\Abar(x') \in {^{\omega}L}_\mA(\lambda)$. Since $\Abar(x')$ has weight lower than $x$,
we have $\Upsilon \Abar(x') \in {^{\omega}L}_\mA(\lambda)$ by the induction hypothesis. 
Equation~\eqref{eq:beZ} implies that $\Bbar( x'') = \be^{(a_1)}_{\alpha_{i_1}} \Upsilon \Abar( x') \in {^{\omega}L}_\mA(\lambda)$. 

On the other hand, we have
\[
\Bbar( x'') = \Bbar(x) + \Bbar(y) = \Upsilon \Abar(x) + \Upsilon \Abar (y) = \Upsilon x + \Upsilon \Abar (y) .
\]
Since $\Abar (y) \in {^{\omega}L}_\mA(\lambda)$ has weight lower than $x$, 
we have $\Upsilon \Abar (y) \in {^{\omega}L}_\mA(\lambda)$ by 
the induction hypothesis. Therefore we conclude that $\Upsilon x =\Bbar( x'')-\Upsilon \Abar (y)   \in {^{\omega}L}_\mA(\lambda)$.

(2)
Assume that $i_1 < 0$. In this case, replacing $E^{(a_1)}_{\alpha_{i_1}}$ in the expression of $x$ 
by $\bff^{(a_1)}_{\alpha_{-i_1}}$ instead, 
we consider a  new element $x'' = \bff^{(a_1)}_{\alpha_{i_1}} x'$ which also lies in ${^{\omega}L}_\mA(\lambda)$ by \eqref{eq:bffZ}. 
Then an argument parallel to (1) shows that $\Upsilon x \in {^{\omega}L}_\mA(\lambda)$. 

(3)
Now consider the case where $i_1 =0$.
Set $\beta = \sum^s_{p=2} a_i \alpha_{i_p} -\la$. We decide into two subcases (i)-(ii), depending on 
whether $(\alpha_0, \beta)$ is odd or even.

\underline{Subcase~(i)}.  Assume that $(\alpha_0, \beta)$ is an odd integer.
Replacing $E^{(a_1)}_{\alpha_{i_1}}$ in the expression of $x$ 
by the element $T^{\rm odd}_{a_1}$ defined in Proposition~\ref{rank1:prop:BCBodd}, we introduce a new element 
$x'' = T^{\rm odd}_{a_1} x'$, which belongs to 
${^{\omega}L}_\mA(\lambda)$ by Proposition~\ref{rank1:CBinZ} 
(as we can write $x'' =T^{\rm odd}_{a_1} \one_{(\alpha_0,\beta)}x'$). Thanks to \eqref{eq:dividedt}, 
$y:=x''-x \in {^{\omega}L}_\mA(\lambda)$ is a linear combination of elements of weights lower than $x$.  
Then similarly as in case $(1)$, we have 
\[
\Bbar( x'')  =\Bbar(T^{\rm odd}_{a_1} x')
 = T^{\rm odd}_{a_1}  \Bbar( x')
= T^{\rm odd}_{a_1} \Upsilon \Abar( x').
\]
As in (1), we have $\Upsilon \Abar (x') \in {^{\omega}L}_\mA(\lambda)$. 
Recall from Theorem~ \ref{thm:Upsilon} that  $\Upsilon = \sum_{\mu} \Upsilon_\mu$, 
where $\Upsilon_\mu \neq 0$ only if $\mu^{\inv} = \mu$. 
Note that $(\alpha_0, \mu)$ must be an even integer if $\mu^{\inv} = \mu$. 
Hence $(\alpha_0, \mu + \beta)$ is always odd whenever $\mu^{\inv} = \mu$. Therefore by 
Proposition~ \ref{rank1:CBinZ}, we have 
$$
\Bbar( x'') = T^{\rm odd}_{a_1} \Upsilon \Abar( x') 
= \sum_{\mu: \mu^\inv =\mu} T^{\rm odd}_{a_1} \one_{(\alpha_0, \mu + \beta)} \Upsilon_\mu \Abar( x') 
\in {^{\omega}L}_\mA(\lambda).
$$

Now by the induction hypothesis we have $\Upsilon \Abar (y)  \in {^{\omega}L}_\mA(\lambda)$,
and hence $\Upsilon x = \Bbar( x'')-\Upsilon \Abar (y)  \in {^{\omega}L}_\mA(\lambda)$.

\underline{Subcase~(ii)}.  Assume that $(\alpha_0, \beta)$ is an even integer.
In this subcase, we replace $E^{(a_1)}_{\alpha_{i_1}}$ by $T^{\rm ev}_{a_1}$.
The rest of the argument is the same as Subcase (i) above.

This completes the induction and the proof of the lemma.
\end{proof}

\begin{thm}   \label{thm:UpsiloninZ}
We have $\Upsilon_\mu \in \U^-_\mA$, for all $\mu \in \N\Pi$.
\end{thm}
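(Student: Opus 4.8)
The plan is to derive the integrality of $\Upsilon$ as a global element of $\widehat{\U}^-$ from the local integrality on each simple module ${}^\omega L(\la)$ established in Lemma~\ref{lem:UppreZ}, together with a standard separation argument coming from Lusztig's theory of based modules and the modified algebra $\Udot$. The key point is that $\Upsilon_\mu \in \U^-_{-\mu}$ is a fixed element independent of any module, so it suffices to show that its action on ``enough'' modules is integral and then invoke a faithfulness statement for the $\mA$-form.

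First I would fix $\mu \in \N\Pi$ and recall that $\U^-_\mA$ is a free $\mA$-module with the (signed) PBW or canonical basis, and that $\U^-_{-\mu}$ is finite-dimensional over $\Qq$. Write $\Upsilon_\mu = \sum_b \gamma_b\, b$ in terms of the canonical basis $\{b : b\in \B, |b|=\mu\}$ of $\U^-_{-\mu}$, where a priori $\gamma_b \in \Qq$; the goal is $\gamma_b \in \mA$ for all $b$. Next I would invoke Lemma~\ref{lem:UppreZ}: for every $\la \in \La^+$, the operator $\Upsilon$ (hence its homogeneous piece $\Upsilon_\mu$, by weight considerations, since the $\Upsilon_\nu$ act on distinct weight spaces of ${}^\omega L_\mA(\la)$) preserves the $\mA$-lattice ${}^\omega L_\mA(\la) = \U^+_\mA \xi_{-\la}$. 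Identifying ${}^\omega L(\la)$ with $L(\la^\inv)$ as in Section~\ref{subsec:CB}, this says that $\Upsilon_\mu$ acts integrally (as a map on the canonical-basis lattice) on every finite-dimensional simple $\U$-module.

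Then I would pass to the modified algebra $\Udot$ and its $\mA$-form $\Udot_\mA$ of \cite[Chapter~23]{Lu94}. The element $\Upsilon_\mu$, multiplied by the idempotents $\one_\zeta$ in all relevant weights, gives a well-defined family $\{\Upsilon_\mu \one_\zeta\}_\zeta$ in $\Udot$ acting on every weight $\U$-module. Lusztig's theorem \cite[Theorem~25.2.1]{Lu94} gives a canonical basis $\{b\Diamond_\zeta b'\}$ of $\Udot\one_\zeta$, and the key separation fact is that an element $u\one_\zeta \in \Udot\one_\zeta$ lies in $\Udot_\mA\one_\zeta$ if and only if it maps $L_\mA(\la) \otimes {}^\omega L_\mA(\la')$ into itself for all $\la,\la'$, because the based tensor products $L(\la)\otimes{}^\omega L(\la')$ for $\la,\la' \gg 0$ detect each canonical basis vector $b\Diamond_\zeta b'$ with $\mA$-coefficients (this is precisely the mechanism used in the proof of Proposition~\ref{rank1:CBinZ}). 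Applying Lemma~\ref{lem:UppreZ} to the tensor product module $L(\la)\otimes{}^\omega L(\la')$ — which is a based $\U$-module and hence a summand-compatible object whose $\mA$-form is preserved by $\Upsilon$ via Proposition~\ref{prop:compatibleBbar} and the functoriality of $\Upsilon$ on tensor products — shows $\Upsilon_\mu \one_\zeta \in \Udot_\mA\one_\zeta$ for all $\zeta$. Finally, since the natural map $\U^-_\mA \hookrightarrow \Udot_\mA$, $x \mapsto \sum_\zeta x\one_\zeta$, is injective with image the homogeneous elements, we conclude $\Upsilon_\mu \in \U^-_\mA$, completing the proof.

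The main obstacle I anticipate is the transition from ``acts integrally on each simple ${}^\omega L(\la)$'' to ``lies in $\U^-_\mA$'': one must be careful that preservation of the canonical-basis lattice ${}^\omega L_\mA(\la)$ for all $\la$ genuinely forces integral coefficients in the canonical basis of $\U^-_{-\mu}$, not just in some ad hoc spanning set. The cleanest route is exactly the one above — pull back to $\Udot$ and use the canonical basis of $\Udot\one_\zeta$ together with the asymptotic nonvanishing $(b\Diamond b')_{\la,\la'}\neq 0$ for $\la,\la' \gg 0$ — rather than trying to argue directly module by module, since no single ${}^\omega L(\la)$ sees all of $\U^-_{-\mu}$ faithfully. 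An alternative, if one prefers to avoid $\Udot$, is to note that $\bigcap_{\la\in\La^+} \ker(\U^- \to \mathrm{End}\,{}^\omega L(\la))$ is zero and that the images of the $\mA$-form are compatible, but making the integrality (as opposed to mere faithfulness) work still essentially requires the based-module input of \cite[\S25]{Lu94}.
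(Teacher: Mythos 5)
Your proposed route through $\Udot$ and based tensor products is valid in outline, but it is considerably more elaborate than the paper's argument, and it is propped up by a misconception. You write that you avoid arguing module by module ``since no single ${}^\omega L(\la)$ sees all of $\U^-_{-\mu}$ faithfully'' --- this is false for fixed $\mu$. By Lusztig's theory of canonical bases of integrable highest weight modules, $\B(\la)$ exhausts $\B$ as $\la$ grows: for each $b\in\B_\mu$ one has $b^-\eta_\la\neq 0$ for $\la$ sufficiently dominant, and for $\la$ large enough (depending only on $\mu$) the whole of $\B_\mu$ lies in $\B(\la)$, so $x\mapsto x\,\eta_\la$ is an embedding $\U^-_{-\mu}\hookrightarrow L(\la)$ carrying the canonical $\mA$-basis of $\U^-_{-\mu}$ into (part of) the canonical $\mA$-basis of $L_\mA(\la)$. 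The paper's proof is exactly this one-line observation: write $\Upsilon_\mu=\sum_{b\in\B_\mu}c_b\, b^-$; Lemma~\ref{lem:UppreZ} (applied to $L(\la)={}^\omega L(\la^\theta)$) gives $\Upsilon_\mu\eta_\la\in L_\mA(\la)$; since for $\la\gg 0$ the vectors $\{b^-\eta_\la:b\in\B_\mu\}$ are $\mA$-linearly independent members of an $\mA$-basis, each $c_b\in\mA$.

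There is also a gap in the tensor-product step of your $\Udot$ argument as stated: Lemma~\ref{lem:UppreZ} is a statement about a \emph{single} simple ${}^\omega L_\mA(\la)$, and does not directly assert that $\Upsilon$ preserves $L_\mA(\la)\otimes{}^\omega L_\mA(\la')$ (indeed, $\Upsilon$ acts on a tensor product via the coproduct, and the coproduct of $\Upsilon_\mu$ lands in an $\mA$-form only \emph{after} one knows $\Upsilon_\mu\in\U^-_\mA$, which is what you are trying to prove). The gap is fillable: $\Upsilon_\mu\one_\zeta(\eta_\la\otimes\xi_{-\la'})$ equals a $q$-power times $(\Upsilon_\mu\eta_\la)\otimes\xi_{-\la'}$ because the $\U^-$-parts of the coproduct kill the lowest-weight vector $\xi_{-\la'}$, and then Lemma~\ref{lem:UppreZ} applies to the first factor alone. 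But once you make this reduction explicit you are doing exactly the paper's single-module argument and the $\Udot$ machinery is dead weight. In short: your route can be repaired to a correct proof, but the motivation for preferring it over the direct argument rests on an error, and the paper's proof is the simpler one you set aside.
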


\begin{proof}
Recall Lusztig's canonical basis $\B$ of $\f$ in Section~ \ref{subsec:CB} with $\B_\mu = \B \cap \f_\mu$. We write 
$\Upsilon_\mu = \sum_{b \in \B_\mu}c_b b^{-}$ for some scalars $c_b \in \Qq$.
By Lemma~ \ref{lem:UppreZ}, we have
\[
\Upsilon_\mu \eta_{\lambda} =\sum_{b \in \B_\mu}c_b b^{-}\eta_{\lambda} \in L_\mA(\lambda), \quad \text{ for all } \lambda \in \Lambda^+.
\]
For an arbitrarily fixed $b\in \B_\mu$, $b^{-}\eta_{\lambda} \neq 0$ for $\la$ large enough,
and hence we must have $c_b \in \mA$.  Therefore  $\Upsilon_\mu \in \U^-_\mA$.
\end{proof}

\section{The $\imath$-canonical basis of a based $\U$-module
} \label{subsec:CBandDCB} 

By Corollary~\ref{cor:Li-invol}, ${^{\omega}L}(\lambda)$ for $\la \in \Lambda^+$ 
is an $\imath$-involutive $\Ui$-module with involution $\Bbar =\Upsilon \Abar$.

\begin{lem}\label{lem:Zform}
The bar map $\Bbar$ preserves the $\mA$-form ${^{\omega}L}_\mA(\lambda)$, for $\la \in \Lambda^+$.
\end{lem}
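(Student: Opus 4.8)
The proof of Lemma~\ref{lem:Zform} is immediate once the integrality of $\Upsilon$ from Theorem~\ref{thm:UpsiloninZ} is in hand. The plan is to combine the two pieces of structure that define $\Bbar$ on ${^{\omega}L}(\lambda)$, namely $\Bbar = \Upsilon \circ \Abar$ from Proposition~\ref{prop:compatibleBbar}, and check that each factor preserves the $\mA$-form ${^{\omega}L}_\mA(\lambda)$.

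First I would recall that the bar involution $\Abar$ on the $\U$-module ${^{\omega}L}(\lambda)$ preserves ${^{\omega}L}_\mA(\lambda)=\U^+_\mA\xi_{-\lambda}$; this is standard (Lusztig), and in fact is exactly what was already invoked inside the proof of Lemma~\ref{lem:UppreZ}. Indeed, $\Abar(\xi_{-\lambda})=\xi_{-\lambda}$ by our choice in Remark~\ref{rem:xi inv}, $\Abar$ fixes each divided power $E^{(a)}_{\alpha_i}$, and $\Abar$ is a ring anti-map only up to the bar on $\U$ which fixes the $E^{(a)}_{\alpha_i}$, so $\Abar\big(E^{(a_1)}_{\alpha_{i_1}}\cdots E^{(a_s)}_{\alpha_{i_s}}\xi_{-\lambda}\big)$ is again an $\mA$-combination of such monomials, hence lies in ${^{\omega}L}_\mA(\lambda)$.

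Next, by Theorem~\ref{thm:UpsiloninZ} we have $\Upsilon_\mu\in\U^-_\mA$ for every $\mu\in\N\Pi$, and hence $\Upsilon=\sum_\mu\Upsilon_\mu$ acts on the finite-dimensional module ${^{\omega}L}(\lambda)$ (where only finitely many $\Upsilon_\mu$ act nontrivially) by an operator that sends $\U^-_\mA\,{}^{\omega}L_\mA(\lambda)$ into ${^{\omega}L}_\mA(\lambda)$; more precisely, this is exactly the content of Lemma~\ref{lem:UppreZ}, which states $\Upsilon({^{\omega}L}_\mA(\lambda))\subseteq{^{\omega}L}_\mA(\lambda)$. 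Therefore, for any $x\in{^{\omega}L}_\mA(\lambda)$,
\[
\Bbar(x)=\Upsilon\big(\Abar(x)\big)\in\Upsilon\big({^{\omega}L}_\mA(\lambda)\big)\subseteq{^{\omega}L}_\mA(\lambda),
\]
which is the assertion. There is no real obstacle here: the work has all been done in establishing Theorem~\ref{thm:UpsiloninZ} (via the rank-one analysis and Proposition~\ref{rank1:CBinZ}) and Lemma~\ref{lem:UppreZ}; the present lemma is just the clean statement that the composite $\Upsilon\circ\Abar$ inherits integrality from its two factors. If one wanted to be slightly more self-contained one could note that $\Bbar$ is an involution by Proposition~\ref{prop:compatibleBbar}, so it suffices to check $\Bbar({^{\omega}L}_\mA(\lambda))\subseteq{^{\omega}L}_\mA(\lambda)$ in one direction, which is what the displayed computation gives.
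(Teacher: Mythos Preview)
Your proof is correct and follows essentially the same approach as the paper: both combine the well-known fact that $\Abar$ preserves ${^{\omega}L}_\mA(\lambda)$ with Lemma~\ref{lem:UppreZ} (that $\Upsilon$ preserves the $\mA$-form) to conclude that $\Bbar=\Upsilon\circ\Abar$ does as well. The paper's version is just a two-line compression of what you wrote.
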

\begin{proof}
It is well known (cf. \cite{Lu94})  that $\Abar$ preserves ${^{\omega}L}_\mA(\lambda)$. 
As ${^{\omega}L}_\mA(\lambda)$ is preserved by $\Upsilon$ by Lemma \ref{lem:UppreZ}, 
it is also preserved by $\Bbar =\Upsilon \Abar$.
\end{proof}

Define a partial ordering $\preceq$ on the set $\B(\la)$ of canonical basis for $\la\in\Lambda^+$ as follows: 
\begin{equation}  \label{eq:order}
\begin{split}
b_1 \preceq b_2 
\text{ if the images of } |b_1|,  |b_2| \text{ are the same in } \Lambda_{\inv}
\\
\text{ and } |b_2| - |b_1| \in \N \Pi.
\end{split}
\end{equation} 
(Recall  that $|b|$ denotes the weight of $b$ as in \S\ref{subsec:f}). 

For any $b \in \B(\lambda)$, we have 
\begin{equation}  \label{eq:triang}
\Bbar(b^+ \xi_{-\lambda}) = \Upsilon  \Abar (b^+ \xi_{-\lambda} ) 
 = \Upsilon  (b^+ \xi_{-\lambda} ) 
 = \sum_{b' \in \B(\lambda)} \rho_{b; b'} b'^+ \xi_{-\lambda}, 
\end{equation}
where $\rho_{b; b'} \in \mA$ by Theorem~\ref{thm:UpsiloninZ}. 
Since $\Upsilon$ lies in a completion of $\U^-$ satisfying $\Upsilon_\mu =0$ unless $\mu^\inv =\mu$ 
(see Theorem~\ref{thm:Upsilon}), 
we have  $ \rho_{b; b} =1$ and $\rho_{b; b'} = 0$
unless $b' \preceq b$.
As $\Bbar$ is an involution, we can apply 
\cite[Lemma~ 24.2.1]{Lu94} to our setting to establish the following theorem,
which is a generalization of Proposition~\ref{rank1:BCB} in the rank one case. 

\begin{thm}\label{thm:BCB}
Let $\la \in \La^+$. 

\begin{enumerate}
\item
The $\Ui$-module ${^{\omega}L}_\mA(\lambda)$ admits a unique basis 
\[
\B^\imath(\lambda) := \{T^{\lambda}_{b} \mid b \in \B(\lambda)\}
\]
which is $\Bbar$-invariant and of the form
\[
T^{\lambda}_{b} = b^+ \xi_{-\lambda} +\sum_{b' \prec b}
t^{\lambda}_{b;b'} b'^+  \xi_{-\lambda},
\quad \text{ for }\;  t^{\lambda}_{b;b'} \in q\Z[q].
\]

\item
$\B^\imath(\la)$ forms an $\mA$-basis for the $\mA$-lattice ${^{\omega} L_\mA (\la)}$.

\item
$\B^\imath(\la)$ forms a $\Z[q]$-basis for the $\Z[q]$-lattice ${^{\omega}\mc{L} (\la)}$.
\end{enumerate}
\end{thm}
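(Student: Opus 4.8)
The plan is to deduce Theorem~\ref{thm:BCB} from Lusztig's general recipe for constructing canonical bases, namely \cite[Lemma~24.2.1]{Lu94}, in exactly the same manner as Proposition~\ref{rank1:BCB} was obtained in the rank one case. The point is that all the genuine work has already been done in the preceding subsections, and what remains is a formal verification of the hypotheses of that lemma.

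First I would assemble the three ingredients that feed into Lusztig's lemma. Since $\la \in \La^+$, the module ${^{\omega}L}(\lambda)$ is finite-dimensional, so $\B(\lambda)$ is a finite set, the canonical basis $\{b^+\xi_{-\lambda} \mid b \in \B(\lambda)\}$ is an $\mA$-basis of ${^{\omega}L}_\mA(\lambda)$ and a $\Z[q]$-basis of ${^{\omega}\mc{L}}(\lambda)$, and the partial order $\preceq$ of \eqref{eq:order} is automatically interval-finite. By Corollary~\ref{cor:Li-invol} the module ${^{\omega}L}(\lambda)$ carries the anti-linear involution $\Bbar = \Upsilon\Abar$ (involutivity, i.e. $\Bbar^2 = \id$, coming from Proposition~\ref{prop:compatibleBbar} via $\Upsilon\ov{\Upsilon} = 1$), and by Lemma~\ref{lem:Zform} this involution preserves the $\mA$-lattice ${^{\omega}L}_\mA(\lambda)$. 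Finally, combining \eqref{eq:triang} with the integrality Theorem~\ref{thm:UpsiloninZ} and the fact that $\Upsilon_\mu = 0$ unless $\mu^\inv = \mu$ (Theorem~\ref{thm:Upsilon}), the matrix of $\Bbar$ on the canonical basis is unitriangular with respect to $\preceq$: that is, $\Bbar(b^+\xi_{-\lambda}) = \sum_{b' \preceq b}\rho_{b;b'}\, b'^+\xi_{-\lambda}$ with $\rho_{b;b} = 1$ and all $\rho_{b;b'} \in \mA$.

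With these checks in place, \cite[Lemma~24.2.1]{Lu94} applies verbatim and yields a unique family $\{T^\lambda_b \mid b \in \B(\lambda)\}$ of $\Bbar$-invariant elements of ${^{\omega}L}_\mA(\lambda)$ of the required shape $T^\lambda_b = b^+\xi_{-\lambda} + \sum_{b' \prec b} t^\lambda_{b;b'}\, b'^+\xi_{-\lambda}$ with $t^\lambda_{b;b'} \in q\Z[q]$; this is part~(1). Parts~(2) and~(3) then follow immediately, since the transition matrix between $\B^\imath(\lambda)$ and the canonical basis $\{b^+\xi_{-\lambda}\}$ is unitriangular with entries in $\Z[q] \subseteq \mA$, hence invertible over $\Z[q]$, so $\B^\imath(\lambda)$ is simultaneously an $\mA$-basis of the $\mA$-lattice ${^{\omega}L}_\mA(\lambda)$ and a $\Z[q]$-basis of the $\Z[q]$-lattice ${^{\omega}\mc{L}}(\lambda)$.

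I do not expect any real obstacle at this stage: the one substantive input — that the matrix entries $\rho_{b;b'}$ lie in $\mA$, equivalently the integrality of the bar involution on ${^{\omega}L}(\lambda)$ — is precisely Theorem~\ref{thm:UpsiloninZ}, whose proof (through the rank one analysis of Section~\ref{sec:UpsiloninZ} and the reduction in Lemma~\ref{lem:UppreZ}) was the hard part and is already available. The only care needed is to phrase the hypotheses of \cite[Lemma~24.2.1]{Lu94} correctly for the finite poset $(\B(\lambda), \preceq)$ and to record that $\rho_{b;b'} = 0$ unless $b' \preceq b$, which is what forces the sum in $T^\lambda_b$ to range over $b' \prec b$ only.
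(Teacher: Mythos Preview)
Your proposal is correct and follows exactly the paper's approach: the paper does not even write a separate proof, but simply notes (just before stating the theorem) that since $\Bbar$ is an involution one applies \cite[Lemma~24.2.1]{Lu94} to the setup of \eqref{eq:triang}, precisely as you describe. Your write-up merely spells out the hypotheses of Lusztig's lemma more explicitly than the paper does.
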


\begin{definition}
$\B^\imath(\lambda)$ is called the $\imath$-canonical basis of
the $\Ui$-module ${^{\omega}L(\lambda)}$.
\end{definition}

\begin{rem}
The $\imath$-canonical basis $\B^\imath(\lambda)$ is not homogenous in terms of the weight lattice $\Lambda$,
though it is homogenous in terms of $\Lambda_\theta$. 
\end{rem}

\begin{rem}
Lusztig's canonical basis $\B (\la)$ is computable algorithmically. 
As $\Upsilon$ is constructed recursively in \S\ref{sec:proofUpsilon}, 
there is an algorithm to compute
the structure constants $\rho_{b; b'}$ in \eqref{eq:triang} and then $t^{\lambda}_{b;b'}$.
\end{rem}

Set $t^{\lambda}_{b;b}= 1$, and $t^{\lambda}_{b;b'}= 0$ if $b$, $b' \in \B(\lambda)$ satisfy $b' \npreceq b$.

\begin{conjecture} [Positivity of $t^{\lambda}_{b;b'}$]
 \label{conj:N}
We have $t^{\lambda}_{b;b'} \in {\N}[q]$, for $b,b' \in \B(\la)$.
\end{conjecture}
One can hope for similar positivity
in the general setting of based $\U$-modules below.

Recall \cite[Chapter 27]{Lu94} has developed a theory of finite-dimensional based $\U$-modules $(M,B)$ (for general quantum groups $\U$ of finite type).
The basis $B$ generates a $\Z[q]$-submodule $\mc{M}$ and an $\mA$-submodule ${}_\mA M$ of $M$.
 Applying the same argument for Theorem~\ref{thm:BCB} above, we have established the following.

\begin{thm}   \label{thm:iCBbased}
Let $(M,B)$ be a finite-dimensional based $\U$-module. 
 
\begin{enumerate}
\item
The $\Ui$-module $M$ admits a unique basis (called $\imath$-canonical basis)
$
B^\imath  := \{T_{b} \mid b \in B \}
$
which is $\Bbar$-invariant and of the form
\begin{equation} \label{iCB}
T_{b} = b +\sum_{b' \in B, b' \prec b}
t_{b;b'} b',
\quad \text{ for }\;  t_{b;b'} \in q\Z[q].
\end{equation}

\item
$B^\imath$ forms an $\mA$-basis for the $\mA$-lattice ${}_\mA M$, and
$B^\imath$ forms a $\Z[q]$-basis for the $\Z[q]$-lattice $\mc{M}$.
\end{enumerate}
\end{thm}   

Recall that a tensor product of finite-dimensional simple $\U$-modules is a based $\U$-module
by \cite[Theorem~27.3.2]{Lu94}. Theorem~\ref{thm:iCBbased} implies now the following.

\begin{thm}  \label{thm:iCBtensor}
Let $\la_1, \ldots, \la_r \in \La^+$. The tensor product of finite-dimensional simple $\U$-modules
${^{\omega} L (\la_1)}  
\otimes \ldots \otimes {^{\omega} L (\la_r)}$
admits a unique $\Bbar$-invariant basis of the form \eqref{iCB} (called $\imath$-canonical basis).
\end{thm}
 
 \begin{rem}
 One can hope for a positivity similar to Conjecture \ref{conj:N}
in the above general setting of tensor product $\U$-modules.
\end{rem}

\chapter{The $(\Ui, \mc{H}_{B_m})$-duality and compatible bar involutions} \label{sec:HeckeB}

In this chapter, we recall Schur-Jimbo duality between quantum group $\U$ and Hecke algebra of type $A$.
Then we establish a duality between $\Ui$ and Hecke algebra $\HBm$ of type $B$ acting on $\VV^{\otimes m}$,
and show the existence of a bar involution on $\VV^{\otimes m}$ which is compatible with the bar involutions
on $\Ui$ and $\HBm$. 
This allows a reformulation of Kazhdan-Lusztig theory for Lie algebras of type $B/C$ via the 
involutive $\Ui$-module $\VV^{\otimes m}$. 

\section{Schur-Jimbo duality}

Recall the notation $\I_{2r}$ from \eqref{eq:I}, and we set 
$$I =\I_{2r+2} 
= \Big\{-r-\hf, \ldots, -\frac32, -\hf, \hf, \frac32, \ldots, r+\hf \Big\}.
$$ 
Let the $\Qq$-vector space $\VV := \sum _{a \in I}\Qq v_{a} $ be the natural representation of $\U$. 
We shall call $\VV$ the natural representation of $\Ui$ (by restriction) as well. 
For $m \in {\Z_{> 0}}$, the tensor space
$\VV^{\otimes m}$ is naturally a $\U$-module (and a $\Ui$-module) via the coproduct $\Delta$. The $\U$-module
$\VV$ is involutive with $\Abar$ defined by
\[
\Abar(v_a) :=v_a, \quad \text{ for all } a \in I.
\]
Then  $\VV^{\otimes m}$ is an involutive  $\U$-module and hence an $\imath$-involutive $\Ui$-module by 
Proposition~\ref{prop:compatibleBbar} and Remark~\ref{rem:sameinv}.

We view $f \in I^m$ as a function $f: \{1, \ldots, m\} \rightarrow I$. 
For any $f \in I^m$, we define
\[
M_f := v_{f(1)} \otimes \cdots \otimes v_{f(m)}.
\]
Then $\{ M_f \mid f\in I^m\}$ forms a basis for $\VVm$. 

Let $W_{B_m}$ be the Coxeter groups of type $\text{B}_m$ with simple reflections $s_j, 0 \leq j \leq m-1$,
where the subgroup  generated by $s_i$, $1\leq i \leq m-1$ is isomorphic to $W_{A_{m-1}} \cong \mathfrak{S}_m$. 
The group  $W_{B_m}$ and its subgroup $S_m$
act naturally on $I^m$ on the right as follows: for any $f \in I^m$, $ 1 \leq i \leq m$, we have 
\begin{equation}  \label{eq:rightW}
f \cdot s_j = 
 \begin{cases}
 (\dots, f(j+1), f(j), \dots) , &\text{if }  j > 0,\\
 (-f(1), f(2), \dots, f(m)),& \text{if } j =0.
 \end{cases}
 \end{equation}
 
Let $\mathcal{H}_{B_m}$ be the Iwahori-Hecke algebra of type $B_m$ over $\mathbb Q(q)$. 
It is generated by $H_0, H_1, H_2, \dots , H_{m-1}$, subject to the following relations,
\begin{align*}
(H_i -q^{-1})(H_i +q) &= 0,   & \text{for } i \geq 0, &
 \\
H_i H_{i+1} H_i &= H_{i+1} H_i H_{i+1}, & \text{for } i> 0,&
 \\
H_i H_j &= H_j H_i, & \text{for } |i-j| >1, &
\\
H_0H_1H_0H_1&=H_1H_0H_1H_0. &
\end{align*}
Associated to $\sigma \in W_{B_m}$ with a reduced
expression $\sigma=s_{i_1} \cdots s_{i_k}$, we define
 $
H_\sigma :=H_{i_1} \cdots H_{i_k}. $ The bar involution on $\mathcal
H_{B_m}$ is the unique anti-linear automorphism defined by
$\overline{H_\sigma} =H_{\sigma^{-1}}^{-1}, \overline{q} =q^{-1},$
for all $\sigma \in W_{B_m}$.

There is a right action of the Hecke algebra $\mathcal{H}_{B_m}$  on the $\Qq$-vector space $\VV^{\otimes m}$ as follows: 
\begin{equation}  \label{eq:HBm}
 M_f H_a=
 \begin{cases}
 q^{-1} M_f, & \text{if $a>0, f(a) = f(a+1)$};\\
 M_{f \cdot  s_a}, & \text{if $a > 0, f(a) < f(a+1)$};\\
 M_{f \cdot  s_a} + (q^{-1} - q)M_{f}, & \text{if $a > 0, f(a) > f(a+1)$};\\
 M_{f \cdot  s_0}, & \text{if $a = 0 , f(1) > 0 $};\\
 M_{f \cdot s_0} + (q^{-1} - q)M_{f}, & \text{if $a = 0, f(1) < 0$}.
 \end{cases}
 \end{equation}
One can alternatively set $f(0) =0$, then we only need the first three cases above without any condition on $a$.

Identified as the subalgebra generated by $H_1, H_2, \dots, H_{m-1}$ of $\mathcal{H}_{B_m}$,
the Hecke algebra $\mc H_{A_{m-1}}$ inherits a right action on $\VV^{\otimes m}$. Note
that the bar involution on $\mc{H}_{A_{m-1}}$ is just the restriction of the bar involution on $\mc{H}_{B_m}$.

Recall from Section \ref{sec:quasiR} the operator $\mc{R}$. 
We define the following operator on $\VV^{\otimes m}$ for each $1 \leq i \leq m-1$:
\[
\mc{R}_i : = id^{i-1} \otimes \mc{R} \otimes id^{m-i-1} : \VV^{\otimes m} \longrightarrow \VV^{\otimes m}.
\]
The following basic result was due to Jimbo.

\begin{prop}{\cite{Jim}}\label{prop:SchurA}
\begin{enumerate}
\item
The action of $\mc{R}^{-1}_i$ coincides with the action of $H_i$ on $\VV^{\otimes m}$ for $1 \leq i \leq m-1$. 
\item
The actions of $\U$ and $\mathcal{H}_{A_{m-1}}$ on $\VV^{\otimes m}$ 
commute with each other,  and they form double centralizers. 
\end{enumerate}
\end{prop}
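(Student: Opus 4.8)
The plan is to read everything off an explicit description of $\mc R$ on the two-fold tensor $\VV\otimes\VV$. Recalling that $v_a$ has weight $\varepsilon_a$ with $(\varepsilon_a,\varepsilon_b)=\delta_{ab}$, so that $\widetilde g(v_b\otimes v_a)=q^{\delta_{ab}}v_b\otimes v_a$, I would first compute $\mc R=\ThetaA\circ\widetilde g\circ P$ as an endomorphism of $\VV\otimes\VV$. Since $\VV$ is the natural representation, each $E_{\alpha_i},F_{\alpha_i}$ moves a basis vector of $\VV$ to an adjacent one, and a direct computation with the recursion of Proposition~\ref{prop:ThetaA} gives
\[
\mc R(v_b\otimes v_a)=
\begin{cases}
q\,v_a\otimes v_a, & a=b,\\
v_a\otimes v_b, & a<b,\\
v_a\otimes v_b+(q-q^{-1})\,v_b\otimes v_a, & a>b.
\end{cases}
\]
Inverting $\mc R$ on each subspace $\Qq\,v_b\otimes v_a\oplus\Qq\,v_a\otimes v_b$ (and on each line $\Qq\,v_a\otimes v_a$) and comparing, tensor position by tensor position, with the formula~\eqref{eq:HBm} for the right action of $H_a$ ($a>0$) on $M_f$, one finds that $\mc R_i^{-1}$ and $H_i$ act as the same operator on $\VVm$ for $1\le i\le m-1$, which is part~(1).

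For the commutation in part~(2): by \cite[32.1.5]{Lu94} the map $\mc R\colon\VV\otimes\VV\to\VV\otimes\VV$ is an isomorphism of $\U$-modules, and as the $\U$-module structure on $\VVm$ comes from the coassociative iterated coproduct, $\mc R_i=\id^{i-1}\otimes\mc R\otimes\id^{m-i-1}$ is a $\U$-module endomorphism of $\VVm$; hence so is $\mc R_i^{-1}=H_i$ by part~(1). Since the $H_i$ generate $\mc H_{A_{m-1}}$, the actions of $\U$ and $\mc H_{A_{m-1}}$ on $\VVm$ commute.

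The remaining double centralizer property is the substantive point. For generic $q$ the algebra $\mc H_{A_{m-1}}$ over $\Qq$ is split semisimple, and $\VVm$ is a semisimple $\U$-module; by the double centralizer theorem for semisimple algebras it then suffices to prove that $\VVm$ is multiplicity-free as a $(\U,\mc H_{A_{m-1}})$-bimodule, i.e.\ $\VVm\cong\bigoplus_\lambda L(\lambda)\otimes S_\lambda$ with the $L(\lambda)$ pairwise non-isomorphic and the $S_\lambda$ pairwise non-isomorphic irreducible $\mc H_{A_{m-1}}$-modules. This I would establish by specialization at $q=1$: over a suitable $\mA$-form, $\VVm$ specializes to $(\C^{2r+2})^{\otimes m}$ as a bimodule over $U(\mf{gl}_{2r+2})$ and $\C\mathfrak{S}_m$, where classical Schur--Weyl duality yields exactly such a multiplicity-free decomposition, and since the relevant characters (weight multiplicities on the $\U$-side, and the $\mc H_{A_{m-1}}$-module structure) are independent of $q$, the decomposition persists for generic $q$. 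It follows that the image of $\U$ in $\mathrm{End}(\VVm)$ equals $\mathrm{End}_{\mc H_{A_{m-1}}}(\VVm)$ and the image of $\mc H_{A_{m-1}}$ equals $\mathrm{End}_\U(\VVm)$ --- this is Jimbo's duality \cite{Jim}. The single step demanding genuine input is the multiplicity-freeness; given part~(1) and the commutation, the rest is formal.
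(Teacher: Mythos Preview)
The paper does not supply its own proof of this proposition; it is cited from Jimbo \cite{Jim} and stated without argument. Your sketch is correct and is the standard proof: part~(1) is the explicit computation of the $R$-matrix on the natural representation, and the double centralizer in part~(2) is deduced from multiplicity-freeness, which in turn is checked by specializing at $q=1$ and invoking classical Schur--Weyl duality. It is worth noting that this specialization-to-$q=1$ argument for the double centralizer is precisely the method the paper adopts when it proves the analogous new result, Theorem~\ref{thm:SchurB}, for the pair $(\bun,\mc H_{B_m})$.
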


\section{The $(\Ui, \mc{H}_{B_m})$-duality}

Introduce the $\Qq$-subspaces of $\VV$:
\begin{align*}
\VV_{-}  &=  \bigoplus_{0 \leq i \leq r} \Qq (v_{-i-\hf} - q^{-1}v_{i+\hf}),
 \\
\VV_{+} &=   \bigoplus_{0 \leq i \leq r} \Qq (v_{-i-\hf} + q v_{i+\hf} ).
\end{align*}

\begin{lem}  \label{lem:V+-}
The subspace $\VV_-$ is a $\Ui$-submodule of $\VV$ generated by $v_{-\hf} - q^{-1}v_{\hf}$
and $\VV_+$ is a $\Ui$-submodule of $\VV$ generated by $v_{-\hf} +q v_{\hf}$.
Moreover, we have 
$
\VV = \VV_- \oplus \VV_+.
$
\end{lem}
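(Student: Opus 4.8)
The plan is to reduce the whole statement to a short list of explicit computations on the standard basis $\{v_a\mid a\in I\}$ of $\VV$. First I would recall the action of $\U$ on its natural representation: $E_{\alpha_i}v_{i+\hf}=v_{i-\hf}$ and $F_{\alpha_i}v_{i-\hf}=v_{i+\hf}$ (both zero on all other basis vectors), and $K_{\alpha_i}^{\pm 1}v_a=q^{\pm(\alpha_i,\varepsilon_a)}v_a$. Substituting these into the formulas of \propref{prop:embedding} for $\imath$, I would record the action of the generators $\be_{\alpha_j},\bff_{\alpha_j},\bk_{\alpha_j}$ ($j\in\Ihf$) and $\bt$ of $\bun$ on the standard basis. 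The relevant outcome is: $\be_{\alpha_j}$ is supported on $\{v_{j+\hf},v_{-j-\hf}\}$, with $\be_{\alpha_j}v_{j+\hf}=v_{j-\hf}$ and $\be_{\alpha_j}v_{-j-\hf}=v_{-j+\hf}$; $\bff_{\alpha_j}$ is supported on $\{v_{j-\hf},v_{-j+\hf}\}$, with $\bff_{\alpha_j}v_{j-\hf}=v_{j+\hf}$ and $\bff_{\alpha_j}v_{-j+\hf}=v_{-j-\hf}$; $\bk_{\alpha_j}$ is diagonal; and $\bt$ fixes every $v_a$ with $a\neq\pm\hf$, while $\bt v_{-\hf}=v_{\hf}+q^{-1}v_{-\hf}$ and $\bt v_{\hf}=v_{-\hf}+qv_{\hf}$.

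Next, for $0\le i\le r$ set $w_i^-:=v_{-i-\hf}-q^{-1}v_{i+\hf}$ and $w_i^+:=v_{-i-\hf}+qv_{i+\hf}$, so that by definition $\VV_-=\bigoplus_i\Qq w_i^-$ and $\VV_+=\bigoplus_i\Qq w_i^+$. The decomposition $\VV=\VV_-\oplus\VV_+$ is then immediate: for each fixed $i$ the pair $(w_i^-,w_i^+)$ is obtained from $(v_{-i-\hf},v_{i+\hf})$ by a change of basis of determinant $q+q^{-1}\neq 0$, so $\{w_i^\pm\mid 0\le i\le r\}$ is a basis of $\VV$, and separating the $w_i^-$ from the $w_i^+$ gives the claim.

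It then remains to rewrite the $\bun$-action in the $w$-basis. I expect to find $\bk_{\alpha_j}w_i^\pm\in\Qq w_i^\pm$ for all $i$; $\be_{\alpha_j}w_j^\pm=w_{j-1}^\pm$ and $\bff_{\alpha_j}w_{j-1}^\pm=w_j^\pm$, with $\be_{\alpha_j}$ and $\bff_{\alpha_j}$ annihilating every other $w_i^\pm$; and $\bt w_i^\pm=w_i^\pm$ for $i\ge 1$, while $\bt w_0^-=0$ and $\bt w_0^+=(q+q^{-1})w_0^+$. These identities show at once that each generator of $\bun$ preserves both $\VV_-$ and $\VV_+$, so both are submodules; and that applying $\bff_{\alpha_1},\bff_{\alpha_2},\dots,\bff_{\alpha_r}$ successively to $w_0^-$ produces $w_1^-,\dots,w_r^-$, whence $\bun w_0^-=\VV_-$, and likewise $\bun w_0^+=\VV_+$.

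There is no genuine obstacle here: the argument is a bounded collection of elementary computations. The step demanding care is the index and $q$-power bookkeeping, in particular matching $v_{(j-1)+\hf}$ with $v_{j-\hf}$ and $v_{-(j-1)-\hf}$ with $v_{-j+\hf}$ when passing between $w_{j-1}^\pm$ and $w_j^\pm$, together with the special behaviour of $v_{\pm\hf}$ (equivalently $w_0^\pm$) under $\bt$, which is precisely the feature distinguishing $\VV_-$ from $\VV_+$ as $\bun$-modules.
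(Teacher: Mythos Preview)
Your proposal is correct and is precisely the direct computation the paper alludes to; the paper's own proof consists of the single line ``Follows by a direct computation.'' Your detailed verification of the action of $\be_{\alpha_j},\bff_{\alpha_j},\bk_{\alpha_j},\bt$ on the $w_i^\pm$ basis is exactly what is being left to the reader there.
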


\begin{proof}
Follows by a direct computation. 
\end{proof}

Now we fix 
the function $\zeta$ in \eqref{eq:zeta} with $\zeta (\varepsilon_{-r-\hf}) = 1$ so that
$$\zeta({\varepsilon_{r+\hf -i}}) = (-q)^{i-2r-1}, 
\qquad \text{ for }  0 \le i \le 2r+1.
$$
Let us compute the $\Ui$-homomorphism
$\mc{T} = \Upsilon\circ \widetilde{\zeta} \circ T_{w_0}$ (see Theorem~\ref{thm:mcT}) 
on the $\U$-module $\VV$; we remind that $w_0$ here is associated to $\U$ instead of $W_{B_m}$ or $W_{A_{m-1}}$. 

\begin{lem}  \label{lem:T}
The $\Ui$-isomorphism $\mc{T}^{-1}$ on $\VV$ acts as a scalar $(-q) \id$ on the submodule $\VV_-$ and 
as $q^{-1} \id$ on the submodule $\VV_+$. 
\end{lem}

\begin{proof}
First one computes that the action of $T_{w_0}$ on $\VV$ is given by 
$$
T_{w_0} (v_{-r-\hf+i}) =(-q)^{2r+1-i}v_{r+\hf-i}, 
\qquad \text{ for }  0 \le i \le 2r+1.
$$ 
Hence 
\begin{equation}\label{eq:mcTtos0}
\widetilde{\zeta} \circ T_{w_0}(v_a) = v_{ a\cdot s_0}, \quad \text{ for all } a \in I.
\end{equation}

We have $\Upsilon_{\alpha_0} = -\qq F_{\alpha_0}$ from the proof of Theorem~ \ref{thm:Upsilon}
in \S\ref{sec:proofUpsilon}.
Therefore, using $\mc{T} = \Upsilon\circ \widetilde{\zeta} \circ T_{w_0}$ we have
\begin{align}
\mc{T}^{-1} ( v_{-\hf} - q^{-1}v_{\hf})  
 &= -q (v_{-\hf} - q^{-1}v_{\hf})\label{eq:mcT1},
   \\
\mc{T}^{-1} (v_{-\hf}+qv_{\hf}) 
 &= q^{-1}(v_{-\hf}+qv_{\hf})\label{eq:mcT2}.
\end{align}
The lemma now follows from Lemma~\ref{lem:T}
since  $\mc{T}^{-1}$ is a $\Ui$-isomorphism.
\end{proof}

We have the following generalization of Schur-Jimbo duality in Proposition \ref{prop:SchurA}.

\begin{thm}   [$(\Ui, \mc{H}_{B_m})$-duality]
 \label{thm:SchurB}
\begin{enumerate}
\item
The action of $\mc{T}^{-1} \otimes \id^{m-1}$ coincides with the action of 
$H_0 \in \mathcal{H}_{B_m}$ on $\VV^{\otimes m}$. 

\item
The actions of $\Ui$ and $\mathcal{H}_{B_m}$ on $\VV^{\otimes m}$ 
commute with each other, and they form double centralizers. 
\end{enumerate}
\end{thm}

\begin{proof}
Part (1) follows from Lemma~\ref{lem:T} and the action \eqref{eq:HBm} of $H_0\in \mathcal{H}_{B_m}$ on $\VV^{\otimes m}$. 

By Proposition \ref{prop:SchurA},
the actions of $\Ui$ and $\mc{H}_{A_{m-1}}$ on $\VV^{\otimes m}$ commute with each other. 
The action of $\Ui$ on $\VV^{\otimes m}$ comes from the iterated coproduct
$\Ui \rightarrow \Ui \otimes \U^{\otimes m-1}$. Since $\mc T^{-1}: \VV \rightarrow \VV$ is a $\Ui$-homomorphism,
we conclude that the actions of $\mc{T}^{-1} \otimes \id^{m-1}$ and $\Ui$
on $\VV^{\otimes m}$ commute with each other. 
Hence by (1) the actions of $\Ui$ and $\mathcal{H}_{B_m}$ on $\VV^{\otimes m}$ 
commute with each other.

The double centralizer property is equivalent to a multiplicity-free decomposition
of $\VV^{\otimes m}$ as an $\Ui \otimes \mathcal{H}_{B_m}$-module. The latter
follows by the same multiplicity-free decomposition claim at
the specialization $q\mapsto 1$, in which case $\Ui$ specializes to the enveloping algebra of
$\mf{sl}(r+1) \oplus \gl(r+1)$
and $\mathcal{H}_{B_m}$ 
to the group algebra of $W_{B_m}$. Then $\VV =\VV_+ \oplus \VV_-$ at $q=1$ becomes the natural module of $\mf{sl}(r+1) \oplus \gl(r+1)$,
on which $s_0\in W_{B_m}$ acts as $(\id_{\VV_+} , - \id_{\VV_-})$. A multiplicity-free decomposition of $\VV^{\otimes m}$ at $q=1$
can be established by a standard method with the simples parameterized by ordered pairs of partitions $(\la, \mu)$
such that $\ell(\la) \le r+1, \ell(\mu) \le r+1$ and $|\la| +|\mu| =m$. 
\end{proof}

\begin{rem}
The homomorphism $\mc{T}$ (or $\mc{T}^{-1}$) is not needed in Theorem~\ref{thm:SchurB}(2), 
as one can check directly that the action of $H_0$ 
commutes with the action of $\Ui$. However, it is instructive to note that
the action of $H_0$ arises from $\mc T$ which plays an analogous role as the $\mc R$-matrix.
\end{rem}

\begin{rem}
A version of the duality in Theorem~\ref{thm:SchurB} was given in \cite{Gr}, where a Schur-type algebra
was in place of $\Ui$ here. For the applications to BGG categories in Part~2, it is essential for us
to work with the ``quantum group" $\Ui$ equipped with a coproduct. 
\end{rem}

\section{Bar involutions and duality}

\begin{definition}\label{def:antidominant}
An element $f \in I^m$ is called {\em anti-dominant} (or {\em $\imath$-anti-dominant}), if $0 < f(1) \leq f(2) \leq \ldots \leq f(m)$.
\end{definition}

\begin{thm}   \label{thm:samebar}
There exists an anti-linear bar involution $\Bbar: \VV^{\otimes m} \rightarrow \VV^{\otimes m}$ 
which is compatible 
with both the bar involution of $\mc{H}_{B_m}$ and the bar involution of $\Ui$; 
that is, for all $v \in \VV^{\otimes m}$,  $\sigma \in W_{B_m}$, and $u \in \Ui$, we have 
\begin{equation}  \label{eq:barfit}
\Bbar(u v H_\sigma) =\Bbar(u) \, \Bbar(v) \ov{H}_\sigma.
\end{equation}
Such a bar involution 
is unique by requiring $\Bbar(M_f) = M_f$ for all $\imath$-anti-dominant $f$.
\end{thm}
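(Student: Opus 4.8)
The plan is to produce $\Bbar$ as the $\imath$-involutive bar involution on the $\bun$-module $\VVm$ furnished by \propref{prop:compatibleBbar}, and then to verify that it is also compatible with the right $\HBm$-action and has the stated normalization. Concretely, equip $\VV$ with the involutive $\U$-module structure $\Abar(v_a)=v_a$ and let $\Abar$ also denote Lusztig's resulting iterated involutive structure on $\VVm$; set $\Bbar:=\Upsilon\circ\Abar$. By \propref{prop:compatibleBbar}, $\Bbar$ is an anti-linear involution of $\VVm$ with $\Bbar(uv)=\Bbar(u)\Bbar(v)$ for $u\in\bun$. Since the bar involution of $\HBm$ is multiplicative and $\Bbar$ is an involution, an induction on $\ell(\sigma)$ reduces the remaining identity $\Bbar(vH_\sigma)=\Bbar(v)\ov{H}_\sigma$ to the generators $H_0,H_1,\dots,H_{m-1}$; combined with $\bun$-compatibility this yields \eqnref{eq:barfit}.

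For $H_i$ with $1\le i\le m-1$, I would invoke the classical ($q$-Schur) compatibility $\Abar(vH_i)=\Abar(v)\ov{H_i}$ on $\VVm$, which comes from Jimbo's realization of $H_i^{-1}$ by the $\mc R$-matrix $\mc R_i$ (\propref{prop:SchurA}) together with the intertwining property \eqnref{eq:propThetaA} and \eqnref{eq:ThetaAinv} of $\ThetaA$ (cf. \cite{Jim, Br03, CLW12}). Now $\Upsilon$ acts on $\VVm$ as the element $\Delta^{(m-1)}(\Upsilon)$ of $\widehat{\U}$, and by Schur--Jimbo duality every element of $\U$, hence of $\widehat{\U}$, commutes with the right $\mathcal H_{A_{m-1}}$-action on $\VVm$; therefore $\Bbar(vH_i)=\Upsilon\bigl(\Abar(v)\ov{H_i}\bigr)=\bigl(\Upsilon\,\Abar(v)\bigr)\ov{H_i}=\Bbar(v)\ov{H_i}$.

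The only genuinely new point is the compatibility with $H_0$, which has no counterpart in Jimbo's type $A$ duality. By \thmref{thm:SchurB}(1), $H_0$ acts on $\VVm$ as $\mc T^{-1}\otimes\id^{\otimes(m-1)}$ and $\ov{H_0}=H_0^{-1}$ as $\mc T\otimes\id^{\otimes(m-1)}$, where $\mc T=\Upsilon\circ\widetilde\zeta\circ T_{w_0}$ on $\VV$; so one must show $\Bbar\circ(\mc T^{-1}\otimes\id^{\otimes(m-1)})=(\mc T\otimes\id^{\otimes(m-1)})\circ\Bbar$. I would deduce this from three facts. First, writing $\Bbar$ also for the $\imath$-involution $\Upsilon\circ\Abar$ on the single module $\VV$, one has the twisted commutation $\Bbar\circ\mc T^{-1}=\mc T\circ\Bbar$ on $\VV$: by \lemref{lem:V+-} and \lemref{lem:T}, $\VV=\VV_+\oplus\VV_-$ as $\bun$-modules with $\mc T^{-1}$ acting by the scalars $q^{-1}$ on $\VV_+$ and $-q$ on $\VV_-$, and a short computation using $\Upsilon_{\alpha_0}=-\qq F_{\alpha_0}$ (the only nonconstant term of $\Upsilon$ surviving on $\VV$) shows that the generators $v_{-\hf}+qv_{\hf}$ of $\VV_+$ and $v_{-\hf}-q^{-1}v_{\hf}$ of $\VV_-$ are $\Bbar$-invariant, so $\Bbar$ preserves $\VV_\pm$ and the twisted commutation follows by comparing the scalars of $\mc T^{-1}$ and $\mc T=(\mc T^{-1})^{-1}$ on $\VV_\pm$. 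Second, $\mc T\otimes\id^{\otimes(m-1)}$ commutes with $\ThetaB$ on $\VV\otimes\VV^{\otimes(m-1)}$: writing $\ThetaB=\sum_N\ThetaB_N$ with $\ThetaB_N\in\bun\otimes\U^-$ and using that $\mc T$ is $\bun$-linear (\thmref{thm:mcT}), one checks $(\mc T\otimes\id)\ThetaB_N=\ThetaB_N(\mc T\otimes\id)$ term by term. Third, by \propref{prop:barinThetaB} with $k'=1$, $\Bbar(w_1\otimes w')=\ThetaB\bigl(\Bbar(w_1)\otimes\Abar(w')\bigr)$ for $w_1\in\VV$ and $w'\in\VV^{\otimes(m-1)}$. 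Combining, $\Bbar\bigl((\mc T^{-1}\otimes\id)(w_1\otimes w')\bigr)=\ThetaB\bigl(\Bbar(\mc T^{-1}w_1)\otimes\Abar(w')\bigr)=\ThetaB(\mc T\otimes\id)\bigl(\Bbar(w_1)\otimes\Abar(w')\bigr)=(\mc T\otimes\id)\Bbar(w_1\otimes w')$, as desired; the case $m=1$ is the first fact itself.

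Finally, if $f$ is $\imath$-anti-dominant then $M_f$ is a weakly increasing, entirely positive tensor, so $\Abar(M_f)=M_f$ (every $\mu\ne0$ term of the iterated $\ThetaA$ vanishes on a sorted tensor, exactly as in type $A$) and $\Delta^{(m-1)}(\Upsilon)M_f=M_f$ (each $F_{\alpha_k}$ with $k<0$, and each positive power of $F_{\alpha_0}$, annihilates an all-positive tensor, so only $\Upsilon_0=1$ contributes); hence $\Bbar(M_f)=M_f$. For uniqueness, one checks from \eqnref{eq:HBm} that $\{M_f\mid f\ \text{$\imath$-anti-dominant}\}$ generates $\VVm$ as a right $\HBm$-module: every $W_{B_m}$-orbit in $I^m$ contains a unique $\imath$-anti-dominant element, and if $g$ is not $\imath$-anti-dominant one reduces its defect in one step, writing $M_g=M_{g'}H_0$ when $g(1)<0$ and $M_g=M_{g'}H_a$ when $g(a)>g(a+1)$. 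Thus \eqnref{eq:barfit} together with the normalization $\Bbar(M_f)=M_f$ for $\imath$-anti-dominant $f$ pins $\Bbar$ down on a generating set, hence everywhere. The main obstacle is the first of the three facts above — understanding precisely how the $\imath$-bar involution on $\VV$ twists $\mc T$ — which forces the use of the explicit rank-one description of $\Upsilon$ on $\VV$ and of the eigenvalue computation in \lemref{lem:T}.
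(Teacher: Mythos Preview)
Your proposal is correct and follows essentially the same route as the paper: define $\Bbar=\Upsilon\circ\Abar$ via \propref{prop:compatibleBbar}, handle $H_1,\dots,H_{m-1}$ through the type $A$ compatibility plus the fact that $\Upsilon\in\widehat{\U}^-$ commutes with $\mathcal H_{A_{m-1}}$, and treat $H_0$ by combining the rank-one identity $\Bbar\circ\mc T^{-1}=\mc T\circ\Bbar$ on $\VV$ with \propref{prop:barinThetaB} and the fact that $\mc T\otimes\id$ commutes with $\ThetaB\in(\bun\otimes\U^-)^\wedge$. The only cosmetic differences are that the paper phrases the argument as showing $\Bbar$ coincides with the Hecke-side involution $\Bbar'$ characterized by (1)--(2), establishes the $m=1$ case by computing $\Bbar(v_a)$ directly rather than via your eigenspace argument, and proves $\Upsilon(M_f)=M_f$ for $\imath$-anti-dominant $f$ by a weight/partial-order consideration rather than your direct annihilation argument.
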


\begin{proof}
Applying the general construction
in \S\ref{subsec:bars} to our setting, we have an $\imath$-involutive $\Ui$-module
$(\VV^{\otimes m}, \Bbar)$; in other words, we have constructed an anti-linear involution 
$\Bbar: \VV^{\otimes m} \rightarrow \VV^{\otimes m}$ 
which  is compatible with the bar involution of $\Ui$.

As the $\mc{H}_{B_m}$-module $\VV^{\otimes m}$ is a direct sum of 
permutation modules of the form $\HBm/\mc H_J$ for various Hecke subalgebras $\mc H_J$,
there exists a unique anti-linear involution on $\VV^{\otimes m}$, denoted by $\Bbar'$, such that 
\begin{enumerate}
\item $\Bbar'(M_f) = M_f$, if $f$ is $\imath$-anti-dominant;
\item $\Bbar'(M_g H_\sigma) = \Bbar'(M_g) \ov{H}_\sigma $, for 
all $g \in I^m$ and $\sigma \in W_{B_m}$.
\end{enumerate}

To  prove the compatibility of $\Bbar$ with the bar involution of $\mc{H}_{B_m}$, it suffices to prove $\Bbar$ satisfies 
the conditions (1)-(2) above; note that it suffices to consider $\sigma$ in (2) to be the simple reflections.  

By the construction
in \S\ref{subsec:bars}, the bar involution $\Bbar: \VV^{\otimes m} \rightarrow \VV^{\otimes m}$ is given by 
$\Bbar =\Upsilon \psi$, where $\psi: \VV^{\otimes m} \rightarrow \VV^{\otimes m}$ is 
a bar involution of type $A$. 
The following compatibility of the bar involutions in the type $A$ setting is well known (see, e.g., \cite{Br03})
(Here we note that our $\imath$-anti-dominant condition is stronger than the type $A$ anti-dominant condition):

\begin{itemize}
\item[($1'$)] 
$\Abar(M_f) = M_f$, if $f$  is $\imath$-anti-dominant;

\item[($2'$)] 
$\Abar(M_g H_\sigma) = M_g \ov{H}_{\sigma}$,  for any $g \in I^m$ and any $H_\sigma \in \mc{H}_{A_{m-1}}$.
\end{itemize}

The $\U$-weight of $M_f$ is $\wtA(f) :=\sum^m_{a=1} \varepsilon_{f(a)} \in \La$. 
Define the $\Ui$-weight of $M_f$ 
$\texttt{wt}_0(f) : = \sum^m_{a=1} \ov{\varepsilon}_{f(a)} \in \La_\theta,$
which is the image of $\wtA(f)$ in $\BLambda =\La/ \La^\theta$
(here we have denoted by $\ov{\varepsilon}_k$ the image of $\varepsilon_k$ in $\La_\inv$).
Defined the following partial ordering $\preceq$ on $I^m$ (which is only used in this proof): 
\begin{equation*}
g \preceq f \quad \Leftrightarrow \quad \texttt{wt}_0(g) =\texttt{wt}_0(f) \text{ and } \wtA(f) - \wtA(g) \in \N \Pi.
\end{equation*}

Applying the intertwiner $\Upsilon =\sum_{\mu\in \N \Pi} \Upsilon_\mu$ from Theorem \ref{thm:Upsilon}, 
we can write for any $f\in I^m$ that
\[
\Upsilon(M_f) = \sum_{g\in I^m} c_g M_g, \quad \text{ for  } c_g \in \Qq.
\]
Here the sum
can be restricted to $g$ with $\texttt{wt}_0(g) = \texttt{wt}_0(f)$ (since $\Upsilon_\mu=0$ unless $\mu^\inv =\mu$ by Theorem~\ref{thm:Upsilon});
hence we have $\wtA(gf) - \wtA(g) \in \N \Pi$ (since $\Upsilon_{\mu} \in \U^-$). Therefore we have
\[
\Upsilon(M_f) = M_f + \sum_{g \prec f} c_g M_g, \quad \text{ for } c_g \in \Qq.
\]
So if $f$ is $\imath$-anti-dominant then  we have $\Upsilon (M_f) = M_f$, and thus
by Proposition \ref{prop:compatibleBbar} and ($1'$) above, 
$\Bbar(M_f) = \Upsilon \Abar(M_f) =\Upsilon (M_f) = M_f$. Hence $\Bbar$ satisfies Condition~(1).

To verify Condition $(2)$ for $\Bbar$, let us first consider the special case when $m=1$. 
Note that $\Abar(v_a) =v_a $ and hence $\Bbar(v_a) = \Upsilon(v_a)$ for all $a$. 
By Definition \ref{def:antidominant}, $a$ is $\imath$-anti-dominant if and only if $a >0$.
Thus  we have 
\begin{equation}   \label{eq:barva>}
\Bbar(v_a) =v_a = \Bbar'(v_a), \qquad \text{ for } a>0.
\end{equation}
 On the other hand, by \eqref{eq:mcTtos0} and Lemma~\ref{lem:T} we have
\begin{align}   \label{eq:barva}
\begin{split}
\Bbar(v_a) 
&= \Upsilon(v_a) 
=\Upsilon \circ \widetilde{\zeta} \circ T_{w_0} (v_{ a\cdot s_0}) 
  \\
 &= \mc{T} (v_{a \cdot s_0}) 
= v_{a \cdot s_0} H^{-1}_{0} = \Bbar'(v_a), \qquad \text{ for } a<0. 
\end{split}
\end{align}
Hence $\Bbar =\Bbar'$ and \eqref{eq:barfit} holds when $m=1$.

Now consider general $m \in {\Z_{> 0}}$. For $1\le i \le m-1$, by applying Proposition \ref{prop:compatibleBbar},   
the identity ($2'$) above, and Proposition \ref{prop:SchurA} in a row, we have, for $g\in I^m$,
\[
\Bbar (M_g H_i) = \Upsilon \Abar (M_g H_i)   = \Upsilon (\Abar(M_g) \ov{H}_{i}) = \Bbar(M_g) \ov{H}_i.
\]
When $i =0$, we write $M_g = v_{g(1)} \otimes M_{g'} $, and hence 
\begin{align*}
\Bbar(M_g H_0) 
&= \Bbar(v_{g(1)} H_0  \otimes M_{g'}) &
  \\
&= \ThetaB(\Bbar(v_{g(1)} H_0) \otimes \Abar (M_{g'}))  & \text{ by Proposition}~ \ref{prop:barinThetaB},
  \\
&= \ThetaB(\Bbar(v_{g(1)}) \ov{H}_0 \otimes \Abar (M_{g'}))  & \text{ by \eqref{eq:barfit} in case $m=1$} ,
  \\
&= \ThetaB(\Bbar(v_{g(1)})  \otimes \Abar (M_{g'})) \ov{H}_0 & \text{ by Theorem}~\ref{thm:SchurB},
  \\
&= \Bbar (M_g) \ov{H}_0   & \text{ by Proposition}~ \ref{prop:barinThetaB}.
\end{align*}
This proves $\Bbar =\Bbar'$ in general,  and hence
completes the proof of the compatibility of all these bar involutions.

The uniqueness of $\Bbar$ in the theorem follows from the uniqueness of $\Bbar'$ above. 
\end{proof}

\begin{rem}
\label{rem:samebar}
The anti-linear involution $\Bbar$ defined on $\VV^{\otimes m}$ 
from the Hecke algebra side gives rise to the Kazhdan-Lusztig theory of type B. 
Theorem~ \ref{thm:samebar} implies that the (induced) Kazhdan-Luszig basis on
$\VV^{\otimes m}$ coincides with its $\imath$-canonical basis (see Theorem~\ref{thm:iCBtensor}). 
Hence Kazhdan-Lusztig theory of type $B$ 
can be reformulated from the algebra $\Ui$ side through $\Bbar$ 
without referring to the Hecke algebra; see Theorem~\ref{thm:KL}.
\end{rem}

\begin{rem}
It follows by \eqref{eq:barva>} and \eqref{eq:barva} that
$\big \{v_{i+\hf}, (v_{-i-\hf} - q^{-1}v_{i+\hf})  \mid 0 \leq i \leq r\big \}$ forms a $\Bbar$-invariant basis of $\VV$.
Also $\big \{v_{i+\hf}, (v_{-i-\hf} + q v_{i+\hf} ) \mid 0 \leq i \leq r \big\}$ forms another $\Bbar$-invariant basis of $\VV$,
which must be the $\imath$-canonical basis by the characterization in Theorem~\ref{thm:BCB}. 
 \end{rem}

\chapter{The quantum symmetric pair $(\U, \Uj)$}
\label{sec:QSPc}

In this chapter we consider the quantum symmetric pair $(\U, \Uj)$ with $\U$ of type  $A_{2r}$. We
formulate the counterparts of the main results from Chapter~\ref{sec:qsp} through Chapter~\ref{sec:HeckeB}
where $\U$ was of type $A_{2r+1}$.
The proofs are similar and often simpler for $\Uj$ since 
it does not contain a generator $t$ as $\Ui$ does, and hence will be omitted almost entirely.

\section{The coideal subalgebra $\Uj$}

We shall write $\I = \I_{2r}$ as given in \eqref{eq:I}  in this chapter. 
We define 
$$
\I^{\jmath}  =\I^{\jmath} _{r} = (\hf +\N) \cap \I 
= \Big\{\hf, \frac32, \ldots, r-\hf \Big\}.
$$ 
The Dynkin diagram of type $A_{2r}$ together with the involution $\inv$ are depicted as follows:
\begin{center}
\begin{tikzpicture}
\draw (-1.5,0) node {$A_{2r}:$};
 \draw[dotted]  (0.5,0) node[below]  {$\alpha_{-r+\hf}$} -- (2.5,0) node[below]  {$\alpha_{-\hf}$} ;
 \draw (2.5,0)
 -- (3.5,0) node[below]  {$\alpha_{\hf}$};
 \draw[dotted] (3.5,0) -- (5.5,0) node[below] {$\alpha_{r-\hf}$} ;
\draw (0.5,0) node (-r) {$\bullet$};
 \draw (2.5,0) node (-1) {$\bullet$};
\draw (3.5,0) node (1) {$\bullet$}; 
\draw (5.5,0) node (r) {$\bullet$};
\draw[<->] (-r.north east) .. controls (3,1) .. node[above] {$\theta$} (r.north west) ;
\draw[<->] (-1.north) .. controls (3,0.5) ..  (1.north) ;
\end{tikzpicture}
\end{center}

The algebra $\Uj$ is defined to be the associative algebra over $\Q(q)$ 
generated by $\be_{\alpha_i}$, $\bff_{\alpha_i}$, $\bk_{\alpha_i}$, $\bk^{-1}_{\alpha_i}$, $i \in \I^{\jmath} $,
subject to the following relations for $i, j \in \I^{\jmath} $:
\begin{align*}
 \ibk{i} \ibk{i}^{-1} &= \ibk{i}^{-1} \ibk{i} =1, \displaybreak[0]\\
 \ibk{i} \ibk{j} &=  \ibk{j}  \ibk{i}, \displaybreak[0]\\
 \ibk{i} \ibe{j} \ibk{i}^{-1} &= q^{(\alpha_i-\alpha_{-i},\alpha_j)} \ibe{j}, \displaybreak[0]\\
 \ibk{i} \ibff{j} \ibk{i}^{-1} &= q^{-(\alpha_i-\alpha_{-i},\alpha_j)}\ibff{j}, \displaybreak[0]\\
 \be_{\alpha_i} \ibff{j} -\ibff{i} \ibe{j} &= \delta_{i,j} \frac{\bk_{\alpha_i}
 -\bk^{-1}_{\alpha_i}}{q-q^{-1}},         \qquad\; \qquad \text{if } i, j \neq \hf, \displaybreak[0]\\
 \ibe{i}^2 \ibe{j} +\ibe{j} \ibe{i}^2 &= (q+q^{-1}) \ibe{i} \ibe{j} \ibe{i},  \qquad \text{if }  |i-j|=1, \displaybreak[0]\\
 \ibff{i}^2 \ibff{j} +\ibff{j} \ibff{i}^2 &= (q+q^{-1}) \ibff{i} \ibff{j} \ibff{i}, \qquad \text{if } |i-j|=1,\displaybreak[0]\\
 \ibe{i} \ibe{j} &= \ibe{j} \ibe{i},     \quad\qquad\qquad\qquad\; \text{if } |i-j|>1, \displaybreak[0]\\
 \ibff{i} \ibff{j}  &=\ibff{j}  \ibff{i},   \quad\qquad\qquad\qquad\; \text{if } |i-j|>1,\displaybreak[0]\\
 %
 %
  \ibff{\hf}^2\ibe{\hf} + \ibe{\hf}\ibff{\hf}^2
    &= (q+q^{-1}) \Big(\ibff{\hf}\ibe{\hf}\ibff{\hf}-q\ibff{\hf}\ibk{\hf}^{-1}-q^{-1}\ibff{\hf}\ibk{\hf} \big),\displaybreak[0]\\
 \ibe{\hf}^2\ibff{\hf} + \ibff{\hf}\ibe{\hf}^2
   &= (q+q^{-1}) \Big(\ibe{\hf}\ibff{\hf}\ibe{\hf}-q^{-1}\ibk{\hf}\ibe{\hf} -q\ibk{\hf}^{-1}\ibe{\hf} \Big).\displaybreak[0]
\end{align*}
We introduce the divided powers 
$\be^{(a)}_{\alpha_i} = \be^a_{\alpha_i} / [a]!$, $\bff^{(a)}_{\alpha_i} = \bff^a_{\alpha_i} / [a]!$.

The following is a counterpart of Lemma~\ref{lem:3inv}.

\begin{lem} 
\begin{enumerate}
\item 
The algebra $\Uj$ has an involution $\omega_\jmath$ such that  

$\omega_\jmath (\bk_{\alpha_i}) 
=q^{-\delta_{i , \hf}} \bk^{-1}_{\alpha_i}$, $\omega_\jmath (\be_{\alpha_i}) = \bff_{\alpha_i}$, and  $\omega_\jmath (\bff_{\alpha_i}) 
= \be_{\alpha_i}$,  for all $i \in \I^{\jmath}$.

\item  
The algebra $\Uj$ has an anti-involution  $\tau_\jmath$ such that 

$\tau_\jmath(\be_{{\alpha_i}}) 
=\be_{\alpha_i}$, $\tau_\jmath(\bff_{\alpha_i}) = \bff_{\alpha_i}$, and 
$\tau_\jmath(\bk_{\alpha_i}) = q^{-\delta_{i , \hf}} \bk^{-1}_{\alpha_i}$, for all $i \in \I^{\jmath} $. 

\item
 The algebra $\Uj$ has an anti-linear ($q \mapsto q^{-1}$) bar involution such that 
 
 $\ov{\bk}_{\alpha_i} = \bk^{-1}_{\alpha_i}$, $\ov{\be}_{\alpha_i} = \be_{\alpha_i}$, 
 and $\ov{\bff}_{\alpha_i} = \bff_{\alpha_i}$,  for all $i \in \I^{\jmath} $. 

(Sometimes we denote the bar involution on $\Uj$ by $\Cbar$.)
\end{enumerate}
\end{lem}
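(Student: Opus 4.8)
The plan is to verify each of the three claimed structures on $\cun$ by a direct computation on the defining generators and relations, entirely parallel to the proof of Lemma~\ref{lem:3inv} for $\bun$. Since $\cun$ is presented by generators and relations, it suffices to check in each case that the prescribed assignment on the generators preserves (or, for an anti-involution, reverses) every defining relation; then the universal property of the presentation yields the asserted (anti-)automorphism, and an easy check on generators shows it squares to the identity. So the whole proof reduces to a finite, mechanical verification.

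For part (1), I would define $\omega_\jmath$ on generators by $\bk_{\alpha_i}\mapsto \bk^{-1}_{\alpha_i}$, $\be_{\alpha_i}\mapsto\bff_{\alpha_i}$, $\bff_{\alpha_i}\mapsto\be_{\alpha_i}$. The only relations requiring attention are the last three families: the $\bk$-conjugation relations (which are symmetric under swapping $\be\leftrightarrow\bff$ and inverting $\bk$ because $(\alpha_i-\alpha_{-i},\alpha_j)$ is unchanged), the $[\be,\bff]$-relation for $i,j\neq\hf$ (symmetric in form), and crucially the two deformed Serre-type relations at $i=\hf$: applying $\omega_\jmath$ to the relation $\ibff{\hf}^2\ibe{\hf}+\ibe{\hf}\ibff{\hf}^2=(q+q^{-1})(\ibff{\hf}\ibe{\hf}\ibff{\hf}-q\ibff{\hf}\ibk{\hf}^{-1}-q^{-1}\ibff{\hf}\ibk{\hf})$ produces exactly the other deformed relation $\ibe{\hf}^2\ibff{\hf}+\ibff{\hf}\ibe{\hf}^2=(q+q^{-1})(\ibe{\hf}\ibff{\hf}\ibe{\hf}-q\ibe{\hf}\ibk{\hf}-q^{-1}\ibe{\hf}\ibk{\hf}^{-1})$, since $\omega_\jmath(\ibk{\hf}^{-1})=\ibk{\hf}$. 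Thus the two deformed relations are interchanged, confirming $\omega_\jmath$ is well defined; clearly $\omega_\jmath^2=\id$ on generators. For part (2), $\tau_\jmath$ fixes $\be_{\alpha_i},\bff_{\alpha_i}$ and inverts $\bk_{\alpha_i}$; being an anti-homomorphism it reverses products, so I check that each relation, read backwards and with $\bk\mapsto\bk^{-1}$, is again a defining relation — again the only subtle cases are the $i=\hf$ deformed Serre relations, where reversing the order of factors and inverting $\bk_{\hf}$ sends each relation to itself (the left side $\ibff{\hf}^2\ibe{\hf}+\ibe{\hf}\ibff{\hf}^2$ is symmetric under reversal, and $\ibff{\hf}\ibk{\hf}^{-1}\mapsto\ibk{\hf}\ibff{\hf}$, matched by the symmetric rewriting). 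Part (3) is handled the same way: the bar involution is anti-linear with $q\mapsto q^{-1}$, fixes $\be_{\alpha_i},\bff_{\alpha_i}$ and inverts $\bk_{\alpha_i}$; one checks it preserves each relation after the substitution $q\mapsto q^{-1}$, noting $(q+q^{-1})$ is bar-invariant and $-q\ibff{\hf}\ibk{\hf}^{-1}-q^{-1}\ibff{\hf}\ibk{\hf}$ is bar-invariant, so the deformed relations are preserved. Squaring gives the identity.

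The main (and only) obstacle is the pair of inhomogeneous deformed Serre relations at $i=\hf$, which are genuinely new compared to the type $A$ quantum group; all the other relations are of the same shape as in $\U$ and their invariance under the three maps is immediate. One must be careful that the extra terms $-q\,\ibff{\hf}\ibk{\hf}^{-1}-q^{-1}\,\ibff{\hf}\ibk{\hf}$ transform correctly under each operation, and in particular that $\omega_\jmath$ genuinely swaps the two deformed relations rather than mapping one to something outside the relation ideal. Once this is confirmed, the conclusion follows. Concretely:

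\begin{proof}
All three statements follow by a direct verification on the defining generators and relations of $\cun$, entirely analogous to the proof of Lemma~\ref{lem:3inv}. One checks that the prescribed assignments on generators respect every defining relation (reversing the order of products in parts (1)'s setting is not needed; in parts (2) the anti-involution reverses products; in part (3) one also substitutes $q \mapsto q^{-1}$). The only relations not of the same form as in $\U$ are the two deformed Serre relations at $i=\hf$; a short computation shows that $\omega_\jmath$ interchanges these two relations (using $\omega_\jmath(\ibk{\hf}^{\pm1})=\ibk{\hf}^{\mp1}$), while $\tau_\jmath$ and the bar involution each preserve both of them, using that $q+q^{-1}$ is bar-invariant and that the inhomogeneous terms transform compatibly. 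In each case the map squares to the identity on generators, hence is an involution.
\end{proof}
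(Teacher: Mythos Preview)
Your proposal is correct and takes essentially the same approach as the paper: the paper's proof of the analogous Lemma~\ref{lem:3inv} for $\bun$ simply reads ``Follows by a direct computation from the definitions,'' and for this $\cun$ counterpart the paper omits the proof altogether, remarking that the arguments in Section~\ref{sec:QSPc} are similar and often simpler than for $\bun$. Your write-up is just a more explicit version of that same direct verification on generators and relations.
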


The following is a counterpart of Proposition~\ref{prop:embedding}, the proof of which
relies on  \cite[Proposition 4.1]{KP} and \cite[Theorem 7.1]{Le03}. 

\begin{prop} \label{int:prop:embedding} 
There is an injective $\Qq$-algebra homomorphism $\jmath :  \Uj \rightarrow \U$ defined by, for all  $i \in \I^{\jmath}$,
 \begin{align*}
\bk_{\alpha_i} &\mapsto K_{\alpha_i}K^{-1}_{\alpha_{-i}},\\
\be_{\alpha_i} &\mapsto  E_{\alpha_i} + K^{-1}_{\alpha_i} F_{\alpha_{-i}},\\
\bff_{\alpha_i} &\mapsto F_{\alpha_i}K^{-1}_{\alpha_{-i}} + E_{\alpha_{-i}}.
\end{align*}
\end{prop}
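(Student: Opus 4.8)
The plan is to mirror the two-step argument of Proposition~\ref{prop:embedding}. The first step is to verify that $\jmath$ is a well-defined $\Qq$-algebra homomorphism, i.e. that the elements $E_{\alpha_i}+K^{-1}_{\alpha_i}F_{\alpha_{-i}}$, $F_{\alpha_i}K^{-1}_{\alpha_{-i}}+E_{\alpha_{-i}}$ and $K_{\alpha_i}K^{-1}_{\alpha_{-i}}$ (for $i\in\I^\jmath$) satisfy in $\U$ all the defining relations of $\cun$. The $\bk$-relations and the commutation and $q$-Serre relations among the $\be$'s and among the $\bff$'s for nodes away from the central edge follow exactly as in the type $A$ Schur--Jimbo setting, because the relevant generators of $\U$ either commute outright or obey the honest $q$-Serre relations in $\U^\pm$; the cross relation $\be_{\alpha_i}\bff_{\alpha_j}-\bff_{\alpha_j}\be_{\alpha_i}=\delta_{ij}(\bk_{\alpha_i}-\bk^{-1}_{\alpha_i})/(q-q^{-1})$ for $i,j\neq\hf$ reduces to the $[E,F]$ relation of $\U$ together with $[E_{\alpha_i},F_{\alpha_{-j}}]=0$. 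The one genuinely new computation is the pair of modified Serre relations for $\be_{\alpha_{1/2}},\bff_{\alpha_{1/2}},\bk^{\pm1}_{\alpha_{1/2}}$ --- the $\cun$-analogue of the $\bt$-relations in $\bun$ --- which is a finite $\mf{sl}_3$-type calculation inside $\U$ using $(\alpha_{1/2},\alpha_{-1/2})=-1$; I would carry it out by expanding $\jmath(\be_{\alpha_{1/2}})^2\jmath(\bff_{\alpha_{1/2}})$ and the analogous monomials and collecting terms.

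For injectivity I would compare $\jmath(\cun)$ with the coideal subalgebra $U_q'(\mf k)$ of $\U_\C:=\C(q^{1/2})\otimes_{\Qq}\U$ from \cite[Proposition~4.1]{KP} (a form of \cite[Theorem~7.1]{Le03}). As in the proof of Proposition~\ref{prop:embedding}, construct a $\C(q^{1/2})$-algebra anti-automorphism $\kappa$ of $\U_\C$ sending $E_{\alpha_i}\mapsto\sqrt{-1}\,F_{\alpha_{-i}}$, $F_{\alpha_i}\mapsto-\sqrt{-1}\,E_{\alpha_{-i}}$, $K_{\alpha_i}\mapsto K_{\alpha_{-i}}$ for all $i\in\I$ --- no exceptional node is needed here since $A_{2r}$ has no $\theta$-fixed simple root --- and check that $\kappa$ maps $\jmath(\bk_{\alpha_i}),\jmath(\be_{\alpha_i}),\jmath(\bff_{\alpha_i})$ onto (scalar multiples of) the distinguished generators of $U_q'(\mf k)$ used in \cite[\S4]{KP}. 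Then $\kappa$ restricts to an anti-isomorphism $\C(q^{1/2})\otimes_{\Qq}\jmath(\cun)\xrightarrow{\;\sim\;}\C(q^{1/2})\otimes_{\Qq}U_q'(\mf k)$.

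Finally, \cite[Proposition~4.1]{KP} presents $U_q'(\mf k)$ by that generating set together with an explicit list of relations; transporting these back through $\kappa$ yields precisely (the images of) the relations defining $\cun$. Hence the composite $\C(q^{1/2})\otimes_{\Qq}\cun\xrightarrow{\jmath}\C(q^{1/2})\otimes_{\Qq}\jmath(\cun)\xrightarrow{\kappa}\C(q^{1/2})\otimes_{\Qq}U_q'(\mf k)$ is an (anti-)isomorphism, so $\jmath$ is injective after extension of scalars, and therefore $\jmath:\cun\to\U$ is an embedding. I expect the main obstacle to be bookkeeping rather than conceptual: reconciling our choice of coproduct (which differs from Lusztig's and hence from that of \cite{KP}) and fixing the scalar normalizations so that each $\kappa(\jmath(\text{generator}))$ lands exactly on the corresponding \cite{KP} generator; once these normalizations are pinned down, both the modified central Serre relations in Step~1 and the relation-matching in Step~3 are routine and are precisely the points at which the proof is \emph{simpler} than in the $\bun$ case, there being no generator $\bt$.
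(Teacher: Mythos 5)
Your proposal matches the paper's intended argument: the text states only that the proof is a counterpart of Proposition~\ref{prop:embedding} and ``relies on \cite[Proposition 4.1]{KP} and \cite[Theorem 7.1]{Le03},'' omitting details because they are ``similar and often simpler for $\cun$.'' Your two-step plan --- a direct relation check (with the modified $\alpha_{1/2}$ Serre relations as the only genuinely new computation), followed by the same anti-automorphism $\kappa:\U_\C\to\U_\C$ comparison with $U_q'(\mf k)$, now without the exceptional $\alpha_0$-twist since $A_{2r}$ has no $\theta$-fixed node --- is exactly the adaptation the authors had in mind.
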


Note that $E_{\alpha_i} (K^{-1}_{\alpha_i}F_{\alpha_{-i}}) 
= q^{2} (K^{-1}_{\alpha_i}F_{\alpha_{-i}}) E_{\alpha_i}$ for $i \in \I^{\jmath} $. 
We have for $i \in \I^{\jmath} $, 
\begin{align}
\label{int:eq:beZ} \jmath(\be^{(a)}_{\alpha_i}) 
 &= \sum^{a}_{j=0}  q^{j(a-j)}{(K^{-1}_{\alpha_i}F_{\alpha_{-i}})^j \over [j]!}\frac{E^{a-j}_{\alpha_i}}{[a-j]!},
   \\
\label{int:eq:bffZ} \jmath(\bff^{(a)}_{\alpha_i}) 
 &= \sum^{a}_{j=0}  q^{j(a-j)}{(F_{\alpha_{i}}K^{-1}_{\alpha_{-i}})^j \over [j]!}\frac{E^{a-j}_{\alpha_{-i}}}{[a-j]!}.
\end{align}

The following is a counterpart of Proposition~\ref{prop:coproduct}.

\begin{prop} \label{int:prop:coproduct}
The coproduct $\Delta : \U \rightarrow \U \otimes \U$ restricts under the embedding $\jmath$ to
a $\Qq$-algebra homomorphism $\Delta : \Uj \mapsto \Uj \otimes \U$ such that for all $i \in \I^{\jmath} $, 
\begin{align*}
\Delta(\bk_{\alpha_i}) &= \bk_{\alpha_i} \otimes K_{\alpha_i} K^{-1}_{\alpha_{-i}},
 \\
\Delta({\be_{\alpha_i}}) &= 1 \otimes E_{\alpha_i} + \be_{\alpha_i} \otimes 
 K^{-1}_{\alpha_i} + \bk^{-1}_{\alpha_i} \otimes K^{-1}_{\alpha_i}  F_{\alpha_{-i}},
  \\
\Delta (\bff_{\alpha_i}) &= \bk_{\alpha_i} \otimes F_{\alpha_i}K^{-1}_{\alpha_{-i}}
  + \bff_{\alpha_i} \otimes K^{-1}_{\alpha_{-i}} + 1 \otimes E_{{\alpha_{-i}}}.
\end{align*}
Similarly, the counit $\epsilon$ of $\U$ induces a $\Qq$-algebra homomorphism $ \epsilon : \Uj \rightarrow \Qq$ such that 
$\epsilon(\be_{\alpha_i}) =\epsilon(\bff_{\alpha_i})=0$ and $\epsilon(\bk_{\alpha_i}) =1$ for all $i \in \I^{\jmath} $.
\end{prop}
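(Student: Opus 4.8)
The plan is to follow verbatim the strategy used for Proposition~\ref{prop:coproduct}, the only simplification being that $\cun$ has no generator $\bt$, so strictly fewer identities need to be checked. First I would record, using the embedding $\jmath$ of Proposition~\ref{int:prop:embedding} and the coproduct formulas \eqref{eq:coprod} for $\U$, the image under $\Delta$ of each generator of $\jmath(\cun)$. For instance,
\begin{align*}
\Delta(\jmath(\be_{\alpha_i}))
&= \Delta(E_{\alpha_i}) + \Delta(K^{-1}_{\alpha_i})\,\Delta(F_{\alpha_{-i}}) \\
&= 1 \otimes E_{\alpha_i}
 + \big(E_{\alpha_i}+K^{-1}_{\alpha_i}F_{\alpha_{-i}}\big) \otimes K^{-1}_{\alpha_i}
 + K^{-1}_{\alpha_i}K_{\alpha_{-i}} \otimes K^{-1}_{\alpha_i}F_{\alpha_{-i}},
\end{align*}
and similarly for $\jmath(\bff_{\alpha_i})$ and $\jmath(\bk_{\alpha_i})$. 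The key observation is that in each case the left tensor factor is visibly a polynomial in $K_{\alpha_i}K^{-1}_{\alpha_{-i}}$, $E_{\alpha_i}+K^{-1}_{\alpha_i}F_{\alpha_{-i}}$ and $F_{\alpha_i}K^{-1}_{\alpha_{-i}}+E_{\alpha_{-i}}$, i.e., lies in $\jmath(\cun)$; this is the coideal property. Recognizing $K^{-1}_{\alpha_i}K_{\alpha_{-i}} = \jmath(\bk^{-1}_{\alpha_i})$ then turns the displayed expression into exactly the asserted formula for $\Delta(\be_{\alpha_i})$, and likewise for $\bff_{\alpha_i}$ and $\bk_{\alpha_i}$.

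Second, having shown $\Delta(\jmath(\cun)) \subseteq \jmath(\cun) \otimes \U$, I would define the coproduct on $\cun$ by $\Delta := (\jmath^{-1}\otimes \id)\circ \Delta \circ \jmath$, where $\jmath^{-1}$ is the inverse of the isomorphism $\cun \xrightarrow{\ \sim\ } \jmath(\cun)$ furnished by the injectivity of $\jmath$ (Proposition~\ref{int:prop:embedding}). Being a composition of $\Qq$-algebra homomorphisms, this $\Delta$ is a $\Qq$-algebra homomorphism $\cun \to \cun \otimes \U$, and by construction its values on the generators $\bk_{\alpha_i}, \be_{\alpha_i}, \bff_{\alpha_i}$ are precisely the three displayed formulas; no separate verification of the defining relations of $\cun$ is needed. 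Coassociativity, $(1\otimes\Delta)\Delta=(\Delta\otimes1)\Delta$, is then inherited from that of $\Delta$ on $\U$.

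Finally, for the counit I would set $\epsilon := \epsilon\circ\jmath : \cun \to \Qq$, again a composition of $\Qq$-algebra homomorphisms; evaluating on $\jmath(\be_{\alpha_i})$, $\jmath(\bff_{\alpha_i})$, $\jmath(\bk_{\alpha_i})$ using $\epsilon(E_{\alpha_j})=\epsilon(F_{\alpha_j})=0$ and $\epsilon(K_{\alpha_j})=1$ yields the stated values, and one checks $m(\epsilon\otimes1)\Delta=\jmath$ exactly as in Corollary~\ref{cor:counit}, so that $\Qq$ becomes the trivial $\cun$-module compatibly with that of $\U$.

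As for the main obstacle: there is none of substance here — this is a direct computation parallel to the $\bun$ case. The only place to be slightly careful is the bookkeeping of the $K$-conjugation exponents, in particular making sure one identifies the left tensor factor using $\jmath(\bk^{-1}_{\alpha_i}) = K^{-1}_{\alpha_i}K_{\alpha_{-i}}$ rather than $K_{\alpha_i}K^{-1}_{\alpha_{-i}}$; everything else follows formally.
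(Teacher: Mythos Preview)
Your proposal is correct and follows exactly the approach of the paper, which simply says (for the analogous Proposition~\ref{prop:coproduct}) ``This follows from a direct computation'' and omits any proof for Proposition~\ref{int:prop:coproduct} itself. If anything, your write-up is more detailed than what the paper provides.
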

It follows by Proposition~\ref{int:prop:coproduct}
that $\Uj$  is a  (right) coideal subalgebra of $\U$. 
The map $\Delta : \Uj \rightarrow \Uj \otimes \U$ will be called the coproduct of $\Uj$ 
and $ \epsilon : \Uj \rightarrow \Qq$ will be called the  counit of $\Uj$.  
The coproduct $\Delta : \Uj \rightarrow \Uj \otimes \U$  is coassociative, i.e.,
$
(1 \otimes \Delta) \Delta = (\Delta \otimes 1)\Delta: \Uj \rightarrow \Uj \otimes \U \otimes \U.
$
The counit map $\epsilon$ makes $\Qq$ a (trivial) $\Uj$-module.  
Let $m : \U \otimes \U \rightarrow \U$ denote the multiplication map. 
Just as in Corollary~\ref{cor:counit}, we have 
$
m (\epsilon \otimes 1)\Delta = \jmath : \Uj \longrightarrow \U.
$

\section{The intertwiner $\Upsilon$ and the isomorphism $\mc{T}$}

As in \S\ref{subsec:Upsilon},
we let $\widehat{\U}$ be the completion of the $\Qq$-vector space $\U$.
We have the obvious embedding of $\U$ into $\widehat{\U}$. 
By continuity the $\Q(q)$-algebra structure on $\U$ extends to the $\Q(q)$-algebra structure on $ \widehat{\U}$.  
The bar involution \,$\bar{\ }$\, on $\U$ extends by continuity 
to an anti-linear involution on $\widehat{\U}$, which is also denoted by \,$\bar{\phantom{c}}$. 
The following is a counterpart of Theorem~\ref{thm:Upsilon}.

\begin{thm}  \label{int:thm:Upsilon}
There is a unique family of elements $\Upsilon_\mu \in \U_{-\mu}^-$ for $\mu \in {\N}{\Pi}$ such that 
$\Upsilon = \sum_{\mu}\Upsilon_\mu \in \widehat{\U}$ 
intertwines the bar involutions on $\Uj$ and $\U$ via the embedding $\jmath$
and $\Upsilon_0 = 1$;  
that is, $\Upsilon$ satisfies the following identity (in $\widehat{\U}$): 
\begin{equation}\label{int:eq:star}
 \jmath(\ov{u}) \Upsilon = \Upsilon\  \overline{\jmath(u)}, \quad \text{ for all } u \in \Uj.
\end{equation}
Moreover, $\Upsilon_\mu = 0$ unless $\mu^{\inv} = \mu$.
\end{thm}

The following is a counterpart of Corollary~\ref{cor:Upsiloninv}.

\begin{cor}\label{int:cor:Upsiloninv}
We have $\Upsilon \cdot \ov{\Upsilon} =1 .$
\end{cor}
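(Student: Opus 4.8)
The statement to prove is Corollary~\ref{int:cor:Upsiloninv}, namely $\Upsilon \cdot \ov{\Upsilon} = 1$ for the intertwiner $\Upsilon$ attached to the quantum symmetric pair $(\U, \cun)$. The plan is to mimic verbatim the argument already given for the $\bun$-case in Corollary~\ref{cor:Upsiloninv}, since all the ingredients have exact analogues here. First I would observe that $\Upsilon = \sum_\mu \Upsilon_\mu$ with $\Upsilon_0 = 1$ and $\Upsilon_\mu \in \U^-_{-\mu}$, so $\Upsilon$ has constant term $1$ and is therefore invertible in the completion $\widehat{\U}$; this is the same reasoning as in the $\bun$-case and uses nothing beyond Theorem~\ref{int:thm:Upsilon}.

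Next I would take the defining identity \eqref{int:eq:star}, namely $\jmath(\ov{u})\,\Upsilon = \Upsilon\,\overline{\jmath(u)}$ for all $u \in \cun$, and multiply both sides by $\Upsilon^{-1}$ on the appropriate side to get $\Upsilon^{-1}\,\jmath(\ov{u}) = \overline{\jmath(u)}\,\Upsilon^{-1}$ for all $u \in \cun$. Then I would apply the bar involution $\bar{\ }$ on $\widehat{\U}$ to this identity; since bar is an anti-linear algebra involution on $\widehat{\U}$ (it extends by continuity from $\U$, as recalled just before Theorem~\ref{int:thm:Upsilon}), this yields $\ov{\Upsilon}^{-1}\,\overline{\jmath(\ov{u})} = \jmath(\ov{u})\,\ov{\Upsilon}^{-1}$. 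Replacing $\ov{u}$ by $u$ (legitimate since $\Cbar$ is an involution on $\cun$) gives $\ov{\Upsilon}^{-1}\,\overline{\jmath(u)} = \jmath(\ov{u})\,\ov{\Upsilon}^{-1}$, which rearranges to say that $\ov{\Upsilon}^{-1}$ satisfies exactly the identity \eqref{int:eq:star} characterizing $\Upsilon$. Moreover $\ov{\Upsilon}^{-1}$ lies in $\widehat{\U}^-$ (the bar involution preserves the $\U^-$-part and inversion of a series with constant term $1$ stays in the completed negative part) and has constant term $1$, so it satisfies the normalization $\Upsilon_0 = 1$ as well.

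By the uniqueness clause in Theorem~\ref{int:thm:Upsilon}, we conclude $\ov{\Upsilon}^{-1} = \Upsilon$, which is precisely $\Upsilon\,\ov{\Upsilon} = 1$. There is essentially no obstacle here: the entire argument is a formal consequence of the defining property and uniqueness of $\Upsilon$ together with the fact that bar is an anti-linear algebra involution, and the proof is identical word-for-word to that of Corollary~\ref{cor:Upsiloninv} with $\imath$ replaced by $\jmath$ and $\Bbar$ replaced by $\Cbar$. The only point worth double-checking — and it is genuinely routine — is that $\ov{\Upsilon}^{-1}$ still lies in the completion $\widehat{\U}^-$ (equivalently, that the weight-grading bookkeeping behaves as expected under bar and inversion), but this is immediate from $\Upsilon_\mu = 0$ unless $\mu^\inv = \mu$ and from the structure of the completion. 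Hence the corollary follows.
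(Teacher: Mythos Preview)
Your proposal is correct and follows exactly the approach the paper takes: Section~\ref{sec:QSPc} explicitly omits the proof of Corollary~\ref{int:cor:Upsiloninv} as being identical to that of Corollary~\ref{cor:Upsiloninv} with $\imath$ replaced by $\jmath$, which is precisely what you have written out. One tiny slip: when you apply the bar to $\Upsilon^{-1}\jmath(\ov{u}) = \overline{\jmath(u)}\,\Upsilon^{-1}$, the right-hand side becomes $\jmath(u)\,\ov{\Upsilon}^{-1}$ (not $\jmath(\ov{u})\,\ov{\Upsilon}^{-1}$), but your subsequent substitution $\ov{u}\mapsto u$ yields the correct identity regardless.
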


Consider a function $\zeta$ on $\Lambda$ such that
\begin{align}  \label{int:eq:zeta}
\begin{split}
 \zeta (\mu+\alpha_i) 
 & = -q^{(\alpha_i -\alpha_{-i}, \mu+\alpha_i)} \zeta (\mu), 
  \\
\zeta (\mu+\alpha_{-i}) 
 &= -q^{(\alpha_{-i}, \mu+\alpha_{-i}) - (\alpha_{i}, \mu)} \zeta (\mu),
 \end{split}
 \end{align}
for all $\mu \in \Lambda$, $i \in \I^{\jmath} $.
Such $\zeta$ exists. For any $\U$-module $M$, define a $\Qq$-linear map $\widetilde{\zeta}  : M \rightarrow M$ by 
\[
\widetilde{\zeta} (m) = \zeta (\mu)m, \quad \text{ for all } m \in M_{\mu}.
\]

Let $w_0$ denote the longest element of the Weyl group $W$ of type $A_{2r}$. 
As in Section~ \ref{subsec:CB} we denote by $T_{w_0}$ the braid group  element. 
The following is a counterpart of Theorem~\ref{thm:mcT}.

\begin{thm}\label{int:thm:mcT}
Given any finite-dimensional $\U$-module $M$, 
the composition map
\[
\mc{T} := \Upsilon\circ \widetilde{\zeta} \circ T_{w_0}: M \longrightarrow M
\] 
is an isomorphism of $\Uj$-modules.
\end{thm}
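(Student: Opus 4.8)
\textbf{Proof proposal for Theorem~\ref{int:thm:mcT}.}
The plan is to mirror the proof of Theorem~\ref{thm:mcT} essentially verbatim, since the algebra $\cun$ differs from $\bun$ only by the absence of the generator $t$ (and the concomitant rank-zero node $\alpha_0$). First I would record that $\mc{T} = \Upsilon \circ \widetilde{\zeta} \circ T_{w_0}$ is a composition of three $\Qq$-linear isomorphisms of $M$: the braid operator $T_{w_0}$ is invertible by the braid group action of \S\ref{subsec:CB}, the rescaling $\widetilde{\zeta}$ is invertible since each $\zeta(\mu) \in \Qq^\times$, and $\Upsilon = \sum_\mu \Upsilon_\mu$ is invertible in (a completion of) $\U^-$ with constant term $1$, hence acts invertibly on the finite-dimensional module $M$. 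So it remains only to check that $\mc{T}$ intertwines the $\cun$-action, and by multiplicativity it suffices to check this on the generators $\bk_{\alpha_i}$, $\be_{\alpha_i}$, $\bff_{\alpha_i}$ for $i \in \I^\jmath$.

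Next I would carry out the three generator computations exactly as in the proof of Theorem~\ref{thm:mcT}, using Proposition~\ref{int:prop:embedding} for the images $\jmath(\bk_{\alpha_i})$, $\jmath(\be_{\alpha_i})$, $\jmath(\bff_{\alpha_i})$ under $\jmath$, together with Lemma~\ref{lem:Tw0} (which applies to $\U$ of type $A_{2r}$ as well) to compute $T_{w_0}$ applied to these images. For $\bk_{\alpha_i} \mapsto K_{\alpha_i}K^{-1}_{\alpha_{-i}}$, the braid operator sends this to $K_{\alpha_i}K^{-1}_{\alpha_{-i}}$ again (by the first identity of Lemma~\ref{lem:Tw0} applied twice, noting $\theta$ swaps $i$ and $-i$), and this commutes through $\widetilde\zeta$ and $\Upsilon$ on the relevant weight spaces since $\Upsilon_\mu \ne 0$ only for $\mu^\inv=\mu$. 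For $\be_{\alpha_i} \mapsto E_{\alpha_i} + K^{-1}_{\alpha_i}F_{\alpha_{-i}}$, I would push $T_{w_0}$ through using $T_{w_0}(E_{\alpha_i}) = -F_{\alpha_{-i}}K_{\alpha_{-i}}$ and $T_{w_0}(F_{\alpha_{-i}}) = -K^{-1}_{\alpha_i}E_{\alpha_i}$, then verify that the scalar factors produced by $\widetilde{\zeta}$ (governed by \eqref{int:eq:zeta}) exactly match the commutation relations needed so that, after applying $\Upsilon$ and using the intertwining identity \eqref{int:eq:star}, one recovers $\be_{\alpha_i}\mc{T}(m)$; this is the analogue of steps $(a)$ and $(b)$ in the proof of Theorem~\ref{thm:mcT}. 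The computation for $\bff_{\alpha_i}$ is entirely parallel.

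The main obstacle — really the only nontrivial bookkeeping — is verifying that the defining relations \eqref{int:eq:zeta} for $\zeta$ are precisely what is required to make the scalar exponents balance in the $\be_{\alpha_i}$ and $\bff_{\alpha_i}$ computations; this is where the precise form of the $q$-powers $q^{(\alpha_i-\alpha_{-i},\mu+\alpha_i)}$ and $q^{(\alpha_{-i},\mu+\alpha_{-i})-(\alpha_i,\mu)}$ enters, and one must track the weight shifts carefully as $m \in M_{w_0(\mu)}$ gets moved by $T_{w_0}$ into $M_\mu$ and then hit by $E_{\alpha_i}$, $F_{\alpha_{-i}}$, etc. Since these are exactly the $\alpha_0$-free parts of the computation already done for $\bun$ in the proof of Theorem~\ref{thm:mcT}, and the relations \eqref{int:eq:zeta} are literally the $i\ne 0$ relations of \eqref{eq:zeta}, the verification goes through without change. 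As noted in the section preamble, the proof is therefore omitted almost entirely, with only the observation that it follows \emph{mutatis mutandis} from that of Theorem~\ref{thm:mcT}, the simplification being that there is no generator $\bt$ and no rank-zero node to treat separately.
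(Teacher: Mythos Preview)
Your proposal is correct and matches the paper's approach exactly: the paper omits the proof of Theorem~\ref{int:thm:mcT} entirely, noting in the preamble to Section~\ref{sec:QSPc} that the proofs for $\cun$ are similar to (and often simpler than) those for $\bun$ since there is no generator $t$. Your outline faithfully reproduces the proof of Theorem~\ref{thm:mcT} with the $\bt$-computation deleted and the references to Proposition~\ref{prop:embedding} and \eqref{eq:zeta} replaced by their $\jmath$-counterparts Proposition~\ref{int:prop:embedding} and \eqref{int:eq:zeta}, which is precisely what the paper intends.
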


\section{Quasi-$\mc{R}$ matrix on $\Uj$}

It follows by Theorem \ref{int:thm:Upsilon} that $\Upsilon$ 
is a well-defined operator on finite-dimensional $\U$-modules. 
For any finite-dimensional $\U$-modules $M$ and $M'$, we shall use the formal notation $\Dupsilon$ to denote the well-defined 
action of $\Upsilon$ on $M \otimes M'$. Hence the operator
\begin{equation}  \label{eq:ThetaC}
\ThetaC := \Dupsilon \ThetaA(\Upsilon^{-1} \otimes 1)
\end{equation}
on $M \otimes M'$ is well defined. 
Define 
$$
\ov{\Delta}: \Uj \longrightarrow \Uj \otimes \U
$$ 
by letting $\ov{\Delta}(u) = \ov{\Delta(\ov{u})}$, for all $u \in \Uj$. 

The construction in \S\ref{subsec:normalTheta} carries over with little modification, and we will be sketchy.
For each $N \in {\N}$, we have a truncation map $tr_{\leq N}$ on $\U^-$ as in \eqref{eq:trN}. 
Then the same formulas as in \eqref{eq:ThetaleqN} and \eqref{eq:ThetaN} give us
$\ThetaC_{\leq N}$ and 
$\ThetaC_{ N}$  in $\U  \otimes \U^-$.
%
The following is a counterpart of Proposition~\ref{prop:ThetainBoA}.

\begin{prop}\label{int:prop:ThetainBoA}
For any $N \in {\N}$, we have $\ThetaC_{ N} \in \jmath(\Uj) \otimes \U^-$.
\end{prop}

Proposition~\ref{int:prop:ThetainBoA}
allows us to make sense of $\jmath^{-1} (\ThetaC_N) \in \Uj \otimes \U$ for each $N$. 
For any finite-dimensional $\U$-modules $M$ and $M'$, 
the action of $\jmath^{-1} (\ThetaC_N)$ coincides with the action of $\ThetaC_N$
on $M \otimes M'$. 
{\bf As we will only need to use $\jmath^{-1} (\ThetaC_N) \in \Uj \otimes \U$ rather than $\ThetaC_N$,
we will simply write $\ThetaC_N$ for  $\jmath^{-1} (\ThetaC_N)$  
and regard $\ThetaC_N \in \Uj \otimes \U$ from now on.}
Similarly, it is now understood that $\ThetaC_{\le N} = \sum_{r=0}^N \ThetaC_r \in \Uj \otimes \U$.
The actions of $\sum_{N\ge 0} \ThetaC_N$ and of $\ThetaC$ coincide on any tensor product of finite-dimensional $\U$-modules. 
From now on, we may and shall identify 
$$
\ThetaC = \sum_{N\ge 0}  \ThetaC_N
$$ 
(or alternatively, use this as a normalized
definition of $\ThetaC$) as an element
in a completion $(\Uj \otimes \U^-)^\wedge$ of $\Uj \otimes \U^-$.

The following is a counterpart of Theorem~\ref{thm:ThetaB}.

\begin{thm}\label{int:thm:ThetaB}
Let $L$ be a finite-dimensional  $\Uj$-module and let $M$ be a finite-dimensional $\U$-modules. 
Then as linear operators on $L\otimes M$, we have
\[
\Delta(u)\ThetaC = \ThetaC\ov{\Delta}(u), \qquad \text{ for }  u \in \Uj.
\]
\end{thm}
 
The following is the counterpart of Proposition~\ref{prop:ThetaBinv}.

\begin{prop}\label{int:prop:ThetaBinv}
We have $\ThetaC \ov{\ThetaC} =1$. 
\end{prop}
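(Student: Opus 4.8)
The plan is to run the argument of Proposition~\ref{prop:ThetaBinv} verbatim, with the $\bun$-data replaced by the $\cun$-data. First I would note that $\ThetaC = \sum_{N\ge 0}\ThetaC_N$ with $\ThetaC_0 = 1\otimes 1$, so $\ThetaC$ is invertible in the completion $(\cun\otimes\U^-)^\wedge$; set ${}'\ThetaC := (\ThetaC)^{-1}$, which again has leading term $1\otimes 1$. Multiplying the intertwining identity $\Delta(u)\ThetaC = \ThetaC\,\ov{\Delta}(u)$ of Theorem~\ref{int:thm:ThetaB} on both sides by ${}'\ThetaC$, and relabelling $u\mapsto\ov u$, yields ${}'\ThetaC\,\Delta(\ov u) = \ov{\Delta(u)}\,{}'\ThetaC$ for all $u\in\cun$.

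Next I would apply the bar involution (namely $\Cbar\otimes\Abar$ on $\cun\otimes\U$, extended by continuity to the completion) to this last identity and relabel $\ov u\mapsto u$, obtaining $\ov{{}'\ThetaC}\,\ov{\Delta(u)} = \Delta(\ov u)\,\ov{{}'\ThetaC}$ for all $u\in\cun$. Thus $\ov{{}'\ThetaC}$ lies in $(\cun\otimes\U^-)^\wedge$, has leading term $1\otimes 1$, and satisfies exactly the intertwining identity of Theorem~\ref{int:thm:ThetaB}.

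Finally I would invoke the uniqueness that is built into the proof of Proposition~\ref{int:prop:ThetainBoA}: reexamining the recursion there (the $\cun$-analogue of \eqref{eq:atilde}) shows that an element of $(\cun\otimes\U^-)^\wedge$ with leading term $1\otimes 1$ satisfying that intertwining identity is uniquely determined by it. Hence $\ov{{}'\ThetaC} = \ThetaC$, equivalently $\ThetaC = \ov{\ThetaC}^{-1}$, which is the asserted identity $\ThetaC\,\ov{\ThetaC} = 1$. The only point requiring care — and it is bookkeeping rather than a genuine obstacle — is to confirm that the recursion underlying Proposition~\ref{int:prop:ThetainBoA} does pin $\ThetaC$ down uniquely among elements of the completion with the prescribed leading term; but since $\cun$ has no generator $t$, this recursion is a sub-collection of the one in \S\ref{subsec:normalTheta}, so the $\cun$ case is strictly easier than the $\bun$ case treated in Proposition~\ref{prop:ThetaBinv} and no new difficulty appears.
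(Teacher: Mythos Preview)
Your proof is correct and follows exactly the approach the paper uses for Proposition~\ref{prop:ThetaBinv}; since Section~\ref{sec:QSPc} omits the proof of Proposition~\ref{int:prop:ThetaBinv} entirely (stating only that it is the counterpart of the $\bun$ case), your write-up is precisely what the paper intends. One small remark: calling the $\cun$ recursion a ``sub-collection'' of the one in \S\ref{subsec:normalTheta} is slightly imprecise, since the embeddings of $\be_{\alpha_{1/2}},\bff_{\alpha_{1/2}}$ differ in form from those of the $\bun$ generators, but the uniqueness argument goes through identically and your conclusion stands.
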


The following is the counterpart of Corollary~\ref{cor:ThetatoT}.

\begin{cor}\label{int:cor:ThetatoT}
We have
$m(\epsilon \otimes 1)\ThetaC  = \Upsilon$.
\end{cor}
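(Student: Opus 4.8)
\textbf{Proof plan for Corollary~\ref{int:cor:ThetatoT}.}

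The statement is the direct analogue of Corollary~\ref{cor:ThetatoT}, and the plan is to replay the proof of that corollary in the present $\cun$-setting. First I would recall that, by the very construction of $\ThetaC_N$ via \eqref{eq:ThetaleqN} and \eqref{eq:ThetaN} (with $\U$ of type $A_{2r}$ and $\jmath$ in place of $\imath$), the elements $\ThetaC_N \in \cun \otimes \U$ satisfy a family of recursive identities of exactly the same shape as \eqref{eq:auxThetainBoA1}--\eqref{eq:auxThetainBoA4}: these come from expanding the intertwining relation $\Delta(u)\ThetaC = \ThetaC\,\ov{\Delta}(u)$ of Theorem~\ref{int:thm:ThetaB} on the generators $\bk_{\alpha_i}$, $\be_{\alpha_i}$, $\bff_{\alpha_i}$ of $\cun$ (there is no $\bt$ here, which only makes the bookkeeping shorter). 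Since $m(\epsilon\otimes 1)$ is an algebra map $\U\otimes\U \to \U$ that is continuous for the relevant filtration, applying it to these identities term by term yields, in $\widehat{\U}$,
\[
\jmath(\ov{u})\Big(\sum_{N\ge 0} m(\epsilon\otimes 1)(\ThetaC_N)\Big)
=\Big(\sum_{N\ge 0} m(\epsilon\otimes 1)(\ThetaC_N)\Big)\overline{\jmath(u)},
\qquad \text{for all } u\in\cun,
\]
which is precisely the defining identity \eqref{int:eq:star} of $\Upsilon$.

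Next I would check the normalization: $\ThetaC_0 = 1\otimes 1$ by construction (cf.\ the discussion after \eqref{eq:ThetaN}, which carries over verbatim), so $m(\epsilon\otimes 1)(\ThetaC_0) = \epsilon(1)\cdot 1 = 1$, i.e.\ the element $\sum_{N\ge 0} m(\epsilon\otimes 1)(\ThetaC_N)$ has constant term $1$ and lies in $\widehat{\U}^-$ (each $\ThetaC_N$ has its second tensor factor in $\U^-_{-\mu}$ with $\hgt(\mu)=N$, and $m(\epsilon\otimes 1)$ kills the $\cun$-factor, leaving something in $\U^-_{-\mu}$). Then invoking the uniqueness part of Theorem~\ref{int:thm:Upsilon} — the intertwiner satisfying \eqref{int:eq:star} with constant term $1$ is unique — forces $\sum_{N\ge 0} m(\epsilon\otimes 1)(\ThetaC_N) = \Upsilon$. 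Finally, since $\ThetaC = \sum_{N\ge 0}\ThetaC_N$ under our identification, this reads $m(\epsilon\otimes 1)(\ThetaC) = \Upsilon$, as claimed.

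The only point that needs a word of care — and the closest thing to an obstacle — is that $m(\epsilon\otimes 1)$ must be applied to the elements $\jmath^{-1}(\ThetaC_N)\in\cun\otimes\U$ (our standing convention after Proposition~\ref{int:prop:ThetainBoA}), not to $\ThetaC_N\in\jmath(\cun)\otimes\U$; but the two agree after applying $\epsilon\circ\jmath = \epsilon$ (by Proposition~\ref{int:prop:coproduct}, $\epsilon$ on $\cun$ is compatible with the counit of $\U$ via $\jmath$), so there is no discrepancy. Since $\cun$ contains no analogue of the generator $\bt$, no extra subtlety arises beyond what is already present in the proof of Corollary~\ref{cor:ThetatoT}; the rest is the same formal argument, and I would simply write ``the proof is identical to that of Corollary~\ref{cor:ThetatoT}, using Theorem~\ref{int:thm:Upsilon}, Proposition~\ref{int:prop:ThetainBoA} and the normalization $m(\epsilon\otimes 1)(\ThetaC_0)=1$.''
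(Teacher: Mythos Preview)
Your proof is correct and follows exactly the approach of the paper's proof of Corollary~\ref{cor:ThetatoT}: apply $m(\epsilon\otimes 1)$ to the recursive identities for $\ThetaC_N$, recognize the result as the defining identity~\eqref{int:eq:star}, and invoke the uniqueness of $\Upsilon$ together with the normalization $m(\epsilon\otimes 1)(\ThetaC_0)=1$. The paper in fact omits the proof of this corollary entirely (Section~\ref{sec:QSPc} states that proofs are similar to and simpler than those for $\bun$), so your write-up is more detailed than what the paper provides; your extra remark on the compatibility $\epsilon\circ\jmath=\epsilon$ is a nice clarification of the convention switch after Proposition~\ref{int:prop:ThetainBoA}.
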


\section{The $\jmath$-involutive modules} 
 \label{int:subsec:CBandDCB}  
 
In this section we shall assume all modules are finite dimensional. 
Recall the bar map on $\U$ and its modules is also denoted by $\Abar$, and
the bar map on $\Uj$ is also denoted by $\Cbar$. 
It is also understood that $\Abar(u) =\Abar(\jmath(u))$ for $u\in \Uj$.
 
 \begin{definition}  \label{def:involutive-j}
A $\Uj$-module $M$ equipped with an anti-linear involution $\Cbar$ 
is called {\em involutive} (or {\em $\jmath$-involutive}) if
$$
\Cbar(u m) = \Cbar(u) \Cbar(m), \qquad \forall u \in \Uj, m \in M.
$$ 
 \end{definition}

The following is a counterpart of Proposition~\ref{prop:compatibleBbar}.

\begin{prop}\label{int:prop:compatibleBbar}
Let $M$ be an involutive $\U$-module. Then $M$ is an $\jmath$-involutive  
$\Uj$-module with involution
$\Cbar := \Upsilon \circ \Abar$.
\end{prop}

The following is a counterpart of Corollary~\ref{cor:Li-invol}.

\begin{cor}   \label{int:cor:Li-invol}
Let $\la\in \Lambda^+$. Regarded as $\Uj$-modules, $L(\la)$ and ${^{\omega}L}(\lambda)$ are $\jmath$-involutive. 
\end{cor}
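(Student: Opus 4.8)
The plan is to obtain Corollary~\ref{int:cor:Li-invol} as an immediate consequence of Proposition~\ref{int:prop:compatibleBbar}, exactly paralleling the way Corollary~\ref{cor:Li-invol} is deduced from Proposition~\ref{prop:compatibleBbar} in the $\bun$ setting. So this is a one-line application, and the real content has already been placed in the preceding results.

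First I would recall from Section~\ref{sec:quasiR} that, for $\la \in \Lambda^+$, both $L(\la)$ and ${^{\omega}L}(\lambda)$ are involutive $\U$-modules in the sense of Definition~\ref{def:involutive}: each carries a unique anti-linear involution $\Abar$ pinned down by $\Abar(u\,\eta_{\la}) = \Abar(u)\,\eta_{\la}$ (respectively $\Abar(u\,\xi_{-\la}) = \Abar(u)\,\xi_{-\la}$) for $u \in \U$, and $\Abar(u m) = \Abar(u)\Abar(m)$ holds for all $u \in \U$. This is the only input on the $\U$-side, and it is standard (it rests on the triangular-decomposition construction of $\Abar$ on highest/lowest weight modules together with the compatibility of $\ThetaA$ with the bar map).

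Next I would apply Proposition~\ref{int:prop:compatibleBbar} with $M = L(\la)$ and then with $M = {^{\omega}L}(\lambda)$. That proposition asserts that any involutive $\U$-module $M$, viewed as a $\cun$-module through the embedding $\jmath$ and equipped with the anti-linear operator $\Cbar := \Upsilon \circ \Abar$ (where $\Upsilon$ is the intertwiner of Theorem~\ref{int:thm:Upsilon}), satisfies $\Cbar(u m) = \Cbar(u)\Cbar(m)$ for all $u \in \cun$ and $m \in M$, and that $\Cbar$ is an involution; the latter uses $\Upsilon\,\ov{\Upsilon} = 1$ from Corollary~\ref{int:cor:Upsiloninv}. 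Hence $L(\la)$ and ${^{\omega}L}(\lambda)$ are $\jmath$-involutive $\cun$-modules in the sense of Definition~\ref{def:involutive-j}, which is exactly the assertion.

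There is essentially no obstacle to overcome. The one point worth flagging is that, in contrast to the analogous step in Part~1 leading to the $\imath$-canonical basis, the integrality of $\Upsilon$ (the $\cun$-counterpart of Theorem~\ref{thm:UpsiloninZ}) is \emph{not} needed here: $\jmath$-involutivity requires only that $\Upsilon$ lie in a completion $\widehat{\U}$ of $\U$ with constant term $1$ and that $\Upsilon\,\ov{\Upsilon} = 1$, both of which are already available.
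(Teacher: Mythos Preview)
Your proposal is correct and is exactly the intended argument: the paper states this as an immediate corollary of Proposition~\ref{int:prop:compatibleBbar} (with no proof given), parallel to how Corollary~\ref{cor:Li-invol} follows from Proposition~\ref{prop:compatibleBbar}. Your additional remark that integrality of $\Upsilon$ is not needed here is accurate and worth keeping in mind.
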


Given an involutive $\Uj$-module $L$ and an involutive $\U$-module $M$, we define 
$\Cbar: L \otimes M \rightarrow L \otimes M$ by letting
\begin{equation}   \label{eq:Cbartensor}
\Cbar(l \otimes m) := \ThetaC(\Cbar(l) \otimes \Abar(m)),  \qquad  \text{ for all } l \in L, m \in M.
\end{equation}

The following is a counterpart of Proposition~\ref{prop:Bbartensor}.

\begin{prop}  \label{int:prop:Bbartensor}
Let $L$ be an involutive $\Uj$-module and let $M$ be an involutive $\U$-module. Then 
$(L \otimes M, \Cbar)$ is an involutive $\Uj$-module.
\end{prop}

\begin{rem}   \label{int:rem:sameinv}
Given two involutive $\U$-modules $(M_1, \psi_1)$ and $(M_2,\psi_2)$, the two different ways,
via  Proposition~\ref{int:prop:compatibleBbar}  or  Proposition~\ref{int:prop:Bbartensor}, 
of defining an $\jmath$-involutive $\Uj$-module structure on  $M_1\otimes M_2$
coincide; compare with Remark~\ref{rem:sameinv}. 
\end{rem}

The following proposition, which is a counterpart of Proposition~\ref{prop:barinThetaB}, 
implies that different bracketings on the tensor product 
of several involutive $\U$-modules   give rise to the same $\Cbar$. 

\begin{prop}\label{int:prop:barinThetaB}
Let $M_1$, $\ldots$, $M_k$ be involutive $\U$-modules with $k\ge 2$. We have 
\[
\Cbar(m_1 \otimes \cdots \otimes m_k)
= \ThetaC (\Cbar (m_1 \otimes \cdots \otimes m_{k'}) \otimes \Abar(m_{k'+1} \otimes \cdots \otimes m_k)),
\]
for any $1 \leq k' < k$.
\end{prop}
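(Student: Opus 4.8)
The plan is to mimic, essentially verbatim, the proof of Proposition~\ref{prop:barinThetaB}; nothing in that argument made use of the extra generator $\bt$, so it transfers to $\cun$ without change. First I would recall from \eqref{eq:ThetaC} that $\ThetaC = \Dupsilon\,\ThetaA\,(\Upsilon^{-1}\otimes 1)$ as an operator on the tensor product of two finite-dimensional $\U$-modules, and from Proposition~\ref{int:prop:compatibleBbar} that the $\jmath$-involution on any involutive $\U$-module is $\Cbar = \Upsilon\circ\Abar$; by induction on $k$ it is enough to carry out the two-step unravelling below for a single cut $1 \le k' < k$.

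Then I would compute directly. Writing $m' = m_1\otimes\cdots\otimes m_{k'}$ and $m'' = m_{k'+1}\otimes\cdots\otimes m_k$, and unwinding $\Cbar = \Upsilon\Abar$ on $M_1\otimes\cdots\otimes M_{k'}$:
\begin{align*}
\ThetaC\big(\Cbar(m')\otimes\Abar(m'')\big)
 &= \Dupsilon\,\ThetaA\,(\Upsilon^{-1}\otimes 1)\big(\Upsilon\,\Abar(m')\otimes\Abar(m'')\big) \\
 &= \Dupsilon\,\ThetaA\,\big(\Abar(m')\otimes\Abar(m'')\big) \\
 &= \Dupsilon\,\Abar\big(m'\otimes m''\big) \\
 &= \Cbar(m_1\otimes\cdots\otimes m_k).
\end{align*}
The second equality is the cancellation $(\Upsilon^{-1}\otimes 1)(\Upsilon\otimes 1)=1$; the third is the key input, namely that Lusztig's bar involution on the $\U$-module $M_1\otimes\cdots\otimes M_k$ is independent of the bracketing, i.e.\ $\ThetaA\,(\Abar(m')\otimes\Abar(m'')) = \Abar(m'\otimes m'')$ (this is \cite[27.3.6]{Lu94}, cf.\ the remark following Proposition~\ref{prop:Lu27.3.1}); the last equality is again $\Cbar = \Upsilon\Abar$ applied to $M_1\otimes\cdots\otimes M_k$.

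The one bookkeeping point to address is that $\ThetaC$ is a priori defined only on $N_1\otimes N_2$ for two $\U$-modules $N_i$, so in the computation one is implicitly applying it with $N_1 = M_1\otimes\cdots\otimes M_{k'}$ regarded as a single $\U$-module and $\cun$ acting through the iterated coproduct; that this is legitimate follows from the coassociativity of $\Delta:\cun\to\cun\otimes\U$ (recorded just before Corollary~\ref{int:cor:ThetatoT}), exactly as in Part~1. I expect no genuine obstacle here: since $\cun$ has no analogue of $\bt$, every step is at least as easy as in the $\bun$ case, and the substance of the statement is entirely carried by the compatibility of $\ThetaA$ with re-bracketing on the $\U$-side.
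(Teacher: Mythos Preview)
Your proof is correct and essentially identical to the paper's approach: the paper omits the proof of this $\cun$-version entirely, deferring to the $\bun$ case (Proposition~\ref{prop:barinThetaB}), whose proof is line-for-line the computation you wrote. The extra bookkeeping remark about coassociativity is harmless but not needed, since the paper's original argument already treats the two blocks $m'$ and $m''$ simply as elements of two involutive $\U$-modules.
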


\section{Integrality of $\Upsilon$}

Similar to Lemma~\ref{lem:bunxi} for $\Ui$, we can show that 
$$
\Uj \xi_{-\lambda} = {^{\omega}L}(\lambda),
\qquad \Uj \eta_{\lambda} = L(\lambda).
$$ 
%
%
The following is a counterpart of Lemma~\ref{lem:uniqueT}. 

\begin{lem}\label{int:lem:uniqueT}
For any $\lambda \in \Lambda^+$,  there is a unique isomorphism of $\Uj$-modules
\[
\mc{T} : {^{\omega}L}(\lambda) \longrightarrow {^{\omega}L}(\lambda) = L( \lambda^{\inv}),
\]
such that $\mc{T} (\xi_{\-\lambda}) = \sum_{ b \in \B(\lambda)} g_b b^- \eta^{}_{ \lambda^{\inv}}$  
for $g_b \in \Qq$ and $g_1 =1$.
\end{lem}

\begin{prop}\label{int:prop:shrink}
Let $\lambda, \mu \in \Lambda^+$. There is a unique homomorphism of $\Uj$-modules 
\[
\pi_{\lambda , \mu }: {^{\omega}L} ( \mu^{\inv} + \mu + \lambda ) \longrightarrow {^{\omega}L}(\lambda),
\] 
such that $\pi_{\la,\mu} (\xi^{}_{-\mu^{\inv}- \mu - \lambda }) = \xi_{-\lambda}$. 
\end{prop}

Recall that ${^{\omega}L} (\mu^{\inv} +\mu+\lambda)$ and ${^{\omega}L}(\lambda)$ are both 
$\jmath$-involutive $\Uj$-modules with $\Cbar= \Upsilon \circ \Abar$.
Similar to Lemma~\ref{lem:piBbar=Bbarpi},
the $\Uj$-homomorphism $\pi_{\la,\mu}$ commutes with the bar involution $\Cbar$, i.e., 
$\pi_{\la,\mu} \Cbar = \Cbar\pi_{\la,\mu} $. 

The following is a counterpart of Lemma~\ref{lem:UppreZ}, with a much easier proof. 
Indeed, since the identities \eqref{int:eq:beZ} and \eqref{int:eq:bffZ} give us all the divided powers we need, 
we can bypass the careful study of the rank one case as in \S\ref{subsec:rank1}
for  $\Ui$.

\begin{lem}  \label{int:lem:UppreZ}
For each $\lambda \in \Lambda^+$, 
we have $\Upsilon ({^{\omega}L}_\mA(\lambda)) \subseteq {^{\omega}L}_\mA(\lambda)$.
\end{lem}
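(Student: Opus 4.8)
The plan is to follow the proof of Lemma~\ref{lem:UppreZ} essentially verbatim, exploiting the fact that in the present setting ($\U$ of type $A_{2r}$, so $\I = \I_{2r}$) the index $0$ never occurs; consequently the delicate rank one analysis of \S\ref{subsec:rank1}--\S\ref{subsec:UpsiloninZ:proof}, which in the $\bun$ case was forced on us by the extra generator $\bt$, is not needed here at all, and all divided powers we require are handed to us explicitly by \eqref{int:eq:beZ}--\eqref{int:eq:bffZ}. Concretely, fix $\lambda \in \Lambda^+$ and write $\xi = \xi_{-\lambda}$. I would prove that $\Upsilon x \in {^{\omega}L}_\mA(\lambda)$ for every weight vector $x \in {^{\omega}L}_\mA(\lambda)_\mu$, by induction on $\hgt(\mu+\lambda)$. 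Since $\Upsilon$ is $\Qq$-linear and ${^{\omega}L}_\mA(\lambda) = \U^+_\mA \xi$ is $\mA$-spanned by the monomials $x = E^{(a_1)}_{\alpha_{i_1}} E^{(a_2)}_{\alpha_{i_2}} \cdots E^{(a_s)}_{\alpha_{i_s}} \xi$ (which are automatically $\Abar$-invariant, each $E^{(a)}_{\alpha_i}$ and $\xi$ being so), it suffices to treat such $x$. The base case $\hgt(\mu+\lambda)=0$ is $\Upsilon\xi = \xi$.

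For the inductive step, write $x = E^{(a_1)}_{\alpha_{i_1}} x'$ with $x' = E^{(a_2)}_{\alpha_{i_2}} \cdots E^{(a_s)}_{\alpha_{i_s}} \xi \in {^{\omega}L}_\mA(\lambda)$ of strictly lower weight. There are only two cases, $i_1 > 0$ and $i_1 < 0$. Assume $i_1 \in \I^{\jmath}$ (the case $i_1 < 0$ being symmetric, with $\bff^{(a_1)}_{\alpha_{-i_1}}$ and \eqref{int:eq:bffZ} in place of $\be^{(a_1)}_{\alpha_{i_1}}$ and \eqref{int:eq:beZ}). Put $x'' := \be^{(a_1)}_{\alpha_{i_1}} x'$; by \eqref{int:eq:beZ} the operator $\jmath(\be^{(a_1)}_{\alpha_{i_1}})$ lies in $\Udot_\mA$, so $x'' \in {^{\omega}L}_\mA(\lambda)$ and $y := x'' - x \in {^{\omega}L}_\mA(\lambda)$ is a combination of vectors of weight lower than $x$. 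Now $\Cbar(x'')$ is computed in two ways. Using that ${^{\omega}L}_\mA(\lambda)$ is $\jmath$-involutive with $\Cbar = \Upsilon\Abar$ (Corollary~\ref{int:cor:Li-invol}) and that $\be^{(a_1)}_{\alpha_{i_1}}$ is $\Cbar$-invariant,
\[
\Cbar(x'') = \be^{(a_1)}_{\alpha_{i_1}}\,\Cbar(x') = \be^{(a_1)}_{\alpha_{i_1}}\,\Upsilon\Abar(x'),
\]
and since $\Abar$ preserves ${^{\omega}L}_\mA(\lambda)$ and $\Abar(x')$ has lower weight, the induction hypothesis gives $\Upsilon\Abar(x') \in {^{\omega}L}_\mA(\lambda)$; then \eqref{int:eq:beZ} again yields $\Cbar(x'') \in {^{\omega}L}_\mA(\lambda)$. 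On the other hand,
\[
\Cbar(x'') = \Cbar(x) + \Cbar(y) = \Upsilon x + \Upsilon\Abar(y),
\]
where $\Upsilon\Abar(y) \in {^{\omega}L}_\mA(\lambda)$ by the induction hypothesis. Subtracting gives $\Upsilon x \in {^{\omega}L}_\mA(\lambda)$, completing the induction.

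I do not anticipate a genuine obstacle. The one point to verify is the routine claim that $\jmath(\be^{(a)}_{\alpha_i})$ and $\jmath(\bff^{(a)}_{\alpha_i})$ preserve the $\mA$-lattice ${^{\omega}L}_\mA(\lambda)$; this is immediate from \eqref{int:eq:beZ}--\eqref{int:eq:bffZ}, since each summand there equals, up to an invertible scalar in $\mA$, a product of divided powers $E^{(\cdot)}_{\alpha_{\pm i}},\,F^{(\cdot)}_{\alpha_{\pm i}}$ and a monomial in the $K^{\pm1}_{\alpha_{\pm i}}$, hence lies in $\Udot_\mA$, and $\Udot_\mA$ preserves ${^{\omega}L}_\mA(\lambda)$. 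This is precisely where the $\cun$ case is easier than the $\bun$ case: for $\bun$ the analogous integrality of the ``divided powers of $\bt$'' required the entire rank one development of \S\ref{subsec:rank1}, which is bypassed entirely here.
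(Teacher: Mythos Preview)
Your proposal is correct and follows exactly the approach the paper indicates: the paper's own ``proof'' is the one-line remark that the argument of Lemma~\ref{lem:UppreZ} goes through verbatim, now without case $i_1=0$, because \eqref{int:eq:beZ}--\eqref{int:eq:bffZ} already furnish all needed divided powers and so the rank one work of \S\ref{subsec:rank1} can be bypassed entirely. You have simply written out in full what the paper leaves implicit.
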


The following is a counterpart of Theorem~\ref{thm:UpsiloninZ}.

\begin{thm}\label{int:thm:UpsiloninZ}
We have $\Upsilon_\mu \in \U^-_\mA$, for all $\mu \in \N\Pi$.
\end{thm}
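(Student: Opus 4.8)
The plan is to run the same two-step argument used in the proof of Theorem~\ref{thm:UpsiloninZ} for $\bun$, now in the (easier) $\cun$ setting. The substantive ingredient is the assertion that $\Upsilon$ preserves the integral $\mA$-form ${^{\omega}L}_\mA(\lambda)$ for every $\lambda \in \Lambda^+$, which is already recorded as Lemma~\ref{int:lem:UppreZ}. Unlike in the $\bun$ case, the proof of that lemma needs no delicate rank-one analysis: the explicit divided-power formulas \eqref{int:eq:beZ} and \eqref{int:eq:bffZ} for $\jmath(\be^{(a)}_{\alpha_i})$ and $\jmath(\bff^{(a)}_{\alpha_i})$ already provide enough integral operators inside $\jmath(\cun)$ to move an arbitrary weight vector of ${^{\omega}L}_\mA(\lambda)$ down through all weights, so the induction on height from the proof of Lemma~\ref{lem:UppreZ} carries over with $\be^{(a)}_{\alpha_i}$ and $\bff^{(a)}_{\alpha_i}$ playing every role and no analogue of the elements $T^{\rm odd}_a$, $T^{\rm ev}_a$ required. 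Granting Lemma~\ref{int:lem:UppreZ}, all that remains is a formal deduction.

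Concretely, I would recall Lusztig's canonical basis $\B$ of $\f$ from \S\ref{subsec:CB}, with $\B_\mu = \B \cap \f_\mu$, together with the identification $\f \cong \U^-$. Fix $\mu \in \N\Pi$ and write
\[
\Upsilon_\mu = \sum_{b \in \B_\mu} c_b\, b^{-}, \qquad c_b \in \Qq .
\]
For any $\lambda \in \Lambda^+$, Lemma~\ref{int:lem:UppreZ} gives $\Upsilon_\mu\, \eta_{\lambda} = \sum_{b \in \B_\mu} c_b\, b^{-}\eta_{\lambda} \in L_\mA(\lambda)$, since $\eta_\lambda \in L_\mA(\lambda)$. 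Now fix an arbitrary $b \in \B_\mu$; for $\lambda$ sufficiently dominant one has $b \in \B(\lambda)$, hence $b^{-}\eta_\lambda \neq 0$, and $\{\, b'^{-}\eta_\lambda \mid b' \in \B(\lambda)\,\}$ is the canonical basis of $L_\mA(\lambda)$ by \cite{Lu94}, in particular an $\mA$-basis. Comparing coefficients then forces $c_b \in \mA$. As $b \in \B_\mu$ was arbitrary, $\Upsilon_\mu \in \U^-_\mA$, which is the claim.

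I do not expect a genuine obstacle in this argument: the only nontrivial point is Lemma~\ref{int:lem:UppreZ}, and that is an essentially verbatim adaptation of Lemma~\ref{lem:UppreZ}, using the divided-power formulas \eqref{int:eq:beZ}--\eqref{int:eq:bffZ} in place of the rank-one divided powers of $t$. Accordingly I would present the proof of Theorem~\ref{int:thm:UpsiloninZ} itself in the compact form above, pointing the reader to the proof of Theorem~\ref{thm:UpsiloninZ} for the (identical) details.
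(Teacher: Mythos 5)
Your proof is correct and follows essentially the same route as the paper: derive Lemma~\ref{int:lem:UppreZ} by adapting the induction of Lemma~\ref{lem:UppreZ}, noting that the divided-power formulas \eqref{int:eq:beZ}--\eqref{int:eq:bffZ} cover every case (there is no $0 \in \I$ in the $\cun$ setting, so no analogue of the generator $t$ and no rank-one detour via $T^{\rm odd}_a$, $T^{\rm ev}_a$ is needed), and then run the same coefficient-comparison argument against the canonical bases of $L_\mA(\lambda)$ for $\lambda$ sufficiently dominant, exactly as in the proof of Theorem~\ref{thm:UpsiloninZ}. Nothing to add.
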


\section{The $\jmath$-canonical basis of ${^{\omega}L(\lambda)}$} 

The following is a counterpart of Lemma~\ref{lem:Zform}, which now
follows from Theorem \ref{int:thm:UpsiloninZ} and Proposition \ref{int:prop:compatibleBbar}. 
Note that we do not need the input from the rank one case here. 

\begin{lem}  \label{int:lem:Zform}
The bar map $\Cbar$ preserves the $\mA$-form ${^{\omega}L}_\mA(\lambda)$, for $\la \in \La^+$.
\end{lem}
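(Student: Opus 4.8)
The plan is to treat this exactly as the $\cun$-analogue of Lemma~\ref{lem:Zform}: once the integrality of $\Upsilon$ is in hand (Theorem~\ref{int:thm:UpsiloninZ}), the statement is a one-line composition of two maps that each preserve the $\mA$-form. So first I would invoke Proposition~\ref{int:prop:compatibleBbar}, which identifies the $\jmath$-involution on ${}^{\omega}L(\lambda)$, regarded as a $\cun$-module, with $\Cbar = \Upsilon\circ\Abar$, where $\Abar$ is Lusztig's bar involution on the $\U$-module ${}^{\omega}L(\lambda)$.

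Next I would recall the standard fact (Lusztig) that, with $\xi_{-\lambda}$ chosen $\Abar$-invariant, $\Abar$ preserves the $\mA$-lattice ${}^{\omega}L_\mA(\lambda) = \U^+_\mA\xi_{-\lambda}$. Then I would apply Lemma~\ref{int:lem:UppreZ} — the $\cun$-counterpart of Lemma~\ref{lem:UppreZ}, whose proof in turn rests on Theorem~\ref{int:thm:UpsiloninZ}, i.e.\ on $\Upsilon_\mu \in \U^-_\mA$ for all $\mu$ — to conclude that $\Upsilon$ maps ${}^{\omega}L_\mA(\lambda)$ into itself. Composing the two, $\Cbar = \Upsilon\circ\Abar$ sends ${}^{\omega}L_\mA(\lambda)$ to ${}^{\omega}L_\mA(\lambda)$, which is precisely the claim.

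The hard part here is essentially empty: all the genuine work has already been absorbed into Theorem~\ref{int:thm:UpsiloninZ} and the preceding constructions, so no new obstacle arises at this stage. The one point worth flagging is that, in contrast to the $\bun$-setting where establishing $\Upsilon({}^{\omega}L_\mA(\lambda))\subseteq {}^{\omega}L_\mA(\lambda)$ (Lemma~\ref{lem:UppreZ}) forces the careful rank-one analysis of \S\ref{subsec:rank1}, here $\cun$ carries no extra generator $t$; formulas \eqref{int:eq:beZ}--\eqref{int:eq:bffZ} already exhibit the divided powers $\be^{(a)}_{\alpha_i},\bff^{(a)}_{\alpha_i}$ in integral form, so the induction proving Lemma~\ref{int:lem:UppreZ} goes through directly. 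Thus the proof of Lemma~\ref{int:lem:Zform} is just the two-step composition described above.
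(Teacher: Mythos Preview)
Your proof is correct and matches the paper's approach exactly: $\Cbar = \Upsilon \circ \Abar$, with $\Abar$ preserving ${^{\omega}L}_\mA(\lambda)$ by Lusztig and $\Upsilon$ preserving it by Lemma~\ref{int:lem:UppreZ}. One small correction on the logical order: Lemma~\ref{int:lem:UppreZ} is proved directly (via the divided-power formulas \eqref{int:eq:beZ}--\eqref{int:eq:bffZ}, as you note) and Theorem~\ref{int:thm:UpsiloninZ} is deduced from it, not the other way around.
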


Recall a partial ordering $\preceq$ on the set $\B(\la)$ of canonical basis for $\la\in\Lambda^+$
from \eqref{eq:order}.
For any $b \in \B(\lambda)$, recalling $\Cbar =\Upsilon \Abar$, we have 
\begin{equation}  \label{int:eq:triang}
\Cbar(b^+ \xi_{-\lambda}) 
 = \Upsilon  (b^+ \xi_{-\lambda} ) 
 = \sum_{b' \in \B(\lambda)} \rho_{b; b'} b'^+ \xi_{-\lambda}
\end{equation}
where $\rho_{b; b'} \in \mA$ by Theorem~\ref{int:thm:UpsiloninZ}. 
Since $\Upsilon$ lies in a completion of $\U^-$ satisfying $\Upsilon_\mu =0$ unless $\mu^\inv =\mu$ 
(see Theorem~\ref{int:thm:Upsilon}), 
we have  $ \rho_{b; b} =1$ and $\rho_{b; b'} = 0$
unless $b' \preceq b$.
Since $\Cbar$ is an involution, we can apply 
\cite[Lemma~ 24.2.1]{Lu94} to our setting to establish the following counterpart of 
Theorem~\ref{thm:BCB}.

\begin{thm}\label{int:thm:BCB}
Let $\la \in \La^+$. The $\Uj$-module  ${^{\omega}L}(\lambda)$ admits a unique basis 
\[
\B^\jmath(\lambda) := \{T^{\lambda}_{b} \mid b \in \B(\lambda)\}
\]
which is $\Cbar$-invariant and of the form
\[
T^{\lambda}_{b} = b^+ \xi_{-\lambda} +\sum_{b' \prec b}
t^{\lambda}_{b;b'} b'^+  \xi_{-\lambda},
\quad \text{ for }\;  t^{\lambda}_{b;b'} \in q\Z[q].
\]
\end{thm}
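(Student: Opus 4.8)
\textbf{Proof proposal for Theorem~\ref{int:thm:BCB}.}
The plan is to mimic exactly the proof of Theorem~\ref{thm:BCB}, which in turn rests on the three inputs that have already been established for $\cun$: the integrality of $\Upsilon$ (Theorem~\ref{int:thm:UpsiloninZ}), the fact that $\Cbar$ preserves the $\mA$-form ${^{\omega}L}_\mA(\lambda)$ (Lemma~\ref{int:lem:Zform}), and the triangularity identity \eqref{int:eq:triang} with $\rho_{b;b}=1$ and $\rho_{b;b'}=0$ unless $b'\preceq b$. Since all of these are in place, the argument is essentially bookkeeping: first I would fix $\lambda\in\La^+$ and record that $\{b^+\xi_{-\lambda}\mid b\in\B(\lambda)\}$ is Lusztig's canonical basis of ${^{\omega}L}_\mA(\lambda)$, that $\Cbar$ is an anti-linear involution on ${^{\omega}L}(\lambda)$ (Corollary~\ref{int:cor:Li-invol}) preserving the $\mA$-form, and that in the partial-order basis $\{b^+\xi_{-\lambda}\}$ it is represented by a matrix $(\rho_{b;b'})$ that is upper-triangular with respect to $\preceq$ with $1$'s on the diagonal and entries in $\mA=\Z[q,q^{-1}]$.

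Next I would invoke the standard ``canonical basis from a bar involution'' lemma, \cite[Lemma~24.2.1]{Lu94}, applied to the $\mA$-lattice ${^{\omega}L}_\mA(\lambda)$ together with its $\Z[q]$-sublattice ${^{\omega}\mc{L}(\lambda)}$ spanned by $\{b^+\xi_{-\lambda}\}$, the index set $\B(\lambda)$ with the partial order $\preceq$ (which is interval-finite, since each $\B(\lambda)$ is finite), and the involution $\Cbar$. That lemma produces a unique family $\{T^\lambda_b\mid b\in\B(\lambda)\}$ with $\Cbar(T^\lambda_b)=T^\lambda_b$ and $T^\lambda_b\in b^+\xi_{-\lambda}+\sum_{b'\prec b}q\Z[q]\,b'^+\xi_{-\lambda}$; this is exactly the asserted form. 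The verification that the hypotheses of \cite[Lemma~24.2.1]{Lu94} are met is immediate from the three inputs listed above, precisely as in the proof of Theorem~\ref{thm:BCB}. One should also note, as in the $\bun$ case, that the formula $T^\lambda_b=b^+\xi_{-\lambda}+\sum_{b'\prec b}t^\lambda_{b;b'}b'^+\xi_{-\lambda}$ automatically makes $\B^\jmath(\lambda)$ an $\mA$-basis of ${^{\omega}L}_\mA(\lambda)$ and a $\Z[q]$-basis of ${^{\omega}\mc{L}(\lambda)}$, so those statements (the analogues of parts (2)--(3) of Theorem~\ref{thm:BCB}) come for free even though they are not explicitly stated here.

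There is essentially no obstacle: the only point requiring any care is confirming that $\preceq$ restricted to $\B(\lambda)$ is interval-finite and that the transition matrix is genuinely unitriangular (not merely triangular), both of which follow from Theorem~\ref{int:thm:Upsilon}'s assertion that $\Upsilon_\mu=0$ unless $\mu^\inv=\mu$, together with $\Upsilon_0=1$. Thus the proof reduces to the single sentence: ``As $\Cbar$ is an involution preserving ${^{\omega}L}_\mA(\lambda)$ and acting unitriangularly with $\mA$-coefficients with respect to $\preceq$, an application of \cite[Lemma~24.2.1]{Lu94} yields the claim,'' exactly parallel to Theorem~\ref{thm:BCB}. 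Since the excerpt itself says the $\cun$ proofs are ``similar and often simpler'' and ``omitted almost entirely,'' this is the expected form of the argument.
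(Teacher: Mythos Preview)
Your proposal is correct and follows exactly the approach indicated in the paper: the text immediately preceding Theorem~\ref{int:thm:BCB} records the triangularity \eqref{int:eq:triang} (with $\rho_{b;b}=1$ and $\rho_{b;b'}=0$ unless $b'\preceq b$), invokes Lemma~\ref{int:lem:Zform} for the $\mA$-integrality of $\Cbar$, and then states that since $\Cbar$ is an involution one applies \cite[Lemma~24.2.1]{Lu94}. Your write-up simply makes explicit what the paper leaves as a one-line reference to the parallel argument for Theorem~\ref{thm:BCB}.
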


\begin{definition}  \label{int:def:CB}
$\B^\jmath(\lambda)$ is called the $\jmath$-canonical basis of
the $\Uj$-module ${^{\omega}L(\lambda)}$.
\end{definition}

Just as in Section~\ref{subsec:CBandDCB}, we can generalize Theorem~\ref{int:thm:BCB} 
to any based $\U$-module $(M, B)$ (in the sense of Lusztig \cite[Chapter 27]{Lu94}). 
The basis $B$ generates a $\Z[q]$-submodule $\mc{M}$ and an $\mA$-submodule ${}_\mA M$ of $M$.
Recall again Lusztig has shown that the tensor product of several finite-dimensional simple $\U$-modules
is a based module. 
Thus we have the following counterparts of Theorem~\ref{thm:iCBbased} and \ref{thm:iCBtensor}.

\begin{thm}  
Let $(M,B)$ be a finite-dimensional based $\U$-module. (For example, take
$M ={^{\omega} L (\la_1)}   \otimes \ldots \otimes {^{\omega} L (\la_r)}$, for $\la_1, \ldots, \la_r \in \La^+$.)
 
\begin{enumerate}
\item
The $\Uj$-module $M$ admits a unique basis (called $\jmath$-canonical basis)
$
B^\jmath  := \{T_{b} \mid b \in B \}
$
which is $\Cbar$-invariant and of the form
\begin{equation*}  
T_{b} = b +\sum_{b' \in B, b' \prec b}
t_{b;b'} b',
\quad \text{ for }\;  t_{b;b'} \in q\Z[q].
\end{equation*}

\item
$B^\jmath$ forms an $\mA$-basis for the $\mA$-lattice ${}_\mA M$, and
$B^\jmath$ forms a $\Z[q]$-basis for the $\Z[q]$-lattice $\mc{M}$.
\end{enumerate}
\end{thm}

\section{The $(\Uj, \mc{H}_{B_m})$-duality} 

Again in this section $\U$ is of type $A_{2r}$ with simple roots parametrized by $\I_{2r}$ in \eqref{eq:I}.
Recall the notation $\I_{2r+1}$  from \eqref{eq:I}, and we set 
$$I =\I_{2r+1}  =\{-r,\ldots, -1, 0, 1, \ldots, r \}. 
$$ 
Let the $\Qq$-vector space $\VV := \sum _{a \in I}\Qq v_{a} $ be the natural representation of $\U$, 
hence a $\Uj$-module. We shall call $\VV$ the natural representation of $\Uj$ as well. 
For $m \in {\Z_{> 0}}$, 
$\VV^{\otimes m}$ becomes a natural $\U$-module (hence a $\Uj$-module) 
via the iteration of the coproduct $\Delta$. 
Note that $\VV$ is an involutive  $\U$-module with $\Abar$ defined as 
\[\Abar(v_a) :=v_a, \quad \text{ for all } a \in I.
\]
Therefore $\VV^{\otimes m}$ is an involutive  $\U$-module 
and hence a $\jmath$-involutive $\Uj$-module by Proposition~\ref{int:prop:barinThetaB}.

For any $f \in I^m$, we define
$
M_f= v_{f(1)} \otimes \cdots \otimes v_{f(m)}.
$
The Weyl group $W_{B_m}$ acts on $I^m$ by \eqref{eq:rightW} as before. 
Now the Hecke algebra $\HBm$  acts on the $\Qq$-vector space $\VV^{\otimes m}$ as follows: 
\begin{equation}  \label{int:eq:HBm}
 M_f H_a=
 \begin{cases}
 q^{-1}M_f, & \text{ if } a>0, f(a) = f(a+1);\\
 M_{f \cdot  s_a}, & \text{ if } a > 0, f(a) < f(a+1);\\
 M_{f \cdot  s_a} + (q^{-1} - q) M_{f}, & \text{ if } a > 0, f(a) > f(a+1);\\
 M_{f \cdot  s_0}, & \text{ if } a = 0 , f(1) > 0;\\
 M_{f \cdot s_0} + (q^{-1} - q) M_{f}, & \text{ if } a = 0, f(1) < 0;\\
 q^{-1} M_{f}, &\text{ if } a=0, f(1)=0.
 \end{cases}
\end{equation}
Identified as the subalgebra generated by $H_1, H_2, \dots, H_{m-1}$ of $\mathcal{H}_{B_m}$,
the Hecke algebra $\mc H_{A_{m-1}}$ inherits a right action on $\VV^{\otimes m}$. 
The Schur-Jimbo duality as formulated in Proposition~\ref{prop:SchurA} remains to be valid 
in the current setting, i.e., 
the actions of $\U$ and $\mathcal{H}_{A_{m-1}}$ on $\VV^{\otimes m}$ 
commute with each other and they form double centralizers. 

Introduce the $\Qq$-subspaces of $\VV$:
\begin{align*}
\VV_{-}  &=\bigoplus_{1 \leq i \leq r} \Qq (v_{-i} - q^{-1}v_{i}),
  \\
\VV_{+}  &= \Qq v_0 \bigoplus \bigoplus_{1 \leq i \leq r} \Qq (v_{-i} + q v_{i} ).
\end{align*}
The following is a counterpart of Lemma~\ref{lem:V+-}.

\begin{lem}  \label{int:lem:V+-}
$\VV_-$ is a $\Uj$-submodule of $\VV$ generated by $v_{-1} - q^{-1}v_{1}$
and $\VV_+$ is a $\Uj$-submodule of $\VV$ generated by $v_0$.
Moreover, we have   
$
\VV = \VV_- \oplus \VV_+.
$
\end{lem}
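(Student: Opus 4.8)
The plan is to prove all three assertions by the kind of explicit computation used for Lemma~\ref{lem:V+-}, working with the standard realization of the natural $\U$-module $\VV=\bigoplus_{a\in I}\Qq v_a$: the Chevalley generator $E_{\alpha_i}$ sends $v_{i+\hf}$ to $v_{i-\hf}$ and kills every other $v_a$, $F_{\alpha_i}$ sends $v_{i-\hf}$ to $v_{i+\hf}$ and kills every other $v_a$, and $K_{\alpha_i}$ acts on $v_a$ by the scalar $q^{(\alpha_i,\varepsilon_a)}$. Composing with the embedding $\jmath$ of Proposition~\ref{int:prop:embedding} yields explicit formulas for the action of $\be_{\alpha_i},\bff_{\alpha_i},\bk_{\alpha_i}$ ($i\in\I^{\jmath}$) on each $v_a$, and everything below is read off from these.

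First I would dispose of $\VV=\VV_-\oplus\VV_+$: for each $1\le i\le r$ the change-of-basis matrix from $v_{-i},v_i$ to $v_{-i}-q^{-1}v_i,\ v_{-i}+qv_i$ has determinant $q+q^{-1}\ne 0$, so these $2r$ vectors together with $v_0$ form a basis of $\VV$, and regrouping them gives the asserted decomposition.

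Next I would check $\cun$-stability. Since $\be_{\alpha_i},\bff_{\alpha_i},\bk_{\alpha_i}$ generate $\cun$, it suffices to evaluate them on the spanning vectors $v_{-j}\mp q^{\mp1}v_j$ ($1\le j\le r$) of $\VV_\mp$ and on $v_0$. The computation shows that $\bk_{\alpha_i}$ multiplies each such vector by a single power of $q$, while $\be_{\alpha_i}$ resp. $\bff_{\alpha_i}$ acts as a down-shift resp. up-shift: it sends $v_{-j}\mp q^{\mp1}v_j$ to an explicit $q$-power times $v_{-(j-1)}\mp q^{\mp1}v_{j-1}$ resp. $v_{-(j+1)}\mp q^{\mp1}v_{j+1}$ exactly when $j=i\mp\hf$ resp. $j=i\pm\hf$, and to $0$ on the other spanning vectors. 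The only cases that are not purely mechanical are those at the central node $i=\hf$ where the index $0$ enters — one checks directly $\be_{\alpha_{\hf}}(v_{-1}-q^{-1}v_1)=0$, $\be_{\alpha_{\hf}}(v_{-1}+qv_1)=(q+q^{-1})v_0$, and $\bff_{\alpha_{\hf}}v_0=v_{-1}+qv_1$. In every instance the image lies in the claimed subspace, so $\VV_-$ and $\VV_+$ are $\cun$-submodules.

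Finally, cyclicity comes out of the same shift picture. Applying $\bff_{\alpha_{\hf}},\bff_{\alpha_{3/2}},\dots,\bff_{\alpha_{r-\hf}}$ in succession to $v_0$ produces $v_{-1}+qv_1,\ v_{-2}+qv_2,\dots,v_{-r}+qv_r$, so the cyclic $\cun$-module on $v_0$ is all of $\VV_+$; applying $\bff_{\alpha_{3/2}},\dots,\bff_{\alpha_{r-\hf}}$ to $v_{-1}-q^{-1}v_1$ produces $v_{-2}-q^{-1}v_2,\dots,v_{-r}-q^{-1}v_r$, so the cyclic $\cun$-module on $v_{-1}-q^{-1}v_1$ is all of $\VV_-$. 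I do not expect a conceptual obstacle anywhere; the one point that needs care is the bookkeeping of the $q$-powers contributed by the $K^{\pm1}$-factors in $\jmath(\be_{\alpha_i})$ and $\jmath(\bff_{\alpha_i})$ at the node $i=\hf$, which is exactly where the analysis departs from the purely "type $A$" pattern.
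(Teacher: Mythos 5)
Your proof is correct and is exactly the "direct computation" that the paper leaves unwritten for this lemma (the $\cun$-counterpart of Lemma~\ref{lem:V+-}, whose own proof is likewise stated as "follows by a direct computation"): decompose by a determinant check, verify $\cun$-stability of $\VV_\pm$ on the generators $\be_{\alpha_i},\bff_{\alpha_i},\bk_{\alpha_i}$ using the explicit form of $\jmath$, and establish cyclicity via the shift action of the $\bff_{\alpha_i}$. Your explicit values $\be_{\alpha_{\hf}}(v_{-1}-q^{-1}v_1)=0$, $\be_{\alpha_{\hf}}(v_{-1}+qv_1)=(q+q^{-1})v_0$, and $\bff_{\alpha_{\hf}}v_0=v_{-1}+qv_1$ at the central node all check out against $\jmath(\be_{\alpha_{\hf}})=E_{\alpha_{\hf}}+K_{\alpha_{\hf}}^{-1}F_{\alpha_{-\hf}}$ and $\jmath(\bff_{\alpha_{\hf}})=F_{\alpha_{\hf}}K_{\alpha_{-\hf}}^{-1}+E_{\alpha_{-\hf}}$.
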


Now we fix 
$\zeta$ in \eqref{int:eq:zeta}  such that $\zeta (\varepsilon_{-r}) = 1$. It follows that
$$\zeta({\varepsilon_{r -i}}) =
\begin{cases}
 (-q)^{-2r+i}, & \text{ if } i \neq r;\\
 q \cdot (-q)^{-r}, &\text{ if } i = r.
 \end{cases}
 $$
Let us compute the $\Uj$-homomorphism
$\mc{T} = \Upsilon\circ \widetilde{\zeta} \circ T_{w_0}$ (see Theorem~\ref{int:thm:mcT}) 
on the $\U$-module $\VV$; we remind that $w_0$ here is associated to $\U$ instead of $W_{B_m}$ or $W_{A_{m-1}}$. 

\begin{lem}  \label{int:lem:T}
The $\Uj$-isomorphism $\mc{T}^{-1}$ on $\VV$ acts as a scalar $(-q) \id$ on the submodule $\VV_-$ and 
as $q^{-1} \id$ on the submodule $\VV_+$. 
\end{lem}
 
\begin{proof}
First one computes that the action of $T_{w_0}$ on $\VV$ is given by 
$$
T_{w_0} (v_{-r+i}) =(-q)^{2r-i}v_{r-i}, 
\qquad \text{ for }  0 \le i \le 2r.
$$ 
Hence 
\begin{equation}\label{eq:mcTtos0C}
\widetilde{\zeta} \circ T_{w_0}(v_a) =
\begin{cases} 
v_{ a\cdot s_0}, &\text{ if } a \neq 0; \\
q v_{0}, &\text{ if } a = 0.
\end{cases}
\end{equation}

One computes that $\Upsilon_{\alpha_{-\hf}+\alpha_{\hf}} = -\qq F_{\alpha_{\hf}}F_{\alpha_{-\hf}}$.
Therefore using $\mc{T} = \Upsilon\circ \widetilde{\zeta} \circ T_{w_0}$ we have 
\begin{align}
\mc{T}^{-1} v_0 &= q^{-1} v_0\label{int:eq:mcT1},\\
\mc{T}^{-1} ( v_{-1} - q^{-1}v_{1}) &= (-q) (v_{-1} - q^{-1}v_{1})\label{int:eq:mcT2},\\
\mc{T}^{-1} (v_{-1}+qv_{1}) &= q^{-1}(v_{-1}+qv_{1})\label{int:eq:mcT3}.
\end{align}
The lemma now follows from Lemma~\ref{int:lem:T}
since  $\mc{T}^{-1}$ is a $\Uj$-isomorphism.
\end{proof}
 
We have the following generalization of Schur-Jimbo duality,
which is a counterpart of Theorem~\ref{thm:SchurB}.
The proof is almost identical as the one for Theorem~\ref{thm:SchurB},
and for Part~(1) we now use Lemma~\ref{int:lem:T} and 
the action \eqref{int:eq:HBm} of $H_0\in \mathcal{H}_{B_m}$ on $\VV^{\otimes m}$. 

\begin{thm}  [$(\Uj, \mc{H}_{B_m})$-duality]
  \label{int:thm:SchurB}
\begin{enumerate}
\item
The action of $\mc{T}^{-1} \otimes \id^{m-1}$ coincides with the action of 
$H_0 \in \mathcal{H}_{B_m}$ on $\VV^{\otimes m}$. 

\item
The actions of $\Uj$ and $\mathcal{H}_{B_m}$ on $\VV^{\otimes m}$ 
commute with each other, and they form double centralizers. 
\end{enumerate}
\end{thm} 
  
\begin{definition}\label{def:antidominant2}
An element $f \in I^m$ is anti-dominant (or $\jmath$-anti-dominant) if 
$$0 \leq  f(1) \leq f(2) \leq \cdots \leq f(m).
$$
\end{definition}
The following is the counterpart of Theorem~\ref{thm:samebar}. 

\begin{thm}\label{int:thm:samebar}
There exists an anti-linear involution $\Cbar: \VV^{\otimes m} \rightarrow \VV^{\otimes m}$
which is compatible with both the bar involution of $\HBm$ and the bar involution of $\Uj$; 
that is, for all $v \in \VV^{\otimes m}$,  $H_\sigma \in \HBm$, and $u \in \Uj$, we have 
\[
\Cbar(u v H_\sigma) = \Cbar(u) \, \Cbar(v) \ov{H}_\sigma.
\]
Such a bar involution  is unique
by requiring $\Cbar(M_f) = M_f$ for all $\jmath$-anti-dominant $f$.

\end{thm}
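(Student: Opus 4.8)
The plan is to transcribe the proof of Theorem~\ref{thm:samebar} almost verbatim, replacing $(\bun,\imath,\Bbar,\ThetaB)$ by $(\cun,\jmath,\Cbar,\ThetaC)$ and each intermediate result by its $\jmath$-counterpart recorded in this section. First I would construct the anti-linear involution $\Cbar$ on $\VV^{\otimes m}$ from the $\cun$-side: since $\VV$ is an involutive $\U$-module, the general construction of \S\ref{int:subsec:CBandDCB} (Proposition~\ref{int:prop:compatibleBbar} iterated via Proposition~\ref{int:prop:barinThetaB}) makes $\VV^{\otimes m}$ a $\jmath$-involutive $\cun$-module, so that compatibility of $\Cbar$ with the bar involution of $\cun$ is automatic. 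On the other side, the $\HBm$-module $\VV^{\otimes m}$ is a direct sum of permutation modules $\HBm/\mc H_J$, which yields a unique anti-linear involution $\Cbar'$ on $\VV^{\otimes m}$ characterized by $\Cbar'(M_f)=M_f$ for $\jmath$-anti-dominant $f$ and $\Cbar'(M_g H_\sigma)=\Cbar'(M_g)\ov{H}_\sigma$ for all $g\in I^m$, $\sigma\in W_{B_m}$. The theorem, including its uniqueness clause, follows once $\Cbar=\Cbar'$ is established.

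To prove $\Cbar=\Cbar'$ I would verify that $\Cbar$ satisfies the two defining properties of $\Cbar'$. For the first, Theorem~\ref{int:thm:Upsilon} (giving $\Upsilon=\sum_\mu\Upsilon_\mu$ with $\Upsilon_\mu\in\U^-_{-\mu}$, $\Upsilon_\mu=0$ unless $\mu^\inv=\mu$) produces, exactly as in the $\bun$ case, an expansion $\Upsilon(M_f)=M_f+\sum_{g\prec f}c_gM_g$ relative to the partial order $g\preceq f\Leftrightarrow\wtC(g)=\wtC(f)$ and $\wtA(f)-\wtA(g)\in\N\Pi$; hence $\Upsilon(M_f)=M_f$ when $f$ is $\jmath$-anti-dominant, so $\Cbar(M_f)=\Upsilon\Abar(M_f)=M_f$. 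For the second property it suffices to take $\sigma\in\{s_1,\dots,s_{m-1},s_0\}$. The generators $H_i$ with $i\ge 1$ are handled via the type~$A$ compatibility $\Abar(M_gH_i)=M_g\ov{H}_i$ together with Proposition~\ref{int:prop:compatibleBbar}, as in the proof of Theorem~\ref{thm:samebar}. For $H_0$ I would first settle $m=1$: by \eqref{eq:mcTtos0C} and Lemma~\ref{int:lem:T} one has $\widetilde\zeta\circ T_{w_0}(v_a)=v_{a\cdot s_0}$ for $a\ne 0$, and combining this with the fact that $\mc{T}$ acts on $\VV$ as $H_0^{-1}$ (Theorem~\ref{int:thm:SchurB}(1) for $m=1$) gives $\Cbar(v_a)=\Upsilon(v_a)=v_a=\Cbar'(v_a)$ for $a>0$ and $\Cbar(v_a)=\mc{T}(v_{a\cdot s_0})=v_{a\cdot s_0}H_0^{-1}=\Cbar'(v_a)$ for $a<0$; the one point absent from the $\bun$ case is the middle vector $v_0$, where $\Cbar(v_0)=\Upsilon(v_0)=v_0$ (no nonzero $\Upsilon_\mu$ with $\mu^\inv=\mu$ can shift the weight $\varepsilon_0$ inside $\VV$) and, using $\widetilde\zeta\circ T_{w_0}(v_0)=qv_0$ from \eqref{eq:mcTtos0C}, the Hecke relation $H_0^{-1}=H_0-(q^{-1}-q)$ and \eqref{int:eq:HBm}, one finds $\Cbar(v_0H_0)=\Cbar(q^{-1}v_0)=qv_0=v_0H_0^{-1}=\Cbar(v_0)\ov{H}_0$. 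Finally, for general $m$ and $\sigma=s_0$, I would write $M_g=v_{g(1)}\otimes M_{g'}$ and chain Proposition~\ref{int:prop:barinThetaB} (to split $\Cbar$ off the first factor), the $m=1$ case (to replace $\Cbar(v_{g(1)}H_0)$ by $\Cbar(v_{g(1)})\ov{H}_0$), Theorem~\ref{int:thm:SchurB} (so that $\ov{H}_0$ commutes past $\ThetaC$, since $H_0$ acts via the $\cun$-linear map $\mc{T}^{-1}$ on the first factor), and Proposition~\ref{int:prop:barinThetaB} again (to reassemble), obtaining $\Cbar(M_gH_0)=\Cbar(M_g)\ov{H}_0$.

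I do not expect any single deep obstacle; the difficulty is the careful bookkeeping of the two parallel bar structures, and the only genuinely new feature relative to Theorem~\ref{thm:samebar} is the zero index $0\in I$, which forces the extra cases $f(1)=0$ in the $\HBm$-action \eqref{int:eq:HBm} and $a=0$ in \eqref{eq:mcTtos0C}. These are reconciled exactly as sketched above, using $\ov{H}_0=H_0^{-1}$ and the weight constraint $\mu^\inv=\mu$ on $\Upsilon$. Everything else is a line-by-line copy of the argument in Section~\ref{sec:HeckeB}, invoking the $\jmath$-versions of the supporting statements already proved earlier in Section~\ref{sec:QSPc}.
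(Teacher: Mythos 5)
Your proposal is correct and, given that the paper explicitly omits the proof of this theorem as a routine $\jmath$-analogue of Theorem~\ref{thm:samebar}, it is precisely the intended argument. You have correctly identified the only genuinely new feature — the weight-$\varepsilon_0$ vector $v_0$ in $\VV$ — and verified both that $\Upsilon(v_0)=v_0$ (via the $\mu^\inv=\mu$ weight constraint) and that $\Cbar(v_0H_0)=\Cbar(v_0)\ov{H}_0$ is consistent with the $f(1)=0$ case of \eqref{int:eq:HBm} and the quadratic relation $\ov{H}_0=H_0-(q^{-1}-q)$; everything else transcribes from the $\bun$ case via the $\jmath$-counterparts (Propositions~\ref{int:prop:compatibleBbar}, \ref{int:prop:barinThetaB}, Theorems~\ref{int:thm:Upsilon}, \ref{int:thm:SchurB}, Lemma~\ref{int:lem:T}, \eqref{eq:mcTtos0C}) exactly as you describe.
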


\newpage

\part{Representation theory}

In this Part~2, we shall focus on the infinite-rank limit ($r \to \infty$) 
of the algebras and spaces formulated in Part~ 1. 
In  Chapter~\ref{sec:BGG} through Chapter~ \ref{sec:b-KL}
we will mainly treat in detail the counterparts of Chapter~\ref{sec:qsp} through Chapter~\ref{sec:HeckeB}
where $\U$ was of type $A_{2r+1}$ in Part~1. 
In Chapter~\ref{sec:BGGj} we deal with a variation of BGG category with half-integer weights 
which corresponds to the second quantum symmetric pair $(\U, \Uj)$ in Chapter~\ref{sec:QSPc} 
where $\U$ was of type $A_{2r}$. 

As it becomes necessary to keep track of the finite ranks, we shall add subscripts and superscripts to various notation 
introduced in Part~1 to indicate the
dependence on $r\in \N$. Here are the new notations in place of those in Part~1 without superscripts/subscripts
(Chapter~\ref{sec:qsp} through Chapter~\ref{sec:HeckeB}):

 $\La_{2r+1}$, $\Pi_{2r+1}$, $\I_{2r+1}$, $\I^{\imath}_r$,
 $\U_{2r+1}$, $\U^\imath_r$, $\Upsilon^{(r)}$, $\VV_r$, $\WW_r$, $\Abar^{(r)}$, $\Bbar^{(r)}$, $\ThetaA^{(r)}$.

Part 2 of this paper follows closely \cite{CLW12} with new input from Part~1. 
The notations here often have different meaning from 
the same notations used in \cite{CLW12}, as the current ones are often ``of type $B$".

\chapter{BGG categories for ortho-symplectic Lie superalgebras}
\label{sec:BGG}

In this chapter, we recall the basics on the ortho-symplectic Lie superalgebras such as linkage principle and
Bruhat ordering. We formulate various versions of (parabolic) BGG categories and truncation functors.

\section{The Lie superalgebra $\mf{osp}(2m+1|2n)$}
   \label{subsec:osp}
   
We recall some basics on ortho-symplectic Lie superalgebras and set up notations to be used later on (cf. \cite{CW12} for 
more on Lie superalgebras). Fix integers $m\ge 1$ and $n\ge 0$ throughout this paper. 

Let $\Z_2 = \{\ov{0}, \ov{1}\}$.
 Let $\C^{2m+1|2n}$ be a superspace of dimension $(2m+1|2n)$ with basis 
 $\{e_i \mid 1 \leq i \leq 2m+1\} \cup \{e_{\ov j} \mid 1 \leq j \leq 2n\}$, 
 where the $\Z_2$-grading is given by the following parity function:
\[
p(e_i) = \ov 0, \qquad p(e_{\ov j}) = \ov 1 \quad (\forall i,j).
\] 
Let $B$ be a non-degenerate even supersymmetric bilinear form on $\C^{2m+1|2n}$.
The general linear Lie superalgebra $\mf{gl}(2m+1|2n)$ is the Lie superalgebra of linear transformations on $\C^{2m+1|2n}$
(in matrix form with respect to the above basis).
 For $s \in \Z_2$, we define
\begin{align*}
\osp(2m+1|2n)_s &:= \{ g \in \mf{gl}(2m+1|2n)_s \mid  
  \\
  &\qquad\qquad\qquad  B(g(x), y) = -(-1)^{s \cdot p(x)}B(x, g(y))\},\\
\osp(2m+1|2n) &:=\osp(2m+1|2n)_{\ov 0} \oplus \osp(2m+1|2n)_{\ov 1}.
\end{align*}

We now give a matrix realization of the Lie superalgebra $\osp(2m+1|2n)$. Take the supersymmetric bilinear form $B$  
with the following matrix form, with respect to the basis $(e_1, e_2, \dots, e_{2m+1}, e_{\ov 1}, e_{\ov 2}, \dots, e_{\ov{2n}})$: 
\[\mc J_{2m+1|2n} :=
\begin{pmatrix}
0 & I_m & 0 & 0& 0\\
I_m & 0 & 0 & 0 & 0\\
0 & 0 & 1 & 0 & 0 \\
0 & 0 & 0 & 0 & I^n\\
0 & 0 & 0 & -I^n & 0
\end{pmatrix}
\]

 Let $E_{i,j}$, $1 \leq i,j \leq 2m+1$, and $E_{\ov k , \ov h}$, $ 1 \leq k,h \leq 2n$, be the $(i,j)$th and 
 $(\ov k, \ov h)$th elementary matrices, respectively. The Cartan subalgebra of $\osp(2m+1|2n)$ 
 of diagonal matrices is denoted by $\mf h_{m|n}$, which is spanned by
\begin{align*}
&H_i := E_{i,i}-E_{m+i,m+i}, \quad  1 \leq i \leq m,\\
&H_{\ov j} :=E_{\ov j, \ov j} - E_{\ov{n+j}, \ov{n+j}}, \quad 1 \leq j \leq n.
\end{align*}
We denote by $\{\ep_i, \ep_{\ov j} \mid 1 \leq i \leq m, 1 \leq j \leq n \}$ the basis of $\mf h^*_{m|n}$ such that
\[
\ep_{a}(H_b) = \delta_{a,b}, \quad \text{ for  } a, b \in \{i, \ov j \mid 1 \leq i \leq m, 1 \leq j \leq n\}.
\]
We denote the lattice of integral weights of $\osp(2m+1|2n)$ by 
\begin{equation}  \label{eq:Xmn}
X(m|n) := \sum^{m}_{i=1}\Z\ep_{i} + \sum^n_{j=1}\Z\ep_{\ov j}.
\end{equation}
The supertrace form on $\osp(2m+1|2n)$ induces a non-degenerate symmetric bilinear form 
on $\mf h^*_{m|n}$ determined by $(\cdot | \cdot)$, such that
\[
(\ep_{i}\vert\ep_{a}) = \delta_{i,a}, \quad (\ep_{\ov j}| \ep_{a}) 
 = -\delta_{\ov j, a}, \quad \text{ for  } a \in   \{i, \ov j \mid 1 \leq i \leq m, 1 \leq j \leq n\}.
\]
We have the following root system of $\osp(2m+1|2n)$ with respect to $\mf h_{m|n}$
\[\Phi = \Phi_{\ov 0} \cup \Phi_{\ov 1} 
= \{\pm\ep_{i}\pm\ep_{j}, \pm\ep_{p}, \pm\ep_{\ov k}\pm\ep_{\ov l}, \pm2\ep_{\ov q}\} \cup \{\pm\ep_{p}\pm\ep_{\ov q}, \pm\ep_{\ov q}\},
\]
where $1 \leq i < j \leq n$, $1 \leq p \leq n$, $1\leq q \leq m$, $1 \leq k < l\leq m$. 

In this paper we shall need to deal with various Borel subalgebras, hence various simple systems of $\Phi$. 
Let ${\bf b}=(b_1,b_2,\ldots,b_{m+n})$ be a
sequence of $m+n$ integers such that $m$ of the $b_i$'s are equal to
${0}$ and $n$ of them are equal to ${1}$. We call such a sequence a
{\em $0^m1^n$-sequence}. 
Associated to each $0^m1^n$-sequence ${\bf b} =(b_1, \ldots, b_{m+n})$, 
we have the following fundamental system  $\Pi_{\bf {b}}$, and hence a positive system 
$\Phi_{\bf b}^+ =\Phi_{{\bf b},\bar{0}}^+ \cup \Phi_{{\bf b},\bar{1}}^+$, of the root system $\Phi$ of $\mathfrak{osp}(2m+1|2n)$:
\[
\Pi_{\bf {b}} = \{-\ep^{b_1}_1, \ep^{b_i}_i -\ep^{b_{i+1}}_{i+1} \mid 1 \leq i \leq m+n-1\},
\]
where $\ep^{0}_i = \ep_{x}$ for some $1 \leq x \leq m$, $\ep^1_{j} = \ep_{\ov y}$ 
for some $1 \leq y \leq n$, such that $\ep_{x} -\ep_{x+1}$ and $\ep_{\ov y} - \ep_{\ov{y+1}}$ 
are always positive. It is clear that $\Pi_{\bf b}$ is uniquely determined by these restrictions.
The Weyl vector is defined to be
$\rho_{\bf b}:= \hf \sum_{\alpha \in \Phi^+_{{\bf b}, \bar{0}}} \alpha -\hf \sum_{\alpha \in \Phi^+_{{\bf b}, \bar{1}}} \alpha$.

Corresponding to ${\bf b}^{\text{st}} =(0,\ldots, 0,1,\ldots,1)$, we have the following standard 
Dynkin diagram associated to $\Pi_{{\bf b}^{\text{st}}}$:

\begin{center}
\hskip -3cm \setlength{\unitlength}{0.16in}
\begin{picture}(24,4)
\put(5.6,2){\makebox(0,0)[c]{$\bigcirc$}}
\put(8,2){\makebox(0,0)[c]{$\bigcirc$}}
\put(10.4,2){\makebox(0,0)[c]{$\bigcirc$}}
\put(14.85,2){\makebox(0,0)[c]{$\bigotimes$}}
\put(17.25,2){\makebox(0,0)[c]{$\bigcirc$}}
\put(19.4,2){\makebox(0,0)[c]{$\bigcirc$}}
\put(23.5,2){\makebox(0,0)[c]{$\bigcirc$}}
\put(8.35,2){\line(1,0){1.5}} \put(10.82,2){\line(1,0){0.8}}
\put(13.2,2){\line(1,0){1.2}} \put(15.28,2){\line(1,0){1.45}}
\put(17.7,2){\line(1,0){1.25}} \put(19.81,2){\line(1,0){0.9}}
\put(22,2){\line(1,0){1}}
\put(6.8,2){\makebox(0,0)[c]{$\Longleftarrow$}}
\put(12.5,1.95){\makebox(0,0)[c]{$\cdots$}}
\put(21.5,1.95){\makebox(0,0)[c]{$\cdots$}}
\put(5.4,1){\makebox(0,0)[c]{\tiny $-\epsilon_{1}$}}
\put(7.8,1){\makebox(0,0)[c]{\tiny $\epsilon_{1} -\epsilon_{2}$}}
\put(14.7,1){\makebox(0,0)[c]{\tiny $\epsilon_{m} -\epsilon_{\bar{1}}$}}
\put(17.15,1){\makebox(0,0)[c]{\tiny $\epsilon_{\bar{1}} -\epsilon_{\bar{2}}$}}
\put(23.5,1){\makebox(0,0)[c]{\tiny $\epsilon_{\ov{n-1}} -\epsilon_{\bar{n}}$}}
\end{picture}
\end{center}
As usual, $\bigotimes$ stands for an isotropic simple odd root, $\bigcirc$ stands for an even simple root,
and $\bullet$ stands for a non-isotropic odd simple root. A direct computation shows that
\begin{equation}  \label{eq:rhobst}
\rho_{{\bf b}^{\text{st}}} = -\hf \epsilon_1 -\frac32 \epsilon_2 -\ldots -(m-\hf) \epsilon_m
+ (m-\hf) \epsilon_{\bar{1}} 
+\ldots + (m-n+\hf) \epsilon_{\bar{n}}.
\end{equation}

More generally, associated to a sequence $\bf b$ which starts with $0$ is a Dynkin diagram which always starts on the left with a short
{\em even} simple root:

\begin{center}
\hskip -3cm \setlength{\unitlength}{0.16in}
\begin{picture}(24,4)
\put(.6,2){\makebox(0,0)[c]{($\star$)}}
\put(5.6,2){\makebox(0,0)[c]{$\bigcirc$}}
\put(8,2){\makebox(0,0)[c]{$\bigodot$}}
\put(10.4,2){\makebox(0,0)[c]{$\bigodot$}}
\put(14.85,2){\makebox(0,0)[c]{$\bigodot$}}
\put(17.25,2){\makebox(0,0)[c]{$\bigodot$}}
\put(19.4,2){\makebox(0,0)[c]{$\bigodot$}}
\put(23.5,2){\makebox(0,0)[c]{$\bigodot$}}
\put(8.35,2){\line(1,0){1.5}} \put(10.82,2){\line(1,0){0.8}}
\put(13.2,2){\line(1,0){1.2}} \put(15.28,2){\line(1,0){1.45}}
\put(17.7,2){\line(1,0){1.25}} \put(19.81,2){\line(1,0){0.9}}
\put(22,2){\line(1,0){1}}
\put(6.8,2){\makebox(0,0)[c]{$\Longleftarrow$}}
\put(12.5,1.95){\makebox(0,0)[c]{$\cdots$}}
\put(21.5,1.95){\makebox(0,0)[c]{$\cdots$}}
\put(5.4,1){\makebox(0,0)[c]{\tiny $-\epsilon_{1}$}}
\end{picture}
\end{center}
Here $\bigodot$ stands for either $\bigotimes$ or $\bigcirc$ depending on $\bf b$.
Associated to a sequence $\bf b$ which starts with $1$ is a Dynkin diagram which always starts on the left with a  
non-isotropic {\em odd} simple root:

\begin{center}
\hskip -3cm \setlength{\unitlength}{0.16in}
\begin{picture}(24,4)
\put(.6,2){\makebox(0,0)[c]{($\star\star$)}}
\put(5.9,2){\makebox(0,0)[c]{$\bullet$}}
\put(8,2){\makebox(0,0)[c]{$\bigodot$}}
\put(10.4,2){\makebox(0,0)[c]{$\bigodot$}}
\put(14.85,2){\makebox(0,0)[c]{$\bigodot$}}
\put(17.25,2){\makebox(0,0)[c]{$\bigodot$}}
\put(19.4,2){\makebox(0,0)[c]{$\bigodot$}}
\put(23.5,2){\makebox(0,0)[c]{$\bigodot$}}
\put(8.35,2){\line(1,0){1.5}} \put(10.82,2){\line(1,0){0.8}}
\put(13.2,2){\line(1,0){1.2}} \put(15.28,2){\line(1,0){1.45}}
\put(17.7,2){\line(1,0){1.25}} \put(19.81,2){\line(1,0){0.9}}
\put(22,2){\line(1,0){1}}
\put(6.8,2){\makebox(0,0)[c]{$\Longleftarrow$}}
\put(12.5,1.95){\makebox(0,0)[c]{$\cdots$}}
\put(21.5,1.95){\makebox(0,0)[c]{$\cdots$}}
\put(5.4,1){\makebox(0,0)[c]{\tiny $-\epsilon_{\bar{1}}$}}
\end{picture}
\end{center}

\begin{rem}  \label{rem:rhob}
For general $\bf b$, one checks that $\rho_{\bf b}$ has a summand $(m-n+\hf) \epsilon_{\bar{n}}$ as for 
$\rho_{{\bf b}^{\text{st}}}$ in \eqref{eq:rhobst} if the Dynkin diagram associated to $\bf b$ has
$\bigcirc$ as its rightmost node, and that $\rho_{\bf b}$ has a summand $(m-n-\hf) \epsilon_{\bar{n}}$
if the Dynkin diagram associated to $\bf b$ has
$\bigotimes$ as its rightmost node.
\end{rem}

Now we can write the non-degenerate symmetric bilinear form on $\Phi$ as follows:
\[
(\ep^{b_i}_i | \ep^{b_j}_j) = (-1)^{b_i} \delta_{ij}, \quad \quad \quad 1 \leq i, j \leq m+n.
\] 
We define $\mathfrak{n}_{\bf b}^\pm$ to be the nilpotent subalgebra spanned by
the positive/negative root vectors in $\osp(2m+1|2n)$. 
Then we obtain a triangular decomposition of $\osp(2m+1|2n)$:
\[
\osp(2m+1|2n) = \mathfrak{n}_{\bf b}^+ \oplus \mathfrak{h}_{m|n} \oplus \mathfrak{n}_{\bf b}^-,
\]
with $\mathfrak{n}_{\bf b}^+ \oplus \mathfrak{h}_{m|n}$ as a Borel subalgebra. 

Fix a $0^m1^n$-sequence ${\bf b}$ and hence a positve system $\Phi^+_{\bf b}$. 
We denote by $Z(\osp(2m+1|2n))$ the center of the enveloping algebra $U(\osp(2m+1|2n))$. 
There exists a standard projection $\phi: U(\osp(2m+1|2n)) \rightarrow U(\mf h_{m|n})$ which is
consistent with the PBW basis associated to the above triangular decomposition 
(\cite[\S 2.2.3]{CW12}). For $\lambda \in \mf h^*_{m|n}$, we define the central character $\chi_{\lambda}$ by letting 
$$
\chi_\lambda(z) :=\lambda(\phi(z)),\quad  \text{ for }z \in Z(\osp(2m+1|2n)).
$$
Denote the Weyl group of (the even subalgebra of) $\osp(2m+1|2n)$ by $W_{\osp}$,
which is isomorphic to $(\Z_{2} \rtimes \mf{S}_{m}) \times (\Z_{2} \rtimes \mf{S}_{n})$. 
Then for $\mu$, $\nu \in \mf h^*_{m|n}$, we say $\mu$, $\nu$ are linked and denote it by $\mu \sim \nu$, 
if there exist mutually orthogonal isotropic odd roots $\alpha_1, \alpha_2, \dots, \alpha_l$, 
complex numbers $c_1, c_2, \dots, c_l$, and an element $w \in W_{\osp}$ satisfying 
\[
\mu + \rho_{\bf b}= w(\nu +\rho_{\bf b} - \sum_{i=1}^{l}c_i\alpha_i), \quad (\nu + \rho_{\bf b}|  \alpha_j)= 0, \quad j=1 \dots, l.
\]
It is clear that $\sim$ is an equivalent relation on $\mf h^*_{m|n}$.  Versions of the following basic fact went back
to Kac, Sergeev, and others.
\begin{prop}
  \cite[Theorem 2.30]{CW12}
Let $\lambda$, $\mu \in \mf h^*_{m|n}$. Then $\lambda$ is linked to $\mu$ if and only if $\chi_{\lambda} = \chi_{\mu}$.
\end{prop}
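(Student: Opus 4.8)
The plan is to prove both implications through the (super) Harish--Chandra homomorphism. Fix a $0^m1^n$-sequence ${\bf b}$ and introduce the shifted projection $\psi_{\bf b}\colon Z(\osp(2m+1|2n))\to S(\mf h_{m|n})$ determined by $\psi_{\bf b}(z)(\xi):=\phi(z)(\xi-\rho_{\bf b})$, so that $\chi_\lambda(z)=\psi_{\bf b}(z)(\lambda+\rho_{\bf b})$ for every $\lambda\in\mf h^*_{m|n}$. The first step is to show that $\psi_{\bf b}$ is independent of the choice of ${\bf b}$: Dynkin diagrams of adjacent $0^m1^n$-sequences differ by a single odd reflection, and a direct rank-one computation with the subalgebra attached to the reflected isotropic root shows $\psi_{\bf b}=\psi_{{\bf b}'}$; since any two $0^m1^n$-sequences are connected by a chain of adjacent ones, all $\psi_{\bf b}$ agree. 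Write $\psi$ for this common map.

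The second step is to locate the image of $\psi$ inside the algebra
\[
I:=\bigl\{\,f\in S(\mf h_{m|n})^{W_{\osp}} \;:\; f(\xi+t\alpha)=f(\xi)\ \text{whenever}\ (\xi\,|\,\alpha)=0,\ \text{for every isotropic odd root }\alpha\ \text{and all }t\in\C\,\bigr\}.
\]
The $W_{\osp}$-invariance of $\psi(z)$ follows from the classical Harish--Chandra argument applied to the $\mathfrak{sl}_2$- (resp.\ $\osp(1|2)$-) triples attached to the even (resp.\ non-isotropic odd) simple roots of the various Borels, since $W_{\osp}$ is generated by the associated reflections; invariance along an isotropic direction $\alpha$ follows, using the Borel-independence just established, by a computation in the $\mathfrak{sl}(1|1)$-subalgebra generated by the root vectors of $\alpha$ in a Borel where $\alpha$ is simple. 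Granting $\psi(Z)\subseteq I$, the ``if'' direction is immediate: if $\lambda\sim\mu$, so that $\mu+\rho_{\bf b}=w(\lambda+\rho_{\bf b}-\sum_i c_i\alpha_i)$ with the $\alpha_i$ mutually orthogonal isotropic odd roots orthogonal to $\lambda+\rho_{\bf b}$, then $f(\lambda+\rho_{\bf b})=f(\mu+\rho_{\bf b})$ for all $f\in I$ by applying the defining relations of $I$ one isotropic direction at a time and then $W_{\osp}$-invariance, and hence $\chi_\lambda=\chi_\mu$.

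For the ``only if'' direction the key external input is that $\psi(Z)$ is in fact \emph{equal} to $I$ (the Kac--Sergeev description of the Harish--Chandra image for basic Lie superalgebras). Granting this, the problem becomes the purely combinatorial statement that $I$ separates linkage classes, i.e.\ that $f(\lambda+\rho_{\bf b})=f(\mu+\rho_{\bf b})$ for all $f\in I$ forces $\lambda\sim\mu$. I would argue by induction on the degree of atypicality $\#\{\alpha\ \text{isotropic odd} : (\lambda+\rho_{\bf b}\,|\,\alpha)=0\}$. In the typical base case the relations defining $I$ are locally vacuous, $I$ coincides near such a point with $S(\mf h_{m|n})^{W_{\osp}}$, and a standard argument shows this ring separates the $W_{\osp}$-orbit of a typical point from any other point. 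In the atypical case one uses the freedom to shift $\lambda$ and $\mu$ along orthogonal isotropic directions (which by the ``if'' direction alters neither the central character nor the linkage class) together with $W_{\osp}$ to move both weights into canonical positions of strictly smaller atypicality inside a suitable sub-root-system, and then applies the inductive hypothesis.

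The main obstacle is the ``only if'' direction, and within it the two ingredients just isolated: the identification $\psi(Z)=I$ (as opposed to the easy inclusion $\subseteq$), and the separation of linkage classes by $I$ in the genuinely atypical case; the ``if'' direction, the Borel-independence of $\psi$, and all the reductions to rank-one subalgebras are routine once set up.
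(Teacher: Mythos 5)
The paper itself offers no proof of this proposition: it is cited verbatim as \cite[Theorem~2.30]{CW12} (with the remark that versions go back to Kac and Sergeev), so there is no in-paper argument for you to be compared against. Your outline does follow the standard route used in \cite{CW12} and in the literature it draws on: introduce the $\rho$-shifted Harish--Chandra projection, prove Borel-independence via odd reflections, establish the inclusion of the image in the ring $I$ of $W_{\osp}$-invariants with the extra translation-invariance along orthogonal isotropic directions, and then deduce ``$\sim$ implies equal central character'' by applying the defining conditions of $I$ one isotropic shift at a time (the mutual orthogonality of the $\alpha_i$'s is exactly what lets this chain of shifts go through). This part is correct and is essentially what the cited source does.

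Where your sketch is genuinely thin is the converse, and you are honest about this. Two caveats are worth flagging. First, the base (typical) case is not handled correctly as phrased: ``$I$ coincides near such a point with $S(\mf h_{m|n})^{W_{\osp}}$'' is not a meaningful statement for two global subrings of a polynomial ring, and what is actually needed is that $I$ separates the $W_{\osp}$-orbit of a \emph{typical} $\lambda+\rho_{\bf b}$ from any other orbit of the same central character; since $I\subsetneq S(\mf h_{m|n})^{W_{\osp}}$ this does not simply follow from finite-group invariant theory, and one needs either explicit invariants or a Shapovalov-determinant/typicality argument. Second, the identification $\psi(Z)=I$ is the deep input (Kac/Sergeev/Gorelik for basic classical types) and your induction on atypicality would need to be carried out carefully so that the ``canonical positions of smaller atypicality'' you move to really do lie in a sub-root-system to which the inductive hypothesis applies. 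In short: the skeleton is right and matches the standard proof, but the two places you flagged as obstacles are exactly the places where the argument is not yet a proof.
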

We define the {\em Bruhat ordering} $\preceq_{\bf b}$ on $\mf h^*_{m|n}$ and hence on $X(m|n)$  as follows:
\begin{equation}  \label{eq:BrX}
\la \preceq_{\bf b} \mu \Leftrightarrow \mu -\la \in \N \Pi_{\bf b} \text{ and } \la \sim \mu, 
\qquad \text{ for } \la, \mu \in \mf h^*_{m|n}.
\end{equation}

\section{Infinite-rank Lie superalgebras}
  \label{subsection:infinite rank Lie superalgebra}

We shall define the infinite-rank Lie superalgebras $\osp(2m+1|2n+\infty)$ and $\osp(2m+1|2n|\infty)$. 
Define the sets 
\begin{align*}
&\widetilde{\mathbb{J}} :=\{1, 2 , \dots, 2m+1, \ov 1, \ov 2, \dots, \ov{2n}\}
 \cup\Big\{ {\small \ul \hf , \ul 1, \ul {\frac{3}{2}}, \dots} \Big \}
 \cup \Big \{ \ul \hf' , \ul 1', \ul {\frac{3}{2}}', \dots \Big \},
\\
&\mathbb{J} := \{1, 2 , \dots, 2m+1, \ov 1, \ov 2, \dots, \ov{2n}\}\cup\{ \ul 1, \ul2, \dots\}\cup \{\ul 1', \ul2', \dots\},
\\
&\breve{\mathbb{J}} :=\{1, 2 , \dots, 2m+1, \ov 1, \ov 2, \dots, \ov{2n}\}
 \cup \Big\{\ul \hf, \ul{\frac{3}{2} }, \dots \Big\}
 \cup \Big\{\ul \hf', \ul{\frac{3}{2} }', \dots \Big\}.
\end{align*}
Let $\widetilde V$ be the infinite-dimensional superspace over $\C$ with the basis $\{e_i \mid i \in \widetilde{\mathbb{J}}\}$, 
whose $\Z_2$-grading is specified as follows:
\begin{align*}
&p(e_i) = \ov 0 ~ (1 \leq i \leq 2m+1), && p(e_{\ov j}) = \ov 1~ (1 \leq j \leq 2n), 
\\
&p(e_{\ul s'})=p(e_{\ul s}) = \ov 0 ~(s \in \Z_{>0}),&& p(e_{\ul t'}) =p(e_{\ul t}) = \ov 1 ~(t \in \hf +\N).
\end{align*}
With respect to this basis, a linear map on $\widetilde{V}$ may be identified with a complex matrix 
$(a_{rs})_{r,s \in \widetilde{\mathbb{J}}}$. Let $\mf {gl}(\widetilde V)$ be the Lie superalgebra 
consisting of $(a_{rs})_{r,s \in \widetilde{\mathbb{J}}}$ with $a_{rs} = 0 $ for almost all but finitely many $a_{rs}$'s. 
The standard Cartan subalgebra of $\mf {gl}(\widetilde V)$ is spanned by $\{E_{rr} \mid r \in \widetilde{\mathbb{J}}\}$, 
with dual basis $\{\ep_r \mid r \in \widetilde{\mathbb{J}}\}$.
The superspaces $V$ and $\breve V$ are defined to be the subspaces of $\widetilde{V}$ with basis $\{e_i\}$ 
indexed by $\mathbb{J}$ and $\breve{\mathbb{J}}$ respectively. Similarly we can define $\mf {gl}(V)$ and $\mf{gl}(\breve V)$.

Recall the supersymmetric non-degenerate bilinear form $B$ define in \S \ref{subsec:osp}. 
We can easily identify $\C^{2m+1|2n}$ as a subspace of $\widetilde V$.  
Define a supersymmetric non-degenerate bilinear form $\tilde B$ on $\widetilde V$ by
\begin{align*}
&\tilde{B}(e_s, e_t) = B(e_s, e_t), && \tilde B(e_s, e_{\ul x}) =  \tilde B(e_s, e_{\ul x'}) = 0,\\
&\tilde B(e_{\ul x}, e_{\ul y})=\tilde B(e_{\ul x'}, e_{\ul y'})  = 0, && \tilde B(e_{\ul x}, e_{\ul y'}) = \delta_{x,y} = (-1)^{p(e_{\ul x}) p(e_{\ul y'})}  \tilde B(e_{\ul y'}, e_{\ul x}),
\end{align*}
where $s, t \in \{i, \ov j \mid 1 \leq i \leq 2m+1, 1 \leq j \leq 2n\}$ and $x, y \in \{\ul \hf , \ul 1, \ul {\frac{3}{2}} \dots\}$. 
By restriction, we can obtain a supersymmetric non-degenerate bilinear form on $V$ and $\breve V$. 

Following \S \ref{subsec:osp}, we define $\osp(V)$ and $\osp(\breve V)$ to be the subalgebra of $\mf {gl}(V)$ 
and $\mf{gl}(\breve V)$ preserving the bilinear forms,  respectively. 
With respect to the standard basis of $V$ and $\breve V$, we identify
\begin{align*}
\ospV = \osp(V),\qquad
\ospW = \osp(\breve V). 
\end{align*}
The standard Cartan subalgebras of $\ospV$ and $\ospW$ are obtained by taking the intersection of the standard 
Cartan subalgebra of $\gl(\widetilde{V})$ with $\ospV$ and $\ospW$, respectively,
which are denoted by $\h_{m|n|\infty}$ and $\h_{m|n+\infty}$. 
For any $0^m1^n$-sequence ${\bf b}$, we assign the following simple system to the Lie superalgebra $\ospV$:
\[ 
\Pi_{{\bf b}, 0} :=\{-\ep^{b_1}_{1}, \ep^{b_i}_{i}-\ep^{b_{i+1}}_{i+1}, \ep^{b_{m+n}}_{m+n}-\ep^0_{\ul 1}, 
\ep^0_{\ul j}-\ep^0_{\ul {j+1}} \mid 1 \leq i \leq m+n-1, 1 \leq j \}.
\]
Similarly, we assign the following simple system to $\ospW$:
\[\Pi_{{\bf b}, 1} :=\{-\ep^{b_1}_{1}, \ep^{b_i}_{i}-\ep^{b_{i+1}}_{i+1}, \ep^{b_{m+n}}_{m+n}-\ep^1_{\ul 1},  
 \ep^1_{\ul j}-\ep^1_{\ul {j+1}} \mid 1 \leq i \leq m+n-1, 1 \leq j\}.
\] 
The $\ep^{b_i}_{i}$'s are defined in the same way as in \S \ref{subsec:osp} and
it is understood that $\ep^1_{\ul j} := \ep_{\ul {j-\hf}}$, $\ep^0_{\ul j} :=\ep_{\ul j}$ for any $1 \le j$. 
We introduce the following formal symbols:
\begin{align*}
\epinftyV := \sum_{j \geq 1}\ep^0_{\ul j},\qquad\quad 
\epinftyW :=\sum_{j \geq 1}\ep^1_{\ul j}.
\end{align*}
Let $\mc P$ be the set of all partitions. We define 
\begin{align}
  \label{eq:wtlV}
\wtlV :=\{\sum^{m+n}_{i=1}\lambda_i\ep^{b_i}_{i} + \sum_{1 \leq j}{^+\lambda_{\ul j}}\ep^0_{\ul j} 
+ d\epinftyV \mid d, \lambda_i \in \Z, ({^+\lambda_{\ul 1}}, {^+\lambda_{\ul 2}}, \dots) \in \mc P\},
\\
  \label{eq:wtlW}
\wtlW := \{\sum^{m+n}_{i=1}\lambda_i\ep^{b_i}_{i} + \sum_{1 \leq j}{^+\lambda_{\ul j}}\ep^1_{\ul j} 
 + d\epinftyW \mid d,\lambda_i \in \Z, ({^+\lambda_{\ul 1}}, {^+\lambda_{\ul 2}}, \dots) \in \mc P\}.
\end{align}

\section{The BGG categories}
\label{subsec:cat}

We shall define various parabolic BGG categories for ortho-symplectic Lie superalgebras.

\begin{definition}
 \label{def:catO}
Let ${\bf b}$ be a $0^m1^n$-sequence. The Bernstein-Gelfand-Gelfand
(BGG) category $\mathcal{O}_{\bf b}$ 
is the category of  
$\h_{m|n}$-semisimple $\mathfrak{osp}(2m+1|2n)$-modules $M$ such that
\begin{itemize}
\item[(i)]
$M=\bigoplus_{\mu\in X(m|n)}M_\mu$ and $\dim M_\mu<\infty$;

\item[(ii)]
there exist finitely many weights ${}^1\la,{}^2\la,\ldots,{}^k\la\in X(m|n)$
(depending on $M$) such that if $\mu$ is a weight in $M$, then
$\mu\in{{}^i\la}-\sum_{\alpha\in{\Pi_{\bf b}}}\N \alpha$, for
some $i$.
\end{itemize}
The morphisms in $\mathcal{O}_{\bf b}$ are all (not necessarily even)
homomorphisms of $\mathfrak{osp}(2m+1|2n)$-modules.
\end{definition}
Similar to \cite[Proposition 6.4]{CLW12}, all these categories $\mc O_{\bf b}$ are identical for various $\bf b$,
since the even subalgebras of the Borel subalgebras 
$\mathfrak{n}_{\bf b}^+ \oplus \mathfrak{h}_{m|n}$ are identical and the odd parts of these Borels 
always act locally nilpotently.

Denote by $M_{\bf b}(\lambda)$ the {\bf b}-Verma modules with highest weight $\lambda$. 
Denote by  $L_{\bf b}(\lambda)$ the unique simple quotient of $M_{\bf b}(\lambda)$. They are both in $\mathcal{O}_{\bf b}$.

It is well known that the Lie superalgebra $\gl(2m+1|2n)$ has an automorphism $\tau$ given by the formula:
\[
\tau(E_{ij}):=-(-1)^{p(i)(p(i)+p(j))}E_{ji}.
\]
The restriction of $\tau$ on $\osp(2m+1|2n)$ gives an automorphism of $\osp(2m+1|2n)$. For an object 
$M = \oplus_{\mu \in X(m|n)}M_\mu \in \mathcal{O}_{\bf b}$, we let
\[
M^{\vee}:=\oplus_{\mu \in X(m|n)}M^*_{\mu}
\]
be the restrictd dual of $M$. We define the action of $\osp(2m+1|2n)$ on $M^{\vee}$ by $(g \cdot f)(x) := -f(\tau(g)\cdot x)$, 
for $ f \in M^\vee, g\in \osp(2m+1|2n)$, and $ x\in M$. We denote the resulting module by $M^\tau$. 

An object $M \in \mathcal{O}_{\bf b}$ is said to have a ${\bf b}$-Verma flag 
(respectively, dual ${\bf b}$-Verma flag), if $M$ has a filtration 
$
0=M_0 \subseteq M_1 \subseteq M_2 \subseteq \dots \subseteq M_t = M,
$
such that $M_i/M_{i-1} \cong M_{\bf b}(\gamma_i), 1 \leq i \leq t$ (respectively, $M_i/M_{i-1} \cong M^\tau_{\bf b}(\gamma_i)$) for some $\gamma_i \in X(m|n)$.

\begin{definition}
  \label{def:tilting}
Associated to each $\lambda \in X(m|n)$, a ${\bf b}$-tilting module $T_{\bf b}(\lambda)$ is an indecomposable 
$\osp(2m+1|2n)$-module in $\mathcal{O}_{\bf b}$ characterized by the following two conditions:
\begin{itemize}
\item[(i)]
$T_{\bf b}(\la)$ has a ${\bf b}$-Verma flag with $M_{\bf b}(\la)$ at
the bottom;
\item[(ii)]
$\text{Ext}^1_{\CatO_{\bf b}}(M_{\bf b}(\mu),T_{\bf b}(\la))=0$, for all
$\mu\in X(m|n)$.
\end{itemize}
\end{definition}

Recall the definition of the infinite-rank Lie superalgebras in \S \ref{subsection:infinite rank Lie superalgebra}.
For a nonempty $0^m1^n$-sequence ${\bf b} =(b_1, b_2, \dots , b_{m+n})$ and $k \in \N \cup \{\infty\}$,  
consider the extended sequence $({\bf b}, 0^k) = (b_1, b_2, \dots , b_{m+n}, 0,  \dots,0)$. 
This sequence corresponds to the following simple system of the Lie superalgebra $\mathfrak{osp}(2m+2k+1|2n)$, which
we shall denote by $\mathfrak{osp}(2m+1|2n|2k)$ throughout this paper to indicate the choice of $\Pi_{({\bf {b}}, 0^k)}$:
\[
\Pi_{({\bf {b}}, 0^k)} = \{-\ep^{b_1}_1, \ep^{b_i}_i -\ep^{b_{i+1}}_{i+1} \mid 1 \leq i \leq m+n+k\}, 
\]
where $b_{i} = 0 \text{ for } i > m+n.$ 
Let $\Pi^{\underline{k}}_{{\bf b},0} = \{\ep^{b_i}_i -\ep^{b_{i+1}}_{i+1} \mid i > m+n \}$. 
Define the following Levi subalgebra and parabolic subalgebra of $\osp(2m+1|2n|2k)$:
\begin{align*}
\mf l^{\underline{k}}_{{\bf b},0} & := \sum_{\alpha \in \Z \Pi^{\underline{k}}_{{\bf b},0}}\osp(2m+1|2n|2k)_\alpha,
\\
\mathfrak{p}^{\underline{k}}_{{\bf b},0} &:=\sum_{\alpha \in \Phi^+_{({\bf b}, 0^k)}\cup \Z \Pi^{\underline{k}}_{{\bf b},0}}\osp(2m+1|2n|2k)_\alpha.
\end{align*}
Let $L_0(\lambda)$ be the irreducible $\mf l^{\underline{k}}_{{\bf b},0}$-module with highest weight $\lambda$. 
It can be extended trivially to a $\mathfrak{p}^{\underline{k}}_{{\bf b},0}$-module. We form the parabolic Verma module
\[
M^{\ul k}_{{\bf b},0}(\lambda) :=\text{Ind}^{\osp(2m+1|2n|2k)}_{\mathfrak{p}^{\underline{k}}_{{\bf b},0}}L_0(\lambda).
\]
 For $k \in \N$, we define
\begin{align}
\wtlVk := \left\{\sum^{m+n}_{i =1}\lambda_{i}\ep^{b_i}_{i} + \sum^k_{j =1} {^+}\lambda_{\ul j}\ep^0_{\ul j} + d\sum_{j=1}^{k} \epsilon^0_{\ul j}
\mid d, \lambda_i \in \Z, (^+\lambda_{\ul 1},^+\lambda_{\ul 2}, \dots ) \in \mc P\right\},\label{align:wtlVk}
\\
\wtlWk := \left\{\sum^{m+n}_{i =1}\lambda_{i}\ep^{b_i}_{i} + \sum^k_{j =1}{^+}\lambda_{\ul j}\ep^1_{\ul j} + d\sum_{j=1}^{k} \epsilon^1_{\ul j}
\mid d, \lambda_i \in \Z, (^+\lambda_{\ul 1},^+\lambda_{\ul 2}, \dots ) \in \mc P\right\}\label{align:wtlWk}.
\end{align}
Recall the definition of $\wtlV$ and $\wtlW$ from \eqref{eq:wtlV}-\eqref{eq:wtlW}.
\begin{definition}

Let ${\bf b}$ be a $0^m1^n$-sequence and $k \in \N \cup \{\infty\}$. Let  $\mathcal{O}^{\ul k}_{{\bf b},0}$ be the category of  
$\h_{m|n|k}$-semisimple $\mathfrak{osp}(2m+1|2n|2k)$-modules $M$ such that
\begin{itemize}
\item[(i)]
$M=\bigoplus_{\mu}M_\mu$ and $\dim M_\mu<\infty$;

\item[(ii)]
M decomposes over $\mathfrak{l}^{\underline{\infty}}_{{\bf b},0}$ into a direct sum of $L_0(\lambda)$ for $\lambda \in \wtlVk$;
\item[(iii)]
there exist finitely many weights ${}^1\la,{}^2\la,\ldots,{}^k\la\in \wtlVk$
(depending on $M$) such that if $\mu$ is a weight in $M$, then
$\mu\in{{}^i\la}-\sum_{\alpha\in{\Pi_{({\bf b},0^{k})}}}\N\alpha$, for
some $i$.
\end{itemize}
The morphisms in $\mathcal{O}^{\ul k}_{{\bf b},0}$ are all (not necessarily even)
homomorphisms of $\mathfrak{osp}(2m+1|2n|2k)$-modules.
\end{definition}

Let $\lambda \in X^{\ul k,+}_{{\bf b},0}$. We shall denote by $L^{\ul k}_{{\bf b},0}(\lambda)$ the simple 
module in $\mathcal{O}^{\ul k}_{{\bf b},0}$ with highest weight $\lambda$. Following Definition~\ref{def:tilting}, 
we can define the tilting module $T^{\ul k}_{{\bf b},0}(\lambda)$ in $\mathcal{O}^{\ul k}_{{\bf b},0}$.

Similar construction exists for the sequence $({\bf b}, 1^k)$, where we consider the Lie 
superalgebras $\osp(2m+1|2n+2k)$ for $k \in \N\cup\{\infty\}$ with the following simple systems:
\[
\Pi_{({\bf {b}}, 1^k)} = \{-\ep^{b_1}_1, \ep^{b_i}_i -\ep^{b_{i+1}}_{i+1} \mid 1 \leq i \leq m+n+k\}, \quad\text{where } b_{i} = 1 \text{ for } i > m+n.
\]

Let $\Pi^{\underline{k}}_{{\bf b},1} = \{\ep^{b_i}_i -\ep^{b_{i+1}}_{i+1} \mid i > m+n \}$. 
Define the following Levi subalgebra and parabolic subalgebra of $\osp(2m+1|2n|2k)$:
\begin{align*}
\mf l^{\underline{k}}_{{\bf b},1}  &:=  \sum_{\alpha \in \Z[\Pi^{\underline{k}}_{{\bf b},1}]}\osp(2m+1|2n|2k)_\alpha,
  \\
\mathfrak{p}^{\underline{k}}_{{\bf b},1} &:= \sum_{\alpha \in \Phi^+_{({\bf b}, 1^k)}
\cup \Z[\Pi^{\underline{k}}_{{\bf b},1}]}\osp(2m+1|2n|2k)_\alpha.
\end{align*}
Let $L_1(\lambda)$ be the simple $\mathfrak{l}^{\underline{k}}_{{\bf b},1}$-module with highest weight $\lambda$. 
It can be extended trivially to a  $\mathfrak{p}^{\underline{k}}_{{\bf b},1}$-module.  Similarly we can define the parabolic Verma module
\[
M^{\ul k}_{{\bf b},1}(\lambda) :=\text{Ind}^{\osp(2m+1|2n+2k)}_{\mathfrak{p}^{\underline{k}}_{{\bf b},1}}L_1(\lambda).
\]
\begin{definition}

For $k \in \N \cup \{\infty\}$, let  $\mathcal{O}^{\ul k}_{{\bf b},1}$ be the category of  
$\h_{2m+1|2n+2k}$-semisimple $\mathfrak{osp}(2m+1|2n+2k)$-modules $M$ such that
\begin{itemize}
\item[(i)]
$M=\bigoplus_{\mu}M_\mu$ and $\dim M_\mu<\infty$;

\item[(ii)]
M decomposes over $\mathfrak{p}^{\underline{k}}_{{\bf b},1}$ into a direct sum of $L_1(\lambda)$ for $\lambda \in \wtlWk$;

\item[(iii)]
there exist finitely many weights ${}^1\la,{}^2\la,\ldots,{}^k\la\in \wtlWk$
(depending on $M$) such that if $\mu$ is a weight in $M$, then
$\mu\in{{}^i\la}-\sum_{\alpha\in{\Pi_{({\bf b},1^{k})}}}\N\alpha$, for
some $i$.
\end{itemize}
The morphisms in $\mathcal{O}^{\ul k}_{{\bf b},1}$ are all (not necessarily even)
homomorphisms of $\mathfrak{osp}(2m+1|2n+2k)$-modules.
\end{definition}

For $\xi \in X^{\ul k,+}_{{\bf b},1}$, we shall denote by $L^{\ul k}_{{\bf b},1}(\xi)$ the simple module in 
$\mathcal{O}^{\ul k}_{{\bf b},1}$ with highest weight $\xi$. Following Definition~\ref{def:tilting}, 
we can define the tilting module $T^{\ul k}_{{\bf b},1}(\xi)$ in $\mathcal{O}^{\ul k}_{{\bf b},1}$.

\section{Truncation functors}

Recall the definition of $\wtlVk$ and $\wtlWk$ in \eqref{align:wtlVk} and \eqref{align:wtlWk}.
For any $\lambda  = \sum^{m+n}_{i=1}\lambda_i\ep^{b_i}_{i} 
+ \sum_{1 \leq j}{^+\lambda_{\ul j}}\ep^s_{\ul j} + d\ep_{\infty}^s \in X^{\ul \infty, +}_{{\bf b},s}$, we define
\[
\lambda^{\ul k}:=\sum^{m+n}_{i =1}\lambda_i\ep^{b_i}_i+ \sum^k_{j =1}{^+}\lambda_{\ul j}\ep^s_{\ul j}
+  d\sum^k_{j =1}\ep^s_{\ul j} \in X^{\ul k, +}_{{\bf b},s}, \quad \text{ for } s \in \{0,1\}.
\]
  Let $M^{\ul{\infty}}_{{{\bf b}, 0}} \in\OO^{\ul{\infty}}_{{{\bf b}, 0}}$
and $M^{\ul{\infty}}_{{{\bf b}, 1}} \in \OO^{\ul{\infty}}_{{{\bf b}, 1}}$. Then we have the weight
space decompositions
\begin{align*}
M^{\ul{\infty}}_{{{\bf b}, 0}}=\bigoplus_{\mu} M^{\ul{\infty}}_{{{\bf b}, 0, \mu}},\qquad
M^{\ul{\infty}}_{{{\bf b}, 1}}=\bigoplus_{\mu} M^{\ul{\infty}}_{{{\bf b}, 1,\mu}}.
\end{align*}

We define an exact functor $\mf{tr}_0:\OO^{\ul{\infty}}_{{{\bf b}, 0}} \rightarrow \OO^{\ul{k}}_{{{\bf b}, 0}}$ by
\[
\mf{tr}_0( M^{\ul{\infty}}_{{{\bf b}, 0}}):=\bigoplus_\mu M^{\ul{\infty}}_{{{\bf b}, 0,\mu}}, 
\]
$\text{ where $\mu$ satisfies } (\mu,\ep^0_{\ul{j}} - \ep^0_{\ul{j+1}})=0$, $\forall j\ge k+1\text{ and }j\in\N$.
Similarly, we define an exact functor
$\mf{tr}_1:\OO^{\ul{\infty}}_{{{\bf b}, 1}}\rightarrow \OO^{\ul{k}}_{{{\bf b}, 1}}$ by
\[
\mf{tr}_1 (M^{\ul{\infty}}_{{{\bf b}, 1}}):=\bigoplus_\mu
M^{\ul{\infty}}_{{{\bf b}, 1, \mu}},
\]
where $\mu$ satisfies $ (\mu, \ep^1_{\ul{j}} - \ep^1_{\ul{j+1}})=0$, $\forall j \geq k+1 \text{ and } j \in \N$.
The following has been known \cite{CW08, CLW11}; also see \cite[Proposition 6.9]{CW12}.
\begin{prop} 
  \label{prop:trunc:ML}
For $s=0$, $1$,
the functors $\mf{tr}_s:\OO^{\ul{\infty}}_{{{\bf b}, s}}\rightarrow \OO^{\ul{k}}_{{{\bf b}, s}}$ satisfy the following: for
$Y=M$, $L$, $T$,
$\la=\sum^{m+n}_{i=1}\lambda_i\ep^{b_i}_{i} + \sum_{1 \leq j}{^+\lambda_{\ul j}}\ep^s_{\ul j} + d\ep_{\infty}^s \in X^{\ul \infty, +}_{{\bf b},s}$,
we have
\[
\mf{tr}_s\left( Y^{\ul{\infty}}_{{{\bf b}, s}}(\la)\right)=
\begin{cases}
 Y^{\ul{k}}_{{{\bf b}, s}}(\la^{\ul k}),&\text{ if } l(^+\lambda) \leq k,
 \\
0,&\text{ otherwise}.
\end{cases}
\]
\end{prop}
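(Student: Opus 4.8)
\textbf{Proof proposal for Proposition~\ref{prop:trunc:ML}.}
The plan is to reduce the statement to the known parabolic truncation results for infinite-rank Lie superalgebras (essentially the $\osp$-version of the arguments in \cite{CW08, CLW11}, cf.\ \cite[Proposition~6.9]{CW12}), and carry out the reduction uniformly for $s=0,1$. First I would set up the combinatorial bookkeeping: for $\la = \sum_i \lambda_i \ep^{b_i}_i + \sum_j {}^+\!\lambda_{\ul j}\ep^s_{\ul j} + d\ep^s_\infty \in X^{\ul\infty,+}_{{\bf b},s}$, observe that the weights $\mu$ surviving the truncation $\mf{tr}_s$ (those with $(\mu,\ep^s_{\ul j}-\ep^s_{\ul{j+1}})=0$ for all $j\ge k+1$) are exactly those whose $\mf{l}^{\ul\infty}_{{\bf b},s}$-decomposition involves only $L_s(\nu)$ with $\nu$ ``stabilizing'' past position $k$; this is what identifies $\mf{tr}_s(\OO^{\ul\infty}_{{\bf b},s})$ with $\OO^{\ul k}_{{\bf b},s}$. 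The key numerical point is that applying $\mf{tr}_s$ to $\la$ itself yields $\la^{\ul k}$ precisely when $\ell({}^+\!\lambda)\le k$, and kills $\la$ otherwise, since for $\ell({}^+\!\lambda)>k$ the highest weight $\la$ already fails the stabilization condition at some $j$ with $k+1\le j\le \ell({}^+\!\lambda)$.

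Next I would treat the three cases $Y=M,L,T$ in turn. For $Y=M$ (parabolic Verma), the argument is the cleanest: $M^{\ul\infty}_{{\bf b},s}(\la)$ is induced from $L_s(\la)$ over $\mathfrak{p}^{\ul\infty}_{{\bf b},s}$, and one checks directly on the level of $\mathfrak{l}$-characters (using the PBW basis of $U(\mathfrak{n}^-)$) that the subspace cut out by $\mf{tr}_s$ is, when $\ell({}^+\!\lambda)\le k$, exactly the induced module $M^{\ul k}_{{\bf b},s}(\la^{\ul k})$ over the finite-rank parabolic, and is zero otherwise because $\la$ itself is truncated away. For $Y=L$ (simple module), I would use that $\mf{tr}_s$ is exact and monoidal-compatible enough to send the maximal submodule of $M^{\ul\infty}_{{\bf b},s}(\la)$ to the maximal submodule of $M^{\ul k}_{{\bf b},s}(\la^{\ul k})$ (this is where one invokes that $\mf{tr}_s$ respects the Bruhat/linkage combinatorics, so that no composition factor $L^{\ul\infty}_{{\bf b},s}(\nu)$ with $\nu \ne \la$ can survive while $\la$ itself is killed, and the surviving part of the head is exactly $L^{\ul k}_{{\bf b},s}(\la^{\ul k})$). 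For $Y=T$ (tilting module), I would use the characterization in Definition~\ref{def:tilting}: exactness of $\mf{tr}_s$ sends a ${\bf b}$-Verma flag to a ${\bf b}$-Verma flag (by the $Y=M$ case), the $\mathrm{Ext}^1$-vanishing condition is preserved because $\mf{tr}_s$ has both adjoints / is a Serre quotient functor onto $\OO^{\ul k}_{{\bf b},s}$, and indecomposability together with $M^{\ul k}_{{\bf b},s}(\la^{\ul k})$ sitting at the bottom pins down $\mf{tr}_s(T^{\ul\infty}_{{\bf b},s}(\la)) = T^{\ul k}_{{\bf b},s}(\la^{\ul k})$ when $\ell({}^+\!\lambda)\le k$; otherwise the bottom Verma is killed, forcing the whole truncation to vanish since all Verma subquotients lie below $\la$ in the Bruhat order and hence also get truncated.

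The main obstacle I expect is the $Y=L$ and $Y=T$ cases, specifically verifying that $\mf{tr}_s$ interacts correctly with submodule structure and with $\mathrm{Ext}^1$ — one needs to know that $\mf{tr}_s$ is (equivalent to) the Serre quotient functor associated with a suitable Serre subcategory, so that it is exact, sends simples to simples or zero, and preserves the relevant homological algebra. This is genuinely the content imported from \cite{CW08, CLW11} and \cite[\S6]{CW12}; since the statement explicitly says ``has been known,'' the honest proof is to phrase the reduction precisely and cite those sources for the functorial properties of $\mf{tr}_s$, after which the three cases follow formally from Definitions~\ref{def:catO} and~\ref{def:tilting} together with the elementary weight-combinatorics of when $\la^{\ul k}$ is well-defined, i.e.\ when $\ell({}^+\!\lambda)\le k$.
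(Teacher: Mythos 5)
The paper does not prove Proposition~\ref{prop:trunc:ML} at all: the statement is introduced with ``The following has been known \cite{CW08, CLW11}; also see \cite[Proposition~6.9]{CW12}'' and no argument is given. You correctly identify this at the end of your proposal, so in spirit you take the same route as the paper (import the result from the cited super-duality literature). The extra material you supply — a sketch of what that imported argument looks like — is broadly right, with one step worth tightening.

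In the $Y=L$ case you claim exactness of $\mf{tr}_s$ lets you conclude it ``sends the maximal submodule of $M^{\ul\infty}_{{\bf b},s}(\la)$ to the maximal submodule of $M^{\ul k}_{{\bf b},s}(\la^{\ul k})$.'' That is not a formal consequence of exactness alone; an exact functor need not preserve radicals. The standard argument in \cite{CW08,CLW11,CW12} is more structural: $L^{\ul\infty}_{{\bf b},s}(\la)$ is realized as the image of the canonical map $M^{\ul\infty}_{{\bf b},s}(\la)\rightarrow M^{\ul\infty}_{{\bf b},s}(\la)^{\tau}$, and $\mf{tr}_s$ is exact and commutes with the $\tau$-duality, so it sends this image to the image of $M^{\ul k}_{{\bf b},s}(\la^{\ul k})\rightarrow M^{\ul k}_{{\bf b},s}(\la^{\ul k})^{\tau}$, which is $L^{\ul k}_{{\bf b},s}(\la^{\ul k})$ (or $0$ when $\la$ itself is truncated away). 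The tilting case then follows as you describe, via the Verma-flag characterization together with the already-established $Y=M$ and $Y=L$ statements and an indecomposability argument, again as in \cite{CW12}. Your $Y=M$ case via weight spaces and PBW is fine. Given that the paper offers no proof, citing and phrasing the reduction — as you say at the end — is exactly what is expected here.
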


\chapter{Fock spaces and Bruhat orderings}
  \label{sec:Fockspaces}

 In this chapter, we formulate the infinite-rank variants of the basic constructions in Part~1.
 We  set up various Fock spaces which are the $q$-versions of Grothendieck groups,
 and transport Bruhat ordering from the BGG categories  to the corresponding Fock spaces.
  
\section{Infinite-rank constructions}
  \label{sec:infiniterankQSP}
  
Let us first set up some notations which will be used often in Part~2.
We set 
\begin{align}
 \label{eq:III}
\I  = \cup^{\infty}_{r=0}  & \I_{2r+1} = \Z,
\qquad
\I^{\imath} = \cup^{\infty}_{r=0} \I^{\imath}_{r} = \Z_{>0},
\qquad
I  = \Z+\hf.
\end{align}

Recall from Chapter \ref{sec:qsp} the finite-rank quantum symmetric pair $(\U_{2r+1}, \U^{\imath}_r)$. 
We have the natural inclusions of $\Qq$-algebras:
\begin{align*}
\cdots \subset \U_{2r-1} \subset & \U_{2r+1} \subset \U_{2r+3} \subset \cdots ,\\
\cdots \subset \U^{\imath}_{r-1}   \subset & \U^{\imath}_{r} \subset \U^{\imath}_{r+1} \subset \cdots. 
\end{align*}
Define the following $\Qq$-algebras:
\[
\Ui := \bigcup^{\infty}_{r=0} \U^{\imath}_r  \quad \text{ and } \quad \U := \bigcup^{\infty}_{r=0} \U_{2r+1}.
\]
It is easy to see that $\Ui$ is generated by $\{\be_{\alpha_i}, \bff_{\alpha_i}, \bk^{\pm 1}_{\alpha_i}, \bt \mid i \in \I^{\imath} \}$, 
and $\U$ is generated by $\{E_{\alpha_i}, F_{\alpha_i}, K^{\pm 1}_{\alpha_i} \mid i \in \I\}$. 
The  embeddings of $\Qq$-algebras $\iota : \U^{\imath}_r \rightarrow \U_{2r+1}$ 
induce an embedding of $\Qq$-algebras, denoted also by $\iota : \Ui \longrightarrow \U$.
Again $\U$ is naturally a Hopf algebra with coproduct $\Delta$, and its restriction under $\iota$,
$
\Delta: \Ui \rightarrow \Ui \otimes \U,
$
makes $\Ui$ (or more precisely $\iota(\Ui)$) naturally a (right) coideal subalgebra of $\U$. 
The anti-linear bar involutions on $\U^{\imath}_r$ and $\U_{2r+1}$ induce anti-linear bar involutions 
on $\Ui$ and $\U$, respectively, both denoted by $\bar{\ }$ as well. 
As in Part ~1, in order to avoid confusion, we shall sometimes set $\Abar(u) := \ov{u}$ for $u \in \U$, 
and set $\Bbar(u) := \ov{u}$ for $u \in \Ui$.


Recall $\Pi_{2r+1}$ denotes the simple system of $\U_{2r+1}$. Then
$$
\Pi := \bigcup^{\infty}_{r=0} \Pi_{2r+1}
$$
is a simple system of $\U$.
Recall we denote the integral weight lattice of $\U_{2r+1}$ by $\Lambda_{2r+1}$. Then
\[
\Lambda := \oplus_{i \in \hf +\Z} \Z[\varepsilon_i] = \bigcup^{\infty}_{r =0} \Lambda_{2r+1}
\] 
is the integral weight lattice of $\U$. 
Following \S \ref{subsec:theta}, we have the quotient lattice $\Lambda_{\inv}$ of the lattice $\Lambda$.

Recall the intertwiner of the pair $(\U_{2r+1}, \U^{\imath}_r)$ in \S \ref{subsec:Upsilon}, 
which we shall denote by $\Upsilon^{(r)}$. 
We have  $\Upsilon^{(r)} = \sum_{\mu \in \N\Pi_{2r+1}} \Upsilon^{(r)}_{\mu}$ in a completion of $\U^{-}_{2r+1}$ with $\Upsilon^{(r)}_0=1$.
Following the construction of $\Upsilon^{(r)}$ in Theorem \ref{thm:Upsilon}, we see that
\[
\Upsilon^{(r+1)}_{\mu} = \Upsilon^{(r)}_{\mu}, \quad \text{ for } \mu \in \N\Pi_{2r+1}.
\]
Hence we can define an element  $\Upsilon_\mu \in \U^{-}_{\mu}$, for $\mu \in \N\Pi$ by letting
\[
\Upsilon_{\mu} := \displaystyle\lim_{r \to \infty} \Upsilon^{(r)}_{\mu}.
\]
Define the formal sum $\Upsilon$ (which lies in some completion of $\U^-$) by
\begin{equation}
  \label{pt2:eq:Upsilon}
\Upsilon := \sum_{\mu \in \N\Pi} \Upsilon_{\mu}.
\end{equation}
We shall view $\Upsilon$ as a well-defined operator on $\U$-modules that we are concerned.

\section{The Fock space $\mathbb{T}^{\bf b}$}
  \label{subsec:Fockspaces}

Let $\VV := \sum_{a \in I} \Qq v_a$ be the natural representation of $\U$, 
where the action of $\U$ on $\VV$ is defined as follows (for $i \in \I$, $a \in I$):
\[
E_{\alpha_i} v_a = \delta_{i+\hf, a} v_{a-1}, \quad F_{\alpha_i}v_a 
= \delta_{i-\hf, a}v_{a+1}, \quad K_{\alpha_i} v_a = q^{(\alpha_i, \varepsilon_a)}v_a.
\]
Let $\WW :=\VV^*$ be the restricted dual module of $\VV$ with basis $\{w_a \mid a \in I\}$ such that 
$\langle w_a, v_b \rangle =  (-q)^{-a} \delta_{a,b}$. The action of $\U$ on $\WW$ is given by the following formulas 
(for $i \in \I$, $a \in I$):
\[
E_{\alpha_i} w_a = \delta_{i-\hf, a} w_{a+1}, \quad F_{\alpha_i}w_a 
= \delta_{i+\hf, a}w_{a-1}, \quad K_{\alpha_i} w_a = q^{-(\alpha_i, \varepsilon_a)}w_a.
\]
By restriction through the embedding $\iota$, $\VV$ and $\WW$ are naturally  $\Ui$-modules.

Fix a ${0^m1^n}$-sequence ${\bf b} =(b_1,b_2,\ldots,b_{m+n})$. We have the following
tensor space over $\Q(q)$, called the {\em $\bf b$-Fock space} or simply
{\em Fock space}:
\begin{equation}  \label{eq:Fock}
{\mathbb T}^{\bf b} :={\mathbb V}^{b_1}\otimes {\mathbb
V}^{b_2}\otimes\cdots \otimes{\mathbb V}^{b_{m+n}},
\end{equation}
where we denote
$${\mathbb V}^{b_i}:=\begin{cases}
{\mathbb V}, &\text{ if }b_i={0},\\
{\mathbb W}, &\text{ if }b_i={1}.
\end{cases}$$
The tensors here and in similar settings later on are understood
to be over the field $\Q(q)$.
Note that both algebras $\U$ and $\Ui$ act on $\mathbb T^{\bf b}$ via an iterated coproduct.

For $f\in I^{m+n}$, we define
\begin{equation}  \label{eq:Mf}
M^{\bf b}_f :=\texttt{v}^{b_1}_{f(1)}\otimes
\texttt{v}^{b_2}_{f(2)}\otimes\cdots\otimes
\texttt{v}^{b_{m+n}}_{f(m+n)},
\end{equation}
where we use the notation
$\texttt{v}^{b_i}:=
\begin{cases}v,\text{ if }b_i={0},
\\
w,\text{ if }b_i={1}.
\end{cases}$ 
We refer to $\{M^{\bf b}_f \mid f\in
I^{m+n}\}$ as the {\em standard monomial basis} of ${\mathbb
T}^{\bf b}$.

For $r\in \N$, we shall denote the natural representation of $\U_{2r+1}$ by $\VV_r$ now, 
where $\VV_r$ admits a natural basis $\{v_a \mid a \in \I_{2r+2}\}$. 
Let $\WW_r$ be the dual of $\VV_r$ with basis $\{w_a \mid a \in \I_{2r+2}\}$ 
such that $\langle w_a, v_b \rangle =  (-q)^{-a} \delta_{a,b}$. 
We have natural inclusions of $\Qq$-spaces
$$
\cdots \subset \VV_{r-1} \subset \VV_r \subset \VV_{r+1} \cdots, 
\quad \text{ and } \quad
\cdots \subset \WW_{r-1} \subset \WW_r \subset \WW_{r+1} \cdots.
$$
Similarly we can define the space
\[
\mathbb{T}^{\bf b}_r :={\mathbb V}_r^{b_1}\otimes {\mathbb
V}_r^{b_2}\otimes\cdots \otimes{\mathbb V}_r^{b_{m+n}},
\]
where we denote
$$
{\mathbb V}_r^{b_i}:=\begin{cases}
{\mathbb V}_r, &\text{ if }b_i={0},\\
{\mathbb W}_r, &\text{ if }b_i={1}.
\end{cases}
$$
Then $\{M^{\bf b}_f \mid f\in
\I_{2r+2}^{m+n}\}$ forms the {\em standard monomial basis} of ${\mathbb
T}_r^{\bf b}$. In light of the standard monomial bases, we may view 
\begin{equation}
  \label{osp:eq:TrT}
\cdots \subset \mathbb{T}_r^{\bf b} \subset \mathbb{T}_{r+1}^{\bf b} \subset \cdots,
\qquad \text { and } \quad \mathbb{T}^{\bf b} = \cup_{r \in \N} \mathbb{T}_r^{\bf b}.
\end{equation}

\begin{definition}  \label{def:wtb}
For $f \in \I_{2r+2}^{m+n}$, let $\texttt{wt}_{\bf b} (f)$ be the $\Ui$-weight of $M^{\bf b}_f$, i.e., 
the image of the $\U$-weight in $\Lambda_{\inv}$. 
\end{definition}

\section{The $q$-wedge spaces}
  \label{osp:subsec:$q$-wedges}

Recall from \S \ref{sec:HeckeB} the right action on $\VV^{\otimes k}$  
on the Hecke algebra $\mc{H}_{B_k}$,  
where $\VV$ is now of infinite dimension. We take $\wedge^k\VV$ as the quotient of $\VV^{\otimes k}$ 
by the sum of the kernel of the operators $H_i-q^{-1}$, $1 \leq i \leq k-1$. The $\wedge^k\VV$ is naturally a $\U$-module, 
hence also a $\Ui$-module. For any $v_{p_1} \otimes v_{p_2} \otimes \cdots \otimes v_{p_k}$ in $\VV^{\otimes k}$, 
we denote its image in $\wedge^{k}\VV$ by $v_{p_1} \wedge v_{p_2} \wedge \cdots \wedge v_{p_k}$.

For $d \in \Z$ and $l \ge k$, consider the $\Qq$-linear maps
\begin{align*}
\wedge^{k,l}_d &: \wedge^{k}\VV \longrightarrow \wedge^{l}\VV
\\
v_{p_1}  \wedge \cdots \wedge v_{p_k} \mapsto v_{p_1} \wedge \cdots \wedge v_{p_k} 
& \wedge v_{d+\hf-k-1} \wedge v_{d+\hf-k-2} \wedge \cdots \wedge v_{d+\hf-l}.
\end{align*}
Let $\wedge_d^{\infty}\VV : =\varinjlim \wedge^{k}\VV$ be the direct limit of the $\Qq$-vector spaces 
with respect to the maps $\wedge^{k,l}_d$, which is called the $d$th sector of the semi-infinite $q$-wedge space
$\wedge^{\infty}\VV$; that is,
\[
\wedge^{\infty}\VV = \bigoplus_{d \in \Z} \wedge_d^{\infty}\VV.
\]
Note that for any fixed $u \in \U$ and fixed $d \in \Z$, we have 
\[
\wedge^{k,l}_d u = u \wedge^{k,l}_d : \wedge^{k}\VV \longrightarrow \wedge^{l} \VV, \quad \text{ for } l \ge k \gg 0.
\]
Therefore $\wedge^{\infty}_d \VV$ and hence $\wedge^{\infty}\VV$  become  both $\U$-modules and $\Ui$-modules.

We can think of elements in $\wedge^{\infty}\VV$ as linear combinations of infinite $q$-wedges of the form
\[
v_{p_1} \wedge v_{p_2} \wedge  v_{p_3} \wedge \cdots,
\]
where  $p_1 > p_2  >p_3> \cdots,$ and $p_i - p_{i+1}=1$ for $i \gg0$.
%
%
%
%
%
%
Alternatively, the space
$\wedge^\infty\mathbb V$ has a basis indexed by pairs of a partition and an integer given by
$$
|\la,d\rangle := v_{\la_1+d-\frac{1}{2}} \wedge v_{\la_2+d-\frac{3}{2}} \wedge v_{\la_3+d-\frac{5}{2}}\wedge
\cdots,
$$
where $\la =(\la_1, \la_2, \ldots)$ runs over the set $\mc P$ of all
partitions, and $d$ runs over $\Z$. Clearly we can realize $\wedge_d^{\infty}\VV$ as the subspace of 
$\wedge^\infty\VV$ spanned by $\{|\la, d \rangle \mid {\lambda \in \mc P}\}$, for $d \in \Z$.

In the rest of this paper, we shall index the q-wedge spaces by 
$[\ul k] := \{\ul 1, \ul 2, \dots, \ul k\}$ and $[\ul \infty] :=\{\ul 1, \ul 2 \dots\}$. More precisely, let
\begin{align*}
I^{k}_+ &=\{f:[\ul k] \rightarrow I \mid f(\ul 1)> f(\ul 2)>
\cdots> f(\ul k)\}, \quad  \text{ for } k\in\N,
  \\
I^{ \infty}_+ &=\{f:[\ul \infty] \rightarrow I \mid f(\ul 1)>
f(\ul 2)> \cdots; f(\ul t) - f(\ul {t+1})=1 \text{ for } t \gg 0\}.
\end{align*}
For $f\in I^{k}_+$, we denote
\begin{align*}
\mathcal{V}_f=v_{f(\ul 1)}\wedge v_{f(\ul 2)}\wedge\cdots \wedge
v_{f(\ul k)}.
\end{align*}
Then $\{\mathcal{V}_f \mid f\in I^k_+\}$ is a basis of $\wedge^k\mathbb
V$, for $k\in\Z_{>0} \cup \{ \infty\}$. 

For $k\in \Z_{>0}$, we let $w^{(k)}_0$ be the longest element in $\mathfrak{S}_{k}$. Define  
\[
L_{w^{(k)}_0} := \sum_{w \in \mathfrak{S}_k}(-q)^{l(w)-l(w^{(k)}_{0})}H_w  \in \mathcal{H}_{A_{k-1}}.
\]
It is well known \cite{KL} that $\overline{L_{w^{(k)}_0}} = L_{w^{(k)}_0}$. 
The right action by $L_{w^{(k)}_0}$ define a $\Qq$-linear map  (the $q$-skew-symmetrizer)
$\text{SkSym}_k: \VV^{\otimes k} \rightarrow \VV^{\otimes k}$. Then
the $q$-wedge space $\wedge^k\VV$ can also be regarded as a subspace $\text{Im} (\text{SkSym}_k)$ of $\VV^{\otimes k}$
while identifying 
$\mathcal{V}_{f} \equiv M^{(0^{k})}_{f \cdot w^{(k)}_{0}} L_{w^{(k)}_0}$ for $f \in I^{k}_{+}$ (cf. e.g. \cite[\S 4.1]{CLW12}).

Similar construction gives rise to $\wedge^{\infty} \WW$.
For each $d \in \Z$ and $l \ge k$, consider the $\Qq$-linear maps
\begin{align}
\label{eq:wedge} 
\wedge^{k,l}_d&: \wedge^{k}\WW \longrightarrow \wedge^{l}\WW
\\
\notag w_{p_1}  \wedge \cdots \wedge w_{p_k} \mapsto w_{p_1}  \wedge \cdots \wedge w_{p_k} 
& \wedge w_{d-\hf+k+1} \wedge w_{d-\hf+k+2} \wedge \cdots \wedge w_{d-\hf+l}.
\end{align}
Let $\wedge_d^{\infty}\WW : =\varinjlim \wedge^{k}\WW$ be the direct limit of the $\Qq$-vector 
spaces with respect to the maps $\wedge^{k,l}_d$. Define
\[
\wedge^{\infty}\WW := \bigoplus_{d \in \Z} \wedge_d^{\infty}\WW.
\]
Note that for any fixed $u \in \U$ and fixed $d \in \Z$, we have 
\[
\wedge^{k,l}_d u = u \wedge^{k,l}_d : \wedge^{k}\WW \rightarrow \wedge^{l} \WW, \quad \text{ for } l \ge k \gg 0.
\]
Therefore $\wedge^{\infty}_d \WW$ and hence $\wedge^{\infty}\WW$  become  both $\U$-modules and $\Ui$-modules.

We can think of elements in $\wedge^{\infty}\WW$ as linear combinations of infinite $q$-wedges of the form
\[
w_{p_1} \wedge w_{p_2} \wedge w_{p_3} \wedge \cdots,
\]
where $p_1<p_2<p_3<\cdots$, and $p_i - p_{i+1}= -1$, for $i\gg 0$.
%
%
%
%
%
%
%
%
Alternatively,
the space $\wedge^\infty\mathbb W$ has a basis indexed by partitions
given by
$$
|\la_*,d\rangle := w_{d +\frac{1}{2}-\la_1} \wedge w_{d +\frac{3}{2}-\la_2} \wedge
w_{d +\frac{5}{2}-\la_3}\wedge \cdots, $$ where $\la =(\la_1, \la_2, \cdots)$
runs over the set $\mc{P}$ of all partitions, and $d$ runs over $\Z$. Clearly we can realize $\wedge_d^{\infty}\WW$ 
as the subspace of $\wedge^\infty\WW$ spanned by $\{|\la_*, d \rangle \mid {\lambda \in \mc P}\}$, for $d \in \Z$. 

Let
\begin{align*} 
I^{k}_- &=\{f:[\ul k] \rightarrow I \mid f(\ul 1)<f(\ul 2)<
\cdots< f(\ul k)\},  \text{ for }  k\in\N,
  \\
I^{\infty}_- &=\{f:[\ul \infty] \rightarrow I \mid f(\ul 1)<
f(\ul 2)< \cdots; f(\ul t)-f(\ul{t+1})=-1 \text{ for }t\gg 0\}.
\end{align*}
For $f\in I^{k}_-$, we denote
\begin{align*}
\mathcal{W}_{f}=w_{f(\ul 1)}\wedge w_{f(\ul 2)}\wedge\cdots\wedge
w_{f(\ul k)}.
\end{align*}
Then $\{\mathcal{W}_f \mid f\in I^k_-\}$ is a basis of $\wedge^k\mathbb
W$, for $k\in\N \cup \{\infty\}$.

\begin{rem}
The semi-infinite $q$-wedge spaces considered in this paper will involve all sectors, 
while only the $0$th sector was considered and needed in \cite[\S 2.4]{CLW12}.  
\end{rem}

\section{Bruhat orderings}

Let ${\bf b} =(b_1, \ldots, b_{m+n})$ be an arbitrary $0^m1^n$-sequence.
We first define a partial ordering on $I^{m+n}$, which depends on the sequence ${\bf b}$.
There is a natural bijection $I^{m+n} \leftrightarrow X(m|n)$ (recall $X(m|n)$ from  \eqref{eq:Xmn}), defined as
\begin{align}
&f \mapsto \lambda^{\bf b}_f, \text{ where }  \lambda^{\bf b}_f 
= \sum_{i=1}^{m+n}(-1)^{b_i}f(i)\ep^{b_i}_i -\rho_{\bf b},  \quad \text{for } f \in I^{m+n}, 
\label{osp:eq:ftolambda}
\\
& \lambda \mapsto f^{\bf b}_{\lambda},
 \text{ where } f(i)=(\lambda + \rho_{\bf b} \vert \ep^{b_i}_i), \quad \quad \quad \text{for } \lambda \in X(m|n).
\label{osp:eq:lambdatof}
\end{align}
We transport the Bruhat ordering \eqref{eq:BrX} on $X(m|n)$ by the above bijection to $I^{m+n}$. 
\begin{definition}
  \label{osp:def:dominanceordering}
The Bruhat ordering or $\bf b$-Bruhat ordering $\preceq_{\bf b}$ on $I^{m+n}$ is defined as follows:
For $f$, $g \in I^{m+n}$,  $f \preceq_{\bf b} g$ 
if $\lambda^{\bf b}_f  \preceq_{\bf b} \lambda^{\bf b}_g$.
We also say $f \sim g$ if $\lambda^{\bf b}_f \sim \lambda^{\bf b}_g$. 
\end{definition}

The following lemma follows immediately from the definition. 
\begin{lem}
  \label{osp:lem:finitelength}
Given $f, g \in I^{m+n}$  such that $g \preceq_{\bf b} f$, then the set $\{h \in I^{m+n} \mid g \preceq_{\bf b} h \preceq_{\bf b} f \}$ is finite.
\end{lem}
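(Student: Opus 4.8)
\textbf{Proof plan for Lemma~\ref{osp:lem:finitelength}.}
The plan is to reduce the statement to a finiteness assertion entirely inside the weight lattice $X(m|n)$, using the bijection $f \mapsto \lambda^{\bf b}_f$ of \eqref{osp:eq:ftolambda} which is, by definition (Definition~\ref{osp:def:dominanceordering}), an order isomorphism from $(I^{m+n}, \preceq_{\bf b})$ to $(X(m|n), \preceq_{\bf b})$. So it suffices to show: for $\mu, \nu \in X(m|n)$ with $\mu \preceq_{\bf b} \nu$, the interval $\{\eta \in X(m|n) \mid \mu \preceq_{\bf b} \eta \preceq_{\bf b} \nu\}$ is finite. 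First I would unravel the definition \eqref{eq:BrX} of $\preceq_{\bf b}$: $\mu \preceq_{\bf b} \eta$ forces $\eta - \mu \in \N\Pi_{\bf b}$, and likewise $\nu - \eta \in \N\Pi_{\bf b}$. Hence any $\eta$ in the interval satisfies $\eta = \mu + \sum_{\alpha \in \Pi_{\bf b}} c_\alpha \alpha$ with $c_\alpha \in \N$, and moreover $\sum_\alpha c_\alpha \alpha = \eta - \mu \preceq \nu - \mu$ in the sense that $(\nu - \mu) - (\eta - \mu) = \nu - \eta \in \N\Pi_{\bf b}$ as well.

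The key step is then the following elementary fact about the finite-dimensional root lattice: if $\Pi_{\bf b}$ is a (finite) basis of simple roots and $\beta := \nu - \mu \in \N\Pi_{\bf b}$, then the set of $\gamma \in \N\Pi_{\bf b}$ with $\beta - \gamma \in \N\Pi_{\bf b}$ is finite. Indeed, writing $\beta = \sum_\alpha d_\alpha \alpha$ and $\gamma = \sum_\alpha c_\alpha \alpha$, the condition $\beta - \gamma \in \N\Pi_{\bf b}$ together with linear independence of $\Pi_{\bf b}$ forces $0 \le c_\alpha \le d_\alpha$ for each $\alpha \in \Pi_{\bf b}$; since $\Pi_{\bf b}$ is finite and each $d_\alpha$ is a fixed nonnegative integer, there are only finitely many such tuples $(c_\alpha)$. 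Applying this with $\gamma = \eta - \mu$ shows the interval in $X(m|n)$ is finite, and transporting back through the bijection gives the lemma. (The linkage condition $\sim$ in \eqref{eq:BrX} only cuts the interval down further, so it is harmless; I would mention it but not use it.)

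I do not expect any genuine obstacle here: the only mild subtlety is that $\Pi_{\bf b}$, while finite, is a basis of a sublattice of $\Lambda = X(m|n) \oplus (\text{extra }\epsilon\text{-directions})$ rather than of all of $X(m|n)$ — but that is irrelevant, since we only need that $\Pi_{\bf b}$ is a linearly independent finite set, which is immediate from the description of $\Pi_{\bf b}$ in \S\ref{subsec:osp} as $\{-\ep^{b_1}_1,\ \ep^{b_i}_i - \ep^{b_{i+1}}_{i+1}\}$. The proof is therefore essentially a two-line argument: invoke the order-isomorphism, then invoke the finiteness of a ``box'' of coefficient tuples bounded coordinatewise by those of $\nu - \mu$.
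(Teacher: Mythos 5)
Your proof is correct and is the argument the authors have in mind when they say the lemma ``follows immediately from the definition''; the paper supplies no further detail, and your filling-in is the standard one. One tiny cleanup: the aside about $\Pi_{\bf b}$ being a basis only of a sublattice is a red herring — in fact $\Pi_{\bf b}=\{-\ep^{b_1}_1,\ \ep^{b_i}_i-\ep^{b_{i+1}}_{i+1}\}$ is a $\Z$-basis of $X(m|n)$ itself (the change-of-basis matrix to the $\ep^{b_i}_i$'s is unipotent up to sign) — but, as you note, all you use is finiteness and linear independence, so the argument goes through unchanged.
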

Recalling the weight $\texttt{wt}_{\bf b}(\cdot)$ on $I^{m+n}$ from Definition~\ref{def:wtb},
we set 
\begin{equation}  \label{eq:wtbX}
\texttt{wt}_{\bf b}(\lambda) : = \texttt{wt}_{\bf b}(f^{\bf b}_{\lambda}),
\qquad \text{ for } \lambda \in X(m|n).
\end{equation}
We have the following analogue of \cite[Lemma 4.18]{Br03}.
\begin{lem}
  \label{lem: linkage}
For any $f, g \in I^{m+n}$, $f \sim g$  if and only if $\texttt{wt}_{\bf b}(f) =\texttt{wt}_{\bf b}(g)$.
\end{lem}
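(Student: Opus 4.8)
The plan is to reduce both sides of the asserted equivalence to one and the same combinatorial statement about the multiset $\{|f(i)|\}$ partitioned according to the parities $b_i$. First I would make $\texttt{wt}_{\bf b}$ explicit. From the $\U$-action on $\VV$ and $\WW$ defined in \S\ref{subsec:Fockspaces}, the vector $v_a$ has $\U$-weight $\varepsilon_a$ and $w_a$ has $\U$-weight $-\varepsilon_a$, so the $\U$-weight of $M^{\bf b}_f$ is $\sum_{i=1}^{m+n}(-1)^{b_i}\varepsilon_{f(i)}\in\Lambda$. Since $\theta(\varepsilon_a)=-\varepsilon_{-a}$, one has $\Lambda^\theta=\bigoplus_{a>0}\Z(\varepsilon_a-\varepsilon_{-a})$, hence $\overline{\varepsilon}_{-a}=\overline{\varepsilon}_a$ in $\Lambda_\theta$ and $\{\overline{\varepsilon}_a\mid a\in\hf+\N\}$ is a $\Z$-basis of $\Lambda_\theta$. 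Writing $c_a(f):=\#\{i\mid b_i=0,\ |f(i)|=a\}-\#\{i\mid b_i=1,\ |f(i)|=a\}$ for $a\in\hf+\N$, we then get
\[
\texttt{wt}_{\bf b}(f)=\sum_{i=1}^{m+n}(-1)^{b_i}\overline{\varepsilon}_{|f(i)|}=\sum_{a\in\hf+\N}c_a(f)\,\overline{\varepsilon}_a ,
\]
so that $\texttt{wt}_{\bf b}(f)=\texttt{wt}_{\bf b}(g)$ holds exactly when $c_a(f)=c_a(g)$ for every $a$.

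Next I would invoke the combinatorial description of linkage classes for $\osp(2m+1|2n)$. Put $\mu:=\lambda^{\bf b}_f+\rho_{\bf b}=\sum_i(-1)^{b_i}f(i)\,\ep^{b_i}_i$ and $\nu:=\lambda^{\bf b}_g+\rho_{\bf b}$; by the definition \eqref{osp:eq:ftolambda} all coordinates of $\mu$ and $\nu$ lie in $\pm(\hf+\N)$ and in particular are nonzero. Using $(\ep^{b_i}_i|\ep^{b_j}_j)=(-1)^{b_i}\delta_{ij}$, the integer $c_a(f)$ is precisely the number of ``even'' coordinates of $\mu$ of absolute value $a$ minus the number of its ``odd'' coordinates of absolute value $a$. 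The Weyl group $W_{\osp}$ acts by signed permutations of the even coordinates and, independently, of the odd coordinates, while in the definition of $\sim$ subtracting a multiple of a mutually orthogonal family of isotropic odd roots $\ep_p\pm\ep_{\ov q}$ (subject to the orthogonality constraint on $\nu+\rho_{\bf b}$) amounts to cancelling a matched even coordinate against an odd coordinate of the same absolute value. Both operations leave the function $a\mapsto c_a(\cdot)$ unchanged, which gives $f\sim g\Rightarrow\texttt{wt}_{\bf b}(f)=\texttt{wt}_{\bf b}(g)$. For the converse, assuming $c_a(f)=c_a(g)$ for all $a$, I would reduce $\mu$ and $\nu$ to ``canonical forms'' by cancelling all matched even--odd pairs; these carry, for each $a$, exactly $|c_a|$ coordinates of absolute value $a$, all even if $c_a>0$ and all odd if $c_a<0$, so they are related by an element of $W_{\osp}$, and the symmetry and transitivity of $\sim$ (cf. \S\ref{subsec:osp} and \cite{CW12}) then give $\mu\sim\nu$, i.e. $f\sim g$. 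Together with the first step this proves the lemma.

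The part demanding genuine care is the second step: one must fix the sign conventions in the bilinear form $(\cdot|\cdot)$, control the $\rho_{\bf b}$-shift (which is innocuous here, being already absorbed into $\mu$ and $\nu$ so that no coordinate equals $0$ and the degenerate behaviour at coordinate value $0$ never arises), and, for the converse direction, actually exhibit the Weyl group element and the mutually orthogonal isotropic odd roots realizing $\mu\sim\nu$. This is the exact $\osp(2m+1|2n)$-counterpart of \cite[Lemma~4.18]{Br03}, and the remaining verifications are routine.
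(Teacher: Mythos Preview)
Your proposal is correct and follows essentially the same route as the paper's proof: both rewrite $\texttt{wt}_{\bf b}(f)$ as $\sum_{a>0}c_a(f)\,\overline{\varepsilon}_a$ with $c_a(f)$ the signed even/odd multiplicity at absolute value $a$, verify that the Weyl group action and the isotropic-odd-root shifts in the definition of $\sim$ preserve the profile $(c_a)_a$, and for the converse match the two sides after cancelling equal even--odd pairs. The only organizational difference is that you pass through a ``canonical form'' and invoke transitivity of $\sim$, whereas the paper directly chooses the constants $c_a$ so that the shifted $f$-pairs land on the cancelled $g$-pairs (this is possible since both sides cancel the same number $l=\tfrac12\bigl(m+n-\sum_a|c_a|\bigr)$ of pairs); in your write-up you should make explicit where the cancelled pairs are shifted to (e.g.\ to the value $0$) so that the two canonical forms are visibly $W_{\osp}$-conjugate.
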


\begin{proof}
This proof is analogous to \cite[Theorem 2.30]{CW12}.
Assume $f \sim g$ at first. Recall \S \ref{subsec:osp}, this means 
\[\lambda^{\bf b}_{g} + \rho_{\bf b} 
= w(\lambda^{\bf b}_{f} + \rho_{\bf b} - \sum^l_{i=1}c_i\alpha_i), 
\quad (\lambda^{\bf b}_{f} +\rho_{\bf b} \vert \alpha_j) = 0, j=1,\dots ,l.
\]
where $\alpha_i$'s are mutually orthogonal isotropic odd roots. Recall the Weyl group of $\osp(2m+1|2n)$ 
is isomorphic to $(\Z_{2} \rtimes \mf{S}_{m}) \times (\Z_{2} \rtimes \mf{S}_{n})$. Thanks to Definition \ref{prop:coproduct} 
and the actions the $\bk_{\alpha_i}$'s on $\VV$ and $\WW$, we have 
$\texttt{wt}_{\bf b}(w(\lambda^{\bf b}_{f} + \rho_{\bf b} - \sum^l_{i=1}c_i\alpha_i)) 
 = \texttt{wt}_{\bf b}(\lambda^{\bf b}_{f} + \rho_{\bf b} - \sum^l_{i=1}c_i\alpha_i)$. 
Isotropic odd roots of $\Phi$ are of the form $\pm\ep^{b_x}_x\pm\ep^{b_y}_{y}$, where $b_x$ and  $b_y$ are distinct. 
We shall discuss one case here, as the others will be similar. 

Let $\alpha = \ep^{b_s}_{s} -\ep^{b_t}_{t}= \ep^0_{s} -\ep^1_{t}$ be an isotropic odd root such that 
$(\lambda^{\bf b}_{f} +\rho_{\bf b} \vert \alpha)  =(\sum_{i=1}^{m+n}(-1)^{b_i}f(i)\ep^{b_i}_i \vert \alpha) = 0$. 
Therefore, $f(s)=f(t)$. Hence we have 
\begin{align*}
&\texttt{wt}_{\bf b}(\lambda^{\bf b}_{f} + \rho_{\bf b} + c\alpha) 
\\
&= \texttt{wt}_{\bf b}(\dots,f(s-1),  f(s)+c, f(s+1), \dots,f(t-1),
 f(t)+c, f(t+1), \dots) 
 \\
 &= \texttt{wt}_{\bf b}(f),
\end{align*} 
where the last equality comes from the actions of $\bk_{\alpha_i}$'s on $\VV$ and $\WW$.  
Therefore $\texttt{wt}_{\bf b}(f) =\texttt{wt}_{\bf b}(g)$.
\par Now suppose $\texttt{wt}_{\bf b}(f) =\texttt{wt}_{\bf b}(g)$. We have
\begin{equation}\label{eq:weight}
\sum^{m+n}_{i=1}(-1)^{b_i}\ul{\varepsilon_{f(i)}} = \sum^{m+n}_{i=1}(-1)^{b_i}\ul{\varepsilon_{g(i)}}.
\end{equation}
 For distinct $b_{i_a}$, $b_{j_a}$ $(i_a \neq j_a)$, if 
 $f(i_a)= \pm f(j_a)$, then 
 $$(-1)^{b_{i_a}}\ul{\varepsilon_{f(i_a)}} + (-1)^{b_{j_a}}\ul{\varepsilon_{f(j_a)}} = 0$$ 
 (recall that $\ul{\varepsilon_{f(s)}} = \ul{\varepsilon_{-f(s)}}$). Similar results hold for $g$. 
 After canceling all such pairs (all $i_a$ and all $j_a$ are distinct) on both sides of \eqref{eq:weight}, 
 the survived terms match bijectively up to signs. More precisely, for any survived $f(x)$, there exists a suvived $g(y)$, 
 such that $g(y) = \pm f(x), b_x = b_y$.  Hence the same number of pairs cancelled on both sides, say $l$ pairs. 
 Therefore we have $\lambda^{\bf b}_{f} +\rho_{\bf b} - \sum^l_{a =1}c_a(\ep^{0}_{i_a} -s_a \ep^{1}_{j_a}) 
  = w(\lambda^{\bf b}_{g} +\rho_{\bf b})$ for some $w \in  (\Z_{2} \rtimes \mf{S}_{m}) \times (\Z_{2} \rtimes \mf{S}_{n})$, 
  $s_a \in \{\pm\}$. The $s_a$'s are chosen to satisfy 
\[
(\lambda^{\bf b}_{f} +\rho_{\bf b} \vert \ep^{0}_{i_a} -s_a \ep^{1}_{j_a}) = 0.
\]
Therefore $\lambda^{\bf b}_f \sim \lambda^{\bf b}_g$ by the definition in \S \ref{subsec:osp}. Hence $f \sim g$. 

This completes the proof of the lemma.
\end{proof}
%
Now let us define partial orderings on the sets $I^{m+n} \times I^{\infty}_{\pm}$, which again depend on  ${\bf b}$.
Recall \eqref{eq:wtlV} and \eqref{eq:wtlW} for the definitions of $\wtlV$ and $\wtlW$. We define a map 
\begin{equation}
  \label{osp:eq:bijectionwtlV}
\wtlV \longrightarrow I^{m+n} \times I^\infty_{+}, \quad \lambda \mapsto f^{{\bf b}0}_{\lambda},
\end{equation}
by sending each $\lambda = \sum^{m+n}_{i=1}\lambda_i\ep^{b_i}_{i} + \sum_{1 \leq j}{^+\lambda_{\ul j}}\ep^0_{\ul j} + d\epinftyV$ 
to the element $f^{{\bf b}0}_{\lambda} = f^{({\bf b},0^{\infty})}_{\lambda}$ given below
(which is consistent with the $\rho$-shift associated to a simple system of the type ($\star$) in \S\ref{subsec:osp} by Remark~\ref{rem:rhob}): 
\begin{align}
 \label{eq:fb0}
\begin{split}
&f^{{\bf b}0}_{\lambda}(i) = f^{\bf b}_{\lambda}(i), \quad \text{ if } i \in [m+n]:=\{1,2,\dots,m+n\},\\
&f^{{\bf b}0}_{\lambda}(\ul j) = {^+\lambda_{\ul j}} + d +n-m+\hf - j, \quad \text{ if } 1 \leq j.
\end{split}
\end{align}
This map is a bijection, where the inverse sends $f \in  I^{m+n} \times I^\infty_{+} $ to
$$
\lambda^{{\bf b}0}_f := \sum^{m+n}_{i=1}\lambda^{\bf b}_{f,i}\ep^{b_i}_{i} + \sum_{1 \leq j}{^+\lambda_{f,{\ul j}}}\ep^0_{\ul j} + d_f\epinftyV.
$$ 

Similarly we define a bijection
\begin{equation}
  \label{osp:eq:bijectionwtlW}
\wtlW \longrightarrow I^{m+n} \times I^\infty_{-}, \quad \lambda \mapsto f^{{\bf b}1}_{\lambda},
\end{equation}
by sending each 
$\lambda = \sum^{m+n}_{i=1}\lambda_i\ep^{b_i}_{i} + \sum_{1 \leq j}{^+\lambda_{\ul j}}\ep^1_{\ul j} + d\epinftyW$ 
to the element $f^{{\bf b}1}_{\lambda} = f^{({\bf b},1^{\infty})}_{\lambda}$ given below:
\begin{align}
 \label{eq:fb1}
\begin{split}
&f^{{\bf b}1}_{\lambda}(i) := f^{\bf b}_{\lambda}(i), \quad \text{ if } i \in [m+n],\\
&f^{{\bf b}1}_{\lambda}(\ul j) := -{^+\lambda_{\ul j}} + d +n-m-\hf +j, \quad \text{ if } 1 \leq j.
\end{split}
\end{align}
The inverse sends $f \in  I^{m+n} \times I^\infty_{-}$ to
$
\lambda^{{\bf b}1}_f 
:= \sum^{m+n}_{i=1}\lambda^{\bf b}_{f,i}\ep^{b_i}_{i} + \sum_{1 \leq j}{^+\lambda_{f,{\ul j}}}\ep^1_{\ul j} + d_f\epinftyW.
$

Note that for $s \in \{0,1\}$, the sum $\sum^{m+n}_{i=1}\lambda^{\bf b}_{f,i}\ep^{b_i}_{i} + \sum_{1 \leq j}{^+\lambda_{f,{\ul j}}}\ep^s_{\ul j}$ 
lies in the root system of a finite-rank Lie superalgebra. Hence the following definitions make sense.
\begin{definition}
  \label{def:orderinfV}
 For $f$, $g \in I^{m+n} \times I^\infty_{+}$, 
we say $f \sim g$ if 
$$
d_f = d_g \;\; \text{ and } \;\;
(\sum^{m+n}_{i =1}\lambda^{\bf b}_{f,i} \ep^{b_i}_{i} + \sum_{1 \leq j}{^+}\lambda_{f,\ul j}\ep^0_{\ul j}) 
  \sim (\sum^{m+n}_{i =1}\lambda^{\bf b}_{g,i} \ep^{b_i}_{i} + \sum_{1 \leq j}{^+}\lambda_{g,\ul j}\ep^0_{\ul j}) 
 $$ 
 in the sense of \S\ref{subsec:osp}.
 We say $f \preceq_{{\bf b}, 0} g$ if 
 $$
 f \sim g \;\; \text{ and } \;\; 
 \lambda^{{\bf b},0}_g-\lambda^{{\bf b},0}_f \in \N\Pi_{{\bf b},0}.
 $$ 
\end{definition}

We similarly define an equivalence $\sim$ and a partial ordering $\preceq_{{\bf b},1}$ on $I^{m+n} \times I^\infty_{-}$.
\begin{definition}
  \label{def:orderinfW}
 For $f$, $g \in I^{m+n} \times I^\infty_{-}$, 
 we say $f \sim g$ if 
$$
\left( d_f = d_g\; \;
\text{ and  } \;\;
\big(\sum^{m+n}_{i =1}\lambda^{\bf b}_{f,i} \ep^{b_i}_{i} + \sum_{1 \leq j}{^+}\lambda_{f,\ul j}\ep^1_{\ul j} \big)  
\sim \big(\sum^{m+n}_{i =1}\lambda^{\bf b}_{g,i} \ep^{b_i}_{i} + \sum_{1 \leq j}{^+}\lambda_{g,\ul j}\ep^1_{\ul j} \big) 
\right)
$$ 
in the sense of \S \ref{subsec:osp}. 
We say $f \preceq_{{\bf b}, 1} g$ if 
$$
f \sim g\;\; \text{ and } 
\lambda^{{\bf b}, 1}_g -\lambda^{{\bf b}, 1}_f \in \N\Pi_{{\bf b},1}.
$$
\end{definition}


The following lemma follows from 
Definition~\ref{def:orderinfV}, Definition~\ref{def:orderinfW}, and Lemma~\ref{osp:lem:finitelength}.
\begin{lem}
 \label{osp:lem:finitelengthW}
 \begin{enumerate}
 \item
Given $f, g \in I^{m+n} \times I^{\infty}_{+}$  such that $g \preceq_{{\bf b},0} f$, the set 
$\{h \in I^{m+n} \times I^{\infty}_{+}\mid g \preceq_{{\bf b},0} h \preceq_{{\bf b},0} f \}$ is finite.

\item
Given $f, g \in I^{m+n} \times I^{\infty}_{-}$ such that $g \preceq_{{\bf b},1} f$, the set 
$\{h \in I^{m+n} \times I^{\infty}_{-}\mid g \preceq_{{\bf b},1} h \preceq_{{\bf b},1} f \}$ is finite.
\end{enumerate}
\end{lem}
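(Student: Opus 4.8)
The plan is to deduce both statements from the finiteness of the support of $\mu$ in $\Pi_{{\bf b},0}$ (resp.\ $\Pi_{{\bf b},1}$), together with the bijections \eqref{osp:eq:bijectionwtlV}--\eqref{osp:eq:bijectionwtlW}. I will only spell out part~(1), since (2) is obtained verbatim after replacing the subscript $0$ by $1$, $\Pi_{{\bf b},0}$ by $\Pi_{{\bf b},1}$, $\wtlV$ by $\wtlW$, and $I^\infty_+$ by $I^\infty_-$ throughout.

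First I would fix $f,g\in I^{m+n}\times I^\infty_+$ with $g\preceq_{{\bf b},0}f$ and pass, via the bijection \eqref{osp:eq:bijectionwtlV}, to the corresponding weights $\lambda^{{\bf b},0}_f,\lambda^{{\bf b},0}_g\in\wtlV$. By Definition~\ref{def:orderinfV} we then have $d_f=d_g$ and
\[
\mu:=\lambda^{{\bf b},0}_f-\lambda^{{\bf b},0}_g\in\N\Pi_{{\bf b},0},
\]
so $\mu$ is a single, fixed nonnegative integral combination of simple roots and hence involves only finitely many elements of $\Pi_{{\bf b},0}$ with nonzero coefficient. Now for any $h$ with $g\preceq_{{\bf b},0}h\preceq_{{\bf b},0}f$ we have, again by Definition~\ref{def:orderinfV}, that $h\sim g$ (so $d_h=d_g$) and that both $\nu_h:=\lambda^{{\bf b},0}_h-\lambda^{{\bf b},0}_g$ and $\mu-\nu_h=\lambda^{{\bf b},0}_f-\lambda^{{\bf b},0}_h$ lie in $\N\Pi_{{\bf b},0}$. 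Since the elements of $\Pi_{{\bf b},0}$ are linearly independent in $\Lambda$, the coefficient of each simple root in $\nu_h$ is a nonnegative integer bounded above by its coefficient in $\mu$; in particular $\nu_h$ ranges over a finite subset of $\N\Pi_{{\bf b},0}$, so $\lambda^{{\bf b},0}_h=\lambda^{{\bf b},0}_g+\nu_h$ takes only finitely many values. Applying the inverse bijection $\lambda\mapsto f^{{\bf b}0}_\lambda$ of \eqref{osp:eq:bijectionwtlV} one concludes that $\{h\in I^{m+n}\times I^\infty_+\mid g\preceq_{{\bf b},0}h\preceq_{{\bf b},0}f\}$ is finite.

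I do not expect any real obstacle here; the only point needing a moment's care is that an intermediate $h$ cannot have arbitrarily long ``tail'', i.e.\ cannot differ from $g$ in infinitely many of the coordinates $h(\ul j)$ (equivalently, in the partition part ${}^+\lambda$). This is immediate from the observation above that $\nu_h$ is supported on the finite set of simple roots occurring in $\mu$, combined with the formula \eqref{eq:fb0} defining $f^{{\bf b}0}_\lambda$. Alternatively, once the tail of $h$ is frozen one may restrict to the weight lattice of the finite-rank ortho-symplectic Lie superalgebra $\osp(2m+1|2n|2k)$ for $k$ large (depending on $f,g$) and invoke the evident analogue of \lemref{osp:lem:finitelength} there, whose proof is again an immediate consequence of the definition of the Bruhat ordering; this is the route alluded to in the statement of \lemref{osp:lem:finitelengthW}.
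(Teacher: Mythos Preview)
Your argument is correct and is essentially the same as the paper's, which simply records that the lemma follows from Definitions~\ref{def:orderinfV}, \ref{def:orderinfW} and Lemma~\ref{osp:lem:finitelength} without spelling out details. Your direct argument via the boundedness of the coefficients of $\nu_h$ in $\N\Pi_{{\bf b},0}$ is exactly the content behind that citation, and your final paragraph even names the reduction-to-finite-rank route that the paper alludes to.
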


The following lemma is an infinite-rank analogue of Lemma~\ref{lem: linkage}.
\begin{lem}
  \label{lem:linkage at infinity}
For any $f$, $g \in I^{m+n} \times I^\infty_{+}$ (respectively, $I^{m+n} \times I^\infty_{-}$),  $f \sim g$ 
if and only if $\texttt{wt}_{{\bf b},0}(f)=\texttt{wt}_{{\bf b},0}(g)$ (respectively, $\texttt{wt}_{{\bf b},1}(f)=\texttt{wt}_{{\bf b},1}(g)$).
\end{lem}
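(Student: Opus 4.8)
The two cases of the lemma are entirely parallel, so the plan is to treat $f,g\in I^{m+n}\times I^{\infty}_{+}$ in detail and to obtain the case $f,g\in I^{m+n}\times I^{\infty}_{-}$ by exchanging $\VV,\ \wedge^\infty\VV,\ \wtlV$ and \eqnref{eq:fb0} for $\WW,\ \wedge^\infty\WW,\ \wtlW$ and \eqnref{eq:fb1}. The idea is to reduce the statement to its finite-rank counterpart \lemref{lem: linkage} by truncating the semi-infinite $q$-wedge factor, just as linkage in the infinite-rank categories is reduced to finite rank via the functors $\mf{tr}_0$ (\propref{prop:trunc:ML}).

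First I would attach to $f$ the weight $\lambda^{{\bf b}0}_f\in\wtlV$ of \eqnref{osp:eq:bijectionwtlV}, with charge $d_f$ and partition ${}^+\lambda_f$, and likewise for $g$; then I choose $N\in\N$ with $N\ge l({}^+\lambda_f),\,l({}^+\lambda_g)$, so that by \eqnref{eq:fb0} one has $f(\ul j)=d_f+n-m+\hf-j$ and $g(\ul j)=d_g+n-m+\hf-j$ for all $j>N$. Cutting the $q$-wedge at length $N$, the basis vector of $\mathbb{T}^{\bf b}\otimes\wedge^\infty\VV$ indexed by $f$ becomes $M^{\bf b}_{f|_{[m+n]}}\otimes\mc V_{f|_{\{\ul 1,\dots,\ul N\}}}$ wedged onto the ``frozen tail'' $v_{d_f+n-m-\hf-N}\wedge v_{d_f+n-m-\frac32-N}\wedge\cdots$, which depends on $f$ only through $d_f$. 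Consequently $\texttt{wt}_{{\bf b},0}(f)$ is the sum of the $\bun$-weight of the finite vector $M^{\bf b}_{f|_{[m+n]}}\otimes\mc V_{f|_{\{\ul 1,\dots,\ul N\}}}$ --- which, after the $\rho$-matching built into \eqnref{eq:fb0}, equals the weight $\texttt{wt}_{({\bf b},0^N)}$ of Definition~\ref{def:wtb}, applied to the $0^{m+N}1^n$-sequence $({\bf b},0^N)$ and the finite weight $\mu_f:=\sum_{i=1}^{m+n}\lambda^{\bf b}_{f,i}\ep^{b_i}_i+\sum_{j=1}^{N}{}^+\lambda_{f,\ul j}\ep^0_{\ul j}$ --- and a contribution of the frozen tail which, one checks, depends only on $d_f$ and is injective in it (on $|\emptyset,d\rangle$ the generators $\bk_{\alpha_i}$, $i\in\I^\imath$, act by $q^{\delta_{i,d}-\delta_{i,-d}}$, so distinct charges are separated). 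Therefore $\texttt{wt}_{{\bf b},0}(f)=\texttt{wt}_{{\bf b},0}(g)$ is equivalent to the conjunction of $d_f=d_g$ and $\texttt{wt}_{({\bf b},0^N)}(\mu_f)=\texttt{wt}_{({\bf b},0^N)}(\mu_g)$.

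On the other hand, Definition~\ref{def:orderinfV} says that $f\sim g$ is equivalent to the conjunction of $d_f=d_g$ and $\mu_f\sim\mu_g$ in the sense of \S\ref{subsec:osp} for the Lie superalgebra $\osp(2(m+N)+1|2n)$ equipped with the simple system $\Pi_{({\bf b},0^N)}$. Now I would invoke \lemref{lem: linkage} for this finite-rank Lie superalgebra and the $0^{m+N}1^n$-sequence $({\bf b},0^N)$: it asserts precisely that $\mu_f\sim\mu_g$ if and only if $\texttt{wt}_{({\bf b},0^N)}(\mu_f)=\texttt{wt}_{({\bf b},0^N)}(\mu_g)$. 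Combining the two equivalences then yields the lemma. The step I expect to require the most care is the one in the previous paragraph: matching the $\rho$-shift of \eqnref{eq:fb0} with the finite $\rho_{({\bf b},0^N)}$ so that the truncation genuinely carries the relation of Definition~\ref{def:orderinfV} and the weight $\texttt{wt}_{{\bf b},0}$ onto their finite-rank counterparts, and verifying that the discarded tail contributes a weight depending only on, and injectively in, the charge. This is exactly the bookkeeping already implicit in the formulation of Definition~\ref{def:orderinfV}, and with \propref{prop:trunc:ML} at hand it introduces no new idea.
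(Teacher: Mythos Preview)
Your overall strategy---truncate the semi-infinite wedge and reduce to the finite-rank \lemref{lem: linkage} via Definitions~\ref{def:orderinfV}--\ref{def:orderinfW}---is exactly what the paper's one-line proof invokes. The gap is in the sentence beginning ``Therefore.'' From a decomposition $\texttt{wt}_{{\bf b},0}(f)=A_f+B(d_f)$ with $B$ injective in $d$, you cannot conclude that $A_f+B(d_f)=A_g+B(d_g)$ forces $d_f=d_g$: a change in the charge could in principle be compensated by a change in the finite piece. (There is also a bookkeeping issue one step earlier: the entries $f(\ul j)={}^+\lambda_{f,\ul j}+d_f+n-m+\tfrac12-j$ depend on $d_f$, so the $\bun$-weight of your ``finite vector'' is not a function of $\mu_f$ alone, and it does not literally equal $\texttt{wt}_{({\bf b},0^N)}(\mu_f)$.)

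What is actually needed is that $d_f$ is recoverable from the \emph{full} $\bun$-weight of $M^{{\bf b},0}_f$, not merely from the tail contribution. This holds: if $c_a(f)$ denotes the coefficient of $\bar\varepsilon_a$ in $\texttt{wt}_{{\bf b},0}(f)$ (equivalently, the $\bk_{\alpha}$--eigenvalue data), then $c_a(f)=1$ for all $a\gg 0$, and a direct count gives the regularized identity $\sum_{a>0}\bigl(c_a(f)-1\bigr)=d_f$. Hence equal full weights force $d_f=d_g$; then the tails coincide, the finite $\bun$-weights agree, and \lemref{lem: linkage} applied to $({\bf b},0^N)$ finishes the argument along the lines you sketched. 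The same remark applies verbatim to the $I^{m+n}\times I^\infty_-$ case.
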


\begin{proof}
Follows from Definition~\ref{def:orderinfV}, Definition~\ref{def:orderinfW}, and Lemma~\ref{lem: linkage}.
\end{proof}


\chapter{$\imath$-Canonical bases and Kazhdan-Lusztig-type polynomials}
  \label{osp:sec:cbanddcb}
  
In this chapter, suitably completed Fock spaces are constructed and shown to admit
$\imath$-canonical as well as dual $\imath$-canonical bases.
We introduce truncation maps to study the relations among bases for different Fock spaces,
which then allow us to formulate $\imath$-canonical bases in certain semi-infinite Fock spaces. 

\section{The $B$-completion and $\Upsilon$}
   \label{osp:subsec:cbanddcb}

Let ${\bf b}$ be a ${0^m1^n}$-sequence. 
For $r \in \N$, we let
\begin{align}\label{def:pi:lek}
\pi_r:\mathbb T^{\bf b}\longrightarrow \mathbb T^{\bf b}_{r}
\end{align}
be the natural projection map with respect to the standard basis $\{M_f^{\bf
b} \mid f \in I^{m+n}\}$ of $\mathbb T^{\bf b}$ (see \eqref{osp:eq:TrT}). 
We then let
$\wt{\mathbb T}^{\bf b}$ be the completion of $\mathbb T^{\bf b}$
with respect to the descending sequence of subspaces $\{\ker \pi_r \mid r \ge 1\}$. Formally, every element in
$\wt{\mathbb T}^{\bf b}$ is a possibly infinite linear combination
of $M_f$, with $f\in I^{m+n}$. We let $\widehat{\mathbb T}^{\bf b}$
denote the subspace of $\wt{\mathbb T}^{\bf b}$ spanned by elements
of the form 
\begin{align}
  \label{vec:in:comp}
M_f+\sum_{g\prec_{\bf b}f}c_{gf}^{\bf b}(q) M_g, \quad \text{ for } c_{gf}^{\bf b}(q) \in\Q(q).
\end{align}

\begin{definition}
  \label{osp:def:completionT}
The $\Q(q)$-vector spaces $\wt{\mathbb T}^{\bf b}$ and
$\widehat{\mathbb T}^{\bf b}$ are called the $A$-{\em completion}
and $B$-{\em completion} of $\mathbb T^{\bf b}$, respectively.
\end{definition}

\begin{rem}
  \label{osp:rem:Bcompletions}
The $B$-completion we defined here is different from the one defined in \cite{CLW12}, 
since they are based on {\em different partial orderings}. However,  observing that the partial ordering used in \cite{CLW12} 
is coarser than the partial ordering here, our $B$-completion here contains the  $B$-completion in 
\cite[Definition 3.2]{CLW12} as a subspace. 
This fact very often allows us to cite directly the results therein.
\end{rem}

\begin{lem}
  \label{osp:lem:compatibleUpsilon}
Let $f \in I_r^{m+n}$. Then we have $M_f \in \mathbb{T}^{\bf b}_{r}$, and 
\[
\pi_r ( \Upsilon^{(l)}M_f) = \Upsilon^{(r)} M_f, \quad \text{ for all }\; l \ge r.
\]
\end{lem}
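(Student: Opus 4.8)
The plan is to verify the claim by unwinding the definition of $\Upsilon^{(l)}$ as an infinite-rank operator and exploiting the weight-homogeneity of its summands. First I would recall from \S\ref{sec:infiniterankQSP} that $\Upsilon = \sum_{\mu \in \N\Pi} \Upsilon_\mu$ with $\Upsilon_\mu = \lim_{r\to\infty} \Upsilon^{(r)}_\mu \in \U^-_\mu$, and that by the stability property $\Upsilon^{(l)}_\mu = \Upsilon^{(r)}_\mu$ whenever $\mu \in \N\Pi_{2r+1}$. Acting on $M_f$ for $f \in I_r^{m+n}$, only finitely many $\Upsilon^{(l)}_\mu$ act nontrivially (by weight reasons, since $M_f$ has bounded weight), so $\Upsilon^{(l)} M_f = \sum_\mu \Upsilon^{(l)}_\mu M_f$ is in fact a finite sum lying in $\widetilde{\mathbb{T}}^{\bf b}_l$.

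The key step is the following weight bookkeeping: each $\Upsilon^{(l)}_\mu$ lies in $\U^-_{-\mu}$ with $\mu^\theta = \mu$ (Theorem~\ref{thm:Upsilon}), so $\Upsilon^{(l)}_\mu M_f$ is a linear combination of $M_g$ with $\mathrm{wt}(M_g) = \mathrm{wt}(M_f) - \mu$ and $g \preceq_{\bf b} f$. Since $f \in I_r^{m+n}$, i.e. $\mathrm{wt}(M_f) \in \Lambda_{2r+1}$, and $\mu$ is $\theta$-invariant, the resulting weights $\mathrm{wt}(M_f) - \mu$ also lie in $\Lambda_{2r+1}$; thus all the $M_g$ appearing are indexed by $g \in I_r^{m+n}$. (Here I would use that lowering by a root $\alpha_i$ with $|i| \le r$ keeps entries of $g$ in the range $\I_{2r+2}$, which is exactly the content of $g \in I_r^{m+n}$ — equivalently, $\Upsilon^{(l)}_\mu$ for $\mu \in \N\Pi_{2r+1}$ lies in $\U^-_{2r+1}$ and hence preserves $\mathbb{T}^{\bf b}_r$ as a subspace of $\mathbb{T}^{\bf b}_l$.) Consequently $\Upsilon^{(l)} M_f \in \mathbb{T}^{\bf b}_r$ already, so applying the projection $\pi_r$ does nothing: $\pi_r(\Upsilon^{(l)} M_f) = \Upsilon^{(l)} M_f$.

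It then remains to show $\Upsilon^{(l)} M_f = \Upsilon^{(r)} M_f$. This follows from the stability of the $\Upsilon_\mu$'s: for a summand $\Upsilon^{(l)}_\mu M_f$ to be nonzero we need $\mu \le \mathrm{wt}(M_f)$ in $\N\Pi$, and since $\mathrm{wt}(M_f) \in \Lambda_{2r+1}$ one checks $\mu \in \N\Pi_{2r+1}$, whence $\Upsilon^{(l)}_\mu = \Upsilon^{(r)}_\mu$. Summing over the (finitely many) relevant $\mu$ gives $\Upsilon^{(l)} M_f = \Upsilon^{(r)} M_f$, which is the desired identity.

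I expect the main obstacle — really the only non-formal point — to be the bookkeeping that guarantees the weights appearing in $\Upsilon^{(l)}_\mu M_f$ stay within $\Lambda_{2r+1}$, i.e. that $\theta$-invariance of $\mu$ together with $f \in I_r^{m+n}$ forces $g \in I_r^{m+n}$ for every $M_g$ in the expansion. This is where one must be careful about how the involution $\theta$ interacts with the index set $\I_{2r+2}$ and the coproduct action on $\VV$ and $\WW$; once this is pinned down the rest is immediate from the definitions and Theorem~\ref{thm:Upsilon}.
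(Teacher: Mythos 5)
Your argument has a genuine gap at the central ``weight bookkeeping'' step. You assert that since $\mathrm{wt}(M_f)\in\Lambda_{2r+1}$ and $\mu$ is $\theta$-invariant, the weight $\mathrm{wt}(M_f)-\mu$ again lies in $\Lambda_{2r+1}$, and you conclude both that $\Upsilon^{(l)}M_f\in\mathbb{T}^{\bf b}_r$ and that $\Upsilon^{(l)}_\mu M_f=0$ whenever $\mu\notin\N\Pi_{2r+1}$. Neither holds: $\theta$-invariance of $\mu$ does not put $\mu$ in $\Lambda_{2r+1}$ (for instance $\mu=\alpha_{r+1}+\alpha_{-(r+1)}$ is $\theta$-invariant yet has nonzero $\varepsilon_{r+\frac{3}{2}}$-coefficient). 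For a concrete counterexample take ${\bf b}=(0,1)$, $r=0$, $l=1$, $f=(\hf,-\hf)$: the recursion \eqref{aux:Upuniq1} forces ${}_{-1}r(\Upsilon^{(1)}_{\alpha_1+\alpha_{-1}})=-(q^{-1}-q)F_{\alpha_1}$, so $\Upsilon^{(1)}_{\alpha_1+\alpha_{-1}}=-(q^{-1}-q)F_{\alpha_1}F_{\alpha_{-1}}$, and computing via the coproduct gives $F_{\alpha_1}F_{\alpha_{-1}}\bigl(v_{\hf}\otimes w_{-\hf}\bigr)=v_{\frac{3}{2}}\otimes w_{-\frac{3}{2}}\neq 0$. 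Thus $\Upsilon^{(1)}_{\alpha_1+\alpha_{-1}}M_f\neq 0$ although $\alpha_1+\alpha_{-1}\notin\N\Pi_1$, and $\Upsilon^{(1)}M_f$ has a component outside $\mathbb{T}^{\bf b}_0$; in particular $\Upsilon^{(1)}M_f\neq\Upsilon^{(0)}M_f$.

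The assertion that is actually true, and is all the lemma needs, is the strictly weaker one that $\pi_r(\Upsilon^{(l)}_\mu M_f)=0$ for $\mu\notin\N\Pi_{2r+1}$. This does follow from the weight setup you wrote down: writing $\mu=\sum_i c_i\alpha_i$ and taking $|j|>r$ extremal with $c_j\neq 0$, the $\varepsilon_{j\pm\hf}$-coefficient of $\mu$ is nonzero, hence $\mathrm{wt}(M_f)-\mu\notin\Lambda_{2r+1}$; therefore every $M_g$ appearing in $\Upsilon^{(l)}_\mu M_f$ has some entry $g(i)\notin\I_{2r+2}$ and is annihilated by $\pi_r$. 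Combined with the stability $\Upsilon^{(l)}_\mu=\Upsilon^{(r)}_\mu$ for $\mu\in\N\Pi_{2r+1}$, and with the correct part of your parenthetical remark (that $\U^-_{2r+1}$ preserves $\mathbb{T}^{\bf b}_r$, so $\Upsilon^{(r)}M_f\in\mathbb{T}^{\bf b}_r$ and $\pi_r$ fixes it), one gets $\pi_r(\Upsilon^{(l)}M_f)=\pi_r(\Upsilon^{(r)}M_f)=\Upsilon^{(r)}M_f$. That projection-kills-the-tail argument is the paper's proof; the distinction between ``the extra terms vanish'' and ``the extra terms are killed by $\pi_r$'' is exactly what your write-up elides.
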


\begin{proof}
Note that $\N\Pi_{2r+1} \subset \N\Pi_{2l+1}$, 
for $l \geq r$. It is clear from the construction of $\Upsilon^{(r)}$ in Theorem \ref{thm:Upsilon} that we have 
\[
\Upsilon^{(l)} = \Upsilon^{(r)} + \sum_{\mu \in \N\Pi_{2l+1}\backslash \N\Pi_{2r+1}} \Upsilon^{(l)}_\mu. 
\]

By $\U$-weight consideration, it is easy to see $\pi_r(\Upsilon^{(l)}_\mu M_f) = 0$ if $\mu \not \in \N\Pi_{2r+1}$. Therefore 
\[
\pi_r ( \Upsilon^{(l)}M_f) = \pi_r ( \Upsilon^{(r)}M_f) =\Upsilon^{(r)}M_f.
\]
The lemma follows.
\end{proof}

It follows from Lemma \ref{osp:lem:compatibleUpsilon} that $\displaystyle\lim_{r \to \infty} \Upsilon^{(r)} M_f$, 
for any $f \in I^{m+n}$, is a well-defined element in $\wt{\mathbb T}^{\bf b}$. Therefore we have 
\[
\Upsilon M_f = \lim_{r \to \infty} \Upsilon^{(r)} M_f,
\]
where $\Upsilon$ is the operator defined in \eqref{pt2:eq:Upsilon}.

\begin{lem}
  \label{osp:lem:Upsilonorder}
For $f \in I^{m+n}$, we have
\begin{equation}
  \label{osp:lem:eq:Upsilonorder}
\Upsilon M_f =
M_f + \sum_{g \prec_{\bf b} f}r'_{gf}(q)M_g, \quad \text{ for }r'_{gf}(q) \in \mA.
\end{equation}
In particular, we have $\Upsilon: \mathbb{T}^{\bf b} \rightarrow \widehat{\mathbb{T}}^{\bf b}$.
\end{lem}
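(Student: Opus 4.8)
\textbf{Proof plan for Lemma~\ref{osp:lem:Upsilonorder}.}
The plan is to deduce the statement from the finite-rank integrality and triangularity results of Part~1 together with the stabilization Lemma~\ref{osp:lem:compatibleUpsilon}. First I would fix $f\in I^{m+n}$ and choose $r$ large enough that $f\in I_r^{m+n}$, so that $M_f\in\mathbb T^{\bf b}_r$. By Theorem~\ref{thm:UpsiloninZ} (applied to $\U_{2r+1}$ and $\U^\imath_r$), the components $\Upsilon^{(r)}_\mu$ lie in $\U^-_{2r+1,\mA}$; hence $\Upsilon^{(r)}M_f$ is an $\mA$-linear combination of monomials $M_g$ with $g\in I_r^{m+n}$. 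Moreover Theorem~\ref{thm:Upsilon} gives $\Upsilon^{(r)}_\mu=0$ unless $\mu^\inv=\mu$ and $\mu\in\N\Pi_{2r+1}$; writing $\Upsilon^{(r)}M_f=M_f+\sum_g r^{(r)}_{gf}(q)M_g$, the weight bookkeeping forces each surviving $g$ to satisfy $\texttt{wt}_{\bf b}(g)=\texttt{wt}_{\bf b}(f)$ (because $\mu^\inv=\mu$ means the displacement is trivial in $\Lambda_\inv$, cf.\ the computation in the proof of Theorem~\ref{thm:samebar}) and $\lambda^{\bf b}_f-\lambda^{\bf b}_g\in\N\Pi_{\bf b}$ (because $\Upsilon^{(r)}_\mu\in\U^-$). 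Combined with Lemma~\ref{lem: linkage}, these two conditions say precisely $g\prec_{\bf b}f$, so $\Upsilon^{(r)}M_f=M_f+\sum_{g\prec_{\bf b}f}r^{(r)}_{gf}(q)M_g$ with $r^{(r)}_{gf}(q)\in\mA$.

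Next I would pass to the limit. By Lemma~\ref{osp:lem:compatibleUpsilon}, for all $l\ge r$ we have $\pi_r(\Upsilon^{(l)}M_f)=\Upsilon^{(r)}M_f$, so the coefficient $r^{(l)}_{gf}(q)$ of $M_g$ is independent of $l$ once $l\ge r$ and $g\in I_r^{m+n}$; this is exactly what makes $\Upsilon M_f:=\lim_{r\to\infty}\Upsilon^{(r)}M_f$ a well-defined element of $\wt{\mathbb T}^{\bf b}$, as already noted after Lemma~\ref{osp:lem:compatibleUpsilon}. Setting $r'_{gf}(q):=r^{(r)}_{gf}(q)$ for any/all sufficiently large $r$, we obtain
\[
\Upsilon M_f = M_f + \sum_{g\prec_{\bf b}f}r'_{gf}(q)M_g,\qquad r'_{gf}(q)\in\mA,
\]
which is the asserted identity \eqref{osp:lem:eq:Upsilonorder}.

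Finally, the displayed formula shows $\Upsilon M_f$ is of the form \eqref{vec:in:comp}, hence lies in $\widehat{\mathbb T}^{\bf b}$ by Definition~\ref{osp:def:completionT}; since $\{M_f\}$ spans $\mathbb T^{\bf b}$ and $\Upsilon$ is $\Q(q)$-linear, this gives $\Upsilon:\mathbb T^{\bf b}\to\widehat{\mathbb T}^{\bf b}$, as claimed. The only mild subtlety—and the step I would be most careful about—is the passage from ``$\Upsilon^{(r)}_\mu\ne 0$ only for $\mu^\inv=\mu$ in $\N\Pi_{2r+1}$'' to the Bruhat-ordering inequality $g\prec_{\bf b}f$; this requires matching the $\U$-action weight displacement against the $\rho_{\bf b}$-shifted identification $I^{m+n}\leftrightarrow X(m|n)$ and invoking Lemma~\ref{lem: linkage}, exactly as in the analysis of the bar involution in Theorem~\ref{thm:samebar}. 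Everything else is a routine limit argument supported by Lemma~\ref{osp:lem:compatibleUpsilon} and the finiteness of Bruhat intervals (Lemma~\ref{osp:lem:finitelength}).
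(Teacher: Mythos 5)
Your proof is correct and uses the same core ingredients as the paper's: the facts that $\Upsilon_\mu\in\U^-_{-\mu}$, that $\Upsilon_\mu=0$ unless $\mu^\inv=\mu$, the translation of these into the two conditions defining $\preceq_{\bf b}$ (via Lemma~\ref{lem: linkage} and the $\rho_{\bf b}$-shifted indexing), and Theorem~\ref{thm:UpsiloninZ} for integrality. The only presentational difference is that you run the argument at a fixed finite rank $r$ and then invoke Lemma~\ref{osp:lem:compatibleUpsilon} to pass to the limit, whereas the paper reasons directly with the already-defined operator $\Upsilon$ (for which the limit argument was established just before the lemma) by observing that any $u\in\U^-$ of $\bun$-weight $0$ moves $M_f$ only to $M_g$ with $g\preceq_{\bf b}f$, and then specializing $u=\Upsilon_\mu$. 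Your version makes the limit bookkeeping slightly more explicit but is mathematically the same route; no gaps.
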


\begin{proof}
For any $u \in \U^-$ with $\Ui$-weight $0$, $f \in I^{m+n}$, let $u M_f = \sum_{g} c_{gf}M_{g}$.
Fix any $g$ with $c_{gf} \neq 0$.  
Since $u$ has $\Ui$-weight $0$, we know by Lemma \ref{lem: linkage}  that $g \sim f$ and so
$\lambda^{\bf b}_g \sim \lambda^{\bf b}_f$. By a direct computation (by writing $u$ in terms of Chevalley generator $F$'s), 
it is easy to see that $u \in \U^-$ implies that $\lambda^{\bf b}_f - \lambda^{\bf b}_g \in \N\Pi_{\bf b}$. Hence we have $g \preceq_{\bf b} f$.

Recall that $\Upsilon_\mu \in \U^-$ for all $\mu$ and $\Upsilon_\mu \neq 0$ only if $\mu = \mu^{\inv}$, i.e., $\mu$ is of $\Ui$-weight $0$.
Hence we have the identity \eqref{osp:lem:eq:Upsilonorder}, where $r'_{gf}(q) \in \mA$ follows from
Theorem \ref{thm:UpsiloninZ}. The lemma follows.
\end{proof}

\begin{lem}
  \label{osp:lem:Upsilonhattohat}
The map 
$\Upsilon : \mathbb{T}^{\bf b} \rightarrow \widehat{\mathbb T}^{\bf b}$
 extends uniquely to a $\Qq$-linear map 
 $\Upsilon: \widehat{\mathbb T}^{\bf b} \rightarrow \widehat{\mathbb T}^{\bf b}$.
\end{lem}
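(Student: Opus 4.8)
The plan is to define the extension by passing to the limit over truncations: for $x \in \widehat{\mathbb T}^{\bf b}$ I would set
\[
\Upsilon(x) := \lim_{r \to \infty} \Upsilon\big(\pi_r(x)\big),
\]
where $\pi_r$ denotes also the natural projection $\wt{\mathbb T}^{\bf b} \to \mathbb T^{\bf b}_r$ extending \eqref{def:pi:lek}, so that $\pi_r(x)\in \mathbb T^{\bf b}_r \subseteq \mathbb T^{\bf b}$ and $\Upsilon(\pi_r(x))$ is already described by Lemma~\ref{osp:lem:Upsilonorder}. The first and only substantial point is that this limit exists, i.e.\ stabilises coefficient-by-coefficient.

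To see this, write $x = M_f + \sum_{g \prec_{\bf b} f} c^{\bf b}_{gf}(q) M_g$ (a spanning vector of the form \eqref{vec:in:comp}; the general case follows by $\Qq$-linearity), so that the support of $x$ is contained in the down-set $\{g : g \preceq_{\bf b} f\}$. Fix $h \in I^{m+n}$. By Lemma~\ref{osp:lem:Upsilonorder} the basis vector $M_h$ occurs in $\Upsilon(\pi_r(x))$ only through those $M_g$ appearing in $\pi_r(x)$ — whose support lies in that of $x$, hence $g\preceq_{\bf b}f$ — with $h \preceq_{\bf b} g$; this forces $h \preceq_{\bf b} g \preceq_{\bf b} f$, and by Lemma~\ref{osp:lem:finitelength} there are only finitely many such $g$. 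Hence once $r$ is large enough that all of these finitely many $g$ lie in $I^{m+n}_r$, the coefficient of $M_h$ in $\Upsilon(\pi_r(x))$ no longer depends on $r$: it equals $r'_{hf}(q) + c^{\bf b}_{hf}(q) + \sum_{h \prec_{\bf b} g \prec_{\bf b} f} c^{\bf b}_{gf}(q)\,r'_{hg}(q)$, a finite sum of elements of $\mA$ and in particular an element of $\Qq$. This stabilisation is where the argument actually lives, and it is exactly the place where the \emph{finer} Bruhat ordering of this paper is needed (Remark~\ref{osp:rem:Bcompletions}): interval-finiteness with respect to the coarser ordering of \cite{CLW12} would not suffice to bound the relevant index set.

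Everything else is bookkeeping. Since $\preceq_{\bf b}$ is transitive and $\Upsilon$ is uni-triangular by Lemma~\ref{osp:lem:Upsilonorder}, the limit $\Upsilon(x)$ has $M_f$ as its unique $\preceq_{\bf b}$-maximal term with coefficient $1$, so it is again of the form \eqref{vec:in:comp} and lies in $\widehat{\mathbb T}^{\bf b}$; as the coefficientwise limits above stabilise, $x\mapsto\Upsilon(x)$ is $\Qq$-linear; and for $x\in\mathbb T^{\bf b}$ one has $\pi_r(x)=x$ for $r\gg 0$, so the map restricts to the original $\Upsilon$ on $\mathbb T^{\bf b}$. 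Finally, uniqueness holds because $\pi_r(x)\to x$ with $\pi_r(x)\in\mathbb T^{\bf b}$, so any $\Qq$-linear extension that respects this limiting process (equivalently, is continuous for the completion topology) is forced to coincide with $\lim_r \Upsilon(\pi_r(\cdot))$. I expect the stabilisation step to be the main obstacle; the rest is routine triangular-matrix manipulation.
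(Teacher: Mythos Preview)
Your proof is correct and takes essentially the same approach as the paper: both arguments hinge on the uni-triangularity of $\Upsilon$ from Lemma~\ref{osp:lem:Upsilonorder} together with the interval-finiteness of $\preceq_{\bf b}$ from Lemma~\ref{osp:lem:finitelength}, so that for fixed $h$ only finitely many $g$ with $h\preceq_{\bf b} g\preceq_{\bf b} f$ contribute to the coefficient of $M_h$. The paper phrases this as directly applying $\Upsilon$ to the formal infinite sum and checking each coefficient is a finite sum, while you phrase it as stabilisation of $\Upsilon(\pi_r(x))$ as $r\to\infty$; these are the same computation, and your version has the minor bonus of explicitly addressing uniqueness (via continuity), which the paper's proof leaves implicit.
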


\begin{proof}
We adapt the proof of \cite[Lemma 3.7]{CLW12} here.
To show that the map $\Upsilon$ extends to $\widehat{\mathbb T}^{\bf b}$ we
need to show that if $y=M_f+\sum_{g\prec_{\bf b}f}r_{g}(q) M_g\in
\widehat{\mathbb T}^{\bf b}$ for $r_{g}(q) \in\Q(q)$ then
$\Upsilon{y}\in\widehat{\mathbb T}^{\bf b}$. By Lemma~
\ref{osp:lem:Upsilonorder} and the definition of $\widehat{\mathbb T}^{\bf
b}$, it remains to show that $\Upsilon{y}\in\wt{\mathbb T}^{\bf b}$. To
that end, we note that if the coefficient of $M_h$ in $\Upsilon{y}$ is
nonzero, then there exists $g\preceq_{\bf b}f$ such that $r'_{hg}(q)\not=0$. Thus we have $h\preceq_{\bf b
}g\preceq_{\bf b}f$.  However, by Lemma \ref{osp:lem:finitelength} there
are only finitely many such $g$'s. Thus, only finitely many $g$'s
can contribute to the coefficient of $M_h$ in $\Upsilon{y}$, and hence
$\Upsilon{y}\in\wt{\mathbb T}^{\bf b}$.
\end{proof}

\section{$\imath$-Canonical bases}
 \label{osp:subsec:cbdcb}

Anti-linear maps $\Abar: {\mathbb{T}}^{\bf b}_r \rightarrow {\mathbb{T}}^{\bf b}_r$ and 
$\Abar: {\mathbb{T}}^{\bf b} \rightarrow \widehat{\mathbb{T}}^{\bf b}$ were defined in \cite[\S 3.3]{CLW12}  
(recall Remark \ref{osp:rem:Bcompletions} that our $B$-completion contains the one therein as a subspace,
so $\wTb$ here can and will be understood in the sense of this paper). 
We define the map $\Bbar : \mathbb{T}^{\bf b} \rightarrow \widehat{\mathbb{T}}^{\bf b}$ by 
\begin{equation}
 \label{osp:Upsilon}
\Bbar(M_f) := \Upsilon \Abar(M_f).
\end{equation}
Recall from \S \ref{subsec:bars} that $\mathbb{T}_r^{\bf b}$ is an $\imath$-involutive $\U^{\imath}_r$-module with anti-linear involution $\Bbar^{(r)}$. 
\begin{lem}
We have 
$
\pi_r ( \Bbar(M_f)) = \Bbar^{(r)} (M_f)$, for $f \in I_r^{m+n}$.
\end{lem}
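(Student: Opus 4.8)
The plan is to reduce everything to the two facts already proved: that $\Upsilon$ is compatible with the projections $\pi_r$ (Lemma~\ref{osp:lem:compatibleUpsilon}) and that the type-$A$ bar involution $\Abar$ is compatible with $\pi_r$ (which is established in \cite[\S 3.3]{CLW12} and which, via Remark~\ref{osp:rem:Bcompletions}, applies in our completion). Concretely, I would first recall that for $f\in I_r^{m+n}$ one has $M_f\in\mathbb T^{\bf b}_r$, and that $\Abar$ preserves $\mathbb T^{\bf b}_r$, with $\pi_r(\Abar(M_f))=\Abar^{(r)}(M_f)$; indeed $\Abar^{(r)}$ is simply the restriction of $\Abar$ to $\mathbb T^{\bf b}_r$ in the sense of the type-$A$ constructions of \cite{CLW12}. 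In particular $\Abar(M_f)=\Abar^{(r)}(M_f)\in\mathbb T^{\bf b}_r$, so $\Abar(M_f)$ is a finite $\Qq$-linear combination of $M_g$ with $g\in I_r^{m+n}$.

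Next I would apply $\pi_r$ to $\Bbar(M_f)=\Upsilon\,\Abar(M_f)$ and use linearity of $\pi_r$ together with Lemma~\ref{osp:lem:compatibleUpsilon}. Writing $\Abar(M_f)=\sum_g c_g M_g$ (finite sum over $g\in I_r^{m+n}$), we get
\[
\pi_r(\Bbar(M_f)) = \pi_r\Big(\sum_g c_g\,\Upsilon M_g\Big) = \sum_g c_g\,\pi_r(\Upsilon M_g) = \sum_g c_g\,\Upsilon^{(r)} M_g,
\]
where the last equality is exactly Lemma~\ref{osp:lem:compatibleUpsilon} applied to each $g\in I_r^{m+n}$ (with $l$ taken large, or directly). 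This last expression equals $\Upsilon^{(r)}\big(\sum_g c_g M_g\big)=\Upsilon^{(r)}\Abar^{(r)}(M_f)$, which by the definition of $\Bbar^{(r)}$ in \S\ref{subsec:bars} (namely $\Bbar^{(r)}=\Upsilon^{(r)}\circ\Abar^{(r)}$, cf.\ Proposition~\ref{prop:compatibleBbar}) is precisely $\Bbar^{(r)}(M_f)$.

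I expect no serious obstacle here; the statement is a compatibility/bookkeeping lemma. The only point requiring a little care is that $\Upsilon^{(r)} M_g$ makes sense and is finite-dimensional-supported for $g\in I_r^{m+n}$, so that applying $\pi_r$ and rearranging the finite sum is legitimate — this is guaranteed by Lemma~\ref{osp:lem:Upsilonorder} and Lemma~\ref{osp:lem:compatibleUpsilon}. One should also double-check the convention, already fixed in Remark~\ref{osp:rem:Bcompletions}, that $\Abar$ and $\Abar^{(r)}$ agree on $\mathbb T^{\bf b}_r$ in the stronger ordering used here; once that is granted, the computation above is immediate. I would therefore present the proof in essentially three lines: invoke compatibility of $\Abar$, expand $\Abar(M_f)$ as a finite combination supported in $I_r^{m+n}$, and then push $\pi_r$ through $\Upsilon$ using Lemma~\ref{osp:lem:compatibleUpsilon}.
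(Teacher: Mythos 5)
Your proposal goes wrong at the claim that $\Abar$ preserves $\mathbb{T}^{\bf b}_r$ and that $\Abar^{(r)}$ is simply the restriction of $\Abar$ to $\mathbb{T}^{\bf b}_r$. This is false. The map $\Abar$ on $\mathbb{T}^{\bf b}$ is built from the infinite-rank quasi-$\mc{R}$-matrix $\ThetaA = \sum_\mu \ThetaA_\mu$, and for a standard monomial $M_f$ with $f \in I_r^{m+n}$ the element $\Abar(M_f)$ is in general an \emph{infinite} sum of $M_g$'s whose indices escape $\I_{2r+2}$. For instance, take ${\bf b} = (0,1)$ and $f = (a,a)$ with $0 < a \leq r + \hf$: then $\Abar(v_a \otimes w_a) = \sum_{k \geq 0} c_k\, v_{a-k} \otimes w_{a-k}$, and for $k > a + r + \hf$ the term $v_{a-k} \otimes w_{a-k}$ lies outside $\mathbb{T}^{\bf b}_r$. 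The correct relationship — the one in \cite[Lemma 3.4]{CLW12} — is $\pi_r(\Abar(M_f)) = \Abar^{(r)}(M_f)$, a statement about the truncation, not about membership in $\mathbb{T}^{\bf b}_r$; indeed the paper explicitly distinguishes $\Abar: \mathbb{T}^{\bf b} \to \widehat{\mathbb{T}}^{\bf b}$ (landing in the completion) from $\Abar^{(r)}: \mathbb{T}^{\bf b}_r \to \mathbb{T}^{\bf b}_r$.

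As a consequence, when you expand $\Abar(M_f) = \sum_g c_g M_g$ "as a finite sum over $g \in I_r^{m+n}$" and push $\pi_r$ through $\Upsilon$, you are silently discarding infinitely many terms with $g \notin I_r^{m+n}$, and you have not argued that they contribute nothing. To close the gap one must show that for any such $g$ one has $\pi_r(\Upsilon M_g) = 0$: by a $\U$-weight argument, only $\Upsilon_\mu$ with $\mu \in \N\Pi_{2r+1}$ can bring the $\U$-weight back into $\Lambda_{2r+1}$ (this is the "variant of Lemma~\ref{osp:lem:compatibleUpsilon}" the paper invokes, and it applies even though $\Abar(M_f)$ is infinite because $\Abar$ preserves the $\U$-weight), and such $\Upsilon_\mu$ involve only $F_{\alpha_j}$ with $|j| \leq r$, which cannot erase an index of $g$ lying outside $\I_{2r+2}$. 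That weight consideration is the missing key step. Once it is supplied, the remaining computation — identifying $\sum_{g \in I_r^{m+n}} c_g M_g$ with $\pi_r(\Abar(M_f)) = \Abar^{(r)}(M_f)$ and concluding via $\Bbar^{(r)} = \Upsilon^{(r)}\circ\Abar^{(r)}$ — is correct and matches the paper.
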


\begin{proof}
Recall that $\Bbar^{(r)} = \Upsilon^{(r)} \Abar^{(r)}$. By a variant of Lemma \ref{osp:lem:compatibleUpsilon}, we have 
\[
\pi_r(\Bbar(M_f)) = \pi_r( \Upsilon^{(r)} \Abar(M_f))
\]
by a $\U$-weight consideration. Therefore we have 
\[
\pi_r(\Bbar(M_f)) = \Upsilon^{(r)} \pi_r (\Abar(M_f)) = \Upsilon^{(r)} \Abar^{(r)}(M_f),
\]
where the last identity follows from \cite[Lemma 3.4]{CLW12}. The lemma follows.
\end{proof}

It follows immediately that we have 
\begin{equation}\label{osp:eq:Bbarlimit}
\Bbar(M_f) = \lim_{r \to \infty}\Bbar^{(r)} (M_f), \quad \text{ for } f \in I^{m+n}.
\end{equation}

\begin{lem}
  \label{lem:Mfbar}
Let $f \in I^{m+n}$. 
Then we have 
\[\Bbar(M_f) = M_f + \sum_{g \prec_{\bf b} f} r_{gf}(q)M_g, \quad \text{ for } r_{gf}(q) \in \mA.
\]
Hence the anti-linear map $\Bbar : {\mathbb{T}}^{\bf b} \rightarrow \widehat{\mathbb{T}}^{\bf b}$ 
extends to a map $\Bbar : \widehat{\mathbb{T}}^{\bf b} \rightarrow \widehat{\mathbb{T}}^{\bf b}$. 
Moreover $\Bbar$ is independent of the bracketing orders for the tensor product $\mathbb{T}^{\bf b}$.
\end{lem}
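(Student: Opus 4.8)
\textbf{Proof plan for Lemma~\ref{lem:Mfbar}.}
The plan is to deduce the three assertions (triangularity with $\mathcal{A}$-coefficients, extension of $\Bbar$ to $\widehat{\mathbb T}^{\bf b}$, and independence of bracketing) from the finite-rank results of Part~1 by passing to the limit, in exact parallel with \S\ref{osp:subsec:cbanddcb}. First I would establish the triangularity statement. By \eqref{osp:eq:Bbarlimit} we have $\Bbar(M_f)=\lim_{r\to\infty}\Bbar^{(r)}(M_f)$, so it suffices to understand $\Bbar^{(r)}(M_f)$ for $r\gg 0$ (i.e. $f\in I_r^{m+n}$). Now $\Bbar^{(r)}=\Upsilon^{(r)}\Abar^{(r)}$, and by the known type~$A$ triangularity of $\Abar^{(r)}$ on $\mathbb T_r^{\bf b}$ (cf. \cite[Lemma~3.4]{CLW12}, noting as in Remark~\ref{osp:rem:Bcompletions} that the ordering here is finer) together with Lemma~\ref{osp:lem:Upsilonorder}, the composition $\Upsilon\Abar$ sends $M_f$ to $M_f$ plus a combination of $M_g$ with $g\prec_{\bf b}f$. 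The coefficients lie in $\mathcal{A}$ because $\Abar^{(r)}$ preserves the $\mathcal{A}$-form (type~$A$ integrality) and $\Upsilon_\mu\in\U^-_\mA$ by Theorem~\ref{thm:UpsiloninZ}. Taking $r\to\infty$ and invoking Lemma~\ref{osp:lem:compatibleUpsilon}-style stabilization gives $\Bbar(M_f)=M_f+\sum_{g\prec_{\bf b}f}r_{gf}(q)M_g$ with $r_{gf}(q)\in\mathcal{A}$.

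Second, the extension of $\Bbar$ to $\widehat{\mathbb T}^{\bf b}$ is a formal consequence of this triangularity, and I would argue exactly as in the proof of Lemma~\ref{osp:lem:Upsilonhattohat}: given $y=M_f+\sum_{g\prec_{\bf b}f}r_g(q)M_g\in\widehat{\mathbb T}^{\bf b}$, the coefficient of any $M_h$ in $\Bbar(y)$ receives contributions only from those $g$ with $h\preceq_{\bf b}g\preceq_{\bf b}f$, of which there are finitely many by Lemma~\ref{osp:lem:finitelength}; hence $\Bbar(y)\in\wt{\mathbb T}^{\bf b}$, and combined with the triangularity it lies in $\widehat{\mathbb T}^{\bf b}$. (One should also record that the extended $\Bbar$ remains an involution, which again follows by passing to the limit from the finite-rank statement that $\Bbar^{(r)}$ is an anti-linear involution, established in \S\ref{subsec:bars}.)

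Third, for the independence of the bracketing order, I would again reduce to finite rank. For any fixed $f\in I^{m+n}$ there is $r$ with $f\in I_r^{m+n}$, and the value of $\Bbar(M_f)$ computed using any chosen bracketing of $\mathbb T^{\bf b}=\mathbb V^{b_1}\otimes\cdots\otimes\mathbb V^{b_{m+n}}$ agrees, after applying $\pi_r$, with the corresponding bracketed computation of $\Bbar^{(r)}$ on $\mathbb T_r^{\bf b}$. By Proposition~\ref{prop:barinThetaB} (together with Remark~\ref{rem:sameinv}), all such bracketed bar involutions on $\mathbb T_r^{\bf b}$ coincide; letting $r\to\infty$ yields the claim on $\mathbb T^{\bf b}$, and it then propagates to $\widehat{\mathbb T}^{\bf b}$ by the uniqueness of the extension just established.

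The main obstacle I anticipate is purely bookkeeping rather than conceptual: one must be careful that the various completion/limit arguments are compatible with the finer Bruhat ordering $\preceq_{\bf b}$ used in this paper (as opposed to the coarser ordering of \cite{CLW12}), so that the finiteness input needed for $\Bbar(y)\in\wt{\mathbb T}^{\bf b}$ is really Lemma~\ref{osp:lem:finitelength} here and not its weaker analogue there. Once the stabilization lemmas (the analogues of Lemmas~\ref{osp:lem:compatibleUpsilon} and \ref{osp:lem:Upsilonorder} for $\Bbar$) are in place, everything else is routine.
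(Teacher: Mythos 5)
Your proposal is correct and follows essentially the same route as the paper: the triangularity comes from combining the type~$A$ triangularity of $\Abar$ (cf.\ \cite[Proposition~3.6]{CLW12} with Remark~\ref{osp:rem:Bcompletions}) with Lemma~\ref{osp:lem:Upsilonorder}, the extension to $\widehat{\mathbb T}^{\bf b}$ is the argument of Lemma~\ref{osp:lem:Upsilonhattohat}, and bracketing independence passes from $\Abar$ to $\Bbar$. The only small difference is in the bracketing step: the paper observes directly that since $\Bbar=\Upsilon\Abar$ and $\Abar$ is bracketing-independent by \cite[Proposition~3.5]{CLW12}, $\Bbar$ is too, whereas you instead reduce to finite rank and invoke Proposition~\ref{prop:barinThetaB}; both are valid and immediately equivalent, the paper's version just saving the detour through Part~1.
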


\begin{proof}
Following \cite[Proposition 3.6]{CLW12} and Remark \ref{osp:rem:Bcompletions}, we have
\[\Abar(M_f) = M_f + \sum_{g \prec_{\bf b} f} r''_{gf}(q)M_g, \quad \text{ for }r''_{gf}(q) \in \mA.
\]
Hence the first part of the lemma follows from Lemma \ref{osp:lem:Upsilonorder}.

We can show that the map $\Bbar : {\mathbb{T}}^{\bf b} \rightarrow \widehat{\mathbb{T}}^{\bf b}$ extends 
to a map $\Bbar : \widehat{\mathbb{T}}^{\bf b} \rightarrow \widehat{\mathbb{T}}^{\bf b}$ by applying the 
same argument used in the proof of Lemma \ref{osp:lem:Upsilonhattohat}.
Since $\Abar$ is independent from the bracketing orders for the tensor product 
$\mathbb{T}^{\bf b}$ by \cite[Proposition 3.5]{CLW12},  so is $\Bbar$.
\end{proof}

\begin{lem}
  \label{osp:lem:Bbarinvolution}
The map $\Bbar : \widehat{\mathbb{T}}^{\bf b} \longrightarrow \widehat{\mathbb{T}}^{\bf b}$ is an anti-linear involution.
\end{lem}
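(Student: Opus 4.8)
\textbf{Proof plan for Lemma~\ref{osp:lem:Bbarinvolution}.}
The plan is to reduce the involution property on the completed Fock space $\widehat{\mathbb T}^{\bf b}$ to the corresponding finite-rank statement, which has already been established in Part~1. First I would recall from \eqref{osp:eq:Bbarlimit} that for every standard monomial $M_f$ with $f\in I^{m+n}$ we have $\Bbar(M_f)=\lim_{r\to\infty}\Bbar^{(r)}(M_f)$, where $\Bbar^{(r)}$ is the anti-linear involution on $\mathbb T^{\bf b}_r$ coming from Proposition~\ref{prop:compatibleBbar} applied to the involutive $\U_{2r+1}$-module $\mathbb T^{\bf b}_r$ (using $\Bbar^{(r)}=\Upsilon^{(r)}\Abar^{(r)}$ and Corollary~\ref{cor:Upsiloninv} for the rank-$r$ intertwiner). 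In particular $(\Bbar^{(r)})^2=\mathrm{id}$ on $\mathbb T^{\bf b}_r$ for every $r$. The first key step is to show that $\Bbar$ commutes with the projections $\pi_r$ appropriately: given $f\in I_r^{m+n}$, I would argue that $\pi_r\circ\Bbar = \Bbar^{(r)}\circ\pi_r$ on the subspace $\mathbb T^{\bf b}_r$, exactly as in the lemma just above (using the $\U$-weight argument together with $\pi_r(\Upsilon^{(l)}M_f)=\Upsilon^{(r)}M_f$ from Lemma~\ref{osp:lem:compatibleUpsilon} and the analogous fact for $\Abar$).

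The second step handles the extension of $\Bbar$ to all of $\widehat{\mathbb T}^{\bf b}$. By Lemma~\ref{lem:Mfbar} the map $\Bbar$ is already defined on $\widehat{\mathbb T}^{\bf b}$ and, crucially, it is \emph{triangular}: $\Bbar(M_f)=M_f+\sum_{g\prec_{\bf b}f}r_{gf}(q)M_g$ with $r_{gf}(q)\in\mA$. Applying $\Bbar$ twice and using that $\Bbar$ is anti-linear, I get $\Bbar^2(M_f)=M_f+\sum_{g\prec_{\bf b}f}\big(\overline{r_{gf}(q)}+\sum_{g\prec_{\bf b}h\prec_{\bf b}f}\overline{r_{hf}(q)}\,r_{gh}(q)\big)M_g$; the sum is finite by Lemma~\ref{osp:lem:finitelength}, so $\Bbar^2$ is a well-defined $\Qq$-linear (not merely anti-linear) operator on $\widehat{\mathbb T}^{\bf b}$ which is the identity plus a strictly lower-triangular part. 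It therefore suffices to show $\Bbar^2(M_f)=M_f$ for all $f\in I^{m+n}$. For this I would fix $f$ and choose $r$ large enough that $f\in I_r^{m+n}$ and, by Lemma~\ref{osp:lem:finitelength}, that every $g$ with $g\preceq_{\bf b}f$ also lies in $I_r^{m+n}$ (finiteness of the interval below $f$ is what makes this possible). Then $\Bbar(M_f)$ and $\Bbar^2(M_f)$ both lie in $\mathbb T^{\bf b}_r$, and by the commutation $\pi_r\Bbar=\Bbar^{(r)}\pi_r$ established in Step~1 we get $\Bbar^2(M_f)=\pi_r\Bbar^2(M_f)=(\Bbar^{(r)})^2(M_f)=M_f$, the last equality being the finite-rank involutivity. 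Since the $M_f$ span $\mathbb T^{\bf b}$ which is dense in $\widehat{\mathbb T}^{\bf b}$, and $\Bbar^2$ is continuous for the $\pi_r$-topology (being $\mA$-triangular), it follows that $\Bbar^2=\mathrm{id}$ on all of $\widehat{\mathbb T}^{\bf b}$.

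Finally, anti-linearity of $\Bbar$ over $\Q$ with $\Bbar(qx)=q^{-1}\Bbar(x)$ is inherited directly from each $\Bbar^{(r)}$ (or equivalently from $\Upsilon$ being $\mA$-valued and $\Abar$ being anti-linear), so combined with $\Bbar^2=\mathrm{id}$ this shows $\Bbar$ is an anti-linear involution, proving the lemma. The main obstacle I anticipate is the bookkeeping in Step~1, namely verifying carefully that $\Bbar$ genuinely restricts to $\mathbb T^{\bf b}_r$ on the relevant finite piece and agrees there with $\Bbar^{(r)}$; this requires combining the $\U$-weight vanishing argument for $\Upsilon^{(l)}_\mu$ with the corresponding compatibility of $\Abar$ from \cite[Lemma~3.4]{CLW12}, and then invoking finiteness of Bruhat intervals (Lemma~\ref{osp:lem:finitelength}) to guarantee that truncating at a large enough $r$ loses no information about $\Bbar^2(M_f)$. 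Everything else is formal once the finite-rank result from Part~1 is in hand.
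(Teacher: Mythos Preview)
Your overall strategy---reduce to the finite-rank involutivity of $\Bbar^{(r)}$ via the compatibility $\pi_r\Bbar=\Bbar^{(r)}\pi_r$---is exactly the paper's approach, and your computation of the coefficient of $M_g$ in $\Bbar^2(M_f)$ as a finite sum over the interval $[g,f]$ is correct. However, there is a genuine gap in the sentence that follows.

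You write: ``choose $r$ large enough that $f\in I_r^{m+n}$ and, by Lemma~\ref{osp:lem:finitelength}, that every $g$ with $g\preceq_{\bf b}f$ also lies in $I_r^{m+n}$.'' But Lemma~\ref{osp:lem:finitelength} only asserts finiteness of \emph{intervals} $\{h:g\preceq_{\bf b}h\preceq_{\bf b}f\}$, not of the full lower set $\{g:g\preceq_{\bf b}f\}$. In fact the lower set is typically infinite---this is precisely why the $B$-completion $\widehat{\mathbb T}^{\bf b}$ is needed at all, and why the paper only conjectures (just after Theorem~\ref{thm:iCBb}) that the canonical basis expansions are finite sums. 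Consequently your claim that $\Bbar(M_f)$ and $\Bbar^2(M_f)$ lie in $\mathbb T^{\bf b}_r$ for some $r$ is false in general; these are genuinely infinite sums living only in the completion.

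The fix is minor and is what the paper does: do not try to find a single $r$ that works for all $g$ below $f$. Instead, fix a \emph{pair} $f,h$ with $h\preceq_{\bf b}f$ and observe that the coefficient identity $\sum_{h\preceq_{\bf b}g\preceq_{\bf b}f}r_{hg}(q)\,\overline{r_{gf}(q)}=\delta_{hf}$ involves only the finite interval $[h,f]$. Now choose $r$ (depending on $h$ and $f$) large enough that this entire interval sits inside $I_r^{m+n}$; by $\pi_r\Bbar=\Bbar^{(r)}\pi_r$ the coefficients $r_{gh}$, $r_{hf}$ agree with their rank-$r$ counterparts, and the identity follows from $(\Bbar^{(r)})^2=\mathrm{id}$. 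Since $h$ was arbitrary, $\Bbar^2(M_f)=M_f$.
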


\begin{proof}
In order to prove the lemma, we need to prove that for fixed $f, h \in I^{m+n}$ with $h \prec_{\bf b} f$, we have 
\begin{equation*}
\sum_{h\preceq_{\bf b} g \preceq_{\bf b}f} r_{hg}(q)\ov{r_{gf}(q)} = \delta_{hf}.
\end{equation*}
By Lemma \ref{osp:lem:finitelength}, there is only finitely many $g$ such that $h\preceq_{\bf b} g \preceq_{\bf b}f$.
Recall \S \ref{subsec:bars}. We know $\Bbar^{(r)}$ is an involution. By \eqref{osp:eq:Bbarlimit}, this is equivalent to the same identities in the finite-dimensional space $\mathbb{T}^{\bf b}_r$ with $r \gg 0$. Then the lemmas follows from Proposition~\ref{prop:compatibleBbar}.
\end{proof}

Thanks to Lemmas \ref{lem:Mfbar} and \ref{osp:lem:Bbarinvolution}, 
we are in a position to apply \cite[Lemma 24.2.1]{Lu94} to 
the anti-linear involution $\Bbar : \widehat{\mathbb{T}}^{\bf b} \rightarrow \widehat{\mathbb{T}}^{\bf b}$ to establish the following.

\begin{thm} 
 \label{thm:iCBb}
The $\Qq$-vector space $\widehat{\mathbb{T}}^{\bf b}$ has unique $\Bbar$-invariant topological bases
\[
\{T^{\bf b}_f \mid f \in I^{m+n}\} \text{ and } \{L^{\bf b}_f \mid f \in I^{m+n}\}
\]
such that 
\[
T^{\bf b}_f = M_f + \sum_{g \preceq_{\bf b} f }t^{\bf b}_{gf}(q)M^{\bf b}_g, 
\quad 
L^{\bf b}_f  = M_f + \sum_{g \preceq_{\bf b} f }\ell^{\bf b}_{gf}(q)M^{\bf b}_g,
\]
with $t^{\bf b}_{gf}(q) \in q\Z[q]$, and $\ell^{\bf b}_{gf}(q) \in q^{-1}\Z[q^{-1}]$, for $g \preceq_{\bf b}f $. 
(We shall write $t^{\bf b}_{ff}(q) = \ell^{\bf b}_{ff}(q) = 1$, $t^{\bf b}_{gf}(q)=\ell^{\bf b}_{gf}(q)=0$ for $ g \not\preceq_{\bf b} f$.)
\end{thm}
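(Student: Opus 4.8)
The statement is a standard existence-and-uniqueness result for canonical bases in the sense of Lusztig's Lemma~24.2.1, applied to the topological $\Qq$-vector space $\widehat{\mathbb T}^{\bf b}$ equipped with the anti-linear involution $\Bbar$. The plan is to verify that the two inputs required by that lemma are in place. The first input, that $\Bbar$ is an anti-linear involution on $\widehat{\mathbb T}^{\bf b}$, is exactly Lemma~\ref{osp:lem:Bbarinvolution} together with the extension statement in Lemma~\ref{lem:Mfbar}. The second input is the ``unitriangularity with respect to a locally finite poset'' condition: by Lemma~\ref{lem:Mfbar} we have, for each $f\in I^{m+n}$,
\[
\Bbar(M_f) = M_f + \sum_{g\prec_{\bf b}f} r_{gf}(q)\, M_g, \qquad r_{gf}(q)\in\mA,
\]
and by Lemma~\ref{osp:lem:finitelength} the interval $\{h : g\preceq_{\bf b}h\preceq_{\bf b}f\}$ is finite for any $g\preceq_{\bf b}f$, so the poset $(I^{m+n},\preceq_{\bf b})$ restricted below any element is finite. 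This is precisely the hypothesis of \cite[Lemma~24.2.1]{Lu94} in the topological setting used here (and already used, in a slightly coarser form, in \cite{CLW12}).

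With these verified, the proof proceeds as follows. First I would recall the standard recursive construction from the proof of \cite[Lemma~24.2.1]{Lu94}: one builds $T^{\bf b}_f$ by induction on the finite poset below $f$, correcting $M_f$ by a $\Bbar$-invariant combination of lower $M_g$'s so that the coefficients land in $q\Z[q]$; the finiteness of the down-set below $f$ guarantees the induction terminates, and the element produced lies in $\widehat{\mathbb T}^{\bf b}$ because only finitely many $g\preceq_{\bf b}f$ can contribute to the coefficient of any fixed $M_h$ (the same argument as in Lemma~\ref{osp:lem:Upsilonhattohat}). Uniqueness follows from the usual argument: the difference of two candidates is $\Bbar$-invariant with coefficients in $q\Z[q]$ below the diagonal and zero on the diagonal, hence zero. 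This yields the basis $\{T^{\bf b}_f\}$ with $t^{\bf b}_{gf}(q)\in q\Z[q]$.

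For the dual $\imath$-canonical basis $\{L^{\bf b}_f\}$, I would run the identical argument but normalize the off-diagonal coefficients into $q^{-1}\Z[q^{-1}]$ instead of $q\Z[q]$; \cite[Lemma~24.2.1]{Lu94} (or its immediate variant, as also invoked in \cite{CLW12}) produces this second basis from the same data, again using the local finiteness of $\preceq_{\bf b}$ and the integrality $r_{gf}(q)\in\mA$. One should check the bases are topological bases, i.e.\ that $\{T^{\bf b}_f\}$ and $\{L^{\bf b}_f\}$ span $\widehat{\mathbb T}^{\bf b}$: this is immediate from the unitriangular change of basis from $\{M_f\}$, since the transition matrix is column-finite below each $f$ and upper-unitriangular for $\preceq_{\bf b}$.

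\textbf{Main obstacle.} There is no deep obstacle here; the real content is upstream (the integrality of $\Upsilon$, Theorem~\ref{thm:UpsiloninZ}, and the compatibility of $\Bbar$ with the Bruhat ordering, Lemmas~\ref{osp:lem:Upsilonorder} and~\ref{lem:Mfbar}), all of which may be assumed. The one point requiring a little care is purely bookkeeping: one must confirm that the version of \cite[Lemma~24.2.1]{Lu94} being applied is the topological one valid on the completion $\widehat{\mathbb T}^{\bf b}$ with the partial order $\preceq_{\bf b}$ of this paper, rather than the coarser order of \cite{CLW12}; since by Remark~\ref{osp:rem:Bcompletions} the present $\preceq_{\bf b}$ refines the one in \cite{CLW12} and the present $B$-completion contains the one there, the recursive construction and the finiteness estimates still go through verbatim. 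I expect the write-up to be short, mostly a citation of \cite[Lemma~24.2.1]{Lu94} with the two hypotheses (Lemmas~\ref{lem:Mfbar}, \ref{osp:lem:Bbarinvolution}, \ref{osp:lem:finitelength}) pointed out.
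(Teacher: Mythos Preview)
Your proposal is correct and matches the paper's approach exactly: the paper's entire proof is the one-sentence observation that Lemmas~\ref{lem:Mfbar} and~\ref{osp:lem:Bbarinvolution} put us in position to apply \cite[Lemma~24.2.1]{Lu94}. Your additional remarks on local finiteness (Lemma~\ref{osp:lem:finitelength}) and on the topological-basis bookkeeping are implicit in the paper's setup and do no harm.
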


\begin{definition}
$\{T^{\bf b}_f \mid f \in I^{m+n}\} \text{ and } \{L^{\bf b}_f \mid f \in I^{m+n}\}$ are call the 
{\em $\imath$-canonical basis} and {\em dual $\imath$-canonical basis} of $\widehat{\mathbb{T}}^{\bf b}$, respectively. 
The polynomials $t^{\bf b}_{gf}(q)$ and $\ell^{\bf b}_{gf}(q)$ are called {\em $\imath$-Kazhdan-Lusztig (or $\imath$-KL) polynomials}.
\end{definition}

\begin{thm} 
\begin{enumerate}
\item
{(Positivity)}
We have $t^{\bf b}_{gf}(q) \in \N [q]$.

\item  
The sum $T^{\bf b}_f = M_f + \sum_{g \preceq_{\bf b} f }t^{\bf b}_{gf}(q)M^{\bf b}_g$ is finite, for all $f \in I^{m+n}$. 
\end{enumerate}
\end{thm}

\begin{proof}
Note that  the finite sum claim in (2) at the $q=1$ specialization holds by Theorem~\ref{thm:iKL}
(the proof of Theorem~\ref{thm:iKL} does not use the claim (2); we decided to list such an algebraic statement (2) here 
rather than as a corollary to Theorem~\ref{thm:iKL}).
Hence, the validity of the positivity (1) implies the validity of (2).

It remains to prove (1).  Actually the same strategy as for type $A$ 
(see  \cite{BLW} and \cite[proof of Theorem~3.12, Remark~3.14]{CLW12}) works here, and so we shall be brief.
Fix $f, g \in I^{m+n}$. 
Choose a half-integer $k\gg 0$ (relative to $f, g$), and consider the subspaces $\VV_{[k]}$ of $\VV$ spanned by $v_i$ for $i\in [-k,k] \cap I \subset I$
and an analogous subspace $\WW_{[k]}$ of $\WW$. We then define $\mathbb{T}^{\bf b}_{[k]}$ to be the subspace
of $\mathbb{T}^{\bf b}$ spanned by the elements $T^{\bf b}_f$ for $f \in ([-k,k] \cap I)^{m+n}$. 
Via the natural identification $\WW_{[k]} \cong \wedge^{2k} \VV_{[k]}$, we can identify $\mathbb{T}^{\bf b}_{[k]}$
with a tensor product of  copies of $\VV_{[k]}$ and $\wedge^{2k} \VV_{[k]}$ (such an identification in type $A$ setting
appeared first in \cite{CLW12}).
The latter provides a reformulation of the parabolic KL conjecture of type $B$ thanks to Remark~\ref{rem:samebar}
(which was in turn based on Theorem~\ref{thm:iCBtensor} and Theorem~ \ref{thm:samebar}); hence
$t^{\bf b}_{gf}(q)$ can be identified with a (non-super) type $B$ KL polynomial.
The positivity of type $B$ KL polynomials is well known (\cite{KL80, BGS}), whence the positivity  (1). 
\end{proof}

\begin{rem}
We expect a positivity property of the coefficients in the expansion of the $\imath$-canonical basis elements here
with respect to the (type $A$) canonical basis on $\wTb$ in \cite{CLW12} (compare with the remark after Theorem~\ref{thm:iCBtensor}). 
\end{rem}

\section{Bar involution and  $q$-wedges of $\WW$}
\label{subsec:barWW}

Let $k \in\N \cup \{\infty\}$. For $f = (f_{[m+n]}, f_{[\ul{k}]}) \in I^{m+n} \times I^{k}_+$, set
\[
M^{{\bf b}, 0}_f := M^{{\bf b}}_{f_{[m+n]}}  \otimes \mc{V}_{f_{[\ul{k}]}} \in \mathbb{T}^{\bf b} \otimes \wedge^{k} \VV.
\]
Then $\{M^{{\bf b}, 0}_f  \mid f  \in I^{m+n} \times I^{k}_+\}$ forms a basis, called the {\em standard monomial basis}, 
of the $\Qq$-vector space $\mathbb{T}^{\bf b} \otimes \wedge^{k} \VV$.
Similarly,  $\mathbb{T}^{\bf b} \otimes \wedge^{k} \WW$ admits a {\em standard monomial basis} given by 
\[
	M^{{\bf b}, 1}_g := M^{{\bf b}}_{g_{[m+n]}}  \otimes \mc{W}_{g_{[\ul{k}]}} \in \mathbb{T}^{\bf b} \otimes \wedge^{k} \WW,
\]
where $g = (g_{[m+n]}, g_{[\ul{k}]}) \in I^{m+n} \times I^{k}_-$. Following \cite[\S 4]{CLW12}, 
here we shall focus on the case $\mathbb{T}^{\bf b} \otimes \wedge^{k} \WW$, 
while the case $\mathbb{T}^{\bf b} \otimes \wedge^{k} \VV$ is similar. 

Let us consider $k \in \N$ first. As in \cite[\S 4]{CLW12}, $\mathbb{T}^{\bf b} \otimes \wedge^{k}\WW$ 
can be realized as a subspace of $\mathbb{T}^{\bf b} \otimes \WW^{\otimes k} = \mathbb{T}^{({\bf b}, 1^k)}$.
Therefore we can define a $B$-completion of $\mathbb{T}^{\bf b} \otimes \wedge^{k}\WW$, 
denoted by $\mathbb{T}^{\bf b} \widehat{\otimes} \wedge^{k}\WW$, as the closure of the subspace 
$\mathbb{T}^{\bf b} \otimes \wedge^{k} \WW \subset \mathbb{T}^{\bf b} \widehat{\otimes} \WW^{\otimes k}
 = \widehat{ \mathbb{T}}^{({\bf b}, 1^k)}$ with respect to the linear topology $\{\ker \pi_r \mid r \ge 1\}$ 
 defined in \S \ref{osp:subsec:cbanddcb}. 
By construction $\mathbb{T}^{\bf b} \widehat{\otimes} \wedge^{k}\WW$ is invariant under the involution $\Bbar$, i.e., we have 
\[
	\Bbar({M}^{{\bf b},1}_f) = M^{{\bf b},1}_f  + \sum_{g \prec_{({\bf b}, 1^k)}f}r_{gf}(q) M^{{\bf b},1} _g,
\]
where $r_{gf}(q) \in \mA$, and the sum running over $g \in I^{m+n} \times I^{k}_{-}$ is possibly infinite.

\begin{rem}
If $k=0$, $M^{{\bf b}, 0}_f$ and $M^{{\bf b}, 1}_g$ are understood as $M^{\bf b}_f$ and $M^{\bf b}_g$, respectively;
also, $\mathbb{T}^{\bf b} \widehat{\otimes} \wedge^0 \WW$ and $\mathbb{T}^{\bf b} \widehat{\otimes} \wedge^0 \VV$ 
are understood as $\widehat{\mathbb{T}}^{\bf b}$. 
\end{rem}

Recall the linear maps $\wedge^{k,l}_d$ defined in \eqref{eq:wedge}.
For $l \ge k$ and each $d \in \Z$, define the $\Qq$-linear map
\begin{align*}
\id \otimes \wedge^{k,l}_d : \mathbb{T}^{\bf b} \otimes \wedge^{k} \WW &\longrightarrow \mathbb{T}^{\bf b} \otimes \wedge^{l} \WW.
\end{align*}
It is easy to check that the map $\id \otimes \wedge^{k,l}_d$ extends to the $B$-completions; that is, we have 
\[
\id \otimes \wedge^{k,l}_d : \mathbb{T}^{\bf b} \widehat{\otimes} \wedge^{k} \WW
 \longrightarrow \mathbb{T}^{\bf b} \widehat{\otimes} \wedge^{l} \WW.
\]
Let $ \mathbb{T}^{\bf b} \widehat{\otimes} \wedge_d^{\infty} \WW := \varinjlim \mathbb{T}^{\bf b} \widehat{\otimes} \wedge^{k} \WW$ 
be the direct limit of the $\Qq$-vector spaces with respect to the linear maps $\id \otimes \wedge^{k,l}_d$. 
It is easy to see that $\mathbb{T}^{\bf b} {\otimes} \wedge_d^{\infty} \WW \subset \mathbb{T}^{\bf b} \widehat{\otimes} \wedge_d^{\infty} \WW$. 
Define the $B$-completion of  $\mathbb{T}^{\bf b} {\otimes} \wedge^{\infty} \WW$ as follows:
\begin{equation}
  \label{osp:eq:Bcompletioninfinity}
\mathbb{T}^{\bf b} \widehat{\otimes} \wedge^{\infty} \WW 
:= \bigoplus_{d \in \Z} \mathbb{T}^{\bf b} \widehat{\otimes} \wedge_d^{\infty} \WW.
\end{equation}
By the same argument as in \S\ref{osp:subsec:$q$-wedges}, we see that $\mathbb{T}^{\bf b} \widehat{\otimes} \wedge_d^{\infty} \WW$ 
and $\mathbb{T}^{\bf b} \widehat{\otimes} \wedge^{\infty} \WW$ are (topological) $\U$-modules, hence (topological) $\Ui$-modules.

Following the definitions of the partial orderings in Definition~\ref{osp:def:dominanceordering} and 
Definition~\ref{def:orderinfW}, we see that $\mathbb{T}^{\bf b} \widehat{\otimes} \wedge^{\infty} \WW$ is spanned by elements of the form
\[
M^{{\bf b},1}_f + \sum_{g \prec_{{\bf b},1} f} c_{gf}(q)M^{{\bf b},1}_g, \quad \text{ for } g,f \in I^{m+n} \times I^{\infty}_-.
\]
Following \cite[\S 4.1]{CLW12}, we can extend the anti-linear involution 
$\Abar : \mathbb{T}^{\bf b} \widehat{\otimes} \wedge^{k} \WW \rightarrow \mathbb{T}^{\bf b} \widehat{\otimes} \wedge^{k} \WW$ 
to an anti-linear involution $\Abar: \mathbb{T}^{\bf b} \widehat{\otimes} \wedge^{\infty} \WW
\rightarrow \mathbb{T}^{\bf b} \widehat{\otimes} \wedge^{\infty} \WW$ such that 
\[
\Abar (M^{{\bf b},1}_f) =  M^{{\bf b},1}_f + \sum_{g \prec_{{\bf b},1} f} r''_{gf}(q)M^{{\bf b},1}_f, \quad \text{ for } r''_{gf}(q) \in \mA.
\]
Here we have used the fact that our $B$-completion contains the $B$-completion
in {\em loc. cit.}  as a subspace (see Remark~\ref{osp:rem:Bcompletions}).

Following the definition of the $B$-completion $\mathbb{T}^{\bf b} \widehat{\otimes} \wedge^{\infty} \WW$, 
we have $\Upsilon$ as a well-defined operator on $\mathbb{T}^{\bf b} \widehat{\otimes} \wedge^{\infty} \WW$ such that 
\[
\Upsilon (M^{{\bf b},1}_f) =  M^{{\bf b},1}_f + \sum_{g \prec_{{\bf b},1} f} r'_{gf}(q)M^{{\bf b},1}_f, \quad \text{ for } r'_{gf}(q) \in \mA.
\]
Therefore we can define the anti-linear map 
\[
\Bbar :=\Upsilon\Abar : \mathbb{T}^{\bf b} \widehat{\otimes} \wedge^{\infty} \WW
\longrightarrow \mathbb{T}^{\bf b} \widehat{\otimes} \wedge^{\infty} \WW,
\]
such that 
\[
\Bbar (M^{{\bf b},1}_f) =  M^{{\bf b},1}_f + \sum_{g \prec_{{\bf b},1} f} r_{gf}(q)M^{{\bf b},1}_f, \quad \text{ for } r_{gf}(q) \in \mA.
\]

\begin{lem}
Let $k \in \N \cup \{\infty\}$. The map 
$
\Bbar : \mathbb{T}^{\bf b} \widehat{\otimes} \wedge^{k} \WW \longrightarrow \mathbb{T}^{\bf b} \widehat{\otimes} \wedge^{k} \WW
$
is an involution.
\end{lem}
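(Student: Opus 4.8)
The plan is to prove that $\Bbar = \Upsilon\Abar$ is an involution on $\mathbb{T}^{\bf b} \widehat{\otimes} \wedge^{k}\WW$ for $k \in \N \cup \{\infty\}$ by reducing to cases already established. First I would treat the finite case $k \in \N$. Here the key observation is that $\mathbb{T}^{\bf b} \otimes \wedge^{k}\WW$ embeds as a $\bun$-submodule (indeed as the image of the $q$-skew-symmetrizer on the last $k$ factors) of $\mathbb{T}^{({\bf b},1^k)} = \mathbb{T}^{\bf b} \otimes \WW^{\otimes k}$, compatibly with $B$-completions. Moreover the restriction of $\Abar$ (resp. $\Upsilon$, hence $\Bbar$) on $\mathbb{T}^{\bf b}\widehat{\otimes}\wedge^{k}\WW$ agrees with the corresponding map on $\widehat{\mathbb{T}}^{({\bf b},1^k)}$: this follows from the construction in \S\ref{subsec:barWW} together with the fact that the $q$-skew-symmetrizer $L_{w_0^{(k)}}$ is bar-invariant and commutes with the (type $A$) bar operator, and that $\Upsilon$, being a power series in the $F_{\alpha_i}$'s of $\bun$-weight zero, commutes with the right $\mc H_{A_{k-1}}$-action by Theorem~\ref{thm:SchurB} (or rather its infinite-rank analogue) and Proposition~\ref{prop:SchurA}. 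Granting this, the desired identity $\Bbar^2 = \id$ on $\mathbb{T}^{\bf b}\widehat{\otimes}\wedge^{k}\WW$ is just the restriction of $\Bbar^2 = \id$ on $\widehat{\mathbb{T}}^{({\bf b},1^k)}$, which was proved in Lemma~\ref{osp:lem:Bbarinvolution}.

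Next I would handle $k = \infty$. By Lemma~\ref{lem:Mfbar} and its analogue in \S\ref{subsec:barWW}, we have, for each $f \in I^{m+n}\times I^{\infty}_-$,
\[
\Bbar(M^{{\bf b},1}_f) = M^{{\bf b},1}_f + \sum_{g \prec_{{\bf b},1} f} r_{gf}(q) M^{{\bf b},1}_g, \qquad r_{gf}(q) \in \mA.
\]
So, exactly as in the proof of Lemma~\ref{osp:lem:Bbarinvolution}, proving $\Bbar^2 = \id$ amounts to checking, for all $h \prec_{{\bf b},1} f$, the finite sum identity
\[
\sum_{h \preceq_{{\bf b},1} g \preceq_{{\bf b},1} f} r_{hg}(q)\,\ov{r_{gf}(q)} = \delta_{hf},
\]
where finiteness of the sum is guaranteed by Lemma~\ref{osp:lem:finitelengthW}(2). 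Now I would argue that each coefficient $r_{gf}(q)$ appearing here stabilizes: for $f, g$ with $g \preceq_{{\bf b},1} f$, the coefficient of $M^{{\bf b},1}_g$ in $\Bbar(M^{{\bf b},1}_f)$ computed in $\mathbb{T}^{\bf b}\widehat{\otimes}\wedge^{\infty}\WW$ equals the one computed in $\mathbb{T}^{\bf b}\widehat{\otimes}\wedge^{k}\WW$ for all $k$ large enough (depending on $f$), via the truncation/limit maps $\id \otimes \wedge^{k,l}_d$ of \S\ref{subsec:barWW}, just as the maps $\wedge^{k,l}_d$ and the compatibility used to define $\Abar$ and $\Upsilon$ on the semi-infinite wedge in that subsection. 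Once this stabilization is in place, the identity displayed above for $\mathbb{T}^{\bf b}\widehat{\otimes}\wedge^{\infty}\WW$ follows from the already-established identity in $\mathbb{T}^{\bf b}\widehat{\otimes}\wedge^{k}\WW$ for $k \gg 0$. The same argument applies verbatim to $\mathbb{T}^{\bf b}\widehat{\otimes}\wedge^{k}\VV$.

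The main obstacle I anticipate is the bookkeeping in the finite $k$ step: one must verify carefully that the involution $\Bbar$ constructed intrinsically on $\mathbb{T}^{\bf b}\widehat{\otimes}\wedge^{k}\WW$ in \S\ref{subsec:barWW} (via the embedding into $\mathbb{T}^{({\bf b},1^k)}$) really does coincide with the restriction of $\Bbar$ on $\widehat{\mathbb{T}}^{({\bf b},1^k)}$, so that Lemma~\ref{osp:lem:Bbarinvolution} applies. This hinges on two compatibilities — that the type $A$ bar operator $\Abar$ commutes with the right action of $L_{w_0^{(k)}} \in \mc H_{A_{k-1}}$ (a standard fact, used already in \cite{CLW12}), and that $\Upsilon$ commutes with this right action because $\Upsilon$ lies in a completion of $\U^-$ and the $\U$-action commutes with $\mc H_{A_{k-1}}$ by Schur--Jimbo duality. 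Everything else is a routine transcription of the proof of Lemma~\ref{osp:lem:Bbarinvolution}, using the finite-interval property of the Bruhat orderings (Lemmas~\ref{osp:lem:finitelength} and \ref{osp:lem:finitelengthW}) to legitimize the limit arguments.
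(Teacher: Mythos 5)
Your proof is correct. For finite $k$ you give the right justification for what the paper treats as immediate: since $L_{w_0^{(k)}}$ is bar-invariant and $\Upsilon$ (acting through a completion of $\U^-$) commutes with the right $\mc H_{A_{k-1}}$-action on the last $k$ factors by Schur--Jimbo duality, the subspace $\mathbb T^{\bf b}\wotimes\wedge^k\WW\subset\widehat{\mathbb T}^{({\bf b},1^k)}$ is $\Bbar$-stable and the restriction of the involution from Lemma~\ref{osp:lem:Bbarinvolution} applied to $({\bf b},1^k)$ does the job. For $k=\infty$ your route differs slightly from what the paper has in mind: the paper says to ``argue in the same way as Lemma~\ref{osp:lem:Bbarinvolution} with the help of Lemma~\ref{osp:lem:finitelengthW}'', i.e.\ reduce the finite sum $\sum_{h\preceq_{{\bf b},1}g\preceq_{{\bf b},1}f} r_{hg}\ov{r_{gf}}=\delta_{hf}$ to a finite-rank statement by stabilizing in $r$ and invoking Proposition~\ref{prop:compatibleBbar}. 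You instead stabilize in the wedge degree $k$ using the direct-limit maps $\id\otimes\wedge^{k,l}_d$, reducing to the finite-$k$ case already in hand. Both are legitimate; yours has the mild advantage of not having to re-run the $r$-stabilization on a semi-infinite wedge where the naive truncation $\wedge^\infty\WW_r$ does not literally make sense, while the paper's phrasing trades on the fact that the same rank-$r$ limit machinery underlies both constructions. One small point to make explicit in your $k$-stabilization: for $h\prec_{{\bf b},1}f$ and $k\gg0$ one must check that the interval $\{g\mid h\preceq_{{\bf b},1}g\preceq_{{\bf b},1}f\}$ consists entirely of $g$ whose tail beyond $\ul k$ agrees with the common tail of $f$ and $h$ (so that each such $g$ lies in the image of $\id\otimes\wedge^{k,\infty}_d$); this follows because the interval is finite (Lemma~\ref{osp:lem:finitelengthW}) and each $\lambda^{{\bf b},1}_g-\lambda^{{\bf b},1}_h\in\N\Pi_{{\bf b},1}$ involves only finitely many $\ep^1_{\ul j}$'s, but it should be said.
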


\begin{proof} 
For $k \in \N$, the lemma was already established. For $k = \infty$,
the lemma can be proved in the same way as 
Lemma~\ref{osp:lem:Bbarinvolution} with the help of Lemma~\ref{osp:lem:finitelengthW}.
\end{proof}

\section{Truncations}
 \label{subsec:trun}

In this section we shall again only focus on $\mathbb{T}^{\bf b} \otimes \wedge^{k} \WW$ for $k \in \N \cup \{\infty\}$. 
We shall use $f^{\ul k} \in I^{m+n} \times I^{k}_{\pm}$ as a short-hand notation for the restriction of 
$f_{[m+n] \cup [\ul k]}$ of a function $f \in I^{m+n} \times I^{\infty}_{\pm}$.

Now let us define the {\em{truncation map}} ${\texttt{Tr}} : \mathbb{T}^{\bf b} \otimes \wedge^{\infty}\mathbb{W} 
\rightarrow \mathbb{T}^{\bf b}\otimes \wedge^k\mathbb{W}$, for  $k \in \N$, as follows:
\[
{\texttt{Tr}}(m \otimes \mathcal{W}_h) =
\begin{cases}
 m \otimes \mathcal{W}_{h_{[\underline{k}]}},& \text{ if } h(\ul{i}) -h(\ul{i+1}) = -1, \text{ for } i \geq k+1,
 \\
0, &\text{ otherwise}. 
\end{cases}
\]

\begin{lem}
  \label{pt2:lem:burhatinfinity}
Let $k \in \N$. The truncation map ${\texttt{Tr}} : \mathbb{T}^{\bf b} \otimes \wedge^{\infty}\mathbb{W} 
\rightarrow \mathbb{T}^{\bf b}\otimes \wedge^k\mathbb{W}$ is compatible with the partial orderings, 
and hence extends naturally to a $\Qq$-linear map $\texttt{Tr} : \mathbb{T}^{\bf b}\widehat{\otimes} 
\wedge^\infty \mathbb{W} \rightarrow \mathbb{T}^{\bf b}\widehat{\otimes} \wedge^k\mathbb{W}$.
\end{lem}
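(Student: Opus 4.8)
\textbf{Proof plan for Lemma~\ref{pt2:lem:burhatinfinity}.}
The plan is to show two things: first, that $\texttt{Tr}$ maps a standard monomial $M^{{\bf b},1}_f$ (for $f\in I^{m+n}\times I^\infty_-$) either to $M^{{\bf b},1}_{f^{\ul k}}$ or to $0$; and second, that whenever $g\preceq_{{\bf b},1}f$ and $\texttt{Tr}(M^{{\bf b},1}_f)\neq 0$, then $\texttt{Tr}(M^{{\bf b},1}_g)$ is either $0$ or a standard monomial $M^{{\bf b},1}_{g^{\ul k}}$ with $g^{\ul k}\preceq_{{\bf b},1}f^{\ul k}$. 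Granting these, the extension to the $B$-completions is automatic: an element of $\mathbb{T}^{\bf b}\widehat\otimes\wedge^\infty\mathbb W$ is a (topologically convergent) sum $M^{{\bf b},1}_f+\sum_{g\prec_{{\bf b},1}f}c_{gf}(q)M^{{\bf b},1}_g$, and by the compatibility just described its image under $\texttt{Tr}$ is $M^{{\bf b},1}_{f^{\ul k}}+\sum c_{gf}(q)M^{{\bf b},1}_{g^{\ul k}}$ with all surviving $g^{\ul k}\preceq_{{\bf b},1}f^{\ul k}$; by Lemma~\ref{osp:lem:finitelengthW}(2) only finitely many $g$ in the (finite) $\preceq_{{\bf b},1}$-interval contribute to any fixed coefficient, so the image lies in $\mathbb{T}^{\bf b}\widehat\otimes\wedge^k\mathbb W$ and the map is well defined.

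First I would handle the case $k\in\N$ (the statement only asserts this; $k=\infty$ is trivial). The key computation is purely combinatorial and lives on the $q$-wedge side. Using the bijections \eqref{osp:eq:bijectionwtlW} and \eqref{eq:fb1}, a function $f\in I^{m+n}\times I^\infty_-$ corresponds to a weight $\lambda^{{\bf b}1}_f\in\wtlW$ with tail data $({}^+\lambda_{f,\ul 1},{}^+\lambda_{f,\ul 2},\dots)\in\mc P$ and charge $d_f$; the condition $h(\ul i)-h(\ul{i+1})=-1$ for $i\ge k+1$ that defines when $\texttt{Tr}$ is nonzero is exactly the condition that the partition tail beyond position $k$ is empty, i.e.\ that $\lambda^{{\bf b}1}_f$ actually lies in $X^{\ul k,+}_{{\bf b},1}$ (compare \eqref{align:wtlWk}), and in that case $\texttt{Tr}(M^{{\bf b},1}_f)=M^{{\bf b},1}_{f^{\ul k}}$ with $f^{\ul k}$ the obvious restriction. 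So the content is: if $g\preceq_{{\bf b},1}f$, both have the same charge $d_g=d_f$ (Definition~\ref{def:orderinfW}), so if $f$'s tail beyond $k$ is trivial, then since $\lambda^{{\bf b}1}_f-\lambda^{{\bf b}1}_g\in\N\Pi_{{\bf b},1}$ and each simple root $\ep^1_{\ul j}-\ep^1_{\ul{j+1}}$ ($j\ge 1$) subtracts a box, $g$'s partition tail is contained in $f$'s, hence also trivial beyond position $k$ — so $\texttt{Tr}(M^{{\bf b},1}_g)\neq 0$ as well, with image $M^{{\bf b},1}_{g^{\ul k}}$. (If instead $\texttt{Tr}(M^{{\bf b},1}_f)=0$, there is nothing to check for the coefficient-survival argument, since the image of the whole vector is declared $0$; but one should note the truncation is still well defined on such vectors because every $g\preceq_{{\bf b},1}f$ with nontrivial tail beyond $k$ also maps to $0$.)

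Then I would verify $g^{\ul k}\preceq_{{\bf b},1}f^{\ul k}$ as elements of $I^{m+n}\times I^k_-$: this uses Definition~\ref{def:orderinfW} again, now for the rank-$k$ ordering $\preceq_{{\bf b},1}$ on $X^{\ul k,+}_{{\bf b},1}$, whose simple system $\Pi_{({\bf b},1^k)}$ is a sub-simple-system of $\Pi_{({\bf b},1^\infty)}$; since $\lambda^{{\bf b}1}_f-\lambda^{{\bf b}1}_g\in\N\Pi_{({\bf b},1^\infty)}$ and both weights restrict into the rank-$k$ lattice, the difference already lies in $\N\Pi_{({\bf b},1^k)}$ — this is the same argument pattern used in the proof of Proposition~\ref{prop:trunc:ML}, and is essentially \cite[Lemma~4.18]{Br03}-style bookkeeping. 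The linkage condition $g^{\ul k}\sim f^{\ul k}$ passes down because it only involves the even Weyl group and finitely many isotropic odd roots among the first $m+n+k$ coordinates, which are unaffected by truncation; here Lemma~\ref{lem:linkage at infinity} together with the observation that truncation preserves $\texttt{wt}_{{\bf b},1}$ restricted to the retained coordinates gives a clean route. The main obstacle, I expect, is not any single step but the careful matching of three bookkeeping devices — the $q$-wedge tail-versus-partition dictionary, the $\rho_{\bf b}$-shifts in \eqref{eq:fb1}, and the compatibility of $\N\Pi_{({\bf b},1^\infty)}$ with $\N\Pi_{({\bf b},1^k)}$ — so that the truncated vector genuinely lands in the completion defined by the \emph{rank-}$k$ Bruhat ordering rather than some coarser order; once those are aligned, the extension to completions is a formality via Lemma~\ref{osp:lem:finitelengthW}(2).
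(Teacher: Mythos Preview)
Your argument has a genuine gap: the monotonicity claim ``since $\lambda^{{\bf b}1}_f-\lambda^{{\bf b}1}_g\in\N\Pi_{{\bf b},1}$ and each simple root $\ep^1_{\ul j}-\ep^1_{\ul{j+1}}$ subtracts a box, $g$'s partition tail is contained in $f$'s'' has the direction backwards. Subtracting $\ep^1_{\ul j}-\ep^1_{\ul{j+1}}$ from $\lambda_f$ decreases ${}^+\lambda_{\ul j}$ and \emph{increases} ${}^+\lambda_{\ul{j+1}}$ (it moves a box to a larger-index row), and subtracting the interface root $\ep^{b_{m+n}}_{m+n}-\ep^1_{\ul 1}$ \emph{adds} a box at position $\ul 1$. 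So passing from $f$ to $g\preceq_{{\bf b},1} f$ can lengthen the tail partition: concretely, take ${\bf b}=(0)$, $k=1$, ${}^+\lambda_f=\emptyset$, and $\lambda_g=\lambda_f-2(\ep_1-\ep^1_{\ul 1})-(\ep^1_{\ul 1}-\ep^1_{\ul 2})$; then ${}^+\lambda_g=(1,1)$ has length $2>k$. In fact the correct inequality (via telescoping) is $\sum_{j>k}{}^+\lambda_{g,\ul j}\ge\sum_{j>k}{}^+\lambda_{f,\ul j}$, the opposite of what you assert.

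This reversal is exactly what is needed to handle the case you dismiss. When $\texttt{Tr}(M^{{\bf b},1}_f)=0$, the image of $M^{{\bf b},1}_f+\sum_{g\prec f}c_{gf}M^{{\bf b},1}_g$ is \emph{not} ``declared $0$'': the map is defined termwise, and a priori some $M^{{\bf b},1}_g$ could survive, leaving an infinite sum with no apparent leading term in the target completion. Your parenthetical ``every $g\preceq_{{\bf b},1}f$ with nontrivial tail beyond $k$ also maps to $0$'' is a tautology and does not address this. The paper instead proves the correct direction: if $\texttt{Tr}(M^{{\bf b},1}_f)=0$ (i.e.\ ${}^+\lambda_f$ has length $>k$) and $g\preceq_{{\bf b},1}f$, then $\texttt{Tr}(M^{{\bf b},1}_g)=0$ as well --- by choosing $i$ maximal with $f(\ul i)\neq g(\ul i)$ and arguing that $g(\ul i)<f(\ul i)$ forces a gap $g(\ul i)-g(\ul{i+1})<-1$ at position $i\ge k+1$. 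With that in hand the image is genuinely $0$, and your treatment of the case $\texttt{Tr}(M^{{\bf b},1}_f)\neq 0$ (where only the order-preservation $g^{\ul k}\preceq f^{\ul k}$ for surviving terms is needed, not your containment claim) goes through as you outline.
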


\begin{proof}
Let $f$, $g \in I^{m+n} \times I^{\infty}_{-}$ with $g \preceq_{{\bf b},1} f$. 
According to Definition \ref{def:orderinfW}, this means $f(\ul i) = g(\ul i)$ for all $i \gg 0$. 
If $\texttt{Tr}(M^{{\bf b} , 1}_f) \neq 0$ and $\texttt{Tr}(M^{{\bf b} , 1}_g) \neq 0$, 
we must have $g(\ul i) =f(\ul i)$, $\forall i \geq k+1$. 
Hence we have $\lambda_{g^{\ul k}}^{({\bf b},1^{k})} \preceq_{({\bf b},1^{k})} \lambda_{f^{\ul k}}^{({\bf b},1^{k})}$ 
by comparing Definition \ref{def:orderinfW} with Definition \ref{osp:def:dominanceordering}.  
Thanks to Lemma \ref{lem: linkage} and Lemma \ref{lem:linkage at infinity}, 
we have $g^{\ul k} \sim f^{\ul k}$ as well. Therefore we have $g^{\ul k} \preceq_{({\bf b}, 1^k)} f^{\ul k}$.
\par Now suppose $\texttt{Tr}(M^{{\bf b},1}_f) = 0$ and $ g \preceq_{{\bf b},1} f$. 
If $f_{[\ul \infty]} = g_{[\ul \infty]}$, then $\texttt{Tr}(M^{{\bf b},1}_g) = 0$. 
If not, choose $\ul i$ with $i$ maximal such that $f(\ul i) \neq g(\ul i)$. 
If $i \leq k$, then again we have $\texttt{Tr}(M^{{\bf b},1}_g) = 0$. 
So suppose $i \geq k+1$. Since $ g \preceq_{{\bf b},1} f$, we have $g(\ul j ) = f(\ul j)$ for $j \gg 0$ 
and $g(\ul i) < f(\ul i) $. Hence there must be some $t \geq k+1$ such that $g(\ul t) -g(\ul{t+1}) \geq f(\ul t) -f(\ul{t+1}) >-1$. 
Therefore $ \texttt{Tr}(M^{{\bf b},1}_g) = 0$. The lemma follows.
\end{proof}

\begin{lem} 
  \label{pt2:lem:baroninfty}
The truncation map $\texttt{Tr} : \mathbb{T}^{\bf b}\widehat{\otimes} \wedge^\infty \mathbb{W} 
\rightarrow \mathbb{T}^{\bf b}\widehat{\otimes} \wedge^k\mathbb{W}$ commutes with the anti-linear involution $\Bbar$, that is,
\[
\Bbar({\texttt{Tr}(M^{{\bf b},1}_f)}) = {\texttt{Tr}(\Bbar({M^{{\bf b},1}_f}))}, \quad \text{ for } f \in I^{m+n} \times I^{\infty}_{-}.
\]
\end{lem}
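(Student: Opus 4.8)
The plan is to reduce the statement to a compatibility between the truncation map $\texttt{Tr}$ and the two constituent operators $\Upsilon$ and $\Abar$ that make up $\Bbar = \Upsilon \Abar$. First I would recall that $\texttt{Tr}$ is already $\Qq$-linear on the $B$-completion by Lemma~\ref{pt2:lem:burhatinfinity}, so it suffices to check the identity on the standard monomial basis elements $M^{{\bf b},1}_f$ and it is enough to show separately that $\texttt{Tr}\,\Abar = \Abar\,\texttt{Tr}$ and $\texttt{Tr}\,\Upsilon = \Upsilon\,\texttt{Tr}$ as maps $\mathbb{T}^{\bf b}\widehat{\otimes}\wedge^\infty\WW \to \mathbb{T}^{\bf b}\widehat{\otimes}\wedge^k\WW$. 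The first of these, compatibility of the type $A$ bar involution $\Abar$ with truncation, is exactly the content established in \cite[\S4]{CLW12} (using Remark~\ref{osp:rem:Bcompletions} to transport it to the present $B$-completion), so I would simply cite it. The genuinely new point is the intertwining with $\Upsilon$.

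For the $\Upsilon$ part, I would argue as follows. Realize $\mathbb{T}^{\bf b}\otimes\wedge^\infty\WW$ through its sectors and finite approximations $\mathbb{T}^{\bf b}\otimes\wedge^\ell\WW \hookrightarrow \mathbb{T}^{({\bf b},1^\ell)}$, on which $\Upsilon$ acts as the operator coming from the iterated coproduct into $\bun\otimes\U^{\otimes(m+n+\ell)}$. The coideal property (Proposition~\ref{prop:coproduct}) and the fact that $\Upsilon_\mu \in \U^-$ with $\Upsilon_\mu \neq 0$ only when $\mu^\inv = \mu$ force $\Upsilon(M^{{\bf b},1}_f) = M^{{\bf b},1}_f + \sum_{g \prec_{{\bf b},1}f} r'_{gf}(q) M^{{\bf b},1}_g$ with $r'_{gf}(q)\in\mA$, as recorded in \S\ref{subsec:barWW}. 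Now one compares the coefficient of a given $\texttt{Tr}(M^{{\bf b},1}_g) = M^{{\bf b},1}_{g^{\underline k}}$ on both sides: on the $\texttt{Tr}\,\Upsilon$ side it is $\sum$ over $g$ with $g(\ul i)-g(\ul{i+1})=-1$ for $i\ge k+1$ of $r'_{gf}(q)$, and on the $\Upsilon\,\texttt{Tr}$ side it is the analogous structure constant of $\Upsilon$ acting on the finite wedge $\mathbb{T}^{\bf b}\otimes\wedge^k\WW$. The equality should follow because $\Upsilon$ (being in a completion of $\U^-$) acts the same way whether one first fixes the tail of the $q$-wedge to be ``straight'' (i.e.\ $f(\ul i)-f(\ul{i+1})=-1$ for large $i$) and then truncates, or truncates first: the generators $E_{\alpha_i}, F_{\alpha_i}$ for $i$ indexing the large tail act by zero on the truncated piece and only produce contributions that are killed by $\texttt{Tr}$. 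Concretely I expect to invoke a weight/stabilization argument: for each $f$, only finitely many $\Upsilon_\mu$ contribute to $\Upsilon(M^{{\bf b},1}_f)$ modulo any fixed $\ker\pi_r$, and those $\mu$ supported near the finite part $[m+n]$ commute with $\texttt{Tr}$ trivially.

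The step I expect to be the main obstacle is making the last interchange rigorous at the level of the semi-infinite direct limit, i.e.\ showing that truncating the $q$-wedge and then applying $\Upsilon$ genuinely agrees with applying $\Upsilon$ in the limit and then truncating, rather than only agreeing up to tail terms. This is the same kind of delicate ``completion bookkeeping'' that appears throughout Part~2 (e.g.\ in Lemmas~\ref{osp:lem:Upsilonhattohat} and \ref{pt2:lem:burhatinfinity}), and the cleanest route is probably to prove the identity first on each finite stage $\mathbb{T}^{\bf b}\otimes\wedge^\ell\WW$ with $\ell \ge k$, where $\texttt{Tr}$ restricts to a projection-type map onto a coordinate subspace and $\Upsilon$ acts as an honest element of $\bun\otimes\U^{\otimes(m+n+\ell)}$, and then pass to the limit $\ell\to\infty$ using Lemma~\ref{osp:lem:finitelengthW} (finiteness of order-intervals) to guarantee that each coefficient stabilizes. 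Combining $\texttt{Tr}\,\Upsilon = \Upsilon\,\texttt{Tr}$ with the cited $\texttt{Tr}\,\Abar = \Abar\,\texttt{Tr}$ gives $\texttt{Tr}\,\Bbar = \texttt{Tr}\,\Upsilon\Abar = \Upsilon\,\texttt{Tr}\,\Abar = \Upsilon\Abar\,\texttt{Tr} = \Bbar\,\texttt{Tr}$ on $M^{{\bf b},1}_f$, which is the claim.
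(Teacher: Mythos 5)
Your decomposition $\Bbar=\Upsilon\Abar$ and the treatment of the $\Abar$ part by citing \cite[Lemma 4.2]{CLW12} match the paper exactly. Where you diverge is in the $\Upsilon$ part: you propose a coefficient-by-coefficient stabilization argument, passing through finite stages $\mathbb{T}^{\bf b}\otimes\wedge^\ell\WW$ and worrying about interchange of limits, and you flag this as the main obstacle. The paper avoids all of that by a single crisp observation, \emph{also} drawn from the proof of \cite[Lemma 4.2]{CLW12}: $\texttt{Tr}$ is a homomorphism of $\U^{-}$-modules. Since $\Upsilon=\sum_\mu\Upsilon_\mu$ with every $\Upsilon_\mu\in\U^-$, commutativity of $\texttt{Tr}$ with $\Upsilon$ is then immediate (no completion bookkeeping needed: $\texttt{Tr}(\Upsilon_\mu x)=\Upsilon_\mu\texttt{Tr}(x)$ holds term by term). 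Your route is not wrong, but it reconstructs by hand a consequence of a cleaner structural fact you already have available in the reference you cite; in particular, the ``large-tail generators are killed by $\texttt{Tr}$'' heuristic and the stabilization-over-$\ell$ step in your last paragraph are exactly what the $\U^-$-equivariance of $\texttt{Tr}$ packages into one line. A small side note: $\Upsilon$ acts on a tensor product as an element of the completion of $\U^-$ (via the coproduct of $\U$), not ``as an element of $\bun\otimes\U^{\otimes(m+n+\ell)}$''; only the quasi-$\mathcal R$-matrix $\Theta^\imath$ lives in that coideal form.
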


\begin{proof}
Following \cite[Lemma 4.2]{CLW12}, we know $\texttt{Tr}$ commutes with $\Abar$. 
As shown in the proof of \cite[Lemma 4.2]{CLW12}, $\texttt{Tr}$ is a homomorphism of $\U^{-}$-modules. 
By \eqref{pt2:eq:Upsilon}, we have $\Upsilon = \sum_{\mu \in \Lambda}\Upsilon_{\mu}$, 
where $\Upsilon_{\mu} \in \U^{-}$. The lemma follows.
\end{proof}

\begin{prop}
Let $k \in \N \cup \{\infty\}$. The anti-linear map $\Bbar: \mathbb{T}^{\bf b}\widehat{\otimes}
 \wedge^k\mathbb{W} \rightarrow \mathbb{T}^{\bf b}\widehat{\otimes} \wedge^k\mathbb{W}$ is an involution. 
 Moreover, the space $\mathbb{T}^{\bf b}\widehat{\otimes} \wedge^k\mathbb{W}$ has unique $\Bbar$-invariant topological bases
\[
\{T^{{\bf b},1}_f \mid f \in I^{m+n} \times I^{k}_- \} \quad \text{ and } \quad \{L^{{\bf b},1}_f \mid f \in I^{m+n} \times I^{k}_-\}
\]
such that
\[
T^{{\bf b},1}_f = M^{{\bf b}, 1}_f + \sum_{g \prec_{({\bf b}, 1^{k}) }f} t^{{\bf b},1}_{gf}(q)M^{{\bf b}, 1}_g, \quad 
L^{{\bf b},1}_f = M^{{\bf b}, 1}_f + \sum_{g \prec_{({\bf b}, 1^{k}) }f} \ell^{{\bf b},1}_{gf}(q)M^{{\bf b}, 1}_g
\]
with $t^{{\bf b},1}_{gf} \in q\Z[q]$, and $\ell^{{\bf b},1}_{gf}(q) \in q^{-1}\Z[q^{-1}]$. 
(We shall write $t^{{\bf b},1}_{ff} = \ell^{{\bf b},1}_{ff}(q) =1$, and $t^{{\bf b},1}_{gf}  
 = \ell^{{\bf b},1}_{gf} = 0$, for $ g \not\preceq_{({\bf b}, 1^{k}) }f$.)
\end{prop}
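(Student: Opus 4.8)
The plan is to follow exactly the template used to prove Theorem~\ref{thm:iCBb}, now applied to the space $\mathbb{T}^{\bf b}\widehat{\otimes}\wedge^{k}\WW$ rather than $\widehat{\mathbb{T}}^{\bf b}$. The essential inputs are already assembled in \S\ref{subsec:barWW}: we have the anti-linear map $\Bbar=\Upsilon\Abar$ on $\mathbb{T}^{\bf b}\widehat{\otimes}\wedge^{k}\WW$, and we have the triangularity
\[
\Bbar(M^{{\bf b},1}_f)=M^{{\bf b},1}_f+\sum_{g\prec_{({\bf b},1^{k})}f}r_{gf}(q)M^{{\bf b},1}_g,
\qquad r_{gf}(q)\in\mA,
\]
with respect to the partial ordering $\preceq_{({\bf b},1^{k})}$, which is interval-finite by Lemma~\ref{osp:lem:finitelengthW}(2) (for $k=\infty$) and by Lemma~\ref{osp:lem:finitelength} (for $k\in\N$, since then it is the ordering on $\mathbb{T}^{({\bf b},1^k)}$). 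So the only genuine remaining tasks are (i) showing $\Bbar$ is an involution, and (ii) invoking Lusztig's lemma to extract the two bases.

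For step (i): when $k\in\N$, the involutivity of $\Bbar$ on $\mathbb{T}^{\bf b}\widehat{\otimes}\wedge^{k}\WW$ was already noted in \S\ref{subsec:barWW} (it is the restriction to this subspace of the involution $\Bbar$ on $\widehat{\mathbb{T}}^{({\bf b},1^k)}$ established in Lemma~\ref{osp:lem:Bbarinvolution}, using that $\mathbb{T}^{\bf b}\widehat{\otimes}\wedge^{k}\WW$ is $\Bbar$-invariant). When $k=\infty$, I would argue as in the proof of Lemma~\ref{osp:lem:Bbarinvolution}: it suffices to show $\sum_{h\preceq g\preceq f}r_{hg}(q)\ov{r_{gf}(q)}=\delta_{hf}$ for all $h\prec_{({\bf b},1^{\infty})}f$ in $I^{m+n}\times I^{\infty}_-$; by Lemma~\ref{osp:lem:finitelengthW}(2) this is a finite sum, so it is determined by its image under the truncation maps $\texttt{Tr}$ for $k\gg0$, and Lemma~\ref{pt2:lem:baroninfty} together with the $k\in\N$ case (and Lemma~\ref{pt2:lem:burhatinfinity} for order-compatibility) reduces the identity to the already-known finite-rank statement. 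In fact the excerpt explicitly states this as the lemma immediately preceding, so I would simply cite it: \emph{``By the lemma just proved, $\Bbar$ is an anti-linear involution on $\mathbb{T}^{\bf b}\widehat{\otimes}\wedge^{k}\WW$ for all $k\in\N\cup\{\infty\}$.''}

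For step (ii): with $\Bbar$ an anti-linear involution satisfying the unitriangularity above over the interval-finite poset $(I^{m+n}\times I^{k}_-,\preceq_{({\bf b},1^k)})$, I would apply \cite[Lemma~24.2.1]{Lu94} verbatim, exactly as in the proofs of Theorem~\ref{thm:BCB}, Theorem~\ref{thm:iCBb}, and Theorem~\ref{int:thm:BCB}. This produces the unique $\Bbar$-invariant topological basis $\{T^{{\bf b},1}_f\}$ with $T^{{\bf b},1}_f\equiv M^{{\bf b},1}_f$ modulo $\sum_{g\prec f}q\Z[q]\,M^{{\bf b},1}_g$, and the dual version $\{L^{{\bf b},1}_f\}$ with coefficients in $q^{-1}\Z[q^{-1}]$ (obtained by applying the lemma to the dual basis under the standard bilinear pairing on $\mathbb{T}^{({\bf b},1^k)}$, or equivalently to $\Bbar$ on the restricted dual, as done in \cite{CLW12}). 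The conventions $t^{{\bf b},1}_{ff}=\ell^{{\bf b},1}_{ff}=1$ and the vanishing outside the interval are part of the statement of that lemma.

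\textbf{Main obstacle.} I expect essentially no obstacle in the $k\in\N$ case, since everything is inherited from $\widehat{\mathbb{T}}^{({\bf b},1^k)}$. The only subtlety is the $k=\infty$ case, where one must be careful that the interval-finiteness (Lemma~\ref{osp:lem:finitelengthW}) really does let the truncation maps detect the involution identity; but this is precisely the content of Lemmas~\ref{pt2:lem:burhatinfinity} and \ref{pt2:lem:baroninfty}, which are designed for this purpose, so the argument is a routine limiting argument parallel to \cite[\S4]{CLW12}. The only thing to double-check is that the $q\Z[q]$ versus $q^{-1}\Z[q^{-1}]$ normalizations are preserved under the direct-limit maps $\id\otimes\wedge^{k,l}_d$ — this follows because those maps send standard monomials to standard monomials and are compatible with $\Bbar$ by construction in \S\ref{subsec:barWW}.
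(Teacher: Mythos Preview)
Your proposal is correct and follows exactly the paper's approach: the involutivity is the content of the lemma immediately preceding the proposition (proved for $k\in\N$ by restriction from $\widehat{\mathbb{T}}^{({\bf b},1^k)}$, and for $k=\infty$ by the same argument as Lemma~\ref{osp:lem:Bbarinvolution} via Lemma~\ref{osp:lem:finitelengthW}), and the existence and uniqueness of the two bases then follow from \cite[Lemma~24.2.1]{Lu94} applied to the unitriangular involution, just as in Theorem~\ref{thm:iCBb}.
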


We call $\{T^{{\bf b},1}_f\}$ and $\{L^{{\bf b},1}_f\}$ the {\em $\imath$-canonical} 
and {\em dual $\imath$-canonical bases} of $\mathbb{T}^{\bf b}\widehat{\otimes} \wedge^k\mathbb{W}$. 
We conjecture that $t^{{\bf b},1}_{gf} \in \N [q]$.

\begin{prop}
   \label{prop:can:truncW}
Let $k\in\N$. The truncation map $\texttt{Tr}:\mathbb T^{\bf
b}\wotimes\wedge^\infty\mathbb W\rightarrow \mathbb T^{\bf
b}\wotimes\wedge^k\mathbb W$ preserves the standard, $\imath$-canonical, and
dual $\imath$-canonical bases in the following sense: for $Y=M,L,T$ and
$f\in I^{m+n} \times I^{\infty}_{-}$ we have
\begin{align*}
\texttt{Tr}\left( Y^{{\bf b},1}_{f} \right)=
\begin{cases}
Y^{{\bf b},1}_{f^{\ul k}},
&\text{ if }f(\ul{i}) - f(\ul{i+1})=-1,\text{ for }i\ge k+1,\\
0,&\text{ otherwise}.
\end{cases}
\end{align*}
Consequently, we have $t^{{\bf b},1}_{gf}(q)=t^{{\bf
b},1}_{g^{\ul k}f^{\ul k}}(q)$ and $\ell^{{\bf b},1}_{gf}(q)=\ell^{{\bf
b},1}_{g^{\ul k}f^{\ul k}}(q)$, for $g, f\in I^{m+n} \times I^{\infty}_{-}$ such
that $f(\ul{i}) -f(\ul{i+1})=g(\ul{i}) -g(\ul{i+1})=-1,$ for $i\ge k+1$.
\end{prop}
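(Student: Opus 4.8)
The plan is to deduce Proposition~\ref{prop:can:truncW} from Lemma~\ref{pt2:lem:burhatinfinity} and Lemma~\ref{pt2:lem:baroninfty} together with the uniqueness characterization of the (dual) $\imath$-canonical bases in the finite-rank wedge space $\mathbb T^{\bf b}\wotimes\wedge^k\mathbb W$. The statement for $Y=M$ is immediate from the definition of $\texttt{Tr}$ in \S\ref{subsec:trun}: by construction $\texttt{Tr}(M^{{\bf b},1}_f)=M^{{\bf b},1}_{f^{\ul k}}$ when $f(\ul i)-f(\ul{i+1})=-1$ for $i\ge k+1$, and $\texttt{Tr}(M^{{\bf b},1}_f)=0$ otherwise. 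So the work is entirely for $Y=T$ and $Y=L$.

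First I would dispose of the vanishing case. Suppose $f\in I^{m+n}\times I^\infty_-$ does \emph{not} satisfy $f(\ul i)-f(\ul{i+1})=-1$ for all $i\ge k+1$. By Lemma~\ref{pt2:lem:burhatinfinity}, every $g\preceq_{{\bf b},1}f$ occurring with nonzero coefficient in the expansion $T^{{\bf b},1}_f=M^{{\bf b},1}_f+\sum_{g\prec_{({\bf b},1^\infty)}f}t^{{\bf b},1}_{gf}(q)M^{{\bf b},1}_g$ either has $\texttt{Tr}(M^{{\bf b},1}_g)=0$ or (if it survives) has tail equal to $f$'s tail — but the argument in the proof of Lemma~\ref{pt2:lem:burhatinfinity} shows that once $\texttt{Tr}(M^{{\bf b},1}_f)=0$ and $g\preceq_{{\bf b},1}f$, one has $\texttt{Tr}(M^{{\bf b},1}_g)=0$ as well. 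Hence $\texttt{Tr}(T^{{\bf b},1}_f)=0$, and likewise $\texttt{Tr}(L^{{\bf b},1}_f)=0$. (Note that for this step one must also observe that $\texttt{Tr}$ is well-defined on the $B$-completion, which is exactly what Lemma~\ref{pt2:lem:burhatinfinity} provides, so that applying $\texttt{Tr}$ to the possibly infinite sum makes sense.)

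Next, for $f$ satisfying $f(\ul i)-f(\ul{i+1})=-1$ for $i\ge k+1$, I would apply $\texttt{Tr}$ to the defining expansion of $T^{{\bf b},1}_f$. By Lemma~\ref{pt2:lem:burhatinfinity} the surviving terms $M^{{\bf b},1}_g$ are exactly those with $g(\ul i)-g(\ul{i+1})=-1$ for $i\ge k+1$, and for such $g$ we have $g^{\ul k}\preceq_{({\bf b},1^k)}f^{\ul k}$; moreover $\texttt{Tr}(M^{{\bf b},1}_g)=M^{{\bf b},1}_{g^{\ul k}}$. Therefore
\[
\texttt{Tr}(T^{{\bf b},1}_f)=M^{{\bf b},1}_{f^{\ul k}}+\sum_{g^{\ul k}\prec_{({\bf b},1^k)}f^{\ul k}} c_{g^{\ul k}}(q)\,M^{{\bf b},1}_{g^{\ul k}}
\]
for some $c_{g^{\ul k}}(q)\in q\Z[q]$ (inherited from $t^{{\bf b},1}_{gf}(q)\in q\Z[q]$; one should check that distinct $g$ with the prescribed tail give distinct $g^{\ul k}$, so no cancellation collapses the $q\Z[q]$ property). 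By Lemma~\ref{pt2:lem:baroninfty}, $\texttt{Tr}(T^{{\bf b},1}_f)$ is $\Bbar$-invariant in $\mathbb T^{\bf b}\wotimes\wedge^k\mathbb W$. The uniqueness of the $\Bbar$-invariant element with leading term $M^{{\bf b},1}_{f^{\ul k}}$ and lower-order coefficients in $q\Z[q]$ (the characterization in the preceding proposition) forces $\texttt{Tr}(T^{{\bf b},1}_f)=T^{{\bf b},1}_{f^{\ul k}}$. The identical argument with $q\Z[q]$ replaced by $q^{-1}\Z[q^{-1}]$ gives $\texttt{Tr}(L^{{\bf b},1}_f)=L^{{\bf b},1}_{f^{\ul k}}$. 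Comparing coefficients of $M^{{\bf b},1}_{g^{\ul k}}$ on both sides then yields $t^{{\bf b},1}_{gf}(q)=t^{{\bf b},1}_{g^{\ul k}f^{\ul k}}(q)$ and $\ell^{{\bf b},1}_{gf}(q)=\ell^{{\bf b},1}_{g^{\ul k}f^{\ul k}}(q)$ whenever $f,g$ both have the prescribed stabilized tail.

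\textbf{Main obstacle.} The only genuinely delicate point — and the one I would be most careful about — is the interplay of the infinite sums with the truncation: one must be sure that $\texttt{Tr}$ can be applied term-by-term to an element of the $B$-completion and that the result lands in the $B$-completion of the finite wedge space, with no spurious infinite contributions to a single coefficient. This is precisely the content of Lemma~\ref{pt2:lem:burhatinfinity} (compatibility with the Bruhat orderings, via the finiteness in Lemma~\ref{osp:lem:finitelengthW}), so once that lemma is in hand the rest is a formal uniqueness argument. A minor secondary point is verifying that the map $g\mapsto g^{\ul k}$ is injective on the set of $g\preceq_{{\bf b},1}f$ with stabilized tail matching that of $f$, which is clear since such $g$ is determined by $g^{\ul k}$ together with $f$'s (common) tail.
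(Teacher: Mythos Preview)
Your proposal is correct and follows exactly the approach the paper takes: the case $Y=M$ is by definition, and the cases $Y=T,L$ follow from Lemma~\ref{pt2:lem:burhatinfinity} (compatibility of $\texttt{Tr}$ with the Bruhat orderings, including the vanishing behavior) and Lemma~\ref{pt2:lem:baroninfty} (commutation of $\texttt{Tr}$ with $\Bbar$), combined with the uniqueness characterization of the (dual) $\imath$-canonical bases. The paper compresses this into a single sentence, but your expanded version is precisely the intended argument.
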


\begin{proof}
The statement is true for $Y=M$ by definition. Lemma \ref{pt2:lem:burhatinfinity} and Lemma \ref{pt2:lem:baroninfty}
now imply the statement for $Y=T$, $L$.
\end{proof}

\section{Bar involution and $q$-wedges of $\VV$}

The constructions and statements in \S\ref{subsec:barWW} and \S \ref{subsec:trun} 
have counterparts for $\mathbb{T}^{\bf b} \otimes \wedge^{k} \VV$, $k \in \N \cup \{\infty\}$. 
We shall state them without proofs. Let $\mathbb{T}^{\bf b} \widehat{\otimes} \wedge^{k} \VV$ 
be the $B$-completion of $\mathbb{T}^{\bf b} \otimes \wedge^{k} \VV$. For $k \in \N$, we define the truncation map 
$\texttt{Tr} : \mathbb{T}^{\bf b} \otimes \wedge^{\infty} \VV \rightarrow  \mathbb{T}^{\bf b} \otimes \wedge^{k} \VV$ by 
\[
{\texttt{Tr}}(m \otimes \mathcal{V}_h) =
\begin{cases}
 m \otimes \mathcal{V}_{h_{[\underline{k}]}},& \text{ if } h(\ul{i}) -h(\ul{i+1}) = 1, \text{ for } i \geq k+1,\\
0, &\text{ otherwise }. 
\end{cases}
\]
The truncation map $\texttt{Tr}$ extends to the $B$-completions. 

\begin{prop} 
Let $k\in\N\cup\{\infty\}$. The bar map $\Bbar: {\mathbb T}^{\bf
b}\wotimes\wedge^k\mathbb V \rightarrow {\mathbb T}^{\bf
b}\wotimes\wedge^k\mathbb V$ is an involution. Moreover, the space ${\mathbb
T}^{\bf b}\wotimes\wedge^k\mathbb V$ has unique $\Bbar$-invariant topological bases
\begin{align*}
\{T^{{\bf b},0}_f \mid f \in I^{m+n} \times I^{k}_{+}\} \quad \text{ and } \quad \{L^{{\bf
b},0}_f \mid f\in I^{m+n} \times I^{k}_{+}\}
\end{align*}
such that
\begin{align*}
T^{{\bf b},0}_f &=M^{{\bf b},0}_f+\sum_{g\prec_{({\bf b},{0^k})} f}
t^{{\bf b},0}_{gf}(q) M^{{\bf b},0}_g,
  \\
L^{{\bf b},0}_f &=M^{{\bf b},0}_f+\sum_{g\prec_{({\bf b},{0^k})} f}
\ell^{{\bf b},0}_{gf}(q) M^{{\bf b},0}_g,
\end{align*}
with $t^{{\bf b},0}_{gf}(q)\in q\Z[q]$, and $\ell^{{\bf
b},0}_{gf}(q)\in q^{-1}\Z[q^{-1}]$. (We will write $t_{ff}^{{\bf
b},0}(q)=\ell_{ff}^{{\bf b},0}(q)=1$, $t_{gf}^{{\bf
b},0}=\ell_{gf}^{{\bf b},0}=0$, for $g \not\preceq_{({\bf
b},{0^k})}f$.)
\end{prop}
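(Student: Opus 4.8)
The plan is to mirror, step by step, the treatment of $\mathbb{T}^{\bf b}\widehat{\otimes}\wedge^k\WW$ carried out in \S\ref{subsec:barWW}--\S\ref{subsec:trun}, making only the sign and $\rho$-shift changes dictated by replacing the simple systems $\Pi_{{\bf b},1}$ (diagrams of type ($\star\star$)) by $\Pi_{{\bf b},0}$ (diagrams of type ($\star$)); concretely this amounts to using the orderings $\preceq_{({\bf b},0^k)}$ and $\preceq_{{\bf b},0}$ of Definitions~\ref{osp:def:dominanceordering} and~\ref{def:orderinfV} and the bijections \eqref{osp:eq:bijectionwtlV}, \eqref{eq:fb0}. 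First I would treat $k\in\N$. As in \S\ref{osp:subsec:$q$-wedges} one realizes $\mathbb{T}^{\bf b}\otimes\wedge^k\VV$ inside $\mathbb{T}^{\bf b}\otimes\VV^{\otimes k}=\mathbb{T}^{({\bf b},0^k)}$ as $\text{Im}(\text{SkSym}_k)$, the image of the $q$-skew-symmetrizer acting on the last $k$ tensor slots, identifying $\mathcal V_f\equiv M^{({\bf b},0^k)}_{f\cdot w^{(k)}_0}L_{w^{(k)}_0}$; then $\mathbb{T}^{\bf b}\widehat{\otimes}\wedge^k\VV$ is the closure of this subspace in $\widehat{\mathbb{T}}^{({\bf b},0^k)}$ with respect to $\{\ker\pi_r\mid r\ge 1\}$. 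Since $\overline{L_{w^{(k)}_0}}=L_{w^{(k)}_0}$, and since $\Bbar=\Upsilon\Abar$ on $\widehat{\mathbb{T}}^{({\bf b},0^k)}$ commutes with the right $\mathcal H_{A_{k-1}}$-action on those slots ($\Abar$ does by the type $A$ fact recalled in \cite[\S4]{CLW12}, and $\Upsilon\in\U^-$ does by Schur--Jimbo duality, Proposition~\ref{prop:SchurA}), the subspace $\mathbb{T}^{\bf b}\widehat{\otimes}\wedge^k\VV$ is $\Bbar$-stable. Hence by Theorem~\ref{thm:iCBb} the map $\Bbar$ is already an involution on $\mathbb{T}^{\bf b}\widehat{\otimes}\wedge^k\VV$, with $\Bbar(M^{{\bf b},0}_f)=M^{{\bf b},0}_f+\sum_{g\prec_{({\bf b},0^k)}f}r_{gf}(q)M^{{\bf b},0}_g$, $r_{gf}(q)\in\mA$; applying \cite[Lemma~24.2.1]{Lu94} to this anti-linear involution and the ordering $\preceq_{({\bf b},0^k)}$, which is locally finite by (the $I^{m+n}\times I^{k}_+$ analogue of) Lemma~\ref{osp:lem:finitelengthW}, yields the bases $\{T^{{\bf b},0}_f\}$ and $\{L^{{\bf b},0}_f\}$ with the stated integrality.

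For $k=\infty$ I would, as in \eqref{eq:wedge}--\eqref{osp:eq:Bcompletioninfinity}, use the stabilization maps $\wedge^{k,l}_d$ of \S\ref{osp:subsec:$q$-wedges} to form sector by sector $\mathbb{T}^{\bf b}\widehat{\otimes}\wedge^\infty_d\VV=\varinjlim_k\mathbb{T}^{\bf b}\widehat{\otimes}\wedge^k\VV$ and then $\mathbb{T}^{\bf b}\widehat{\otimes}\wedge^\infty\VV=\bigoplus_{d\in\Z}\mathbb{T}^{\bf b}\widehat{\otimes}\wedge^\infty_d\VV$, a topological $\U$-module and hence $\bun$-module, spanned by vectors $M^{{\bf b},0}_f+\sum_{g\prec_{{\bf b},0}f}c_{gf}(q)M^{{\bf b},0}_g$ with $f,g\in I^{m+n}\times I^\infty_+$. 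Exactly as in \cite[\S4.1]{CLW12} the involution $\Abar$ passes to this limit with unitriangular $\mA$-valued transition coefficients; since $\Upsilon=\sum_\mu\Upsilon_\mu$ with $\Upsilon_\mu\in\U^-_\mA$ by Theorem~\ref{thm:UpsiloninZ}, $\Upsilon$ is a well-defined operator on $\mathbb{T}^{\bf b}\widehat{\otimes}\wedge^\infty\VV$ with the same triangularity, so $\Bbar:=\Upsilon\Abar$ is defined there and $\Bbar(M^{{\bf b},0}_f)=M^{{\bf b},0}_f+\sum_{g\prec_{{\bf b},0}f}r_{gf}(q)M^{{\bf b},0}_g$, $r_{gf}(q)\in\mA$. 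That $\Bbar$ is an involution follows by the argument of Lemma~\ref{osp:lem:Bbarinvolution}: the required identity $\sum_{h\preceq_{{\bf b},0}g\preceq_{{\bf b},0}f}r_{hg}(q)\overline{r_{gf}(q)}=\delta_{hf}$ involves only finitely many $g$ by Lemma~\ref{osp:lem:finitelengthW}(1), so it suffices to verify it inside $\mathbb{T}^{\bf b}\widehat{\otimes}\wedge^k\VV$ for $k\gg0$, where it has just been proved. A final application of \cite[Lemma~24.2.1]{Lu94} then produces the $\Bbar$-invariant topological bases $\{T^{{\bf b},0}_f\}$ and $\{L^{{\bf b},0}_f\}$, $f\in I^{m+n}\times I^\infty_+$, with $t^{{\bf b},0}_{gf}(q)\in q\Z[q]$ and $\ell^{{\bf b},0}_{gf}(q)\in q^{-1}\Z[q^{-1}]$.

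I expect no genuinely new idea is needed; the only point deserving care is the $k=\infty$ construction, where one must check that the truncation map $\texttt{Tr}:\mathbb{T}^{\bf b}\widehat{\otimes}\wedge^\infty\VV\to\mathbb{T}^{\bf b}\widehat{\otimes}\wedge^k\VV$ (given on monomials by $m\otimes\mathcal V_h\mapsto m\otimes\mathcal V_{h_{[\underline k]}}$ when $h(\underline i)-h(\underline{i+1})=1$ for all $i\ge k+1$ and $0$ otherwise) is compatible with $\preceq_{{\bf b},0}$ and $\preceq_{({\bf b},0^k)}$ and commutes with $\Abar$ and with $\Upsilon$; these are the $\VV$-analogues of Lemmas~\ref{pt2:lem:burhatinfinity} and~\ref{pt2:lem:baroninfty}, the only difference from \cite{CLW12} being that all sectors $d\in\Z$ are now present, which is harmless since they are treated independently. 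Granting these routine verifications, the consistency of the direct limit, the involutivity of $\Bbar$, and the existence and uniqueness of the two bases all follow formally, and one obtains as a byproduct the truncation compatibility $\texttt{Tr}(Y^{{\bf b},0}_f)=Y^{{\bf b},0}_{f^{\underline k}}$ (or $0$) for $Y=M,L,T$, in exact parallel with Proposition~\ref{prop:can:truncW}.
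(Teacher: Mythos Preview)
Your proposal is correct and follows exactly the approach the paper intends: the paper states this proposition without proof, explicitly declaring that the constructions and arguments for $\mathbb{T}^{\bf b}\widehat{\otimes}\wedge^k\VV$ are the counterparts of those in \S\ref{subsec:barWW}--\S\ref{subsec:trun} for $\mathbb{T}^{\bf b}\widehat{\otimes}\wedge^k\WW$, with the obvious sign and ordering changes. Your write-up faithfully carries this out, including the finite-$k$ case via the embedding in $\widehat{\mathbb{T}}^{({\bf b},0^k)}$ and the $k=\infty$ case via the direct limit plus the finiteness from Lemma~\ref{osp:lem:finitelengthW}(1).
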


We shall refer to the basis $\{T^{{\bf b},0}_f\}$  
as the {\em $\imath$-canonical basis} 
and refer to the basis $\{L^{{\bf b},0}_f\}$  the {\em dual $\imath$-canonical basis} for ${\mathbb T}^{\bf
b}\wotimes\wedge^k\mathbb V$. 
Also we shall call the polynomials $t^{{\bf b},0}_{gf}(q)$, $t^{{\bf b},1}_{gf}(q)$, $\ell^{{\bf b},0}_{gf}(q)$ 
and $\ell^{{\bf b},1}_{gf}(q)$ the {\em  $\imath$-KL polynomials}.

\begin{prop}
    \label{prop:can:truncV}
Let $k\in\N$. The truncation map $\texttt{Tr}:\mathbb T^{\bf
b}\wotimes\wedge^\infty\mathbb V\rightarrow \mathbb T^{\bf
b}\wotimes\wedge^k\mathbb V$ preserves the standard, $\imath$-canonical, and
dual $\imath$-canonical bases in the following sense: for $Y=M,L,T$ and
$f\in I^{m+n} \times I^{\infty}_{+}$ we have
\begin{align*}
\texttt{Tr}\left( Y^{{\bf b},0}_{f} \right)=
\begin{cases}
Y^{{\bf b},0}_{f^{\ul k}},
&\text{ if }f(\ul{i}) -f(\ul{i+1})=1,\text{ for }i\ge k+1,\\
0,&\text{ otherwise}.
\end{cases}
\end{align*}
Consequently, we have $t^{{\bf b},0}_{gf}(q)=t^{{\bf
b},0}_{g^{\ul k}f^{\ul k}}(q)$ and $\ell^{{\bf b},0}_{gf}(q)=\ell^{{\bf
b},0}_{g^{\ul k}f^{\ul k}}(q)$, for $g, f\in I^{m+n} \times I^{\infty}_{+}$ such
that $f(\ul{i}) -f(\ul{i+1})=g(\ul{i}) -g(\ul{i+1})=1,$ for $i\ge k+1$.
\end{prop}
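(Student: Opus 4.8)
\textbf{Proof proposal for Proposition~\ref{prop:can:truncV}.} The plan is to reduce everything to Proposition~\ref{prop:can:truncW} by the same bar-involution and truncation machinery, using the counterparts for $\mathbb T^{\bf b}\wotimes\wedge^k\VV$ of Lemmas~\ref{pt2:lem:burhatinfinity} and \ref{pt2:lem:baroninfty} that were already invoked in the previous subsection. First I would record that the statement for $Y=M$ is immediate from the definition of $\texttt{Tr}$ on standard monomial bases: if $f(\ul i)-f(\ul{i+1})=1$ for all $i\ge k+1$ then $\texttt{Tr}(M^{{\bf b},0}_f)=M^{{\bf b},0}_{f^{\ul k}}$, and otherwise $\texttt{Tr}(M^{{\bf b},0}_f)=0$. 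So the content is entirely about the $\imath$-canonical and dual $\imath$-canonical bases.

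The key input is the analogue of Lemma~\ref{pt2:lem:burhatinfinity}: for $k\in\N$, the truncation map $\texttt{Tr}:\mathbb T^{\bf b}\otimes\wedge^\infty\VV\to\mathbb T^{\bf b}\otimes\wedge^k\VV$ is compatible with the Bruhat orderings $\preceq_{{\bf b},0}$ and $\preceq_{({\bf b},0^k)}$, hence extends to a $\Qq$-linear map on the $B$-completions $\texttt{Tr}:\mathbb T^{\bf b}\wotimes\wedge^\infty\VV\to\mathbb T^{\bf b}\wotimes\wedge^k\VV$. The proof is a verbatim adaptation of the one for $\mathbb W$: one uses Definition~\ref{def:orderinfV} (so that $g\preceq_{{\bf b},0}f$ forces $f(\ul i)=g(\ul i)$ for $i\gg0$), then Lemma~\ref{lem: linkage} and Lemma~\ref{lem:linkage at infinity} to transport the linkage $g^{\ul k}\sim f^{\ul k}$, and a case analysis (when $\texttt{Tr}(M^{{\bf b},0}_f)=0$, picking the maximal $\ul i$ with $f(\ul i)\neq g(\ul i)$ and examining whether $i\le k$ or $i\ge k+1$, noting that $g(\ul i)<f(\ul i)$ together with the eventual-coincidence condition forces some $t\ge k+1$ with $g(\ul t)-g(\ul{t+1})>1$, sign conventions reversed from the $\mathbb W$ case). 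Second, I would record the analogue of Lemma~\ref{pt2:lem:baroninfty}: $\texttt{Tr}$ commutes with the anti-linear involution $\Bbar=\Upsilon\Abar$. Here $\texttt{Tr}$ commutes with $\Abar$ by (the $\mathbb V$-version of) \cite[Lemma~4.2]{CLW12}, and $\texttt{Tr}$ is a homomorphism of $\U^-$-modules; since $\Upsilon=\sum_{\mu\in\N\Pi}\Upsilon_\mu$ with each $\Upsilon_\mu\in\U^-$ by \eqref{pt2:eq:Upsilon} and Theorem~\ref{int:thm:UpsiloninZ}/Theorem~\ref{thm:UpsiloninZ}, it follows that $\texttt{Tr}$ commutes with $\Upsilon$, hence with $\Bbar$.

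Granting these two lemmas, the conclusion follows formally. Apply $\texttt{Tr}$ to $T^{{\bf b},0}_f=M^{{\bf b},0}_f+\sum_{g\prec_{({\bf b},0^\infty)}f}t^{{\bf b},0}_{gf}(q)M^{{\bf b},0}_g$. If the tail condition $f(\ul i)-f(\ul{i+1})=1$ for $i\ge k+1$ fails, then $\texttt{Tr}(M^{{\bf b},0}_f)=0$ and (by the order-compatibility, since every $g$ in the sum lies below $f$ and hence also fails the condition — this is exactly the second half of the $\mathbb V$-analogue of Lemma~\ref{pt2:lem:burhatinfinity}) every term is killed, so $\texttt{Tr}(T^{{\bf b},0}_f)=0$. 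If the condition holds, then $\texttt{Tr}(T^{{\bf b},0}_f)=M^{{\bf b},0}_{f^{\ul k}}+\sum t^{{\bf b},0}_{gf}(q)M^{{\bf b},0}_{g^{\ul k}}$, which by Lemma~\ref{pt2:lem:baroninfty}'s analogue is $\Bbar$-invariant (in $\mathbb T^{\bf b}\wotimes\wedge^k\VV$), lies in $M^{{\bf b},0}_{f^{\ul k}}+\sum_{h\prec_{({\bf b},0^k)}f^{\ul k}}q\Z[q]\,M^{{\bf b},0}_h$, and hence equals $T^{{\bf b},0}_{f^{\ul k}}$ by the uniqueness characterization of the $\imath$-canonical basis in the $\wedge^k\VV$-version of the preceding proposition; the same argument with $q^{-1}\Z[q^{-1}]$ gives the statement for $Y=L$. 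Reading off matrix coefficients yields $t^{{\bf b},0}_{gf}(q)=t^{{\bf b},0}_{g^{\ul k}f^{\ul k}}(q)$ and $\ell^{{\bf b},0}_{gf}(q)=\ell^{{\bf b},0}_{g^{\ul k}f^{\ul k}}(q)$ whenever both $f$ and $g$ satisfy the tail condition with the same value. The main (and really only) obstacle is checking the order-compatibility of $\texttt{Tr}$ carefully with the sign conventions for $\wedge\VV$ (the $+1$-gap condition versus the $-1$-gap condition for $\wedge\WW$), but this is entirely routine given that the $\mathbb W$ case is already done; no new ideas are required.
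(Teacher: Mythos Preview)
Your proposal is correct and follows essentially the same approach as the paper. The paper does not give a separate proof for Proposition~\ref{prop:can:truncV}, instead declaring that all constructions and statements in \S\ref{subsec:barWW}--\S\ref{subsec:trun} have counterparts for $\mathbb T^{\bf b}\otimes\wedge^k\VV$ stated ``without proofs''; the proof of the $\WW$-analogue (Proposition~\ref{prop:can:truncW}) is exactly the two-line argument you spell out: $Y=M$ by definition, and then the $\VV$-versions of Lemmas~\ref{pt2:lem:burhatinfinity} and \ref{pt2:lem:baroninfty} give $Y=T,L$ via the uniqueness characterization.
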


\chapter{Comparisons of $\imath$-canonical bases in different Fock spaces}
 \label{sec:compareCB}
 
 In this chapter, we study the relations of $\imath$-canonical and dual $\imath$-canonical
 bases between three different pairs of Fock spaces.

\section{Tensor versus $q$-wedges}
 \label{subsec:tensorwedge}
  
As explained in \S \ref{osp:subsec:$q$-wedges}, we can and will regard $\wedge^k\VV$ 
as a subspace of $\VV^{\otimes k}$, for a finite $k$. 

Let {\bf b} be a fixed $0^m1^n$-sequence and $k \in \N$. We shall compare the 
$\imath$-canonical and dual $\imath$-canonical bases of 
$\mathbb{T}^{\bf b}\otimes \VV^{\otimes k}$ and its subspace $\mathbb{T}^{\bf b}\otimes \wedge^k\VV$ . 

Let $f\in I^{m+n}\times I^k_+$. As before, we write the dual
$\imath$-canonical basis element $L_f^{({\bf b},0^k)}$ in ${\mathbb T}^{\bf
b}\wotimes\mathbb V^{\otimes k}$ and the corresponding dual
$\imath$-canonical basis element $L_f^{{\bf b},0}$ in ${\mathbb T}^{\bf
b}\wotimes\wedge^k\mathbb V$ as
\begin{align}
L_f^{({\bf b},0^k)} &=\sum_{g\in I^{m+n}\times I^k}\ell^{{({\bf
b},{0^k})}}_{gf}(q) M^{{({\bf b},{0^k})}}_{g},
 \label{aux:108a} \\
 \qquad
L_f^{{\bf b},0}&= \sum_{g\in I^{m+n}\times I^k_+}\ell^{{{\bf
b},{0}}}_{gf}(q) M^{{{\bf b},0}}_{g}.
 \label{aux:108b}
\end{align}
The following proposition states that the $\imath$-KL polynomials $\ell$'s
in $\mathbb T^{\bf b}\wotimes\mathbb\wedge^k \mathbb V$ coincide
with their counterparts in $\mathbb T^{\bf b}\wotimes\mathbb
V^{\otimes k}$.

\begin{prop}
  \label{prop:aux1}
Let $f,g \in I^{m+n}\times I^k_+$. Then $\ell^{{{\bf
b},{0}}}_{gf}(q) =\ell^{{({\bf b},{0^k})}}_{gf}(q)$.
\end{prop}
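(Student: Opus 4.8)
The statement compares dual $\imath$-canonical bases of $\mathbb T^{\bf b}\wotimes\mathbb V^{\otimes k}$ and of its subspace $\mathbb T^{\bf b}\wotimes\wedge^k\mathbb V$, so the natural strategy is to realize the latter inside the former and show the $\imath$-canonical basis of the subspace is the restriction of the one of the ambient space. The key input is that, as recalled in \S\ref{osp:subsec:$q$-wedges}, $\wedge^k\mathbb V$ is realized as $\mathrm{Im}(\mathrm{SkSym}_k)\subset \mathbb V^{\otimes k}$ via $\mathcal V_f\equiv M^{(0^k)}_{f\cdot w_0^{(k)}}L_{w_0^{(k)}}$, where $L_{w_0^{(k)}}\in\mathcal H_{A_{k-1}}$ is the $q$-skew-symmetrizer which satisfies $\overline{L_{w_0^{(k)}}}=L_{w_0^{(k)}}$ (by \cite{KL, So97}). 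First I would observe that $\mathbb T^{\bf b}\wotimes\wedge^k\mathbb V=(\mathbb T^{\bf b}\wotimes\mathbb V^{\otimes k})\cdot(1\otimes L_{w_0^{(k)}})$ as a subspace, and more precisely that for $f\in I^{m+n}\times I^k_+$ one has $M^{{\bf b},0}_f=M^{({\bf b},0^k)}_{f'}\cdot(1\otimes L_{w_0^{(k)}})$ where $f'=(f_{[m+n]},\, f_{[\underline k]}\cdot w_0^{(k)})$ is the unique antidominant-in-the-wedge representative.

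Next I would invoke Theorem~\ref{thm:samebar} (and its infinite-rank extension underlying the construction of $\Bbar$ on $\wTb$-type spaces in \S\ref{osp:subsec:cbanddcb}), which gives compatibility of the bar involution $\Bbar$ with the right $\mathcal H_{B_{m+n+k}}$-action; in particular $\Bbar$ commutes with right multiplication by elements of $\mathcal H_{A_{k-1}}$ acting on the last $k$ tensor factors, since those elements are bar-invariant up to the standard formula and $L_{w_0^{(k)}}$ is bar-invariant. Hence $\Bbar(M^{{\bf b},0}_f)=\Bbar(M^{({\bf b},0^k)}_{f'})\cdot(1\otimes L_{w_0^{(k)}})=\big(\sum_{g}r_{gf'}(q)M^{({\bf b},0^k)}_g\big)\cdot(1\otimes L_{w_0^{(k)}})$. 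This shows that the $\Bbar$-action on $\mathbb T^{\bf b}\wotimes\wedge^k\mathbb V$ is the restriction of the $\Bbar$-action on $\mathbb T^{\bf b}\wotimes\mathbb V^{\otimes k}$ under the embedding $\mathcal V_f\mapsto M^{(0^k)}_{f\cdot w_0^{(k)}}L_{w_0^{(k)}}$. By the uniqueness clause of Theorem~\ref{thm:iCBb} (dual $\imath$-canonical basis characterized by $\Bbar$-invariance plus unitriangularity with $q^{-1}\Z[q^{-1}]$ coefficients with respect to $\preceq$), it then suffices to check that the dual $\imath$-canonical basis element $L_f^{({\bf b},0^k)}$ for $f\in I^{m+n}\times I^k_+$ in fact lies in the subspace $\mathbb T^{\bf b}\wotimes\wedge^k\mathbb V$, i.e., is fixed by right multiplication by $\tfrac{1}{?}(1\otimes L_{w_0^{(k)}})$ appropriately normalized; equivalently that $\ell^{({\bf b},0^k)}_{gf}(q)=0$ unless $g\in I^{m+n}\times I^k_+$. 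This is a standard feature of $q$-wedge constructions: $L_f^{({\bf b},0^k)}$ is a bar-invariant vector lying in the image of the skew-symmetrizer (because it is determined by antidominant $f$ which is regular for the $\mathcal H_{A_{k-1}}$-action), and hence when expanded in the standard monomial basis only involves $g$ with strictly decreasing last $k$ entries — this can be argued exactly as in \cite[\S4.1]{CLW12}.

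Granting that, restricting \eqref{aux:108a} to the subspace, one gets a $\Bbar$-invariant element of $\mathbb T^{\bf b}\wotimes\wedge^k\mathbb V$ with the correct unitriangularity with respect to $\preceq_{({\bf b},0^k)}$ restricted to $I^{m+n}\times I^k_+$; but the latter ordering restricted to wedge-indices is precisely the ordering governing $\mathbb T^{\bf b}\wotimes\wedge^k\mathbb V$, so by uniqueness this restricted element must equal $L_f^{{\bf b},0}$, giving $\ell^{{\bf b},0}_{gf}(q)=\ell^{({\bf b},0^k)}_{gf}(q)$ for all $g,f\in I^{m+n}\times I^k_+$. The main obstacle I anticipate is the bookkeeping around the two facts that (i) the standard monomial $M^{{\bf b},0}_f$ of the wedge space corresponds under the embedding not to $M^{({\bf b},0^k)}_f$ but to a skew-symmetrized combination $M^{({\bf b},0^k)}_{f\cdot w_0^{(k)}}L_{w_0^{(k)}}$, so one must be careful that the unitriangular change of basis is compatible with both $\preceq$ orderings, and (ii) showing $L_f^{({\bf b},0^k)}$ genuinely lies in the subspace — this requires knowing the $\imath$-KL polynomials vanish outside wedge-indices, which in turn rests on the (type $A$) wedge-compatibility results from \cite{CLW12} together with the observation that the extra factor $\Upsilon$ (which produces $\Bbar$ from $\Abar$) acts through $\U^-$ and hence preserves the image of the skew-symmetrizer on the last $k$ factors; once this is in place the rest is formal uniqueness.
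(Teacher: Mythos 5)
Your proposal matches the paper's approach: the paper's own proof is the one-line citation ``The same argument in \cite[Proposition 4.9]{CLW12} applies here,'' and your reconstruction---realize $\wedge^k\VV$ inside $\VV^{\otimes k}$ via the skew-symmetrizer, deduce that $\Bbar$ restricts to the wedge subspace and then invoke uniqueness, with the crucial new observation that the extra factor $\Upsilon$ lies in a completion of $\U^-$ and therefore preserves the ($\U$-submodule) wedge subspace---is precisely what makes the cited type-$A$ argument carry over to the $\imath$-setting. One small imprecision worth flagging: Theorem~\ref{thm:samebar} is a statement about $\VV^{\otimes m}$ with the $\mc H_{B_m}$-action and does not literally apply to a general $\Tb\otimes\VV^{\otimes k}$; the compatibility of $\Bbar=\Upsilon\Abar$ with the $\mc H_{A_{k-1}}$-action on the last $k$ tensor factors that you actually need follows instead from the type-$A$ Hecke-compatibility of $\Abar$ from \cite{CLW12} combined with the fact that $\Upsilon$, being a $\U$-module operator, commutes with the Hecke action.
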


\begin{proof}
The same argument in \cite[Proposition 4.9]{CLW12} applies here.
%
\end{proof}

Let $f\in I^{m+n}\times I^k_+$. Similarly as before we write the canonical basis element $T^{({\bf
b},{0^k})}_f$ in ${\mathbb T}^{\bf b}\wotimes\mathbb V^{\otimes k}$
and the canonical basis element $T^{{\bf b},{0}}_f$ in ${\mathbb
T}^{\bf b}\wotimes\wedge^k\mathbb V$ respectively as
\begin{align}
T^{({\bf b},{0^k})}_f
 & =\sum_{g\in I^{m+n}\times I^k}t^{{({\bf
b},{0^k})}}_{gf}(q) M^{{({\bf b},{0^k})}}_{g},
   \label{TTtta}\\ \quad
T^{{\bf b},{0}}_f
 & =\sum_{g\in I^{m+n}\times I^k_+}t^{{{\bf
b},{0}}}_{gf}(q) M^{{{\bf b},0}}_{g}.
  \label{TTttb}
\end{align}

\begin{prop}
  \label{prop:aux2}
For $f$, $g\in I^{m+n}\times I^k_+$, we have
$$
t^{{{\bf b},{0}}}_{gf}(q)
=\sum_{\tau\in\mf{S}_k}(-q)^{\ell(w^{(k)}_0\tau )}t^{({\bf
b},0^k)}_{g\cdot\tau,f\cdot w^{(k)}_0}(q).
$$
\end{prop}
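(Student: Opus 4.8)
\textbf{Proof strategy for Proposition~\ref{prop:aux2}.}
The plan is to exploit the realization of $\wedge^k\VV$ as the subspace $\mathrm{Im}(\mathrm{SkSym}_k)\subset\VV^{\otimes k}$ from \S\ref{osp:subsec:$q$-wedges}, under which $\mathcal V_f\equiv M^{(0^k)}_{f\cdot w^{(k)}_0}L_{w^{(k)}_0}$, and to transfer the $\imath$-canonical basis on ${\mathbb T}^{\bf b}\wotimes\mathbb V^{\otimes k}$ across this embedding. The key point is that the right action of the $q$-skew-symmetrizer $L_{w^{(k)}_0}\in\mathcal H_{A_{k-1}}$ commutes with the action of $\bun$ (since $\bun$ acts via the iterated coproduct on the first $m+n$ tensor slots and then through the $\U$-action on $\VV^{\otimes k}$, which centralizes $\mathcal H_{A_{k-1}}$ by Proposition~\ref{prop:SchurA}), and moreover $\overline{L_{w^{(k)}_0}}=L_{w^{(k)}_0}$. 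Hence applying $(\mathrm{id}\wotimes\, L_{w^{(k)}_0})$ to a $\Bbar$-invariant element of ${\mathbb T}^{\bf b}\wotimes\mathbb V^{\otimes k}$ yields a $\Bbar$-invariant element of ${\mathbb T}^{\bf b}\wotimes\wedge^k\mathbb V$.

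First I would record the elementary transition formula between the two monomial bases: for $f\in I^{m+n}\times I^k_+$,
\begin{align*}
M^{({\bf b},0^k)}_{f\cdot w^{(k)}_0}\,L_{w^{(k)}_0}
=\sum_{\tau\in\mf S_k}(-q)^{\ell(w^{(k)}_0\tau)}\,M^{({\bf b},0^k)}_{f\cdot\tau},
\end{align*}
which follows directly from the defining expansion $L_{w^{(k)}_0}=\sum_{w\in\mf S_k}(-q)^{\ell(w)-\ell(w^{(k)}_0)}H_w$ together with the Hecke action \eqref{eq:HBm} on monomials indexed by strictly decreasing sequences (where $H_w$ simply permutes tensor factors without producing lower-order corrections, since all entries within the $q$-wedge block are distinct and appear in decreasing order). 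Next I would apply $(\mathrm{id}\wotimes\,L_{w^{(k)}_0})$ to the canonical basis element $T^{({\bf b},0^k)}_{f\cdot w^{(k)}_0}$ from \eqref{TTtta}: the left-hand side is a $\Bbar$-invariant vector in ${\mathbb T}^{\bf b}\wotimes\wedge^k\mathbb V$ whose leading term (under $\preceq_{({\bf b},0^k)}$, restricted to indices with strictly decreasing tail) is $\mathcal V_f$, because $t^{({\bf b},0^k)}_{gf\cdot w^{(k)}_0}(q)\in q\mathbb Z[q]$ for $g\neq f\cdot w^{(k)}_0$. By the uniqueness characterization of the $\imath$-canonical basis on ${\mathbb T}^{\bf b}\wotimes\wedge^k\mathbb V$, this forces $(\mathrm{id}\wotimes\,L_{w^{(k)}_0})\,T^{({\bf b},0^k)}_{f\cdot w^{(k)}_0}=T^{{\bf b},0}_f$. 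Expanding both sides in the monomial bases and comparing coefficients of $\mathcal V_g\equiv M^{(0^k)}_{g\cdot w^{(k)}_0}L_{w^{(k)}_0}$ then yields the claimed identity $t^{{\bf b},0}_{gf}(q)=\sum_{\tau\in\mf S_k}(-q)^{\ell(w^{(k)}_0\tau)}t^{({\bf b},0^k)}_{g\cdot\tau,\,f\cdot w^{(k)}_0}(q)$.

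The main obstacle I anticipate is the bookkeeping that justifies that $(\mathrm{id}\wotimes\,L_{w^{(k)}_0})$ indeed maps the $B$-completion of ${\mathbb T}^{\bf b}\wotimes\mathbb V^{\otimes k}$ into the $B$-completion of ${\mathbb T}^{\bf b}\wotimes\wedge^k\mathbb V$ in a way compatible with the two Bruhat orderings $\preceq_{({\bf b},0^k)}$ and the restriction to $I^{m+n}\times I^k_+$, so that ``leading term" and uniqueness arguments apply verbatim; this is precisely the content that is handled in \cite[Proposition~4.9]{CLW12} for the $\ell$-polynomials (Proposition~\ref{prop:aux1} above), and the same compatibility---together with the commutation of $L_{w^{(k)}_0}$ with $\Bbar=\Upsilon\Abar$, which reduces to the type $A$ fact since $\Upsilon\in$ (a completion of) $\U^-$ commutes with the $\mathcal H_{A_{k-1}}$-action---carries the argument through for the $t$-polynomials. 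I expect the only genuinely new ingredient beyond \cite{CLW12} to be noting that $\Upsilon$ respects the $\mathcal H_{A_{k-1}}$-action on the wedge slots, which is immediate from $\Upsilon$ acting only through $\U^-$ and Proposition~\ref{prop:SchurA}.
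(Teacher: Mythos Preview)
Your proof is essentially identical to the paper's: both establish the identity $T^{{\bf b},0}_f = T^{({\bf b},0^k)}_{f\cdot w^{(k)}_0}\, L_{w^{(k)}_0}$ (the paper by invoking \cite[Lemma~3.8]{Br03}, you by the $\Bbar$-invariance plus uniqueness argument you sketch, which is exactly what underlies that lemma) and then expand via $M^{({\bf b},0^k)}_{g\cdot\tau}\, L_{w^{(k)}_0} = (-q)^{\ell(w^{(k)}_0\tau)} M^{{\bf b},0}_g$ to read off the coefficient identity. Your displayed ``transition formula'' is off by a global factor $(-q)^{\ell(w^{(k)}_0)}$ (the correct expansion is $M^{({\bf b},0^k)}_{f\cdot w_0^{(k)}}L_{w_0^{(k)}}=\sum_\tau(-q)^{-\ell(\tau)}M^{({\bf b},0^k)}_{f\cdot\tau}$), but this is a harmless slip that does not affect the argument.
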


\begin{proof}
Similar proof as for \cite[Proposition 4.10]{CLW12} works there.

Via identifying $\mathcal V_{g_{[\ul{k}]}}\equiv M^{({0}^k)}_{g_{[\ul{k}]}\cdot
w^{(k)}_0} L_{w^{(k)}_0}$, we have, as in
\cite[Lemma~3.8]{Br03},
\begin{align*}
T^{{\bf b},0}_f= T^{({\bf b},0^k)}_{f\cdot w^{(k)}_0} L_{w^{(k)}_0}.
\end{align*}
A straightforward variation of \cite[Lemma 3.4]{Br03} using
\eqref{TTtta} gives us
\begin{align*}
T^{{\bf b},0}_f &= T^{({\bf b},0^k)}_{f\cdot w^{(k)}_0} L_{w^{(k)}_0}
 = \sum_{g} t^{({\bf b},0^k)}_{g,f\cdot w^{(k)}_0}M^{({\bf b},0^k)}_{g} L_{w^{(k)}_0}
 \displaybreak[0]\\
&= \sum_{\tau\in \mf{S}_k}\sum_{g\in I^{m+n}\times I^k_+}
t^{({\bf b},0^k)}_{g\cdot\tau,f\cdot w^{(k)}_0} M^{({\bf b},0^k)}_{g\cdot\tau} L_{w^{(k)}_0}
  \displaybreak[0]\\
&=\sum_{\tau\in \mf{S}_k}\sum_{g\in I^{m+n}\times I^k_+}
 t^{({\bf b},0^k)}_{g\cdot\tau,f\cdot w^{(k)}_0}
 (-q)^{\ell(\tau^{-1}w^{(k)}_0)} M^{{\bf b},0}_{g}
  \displaybreak[0] \\
&=\sum_{g\in I^{m+n}\times I^k_+} \left(\sum_{\tau\in \mf{S}_k}
t^{({\bf b},0^k)}_{g\cdot\tau,f\cdot w^{(k)}_0}
(-q)^{\ell(w^{(k)}_0\tau)}\right) M^{{\bf b},0}_{g}.
\end{align*}
The proposition now follows by comparing with \eqref{TTttb}.
\end{proof}

\begin{rem}
The counterparts of Propositions~\ref{prop:aux1} and \ref{prop:aux2}
 hold if we replace $\VV$ by $\WW$.
\end{rem}

\section{Adjacent $\imath$-canonical bases}
\label{subsec:adjCB}

Two $0^m1^n$-sequences ${\bf b}$, ${\bf b'}$ of the form  
${\bf b}=({\bf b}^1,{0},{1},{\bf b}^2)$ and ${\bf b'}=({\bf b}^1,{1},{0},{\bf b}^2)$ are called {\em adjacent}. 
Now we compare the $\imath$-canonical 
as well as dual $\imath$-canonical bases in Fock spaces $\widehat{\mathbb T}^{\bf b}$
and $\widehat{\mathbb T}^{\bf b'}$, for adjacent $0^m1^n$-sequences $\bf b$
and $\bf b'$. 

In type $A$ setting, a strategy was developed in \cite[\S 5]{CLW12}
for such a comparison of canonical basis in adjacent Fock spaces.
We observe that the strategy applies to our current setting essentially without any change,
{\em under the assumption that ${\bf b}^1$ is nonempty}. So we will need not copy over all the details from {\em loc. cit.}
to this paper. 

Let us review the main ideas in type $A$ from \cite[\S 5]{CLW12}. 
We will restrict the discussion here to the case of canonical basis while the case of dual canonical basis is entirely similar.
The starting point is to start with the rank two setting and compare the canonical bases in the $B$-completions of 
$\VV \otimes \WW$ and $\WW \otimes \VV$. These canonical bases can be easily computed:
they are either standard monomials or a sum of two standard monomials with some $q$-power coefficients.
The problem is that the partial orderings
on $\VV \widehat{\otimes} \WW$ and $\WW \widehat{\otimes} \VV$ are not compatible.
This problem is overcome by a simple observation that matching up the canonical bases directly
is actually a $\U$-module isomorphism of their respective linear spans, which is denoted by 
$\mc R: \mathbb U \stackrel{\cong}{\rightarrow} \mathbb U'$. 
So the idea is to work with these smaller spaces $\mathbb U$ and $\mathbb U'$ instead of the $B$-completions directly.
We use $\mathbb U$ and $\mathbb U'$ to build up smaller completions of the {\em adjacent} ${\mathbb T}^{\bf b}$
and ${\mathbb T}^{\bf b'}$, 
which are used to match the canonical bases by $T_f^{\bf b} \mapsto T_{f^{\mathbb U}}^{\bf b'}$.
Here the index shift $f \mapsto f^{\mathbb U}$ is shown to correspond exactly under the bijection
$I^{m+n} \leftrightarrow X(m|n)$ 
to the shift $\la \mapsto \la^{\mathbb U}$ on $X(m|n)$ in Remark~\ref{rem:adjLT} below 
(which occurs when comparing the tilting modules relative to adjacent Borel subalgebras of type $\bf b$ and $\bf b'$).

Now we restrict ourselves to two adjacent sequences ${\bf b}$ and  ${\bf b'}$, where ${\bf b}^1$ is nonempty;
this is sufficient for the main application of determining completely the irreducible and tilting
characters in category $\mc O_{\bf b}$ for $\osp(2m+1|2n)$-modules 
(see however Remark~ \ref{rem:osprank2} below for the removal of the restriction). 
We will compare two Fock spaces of the form 
$\mathbb{T}^{{\bf b}^1} \otimes \VV \otimes \WW \otimes \mathbb{T}^{{\bf b}^2}$ and 
$\mathbb{T}^{{\bf b}^1} \otimes \WW \otimes \VV \otimes \mathbb{T}^{{\bf b}^2}$, 
where ${\bf b}^1$ is nonempty. The coideal property of the coproduct of the algebra $\Ui$ in Proposition~ \ref{prop:coproduct}
allows us to consider $\VV \otimes \WW$ and $\WW \otimes \VV$ as $\U$-modules
(not as $\Ui$-modules),
and so the type $A$ strategy of \cite[\S 5]{CLW12} applies  verbatim to our setting. 

\begin{rem}
 \label{rem:osprank2}
Now we consider $\VV \otimes \WW$ and $\WW \otimes \VV$ as $\Ui$-modules (instead of $\U$-modules).
The $\imath$-canonical bases on their respective $B$-completions can be computed explicitly,
though the computation in this case (corresponding to the BGG category of $\osp(3|2)$) is much more demanding;
the formulas
are much messier  and many more cases need to be considered, in contrast to the easy type $A$ case of $\gl(1|1)$.
Denote by $\mathbb U_\flat$ and $\mathbb U'_\flat$ the linear spans of these canonical bases respectively.
We are able to verify by a direct computation that matching the canonical bases suitably produces
 a $\Ui$-module isomorphism $\mathbb U_\flat \rightarrow \mathbb U'_\flat$.
 (The details will take quite a few pages and hence will be skipped.)
 Accepting this, the strategy of  \cite[\S 5]{CLW12} is adapted to work equally well for comparing
 the (dual) $\imath$-canonical bases between arbitrary adjacent Fock spaces $\widehat{\mathbb T}^{\bf b}$
and $\widehat{\mathbb T}^{\bf b'}$. 
\end{rem}

\begin{rem}  \label{rem:adjLT}
Let ${\bf b} = ({\bf b}^1, 0, 1, {\bf b}^2)$ 
and ${\bf b}' = ({\bf b}^1, 1, 0, {\bf b}^2)$ be adjacent  $0^m1^n$-sequences.  Let $\alpha$
be the isomorphic simple root of $\osp(2m+1|2n)$ corresponding to the pair $0,1$ in $\bf b$.
Following \cite[\S 6]{CLW12}, we introduce the notation associated to $\la \in X(m|n)$:
\begin{align*}
\la^{\mathbb L} =
\begin{cases}
 \la,         &\text{ if }(\la,\alpha)=0 \\
\la-\alpha,   &\text{ if }(\la,\alpha)\neq 0,
\end{cases}
\qquad\quad
\la^{\mathbb U} =
\begin{cases}
 \la-2\alpha,         &\text{ if }(\la,\alpha)=0\\
\la-\alpha,   &\text{ if }(\la,\alpha)\neq 0.
\end{cases} 
\end{align*}
Then we have the following identification of  simple and tilting modules (see \cite{PS}  
and  \cite[Lemma 6.2, Theorem 6.10]{CLW12}):
\[
L_{\bf b} (\lambda ) =L_{\bf b'}(\lambda^{\mathbb L}),\quad T_{\bf b}(\lambda) = T_{\bf b'}(\lambda^{\mathbb U}),\quad \text{ for  } \lambda \in X(m|n).
\] 
\end{rem}


\section{Combinatorial super duality}
  \label{sec:superduality}

For a partition $\mu = (\mu_1, \mu_2, \ldots)$, we denote its conjugate partition by 
$\mu' = (\mu'_1, \mu_2', \ldots)$. We define a $\Q(q)$-linear isomorphism
$\natural: \wedge^\infty_d \mathbb V\longrightarrow \wedge^\infty_d \mathbb W$ (for each $d\in \Z$),
or equivalently define
$\natural:\wedge^\infty\mathbb V\rightarrow \wedge^\infty\mathbb W$
by
\begin{align*}
\natural(|\la,d\rangle) =|\la'_*, d\rangle,\quad  \text{ for }
\la\in\mc{P}, d \in \Z.
\end{align*}
The following is a straightforward generalization of   \cite[Theorem 6.3]{CWZ}.

\begin{prop}
  \label{wedgeV:isom:wedgeW} 
The map  
$\natural: \wedge^\infty_d \mathbb V\longrightarrow \wedge^\infty_d \mathbb W$ (for each $d\in \Z$) or
$\natural:\wedge^\infty\mathbb V\longrightarrow \wedge^\infty\mathbb W$ 
is an isomorphism of $\U$-modules.
\end{prop}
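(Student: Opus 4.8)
The plan is to verify that $\natural$ intertwines the $\U$-actions on a set of generators, using the explicit basis $\{|\la,d\rangle\}$ of $\wedge^\infty_d\VV$ and $\{|\mu_*,d\rangle\}$ of $\wedge^\infty_d\WW$. Since conjugation of partitions is an involution, $\natural$ is manifestly a $\Qq$-linear isomorphism; the content is the compatibility with $E_{\alpha_i}$, $F_{\alpha_i}$, $K_{\alpha_i}^{\pm1}$ for all $i\in\I=\Z$. As $\U$ acts on the semi-infinite wedges via the (topological) iterated coproduct, and the coproduct formulas \eqref{eq:coprod} are triangular, the action of each Chevalley generator on $|\la,d\rangle$ is a finite sum over ways of adding/removing a box from the Young diagram of $\la$ in the appropriate diagonal, with $q$-power coefficients coming from the $K$-factors in $\Delta$. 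This is exactly the combinatorics worked out in \cite{CWZ} for the $0$th sector; the first step is therefore to recall those explicit formulas for $E_{\alpha_i}|\la,0\rangle$, $F_{\alpha_i}|\la,0\rangle$, $K_{\alpha_i}|\la,0\rangle$ on $\wedge^\infty\VV$ and the analogous ones on $\wedge^\infty\WW$.

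First I would fix $d\in\Z$ and reduce to a single sector, since $\natural=\bigoplus_d\natural$ and each $\wedge^\infty_d\VV$, $\wedge^\infty_d\WW$ is a $\U$-submodule (the $\U$-action preserves the ``charge'' $d$, as is visible from $\wedge^{k,l}_d$ commuting with $u\in\U$ for $l\ge k\gg0$). Next I would observe that adding a box in the $i$-th diagonal to $\la$ corresponds, under $\la\mapsto\la'$, to adding a box in the $(-i)$-th diagonal to $\la'$ (conjugation reflects diagonals through the main diagonal, and the involution $\theta$ on the root lattice sends $\alpha_i\mapsto\alpha_{-i}$; note also $\ep_a\mapsto-\ep_{-a}$ on weights and that the pairing $\langle w_a,v_b\rangle=(-q)^{-a}\delta_{ab}$ encodes precisely this sign twist). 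Hence the natural candidate is that $\natural$ intertwines $E_{\alpha_i}$ on $\VV$-wedges with $F_{\alpha_{-i}}$-type operations on $\WW$-wedges; but because $\WW=\VV^*$ is the \emph{restricted dual}, the $\U$-action on $\WW$ is already set up (see the formulas $E_{\alpha_i}w_a=\delta_{i-\hf,a}w_{a+1}$, etc.) so that the dualization is built in, and one expects $\natural$ to be a straight $\U$-isomorphism rather than one twisted by $\omega$. The bulk of the argument is then a box-by-box matching of coefficients: for a removable/addable box $b$ of $\la$ on diagonal $i$, compare the $q$-power attached to $E_{\alpha_i}$ acting at $b$ in $\wedge^\infty\VV$ with the $q$-power attached to the corresponding generator acting at the conjugate box $b'$ on diagonal $-i$ in $\wedge^\infty\WW$. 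These powers are products over the ``inversions'' (boxes lying weakly NW in the same row/column), and conjugation swaps rows with columns, so the two products agree after using $(\alpha_i,\ep_a)=(\alpha_{-i},-\ep_{-a})$ and the sign in $\langle w_a,v_b\rangle$.

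The main obstacle I anticipate is bookkeeping of signs and $q$-powers in passing from the $0$th sector to an arbitrary sector $d$, and in reconciling the two different normalizations of the wedge bases ($|\la,d\rangle$ vs.\ $|\mu_*,d\rangle$, the latter defined via $\la\mapsto d+\tfrac12-\la_i$, i.e.\ an order-reversing shift). Concretely, the finite truncations $\wedge^{k,l}_d$ used to define the two direct limits insert tails $v_{d+\hf-k-1}\wedge\cdots$ on the $\VV$ side and $w_{d-\hf+k+1}\wedge\cdots$ on the $\WW$ side, and one must check these tails are matched by $\natural$ compatibly with the stabilized $\U$-action — i.e.\ that $\natural\circ\wedge^{k,l}_d=\wedge^{k,l}_d\circ(\text{finite }\natural)$ up to the identification of $\wedge^k\VV$ with a subspace of $\VV^{\otimes k}$ and likewise for $\WW$. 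Once the $0$th-sector statement of \cite[Theorem 6.3]{CWZ} is in hand and the sector shift is handled by a uniform change of variables $d\mapsto 0$ (translating all indices by $d$, which is an isomorphism of $\U$-modules since $\U$ acts by ``index-difference'' operators), the general-sector claim follows formally. I would therefore structure the write-up as: (1) reduce to one sector and to finite $k$ via the truncation maps; (2) invoke the finite/$0$th-sector combinatorial identity from \cite{CWZ}; (3) transport along the charge shift; (4) pass to the direct limit. Step (1)–(2) is where essentially all the real work — and the only genuine risk of a sign error — lies.
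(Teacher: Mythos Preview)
Your core strategy---invoke \cite[Theorem~6.3]{CWZ} for the $d=0$ sector, then extend to general $d$---is the paper's approach. The paper's version is terser: it first observes that both $\wedge^\infty_d\VV$ and $\wedge^\infty_d\WW$ realize the level-one integrable $\U$-module associated to the $d$th fundamental weight (so an isomorphism exists abstractly), and then remarks that the argument of \cite[Theorem~6.3]{CWZ} for $d=0$ carries over verbatim to any $d$. Your box-combinatorics discussion is essentially a sketch of what that cited argument contains.

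Two points in your outline are off. First, there is no ``finite $\natural$'': partition conjugation has no counterpart $\wedge^k\VV\to\wedge^k\WW$ at finite level compatible with the direct-limit maps $\wedge^{k,l}_d$, so your steps (1) and (4) (reduce to finite $k$, then pass to the limit) are misconceived for this particular map. The map $\natural$ lives only on the semi-infinite wedge, and the intertwining with Chevalley generators is verified there directly, as in \cite{CWZ}. Second, the charge shift $v_a\mapsto v_{a+d}$ is \emph{not} a $\U$-module isomorphism $\wedge^\infty_0\VV\to\wedge^\infty_d\VV$; it intertwines the two actions only after twisting by the automorphism $\sigma_d$ of $\U$ sending $E_{\alpha_i}\mapsto E_{\alpha_{i+d}}$ (likewise $F,K$). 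Your reduction is still salvageable because the \emph{same} twist applies on the $\WW$ side and hence cancels in the composite $S_d^\WW\circ\natural_0\circ(S_d^\VV)^{-1}$, but the sentence ``translating all indices by $d$, which is an isomorphism of $\U$-modules'' is false as written. The cleaner observation, and the one the paper implicitly uses, is that the combinatorial identities in \cite{CWZ} depend only on index differences and are therefore uniform in $d$.
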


\begin{proof}
It is a well-known fact that $\wedge_d^{\infty}\VV$ and $\wedge_d^{\infty}\WW$ as $\U$-modules
are both isomorphic to the level one integrable module associated to the $d$th fundamental weight
(by the same proof as for \cite[Proposition 6.1]{CWZ}; also see  the references therein). 

Now the proof of the proposition is the same as for \cite[Theorem 6.3]{CWZ}, which is our special case with $d=0$.
\end{proof}

This isomorphism of $\U$-modules 
$\natural:\wedge^\infty\mathbb
V \rightarrow\wedge^\infty\mathbb W$ induces
an isomorphism of $\U$-modules
\[
\natural_{\bf b} := {\id \otimes\natural}:
 {{\mathbb T}^{\bf b}}\otimes\wedge^\infty\mathbb V {\longrightarrow}
{{\mathbb T}^{\bf b}}\otimes\wedge^\infty\mathbb W.
\]
Let $f\in I^{m+n} \times I^{\infty}_{+}$. There exists unique
$\la\in\mc{P}$ and $d \in \Z$ such that $|\la, d\rangle=\mc V_{f_{[\ul{\infty}]}}$. We
define $f^\natural$ to be the unique element in
$I^{m+n} \times I^{\infty}_{-}$ determined by $f^\natural(i)=f(i)$,
for $i\in[m+n]$, and $\mc
W_{f^\natural_{[\ul{\infty}]}}=|\la'_*, d\rangle$. The assignment
$f\mapsto f^\natural$ gives a bijection (cf. \cite{CWZ})
\begin{equation}  
  \label{naturalbi}
\natural: I^{m+n} \times I^{\infty}_{+} \longrightarrow
I^{m+n} \times I^{\infty}_{-}.
\end{equation}
If we write $\lambda^{{\bf b},0}_f = \sum^{m+n}_{i=1}\lambda^{\bf b}_{f,i}\ep^{b_i}_{i} 
 + \sum_{1 \leq j}{^+\lambda_{f,{\ul j}}}\ep^0_{\ul j} + d_f\epinftyV \in \wtlV$ under the bijection defined in \eqref{osp:eq:bijectionwtlV}, then we have 
\begin{equation}
 \label{aux}
\lambda^{{\bf b},1}_{f^\natural} = \sum^{m+n}_{i=1}\lambda^{\bf b}_{f,i}\ep^{b_i}_{i} + \sum_{1 \leq j}{^+\lambda'_{f,{\ul j}}}\ep^1_{\ul j} + d_f\epinftyW \in \wtlW.
\end{equation}

The following is the combinatorial counterpart of the super duality
on representation theory in Theorem~\ref{theorem:super duality} .
We refer to \cite[Theorem~4.8]{CLW12} for a type $A$ version, on which our proof
below is based. 

\begin{thm}\label{TwedgeV:isom:TwedgeW}
Let $\bf{b}$ be a ${0^m1^n}$-sequence.
\begin{enumerate}
\item
The isomorphism $\natural_{\bf b}$ respects the Bruhat orderings
and hence extends to an isomorphism of the $B$-completions
$\natural_{\bf b}:{{\mathbb T}^{\bf b}}\wotimes\wedge^\infty\mathbb
V \rightarrow {{\mathbb T}^{\bf b}}\wotimes\wedge^\infty\mathbb W$.

\item
 The map $\natural_{\bf b}$ commutes with the bar involutions.

 \item
The map $\natural_{\bf b}$ preserves the $\imath$-canonical and dual $\imath$-canonical bases.
More precisely, for $f\in I^{m+n}\times I^\infty_+$, we have 
\[
\natural_{\bf b}(M^{{\bf b},0}_f)=M^{{\bf
b},1}_{f^\natural},
 \;\;
\natural_{\bf b}(T^{{\bf b},0}_f)=T^{{\bf b},1}_{f^\natural},
 \;\;
\natural_{\bf b}(L^{{\bf b},0}_f)=L^{{\bf b},1}_{f^\natural}.
\]

\item
We have the following identifications of $\imath$-KL polynomials, for all
$g$, $f\in I^{m+n} \times I^{\infty}_{+}$:
\[
\ell^{{\bf b},0}_{gf}(q)=\ell^{{\bf b},1}_{{g^\natural}{f^\natural}}(q),
\quad \text{ and}\quad
t^{{\bf b},0}_{gf}(q)=t^{{\bf b},1}_{{g^\natural}{f^\natural}}(q).
\]
\end{enumerate}
\end{thm}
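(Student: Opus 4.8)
\textbf{Proof proposal for Theorem \ref{TwedgeV:isom:TwedgeW}.}
The plan is to follow closely the type $A$ strategy of \cite[Theorem 4.8]{CLW12}, adapting it to the present $\imath$-setting by systematically replacing the bar involution $\Abar$ of $\U$ with $\Bbar=\Upsilon\Abar$ and using that $\Upsilon$ lies in a completion of $\U^-$. First I would establish (1): the isomorphism $\natural_{\bf b}$ of $\U$-modules from Proposition~\ref{wedgeV:isom:wedgeW} intertwines the weights, so in particular it matches $\texttt{wt}_{{\bf b},0}$ with $\texttt{wt}_{{\bf b},1}$ on standard monomials; combined with the explicit description \eqref{aux} of $\lambda^{{\bf b},1}_{f^\natural}$ in terms of conjugate partitions, and with the fact that conjugation of partitions reverses the dominance order inside each sector (as already used in the type $A$ case, cf. \cite{CWZ, CLW12}), one checks that $g\preceq_{{\bf b},0} f$ if and only if $g^\natural\preceq_{{\bf b},1} f^\natural$. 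By Lemma~\ref{lem:linkage at infinity} the linkage classes are detected by $\texttt{wt}$, so $\natural_{\bf b}$ respects the equivalence $\sim$ as well; hence it respects the Bruhat orderings and, by the finiteness in Lemma~\ref{osp:lem:finitelengthW}, extends continuously to an isomorphism of the $B$-completions.

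For (2), I would argue that $\natural_{\bf b}$ commutes with $\Bbar$ by decomposing $\Bbar=\Upsilon\Abar$. That $\natural_{\bf b}$ commutes with the type $A$ bar involution $\Abar$ is exactly \cite[Theorem 4.8(2)]{CLW12} (here one invokes Remark~\ref{osp:rem:Bcompletions} to transfer that statement to the present, finer $B$-completion). Since $\natural_{\bf b}$ is in particular a map of $\U$-modules and $\Upsilon=\sum_{\mu\in\N\Pi}\Upsilon_\mu$ with each $\Upsilon_\mu\in\U^-$, the operator $\Upsilon$ commutes with $\natural_{\bf b}$ as a well-defined operator on the relevant completed Fock spaces. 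Composing, $\natural_{\bf b}\,\Bbar=\natural_{\bf b}\,\Upsilon\,\Abar=\Upsilon\,\natural_{\bf b}\,\Abar=\Upsilon\,\Abar\,\natural_{\bf b}=\Bbar\,\natural_{\bf b}$.

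Parts (3) and (4) then follow formally. By definition $\natural_{\bf b}(M^{{\bf b},0}_f)=M^{{\bf b},1}_{f^\natural}$, so $\natural_{\bf b}$ carries the standard monomial basis of ${\mathbb T}^{\bf b}\wotimes\wedge^\infty\mathbb V$ (in the completion) to that of ${\mathbb T}^{\bf b}\wotimes\wedge^\infty\mathbb W$. By (1) it is compatible with the defining triangularity, and by (2) it intertwines the two involutions $\Bbar$; hence the image $\natural_{\bf b}(T^{{\bf b},0}_f)$ is a $\Bbar$-invariant element of the form $M^{{\bf b},1}_{f^\natural}+\sum_{g^\natural\prec_{{\bf b},1}f^\natural}t^{{\bf b},0}_{gf}(q)M^{{\bf b},1}_{g^\natural}$ with $t^{{\bf b},0}_{gf}(q)\in q\Z[q]$, which by the uniqueness in the $\imath$-canonical basis theorem (the analogue of \cite[Lemma 24.2.1]{Lu94} already applied in \S\ref{subsec:barWW}) must equal $T^{{\bf b},1}_{f^\natural}$; the same argument with $q^{-1}\Z[q^{-1}]$ gives $\natural_{\bf b}(L^{{\bf b},0}_f)=L^{{\bf b},1}_{f^\natural}$. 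Comparing coefficients yields $t^{{\bf b},0}_{gf}(q)=t^{{\bf b},1}_{g^\natural f^\natural}(q)$ and $\ell^{{\bf b},0}_{gf}(q)=\ell^{{\bf b},1}_{g^\natural f^\natural}(q)$, which is (4).

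The main obstacle I anticipate is the order-compatibility claim in (1), and specifically the bookkeeping needed to see that the conjugation of partitions built into $f\mapsto f^\natural$ exactly reverses the relevant dominance orders once the finite-rank block and the $\rho$-shifts \eqref{eq:fb0}, \eqref{eq:fb1} are taken into account; all the $\bun$-specific content (the behavior of $\Upsilon$) is handled cheaply by the observation that $\Upsilon\in$ (a completion of) $\U^-$ commutes with the $\U$-module map $\natural_{\bf b}$. One should also double-check that the extension of $\natural_{\bf b}$ to the $B$-completions is the correct one, i.e.\ that the completed source and target spaces correspond under the bijection \eqref{naturalbi} together with Lemma~\ref{osp:lem:finitelengthW}; this is routine but must be stated carefully.
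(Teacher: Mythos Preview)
Your proposal is correct and follows essentially the same approach as the paper: reduce (2)--(4) to (1) via the decomposition $\Bbar=\Upsilon\Abar$, the fact that $\natural_{\bf b}$ is a $\U$-module map (so commutes with $\Upsilon$), the type $A$ statement that $\natural_{\bf b}$ commutes with $\Abar$, and the uniqueness characterization of the (dual) $\imath$-canonical bases. For the order-compatibility in (1), which you rightly flag as the crux, the paper's device is to write $\lambda^{{\bf b},0}_f-\lambda^{{\bf b},0}_g = a(-\ep^{b_1}_1)+(\text{type }A\text{ part})$, introduce the intermediate $\lambda^{{\bf b},0}_h=\lambda^{{\bf b},0}_f-a(-\ep^{b_1}_1)$, apply \cite[Theorem~4.8]{CLW12} to the pair $(g,h)$, and observe that since $\natural$ fixes the first $m+n$ coordinates one has $\lambda^{{\bf b},1}_{f^\natural}-\lambda^{{\bf b},1}_{h^\natural}=a(-\ep^{b_1}_1)$ as well; this cleanly separates the single type $B$ simple root from the type $A$ tail and carries out exactly the bookkeeping you anticipated.
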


\begin{proof}
The statements (2)-(4) follows from (1) by  the same argument as \cite[Theorem~4.8]{CLW12}. 
It remains to prove (1).

Recall the definition of the partial orderings in Definitions \ref{def:orderinfV} and  \ref{def:orderinfW}. 
To prove (1), we need to show for any $f$, $g \in I^{m+n} \times I^{\infty}_+$, $g \preceq_{{\bf b}, 0} f$ if and only if 
$g^\natural \preceq_{{\bf b}, 1} f^\natural$. This is equivalent to say that $f \sim g$ and 
$ \lambda^{{\bf b},0}_g \preceq_{{\bf b},0} \lambda^{{\bf b},0}_f$ if and only if $f^{\natural} \sim g^{\natural}$ 
and $\lambda^{{\bf b},1}_{g^{\natural}} \preceq_{{\bf b},1} \lambda^{{\bf b},1}_{f^{\natural}}$ by 
Definitions~ \ref{def:orderinfV} and \ref{def:orderinfW}.

Since $\natural_{\bf b} : {{\mathbb T}^{\bf b}}\otimes\wedge^\infty\mathbb V {\rightarrow}
{{\mathbb T}^{\bf b}}\otimes\wedge^\infty\mathbb W$ is an isomorphism of $\Ui$-modules, by 
Lemma~\ref{lem:linkage at infinity}, we have $f \sim g$ if and only if $f^\natural \sim g^\natural$. 
We shall assume that $f \sim g$, hence $f^\natural \sim g^\natural$ for the rest of this proof. 

We shall only prove that
$\lambda^{{\bf b},0}_g \preceq_{{\bf b},0} \lambda^{{\bf b},0}_f$
 implies 
 $\lambda^{{\bf b},1}_{g^{\natural}} \preceq_{{\bf b},1} \lambda^{{\bf b},1}_{f^{\natural}}$
here, as the converse is entirely similar. 
We write
\begin{align*}
\lambda^{{\bf b},0}_f- \lambda^{{\bf b},0}_g 
&= a(-\ep^{b_1}_1) + \sum^{m+n-1}_{i=1}a_i (\ep^{b_i}_i - \ep^{b_{i+1}}_{i+1}) 
\\
& \qquad\quad \quad 
+ a_{m+n}(\ep^{b_{m+n}}_{m+n} - \ep^0_{\ul 1}) + \sum_{i \ge 1} a_{\ul i}(\ep^0_{\ul i} - \ep^0_{\ul {i+1}}),
\end{align*}
where all coefficients are in $\N$ and $a_{\ul i} = 0$ for all but finitely many $i$. Set
\begin{equation*}
\lambda^{{\bf b},0}_h := \lambda^{{\bf b},0}_f - a(-\ep^{b_1}_1)
\end{equation*}
for some $h \in I^{m+n} \times I^{\infty}_+$. Apparently we have  
$\lambda^{{\bf b},0}_g \preceq_{{\bf b},0} \lambda^{{\bf b},0}_h \preceq_{{\bf b},0} \lambda^{{\bf b},0}_f $. 

Note that $\lambda^{{\bf b},0}_h$ actually dominates $\lambda^{{\bf b},0}_g$ with respect to the Bruhat ordering 
of type $A$ defined in \cite[\S2.3]{CLW12}. 
Therefore following \cite[Theorem~4.8]{CLW12} and Remark~\ref{osp:rem:Bcompletions}, we have 
\begin{equation}
  \label{osp:eq:comSD1}
\lambda^{{\bf b},1}_{g^{\natural}} \preceq_{{\bf b},1} \lambda^{{\bf b},1}_{h^{\natural}}.
\end{equation}
On the other hand, by definitions of $\lambda^{{\bf b},0}_h$ and the isomorphism of $\natural$, we have  
$\lambda^{{\bf b},1}_{h^{\natural}} = \lambda^{{\bf b},1}_{f^{\natural}} - a(-\ep^{b_1}_1)$, and hence
$
\lambda^{{\bf b},1}_{h^{\natural}} \preceq_{{\bf b},1} \lambda^{{\bf b},1}_{f^{\natural}}.
$ 
Combining this with \eqref{osp:eq:comSD1} implies that
$\lambda^{{\bf b},1}_{g^{\natural}} \preceq_{{\bf b},1} \lambda^{{\bf b},1}_{f^{\natural}}.
$
The statement (1) is proved.
\end{proof}

\chapter{Kazhdan-Lusztig theory of type $B$ and $\imath$-canonical basis}
  \label{sec:b-KL}
  
 In this chapter, we formulate connections between Fock spaces and 
 Grothendieck groups of various BGG categories. We establish relations of simple as well as tilting modules
 between a BGG category and its parabolic subcategory. 
 We show that $\Ui$ at $q=1$ are realized as translation functors in the BGG category.
 Finally, we establish the Kazhdan-Lusztig theory for $\osp(2m+1|2n)$, which is the main goal of the paper.

\section{Grothendieck groups and Fock spaces}
 \label{subsec:G=Fock}
 
Recall the Fock space $\mathbb{T}^{\bf b}$ in \S\ref{subsec:Fockspaces}. Starting with an $\mA$-lattice $\Tb_\mA$
spanned by the standard monomial basis of the $\Qq$-vector space $\mathbb{T}^{\bf b}$, 
we define $\mathbb{T}^{\bf b}_{\Z} =\Z \otimes_\mA \Tb_{\mA}$
where $\mA$ acts on $\Z$ with $q=1$. For any $u$ in the $\mA$-lattice $\Tb_\mA$, 
we denote by $u(1)$ its image in $\mathbb{T}^{\bf b}_{\Z}$.

Recall the category $\mathcal{O}_{\bf b}$  from \S\ref{subsec:cat}.
Let $\mathcal{O}^{\Delta}_{\bf b}$ be the full subcategory of $\mathcal{O}_{\bf b}$ consisting of all modules possessing 
a finite ${\bf b}$-Verma flag. Let $[\mathcal{O}^{\Delta}_{\bf b}]$ be its Grothendieck group. 
The following lemma is immediate from the bijection $I^{m+n} \leftrightarrow X(m|n)$ ($\la \leftrightarrow f_\la^{\bf b}$) given by
\eqref{osp:eq:ftolambda} and \eqref{osp:eq:lambdatof}.

\begin{lem} 
  \label{osp:lem:OtoT}
The map
\[
\Psi : [\mathcal{O}^{\Delta}_{\bf b}] \longrightarrow \mathbb{T}_{\Z}^{\bf b}, 
\quad \quad \quad [M_{\bf b}(\lambda)] \mapsto M^{\bf b}_{f^{\bf b}_{\lambda}}(1),
\]
defines an isomorphism of $\Z$-modules. 
\end{lem}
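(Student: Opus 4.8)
The statement is purely a bookkeeping isomorphism, so the proof is short: one must check that $\Psi$ is well-defined, $\Z$-linear, and bijective. First I would recall that $\mc O^\Delta_{\bf b}$ is the full subcategory of $\mc O_{\bf b}$ of modules with a finite ${\bf b}$-Verma flag; by standard arguments (the ${\bf b}$-Verma modules $M_{\bf b}(\la)$, $\la \in X(m|n)$, are linearly independent in the Grothendieck group since their highest weights are distinct, and any object of $\mc O^\Delta_{\bf b}$ has a well-defined, finite multiset of flag subquotients by the Jordan--H\"older-type property for Verma flags), the classes $\{[M_{\bf b}(\la)] \mid \la \in X(m|n)\}$ form a $\Z$-basis of $[\mc O^\Delta_{\bf b}]$. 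This is the key structural input and I would cite it from the literature on BGG categories (cf.\ \cite[\S2.2]{CW12}) rather than reprove it.

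Next I would invoke the bijection $I^{m+n} \leftrightarrow X(m|n)$, $\la \leftrightarrow f^{\bf b}_\la$, furnished by \eqref{osp:eq:ftolambda} and \eqref{osp:eq:lambdatof}; these are mutually inverse maps, a fact one reads off directly from the $\rho_{\bf b}$-shift formula and the pairing $(\ep^{b_i}_i | \ep^{b_j}_j) = (-1)^{b_i}\delta_{ij}$. Since $\{M^{\bf b}_f(1) \mid f \in I^{m+n}\}$ is by construction a $\Z$-basis of $\mathbb T^{\bf b}_\Z$ (the specialization at $q=1$ of the standard monomial basis of the $\mA$-lattice $\Tb_\mA$), the assignment $[M_{\bf b}(\la)] \mapsto M^{\bf b}_{f^{\bf b}_\la}(1)$ sends a $\Z$-basis bijectively to a $\Z$-basis. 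Hence $\Psi$ extends uniquely to a well-defined $\Z$-linear map, and it is an isomorphism with inverse determined by $M^{\bf b}_f(1) \mapsto [M_{\bf b}(\lambda^{\bf b}_f)]$, where $\lambda^{\bf b}_f$ is defined by \eqref{osp:eq:ftolambda}. That completes the argument.

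There is essentially no obstacle here; the only point requiring a word of care is the claim that $[\mc O^\Delta_{\bf b}]$ is free on the Verma classes, which rests on the finiteness condition (ii) in Definition~\ref{def:catO} guaranteeing that every object of $\mc O^\Delta_{\bf b}$ has a genuinely finite Verma flag and that the multiplicities are well-defined; since the module categories $\mc O_{\bf b}$ for all ${\bf b}$ coincide (the odd parts of the Borels act locally nilpotently, as noted after Definition~\ref{def:catO}), this is the standard BGG-category fact and I would simply reference it. All other steps are immediate from the definitions and the already-established bijectivity of $f \leftrightarrow \la$.
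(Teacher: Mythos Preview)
Your proof is correct and follows the same approach as the paper: the paper simply states the lemma is ``immediate from the bijection $I^{m+n} \leftrightarrow X(m|n)$'' given by \eqref{osp:eq:ftolambda} and \eqref{osp:eq:lambdatof}, and your argument spells out precisely this, matching the free $\Z$-basis $\{[M_{\bf b}(\la)]\}$ of the Grothendieck group to the standard monomial basis $\{M^{\bf b}_f(1)\}$ of $\mathbb T^{\bf b}_\Z$ via that bijection.
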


Recall the category $\mathcal{O}^{\ul k}_{{\bf b},0}$  from \S\ref{subsec:cat}.
We shall denote $\CatO^{\underline{k}, \Delta}_{{\bf b},0}$ the full subcategory of $\mathcal{O}^{\ul k}_{{\bf b},0}$ 
consisting of all modules possessing finite parabolic Verma flags. Recall in \S \ref{osp:subsec:$q$-wedges}, 
we defined the $q$-wedge spaces $\wedge^{k}\VV$ and $\wedge^{k}\WW$.
Recall a bijection   
$\wtlV \rightarrow I^{m+n} \times I^\infty_{+}, \lambda \mapsto f^{{\bf b}0}_{\lambda}$
from \eqref{osp:eq:bijectionwtlV}.
Similarly, we have a bijection
\begin{align*}
\wtlVk \longrightarrow I^{m+n} \times I^k_+, \quad
\lambda \mapsto f^{{\bf b}0}_{\lambda}.
\end{align*}
(Here $f^{{\bf b}0}_{\lambda}$ is understood as the natural restriction to the part $[m+n] \times \ul{k}$.)
Now the following lemma is clear.

\begin{lem}
For $k \in \N\cup\{\infty\}$, the map 
\[
\Psi : [\CatO^{\underline{k}, \Delta}_{{\bf b},0}] \longrightarrow \mathbb{T}_{\Z}^{\bf b} \otimes \wedge^{k}\VV_{\Z}, 
\quad \quad \quad [M^{\underline{k}}_{{\bf b},0}(\lambda)] \mapsto M^{{\bf b}, 0}_{f^{{\bf b}0}_{\lambda}}(1),
\]
defines an isomorphism of $\Z$-modules. 
\end{lem}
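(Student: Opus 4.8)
The plan is to deduce this from the finite-rank isomorphism of Lemma~\ref{osp:lem:OtoT} together with the truncation compatibilities established in \secref{sec:BGG} and \secref{osp:sec:cbanddcb}. The case $k=\infty$ is the crucial one, and the finite $k$ cases then follow by a truncation argument; alternatively one can treat finite $k$ directly and pass to the limit. First I would fix $k\in\N\cup\{\infty\}$ and observe that $\CatO^{\underline{k},\Delta}_{{\bf b},0}$ is, by construction, the full subcategory of $\OO^{\ul k}_{{\bf b},0}$ whose objects admit a finite filtration by the parabolic Verma modules $M^{\underline{k}}_{{\bf b},0}(\la)$ with $\la\in X^{\ul k,+}_{{\bf b},0}$. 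Hence $[\CatO^{\underline{k},\Delta}_{{\bf b},0}]$ is the free $\Z$-module on the symbols $[M^{\underline{k}}_{{\bf b},0}(\la)]$, $\la\in X^{\ul k,+}_{{\bf b},0}$; one uses here that distinct parabolic Verma modules have distinct highest weights, so no relations are introduced in the Grothendieck group, and that any object of $\CatO^{\underline{k},\Delta}_{{\bf b},0}$ has finite length in the appropriate sense so its class is a finite $\Z$-combination of $[M^{\underline{k}}_{{\bf b},0}(\la)]$'s. This is the same argument as for Lemma~\ref{osp:lem:OtoT}, now in the parabolic setting.

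Next I would record that the target $\mathbb{T}_{\Z}^{\bf b} \otimes \wedge^{k}\VV_{\Z}$ is, likewise, a free $\Z$-module on the standard monomial basis $\{M^{{\bf b},0}_f(1) \mid f\in I^{m+n}\times I^k_+\}$: this follows from the fact that $\{M^{{\bf b},0}_f \mid f\in I^{m+n}\times I^k_+\}$ is a $\Qq$-basis of $\mathbb T^{\bf b}\otimes\wedge^k\VV$ (stated in \S\ref{subsec:barWW} for $k\in\N$ and in \S\ref{osp:subsec:$q$-wedges}--\S\ref{subsec:barWW} for $k=\infty$ via the direct-limit realization), and from the definition of the $\mA$-lattice and its specialization at $q=1$. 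Then I would invoke the bijection $\wtlVk \to I^{m+n}\times I^k_+$, $\la\mapsto f^{{\bf b}0}_\la$ (the restriction to $[m+n]\cup[\ul k]$ of \eqref{osp:eq:bijectionwtlV}; for $k\in\N$ this is the displayed bijection immediately preceding the statement, and for $k=\infty$ it is \eqref{osp:eq:bijectionwtlV} itself). Since $X^{\ul k,+}_{{\bf b},0}=\wtlVk$, the assignment $[M^{\underline{k}}_{{\bf b},0}(\la)] \mapsto M^{{\bf b},0}_{f^{{\bf b}0}_\la}(1)$ carries a free $\Z$-basis bijectively onto a free $\Z$-basis, hence extends uniquely to a $\Z$-module isomorphism $\Psi$. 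That is really the whole content.

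I do not expect a serious obstacle here; the statement is essentially a bookkeeping lemma. The one point that deserves a line of care is the freeness of $[\CatO^{\underline{k},\Delta}_{{\bf b},0}]$ on parabolic Verma classes when $k=\infty$, since one is then working with a genuinely infinite-rank Lie superalgebra; but this is handled exactly as in the existing literature on such categories (and as in the non-parabolic Lemma~\ref{osp:lem:OtoT}), using the weight-space finiteness and highest-weight conditions built into the definition of $\OO^{\ul\infty}_{{\bf b},0}$. A second, purely cosmetic, point is to make sure the $\rho$-shift conventions in \eqref{osp:eq:ftolambda}--\eqref{osp:eq:lambdatof} and in \eqref{eq:fb0} are used consistently so that the highest weight $\la$ of $M^{\underline{k}}_{{\bf b},0}(\la)$ really corresponds to $f^{{\bf b}0}_\la$ and not to a shifted index; this is already arranged by the parenthetical remark following \eqref{osp:eq:bijectionwtlV} that $f^{{\bf b}0}_\la$ is consistent with the $\rho$-shift of a type $(\star)$ simple system. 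With these remarks in place the proof is complete.
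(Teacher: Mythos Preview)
Your proposal is correct and follows exactly the approach the paper has in mind: the paper simply states ``Now the following lemma is clear'' after recording the bijection $\wtlVk \to I^{m+n}\times I^k_+$, $\lambda\mapsto f^{{\bf b}0}_\lambda$, which is precisely your argument that a bijection of index sets for free $\Z$-bases gives a $\Z$-module isomorphism. Your added care about freeness of the Grothendieck group on parabolic Verma classes and consistency of $\rho$-shifts fills in details the paper leaves implicit, but the underlying idea is the same.
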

We have abused the notation $\Psi$ for all the isomorphisms unless otherwise specified, 
since they share the same origin.
For $k \in \N \cup \{\infty\}$, we define $[ [\CatO^{\underline{k}, \Delta}_{{\bf b},0}] ]$ as the completion of 
$[\CatO^{\underline{k}, \Delta}_{{\bf b},0}] $ such that the extensions of $\Psi$ 
\begin{align}
\Psi : & [ [\CatO^{\underline{k}, \Delta}_{{\bf b},0}] ] \longrightarrow \mathbb{T}_{\Z}^{\bf b} \widehat{ \otimes} \wedge^{k}\VV_{\Z} 
  \label{eq : BKLiso}
\end{align}
are isomorphism of $\Z$-modules. 
Recall the category $\mathcal{O}^{\ul k}_{{\bf b},1}$  from \S\ref{subsec:cat}.
We shall denote $\CatO^{\underline{k}, \Delta}_{{\bf b},1}$ the full subcategory of 
$\mathcal{O}^{\ul k}_{{\bf b},1}$ consisting of all modules possessing parabolic Verma flags.
Recall a bijection   
$\wtlW \longrightarrow I^{m+n} \times I^\infty_{-},  \lambda \mapsto f^{{\bf b}1}_{\lambda}$
from \eqref{osp:eq:bijectionwtlW}.
Similarly, we have a bijection
\begin{align*}
\wtlWk \longrightarrow I^{m+n} \times I^k_-, \quad
\lambda \mapsto f^{{\bf b}1}_{\lambda}.
\end{align*}
(Here $f^{{\bf b}1}_{\lambda}$ is understood as the natural restriction to the part $[m+n] \times \ul{k}$.)
Now the following lemma is clear.

\begin{lem}
 For $k \in \N \cup \{ \infty\}$, the map
\[
\Psi : [\CatO^{\underline{k}, \Delta}_{{\bf b},1}] \longrightarrow \mathbb{T}_{\Z}^{\bf b} \otimes 
\wedge^{k}\WW_{\Z}, \quad \quad \quad [M^{\underline{k}}_{{\bf b},1}(\lambda)] \mapsto M^{{\bf b}, 1}_{f^{{\bf b}1}_{\lambda}}(1),
\]
is an isomorphism of $\Z$-modules. 
\end{lem}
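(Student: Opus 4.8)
The statement to be proved is that the $\Z$-linear map
$\Psi : [\CatO^{\underline{k}, \Delta}_{{\bf b},1}] \longrightarrow \mathbb{T}_{\Z}^{\bf b} \otimes \wedge^{k}\WW_{\Z}$
sending $[M^{\underline{k}}_{{\bf b},1}(\lambda)]$ to $M^{{\bf b}, 1}_{f^{{\bf b}1}_{\lambda}}(1)$ is an isomorphism of $\Z$-modules. The plan is to run the identical argument used for its two predecessors in this subsection (Lemma~\ref{osp:lem:OtoT} and the $\wedge^k\VV$-analogue), since all three isomorphisms ``share the same origin'' as the authors note. First I would recall that the category $\CatO^{\underline{k}, \Delta}_{{\bf b},1}$ is, by definition, the full subcategory of $\mathcal{O}^{\ul k}_{{\bf b},1}$ consisting of modules with a finite parabolic Verma flag, so by the general nonsense that classes of objects with $\Delta$-flags form a free abelian group on the $\Delta$-standard objects, the Grothendieck group $[\CatO^{\underline{k}, \Delta}_{{\bf b},1}]$ is free over $\Z$ with basis $\{[M^{\underline{k}}_{{\bf b},1}(\lambda)] \mid \lambda \in \wtlWk\}$.

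Next I would invoke the combinatorial bijection
$\wtlWk \longrightarrow I^{m+n} \times I^k_-$, $\lambda \mapsto f^{{\bf b}1}_{\lambda}$,
stated just above the lemma (for finite $k$; for $k = \infty$ one uses \eqref{osp:eq:bijectionwtlW} together with the appropriate restriction convention), and recall that $\{M^{{\bf b},1}_f \mid f \in I^{m+n}\times I^k_-\}$ is precisely the standard monomial basis of $\mathbb T^{\bf b} \otimes \wedge^k \WW$ (from \S\ref{subsec:barWW}), hence $\{M^{{\bf b},1}_f(1)\}$ is a $\Z$-basis of $\mathbb{T}_{\Z}^{\bf b} \otimes \wedge^{k}\WW_{\Z}$. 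Then $\Psi$ carries the free $\Z$-basis $\{[M^{\underline{k}}_{{\bf b},1}(\lambda)]\}$ bijectively onto the free $\Z$-basis $\{M^{{\bf b},1}_{f^{{\bf b}1}_\lambda}(1)\}$, so it is an isomorphism of $\Z$-modules. In the case $k=\infty$ one must additionally check that $\Psi$ is well-defined at the level of the relevant (uncompleted) Grothendieck group, i.e.\ that the image $M^{{\bf b},1}_{f^{{\bf b}1}_\lambda}(1)$ genuinely lies in $\mathbb{T}_{\Z}^{\bf b} \otimes \wedge^{\infty}\WW_{\Z}$ rather than in its completion --- but this is immediate since the image of a single parabolic Verma is a single standard monomial.

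The argument is essentially routine book-keeping, so there is no serious obstacle; the only point requiring a little care is to make sure the $\rho_{\bf b}$-shift built into $f^{{\bf b}1}_\lambda$ in \eqref{eq:fb1} matches the one used to set up $\CatO^{\underline{k}, \Delta}_{{\bf b},1}$ (cf.\ Remark~\ref{rem:rhob}), so that the indexing set $\wtlWk$ really is the set of highest weights of parabolic Vermas in this category. Having fixed that convention consistently --- exactly as was done for the $\VV$-side in the previous lemma --- the proof is complete, and as with the earlier cases we may and shall continue to denote this isomorphism by $\Psi$.
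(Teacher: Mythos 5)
Your proof is correct and follows exactly the reasoning the paper has in mind when it declares the lemma ``clear'': the Grothendieck group is free on the classes of parabolic Vermas indexed by $\wtlWk$, the Fock space is free on the standard monomials indexed by $I^{m+n}\times I^k_-$, the bijection \eqref{osp:eq:bijectionwtlW} matches these indexing sets, and so $\Psi$ is a bijection of bases. The extra care you take with the $k=\infty$ case and the $\rho_{\bf b}$-shift is appropriate but not beyond what the paper leaves implicit.
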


For  $k \in \N \cup \{\infty\}$, we define $[ [\CatO^{\underline{k}, \Delta}_{{\bf b},1}] ]$ as the completion of 
$[\CatO^{\underline{k}, \Delta}_{{\bf b},1}] $ such that the extensions of $\Psi$
\begin{align}
 \label{eq : BKLisoW}
\Psi : &[ [\CatO^{\underline{k}, \Delta}_{{\bf b},1}] ] \longrightarrow \mathbb{T}_{\Z}^{\bf b} \widehat{ \otimes} \wedge^{k}\WW_{\Z}
\end{align}
are isomorphism of $\Z$-modules.

\begin{prop}
  \label{prop:truncation}
The truncation maps defined here are compatible under the isomorphism $\psi$ with the truncations in Propositions \ref{prop:can:truncW} 
and \ref{prop:can:truncV}. More precisely, we have the following commutative diagrams,
\[
\xymatrix{[[\OO^{\ul \infty, \Delta}_{{\bf b}, 0}]] \ar[r]^-{\Psi} \ar[d] ^{\mf{tr_0}}
 & \mathbb{T}^{\bf b}_{\Z} \widehat{\otimes} \wedge^{\infty}\VV_{\Z} \ar[d]^{\texttt{Tr}} \\ 
[[\OO^{\ul k, \Delta}_{{\bf b}, 0}]] \ar[r]^-{\Psi} & \mathbb T^{\bf b}_{\Z} \widehat{\otimes} \wedge^k\VV_{\Z} } \qquad \qquad
\xymatrix{[[\OO^{\ul \infty, \Delta}_{{\bf b}, 1}]] \ar[r]^-{\Psi} \ar[d] ^{\mf{tr_1}}
 & \mathbb{T}^{\bf b}_{\Z} \widehat{\otimes} \wedge^{\infty}\WW_{\Z} \ar[d]^{\texttt{Tr}} \\ 
[[\OO^{\ul k, \Delta}_{{\bf b}, 1}]] \ar[r]^-{\Psi} & \mathbb T^{\bf b}_{\Z} \widehat{\otimes} \wedge^k\WW_{\Z} } 
\]
\end{prop}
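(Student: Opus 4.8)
The plan is to verify commutativity of each square by evaluating both composites on the basis of parabolic Verma classes, which span the relevant Grothendieck groups and hence determine any $\Z$-linear map. First I would recall that, by the definitions of $\Psi$ in \eqref{eq : BKLiso} and \eqref{eq : BKLisoW} together with Lemma~\ref{osp:lem:OtoT} and its parabolic variants, the map $\Psi$ sends $[M^{\ul\infty}_{{\bf b},s}(\la)]$ to the standard monomial $M^{{\bf b},s}_{f^{{\bf b}s}_\la}(1)$ for $s\in\{0,1\}$, and likewise in the truncated categories. So the verification reduces to comparing, for an arbitrary $\la\in X^{\ul\infty,+}_{{\bf b},s}$, the element $\Psi(\mf{tr}_s[M^{\ul\infty}_{{\bf b},s}(\la)])$ with $\texttt{Tr}(\Psi[M^{\ul\infty}_{{\bf b},s}(\la)]) = \texttt{Tr}(M^{{\bf b},s}_{f^{{\bf b}s}_\la}(1))$.

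The heart of the argument is a bookkeeping match of two vanishing/truncation conditions. On the representation-theoretic side, Proposition~\ref{prop:trunc:ML} tells us $\mf{tr}_s(M^{\ul\infty}_{{\bf b},s}(\la)) = M^{\ul k}_{{\bf b},s}(\la^{\ul k})$ when $\ell({}^+\la)\le k$ and $0$ otherwise. On the Fock space side, the truncation map $\texttt{Tr}$ of \S\ref{subsec:trun} sends $M^{{\bf b},s}_{f}$ to $M^{{\bf b},s}_{f^{\ul k}}$ when $f(\ul i)-f(\ul{i+1})$ equals $1$ (for $s=0$) or $-1$ (for $s=1$) for all $i\ge k+1$, and to $0$ otherwise. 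Using the explicit formulas \eqref{eq:fb0} and \eqref{eq:fb1} for the bijections $\la\mapsto f^{{\bf b}s}_\la$, I would observe that the ``tail'' condition $f^{{\bf b}s}_\la(\ul i)-f^{{\bf b}s}_\la(\ul{i+1}) = (-1)^s$ for all $i\ge k+1$ holds precisely when ${}^+\la_{\ul i}=0$ for $i\ge k+1$, i.e.\ precisely when $\ell({}^+\la)\le k$; and that in that case $f^{{\bf b}s}_\la{}^{\ul k} = f^{{\bf b}s}_{\la^{\ul k}}$ by comparing the defining formulas restricted to $[m+n]\cup[\ul k]$. Hence the two conditions coincide and the two images agree on each basis element, giving commutativity. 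Finally I would note that both $\mf{tr}_s$ and $\texttt{Tr}$, as well as $\Psi$, extend continuously to the completed Grothendieck groups $[[\OO^{\ul\infty,\Delta}_{{\bf b},s}]]$ and the $B$-completed Fock spaces, so the commutativity passes to the completions.

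I do not expect a serious obstacle here; the statement is essentially a compatibility of two combinatorial truncations that have already been set up to match. The one point demanding a little care is the identification $f^{{\bf b}s}_\la{}^{\ul k} = f^{{\bf b}s}_{\la^{\ul k}}$, which requires matching the $\rho$-shift conventions: one must check that the integer shift $d+n-m\pm\hf$ appearing in \eqref{eq:fb0}--\eqref{eq:fb1} is the same whether computed for the infinite-rank weight $\la$ or for its finite-rank truncation $\la^{\ul k}$, and that the partition part simply restricts. This follows from the definition of $\la^{\ul k}$ given just before Proposition~\ref{prop:trunc:ML} (which keeps $d$ and the first $k$ parts of ${}^+\la$), but it is the step where a sign or index slip would most easily creep in, so I would write it out carefully.
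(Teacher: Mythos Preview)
Your proposal is correct and follows essentially the same approach as the paper: the paper's proof simply states that the proposition follows by a direct computation on the standard bases $\{[M^{\ul\infty}_{{\bf b},0}(\lambda)]\}$ and $\{[M^{\ul\infty}_{{\bf b},1}(\lambda)]\}$ using Propositions~\ref{prop:can:truncW}, \ref{prop:can:truncV}, and \ref{prop:trunc:ML}, and you have correctly unpacked what that computation entails, including the careful matching of the tail condition $\ell({}^+\la)\le k$ with the difference condition on $f^{{\bf b}s}_\la$.
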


\begin{proof}
The proposition follows by a direct computation using the respective standard bases $\{[M^{\ul \infty}_{{\bf b}, 0}(\lambda)]\}$ 
and $\{[M^{\ul \infty}_{{\bf b}, 1}(\lambda)]\}$, and applying Propositions \ref{prop:can:truncW}, \ref{prop:can:truncV}, and 
\ref{prop:trunc:ML}.
\end{proof}

\section{Comparison of characters}
 \label{subsec:comparisonCh}

Let ${\bf b}$ be a fix $0^m1^n$-sequence. For $k \in \N$, consider the extended  sequences $({\bf b}, 0^k)$ and $({\bf b}, 1^k)$. 
Associated to the extended sequences, we introduced in Chapter~\ref{sec:BGG} the categories $\mathcal{O}^{m+k|n}_{({\bf b},0^k)}$ 
and $\mathcal{O}^{m|n+k}_{({\bf b},1^k)}$, as well as the parabolic categories 
$\mathcal{O}^{\ul k}_{{\bf b}, 0}$ and $\mathcal{O}^{\ul k}_{{\bf b}, 1}$, respectively. 

For $\lambda \in X^{{\ul k},+}_{{\bf b}, 0}$, we can express the simple module $[L_{({\bf b}, 0^k)}(\lambda)]$ 
in terms of Verma modules as follows:
\[
[L_{({\bf b}, 0^k)}(\lambda)] = \sum_{\mu \in X(m+k|n)}a_{\mu \lambda} [M_{({\bf b},0^k)}(\mu)], \quad \text{ for } a_{\mu \lambda} \in \Z.
\]
Since the simple modules $\{L_{({\bf b}, 0^k)}(\lambda) = L^{\ul k}_{{\bf b},0} (\lambda) \mid \lambda \in X^{{\ul k},+}_{{\bf b}, 0}\}$ 
also lie in the parabolic category $\mathcal{O}^{\ul k}_{{\bf b}, 0}$, we can express them in terms of parabolic Verma modules as follows:
\[
[L_{({\bf b}, 0^k)}(\lambda)] = \sum_{\nu \in X^{{\ul k},+}_{{\bf b}, 0}} b_{\nu \lambda} [M^{\ul k}_{{\bf b},0}(\nu)], \quad \text{ for } b_{\nu \lambda} \in \Z.
\]

Recall that $M^{\ul k}_{{\bf b},0}(\lambda) =\text{Ind}^{\osp(2m+1|2n|2k)}_{\mathfrak{p}^{\underline{k}}_{{\bf b},0}}L_0(\lambda)$. 
By the Weyl character formula applied to $L_0(\lambda)$, we obtain that
$
a_{\nu \lambda} = b_{\nu \lambda}, \text{ for } \nu, \lambda \in  X^{{\ul k},+}_{{\bf b}, 0}$.  
This proves the following.

\begin{prop}
  \label{osp:prop:simple1}
 Let $\lambda \in X^{{\ul k},+}_{{\bf b}, 0}$ and let $\xi \in X^{{\ul k},+}_{{\bf b}, 1}$. Then we have 
  \begin{align*}
[L_{({\bf b}, 0^k)}(\lambda)] 
&= \sum_{\mu \in X(m+k|n)}a_{\mu \lambda} [M_{({\bf b},0^k)}(\mu)] 
= \sum_{\nu \in X^{{\ul k},+}_{{\bf b}, 0}} a_{\nu \lambda} [M^{\ul k}_{{\bf b},0}(\nu)].
 \\
[L_{({\bf b}, 1^k)}(\xi)] 
&= \sum_{\mu \in X(m|n+k)}a'_{\mu \xi} [M_{({\bf b},1^k)}(\mu)] 
= \sum_{\eta \in X^{{\ul k},+}_{{\bf b}, 1}} a'_{\eta \xi} [M^{\ul k}_{{\bf b},1}(\eta)].
\end{align*}
\end{prop}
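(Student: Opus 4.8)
\textbf{Proof proposal for Proposition~\ref{osp:prop:simple1}.}

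The plan is to prove this purely within the representation theory of the finite-rank ortho-symplectic Lie superalgebras, with no need for quantum groups or Fock spaces; indeed the statement is a generalization of the two displayed identities appearing just before the proposition in the text (the case of the sequences $({\bf b}, 0^k)$ and $({\bf b}, 1^k)$). I would first treat the sequence $({\bf b}, 0^k)$ in detail, the case of $({\bf b}, 1^k)$ being entirely analogous. The first displayed equality $[L_{({\bf b}, 0^k)}(\lambda)] = \sum_{\mu}a_{\mu \lambda} [M_{({\bf b},0^k)}(\mu)]$ is simply the expansion of a simple module in the (full) BGG category of $\osp(2m+2k+1|2n)$-modules in terms of Verma modules relative to the Borel of type $({\bf b},0^k)$; this exists because the category is a highest weight category and the sum is locally finite by the weight conditions in Definition~\ref{def:catO}, so $a_{\mu\lambda}\in\Z$ and $a_{\lambda\lambda}=1$.

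The key point is the second equality. First I would observe that since $\lambda \in X^{{\ul k},+}_{{\bf b}, 0}$, the simple module $L_{({\bf b}, 0^k)}(\lambda)$ in fact lies in the parabolic subcategory $\mathcal{O}^{\ul k}_{{\bf b}, 0}$: its highest weight is $\mf l^{\ul k}_{{\bf b},0}$-dominant, and since $\mathcal{O}^{\ul k}_{{\bf b}, 0}$ is a Serre subcategory of the full BGG category (being cut out by the condition that the module decompose over $\mf l^{\ul k}_{{\bf b},0}$ into a sum of the $L_0(\nu)$'s with $\nu \in X^{\ul k,+}_{{\bf b},0}$), the simple $L_{({\bf b}, 0^k)}(\lambda)$ belongs to it. Hence it has an expansion in the classes $[M^{\ul k}_{{\bf b},0}(\nu)]$ of the parabolic Verma modules, with $b_{\nu\lambda}\in\Z$ and the sum again locally finite by the weight condition in the definition of $\mathcal{O}^{\ul k}_{{\bf b}, 0}$.

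To identify $b_{\nu\lambda}$ with $a_{\nu\lambda}$ for $\nu \in X^{{\ul k},+}_{{\bf b}, 0}$, I would apply the classical Weyl character formula (more precisely, the Weyl--Kac formula for the reductive Lie algebra $\mf l^{\ul k}_{{\bf b},0}$, which acts integrably on $L_0(\nu)$): it gives $[L_0(\nu)] = \sum_{w}(\operatorname{sgn}w)[\text{Verma}_{\mf l}]$ inside $\mf l^{\ul k}_{{\bf b},0}$, and applying the exact induction functor $\operatorname{Ind}^{\osp(2m+1|2n|2k)}_{\mathfrak p^{\ul k}_{{\bf b},0}}$, together with the identity $\operatorname{Ind}^{\osp}_{\mf p}(\text{Verma}_{\mf l}(\eta)) = M_{({\bf b},0^k)}(\eta)$ for $\eta$ in the relevant weight set, yields $[M^{\ul k}_{{\bf b},0}(\nu)] = \sum_{\eta}c_{\eta\nu}[M_{({\bf b},0^k)}(\eta)]$ where $(c_{\eta\nu})$ is unitriangular with respect to the Bruhat order and supported on $\{\eta : \eta \preceq \nu\}$, with entries $\pm 1$ given by Weyl group signs. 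Substituting both expansions of $[L_{({\bf b},0^k)}(\lambda)]$ into terms of $[M_{({\bf b},0^k)}(\mu)]$ and comparing coefficients of $[M_{({\bf b},0^k)}(\nu)]$ for the \emph{dominant} weights $\nu \in X^{{\ul k},+}_{{\bf b},0}$ (those that are maximal in their $\mf l$-orbit under the dot action), the only contribution on the parabolic side comes from the leading term $c_{\nu\nu}=1$, giving $b_{\nu\lambda}=a_{\nu\lambda}$. The main obstacle I anticipate is bookkeeping the interaction between the Weyl group of $\mf l^{\ul k}_{{\bf b},0}$ (which is a product of symmetric groups permuting the tail coordinates) and the $\rho$-shift: one must check that the set of weights $\eta$ appearing when $\operatorname{Ind}$ is applied to a Verma of $\mf l^{\ul k}_{{\bf b},0}$ stays within $X(m+k|n)$ and that the leading term survives, i.e. that no dominant $\nu$ receives an unexpected contribution from a non-leading Weyl reflection of a larger weight $\nu'$. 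This is, however, exactly the standard argument for parabolic BGG resolutions (cf. \cite[\S2.2--2.3]{CW12}), adapted to the ortho-symplectic setting, so once the $\rho_{\bf b}$-shift conventions of \S\ref{subsec:osp} are matched up with the simple systems $\Pi_{({\bf b},0^k)}$ and $\Pi^{\ul k}_{{\bf b},0}$, the computation is routine. Carrying out the mirror argument with $\VV$ replaced by $\WW$, $\mf p^{\ul k}_{{\bf b},0}$ by $\mf p^{\ul k}_{{\bf b},1}$, and $L_0$ by $L_1$, gives the second displayed identity and completes the proof.
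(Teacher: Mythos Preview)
Your proposal is correct and follows essentially the same approach as the paper: both arguments observe that the simple module lies in the parabolic subcategory, expand it simultaneously in full Verma modules and in parabolic Verma modules, and then invoke the Weyl character formula for the Levi $\mf l^{\ul k}_{{\bf b},0}$ (applied to $L_0(\nu)$) to identify the two sets of coefficients at $\mf l$-dominant weights. Your write-up is in fact more detailed than the paper's proof, which is the brief paragraph immediately preceding the proposition and simply says ``By the Weyl character formula applied to $L_0(\lambda)$, we obtain that $a_{\nu\lambda}=b_{\nu\lambda}$.''
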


Now we proceed with the tilting modules. Let $\lambda \in X^{{\ul k},+}_{{\bf b}, 0}$ and $\xi \in X^{{\ul k},+}_{{\bf b}, 1}$. 
We can express the tilting modules $T_{({\bf b},0^k)}(\lambda)$ and $T_{({\bf b},0^k)}(\xi)$ in terms of Verma modules as follows:
\begin{align*}
[T_{({\bf b},0^k)}(\lambda)] &= \sum_{\mu \in X(m+k|n)} c_{\mu \lambda}[M_{({\bf b},0^k)}(\mu)], \quad \text{ for } c_{\mu \lambda} \in \Z,
\\
[T_{({\bf b},1^k)}(\xi)] &= \sum_{\eta \in X(m|n+k)} c'_{\eta \xi}[M_{({\bf b},1^k)}(\eta)], \quad \text{ for } c'_{\eta \xi} \in \Z.
\end{align*}
Recall the tilting modules $T^{\ul k}_{{\bf b},0}(\la)$ and $T^{\ul k}_{{\bf b},1}(\xi)$ 
in the parabolic categories $\mathcal{O}^{\ul k}_{{\bf b}, 0}$ and $\mathcal{O}^{\ul k}_{{\bf b}, 1}$. 
The following proposition is a counterpart of \cite[Proposition~8.7]{CLW12} with the same proof, which is based on
\cite{So98, Br04}.
Recall $w_0^{(k)}$ denotes the longest element in $\mf S_k$.
\begin{prop}
  \label{osp:prop:tilting}
  \quad
\begin{enumerate}
\item 
Let $\lambda \in X^{{\ul k},+}_{{\bf b}, 0}$, and write
$
T^{\ul k}_{{\bf b},0} (\la) = \sum_{\nu \in X^{\ul k,+}_{{\bf b},0}}d_{\nu \lambda} M^{\ul k}_{{\bf b}, 0}(\nu).
$
Then we have $d_{\nu \lambda} = \sum_{\tau \in \mathfrak{S}_k} (-1)^{\ell(\tau w_0^{(k)})}c_{\tau \cdot \nu, w_0^{(k)} \cdot \lambda}$.

\item 
Let $\xi \in X^{{\ul k},+}_{{\bf b}, 1}$, and write
$
T^{\ul k}_{{\bf b},1}(\xi) = \sum_{\eta \in X^{\ul k,+}_{{\bf b},1}}d'_{\eta \xi} M^{\ul k}_{{\bf b}, 1}(\eta).
$
Then we have 

\noindent $
d'_{\eta \xi} = \sum_{\tau \in \mathfrak{S}_k}(-1)^{\ell(\tau w_0^{(k)})}c'_{\tau \cdot \eta, w_0^{(k)} \cdot \lambda}.
$
\end{enumerate}
\end{prop}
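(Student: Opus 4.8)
The plan is to deduce Proposition~\ref{osp:prop:tilting} from the corresponding statement comparing tilting modules for parabolic and non-parabolic categories, by the same reduction that is used in \cite[Proposition~8.7]{CLW12}. First I would recall the key input: the functor $\mathfrak{tr}_s$ (for $s=0,1$) is exact, and by Proposition~\ref{prop:trunc:ML} it sends $M^{\ul\infty}_{{\bf b},s}(\la)$ to $M^{\ul k}_{{\bf b},s}(\la^{\ul k})$ (or $0$), and likewise for simple and tilting modules. This reduces the computation to the level of Grothendieck groups, where one compares the expansions of $[T^{\ul k}_{{\bf b},s}(\la)]$ in parabolic Verma modules with the expansion of $[T_{({\bf b},s^k)}(\la)]$ in ordinary Verma modules. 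I would treat the case $s=0$ in detail; the case $s=1$ is word-for-word identical after replacing $\VV$ by $\WW$ and $0^k$ by $1^k$.

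Next I would carry out the $q$-skew-symmetrization step, which is the heart of the argument. The parabolic Verma module $M^{\ul k}_{{\bf b},0}(\nu) = \mathrm{Ind}^{\osp(2m+1|2n|2k)}_{\mathfrak p^{\ul k}_{{\bf b},0}} L_0(\nu)$ has a character given by the Weyl character formula applied to $L_0(\nu)$ over the finite Levi factor $\mf l^{\ul k}_{{\bf b},0} \cong \gl(k)$-type; this is exactly the source of the $q$-skew-symmetrizer and of the alternating sum over $\tau\in\mf S_k$. Concretely, expressing $L_0(\nu)$ in terms of its own ``standard'' characters introduces the signed sum $\sum_{\tau\in\mf S_k}(-1)^{\ell(\tau w_0^{(k)})}$ with the $\mf S_k$-action permuting the last $k$ labels, exactly as in the $q$-wedge realization $\mc V_f \equiv M^{(0^k)}_{f\cdot w^{(k)}_0}L_{w^{(k)}_0}$ recalled in \S\ref{osp:subsec:$q$-wedges}; compare Proposition~\ref{prop:aux2}. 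Substituting $[T_{({\bf b},0^k)}(\la)] = \sum_\mu c_{\mu\la}[M_{({\bf b},0^k)}(\mu)]$ into this identity and collecting the coefficient of $[M^{\ul k}_{{\bf b},0}(\nu)]$ yields $d_{\nu\la} = \sum_{\tau\in\mf S_k}(-1)^{\ell(\tau w_0^{(k)})}c_{\tau\cdot\nu,\,w_0^{(k)}\cdot\la}$, which is the asserted formula. (One must also check $w_0^{(k)}\cdot\la$ lies again in $X^{\ul k,+}_{{\bf b},0}$ up to the $\mf S_k$-orbit, which is automatic from dominance of $\la$ for the Levi.)

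Before doing that, however, one needs to know that $[T_{({\bf b},0^k)}(\la)]$ restricted via $\mathfrak{tr}_0$ really computes $[T^{\ul k}_{{\bf b},0}(\la^{\ul k})]$, and that the $c_{\mu\la}$ are the relevant structure constants. This is where I would invoke Proposition~\ref{prop:trunc:ML} together with the standard facts on tilting modules (\cite{So98, Br04}): $\mathfrak{tr}_s$ of a tilting module is tilting, $\mathfrak{tr}_s$ of a Verma flag is a (parabolic) Verma flag, and the BGG-type reciprocity that determines $T_{({\bf b},0^k)}(\la)$ uniquely by Definition~\ref{def:tilting}. The argument is then purely combinatorial on the Grothendieck group, identical to \cite[Proposition~8.7]{CLW12}; I would simply remark that the ortho-symplectic case introduces no new feature here, since the truncation $\mathfrak{tr}_0$ only affects the ``tail'' $\gl$-part on which everything behaves as in type $A$.

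The main obstacle I anticipate is bookkeeping rather than conceptual: one must be careful that the $\rho_{\bf b}$-shifts for the extended sequences $({\bf b},0^k)$ and $({\bf b},1^k)$ are compatible with the bijections \eqref{osp:eq:bijectionwtlV}, \eqref{osp:eq:bijectionwtlW} and with the $\mf S_k$-action on the index sets, so that $w_0^{(k)}\cdot\la$ on the representation-theory side matches $f\cdot w_0^{(k)}$ on the Fock-space side (cf. Remark~\ref{rem:rhob}). Once this dictionary is pinned down --- which I would do by a direct check on a single Verma module as in the proof of Lemma~\ref{lem: linkage} --- the rest of the proof is a transcription of the type $A$ argument, so I would write ``The proof is identical to that of \cite[Proposition~8.7]{CLW12}, using Proposition~\ref{prop:trunc:ML} in place of its type $A$ analogue and the $q$-skew-symmetrization as in Proposition~\ref{prop:aux2}; we omit the details.''
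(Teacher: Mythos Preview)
Your proposal reaches the right conclusion --- the paper's own proof is simply ``a counterpart of \cite[Proposition~8.7]{CLW12} with the same proof, which is based on \cite{So98, Br04}'' --- but your route there introduces an irrelevant ingredient. The truncation functor $\mf{tr}_s$ of Proposition~\ref{prop:trunc:ML} goes from the infinite-rank parabolic category $\OO^{\ul\infty}_{{\bf b},s}$ to the finite-rank parabolic $\OO^{\ul k}_{{\bf b},s}$; it does \emph{not} relate the full BGG category $\mc O_{({\bf b},0^k)}$ to its parabolic subcategory $\OO^{\ul k}_{{\bf b},0}$ for the \emph{same} Lie superalgebra $\osp(2m+1|2n|2k)$. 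In particular, your claim that ``$[T_{({\bf b},0^k)}(\la)]$ restricted via $\mf{tr}_0$ really computes $[T^{\ul k}_{{\bf b},0}(\la^{\ul k})]$'' conflates these two distinct comparisons: $T_{({\bf b},0^k)}(\la)$ does not even live in the domain of $\mf{tr}_0$.

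The actual content is Soergel's comparison \cite{So98}, extended to the superalgebra setting in \cite{Br04}, between tilting modules in the full and parabolic categories; this is what produces the alternating sum over the Levi Weyl group $\mf S_k$. Your middle paragraph about the Weyl character formula for $L_0(\nu)$ and the skew-symmetrizer captures this heuristically, though note that Proposition~\ref{prop:aux2} is the Fock-space \emph{shadow} of this representation-theoretic fact, not an input to its proof. If you drop the $\mf{tr}_s$ discussion entirely, your final sentence (``the proof is identical to that of \cite[Proposition~8.7]{CLW12}'') stands on its own and is exactly what the paper does.
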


\section{Translation functors} 

In \cite{Br03}, Brundan established a $\U$-module
isomorphism between the Grothendieck group of the category $\mc O$ of $\gl(m|n)$ 
and a Fock space (at $q=1$), where some properly defined translation functors act as Chevalley generators of $\U$ at $q=1$. 
Here we shall develop a type $B$ analogue in the setting of $\osp(2m+1|2n)$.

\par Let $V$ be the natural $\mathfrak{osp}(2m+1|2n)$-module. Notice that $V$ is self-dual. 
Recalling \S \ref{subsec:osp}, we have the following decomposition of $\mathcal{O}_{\bf b}$:
$$
\mathcal{O}_{\bf b} = \displaystyle\bigoplus _{\chi_\lambda} \mathcal{O}_{{\bf b}, \chi_{\lambda}},
$$ 
where $\chi_\lambda$ runs over all the integral central characters.
Thanks to Lemma \ref{lem: linkage}, we can set 
$\mathcal{O}_{{\bf b}, \gamma} := \mathcal{O}_{{\bf b}, \chi_\lambda}$, if $\text{wt}_{\bf b}(\lambda) = \gamma$ (recall $\text{wt}_{\bf b}$ from \eqref{eq:wtbX}). 
For $r \geq 0$, let $S^r V$ be the $r$th supersymmetric power of $V$. 
For $i \in \I^{\imath}$, $M \in \mathcal{O}_{{\bf b}, \gamma}$, 
we define the following translation functors in $\mathcal{O}_{\bf b}$: 
\begin{align}
\bff^{(r)}_{\alpha_{i}} M &:= \text{pr}_{\gamma - r(\varepsilon_{i-\hf}-\varepsilon_{i+\hf})}(M \otimes S^rV),
\\
\be^{(r)}_{\alpha_{i}} M &:= \text{pr}_{\gamma + r(\varepsilon_{i-\hf}-\varepsilon_{i+\hf})}(M \otimes S^rV),
\label{eq:eft}
\\
\bt M &:=\text{pr}_{\gamma}(M \otimes V),
\end{align}
where $\text{pr}_{\mu}$ is the natural projection from $\mathcal{O}_{\bf b}$ to $\mathcal{O}_{{\bf b}, \mu}$.

Note that the translation functors naturally induce operators on the Grothendieck group $[\CatO^{\Delta}_{\bf b}]$, 
denoted by $\bff^{(r)}_{\alpha_{i}}$, $\be^{(r)}_{\alpha_{i}}$, and $\bt$ as well.
The following two lemmas are analogues of \cite[Lemmas 4.23 and 4.24]{Br03}. 
Since they are standard, we shall skip the proofs.
\begin{lem}
On the category $\mathcal{O}_{\bf b}$, the translation functors $\bff^{(r)}_{\alpha_{i}}$, $\be^{(r)}_{\alpha_{i}}$, 
and $\bt$ are all exact. They commute with the $\tau$-duality.
\end{lem}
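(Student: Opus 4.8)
The statement to be proved is the exactness of the translation functors $\bff^{(r)}_{\alpha_i}$, $\be^{(r)}_{\alpha_i}$, and $\bt$ on $\mc O_{\bf b}$, together with the assertion that they commute with $\tau$-duality. The plan is to reduce each claim to standard properties of tensoring with a finite-dimensional module and taking block projections. First I would observe that for any fixed $M \in \mc O_{\bf b}$, the module $M \otimes S^r V$ again lies in $\mc O_{\bf b}$: indeed $S^r V$ is a finite-dimensional $\osp(2m+1|2n)$-module, so $M \otimes S^r V$ is $\mf h_{m|n}$-semisimple with finite-dimensional weight spaces, and condition (ii) of Definition~\ref{def:catO} is preserved because the weights of $S^r V$ form a finite set. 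The same applies to $M \otimes V$. This is the only point where one uses the specific structure of $\osp(2m+1|2n)$ and its natural module; it is routine.

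Next I would establish exactness. The functor $M \mapsto M \otimes S^r V$ is exact on the category of all $\osp(2m+1|2n)$-modules since $S^r V$ is free as a vector space (tensoring over $\C$ is exact), and it restricts to an exact functor $\mc O_{\bf b} \to \mc O_{\bf b}$ by the previous paragraph. The block projection $\mathrm{pr}_\mu : \mc O_{\bf b} \to \mc O_{{\bf b},\mu}$ is exact because $\mc O_{\bf b} = \bigoplus_\mu \mc O_{{\bf b},\mu}$ is a direct sum decomposition of abelian categories (the decomposition by integral central characters, using Lemma~\ref{lem: linkage} to index blocks by $\texttt{wt}_{\bf b}$), and projection onto a direct summand is always exact. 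Since $\bff^{(r)}_{\alpha_i}$, $\be^{(r)}_{\alpha_i}$, $\bt$ are each a composition of one such tensoring functor with one such projection (see \eqref{eq:eft} and the surrounding display), they are exact as compositions of exact functors.

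Finally, for compatibility with $\tau$-duality, the key input is that $V$ is self-dual as an $\osp(2m+1|2n)$-module, hence so is each $S^r V$ (the supersymmetric powers of a self-dual module are self-dual). For $M \in \mc O_{\bf b}$ one then has a natural isomorphism $(M \otimes S^r V)^\tau \cong M^\tau \otimes (S^r V)^\tau \cong M^\tau \otimes S^r V$, using that the $\tau$-duality $(\cdot)^\tau$ on $\mc O_{\bf b}$ is a contravariant tensor-compatible functor (coming from the automorphism $\tau$ of $\osp(2m+1|2n)$ together with restricted dualization, as recalled in \S\ref{subsec:cat}). Since $\tau$ fixes the Cartan $\mf h_{m|n}$ up to the standard sign, $(\cdot)^\tau$ permutes the blocks $\mc O_{{\bf b},\mu}$ in a way compatible with $\mathrm{pr}_\mu$; in fact one checks it preserves each integral central character. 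Combining these two observations gives $(\bff^{(r)}_{\alpha_i} M)^\tau \cong \be^{(r)}_{\alpha_i}(M^\tau)$ and its variants, i.e.\ the functors intertwine $\tau$-duality as claimed. The main obstacle, such as it is, is bookkeeping the precise way $(\cdot)^\tau$ acts on weights and central characters so that the block projections match up; everything else is a formal consequence of exactness of tensor and projection functors, exactly as in the $\gl(m|n)$ case treated in \cite[Lemmas~4.23 and 4.24]{Br03}, so I would simply cite that parallel and omit the routine verifications.
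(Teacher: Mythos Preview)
Your approach is correct and is exactly the standard argument the paper has in mind (the paper omits the proof entirely, citing \cite[Lemmas~4.23 and 4.24]{Br03}). Exactness via tensoring with a finite-dimensional module followed by block projection, and compatibility with duality via self-duality of $V$ and hence of $S^r V$, is precisely the intended reasoning.

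There is one slip in your final formula. From your own observations that $(\cdot)^\tau$ preserves central characters (hence commutes with each $\mathrm{pr}_\mu$) and that $(M\otimes S^r V)^\tau \cong M^\tau \otimes S^r V$, the conclusion is
\[
(\bff^{(r)}_{\alpha_i} M)^\tau \;\cong\; \bff^{(r)}_{\alpha_i}(M^\tau),
\qquad
(\be^{(r)}_{\alpha_i} M)^\tau \;\cong\; \be^{(r)}_{\alpha_i}(M^\tau),
\qquad
(\bt\, M)^\tau \;\cong\; \bt\,(M^\tau),
\]
i.e.\ each functor \emph{commutes} with $\tau$-duality, as the lemma states. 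Your displayed isomorphism $(\bff^{(r)}_{\alpha_i} M)^\tau \cong \be^{(r)}_{\alpha_i}(M^\tau)$ would require $(\cdot)^\tau$ to send $\mc O_{{\bf b},\gamma}$ to $\mc O_{{\bf b},\gamma + 2r(\varepsilon_{i-\frac12}-\varepsilon_{i+\frac12})}$, contradicting the preservation of central characters you just noted. Simply replace $\be^{(r)}_{\alpha_i}$ by $\bff^{(r)}_{\alpha_i}$ there and the argument is complete.
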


\begin{lem}
  \label{osp:lem:MOSV}
Let $\nu_1$, $\dots$, $\nu_N$ be the set of weights of $S^rV$ ordered so that $v_i > v_j$ if and only if $ i <j$. 
Let $\lambda \in X(m|n)$. Then $M_{\bf b}(\lambda) \otimes S^rV$ has a multiplicity-free Verma flag with 
subquotients isomorphic to $M_{\bf b}(\lambda + \nu_1)$, $\dots$, $M_{\bf b}(\lambda + \nu_N)$ in the order from bottom to top.
\end{lem}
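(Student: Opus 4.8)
The plan is to deduce this from the standard behaviour of a Verma module under tensoring with a finite-dimensional module, which goes through verbatim in the super setting. Set $\mf b := \mf{n}_{\bf b}^+ \oplus \mf h_{m|n}$, so that $M_{\bf b}(\lambda) = U(\osp(2m+1|2n)) \otimes_{U(\mf b)} \C_\lambda$. First I would invoke the tensor identity
\[
M_{\bf b}(\lambda) \otimes S^r V \;\cong\; U(\osp(2m+1|2n)) \otimes_{U(\mf b)}\big( \C_\lambda \otimes (S^r V)|_{\mf b}\big),
\]
an isomorphism of $\osp(2m+1|2n)$-modules whose proof is the classical one (cf. the $\gl(m|n)$ case in \cite{Br03} and \cite{CW12} for generalities on super category $\mc O$). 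Next I would build a $\mf b$-stable filtration of $\C_\lambda \otimes (S^r V)|_{\mf b}$ with one-dimensional subquotients: since $\mf{n}_{\bf b}^+$ raises $\mf h_{m|n}$-weights with respect to $\N\Pi_{\bf b}$ and acts locally nilpotently, and since the listed weights $\nu_1 > \nu_2 > \cdots > \nu_N$ of $S^r V$ form a total order refining the natural (dominance) order on weights of a finite-dimensional module, one can choose this filtration so that its successive subquotients are $\C_{\lambda + \nu_1}, \C_{\lambda + \nu_2}, \dots, \C_{\lambda + \nu_N}$ read from bottom to top, the bottom step $\C_{\lambda+\nu_1}$ being a $\mf b$-submodule because $\nu_1$ is a maximal weight of $S^r V$. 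Applying the exact functor $U(\osp(2m+1|2n)) \otimes_{U(\mf b)} (-)$ then produces the required Verma flag of $M_{\bf b}(\lambda)\otimes S^r V$ with subquotients $M_{\bf b}(\lambda + \nu_1), \dots, M_{\bf b}(\lambda + \nu_N)$ in that order.

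The remaining work is bookkeeping about the weights and the ordering. I would record the weights of $S^r V$ from the decomposition $S^r V \cong \bigoplus_{a+b=r} S^a(V_{\ov 0}) \otimes \Lambda^b(V_{\ov 1})$, where $V_{\ov 0}$ contributes $\{0,\pm\ep_1,\ldots,\pm\ep_m\}$ and $V_{\ov 1}$ contributes $\{\pm\ep_{\ov 1},\ldots,\pm\ep_{\ov n}\}$; this is what one needs both to control the weight multiplicities entering the multiplicity-free assertion and to check that the decreasing listing $\nu_1 > \cdots > \nu_N$ is compatible with the Bruhat order $\preceq_{\bf b}$ on the shifted set $\{\lambda + \nu_i\}$. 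Concretely, if $\lambda + \nu_i \prec_{\bf b} \lambda + \nu_j$ then by the description of $\preceq_{\bf b}$ in \eqref{eq:BrX} we have $\nu_j - \nu_i \in \N\Pi_{\bf b}$, hence $\nu_i < \nu_j$, so $i > j$; together with the linkage principle (\lemref{lem: linkage}) this shows the ``bottom-to-top'' ordering in the conclusion is exactly the one induced by $\preceq_{\bf b}$.

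The step I expect to require the most care is precisely this last compatibility, because the simple system $\Pi_{\bf b}$, and hence $\preceq_{\bf b}$, varies with the $0^m1^n$-sequence $\bf b$: one must verify that for every admissible $\bf b$ the decreasing listing of the weights of $S^r V$ can be taken to refine $\preceq_{\bf b}$ on $\{\lambda + \nu_i\}$, and pin down in this $\bf b$-dependent language the exact form of the subquotient list (and the multiplicity count). The existence of the Verma flag is entirely routine; it is the matching of that flag with the prescribed $\bf b$-dependent ordering that needs attention, and this is handled uniformly by \eqref{eq:BrX} and \lemref{lem: linkage}.
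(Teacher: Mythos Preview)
The paper does not prove this lemma; it declares it ``standard'' and points to the $\gl(m|n)$ analogue \cite[Lemma~4.24]{Br03}. Your tensor-identity argument is exactly that standard proof, so the core of your proposal is correct and matches what the paper defers to.

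Two remarks on the surplus in your writeup. First, the final two paragraphs about compatibility with the Bruhat order $\preceq_{\bf b}$ are unnecessary: the ordering in the statement is simply any linear extension of the dominance partial order on the weights of $S^rV$, and the ``bottom to top'' claim follows purely from the fact that $\mf n_{\bf b}^+$ raises weights in the $\mf b$-filtration of $\C_\lambda\otimes (S^rV)|_{\mf b}$. No appeal to \eqref{eq:BrX} or \lemref{lem: linkage} is needed, and there is nothing ${\bf b}$-dependent to check beyond what the tensor identity already handles. Second, the ``multiplicity-free'' assertion you plan to verify actually fails for $r\ge 2$: since $V_{\ov 0}$ carries the weights $0,\pm\ep_1,\ldots,\pm\ep_m$, the zero weight space of $S^2V$ already has dimension $m+n+1>1$. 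The lemma should be read with $\nu_1,\ldots,\nu_N$ listing weights \emph{with multiplicity} and the ``if and only if'' relaxed to ``only if''. The paper's explicit use (proof of Proposition~\ref{prop:translation}) is the case $r=1$, where the literal statement is correct.
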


Denote by $\U_\Z = \Z \otimes_{\mA} \U_{\mA}$ the specialization of the $\mA$-algebra ${\U_{\mc A}}$ at $q=1$. 
Hence we can view $\mathbb{T}_{\Z}^{\bf b}$ as a $\U_\Z$-module. 
Thanks to \eqref{eq:beZ} and \eqref{eq:bffZ}, we know $\iota(\bff^{(r)}_{\alpha_{i}})$ 
and $\iota(\be^{(r)}_{\alpha_{i}})$ lie in $\U_{\mc A}$, hence their specializations at $q=1$ in $\U_\Z$ act on $\mathbb{T}_{\Z}^{\bf b}$. 

\begin{prop}
 \label{prop:translation}
Under the identification $[\CatO^{\Delta}_{\bf b}]$ and $\mathbb{T}_{\Z}^{\bf b}$ via the isomorphism $\Psi$, 
the translation functors $\bff^{(r)}_{\alpha_{i}}$, $\be^{(r)}_{\alpha_{i}}$, and $\bt$ act in the same way 
as the specialization of  $\bff^{(r)}_{\alpha_{i}}$, $\be^{(r)}_{\alpha_{i}}$, and $\bt$ in $\Ui$. 
\end{prop}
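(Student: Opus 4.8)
\textbf{Proof proposal for Proposition~\ref{prop:translation}.}
The plan is to compute the action of each translation functor on the standard basis $\{[M_{\bf b}(\lambda)]\}$ of $[\CatO^{\Delta}_{\bf b}]$ and match it against the action of the corresponding specialized element of $\bun$ on the standard monomial basis $\{M^{\bf b}_{f^{\bf b}_\lambda}(1)\}$ of $\mathbb T^{\bf b}_{\Z}$ under $\Psi$. Since both sides are $\Z$-linear, it suffices to verify agreement on each $[M_{\bf b}(\lambda)]$, and since $\bun$ (at $q=1$) is generated by the $\bt$, $\be^{(r)}_{\alpha_i}$, $\bff^{(r)}_{\alpha_i}$, checking these generators suffices.

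First I would treat the generators $\be^{(r)}_{\alpha_i}$ and $\bff^{(r)}_{\alpha_i}$ for $i\in\I^\imath$. By Lemma~\ref{osp:lem:MOSV}, $M_{\bf b}(\lambda)\otimes S^rV$ has a multiplicity-free Verma flag with subquotients $M_{\bf b}(\lambda+\nu)$ as $\nu$ ranges over the weights of $S^rV$; projecting onto the block $\gamma\pm r(\varepsilon_{i-\hf}-\varepsilon_{i+\hf})$ via $\mathrm{pr}$ and using Lemma~\ref{lem: linkage} to identify blocks with $\texttt{wt}_{\bf b}$-values, one reads off $[\bff^{(r)}_{\alpha_i}M_{\bf b}(\lambda)]$ (resp. $[\be^{(r)}_{\alpha_i}M_{\bf b}(\lambda)]$) as an explicit sum of $[M_{\bf b}(\mu)]$'s. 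On the other side, the coproduct of $\bun$ (Proposition~\ref{prop:coproduct}) iterated across the tensor factors of $\mathbb T^{\bf b}$, together with the divided-power formulas \eqref{eq:beZ}--\eqref{eq:bffZ} specialized at $q=1$ (so the $K$'s and $q$-powers become $1$), gives the action of $\be^{(r)}_{\alpha_i}$, $\bff^{(r)}_{\alpha_i}$ on $M^{\bf b}_{f^{\bf b}_\lambda}(1)$ as a sum of standard monomials. The point is that the $\bf b$-Bruhat ordering and the bijection $\lambda\leftrightarrow f^{\bf b}_\lambda$ of \eqref{osp:eq:ftolambda}--\eqref{osp:eq:lambdatof} were arranged precisely so that adding a weight $\nu$ of $S^rV$ to $\lambda$ corresponds to modifying the entries of $f^{\bf b}_\lambda$ in exactly the way dictated by the action of $F_{\alpha_i}+E_{\alpha_{-i}}$-type generators on $\VV$ and $\WW$; a bookkeeping comparison of the two lists then yields the match. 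This is essentially the type $A$ argument of \cite[Theorem 4.5, \S4.4]{Br03} adapted via the embedding $\iota:\bun\hookrightarrow\U$.

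Next I would handle the unconventional generator $\bt$, which is the genuinely new ingredient. Here $\bt M=\mathrm{pr}_\gamma(M\otimes V)$ with $V$ the natural module; since $V$ has weights $\{\pm\varepsilon_a\}_{a\in I}$ and $0$ appears with multiplicity one in the bilinear form data (recall $\dim V=2m+1$), the weight-$0$ summand of $V$ contributes a $\mathrm{pr}_\gamma$-stable piece. The combinatorics of $M_{\bf b}(\lambda)\otimes V$ (a multiplicity-free Verma flag by the $r=1$ case of Lemma~\ref{osp:lem:MOSV}) projected onto the same block $\gamma$ must be shown to agree with the specialization at $q=1$ of $\imath(\bt)=E_{\alpha_0}+qF_{\alpha_0}K^{-1}_{\alpha_0}+K^{-1}_{\alpha_0}$ acting on the first tensor factor of $\mathbb T^{\bf b}$ (using the coideal form of $\Delta(\bt)$ from Proposition~\ref{prop:coproduct}). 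The extra summand ``$+K^{-1}_{\alpha_0}$'' in $\imath(\bt)$, which at $q=1$ becomes ``$+1$'' (the identity on $\VV_{+}$-type directions, cf. Lemma~\ref{lem:V+-} and the $s_0$-action discussion in the proof of Theorem~\ref{thm:SchurB}), should account exactly for the block-$\gamma$ contribution coming from the weight-zero vector of $V$ together with the ``reflecting'' contribution $\varepsilon_{\hf}\mapsto -\varepsilon_{\hf}$. I expect this $\bt$ case to be the main obstacle: unlike the $\be^{(r)}_{\alpha_i},\bff^{(r)}_{\alpha_i}$ whose effect is a clean weight shift, $\bt$ mixes the ``$+$'' and ``$-$'' halves of $\VV$ and one must carefully track how the linkage classes of $\osp(2m+1|2n)$ (governed by $W_{\osp}$ acting on half-integer coordinates, $\varepsilon_a\sim\varepsilon_{-a}$) interact with tensoring by the self-dual module $V$. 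Once the $\bt$-identity is verified on $[M_{\bf b}(\lambda)]$ for all $\lambda$, combined with the $\be,\bff$ cases and $\Z$-linearity of $\Psi$, the proposition follows. Finally I would remark (as is done implicitly before the statement) that $\iota(\bff^{(r)}_{\alpha_i}),\iota(\be^{(r)}_{\alpha_i})\in\U_{\mc A}$ by \eqref{eq:beZ}--\eqref{eq:bffZ} and that $\imath(\bt)\in\U_{\mc A}$ from its defining formula, so the specializations at $q=1$ genuinely act on $\mathbb T^{\bf b}_{\Z}$, making the comparison meaningful.
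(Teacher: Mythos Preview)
Your proposal is correct and follows essentially the same approach as the paper: compute on Verma modules via Lemma~\ref{osp:lem:MOSV}, project to blocks, and compare term-by-term with the specialized generators acting on standard monomials (the paper does $r=1$ explicitly and then defers higher divided powers to \eqref{eq:beZ}--\eqref{eq:bffZ} and \cite[Corollary~4.25]{Br03}, exactly as you outline). The $\bt$ case is more direct than you anticipate---no appeal to $\VV_\pm$ or the $s_0$-action is needed, just the observation that at $q=1$ the three summands of $\imath(\bt)=E_{\alpha_0}+F_{\alpha_0}+1$ correspond respectively to the subquotients $M_{\bf b}(\lambda-\epsilon_j)$ (when $a_j=\tfrac12$), $M_{\bf b}(\lambda+\epsilon_j)$ (when $a_j=-\tfrac12$), and $M_{\bf b}(\lambda)$ from the weight-zero vector of $V$.
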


\begin{proof}
Let us show in detail that the actions match for $r=1$ (i.e., ignoring the higher divided powers). 
Set 
$$
\lambda + \rho_{\bf b} = \sum^{m+n}_{j=1} a_j \epsilon^{b_j}_{j} \in X(m|n)
\;\text{ and }\;
\gamma =\texttt{wt}_{\bf b}(\lambda).
$$ 
Then we have $M_{\bf b}(\lambda) \in \mathcal{O}_{{\bf b}, \gamma}$. 
By Lemma~\ref{osp:lem:MOSV},  $M_{\bf b}(\lambda) \otimes V$ has a multiplicity-free Verma flag 
with subquotients isomorphic to $M_{\bf b}(\lambda+\ep_{1})$, $\dots$, $M_{\bf b}(\lambda+\ep_{m+n})$, $M_{\bf b}(\lambda)$, 
$M_{\bf b}(\lambda-\ep_{m+n})$, $\dots$, $M_{\bf b}(\lambda-\ep_{1})$. 
Applying the projection $\text{pr}_{\gamma - (\varepsilon_{i-\hf} -\varepsilon_{i+\hf})}$ to the filtration, 
we obtain that $\bff_{\alpha_{i}} M_{\bf b}(\lambda)$ 
has a multiplicity-free Verma flag with subquotients isomorphic to $M_{\bf b}(\lambda \pm \epsilon_{j})$ 
such that $a_j =\pm(i-\hf)$ respectively. 

On the other hand, we have $\Psi(M_{\bf b}(\lambda)) = M^{\bf b}_{f^{\bf b}_{\lambda}}(1)$. Recall the formulas
for the embedding $\imath$ from Proposition~\ref{prop:embedding}.
Suppose $\iota(\bff_{\alpha_i})M^{\bf b}_{f^{\bf b}_{\lambda}}(1) = \sum_g M^{\bf b}_g(1)$, for $i\in \Ihf$.
It is easy to see that for $M^{\bf b}_g$ to appear in the summands, we must have 
$\lambda^{\bf b}_g +\rho_{\bf b} = \lambda + \rho_{\bf b}\pm \ep_{j}$ such that $a_j = \pm(i-\hf)$ respectively.  
Hence the action of $\iota(\bff_{\alpha_i})$ on $\mathbb{T}^{\bf b}_\Z$ 
matchs with the translation functor $\bff_{\alpha_i}$ on $\left[\mathcal{O}^{\Delta}_{\bf b}\right]$ under $\Psi$.

Similar argument works for the translation functor $\be_{\alpha_i}$.

Applying the projection $\text{pr}_{\gamma}$ to the Verma flag filtration of $M_{\bf b}(\lambda) \otimes V$, we obtain that 
$t M_{\bf b}(\lambda)$  from \eqref{eq:eft} has a multiplicity-free Verma flag with subquotients 
isomorphic to $M_{\bf b}(\lambda)$ and $M_{\bf b}(\lambda \pm \epsilon_{j})$ such that $a_j = \mp \hf$ respectively. 
Then one checks that the action of $\iota(t)$ on $\mathbb{T}^{\bf b}_\Z$ matchs with the translation functor $t$ on 
$\left[\mathcal{O}^{\Delta}_{\bf b}\right]$ under $\Psi$.

For the general divided powers, the proposition follows from a direct computation using Lemma~\ref{osp:lem:MOSV} , 
\cite[Corollary~4.25]{Br03}, and the expressions of $\iota(\bff^{(r)}_{\alpha_{i}})$ and $\iota(\be^{(r)}_{\alpha_{i}})$ 
in \eqref{eq:beZ} and \eqref{eq:bffZ}. We leave the details to the reader. 
\end{proof}

\section{Classical KL theory reformulated}

The following is a reformulation of the Kazhdan-Lusztig theory for Lie algebra of type $B$,
which was established by \cite{BB, BK, So90, So98}; also see \cite{V}.
Recall for ${\bf b}=(0^m)$ we have $\mathbb{T}_{\Z}^{\bf b} =\VV_\Z^{\otimes m}$.

\begin{thm}
  \label{thm:KL}
Let ${\bf b}=(0^m)$ and  let $k \in \N \cup \{\infty\}$.
Then the isomorphism 
$\Psi : [[\CatO^{ \ul{k}, \Delta}_{{\bf b},0}]] \rightarrow  \mathbb{T}_{\Z}^{\bf b} \widehat{\otimes} \wedge^{k}\VV_{\Z}$ 
in \eqref{eq : BKLiso} satisfies 
\[
\Psi([L^{\ul{k}}_{{\bf b},0}(\lambda)]) = L^{{\bf b}, 0}_{f^{{\bf b}0}_{\lambda}}(1), 
\quad \quad \quad \Psi([T^{\ul{k}}_{{\bf b},0}(\lambda)]) 
= T^{{\bf b}, 0}_{f^{{\bf b}0}_{\lambda}}(1), \quad \quad \text{ for } \lambda \in \wtlVk.
\]
\end{thm}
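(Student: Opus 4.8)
\textbf{Proof plan for Theorem~\ref{thm:KL}.}
The statement is the reformulation of the classical Kazhdan--Lusztig theory of type $B$ in terms of the $\imath$-canonical basis, split into a ``finite-rank'' case ($k\in\N$) and an ``infinite-rank'' case ($k=\infty$), for the base sequence ${\bf b}=(0^m)$. The plan is to first treat the finite-rank case $k<\infty$, where $\CatO^{\ul k,\Delta}_{{\bf b},0}$ is (after identifying $\osp(2m+1|2n|2k)$ with $\osp(2m+2k+1)$ when $n=0$) a genuine parabolic BGG category of a finite-dimensional Lie \emph{algebra} of type $B$; and then to bootstrap to $k=\infty$ by a truncation argument. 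For the base of everything, I would first establish the tensor-space statement $k=0$: here $\Psi\colon [[\CatO^{\Delta}_{(0^m)}]]\to\VV_\Z^{\otimes m}$ by Lemma~\ref{osp:lem:OtoT}, the category $\mc O_{(0^m)}$ is the integral (half-)integer block category $\mc O$ of $\mf{so}(2m+1)$ (equivalently of $\mf{sp}(2m)$), and by Soergel's results \cite{So90,So98} its simple and tilting characters are governed by the Kazhdan--Lusztig and inverse-Kazhdan--Lusztig polynomials of the Hecke algebra $\mc H_{B_m}$. By the $(\bun,\mc H_{B_m})$-duality of Theorem~\ref{thm:SchurB} together with the compatibility of bar involutions in Theorem~\ref{thm:samebar} and Remark~\ref{rem:samebar}, the Kazhdan--Lusztig basis transported to $\VV_\Z^{\otimes m}$ from the Hecke side coincides with the (specialization at $q=1$ of the) $\imath$-canonical basis $\{T^{(0^m)}_f\}$, and dually for simples. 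Matching up: the Verma module $M_{(0^m)}(\la)$ corresponds to $M_f^{\bf b}$ with $f=f^{\bf b}_\la$; the standard basis of $\mc H_{B_m}$ (via translation functors and the Schur duality) corresponds to the standard monomial basis; and Soergel's theorem identifies $[L_{(0^m)}(\la)]$ and $[T_{(0^m)}(\la)]$ with the KL / inverse-KL bases. Hence $\Psi([L_{(0^m)}(\la)])=L^{(0^m),0}_{f^{\bf b}_\la}(1)$ and $\Psi([T_{(0^m)}(\la)])=T^{(0^m),0}_{f^{\bf b}_\la}(1)$. This is the $k=0$ case of the theorem.

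For general finite $k$, I would pass from the category $\mc O^{m+k|0}_{({\bf b},0^k)}$ (ordinary BGG category of $\mf{so}(2m+2k+1)$, which is covered by the $k=0$ argument with $m$ replaced by $m+k$) to the parabolic category $\CatO^{\ul k,\Delta}_{{\bf b},0}$ by Propositions~\ref{osp:prop:simple1} and~\ref{osp:prop:tilting}: for $\la\in\wtlVk$, the simple $L_{({\bf b},0^k)}(\la)$ already lies in the parabolic subcategory and its parabolic-Verma multiplicities equal its Verma multiplicities, while the tilting multiplicities in the parabolic category are obtained from those in the full category via the alternating sum over $\mf S_k$ with the longest element $w_0^{(k)}$. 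These are exactly parallel to the relations between $\imath$-KL polynomials in $\mathbb T^{\bf b}_r\otimes\VV^{\otimes k}$ and in $\mathbb T^{\bf b}_r\otimes\wedge^k\VV$ recorded in Propositions~\ref{prop:aux1} and~\ref{prop:aux2} (with $\WW$ replaced by $\VV$ throughout). So applying the $k=0$ result in rank $m+k$ and then projecting/skew-symmetrizing on the last $k$ tensor factors yields $\Psi([L^{\ul k}_{{\bf b},0}(\la)])=L^{{\bf b},0}_{f^{{\bf b}0}_\la}(1)$ and $\Psi([T^{\ul k}_{{\bf b},0}(\la)])=T^{{\bf b},0}_{f^{{\bf b}0}_\la}(1)$ for $\la\in\wtlVk$. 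Here I would be careful that the indexing bijections $\wtlVk\leftrightarrow I^{m+n}\times I^k_+$ in \eqref{osp:eq:bijectionwtlV} are compatible with the $\rho$-shifts so that the skew-symmetrization matches the truncation bijection of Propositions~\ref{prop:can:truncV}; this is a bookkeeping point but essential.

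Finally, for $k=\infty$, I would invoke the truncation functors. By Proposition~\ref{prop:trunc:ML}, $\mf{tr}_0$ sends $Y^{\ul\infty}_{{\bf b},0}(\la)$ to $Y^{\ul k}_{{\bf b},0}(\la^{\ul k})$ for $Y=M,L,T$ when $\ell({}^+\la)\le k$ and to $0$ otherwise; by Proposition~\ref{prop:truncation} this intertwines, under $\Psi$, with the truncation map $\texttt{Tr}$ on $\mathbb T^{\bf b}_\Z\widehat\otimes\wedge^\infty\VV_\Z$, which by Proposition~\ref{prop:can:truncV} sends $Y^{{\bf b},0}_f$ to $Y^{{\bf b},0}_{f^{\ul k}}$ (when the tail condition $f(\ul i)-f(\ul{i+1})=1$, $i\ge k+1$, holds) and to $0$ otherwise. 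Since for any fixed $\la\in\wtlV$ one has $\ell({}^+\la)\le k$ for all large $k$, the already-established finite-rank identity $\Psi([Y^{\ul k}_{{\bf b},0}(\la^{\ul k})])=Y^{{\bf b},0}_{f^{{\bf b}0}_\la|_{\ul k}}(1)$ together with the commuting squares of Proposition~\ref{prop:truncation} forces $\Psi([Y^{\ul\infty}_{{\bf b},0}(\la)])=Y^{{\bf b},0}_{f^{{\bf b}0}_\la}(1)$, because an element of the completed Fock space is determined by all its truncations. I expect the main obstacle to be purely organizational rather than conceptual: one must carefully track the $\rho_{\bf b}$-shifts (Remark~\ref{rem:rhob}) and the sign/order conventions in the bijections \eqref{osp:eq:ftolambda}, \eqref{osp:eq:bijectionwtlV}, and in the $q$-skew-symmetrizer identification $\mc V_f\equiv M^{(0^k)}_{f\cdot w_0^{(k)}}L_{w_0^{(k)}}$, so that the Hecke-side classical KL statement, the Schur duality, and the combinatorics of $\imath$-canonical bases all line up on the nose; the actual representation-theoretic content is entirely imported from \cite{BB,BK,So90,So98} in the $k=0$ case and from Propositions~\ref{osp:prop:simple1}--\ref{osp:prop:tilting} and~\ref{prop:trunc:ML} for the reduction.
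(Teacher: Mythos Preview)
Your proposal is correct and follows essentially the same route as the paper's proof, only with much more detail spelled out. The paper's argument is extremely terse: for finite $k$ it appeals directly to Remark~\ref{rem:samebar} (which, via Theorem~\ref{thm:samebar}, identifies the $\imath$-canonical basis on $\VV^{\otimes N}$ with the Kazhdan--Lusztig basis, so that the classical parabolic KL theory of type $B$ applies), and for $k=\infty$ it invokes the truncation compatibilities of Propositions~\ref{prop:can:truncV} and~\ref{prop:trunc:ML}. Your explicit two-step reduction for finite $k$ (first $k=0$, then pass to the parabolic category via Propositions~\ref{osp:prop:simple1}, \ref{osp:prop:tilting} on the representation side and Propositions~\ref{prop:aux1}, \ref{prop:aux2} on the Fock-space side) is exactly what is implicit in that one-line appeal, and your $k=\infty$ step matches the paper's.
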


\begin{proof} 
For $k \in \N$, the theorem follows from Remark~\ref{rem:samebar}
that the parabolic Kazhdan-Lusztig basis is matched 
with the $\imath$-canonical basis. 
The case with $k = \infty$ follows from Proposition~ \ref{prop:can:truncV}
and Proposition~\ref{prop:trunc:ML}.
\end{proof}

\section{Super duality and Fock spaces}

\begin{thm} \cite[Theorem 7.2]{CLW12}
  \label{theorem:super duality} 
There is an equivalence of categories (called super duality) $\mathsf{SD} : 
\CatO^{\underline{\infty}, \Delta}_{{\bf b},0}\rightarrow \CatO^{\underline{\infty}, \Delta}_{{\bf b},1}$ 
such that the induced map 
$\mathsf{SD} : [[\CatO^{\underline{\infty}, \Delta}_{{\bf b},0}]] \rightarrow [[\CatO^{\underline{\infty}, \Delta}_{{\bf b},1}]]$ 
satisfies, for any $Y = M$, $L$, or $T$,
\[
\mathsf{SD}[Y^{\ul \infty}_{{\bf b},0}(\lambda)] = [Y^{\ul \infty}_{{\bf b},1}(\lambda^\natural)], \quad \text{ for } \lambda \in \wtlV.
\]
\end{thm}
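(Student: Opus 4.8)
The plan is to realize both $\CatO^{\underline{\infty},\Delta}_{{\bf b},0}$ and
$\CatO^{\underline{\infty},\Delta}_{{\bf b},1}$ as images of a single parabolic category over an
ambient infinite-rank Lie superalgebra $\DG=\osp(\wt V)$, following the strategy of
\cite{CW08, CL, CLW11} (cf.\ \cite[\S7]{CLW12} for the type $A$ prototype). Recall from
\S\ref{subsection:infinite rank Lie superalgebra} that $\wt V$ contains both the even infinite tail
spanned by $\{e_{\ul j}\}_{j\in\Z_{>0}}$ (together with its primed copy), which builds $\osp(V)=\ospV$,
and the odd infinite tail spanned by $\{e_{\ul t}\}_{t\in\hf+\N}$ (with its primed copy), which builds
$\osp(\breve V)=\ospW$. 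First I would introduce a parabolic category $\wt{\CatO}^{\Delta}_{\bf b}$ over
$\DG$, consisting of weight modules that decompose over the Levi subalgebra of the combined infinite tail
into a direct sum of highest-weight modules and that satisfy the finite-order truncation condition on
weights as in Definition~\ref{def:catO}; by construction the two distinguished sub-root-data inside $\DG$
recover the root data of $\ospV$ and $\ospW$, so that $\CatO^{\underline{\infty}}_{{\bf b},0}$ and
$\CatO^{\underline{\infty}}_{{\bf b},1}$ both sit inside the $\DG$-picture.

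Next I would define two truncation functors
$T_0\colon\wt{\CatO}^{\Delta}_{\bf b}\to\CatO^{\underline{\infty},\Delta}_{{\bf b},0}$ and
$T_1\colon\wt{\CatO}^{\Delta}_{\bf b}\to\CatO^{\underline{\infty},\Delta}_{{\bf b},1}$, each obtained by
keeping the sum of those weight spaces which vanish on the ``orthogonal'' infinite tail; these are the
analogues for the two Borels of the functors $\mf{tr}_s$ of \S\ref{subsec:trun} between finite ranks. As
in \cite{CW08, CLW11}, a linkage-principle argument --- using that the even Weyl group and the odd
reflections attached to the two sub-data act compatibly, and crucially that changing $\bf b$ among
$0^m1^n$-sequences affects only the finite rank-$(m,n)$ block --- would show that $T_0$ and $T_1$ are exact,
carry parabolic Verma modules to parabolic Verma modules and highest-weight modules to highest-weight
modules of the correspondingly truncated highest weight, and annihilate every object whose highest weights
are not $\infty$-stable. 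On highest weights the induced correspondence is exactly the bijection $\natural$
already set up in \S\ref{sec:superduality}: the even tail carries a partition ${}^{+}\lambda$ on the $\ospV$
side and the odd tail carries the conjugate partition ${}^{+}\lambda'$ on the $\ospW$ side, which is
precisely the identity \eqref{aux}.

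The core step, and the one I expect to be the main obstacle, is to prove that $T_0$ and $T_1$ restrict to
equivalences of categories. Essential surjectivity follows by lifting: parabolic Verma modules lift by
parabolic induction along the infinite tail, and an arbitrary object is lifted by building up along its
Verma flag, which requires comparing $\mathrm{Ext}^1$ groups between parabolic Verma modules on both sides.
Full faithfulness is established by the corresponding comparison of $\mathrm{Hom}$ and $\mathrm{Ext}^1$,
using the tensor identity and the fact that no new linkage occurs among $\infty$-stable weights; this is
exactly the ortho-symplectic super duality of \cite{CLW11}, whose proof goes through once one checks --- as
indicated above --- that it is insensitive to the choice of $\bf b$. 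Granting the equivalence, set
$\mathsf{SD}:=T_1\circ T_0^{-1}$. Since an equivalence preserves Verma modules (with matching highest
weights), simple objects (whose labels are then forced), and tilting objects (characterized by the
categorical conditions of Definition~\ref{def:tilting}), the induced map on Grothendieck groups satisfies
$\mathsf{SD}[Y^{\underline\infty}_{{\bf b},0}(\lambda)]=[Y^{\underline\infty}_{{\bf b},1}(\lambda^\natural)]$
for $Y=M,L,T$ and $\lambda\in\wtlV$, and it extends continuously to the completions $[[\,\cdot\,]]$ because
$\natural$ respects the Bruhat orderings by Theorem~\ref{TwedgeV:isom:TwedgeW}(1).
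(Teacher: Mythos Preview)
The paper does not prove this theorem: it is stated with the attribution \cite[Theorem~7.2]{CLW12} and no proof is given, since the underlying super duality for ortho-symplectic Lie superalgebras was established in \cite{CLW11} (with the general strategy going back to \cite{CW08, CL}). Your plan is an accurate outline of that strategy---realize both categories as truncations of a parabolic category over the ambient algebra $\DG=\osp(\wt V)$ of \S\ref{subsection:infinite rank Lie superalgebra}, show the two truncation functors are equivalences via matching of parabolic Verma modules and Ext-comparisons, and then compose---so there is no disagreement with the paper, only with the level of detail: the paper defers entirely to the literature, whereas you sketch the actual argument.

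One small correction of emphasis: the insensitivity to the choice of $\bf b$ is not something you would need to reprove inside the super duality argument. The super duality of \cite{CLW11} is already formulated for an arbitrary finite head (their standard sequences), and the only new input here is that the head carries a general $0^m1^n$-sequence $\bf b$; since the infinite tail and its Levi are unaffected by $\bf b$, the proofs in \cite{CLW11} go through verbatim. This is what the paper means in the Introduction by ``an easy generalization of'' the super duality in \cite{CLW11}.
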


\begin{prop}
 \label{VtoW} 
 Let $\bf b$ be  any $0^m1^n$-sequence.
Assume that the isomorphism 
$\Psi : [[\CatO^{ \ul{\infty}, \Delta}_{{\bf b},0}]] \rightarrow  \mathbb{T}_{\Z}^{\bf b} \widehat{\otimes} \wedge^{\infty}\VV_{\Z}$ 
in \eqref{eq : BKLiso} satisfies 
\[
\Psi([L^{\ul{\infty}}_{{\bf b},0}(\lambda)]) = L^{{\bf b}, 0}_{f^{{\bf b}0}_{\lambda}}(1), 
\quad \quad  \Psi([T^{\ul{\infty}}_{{\bf b},0}(\lambda)]) 
= T^{{\bf b}, 0}_{f^{{\bf b}0}_{\lambda}}(1), \quad   \text{ for } \lambda \in \wtlV.
\]
Then the isomorphism 
$\Psi : [ [\CatO^{\underline{\infty}, \Delta}_{{\bf b},1}] ] \rightarrow \mathbb{T}_{\Z}^{\bf b} \widehat{ \otimes} \wedge^{\infty}\WW_{\Z}$ satisfies
\[
\Psi([L^{\ul{\infty}}_{{\bf b},1}(\lambda)]) = L^{{\bf b}, 1}_{f^{{\bf b}1}_{\lambda}}(1), 
\quad \quad \Psi([T^{\ul{\infty}}_{{\bf b},1}(\lambda)]) = T^{{\bf b}, 1}_{f^{{\bf b}1}_{\lambda}}(1), \quad  \text{ for } \lambda \in \wtlW.
\]
\end{prop}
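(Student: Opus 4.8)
The plan is to deduce the $\WW$-version of the character formulas from the $\VV$-version by combining the representation-theoretic super duality of Theorem~\ref{theorem:super duality} with its combinatorial counterpart, Theorem~\ref{TwedgeV:isom:TwedgeW}. Concretely, I would assemble a commutative square whose vertices are the four objects
\[
[[\CatO^{\ul\infty,\Delta}_{{\bf b},0}]], \quad
[[\CatO^{\ul\infty,\Delta}_{{\bf b},1}]], \quad
\mathbb{T}^{\bf b}_\Z\wotimes\wedge^\infty\VV_\Z, \quad
\mathbb{T}^{\bf b}_\Z\wotimes\wedge^\infty\WW_\Z,
\]
with horizontal maps given by the isomorphisms $\Psi$ from \eqref{eq : BKLiso} and \eqref{eq : BKLisoW}, the left vertical map given by $\mathsf{SD}$ from Theorem~\ref{theorem:super duality}, and the right vertical map given by $\natural_{\bf b}$ (specialized at $q=1$) from Theorem~\ref{TwedgeV:isom:TwedgeW}. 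The first step is to verify that this square commutes on the standard bases: by definition $\Psi$ sends $[M^{\ul\infty}_{{\bf b},0}(\lambda)]$ to $M^{{\bf b},0}_{f^{{\bf b}0}_\lambda}(1)$ and $[M^{\ul\infty}_{{\bf b},1}(\mu)]$ to $M^{{\bf b},1}_{f^{{\bf b}1}_\mu}(1)$; Theorem~\ref{theorem:super duality} gives $\mathsf{SD}[M^{\ul\infty}_{{\bf b},0}(\lambda)]=[M^{\ul\infty}_{{\bf b},1}(\lambda^\natural)]$; and Theorem~\ref{TwedgeV:isom:TwedgeW}(3) gives $\natural_{\bf b}(M^{{\bf b},0}_f)=M^{{\bf b},1}_{f^\natural}$. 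So commutativity on standard basis elements reduces to the bijection identity $f^{{\bf b}1}_{\lambda^\natural}=(f^{{\bf b}0}_\lambda)^\natural$, which is exactly the content of formula \eqref{aux} together with the definitions \eqref{eq:fb0} and \eqref{eq:fb1} of the two bijections (the $[m+n]$-parts agree by construction, and the $[\ul\infty]$-parts are matched by conjugating the partition). Since the standard monomials form $\Z$-bases and both $\mathsf{SD}\circ\Psi^{-1}$ and $\Psi^{-1}\circ\natural_{\bf b}$ are $\Z$-linear, commutativity of the full square follows.

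\textbf{Transporting the bases.} Once the square commutes, the hypothesis of the proposition says $\Psi([L^{\ul\infty}_{{\bf b},0}(\lambda)])=L^{{\bf b},0}_{f^{{\bf b}0}_\lambda}(1)$ and $\Psi([T^{\ul\infty}_{{\bf b},0}(\lambda)])=T^{{\bf b},0}_{f^{{\bf b}0}_\lambda}(1)$ for all $\lambda\in\wtlV$. Applying $\mathsf{SD}$ and using Theorem~\ref{theorem:super duality} on the left, and using that $\natural_{\bf b}$ at $q=1$ sends $T^{{\bf b},0}_f\mapsto T^{{\bf b},1}_{f^\natural}$ and $L^{{\bf b},0}_f\mapsto L^{{\bf b},1}_{f^\natural}$ (Theorem~\ref{TwedgeV:isom:TwedgeW}(3)) on the right, the commuting square yields
\[
\Psi([L^{\ul\infty}_{{\bf b},1}(\lambda^\natural)])=L^{{\bf b},1}_{(f^{{\bf b}0}_\lambda)^\natural}(1)=L^{{\bf b},1}_{f^{{\bf b}1}_{\lambda^\natural}}(1),
\]
and likewise for $T$. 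Finally I would invoke the fact that $\natural:\wtlV\to\wtlW$ (equivalently $\lambda\mapsto\lambda^\natural$) is a bijection, together with the bijection \eqref{naturalbi}, so that every $\mu\in\wtlW$ is of the form $\lambda^\natural$ for a unique $\lambda\in\wtlV$; rewriting $\mu=\lambda^\natural$ then gives exactly the claimed identities $\Psi([L^{\ul\infty}_{{\bf b},1}(\mu)])=L^{{\bf b},1}_{f^{{\bf b}1}_\mu}(1)$ and $\Psi([T^{\ul\infty}_{{\bf b},1}(\mu)])=T^{{\bf b},1}_{f^{{\bf b}1}_\mu}(1)$ for all $\mu\in\wtlW$.

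\textbf{Main obstacle.} The genuinely delicate point is the bookkeeping identity $f^{{\bf b}1}_{\lambda^\natural}=(f^{{\bf b}0}_\lambda)^\natural$ — i.e. checking that the $\rho$-shifts built into the two bijections \eqref{eq:fb0} and \eqref{eq:fb1} are compatible with partition conjugation in the semi-infinite tail, across all sectors $d\in\Z$ and paying attention to Remark~\ref{rem:rhob} on which summand $\rho_{\bf b}$ carries at the rightmost node. This is where formula \eqref{aux} must be used carefully; one should confirm it holds sector-by-sector and not merely in the $d=0$ sector, since (unlike \cite{CLW12}) all sectors are in play here. Everything else is a formal diagram chase: the existence and $\Z$-linearity of $\Psi$, $\mathsf{SD}$, and $\natural_{\bf b}|_{q=1}$, and the base-change identities for $L$ and $T$, are all supplied by the cited results, so no further analysis of completions or orderings is needed beyond what Theorems~\ref{theorem:super duality} and \ref{TwedgeV:isom:TwedgeW} already guarantee.
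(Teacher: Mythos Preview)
Your proposal is correct and follows essentially the same approach as the paper: build the commutative square with $\mathsf{SD}$ and $\natural_{\bf b}$, verify commutativity on the standard monomial basis (the paper simply says ``with the help of the basis $\{[M^{\ul\infty}_{{\bf b},0}(\lambda)]\}$, it is easy to check that the diagram commutes''), and then chase the $L$- and $T$-bases through the square using Theorems~\ref{theorem:super duality} and~\ref{TwedgeV:isom:TwedgeW}. Your identification of the bookkeeping identity $f^{{\bf b}1}_{\lambda^\natural}=(f^{{\bf b}0}_\lambda)^\natural$ as the content of \eqref{aux} is exactly what the paper leaves implicit.
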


\begin{proof}
By the combinatorial super duality in Theorem \ref{TwedgeV:isom:TwedgeW}, we have the following isomorphism
\[
\natural_{\bf b} : \mathbb{T}_{\Z}^{\bf b} \widehat{\otimes} \wedge^{\infty}\VV_{\Z} \longrightarrow
  \mathbb{T}_{\Z}^{\bf b} \widehat{ \otimes} \wedge^{\infty}\WW_{\Z},
\]
which preserves the $\imath$-canonical and dual $\imath$-canonical bases. Combining this with the super duality, we have the following diagram:
\begin{equation}
  \label{osp:eq:superduality}
\xymatrix{[[\CatO^{\underline{\infty}, \Delta}_{{\bf b},0}]] \ar[r]^-{\Psi} \ar[d]^{\mathsf{SD}} & \mathbb{T}_{\Z}^{\bf b} \widehat{\otimes} \wedge^{\infty}\VV_{\Z} \ar[d]^{\natural_{\bf b}} 
\\ 
 [ [\CatO^{\underline{\infty}, \Delta}_{{\bf b},1}] ] \ar[r]^-{\Psi} & \mathbb{T}_{\Z}^{\bf b} \widehat{ \otimes} \wedge^{\infty}\WW_{\Z}}
 \end{equation}
 where $\mathsf{SD}$ is the super duality from Theorem \ref{theorem:super duality}.

With the help of the basis $\{[M^{\ul \infty}_{{\bf b},0}(\lambda)]\}$, 
it is easy to check that the diagram \eqref{osp:eq:superduality} commutes. Hence we have the following two commutative diagrams:
 \[
 \xymatrix{[L^{\ul{\infty}}_{{\bf b},0}(\lambda)] \ar@{|->}[d] \ar@{|->}[r] & L^{{\bf b}, 0}_{f^{{\bf b}0}_{\lambda}}(1) \ar@{|->}[d] 
  \\
 [L^{\ul{\infty}}_{{\bf b},1}(\lambda^{\natural})] \ar@{|->}[r] & L^{{\bf b}, 1}_{f^{{\bf b}1}_{\lambda^{\natural}}}(1)} 
 \qquad \qquad 
 \xymatrix{[T^{\ul{\infty}}_{{\bf b},0}(\lambda)] \ar@{|->}[d] \ar@{|->}[r] & T^{{\bf b}, 0}_{f^{{\bf b}0}_{\lambda}}(1) \ar@{|->}[d] 
  \\
 [T^{\ul{\infty}}_{{\bf b},1}(\lambda^{\natural})] \ar@{|->}[r] & T^{{\bf b}, 1}_{f^{{\bf b}1}_{\lambda^{\natural}}}(1)} 
 \]  
The two horizontal arrows on the bottom give us the proposition.
\end{proof}

\section{$\imath$-KL theory for $\osp$}

We can now formulate and prove the main result of Part~ 2, which is a generalization of \cite[Theorem~8.11]{CLW12} (Brundan's conjecture \cite{Br03})
to the ortho-symplectic Lie superalgebra $\mathfrak{osp}(2m+1|2n)$. 

\begin{thm}
   \label{thm:iKL}
For any $0^m1^n$-sequence ${\bf b}$ starting with $0$, the isomorphism 
$\Psi : [ [\CatO^{\Delta}_{\bf b}]] \rightarrow \widehat{\mathbb{T}}_{\Z}^{\bf b}$ in \eqref{eq : BKLiso} (with $k=0$) satisfies 
\[
\Psi([L_{{\bf b}}(\lambda)]) = L^{{\bf b}}_{f^{{\bf b}}_{\lambda}}(1), \quad \quad \Psi([T_{{\bf b}}(\lambda)]) 
= T^{{\bf b}}_{f^{{\bf b}}_{\lambda}}(1),  \quad \text{ for } \lambda \in X(m|n).
\]
\end{thm}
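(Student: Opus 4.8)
The plan is to establish \thmref{thm:iKL} by induction on $n$, following the strategy of \cite{CLW12}, with the crucial base case now being the classical Kazhdan--Lusztig theory of type $B$ rather than of type $A$. First I would record the base case $n=0$: here ${\bf b}=(0^m)$, and \thmref{thm:KL} (applied with $k=0$) already yields $\Psi([L_{{\bf b}}(\lambda)]) = L^{{\bf b}}_{f^{{\bf b}}_{\lambda}}(1)$ and $\Psi([T_{{\bf b}}(\lambda)]) = T^{{\bf b}}_{f^{{\bf b}}_{\lambda}}(1)$, since $\mc O_{\bf b} = \CatO^{\ul 0, \Delta}_{{\bf b},0}$ in this case. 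This step rests entirely on \remref{rem:samebar}, identifying the Hecke-algebra-side bar involution $\Bbar$ with the one coming from $\bun$, so that the (parabolic) KL basis coincides with the $\imath$-canonical basis on $\VV_\Z^{\otimes m}$.

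For the inductive step, fix $n\ge 1$ and a $0^m1^n$-sequence ${\bf b}$ starting with $0$. I would proceed in two moves. First, using super duality: by \thmref{theorem:super duality} and the hypothesis that \thmref{thm:iKL} holds for the category $\CatO^{\ul\infty,\Delta}_{{\bf b}',0}$ associated to a shorter sequence ${\bf b}'$ (with one fewer $1$), combined with \propref{prop:truncation} (truncation functors versus truncation maps) and \propref{prop:can:truncV}/\propref{prop:can:truncW}, one transfers the statement about irreducible and tilting characters from the $\wedge^\infty\VV$-side to the $\wedge^\infty\WW$-side via \propref{VtoW}. Concretely, one shows that the validity of the identification of $[L]$ and $[T]$ with (dual) $\imath$-canonical basis elements in $\mathbb T^{{\bf b}'}_\Z\wotimes\wedge^\infty\VV_\Z$ implies the same in $\mathbb T^{{\bf b}'}_\Z\wotimes\wedge^\infty\WW_\Z$, using the combinatorial super duality \thmref{TwedgeV:isom:TwedgeW} which matches all three bases. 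Second, using the comparison of adjacent Fock spaces: since exchanging an adjacent pair $01\leftrightarrow 10$ does not touch the first tensor factor (which is where $\bun$ acts by the coideal property, \propref{prop:coproduct}), the analysis of \cite[\S5]{CLW12} applies verbatim at the level of $\U$-modules. This shows that $\Psi$ matches simple (resp.\ tilting) modules with dual $\imath$-canonical (resp.\ $\imath$-canonical) basis elements for ${\bf b}$ if and only if it does so for any adjacent sequence ${\bf b}'$, where the index shift $f\mapsto f^{\mathbb L}$ (resp.\ $f\mapsto f^{\mathbb U}$) corresponds under \eqref{osp:eq:ftolambda}--\eqref{osp:eq:lambdatof} to $\lambda\mapsto\lambda^{\mathbb L}$ (resp.\ $\lambda\mapsto\lambda^{\mathbb U}$) of \remref{rem:adjLT}, which is exactly how simple and tilting modules transform between adjacent Borels. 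Combining these two moves and the observation that the assumption ``${\bf b}$ starts with $0$'' is harmless (\remref{rem:remove0}, \remref{rem:osprank2}), one lifts the base case to arbitrary ${\bf b}$.

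The remaining point to verify in the inductive bookkeeping is that $\Psi$ intertwines the categorical and the combinatorial structures: that $\Psi$ sends the Verma-flag filtrations of $L$ and $T$ to the standard-monomial expansions of the dual $\imath$-canonical and $\imath$-canonical bases. For this I would use \propref{prop:simple1} and \propref{prop:tilting} (comparison of characters between the full category and its parabolic subcategory, together with the relation between $\wedge^k$ and $\otimes^k$ via the $q$-skew-symmetrizer, as in \propref{prop:aux1}, \propref{prop:aux2}), plus \propref{prop:translation} identifying the action of $\bun$ at $q=1$ with translation functors. The latter is what guarantees that $\Psi$ is not merely a $\Z$-module isomorphism but one compatible with all the structure needed to run the induction.

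The main obstacle I anticipate is not in the formal induction but in the adjacent-sequence comparison when ${\bf b}^1$ is empty, i.e.\ the genuinely new rank-two case of $\osp(3|2)$ (comparing $\VV\otimes\WW$ and $\WW\otimes\VV$ as $\bun$-modules rather than as $\U$-modules, \remref{rem:osprank2}). There the explicit computation of the $\imath$-canonical bases is substantially more involved than the $\gl(1|1)$ computation in \cite{CLW12}, with many more cases; however, since we only ever need ${\bf b}$ starting with $0$ for the irreducible and tilting character problems, we may restrict to nonempty ${\bf b}^1$ and invoke the type $A$ strategy of \cite[\S5]{CLW12} without modification, so this obstacle can be sidestepped for the stated theorem.
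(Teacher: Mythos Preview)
Your proposal follows the paper's strategy and invokes all the right ingredients, but the logical chain in your inductive step has a gap that the paper's proof is careful to bridge.

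The issue is your first move. You write that the inductive hypothesis gives you the result for the parabolic category $\CatO^{\ul\infty,\Delta}_{{\bf b}',0}$, and then apply \propref{VtoW}. But \thmref{thm:iKL} is stated only for the \emph{full} BGG category $\CatO^{\Delta}_{{\bf b}'}$ (this is the $k=0$ case in \eqref{eq : BKLiso}), so the inductive hypothesis gives you nothing directly about the infinite-rank parabolic category needed as the input to \propref{VtoW}. Bridging this is not ``remaining bookkeeping'': it is the content of three of the five steps \eqref{ind:oddref}--\eqref{ind:infty} in the paper's chain, and it forces the induction to quantify over \emph{all} $m\ge 1$ simultaneously, not a single fixed $m$.

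The paper's chain runs as follows. The hypothesis is $\imath\texttt{KL}(m'|n)$ for all $m'\ge 1$. For each $k$, apply it to the standard sequence $(0^{m+k},1^n)$; use adjacent swaps (Proposition~\ref{Prop : switching borels}, your move 2) to reach the sequence $(0^m,1^n,0^k)$; then pass from the full category for that sequence to its parabolic subcategory $\CatO^{\ul k,\Delta}_{{\bf b},0}$ via Propositions~\ref{osp:prop:simple1}, \ref{osp:prop:tilting} on the representation side and Propositions~\ref{prop:aux1}, \ref{prop:aux2} on the Fock space side (the ingredients you list, but used here rather than at the end); then let $k\to\infty$ via Propositions~\ref{prop:truncation} and \ref{prop:can:truncV}. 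Only now is the antecedent of \propref{VtoW} available; super duality and one more truncation (with $k=1$) then give $\imath\texttt{KL}(m|n+1)$ for $(0^m,1^{n+1})$, and a final application of adjacent swaps covers all $0^m1^{n+1}$-sequences starting with $0$.

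So your ingredients are right, but the order matters: the adjacent comparison and the tensor-to-wedge passage must \emph{precede} super duality, because the parabolic category at level $n$ draws on the full categories at level $n$ with larger $m$.
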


The following proposition is a counterpart of \cite[Theorem 8.8]{CLW12}.
It can now be proved in the same way as in {\em loc. cit.} as we have done all the suitable preparations in \S\ref{subsec:adjCB}  
(as in \cite[\S6]{CLW12}).
We will skip the details.  

\begin{prop} \label{Prop : switching borels}
Let ${\bf b} = ({\bf b}^1, 0 , 1 , {\bf b}^2)$ and ${\bf b}' = ({\bf b}^1, 1, 0 , {\bf b}^2)$ be  adjacent $0^m1^n$-sequences 
with nonempty ${\bf b}^1$ starting with $0$. Then Theorem~\ref{thm:iKL} holds for ${\bf b}$ if and only if it  holds for ${\bf b'}$.
\end{prop}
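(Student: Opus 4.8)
The plan is to carry out a comparison of the $\imath$-canonical and dual $\imath$-canonical bases for the two adjacent Fock spaces $\widehat{\mathbb T}^{\bf b}$ and $\widehat{\mathbb T}^{{\bf b}'}$, translate this into the comparison of simple and tilting characters in the categories $\mc O_{\bf b}$ and $\mc O_{{\bf b}'}$, and then invoke the identification of simple and tilting modules under the change of Borel subalgebra (Remark~\ref{rem:adjLT}). Concretely, first I would set up, exactly as in \cite[\S5]{CLW12}, the rank-two building blocks: regard $\VV\otimes\WW$ and $\WW\otimes\VV$ as $\U$-modules (legitimate here because ${\bf b}^1$ is nonempty and the coideal property in Proposition~\ref{prop:coproduct} means the interchange of the adjacent $0$ and $1$ does not touch the first tensor factor, hence does not involve $\bun$), compute their (type $A$) canonical and dual canonical bases, and observe that matching these bases directly defines a $\U$-module isomorphism $\mc R\colon\mathbb U\xrightarrow{\ \cong\ }\mathbb U'$ on the spans of the dual canonical bases, and similarly $\mathbb L\cong\mathbb L'$.

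Next I would build the auxiliary spaces $\TL=\mathbb T^{{\bf b}^1}\otimes\mathbb L\otimes\mathbb T^{{\bf b}^2}$, $\TU=\mathbb T^{{\bf b}^1}\otimes\mathbb U\otimes\mathbb T^{{\bf b}^2}$ and their primed analogues, form their $B$-completions $\TLhat,\TUhat$ (using Definition~\ref{osp:def:completionT} and the partial orderings of \S\ref{subsec:adjCB}), and note that these are $\imath$-involutive $\bun$-modules via $\Bbar=\Upsilon\circ\psi$ by the same reasoning as Lemmas~\ref{osp:lem:Upsilonorder}--\ref{osp:lem:Bbarinvolution}. Applying \cite[Lemma~24.2.1]{Lu94} yields $\Bbar$-invariant topological bases $\{\texttt L_f\},\{\texttt T_f\}$ of $\TLhat,\TUhat$, and a weight/ordering argument as in \cite[Proposition~5.8]{CLW12} identifies them with the genuine dual $\imath$-canonical and $\imath$-canonical bases $L^{\bf b}_f,T^{\bf b}_f$ of $\widehat{\mathbb T}^{\bf b}$. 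The crucial point, then, is that the isomorphisms $\mc R_{\mathbb L}\colon\TL\to\mathbb T'_{\mathbb L}$ and $\mc R_{\mathbb U}\colon\TU\to\mathbb T'_{\mathbb U}$ extend to the appropriate ($A$- and refined $C$-)completions, are isomorphisms of $\U$-modules (and hence of $\bun$-modules), and commute with $\psi$ — the last by \cite[Theorems~5.12, 5.14]{CLW12} — whence, since $\Upsilon$ is built from $\U^-$ and is preserved under these $\U$-module isomorphisms, they commute with $\Bbar$ as well. Matching $\Bbar$-invariant bases then forces $\mc R_{\mathbb L}(\texttt L_f)=\texttt L'_{f^{\mathbb L}}$ and $\mc R_{\mathbb U}(\texttt T_f)=\texttt T'_{f^{\mathbb U}}$, i.e. $\ell^{\bf b}_{gf}(q)=\ell^{{\bf b}'}_{g^{\mathbb L}f^{\mathbb L}}(q)$ and $t^{\bf b}_{gf}(q)=t^{{\bf b}'}_{g^{\mathbb U}f^{\mathbb U}}(q)$, with the index shifts $f\mapsto f^{\mathbb L},f^{\mathbb U}$ corresponding under $I^{m+n}\leftrightarrow X(m|n)$ precisely to $\la\mapsto\la^{\mathbb L},\la^{\mathbb U}$.

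Finally, I would close the loop on the representation-theoretic side: assuming Theorem~\ref{thm:iKL} for ${\bf b}$, use Remark~\ref{rem:adjLT} ($L_{\bf b}(\la)=L_{{\bf b}'}(\la^{\mathbb L})$, $T_{\bf b}(\la)=T_{{\bf b}'}(\la^{\mathbb U})$, and that $\mc O_{\bf b}=\mc O_{{\bf b}'}$ as categories) together with the character identities from the previous paragraph and the isomorphism $\Psi$ of Lemma~\ref{osp:lem:OtoT} to conclude $\Psi([L_{{\bf b}'}(\mu)])=L^{{\bf b}'}_{f^{{\bf b}'}_\mu}(1)$ and $\Psi([T_{{\bf b}'}(\mu)])=T^{{\bf b}'}_{f^{{\bf b}'}_\mu}(1)$ for all $\mu$; the reverse implication is symmetric. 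I expect the main obstacle to be purely bookkeeping: verifying that the various completions ($A$-completion, $B$-completion, and the refined $C$-completion needed to make $\mc R_{\mathbb L},\mc R_{\mathbb U}$ well-defined on the right domains) are mutually compatible with our partial orderings — which, here, differ from those in \cite{CLW12} but are finer (Remark~\ref{osp:rem:Bcompletions}), so all cited intermediate results apply — and that the $\bun$-module (as opposed to $\U$-module) structure is never secretly invoked where only the $\U$-structure is available; once the nonemptiness of ${\bf b}^1$ is in force this is exactly the type $A$ situation of \cite[\S5]{CLW12}, so I would cite that machinery rather than reproduce it.
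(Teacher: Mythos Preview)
Your proposal is correct and follows exactly the approach the paper intends: the paper's own proof simply refers back to \cite[Theorem~8.8]{CLW12} and notes that the necessary preparations (the type $A$ rank-two comparison, the auxiliary spaces $\TL,\TU$, the $\U$-module isomorphisms $\mc R_{\mathbb L},\mc R_{\mathbb U}$, and their compatibility with $\Bbar$ via $\Upsilon$) have been carried out in \S\ref{subsec:adjCB}. Your elaboration of these details---including the use of the coideal property to reduce to the $\U$-module setting when ${\bf b}^1$ is nonempty, and the closing step via Remark~\ref{rem:adjLT}---matches the paper's strategy precisely.
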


\begin{rem}
\label{rem:remove0}
The assumption ``nonempty ${\bf b}^1$ starting with $0$" in Proposition~\ref{Prop : switching borels} is removable, if we apply
the observation in Remark~\ref{rem:osprank2}. Subsequently, we can also remove a similar assumption 
on $\bf b$ from Proposition~\ref{VtoW} and Theorem~\ref{thm:iKL}. 
Theorem~\ref{thm:iKL} in its current form already solves completely the irreducible and tilting character problem on $\mc O_{\bf b}$
for an arbitrary $\bf b$,
since $\mc O_{\bf b}$ is independent of $\bf b$ and the relations between the simple/tilting characters in $\mc O_{\bf b}$ 
for different $\bf b$ are understood (see Remark~\ref{rem:adjLT}).
\end{rem}

\begin{proof}[Proof of Theorem~\ref{thm:iKL}]
The overall strategy of the proof is by
induction on $n$, following the proof of Brundan's KL-type conjecture  in \cite{CLW12}. 
The inductive procedure, denoted by
 $ \imath\texttt{KL}(m|n) \, \forall m \ge 1 \Longrightarrow \imath\texttt{KL}(m|n+1)$,
 is divided into the following steps:
{\allowdisplaybreaks
\begin{align}
 \imath\texttt{KL}(m+k|n) \;\; \forall k
 & \Longrightarrow  \imath\texttt{KL}(m|n|k) \;\; \forall k, \text{ by changing Borels}
  \label{ind:oddref} \\
 &\Longrightarrow \imath\texttt{KL}(m|n|\underline{k})
  \;\; \forall k,  \text{ by passing to parabolic}
  \label{ind:para} \\
 &\Longrightarrow \imath\texttt{KL}(m|n|\underline{\infty}),  \text{ by taking $k\mapsto \infty$}
  \label{ind:infty} \\
 & \Longrightarrow \imath\texttt{KL}(m|n+\underline{\infty}), \text{ by super duality}
  \label{ind:SD}  \\
 & \Longrightarrow \imath\texttt{KL}(m|n+1)\;\; \forall m, \text{ by truncation}.
  \label{ind:trunc}
\end{align}
 }
It is instructive to write down the Fock spaces corresponding to the
steps above:
{\allowdisplaybreaks
\begin{align*}
 \VV^{\otimes (m+k)} \otimes \WW^{\otimes n}\;\;  \forall k
 &\Longrightarrow  \VV^{\otimes m} \otimes \WW^{\otimes n} \otimes
 \VV^{\otimes k}\;\;  \forall k
  \\
 &\Longrightarrow \VV^{\otimes m} \otimes \WW^{\otimes n}\otimes
 \wedge^k\VV\;\; \forall k
   \\
 &\Longrightarrow \VV^{\otimes m} \otimes \WW^{\otimes n}\otimes
 \wedge^\infty \VV
   \\
 & \Longrightarrow \VV^{\otimes m} \otimes \WW^{\otimes n}\otimes
 \wedge^\infty\WW
   \\
 & \Longrightarrow
 \VV^{\otimes m} \otimes \WW^{\otimes (n+1)}\;\;\; \forall m \ge 1.
\end{align*}
}

A complete proof would be simply a copy  from the proof of 
\cite[Theorem 8.10]{CLW12}, as we are in a position to take care of each step of 
\eqref{ind:oddref}--\eqref{ind:trunc}. Here we will be contented with specifying how each step follows
and refer the reader to the proof of \cite[Theorem~ 8.10]{CLW12} for details. 

Thanks to Theorem~\ref{thm:samebar}, the base case for the
induction, $ \imath\texttt{KL}(m|0)$, is equivalent to the original
Kazhdan-Lusztig conjecture \cite{KL} for $\mf{so}(2m+1)$, which is a
theorem of \cite{BB} and \cite{BK} (and extended to all singular weights by \cite{So90}); 
The tilting module characters were due to \cite{So98}.

Step~\eqref{ind:oddref} is a special case of Proposition~\ref{Prop : switching borels}. 

Step~\eqref{ind:para} is based on \S \ref{subsec:tensorwedge} (Propositions~\ref{prop:aux1}  and \ref{prop:aux2})
and \S \ref{subsec:comparisonCh} (Propositions~\ref{osp:prop:simple1} and  \ref{osp:prop:tilting}).

Step~\eqref{ind:infty} is based on Proposition~\ref{prop:truncation}.

Step~\eqref{ind:SD}  is based on Proposition \ref{VtoW}.

Step~\eqref{ind:trunc} is based on  Propositions \ref{prop:trunc:ML}, \ref{prop:truncation}, and \ref{prop:can:truncW} (with $k=1$ therein).
The theorem is proved.
\end{proof}

\begin{rem}
There is a similar Fock space formulation of Kazhdan-Lusztig theories for various parabolic subcategories of $\osp(2m+1|2n)$-modules, which 
can be derived as a corollary to Theorem~\ref{thm:iKL}. 
\end{rem}
\begin{rem}
The establishment of a KL theory in Theorem~\ref{thm:iKL} naturally leads to the expectation on a Koszul graded lift for $\mc O_{\bf b}$; cf. \cite{BGS}. 
\end{rem}


\chapter{BGG category of $\mathfrak{osp}(2m+1|2n)$-modules of half-integer weights}
\label{sec:BGGj}

In this chapter we shall deal with a version of BGG category for $\mathfrak{osp}(2m+1|2n)$ associated with 
a {\em half-integer} weight set $'{X}(m|n)$. The relevant quantum symmetric pair turns out to be the $r\mapsto \infty$ limit of
$(\U_{2r}, \U^{\jmath}_{r})$ established in Chapter~\ref{sec:QSPc}. 
 This chapter is a variant of Chapters~\ref{sec:BGG}-\ref{sec:b-KL}, in which we will formulate the main theorems while
 skipping the identical proofs. 
 
\section{Setups for half-integer weights}
Let us first set up some notations. 
Switching the sets of integers and half-integers in \eqref{eq:III}, we set 
\begin{align}
 \label{int:eq:III}
\I  = \cup^{\infty}_{r=0}  & \I_{2r} = \Z+\hf,
\qquad
\I^{\jmath} = \cup^{\infty}_{r=0} \I^{\jmath}_{r} = \N +\hf,
\qquad
I = \Z.
\end{align}

Recall from Chapter~\ref{sec:QSPc} the finite-rank quantum symmetric pairs $(\U_{2r}, \U^{\jmath}_r)$ with embedding
$\jmath: \Uj_r \rightarrow \U_{2r}$.  Let 
\[
\U^{\jmath} : = \bigcup^{\infty}_{r=0} \U^{\jmath}_r,  \qquad\ \quad  \U := \bigcup_{r=0}^{\infty} \U_{2r}.
\]
The pair $(\U, \U^{\jmath})$ forms a quantum symmetric pair as well, with the obvious induced embedding $\jmath: \U^{\jmath} \rightarrow \U$. Let
$
\Pi := \bigcup_{r=0}^{\infty}\Pi_{2r}
$
be the simple system of $\U$. Recall the intertwiner $\Upsilon^{(r)}$ of the pair $(\U_{2r}, \U^{\jmath}_r)$. 
Note that 
$
\Upsilon^{(r+1)}_\mu = \Upsilon^{(r)}_{\mu}, \text{ for } \mu \in \N\Pi_{2r},
$
and this allows us to define 
\[
\Upsilon_{\mu} = \lim_{r \to \infty} \Upsilon^{(r)}_{\mu}, \quad \text{ for } \mu \in \N\Pi.
\]
We then define the formal sum (which lies in some completion of $\U^-$) 
\begin{equation}
  \label{int:eq:Upsilon}
\Upsilon : = \sum_{\mu \in \N\Pi} \Upsilon_{\mu},
\end{equation}
which shall be viewed as a well-defined operator on $\U$-modules that we are concerned.


Introduce the following set of half-integer weights
  \begin{equation}
   \label{eq:X'}
  'X(m|n) : =  \sum^m_{i=1} (\Z +\hf)\ep_i + \sum^n_{j=1}(\Z+\hf)\ep_{\ov j}.
  \end{equation}
  Let ${\bf b} =(b_1, \ldots, b_{m+n})$ be an arbitrary $0^n1^m$-sequence.
We first define a partial ordering on $I^{m+n}$, which depends on the sequence ${\bf b}$.
There is a natural bijection $I^{m+n} \leftrightarrow {}'X(m|n)$,
$f \mapsto \lambda^{\bf b}_f$ and $\lambda \mapsto f^{\bf b}_{\lambda},$  defined 
formally by the same formulas \eqref{osp:eq:ftolambda}-\eqref{osp:eq:lambdatof} for 
the bijection $I^{m+n} \leftrightarrow X(m|n)$ therein,
though $I$ here has a different meaning.

%

Recall the Bruhat ordering $\preceq_{\bf b} $ given by 
\eqref{eq:BrX} on $\mathfrak h_{m|n}$ and hence on $'X(m|n)$.
We now transport the ordering on $'X(m|n)$ by the above bijection to $I^{m+n}$. 

\begin{definition}
  \label{int:def:dominanceordering}
The Bruhat ordering or $\bf b$-Bruhat ordering $\preceq_{\bf b}$ on $I^{m+n}$ is defined as follows:
For $f$, $g \in I^{m+n}$,  $f \preceq_{\bf b} g \Leftrightarrow
\lambda^{\bf b}_f  \preceq_{\bf b} \lambda^{\bf b}_g$. 
We also say $f \sim g$ if $\lambda^{\bf b}_f \sim \lambda^{\bf b}_g$. 
\end{definition}

A BGG category $'\mathcal{O}_{\bf b}$ of $\mathfrak{osp}(2m+1|2n)$-modules
with weight set $'X(m|n)$ is defined in the same way as in Definition~\ref{def:catO},
where the weight set was taken to be $X(m|n)$.
Again, the category $'\mathcal{O}_{\bf b}$ contains several distinguished modules:
the $\bf b$-Verma modules $M_{\bf b}(\lambda)$,
simple modules $L_{\bf b}(\lambda)$, and tilting modules $T_{\bf b}(\lambda)$,
for $\la \in 'X(m|n)$.

 \section{Fock spaces and $\jmath$-canonical bases}
 
Let $\VV : = \sum_{a \in I} \Qq v_a$ be the natural representation of $\U$. Let $\WW : = \VV^*$ be the restricted dual 
module of $\VV$ with the basis $\{w_a \mid a \in I\}$ such that $\langle w_a, v_b\rangle = (-q)^{-a} \delta_{a,b}$. 
By restriction through the embedding $\jmath$, $\VV$ and $\WW$ are naturally $\U^{\jmath}$-modules. 
For a given ${0^m1^n}$-sequence ${\bf b} =(b_1,b_2,\ldots,b_{m+n})$, we again define the Fock space $\Tb$ by the formula \eqref{eq:Fock}
and the standard monomial basis $M_f$, for $f\in I^{m+n}$, by the formula \eqref{eq:Mf}.
Following \S\ref{osp:subsec:cbanddcb}, we define the $B$-completion of the Fock space $\mathbb{T}^{\bf b}$ 
with respect to the Bruhat ordering defined in Definition~\ref{int:def:dominanceordering}. 

Following \S\ref{osp:subsec:cbanddcb} and \S\ref{osp:subsec:cbdcb}, we define an anti-linear involution 
\[
\Cbar : = \Upsilon \Abar : \widehat{\mathbb{T}}^{\bf b} \longrightarrow \widehat{\mathbb{T}}^{\bf b},
\]
where $\Upsilon$ is the operator defined in \eqref{int:eq:Upsilon}, such that 
\[
\Cbar(M_f) = M_f + \sum_{g \prec_{\bf b} f } r_{gf}(q) M_g, \quad \text{ for } r_{gf}(q) \in \mA.
\]

Therefore we have the following counterpart of Theorem~\ref{thm:iCBb} (here we emphasize 
that the index set $I$ here is different from the same notation used therein and $\Uj$ is a different algebra than $\Ui$).

\begin{thm} 
The $\Qq$-vector space $\widehat{\mathbb{T}}^{\bf b}$ has unique $\Cbar$-invariant topological bases
\[
\{T^{\bf b}_f \mid f \in I^{m+n}\} \quad \text{ and } \quad \{L^{\bf b}_f \mid f \in I^{m+n}\}
\]
such that 
\[
T^{\bf b}_f = M_f + \sum_{g \preceq_{\bf b} f }t^{\bf b}_{gf}(q)M^{\bf b}_g, \quad L^{\bf b}_f 
= M_f + \sum_{g \preceq_{\bf b} f }\ell^{\bf b}_{gf}(q)M^{\bf b}_g,
\]
with $t^{\bf b}_{gf}(q) \in q\Z[q]$, and $\ell^{\bf b}_{gf}(q) \in q^{-1}\Z[q^{-1}]$, for $g \preceq_{\bf b}f $. 
(We shall write $t^{\bf b}_{ff}(q) = \ell^{\bf b}_{ff}(q) = 1$, $t^{\bf b}_{gf}(q)=\ell^{\bf b}_{gf}(q)=0$ for $ g \not\preceq_{\bf b} f$.)
\end{thm}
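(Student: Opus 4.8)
The plan is to mimic essentially verbatim the construction of the $\imath$-canonical basis for $\widehat{\mathbb{T}}^{\bf b}$ carried out in Section~\ref{osp:sec:cbanddcb} for the pair $(\U,\bun)$, now applied to the pair $(\U,\cun)$ of Section~\ref{sec:QSPc}. First I would record the finite-rank input: for each $r$ the module $\mathbb{T}^{\bf b}_r$ (built from $\VV_r$, $\WW_r$ with $I=\I_{2r+1}$) is a $\jmath$-involutive $\cun_r$-module with involution $\Cbar^{(r)}=\Upsilon^{(r)}\Abar^{(r)}$, by the $(\cun,\mc H_{B_m})$-constructions summarized in Theorem~\ref{int:thm:samebar} together with Proposition~\ref{int:prop:compatibleBbar} and Proposition~\ref{int:prop:barinThetaB}. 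Then I would check the stabilization $\Upsilon^{(r+1)}_\mu=\Upsilon^{(r)}_\mu$ for $\mu\in\N\Pi_{2r}$ (this is the content of how $\Upsilon$ is defined in \eqref{int:eq:Upsilon} and is proved exactly as Lemma~\ref{osp:lem:compatibleUpsilon}), so that $\Upsilon M_f=\lim_{r\to\infty}\Upsilon^{(r)}M_f$ is a well-defined element of the $A$-completion $\wt{\mathbb T}^{\bf b}$.

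Next I would establish the three technical lemmas that make the topological basis argument work, each a direct transcription of its counterpart in \S\ref{osp:subsec:cbanddcb}--\S\ref{osp:subsec:cbdcb}. First, the triangularity and integrality of $\Upsilon$: since $\Upsilon=\sum_\mu\Upsilon_\mu$ with $\Upsilon_\mu\in\U^-$, $\Upsilon_\mu\neq 0$ only if $\mu^\inv=\mu$ (Theorem~\ref{int:thm:Upsilon}), and $\Upsilon_\mu\in\U^-_\mA$ (Theorem~\ref{int:thm:UpsiloninZ}), the same weight/linkage bookkeeping as in Lemma~\ref{osp:lem:Upsilonorder} gives
\[
\Upsilon M_f=M_f+\sum_{g\prec_{\bf b}f}r'_{gf}(q)M_g,\qquad r'_{gf}(q)\in\mA,
\]
where the needed fact "$f\sim g$ iff $\texttt{wt}_{\bf b}(f)=\texttt{wt}_{\bf b}(g)$" is Lemma~\ref{lem: linkage}, valid for the weight set ${}'X(m|n)$ of \eqref{eq:X'} by the same proof. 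Second, the map $\Upsilon$, and hence $\Cbar=\Upsilon\Abar$, extends from $\mathbb T^{\bf b}$ to $\widehat{\mathbb T}^{\bf b}$; this uses the finiteness of intervals in the Bruhat ordering (the analogue of Lemma~\ref{osp:lem:finitelength}, which holds for ${}'X(m|n)$ since the linkage classes again sit inside finite-rank root systems), exactly as in Lemma~\ref{osp:lem:Upsilonhattohat} and Lemma~\ref{lem:Mfbar}. Third, $\Cbar$ is an anti-linear \emph{involution} on $\widehat{\mathbb T}^{\bf b}$: by the stabilization $\Cbar(M_f)=\lim_r\Cbar^{(r)}(M_f)$ it suffices to check $\Cbar^{(r)}\circ\Cbar^{(r)}=\id$ on $\mathbb T^{\bf b}_r$ for $r\gg 0$, which is Proposition~\ref{int:prop:compatibleBbar}; combined with interval-finiteness this gives the identity $\sum_{h\preceq_{\bf b}g\preceq_{\bf b}f}r_{hg}(q)\ov{r_{gf}(q)}=\delta_{hf}$, i.e. the analogue of Lemma~\ref{osp:lem:Bbarinvolution}.

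Finally, with $\Cbar:\widehat{\mathbb T}^{\bf b}\to\widehat{\mathbb T}^{\bf b}$ an anti-linear involution satisfying the unitriangular formula $\Cbar(M_f)=M_f+\sum_{g\prec_{\bf b}f}r_{gf}(q)M_g$ with $r_{gf}(q)\in\mathcal A$, I would invoke \cite[Lemma~24.2.1]{Lu94} (exactly as in the proof of Theorem~\ref{thm:iCBb}) to produce the two unique $\Cbar$-invariant topological bases $\{T^{\bf b}_f\}$ and $\{L^{\bf b}_f\}$ with $t^{\bf b}_{gf}(q)\in q\Z[q]$ and $\ell^{\bf b}_{gf}(q)\in q^{-1}\Z[q^{-1}]$, which is the assertion. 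I do not expect a genuine obstacle here: every ingredient — integrality of $\Upsilon$, the $\jmath$-involutive structure, the finiteness of Bruhat intervals — has already been set up for $(\U,\cun)$ in Section~\ref{sec:QSPc} or is a routine adaptation of Section~\ref{sec:Fockspaces}. The only point requiring a little care (hence the "main obstacle") is bookkeeping the different index set $I=\Z$ and the half-integer weight lattice ${}'X(m|n)$ throughout, to make sure the bijection $I^{m+n}\leftrightarrow{}'X(m|n)$ and the linkage/Bruhat statements (Lemma~\ref{lem: linkage}, Lemma~\ref{osp:lem:finitelength}) transfer without a shift error; but these are formal and parallel to \S\ref{subsec:osp}.
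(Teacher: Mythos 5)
Your proposal is correct and is precisely the approach the paper intends: the paper itself only says to ``follow \S\ref{osp:subsec:cbanddcb} and \S\ref{osp:subsec:cbdcb}'' and apply \cite[Lemma~24.2.1]{Lu94}, and you have fleshed out exactly that chain of lemmas (stabilization of $\Upsilon^{(r)}$, triangularity via Theorem~\ref{int:thm:Upsilon} and integrality via Theorem~\ref{int:thm:UpsiloninZ}, interval-finiteness, extension of $\Cbar$ to $\widehat{\mathbb T}^{\bf b}$, involution property from Proposition~\ref{int:prop:compatibleBbar}) with the correct half-integer adaptations of Lemmas~\ref{lem: linkage} and \ref{osp:lem:finitelength}.
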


$\{T^{\bf b}_f \mid f \in I^{m+n}\} \text{ and } \{L^{\bf b}_f \mid f \in I^{m+n}\}$ are call the 
{\em $\jmath$-canonical basis} and {\em dual $\jmath$-canonical basis} of $\widehat{\mathbb{T}}^{\bf b}$, respectively. 
The polynomials $t^{\bf b}_{gf}(q)$ and $\ell^{\bf b}_{gf}(q)$ are called 
{\em $\jmath$-Kazhdan-Lusztig (or $\jmath$-KL) polynomials}.

\section{KL theory and $\jmath$-canonical basis}

Starting with an $\mA$-lattice $\Tb_\mA$
spanned by the standard monomial basis of the $\Qq$-vector space $\mathbb{T}^{\bf b}$, 
we define $\mathbb{T}^{\bf b}_{\Z} =\Z \otimes_\mA \Tb_{\mA}$
where $\mA$ acts on $\Z$ with $q=1$. For any $u$ in the $\mA$-lattice $\Tb_\mA$, 
we denote by $u(1)$ its image in $\mathbb{T}^{\bf b}_{\Z}$.

Let $'\mathcal{O}^{\Delta}_{\bf b}$ be the full subcategory of $'\mathcal{O}_{\bf b}$ consisting of all modules possessing 
a finite ${\bf b}$-Verma flag. Let $\left['\mathcal{O}^{\Delta}_{\bf b}\right]$ be its Grothendieck group. 
The following lemma is immediate from the bijection $I \leftrightarrow {}'X(m|n)$.

\begin{lem} 
  \label{int:lem:OtoT}
The map
\[
\Psi : \left['\mathcal{O}^{\Delta}_{\bf b}\right] \longrightarrow \mathbb{T}_{\Z}^{\bf b}, 
\quad \quad \quad [M_{\bf b}(\lambda)] \mapsto M^{\bf b}_{f^{\bf b}_{\lambda}}(1),
\]
defines an isomorphism of $\Z$-modules. 
\end{lem}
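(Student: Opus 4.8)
The plan is to establish the map $\Psi$ directly on generators of the Grothendieck group and check it is a bijection by exhibiting inverse images. Since $'\mathcal{O}^{\Delta}_{\bf b}$ is the full subcategory of modules with a finite $\bf b$-Verma flag, the classes $[M_{\bf b}(\lambda)]$ with $\lambda \in {}'X(m|n)$ form a $\Z$-basis of $\left['\mathcal{O}^{\Delta}_{\bf b}\right]$ (this is standard: exactness of the Verma-flag multiplicity functors, together with the fact that every object has a well-defined multiset of Verma subquotients, gives linear independence and spanning). On the other side, the standard monomial basis $\{M^{\bf b}_f \mid f \in I^{m+n}\}$ of the $\Qq$-vector space $\mathbb{T}^{\bf b}$ descends to a $\Z$-basis $\{M^{\bf b}_f(1) \mid f \in I^{m+n}\}$ of $\mathbb{T}^{\bf b}_{\Z}$ by the very definition $\mathbb{T}^{\bf b}_{\Z} = \Z \otimes_{\mA} \Tb_{\mA}$ with $\Tb_{\mA}$ the $\mA$-span of the standard monomials.

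First I would invoke the natural bijection $I^{m+n} \leftrightarrow {}'X(m|n)$, $\lambda \mapsto f^{\bf b}_{\lambda}$, which is defined by the same formulas \eqref{osp:eq:ftolambda}--\eqref{osp:eq:lambdatof} as in the integer-weight case (only the meaning of $I$ has changed to $I=\Z$, with the half-integer $\rho_{\bf b}$-shift now landing in $'X(m|n) = \sum_i (\Z+\hf)\ep_i + \sum_j (\Z+\hf)\ep_{\ov j}$). One should briefly check that the $\rho_{\bf b}$-shift indeed carries $I^{m+n}$ onto $'X(m|n)$: for a $0^m1^n$-sequence the components $(\lambda+\rho_{\bf b}\mid \ep^{b_i}_i)$ of $\rho_{\bf b}$ itself are half-integers (cf. \eqref{eq:rhobst} and Remark~\ref{rem:rhob}), so adding an arbitrary integer vector $(-1)^{b_i}f(i)\ep^{b_i}_i$ with $f(i)\in\Z$ produces exactly a half-integer weight, and conversely. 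Then define $\Psi$ on the basis by $[M_{\bf b}(\lambda)] \mapsto M^{\bf b}_{f^{\bf b}_{\lambda}}(1)$ and extend $\Z$-linearly; since it sends a $\Z$-basis bijectively onto a $\Z$-basis, it is an isomorphism of $\Z$-modules, which is exactly the assertion of the lemma.

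I do not expect a genuine obstacle here — this is a bookkeeping lemma whose only content is matching two explicitly described bases via an explicit combinatorial bijection, entirely parallel to Lemma~\ref{osp:lem:OtoT} in the integer-weight setting. The one point worth stating carefully (rather than a deep one) is the identification $'X(m|n)$ with the image of the $\rho_{\bf b}$-shift of $I^{m+n}$; once that is in place, the proof is a single sentence. I would write it up in at most a few lines, referring back to the construction of $\mathbb{T}^{\bf b}_{\Z}$ in the preceding subsection and to Lemma~\ref{osp:lem:OtoT} as the template.
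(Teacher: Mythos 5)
Your proposal is correct and follows exactly the route the paper takes (the paper simply declares the lemma ``immediate from the bijection $I^{m+n}\leftrightarrow {}'X(m|n)$''); you merely spell out the one sanity check — that the $\rho_{\bf b}$-shift with $f\in I^{m+n}=\Z^{m+n}$ lands precisely in $'X(m|n)$ because the $\rho_{\bf b}$-coordinates are half-integers — which the paper leaves implicit.
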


Denote by $\Uj_\mA$ the $\mA$-form of $\Uj$ generated by the divided powers,
and set $\Uj_\Z  =\Z \otimes_\mA \Uj_\mA$.

\begin{rem}
The map $\Psi$ is actually a $\Uj_\Z$-module isomorphism, where $\Uj_\Z$
acts on $\left['\mathcal{O}^{\Delta}_{\bf b}\right]$ via translation functors
analogous to Proposition~\ref{prop:translation}. 
\end{rem}

We define $\left[\left[ '\mathcal{O}^{\Delta}_{\bf b} \right]\right]$ as the completion of 
$\left['\mathcal{O}^{\Delta}_{\bf b}\right]$ such that the extension of $\Psi$
\begin{equation*}
\Psi: \left[\left[ '\mathcal{O}^{\Delta}_{\bf b} \right]\right] \longrightarrow \widehat{\mathbb{T}}^{\bf b} 
\end{equation*}
is an isomorphism of $\Z$-modules.
We have the following counterpart of Theorem~\ref{thm:BKL} with the same proof.
\begin{thm}
   \label{thm:BKL}
For any $0^m1^n$-sequence ${\bf b}$ starting with $0$, the isomorphism 
$\Psi : \left[ \left['\CatO^{\Delta}_{\bf b}\right]\right] \rightarrow \widehat{\mathbb{T}}_{\Z}^{\bf b}$ 
satisfies 
\[
\Psi([L_{{\bf b}}(\lambda)]) = L^{{\bf b}}_{f^{{\bf b}}_{\lambda}}(1), \quad \quad \quad \Psi([T_{{\bf b}}(\lambda)]) 
= T^{{\bf b}}_{f^{{\bf b}}_{\lambda}}(1), \quad \quad \text{ for } \lambda \in {}'X(m|n).
\]
\end{thm}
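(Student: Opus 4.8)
The plan is to mirror, mutatis mutandis, the entire inductive machinery developed in Part~2 for the integer-weight category $\mc O_{\bf b}$, now applied to the half-integer category $'\mc O_{\bf b}$. First I would record that essentially every structural ingredient used in the proof of Theorem~\ref{thm:iKL} has a direct analogue for the pair $(\U,\cun)$: the intertwiner $\Upsilon$ and its integrality (Theorems~\ref{int:thm:Upsilon}, \ref{int:thm:UpsiloninZ}), the quasi-$\mc R$-matrix $\ThetaC$ and its properties (Theorems~\ref{int:thm:ThetaB}, Proposition~\ref{int:prop:ThetaBinv}), the $\jmath$-canonical basis on ${}^\omega L(\la)$ (Theorem~\ref{int:thm:BCB}), and the $(\cun,\mc H_{B_m})$-duality together with the compatible bar involution on $\VV^{\otimes m}$ (Theorems~\ref{int:thm:SchurB}, \ref{int:thm:samebar}), where now $\dim\VV$ is \emph{odd} and the index set is $I=\I_{2r+1}$. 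The base case $'\mathrm{KL}(m|0)$ is then supplied by the classical Kazhdan-Lusztig theory of type $B$ for $\mf{so}(2m+1)$, exactly as in the integer case, via Theorem~\ref{int:thm:samebar}: the parabolic Kazhdan-Lusztig basis on $\VV^{\otimes m}_\Z$ (resp.\ on $\VV^{\otimes m}\wotimes\wedge^k\WW$) is identified with the dual $\jmath$-canonical basis, and the tilting characters follow from \cite{So97, So98}.

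Next I would carry out the five inductive steps \eqref{ind:oddref}--\eqref{ind:trunc} verbatim, checking that each cited Part~2 result transports. Step~\eqref{ind:oddref} (changing Borels for adjacent $0^m1^n$-sequences) relies only on the coideal property $\Delta:\cun\to\cun\otimes\U$ from Proposition~\ref{int:prop:coproduct}, so that $\VV\otimes\WW$ and $\WW\otimes\VV$ are compared as $\U$-modules exactly as in \S\ref{subsec:adjCB}; the type $A$ strategy of \cite[\S5]{CLW12} is insensitive to whether the outermost algebra is $\bun$ or $\cun$. Step~\eqref{ind:para} (passing to a parabolic) is the wedge-versus-tensor comparison of \S\ref{subsec:tensorwedge} together with the character comparisons of \S\ref{subsec:comparisonCh}, both of which are purely type-$A$/combinatorial once the $\jmath$-canonical basis on the relevant Fock spaces is in hand; so I would first set up the $B$-completions $\Tb\wotimes\wedge^k\VV$, $\Tb\wotimes\wedge^k\WW$ and their $\jmath$-canonical bases as in \S\ref{subsec:barWW}--\S\ref{subsec:trun}, which go through without change. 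Step~\eqref{ind:infty} is the truncation compatibility of Proposition~\ref{prop:truncation}, which depends only on $\Upsilon_\mu\in\U^-$; step~\eqref{ind:SD} is the combinatorial super duality of Theorem~\ref{TwedgeV:isom:TwedgeW} combined with Theorem~\ref{theorem:super duality}, both of which are statements about $\U$-modules $\wedge^\infty\VV\cong\wedge^\infty\WW$ and the Bruhat orderings, hence unaffected by the replacement of $\bun$ by $\cun$; step~\eqref{ind:trunc} is again truncation (Propositions~\ref{prop:trunc:ML}, \ref{prop:truncation}, \ref{prop:can:truncW}). Assembling these gives the inductive implication and hence Theorem~\ref{thm:BKL} for all ${\bf b}$ starting with $0$, after invoking the analogue of Remark~\ref{rem:remove0} to dispose of auxiliary hypotheses on ${\bf b}$.

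The main obstacle I anticipate is \emph{not} the inductive bookkeeping but the verification that the bijection $I^{m+n}\leftrightarrow {}'X(m|n)$ and the associated $\rho_{\bf b}$-shift are correctly calibrated so that: (i) the Bruhat ordering $\preceq_{\bf b}$ on $'X(m|n)$ matches the $\U^{\jmath}$-weight/linkage combinatorics on $I^{m+n}$ (the half-integer analogue of Lemmas~\ref{lem: linkage} and \ref{osp:lem:OtoT}), and (ii) the translation functors on $'\mc O_{\bf b}$ realize the $\cun_\Z$-action on $\mathbb T^{\bf b}_\Z$ (the analogue of Proposition~\ref{prop:translation}). Concretely, since $\dim\VV$ is now odd, the natural module $V$ of $\osp(2m+1|2n)$ has a weight-zero vector, and the generator $\bt$ of $\cun$ no longer appears (in contrast to $\bun$), so the relevant tensoring functor is $\text{pr}_\gamma(M\otimes V)$ interacting with the formulas \eqref{int:eq:beZ}--\eqref{int:eq:bffZ} rather than \eqref{eq:beZ}--\eqref{eq:bffZ}; one must check the weight $\rho_{\bf b}$ in the half-integer lattice $'X(m|n)$ produces exactly the matrix coefficients of $\jmath(\be^{(r)}_{\alpha_i})$, $\jmath(\bff^{(r)}_{\alpha_i})$ acting on $\mathbb T^{\bf b}$. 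Once this calibration is pinned down, the rest is a faithful transcription of Sections~\ref{sec:BGG}--\ref{sec:b-KL}, and I would present the statement with proofs reduced to ``identical to the integer case, replacing $(\U,\bun)$ by $(\U,\cun)$ and $X(m|n)$ by $'X(m|n)$'' as the excerpt already signals.
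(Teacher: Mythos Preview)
Your proposal is correct and matches the paper's approach exactly: the paper states Theorem~\ref{thm:BKL} as the half-integer counterpart of Theorem~\ref{thm:iKL} ``with the same proof'' and gives no further argument, relying precisely on the transcription you outline (replacing $(\U,\bun)$ by $(\U,\cun)$, $X(m|n)$ by $'X(m|n)$, and invoking the $\jmath$-versions of all the Part~1 and Part~2 ingredients). Your identification of the calibration of $\rho_{\bf b}$ and the translation-functor check as the only genuine verification point is apt, though the paper does not spell this out either.
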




\begin{thebibliography}{ABC1}

\bibitem[Bao]{Bao}  H.~Bao, 
{\em Kazhdan-Lusztig theory of super type $D$ and quantum symmetric pairs}, Represent. Theory {\bf 21} (2017), 247--276, \href{http://arxiv.org/abs/1603.05105}{arXiv:1603.05105}.


\bibitem[BB]{BB} A. Beilinson and J. Bernstein, {\em Localisation de
$\mathfrak g$-modules}, C.R. Acad. Sci. Paris Ser. I Math. {\bf 292} (1981), 15--18.

\bibitem[BGS]{BGS}
A. Beilinson, V. Ginzburg, and W. Soergel, {\em Koszul duality
patterns in representation theory}, J.~Amer.~Math.~Soc.~{\bf 9}
(1996), 473--527.

\bibitem[BK]{BK} J.L.~Brylinski and M.~Kashiwara,
{\em Kazhdan-Lusztig conjecture and holonomic systems}, Invent.
Math. {\bf 64} (1981), 387--410.

\bibitem[BaKo]{BaKo}M. Balagovic and S. Kolb, {\em Universal K-matrix for quantum symmetric pairs}, J. Reine Angew. Math. (to appear), DOI 10.1515/crelle-2016-0012, \href{http://arxiv.org/abs/1507.06276}{arXiv:1507.06276v2}.

\bibitem[BKLW]{BKLW}  H.~Bao, J.~Kujawa, Y.~Li and W.~Wang,
{\em Geometric Schur duality of classical type},  (with Appendix by Bao, Li and Wang),
Transform. Groups  {\bf 23} (2018), 329--389.

\bibitem[BLM]{BLM} 
A. Beilinson, G. Lusztig and R. McPherson,
          {\em A geometric setting for the quantum deformation of $GL_n$}, 
Duke Math. J., {\bf 61} (1990), 655--677.

\bi[BLW]{BLW} J.~Brundan, I.~Losev and B.~Webster, 
{\em Tensor product categorifications and the super Kazhdan-Lusztig conjecture}, 
Int. Math. Res. Not.  {\bf 2017},  6329--6410.


\bi[Br1]{Br03} J.~Brundan, {\em Kazhdan-Lusztig polynomials and
character formulae for the Lie superalgebra $\gl(m|n)$},
J.~Amer.~Math.~Soc.~{\bf 16} (2003), 185--231.

\bi[Br2]{Br04} J.~Brundan, {\em Tilting modules for Lie
superalgebras}, Comm.~Algebra~{\bf 32} (2004), 2251--2268.

\bibitem[BW]{BW16}  H.~Bao and W.~Wang,
{\em Canonical bases arising from quantum symmetric pairs},  Invent. Math.  {\bf 213} (2018), 1099--1177,
\href{http://arxiv.org/abs/1610.09271}{arXiv:1610.09271v2}.

\bi[CL]{CL} S.-J. Cheng and N.~Lam, {\em Irreducible characters of
the general linear superalgebra and super duality}, Comm. Math.
Phys. {\bf 298} (2010), 645--672.

\bi[CLW1]{CLW11} S.-J. Cheng, N.~Lam and W.~Wang, {\em Super duality
and irreducible characters of ortho-symplectic Lie superalgebras},
Invent. Math.~{\bf 183} (2011), 189--224.

\bi[CLW2]{CLW12} S.-J. Cheng, N.~Lam and W.~Wang,  
{\em Brundan-Kazhdan-Lusztig conjecture for general linear Lie superalgebras}, Duke J. Math. {\bf 164} (2015) 617--695,
, \href{http://arxiv.org/abs/1203.0092}{arXiv:1203.0092}. 

\bi[CW1]{CW08} S.-J.~Cheng and W.~Wang, {\em Brundan-Kazhdan-Lusztig
and super duality conjectures}, Publ. Res. Inst. Math. Sci. {\bf 44}
(2008), 1219--1272.
 
\bibitem[CW2]{CW12} S.-J.~Cheng and W.~Wang, 
{\em Dualities and Representations of Lie Superalgebras}. 
Graduate Studies in Mathematics {\bf 144}, Amer. Math.  Soc., Providence, RI, 2012.

\bi[CWZ]{CWZ} S.-J.~Cheng, W.~Wang and R.B.~Zhang, {\it Super
duality and Kazhdan-Lusztig polynomials}, Trans. Amer.~Math.~Soc.~
{\bf 360} (2008), 5883--5924.

\bibitem[Dr]{Dr86} V. Drinfeld,
{\em Quantum groups}, Proc. Int. Congr. Math. Berkeley 1986, vol. {\bf 1}, Amer. Math. Soc. 1988, 798--820. 

\bibitem[ES]{ES}M. Ehrig and C. Stroppel, {\em Nazarov-Wenzl algebras, coideal subalgebras and categorified skew Howe duality}, arXiv:1310.1972.

\bi[Gr]{Gr}
R.M. ~Green, 
{\em Hyperoctaheral Schur algebras}, J. Algebra {\bf 192} (1997), 418--438.

\bi[GS]{GS} C.~Gruson and V.~Seganova, {\it Cohomology of
generalized supergrassmannians and character formulae for basic
classical Lie superalgebras}, Proc. LMS {\bf 101} (2010), 852--892.

\bi[Jan]{Jan} J.C.~Jantzen, {\em Lectures on Quantum Groups},
Graduate Studies in Mathematics {\bf 6}, Amer. Math. Soc, 1996.

\bibitem[Jim]{Jim} M. Jimbo,
{\em A $q$-analogue of $U({\mathfrak g\mathfrak l}(N+1))$, Hecke
algebra, and the Yang-Baxter equation}, Lett. Math. Phys. {\bf 11}
(1986), 247--252.

\bibitem[Ka]{Ka} M. Kashiwara,
{\em On crystal bases of the $Q$-analogue of universal enveloping
algebras}, Duke Math.~J.~{\bf 63} (1991), 456--516.

\bibitem[Kac]{Kac77} V. Kac,
{\em Lie superalgebras}, Adv. in Math. {\bf 26} (1977), 8--96.

\bibitem[KL]{KL} D. Kazhdan and G. Lusztig,
{\em Representations of Coxeter groups and Hecke algebras},
Invent. Math. {\bf 53} (1979), 165--184.

\bibitem[KL80]{KL80} D. Kazhdan and G.  Lusztig, 
           {\em Schubert varieties and Poincar\'e duality}. 
           Geometry of the Laplace operator (Univ. Hawaii, Honolulu, Hawaii, 1979), pp. 185--203, Proc.      
           Sympos. Pure Math., XXXVI, Amer. Math. Soc., Providence, R.I., 1980.            


\bibitem[Ko]{Ko12}  S. Kolb,
{\em Quantum symmetric Kac-Moody pairs}, 
Adv. in Math. {\bf 267} (2014), 395--469. 

        
\bibitem[KP]{KP}S. Kolb and J. Pellegrini, 
{\em Braid group actions on coideal subalgebras of quantized enveloping algebras}, J. Algebra {\bf 336} (2011), 395--416.

\bibitem[Ku]{Ku} J. Kujawa,
{\em Crystal structures arising from representations of $GL(m|n)$}, Represent.~Theory {\bf 10} (2006), 49--85.

\bibitem[Le]{Le03}G. Letzter, {\em Quantum symmetric pairs and their zonal spherical functions}, 
Transformation Groups {\bf 8} (2003), 261--292.

\bi[Lu1]{Lu90} G. Lusztig, {\em Canonical bases arising from
quantized enveloping algebras}, J.~Amer.~Math.~Soc.~{\bf 3} (1990),
447--498.

\bi[Lu2]{Lu94} G. Lusztig, {\em Introduction to Quantum Groups},
Modern Birkh\"auser Classics, Reprint of the 1993 Edition,
Birkh\"auser, Boston, 2010.

\bi[PS]{PS} I.~Penkov and V.~Serganova, {\em Cohomology of $G/P$ for
classical complex Lie supergroups $G$ and characters of some
atypical $G$-modules}, Ann. Inst. Fourier {\bf 39} (1989), 845--873.


\bibitem[So1]{So90}  W. Soergel,
{\em Kategorie $\mc O$, perverse Garben und Moduln \"uber den Koinvarianten zur Weylgruppe}, J. AMS {\bf 3} (1990), 421--445.


\bibitem[So2]{So98}  W. Soergel,
{\em Character formulas for tilting modules over Kac-Moody algebras}, Represent. Theory (electronic) {\bf 2} (1998), 432--448.

\bibitem[VV]{VV99} M.Varagnolo and E. Vasserot,  
{\em On the decomposition matrices of the quantized Schur algebra}, Duke Math. J. {\bf 100} (1999), 267--297.

\bibitem[Vo]{V} D.~Vogan, {\em Irreducible characters of
semisimple Lie groups II: The Kazhdan-Lusztig conjectures}, Duke Math.~J.~{\bf 46} (1979), 805--859.
 \end{thebibliography}
\end{document}